\documentclass[12pt, a4paper, reqno]{amsart}
\usepackage{amsmath}
\usepackage{amsfonts}
\usepackage{amssymb}
\usepackage{color}
\usepackage{comment,graphicx}
\usepackage[T1]{fontenc}
\usepackage{url}
\usepackage{ae}
\usepackage{mathtools}
\usepackage{esint}
\usepackage{latexsym}
\usepackage{amscd}

\setcounter{MaxMatrixCols}{10}

\setlength{\oddsidemargin}{0cm} \setlength{\evensidemargin}{0cm}
\setlength{\textwidth}{15.92cm} \setlength{\topmargin}{0cm}
\setlength{\textheight}{23.5cm}
\def\phi{\varphi}
\def\rho{\varrho}
\def\epsilon{\varepsilon}
\numberwithin{equation}{section}
\theoremstyle{plain}
\newtheorem{theorem}[equation]{Theorem}
\newtheorem{lemma}[equation]{Lemma}
\newtheorem{proposition}[equation]{Proposition}
\newtheorem{corollary}[equation]{Corollary}
\theoremstyle{definition}
\newtheorem{definition}[equation]{Definition}

\theoremstyle{remark}
\newtheorem{remark}[equation]{Remark}

\renewcommand{\leq}{\leqslant}
\renewcommand{\geq}{\geqslant}

\pagestyle{headings}
\newcommand{\vertiii}[1]{{\left\vert\kern-0.25ex\left\vert\kern-0.25ex\left\vert \right\vert\kern-0.25ex\right\vert\kern-0.25ex\right\vert}}

\begin{document}
\title[Lorentz Herz-type Besov-Triebel-Lizorkin spaces]{Lorentz Herz-type
Besov-Triebel-Lizorkin spaces}
\author[D. Drihem]{Douadi Drihem}
\address{Douadi Drihem\\
Laboratory of Functional Analysis and Geometry of Spaces, Faculty of
Mathematics and Informatics, Department of Mathematics, M'sila University,
PO Box 166 Ichebelia, M'sila 28000, Algeria}
\email{douadidr@yahoo.fr, douadi.drihem@univ-msila.dz}
\thanks{ }
\date{\today }
\subjclass[2010]{ Primary: 42B25, 42B35; secondary: 46E35.}

\begin{abstract}
In this paper, we introduce a new family of function spaces of Besov-
Triebel-Lizorkin type. We present the $\varphi $-transform characterization
of these spaces in the sense of Frazier and Jawerth and we prove their
Sobolev and Franke-Jewarth embeddings. Also, we establish the smooth atomic,
molecular and wavelet decomposition of these function spaces.
Characterizations by ball means of differences are given. Finally, we
investigate a series of examples which play an important role in the study
of function spaces of Besov-Triebel-Lizorkin type.
\end{abstract}

\keywords{Atom, Molecule, Wavelet, Difference, Herz space, Lorentz space,
Besov space, Triebel-Lizorkin space, Embedding.}
\maketitle

\section{Introduction}

Function spaces have been a central topic in modern analysis, and are now of
increasing applications in many fields of mathematics especially harmonic
analysis and partial differential equations. The most known general scales
of function spaces are the scales of Besov spaces and Triebel-Lizorkin
spaces and it is known that they cover many well-known classical function
spaces such as H\"{o}lder-Zygmund spaces, Hardy spaces and Sobolev spaces.
For more details one can refer to Triebel's books \cite{T83}\ and\ \cite{T2}.

It is well known that Herz spaces $\dot{K}_{p}^{\alpha ,q}$, $\alpha \in 
\mathbb{R}$ and $0<p,q\leq \infty $, play an important role in harmonic
analysis. After they have been introduced in \cite{Herz68}, the theory of
these spaces had a remarkable development in part due to its usefulness in
applications. For instance, they appear in the characterization of
multipliers on Hardy spaces \cite{BS85}, in the summability of Fourier
transforms \cite{FeichtingerWeisz08} and in regularity theory for elliptic
equations in divergence form \cite{Rag09}. Y. Tsutsui \cite{T11} has
considered the Cauchy problem for Navier-Stokes equations on Herz spaces and
weak Herz spaces. They unify and generalize the classical Lebesgue spaces of
power weights. More precisely, if $\alpha =0$ and $p=q$, then $\dot{K}%
_{p}^{0,p}$ coincides with the Lebesgue spaces $L^{p}$ and 
\begin{equation*}
\dot{K}_{p}^{\alpha ,p}=L^{p}(\mathbb{R}^{n},|\cdot |^{\alpha p}),\quad 
\text{(Lebesgue space equipped with power weight).}
\end{equation*}

Based on Herz spaces, Besov and Triebel-Lizorkin spaces, the authors in \cite%
{XuYang05} and \cite{Xu05} introduced new family of function spaces called
Herz-type Besov spaces $\dot{K}_{p}^{\alpha ,q}B_{\beta }^{s}$ and
Triebel-Lizorkin spaces $\dot{K}_{p}^{\alpha ,q}F_{\beta }^{s}$. These
spaces unify and generalize the classical inhomogeneous Besov spaces and
Triebel-Lizorkin spaces. Several basic properties were established, such as
the Fourier analytical characterisation, lifting properties and embeddings
properties. When $\alpha =0$ and$\ p=q$ the spaces $\dot{K}%
_{p}^{0,p}B_{\beta }^{s}$ and $\dot{K}_{p}^{0,p}F_{\beta }^{s}$ coincide
with the usual function spaces $B_{p,\beta }^{s}$ and $F_{p,\beta }^{s}$,
respectively. The embeddings\ in $\dot{K}_{p}^{\alpha ,q}B_{\beta }^{s}$ and 
$\dot{K}_{p}^{\alpha ,q}F_{\beta }^{s}$ such us Sobolev, Franke and Jewarth,
extend and improve Sobolev, Franke and Jewarth embeddings of Besov and
Triebel-Lizorkin spaces; see \cite{Drihem1.13}, \cite{Drihem2.13} and \cite%
{drihem2016jawerth}.

The interest in Herz-type Besov-Triebel-Lizorkin spaces comes not only from
theoretical reasons but also from their applications to several classical
problems in analysis. In \cite{LuYang97}, Lu and Yang introduced the
Herz-type Sobolev and Bessel potential spaces. They gave some applications
to partial differential equations. In \cite{Dr22.Banach} the author studied
the Cauchy problem for the semilinear parabolic equations%
\begin{equation*}
\partial _{t}u-\Delta u=G(u)
\end{equation*}%
with initial data in Herz-type Triebel-Lizorkin spaces and under some
suitable conditions on $G$.

Based on Lorentz-Herz spaces, see below, and Herz-type
Besov-Triebel-Lizorkin spaces, this paper consists in giving a unified
treatment of function spaces of Besov-Triebel-Lizorkin type. We will define
and investigate the scales%
\begin{equation}
\dot{K}_{p,r}^{\alpha ,q}B_{\beta }^{s}\quad \text{and}\quad \dot{K}%
_{p,r}^{\alpha ,q}F_{\beta }^{s}.  \label{new}
\end{equation}%
Let us present briefly the idea to introduce the function spaces \eqref{new}%
. In \cite{Dr-inter-herz}\ the author studied the interpolation properties
of Herz-type Besov and Triebel-Lizorkin spaces. He proved the following
result. Let $\dot{K}_{{p,r}}^{{\alpha },q}$\ denote the homogeneous
Herz-type\ Lorentz space, see below. Let $0<\theta <1$, $1\leq p_{0}\neq
p_{1}\leq \infty ,1\leq q_{0},q_{1}<\infty ,1\leq \beta _{0},\beta
_{1},\beta \leq \infty $ and $\alpha _{0},\alpha _{1},s_{0},s_{1}\in \mathbb{%
R}$, with 
\begin{equation*}
-\frac{n}{p_{0}}<\alpha _{0}<n-\frac{n}{p_{0}}\quad \text{and}\quad -\frac{n%
}{p_{1}}<\alpha _{1}<n-\frac{n}{p_{1}}.
\end{equation*}%
Assume that 
\begin{equation*}
\alpha =(1-\theta )\alpha _{0}+\theta \alpha _{1},\text{\quad }\frac{1}{q}=%
\frac{1-\theta }{q_{0}}+\frac{\theta }{q_{1}}\text{\quad and\quad }\frac{1}{p%
}=\frac{1-\theta }{p_{0}}+\frac{\theta }{p_{1}}.
\end{equation*}%
(i) We have 
\begin{equation*}
(\dot{K}_{p_{0}}^{\alpha _{0},q_{0}}F_{\beta }^{s},\dot{K}_{p_{1}}^{\alpha
_{1},q_{1}}F_{\beta }^{s})_{\theta ,q}=\dot{K}_{p,q}^{\alpha ,q}F_{\beta
}^{s}
\end{equation*}%
hold in the sense of equivalent norms.$\newline
$(ii) Let $\frac{1}{q}=\frac{1-\theta }{\beta _{0}}+\frac{\theta }{\beta _{1}%
}$ and $s=(1-\theta )s_{0}+\theta s_{1}$. Then 
\begin{equation*}
(\dot{K}_{p_{0}}^{\alpha _{0},q_{0}}B_{\beta _{0}}^{s_{0}},\dot{K}%
_{p_{1}}^{\alpha _{1},q_{1}}B_{\beta _{1}}^{s_{1}})_{\theta ,q}=\dot{K}%
_{p,q}^{\alpha ,q}B_{q}^{s}
\end{equation*}%
hold in the sense of equivalent norms.

Therefore, it will be interesting to study such function spaces.

The paper contains\ six sections. Every section has an introduction which
explains what one will find there.

\textbf{Section 2}. We present some known results concerning Lorentz and
Herz spaces which are needed in the following parts of the paper. We provide
the necessary background information about Lorentz-Herz spaces. In this
section, we extend the vector-valued maximal inequality\ of Fefferman-Stein
and the classical Plancherel-Polya-Nikolskij inequality to the Lorentz-Herz
spaces.

\textbf{Section 3}. Using the Calder\'{o}n reproducing formulae, we
establish the $\varphi $-transform characterization of $\dot{K}%
_{p,r}^{\alpha ,q}B_{\beta }^{s}\ $and$\ \dot{K}_{p,r}^{\alpha ,q}F_{\beta
}^{s}$ spaces in the sense of Frazier and Jawerth. We continue by proving
Lifting property and the Fatou property of such spaces. Some interpolation
inequalities are established.

\textbf{Section 4}. We present some elementary embeddings. Sobolev and
Franke-Jewarth embeddings on such spaces are given. Also, we present new
embeddings between Besov and Herz spaces. All these results generalize and improve the existing classical results on Besov and Triebel-Lizorkin spaces.

\textbf{Section 5}. Firstly, we prove the boundedness of almost diagonal
operator in the sense of Frazier and Jawerth on sequence spaces $\dot{K}%
_{p,r}^{\alpha ,q}b_{\beta }^{s}$ and $\dot{K}_{p,r}^{\alpha ,q}f_{\beta
}^{s}$. Secondly, we establish characterizations by atomic and molecular
decompositions of $\dot{K}_{p,r}^{\alpha ,q}B_{\beta }^{s}\ $and$\ \dot{K}%
_{p,r}^{\alpha ,q}F_{\beta }^{s}$ spaces. Using the characterizations of $%
\dot{K}_{p,r}^{\alpha ,q}B_{\beta }^{s}\ $and$\ \dot{K}_{p,r}^{\alpha
,q}F_{\beta }^{s}$ spaces by atom, we establish characterizations of such
spaces\ by wavelets.

\textbf{Section 6}. In this section, we establish characterizations of $\dot{%
K}_{p,r}^{\alpha ,q}B_{\beta }^{s}\ $and$\ \dot{K}_{p,r}^{\alpha ,q}F_{\beta
}^{s}$\ by Peetre maximal function, by ball mean of differences and we will
present some useful examples, which play an important role in the study of
function spaces of Besov-Triebel-Lizorkin type.

\subsection{Basic spaces}

Throughout this paper, we denote by $\mathbb{R}^{n}$ the $n$-dimensional
real Euclidean space, $\mathbb{N}$ the collection of all natural numbers and 
$\mathbb{N}_{0}=\mathbb{N}\cup \{0\}$. The letter $\mathbb{Z}$ stands for
the set of all integer numbers.\ For a multi-index $\alpha =(\alpha
_{1},...,\alpha _{n})\in \mathbb{N}_{0}^{n}$, we write $\left\vert \alpha
\right\vert =\alpha _{1}+...+\alpha _{n}$. The Euclidean scalar product of $%
x=(x_{1},...,x_{n})$ and $y=(y_{1},...,y_{n})$ is given by $x\cdot
y=x_{1}y_{1}+...+x_{n}y_{n}$.

The expression $f\lesssim g$ means that $f\leq c\,g$ for some independent
constant $c$ (and non-negative functions $f$ and $g$), and $f\approx g$
means $f\lesssim g\lesssim f$. As usual for any $x\in \mathbb{R}$, $%
\left\lfloor x\right\rfloor $ stands for the largest integer smaller than or
equal to $x$.

For $x\in \mathbb{R}^{n}$ and $r>0$ we denote by $B(x,r)$ the open ball in $%
\mathbb{R}^{n}$ with center $x$ and radius $r$. By \textrm{supp}$f$ we
denote the support of the function $f$, i.e., the closure of its non-zero
set. If $E\subset {\mathbb{R}^{n}}$ is a measurable set, then $|E|$ stands
for the (Lebesgue) measure of $E$ and $\chi _{E}$ denotes its characteristic
function. By $c$ we denote generic positive constants, which may have
different values at different occurrences.

For $v\in \mathbb{N}_{0}$ and $m\in \mathbb{Z}^{n}$, denote by $Q_{v,m}$ the
dyadic cube,%
\begin{equation*}
Q_{v,m}=2^{-v}([0,1)^{n}+m).
\end{equation*}

For each cube $Q$, we denote by $x_{v,m}$ the lower left-corner $2^{-v}m$ of 
$Q=Q_{v,m}$. Also, we set $\chi _{v,m}=\chi _{Q_{v,m}},v\in \mathbb{N}_{0}$, 
$m\in \mathbb{Z}^{n}.$

The symbol $\mathcal{S}(\mathbb{R}^{n})$ is used in place of the set of all
Schwartz functions on $\mathbb{R}^{n}$, it is equipped with the family of
seminorms,%
\begin{equation*}
\big\|\varphi \big\|_{\mathcal{S}_{M}}=\sup_{\gamma \in \mathbb{N}%
_{0}^{n},|\gamma |\leq M}\sup_{x\in \mathbb{R}^{n}}|\partial ^{\alpha
}\varphi (x)|(1+|x|)^{n+M+|\gamma |}<\infty
\end{equation*}%
for all $M\in \mathbb{N}$. We denote by $\mathcal{S}^{\prime }(\mathbb{R}%
^{n})$ the dual space of all tempered distributions on $\mathbb{R}^{n}$. We
define the Fourier transform of a function $f\in \mathcal{S}(\mathbb{R}^{n})$
by 
\begin{equation*}
\mathcal{F(}f)(\xi )=\left( 2\pi \right) ^{-n/2}\int_{\mathbb{R}%
^{n}}e^{-ix\cdot \xi }f(x)dx,\quad \xi \in \mathbb{R}^{n}.
\end{equation*}%
Its inverse is denoted by $\mathcal{F}^{-1}f$. Both $\mathcal{F}$ and $%
\mathcal{F}^{-1}$ are extended to the dual Schwartz space $\mathcal{S}%
^{\prime }(\mathbb{R}^{n})$ in the usual way.

(i) Let $0<p\leq \infty $. By $L^{p}$ we denote the space of all measurable
functions $f$ such that%
\begin{equation*}
\big\|f\big\|_{p}=\Big(\int_{\mathbb{R}^{n}}\left\vert f(x)\right\vert ^{p}dx%
\Big)^{1/p}<\infty ,
\end{equation*}%
with $0<p<\infty $ and%
\begin{equation*}
\big\|f\big\|_{\infty }=\underset{x\in \mathbb{R}^{n}}{\text{ess-sup}}%
\left\vert f(x)\right\vert <\infty .
\end{equation*}

(ii) Let $\alpha \in \mathbb{R}$ and $0<p<\infty $. The weighted Lebesgue
space $L^{p}(\mathbb{R}^{n},|\cdot |^{\alpha })$ contains all measurable
functions $f$ such that 
\begin{equation*}
\big\|f\big\|_{L^{p}(\mathbb{R}^{n},|\cdot |^{\alpha })}=\Big(\int_{\mathbb{R%
}^{n}}\left\vert f(x)\right\vert ^{p}|x|^{\alpha }dx\Big)^{1/p}<\infty .
\end{equation*}%
If $\alpha =0$, then we put $L^{p}(\mathbb{R}^{n},|\cdot |^{0})=L^{p}.$

(iii) The space $C(\mathbb{R}^{n})$ consists of all uniformly continuous
functions $f$ such that%
\begin{equation*}
\big\|f\big\|_{C(\mathbb{R}^{n})}=\sup_{x\in \mathbb{R}^{n}}\left\vert
f(x)\right\vert <\infty .
\end{equation*}

(iv)\ Let $m\in \mathbb{N}$. The space $C^{m}(\mathbb{R}^{n})$ is defined as
the set of all of all functions $f\in C(\mathbb{R}^{n})$, having all
classical derivatives $\partial ^{\alpha }f\in C(\mathbb{R}^{n})$\ up to
order $|\alpha |\leq m$ and such that%
\begin{equation*}
\big\|f\big\|_{C^{m}(\mathbb{R}^{n})}=\sum_{|\alpha |\leq m}\big\|\partial
^{\alpha }f\big\|_{C(\mathbb{R}^{n})}<\infty .
\end{equation*}

\begin{definition}
$($H\"{o}lder spaces$)$ Let $m\in \mathbb{N}_{0}$ and $m<s<m+1$. The space $%
C^{s}$ is defined to be the set of all $f\in C^{m}(\mathbb{R}^{n})$ such that%
\begin{equation*}
\big\|f\big\|_{C^{s}}=\big\|f\big\|_{C^{m}(\mathbb{R}^{n})}+\sum_{|\alpha
|=m}\sup_{x\neq y}\frac{|\partial ^{\alpha }f(x)-\partial ^{\alpha }f(y)|}{%
|x-y|^{s-m}}<\infty .
\end{equation*}
\end{definition}

\begin{definition}
Let $1<p<\infty $ and $m\in \mathbb{N}_{0}$. We define the Sobolev space $%
W_{p}^{m}$ as the set of functions $f\in L^{p}$ with weak derivatives $%
\partial ^{\beta }f\in L^{p}$ for $|\beta |\leq m$. We define the norm of $%
W_{p}^{m}$ by%
\begin{equation*}
\big\|f\big\|_{W_{p}^{m}}=\sum_{|\beta |\leq m}\big\|\partial ^{\beta }f%
\big\|_{p}<\infty .
\end{equation*}
\end{definition}

As usual, we define $W_{p}^{0}=L^{p}$.

\subsection{Besov and Triebel-Lizorkin spaces}

We present the Fourier analytical definition of Besov space and
Triebel-Lizorkin spaces\ and recall their basic properties. We first need
the concept of a smooth dyadic resolution of unity. Let $\vartheta $ be a
function in $\mathcal{S}(\mathbb{R}^{n})$ satisfying 
\begin{equation}
\vartheta (x)=1\quad \text{for}\quad \lvert x\rvert \leq 1\quad \text{and}%
\quad \vartheta (x)=0\quad \text{for}\quad \lvert x\rvert \geq \frac{3}{2}.
\label{function-v}
\end{equation}%
We put $\mathcal{F}\varphi _{0}(x)=\vartheta (x),\,\mathcal{F}\varphi
_{1}(x)=\vartheta (\frac{x}{2})-\vartheta (x)$ and $\varphi _{k}(x)=\mathcal{%
F}\varphi _{1}(2^{-k+1}x)\ $for $k=2,3,....$ Then we have supp$\mathcal{F}%
\varphi _{k}\subset \{x\in {\mathbb{R}^{n}}:2^{k-1}\leq \lvert x\rvert \leq
3\cdot 2^{k-1}\}\ $and 
\begin{equation}
\sum_{k=0}^{\infty }\mathcal{F}\varphi _{k}(x)=1\quad \text{for all}\quad
x\in {\mathbb{R}^{n}}.  \label{partition}
\end{equation}%
The system of functions $\{\varphi _{k}\}_{k\in \mathbb{N}_{0}}$ is called a
smooth dyadic resolution of unity. Thus we obtain the Littlewood-Paley
decomposition 
\begin{equation*}
f=\sum_{k=0}^{\infty }\varphi _{k}\ast f
\end{equation*}%
for all $f\in \mathcal{S}^{\prime }(\mathbb{R}^{n})$ (convergence in $%
\mathcal{S}^{\prime }(\mathbb{R}^{n})$).

We are now in a position to state the definition of Besov and
Triebel-Lizorkin spaces.

\begin{definition}
\label{def-herz-Besov}Let $s\in \mathbb{R}$, $0<p\leq \infty $\ and $0<q\leq
\infty $. $\newline
\mathrm{(i)}$ The Besov space $B_{p,q}^{s}$ is the collection of all $f\in 
\mathcal{S}^{\prime }(\mathbb{R}^{n})$ such that 
\begin{equation*}
\big\Vert f\big\Vert_{B_{p,q}^{s}}=\Big(\sum_{k=0}^{\infty }2^{ksq}\big\Vert%
\varphi _{k}\ast f\big\Vert_{p}^{q}\Big)^{1/q}<\infty ,
\end{equation*}%
with the obvious modification if $q=\infty $.$\newline
\mathrm{(ii)}$ Let $0<p<\infty $. The Triebel-Lizorkin space $F_{p,q}^{s}$
is the collection of all $f\in \mathcal{S}^{\prime }(\mathbb{R}^{n})$\ such
that%
\begin{equation*}
\big\Vert f\big\Vert_{F_{p,q}^{s}}=\Big\|\Big(\sum\limits_{k=0}^{\infty
}2^{ksq}\left\vert \varphi _{k}\ast f\right\vert ^{q}\Big)^{1/q}\Big\|%
_{p}<\infty ,
\end{equation*}%
with the obvious modification if $q=\infty .$
\end{definition}

\begin{remark}
Let\textrm{\ }$s\in \mathbb{R},0<p<\infty $ and $0<q\leq \infty $. The spaces%
\textrm{\ }$B_{p,q}^{s}$ and $F_{p,q}^{s}$ are independent of the particular
choice of the smooth dyadic resolution of unity\textrm{\ }$\{\varphi
_{j}\}_{j\in \mathbb{N}_{0}}$ (in the sense of$\mathrm{\ }$equivalent
quasi-norms). In particular $B_{p,q}^{s}$ and $F_{p,q}^{s}$ are quasi-Banach
spaces and if\textrm{\ }$p,q\geq 1$, then $B_{p,q}^{s}$ and $F_{p,q}^{s}$\
are Banach spaces. In addition%
\begin{equation*}
F_{p,2}^{m}=W_{p}^{m},\quad m\in \mathbb{N}_{0},1<p<\infty ,
\end{equation*}%
and%
\begin{equation*}
B_{\infty ,\infty }^{s}=C^{s},\quad s>0,s\notin \mathbb{N},
\end{equation*}%
see \cite{Sawano18}, \cite{T83} and \cite{T2}\ for more details about these
function spaces.
\end{remark}

Let $\psi \in \mathcal{S}(\mathbb{R}^{n})$ be such that%
\begin{equation*}
\int_{\mathbb{R}^{n}}\psi (x)dx=1.
\end{equation*}%
The local Hardy space $h^{p}$\ consist of all distributions $f\in \mathcal{S}%
^{\prime }(\mathbb{R}^{n})$ for which 
\begin{equation*}
\big\|f\big\|_{h^{p}}=\big\|\sup_{0<t<1}|t^{-n}\mathcal{F}^{-1}\psi \left(
t^{-1}\cdot \right) \ast f|\big\|_{p}<\infty .
\end{equation*}%
We have 
\begin{equation*}
F_{p,2}^{0}=h^{p},\quad 0<p<\infty ,
\end{equation*}%
see \cite[Sect. 2.2.2]{T83}.

We would mention that if $s\in \mathbb{R}$ and $0<p,q<\infty $ then $%
\mathcal{S}(\mathbb{R}^{n})$\ is dense in $A_{p,q}^{s}$ spaces, see \cite%
{T83}. Further characterizations of such spaces can be fund in \cite%
{Sawano18}, \cite{T83} and \cite{T2}.

\section{Lorentz-Herz spaces}

The aim of this section is twofold. First, we provide the necessary
background information about Lorentz-Herz spaces. The second aim is to
present some technical results\ which are needed in the following parts of
the paper, such as the boundedness of class of sublinear operators and
Plancherel-Polya-Nikolskij inequality on such spaces. The results of this
section will play a crucial role in several other sections of this paper.

\subsection{Definition and some basic properties}

The main purpose of this subsection is to present\ some fundamental
properties of Lorentz-Herz spaces. Let $k\in \mathbb{Z}$. For convenience,
we set 
\begin{equation*}
B_{k}=B(0,2^{k})\quad \text{and}\quad \bar{B}_{k}=\{x\in {\mathbb{R}%
^{n}:|x|\leq }2^{k}\}.
\end{equation*}%
In addition, we put%
\begin{equation*}
R_{k}=B_{k}\backslash B_{k-1}\quad \text{and}\quad \chi _{k}=\chi _{R_{k}}.
\end{equation*}

\begin{definition}
\label{def:herz}Let $0<p,q\leq \infty $ and $\alpha \in \mathbb{R}$. The
homogeneous Herz space $\dot{K}_{{p}}^{{\alpha },q}$ is defined as the set
of all $f\in L_{\mathrm{loc}}^{{p}}({\mathbb{R}^{n}}\backslash \{0\})$ such
that 
\begin{equation}
\big\|f\big\|_{\dot{K}_{{p}}^{{\alpha },q}}=\Big(\sum\limits_{k=-\infty
}^{\infty }2^{k{\alpha q}}\big\|f\,\chi _{k}\big\|_{{p}}^{q}\Big)%
^{1/q}<\infty  \label{hnorm}
\end{equation}%
with the usual modification if $q=\infty $, i.e., 
\begin{equation*}
\big\|f\big\|_{\dot{K}_{{p}}^{{\alpha },\infty }}=\sup_{k\in \mathbb{Z}}\big(%
2^{k{\alpha }}\big\|f\,\chi _{k}\big\|_{{p}}\big).
\end{equation*}
\end{definition}

\begin{remark}
Herz spaces play an important role in Harmonic Analysis. After they have
been introduced in \cite{Herz68}, the theory of these spaces had a
remarkable development in part due to its usefulness in applications. For
instance, they appear in the characterization of multipliers on Hardy spaces 
\cite{BS85}, in the semilinear parabolic equations; see \cite{Drappl}, in
the summability of Fourier transforms \cite{FeichtingerWeisz08}, in
regularity theory for elliptic equations in divergence form \cite{Rag09}-%
\cite{Rag12}, and in the Cauchy problem for Navier-Stokes equations \cite%
{T11}. But, the study of the Herz spaces can be dated back to the work of
Beurling \cite{Be64}. Feichtinger in \cite{Fe} introduced another norm which
is equivalent to the norm defined by Beurling.
\end{remark}

\begin{remark}
A\ detailed discussion of the properties of Herz spaces may be found in \cite%
{HerYang99}, \cite{Ho18}, \cite{LYH08} and \cite{RS}, and references therein.
\end{remark}

Let $f$ be a measurable function on $\mathbb{R}^{n},t>0$ and $\lambda >0$.
We define the distribution function of $f$ by 
\begin{equation*}
m_{f}(\lambda )=|\{x\in \mathbb{R}^{n}:|f(x)|>\lambda \}|.
\end{equation*}%
The non-increasing rearrangement of $f$ is defined by 
\begin{equation*}
f^{\ast }(t)=\inf \{\lambda >0:m_{f}(\lambda )\leq t\}.
\end{equation*}

Next, we recall the Lorentz spaces.

\begin{definition}
Let $0<p<\infty $ and $0<r\leq \infty $. Then the Lorentz\ space $L^{p,r}$
is the set of all measurable function $f$ on $\mathbb{R}^{n}$ such that $%
\big\|f\big\|_{L^{p,r}}<\infty $, where%
\begin{equation*}
\big\|f\big\|_{L^{p,r}}=\Big(\int_{0}^{\infty }t^{\frac{r}{p}}(f^{\ast
}(t))^{r}\frac{dt}{t}\Big)^{1/r}\quad \text{if}\quad 0<r<\infty
\end{equation*}%
and%
\begin{equation*}
\big\|f\big\|_{L^{p,\infty }}=\sup_{t>0}t^{\frac{1}{p}}f^{\ast }(t)\quad 
\text{if}\quad r=\infty .
\end{equation*}
\end{definition}

\begin{remark}
We know that the Lorentz space is very important in harmonic analysis. A
much more detailed about such spaces can be found in \cite[Chapter 1]{L.
Graf14}. We put $L^{\infty ,\infty }=L^{\infty }.$
\end{remark}

We recall some basic properties of Lorentz space.

\begin{proposition}
\label{Holder's-convolution-lorentz}Let $0<p,p_{0},p_{1}<\infty $ and $%
0<r,r_{0},r_{1}\leq \infty $. $\newline
\mathrm{(i)}$ The Lorentz space $L^{p,r}$\ with the quasi-norm $\big\|\cdot %
\big\|_{L^{p,r}}$ is complete, quasi-Banach, for all $0<p<\infty $ and $%
0<r\leq \infty $.$\newline
\mathrm{(ii)}$ Let $0<s<\infty $ and $f\in L^{p,r}$. Then we have%
\begin{equation}
\big\||f|^{s}\big\|_{L^{p,r}}=\big\|f\big\|_{L^{ps,rs}}^{s}.
\label{pro1-lorentz}
\end{equation}%
$\mathrm{(iii)}$ We have $L^{p,p}=L^{p}$ in the sense of equivalent norms.$%
\newline
\mathrm{(iv)}$ Suppose $0<q<r\leq \infty $. Then $L^{p,q}\hookrightarrow
L^{p,r}.\newline
\mathrm{(v)}$\ Let $f\in L^{p_{0},r_{0}}$ and $g\in L^{p_{1},r_{1}}$.
Suppose 
\begin{equation*}
\frac{1}{p}=\frac{1}{p_{0}}+\frac{1}{p_{1}}\quad \text{and}\quad \frac{1}{r}=%
\frac{1}{r_{0}}+\frac{1}{r_{1}}.
\end{equation*}%
Then $fg\in L^{p,r}$ and 
\begin{equation*}
\big\|fg\big\|_{L^{p,r}}\leq c\big\|f\big\|_{L^{p_{0},r_{0}}}\big\|g\big\|%
_{L^{p_{1},r_{1}}}.
\end{equation*}%
$\mathrm{(vi)}$ Let $f\in L^{p_{0},r_{0}}$ and $g\in L^{p_{1},r_{1}}$.
Suppose\ $1<p,p_{0},p_{1}<\infty $ and $0<r_{0},r_{1}\leq \infty $\ with%
\begin{equation*}
\frac{1}{p}+1=\frac{1}{p_{0}}+\frac{1}{p_{1}}\quad \text{and}\quad \frac{1}{r%
}=\frac{1}{r_{0}}+\frac{1}{r_{1}}.
\end{equation*}%
Then $f\ast g\in L^{p,r}$\ and 
\begin{equation*}
\big\|f\ast g\big\|_{L^{p,r}}\leq c\big\|f\big\|_{L^{p_{0},r_{0}}}\big\|g%
\big\|_{L^{p_{1},r_{1}}}.
\end{equation*}%
$\mathrm{(vii)}$ Suppose $f\in L^{p,r},1<p<\infty ,1\leq r\leq \infty $ or $%
p=r=\infty $. We put 
\begin{equation*}
\big\|f\big\|_{L^{p,r}}^{\ast }=\Big(\int_{0}^{\infty }t^{\frac{r}{p}%
}(f^{\ast \ast }(t))^{r}\frac{dt}{t}\Big)^{1/r}\quad \text{if}\quad 1\leq
r<\infty
\end{equation*}%
and%
\begin{equation*}
\big\|f\big\|_{L^{p,\infty }}^{\ast }=\sup_{t>0}t^{\frac{1}{p}}f^{\ast \ast
}(t)\quad \text{if}\quad r=\infty ,
\end{equation*}%
where $f^{\ast \ast }(t)=\frac{1}{t}\int_{0}^{t}f^{\ast }(s)ds,s>0$. Then 
\begin{equation*}
\big\|f\big\|_{L^{p,r}}\leq \big\|f\big\|_{L^{p,r}}^{\ast }\leq \frac{p}{p-1}%
\big\|f\big\|_{L^{p,r}}.
\end{equation*}
\end{proposition}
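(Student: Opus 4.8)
The plan is to read all six statements off from elementary properties of the non-increasing rearrangement $f^{\ast}$. The facts I would isolate first are: $f$ and $f^{\ast}$ are equimeasurable; $f^{\ast}$ is non-increasing and right-continuous; for $s>0$ one has $\big(|f|^{s}\big)^{\ast}(t)=\big(f^{\ast}(t)\big)^{s}$, which follows from $m_{|f|^{s}}(\lambda)=m_{f}(\lambda^{1/s})$; the subadditivity $(f+g)^{\ast}(t_{1}+t_{2})\leq f^{\ast}(t_{1})+g^{\ast}(t_{2})$; and the submultiplicativity $(fg)^{\ast}(t_{1}+t_{2})\leq f^{\ast}(t_{1})\,g^{\ast}(t_{2})$, which follows from $m_{fg}(\lambda_{1}\lambda_{2})\leq m_{f}(\lambda_{1})+m_{g}(\lambda_{2})$.

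Given these, (ii) is immediate: $\big\||f|^{s}\big\|_{L^{p,r}}^{r}=\int_{0}^{\infty}t^{r/p}\big(f^{\ast}(t)\big)^{rs}\,\frac{dt}{t}=\big\|f\big\|_{L^{ps,rs}}^{rs}$, with the evident variant for $r=\infty$. Statement (iii) is the layer-cake identity combined with equimeasurability, $\big\|f\big\|_{L^{p,p}}^{p}=\int_{0}^{\infty}\big(f^{\ast}(t)\big)^{p}\,dt=\int_{\Rn}|f(x)|^{p}\,dx$. For (iv), monotonicity of $f^{\ast}$ gives, for each $t>0$, $t^{q/p}\big(f^{\ast}(t)\big)^{q}=\tfrac{q}{p}\int_{0}^{t}s^{q/p-1}\,ds\cdot\big(f^{\ast}(t)\big)^{q}\leq\tfrac{q}{p}\int_{0}^{t}s^{q/p-1}\big(f^{\ast}(s)\big)^{q}\,ds\leq\tfrac{q}{p}\big\|f\big\|_{L^{p,q}}^{q}$, hence $L^{p,q}\hookrightarrow L^{p,\infty}$; for $q<r<\infty$ one then writes $\big(t^{1/p}f^{\ast}(t)\big)^{r}=\big(t^{1/p}f^{\ast}(t)\big)^{r-q}\big(t^{1/p}f^{\ast}(t)\big)^{q}$, bounds the first factor by $\big\|f\big\|_{L^{p,\infty}}^{r-q}$, and integrates to get $\big\|f\big\|_{L^{p,r}}\lesssim\big\|f\big\|_{L^{p,q}}$. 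For (i), the quasi-triangle inequality follows from subadditivity of $f^{\ast}$ (when $p>1$ one may instead work with the equivalent norm built from $f^{\ast\ast}(t)=\tfrac1t\int_{0}^{t}f^{\ast}(s)\,ds$, which is genuinely subadditive), and completeness follows from the standard criterion that a quasi-normed space in which every absolutely convergent series converges is complete, the convergence being supplied by Fatou's lemma for rearrangements.

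Item (v) I would prove by submultiplicativity in the form $(fg)^{\ast}(t)\leq f^{\ast}(t/2)\,g^{\ast}(t/2)$: substituting $t=2u$ in the defining integral, using $u^{r/p}=u^{r/p_{0}}u^{r/p_{1}}$, and then applying Hölder's inequality for the measure $du/u$ with the conjugate exponents $r_{0}/r$ and $r_{1}/r$ (conjugate because $r/r_{0}+r/r_{1}=1$) yields $\big\|fg\big\|_{L^{p,r}}\leq 2^{1/p}\big\|f\big\|_{L^{p_{0},r_{0}}}\big\|g\big\|_{L^{p_{1},r_{1}}}$, the cases with $r_{i}=\infty$ needing only the obvious modification. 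The genuinely harder point is (vi), which is O'Neil's convolution inequality. I see two routes. The self-contained one is to prove O'Neil's pointwise bound $(f\ast g)^{\ast\ast}(t)\lesssim t\,f^{\ast\ast}(t)\,g^{\ast\ast}(t)+\int_{t}^{\infty}f^{\ast}(s)\,g^{\ast}(s)\,ds$ by cutting $g$ at the height $g^{\ast}(t)$ and applying the classical estimates $L^{1}\ast L^{\infty}\subset L^{\infty}$ and $L^{1}\ast L^{1}\subset L^{1}$ to the two pieces, and then to feed this into Hardy-type inequalities for the weight $t^{1/p}$ on $(0,\infty)$; it is precisely the strict inclusions $1<p,p_{0},p_{1}<\infty$, equivalently $0<\tfrac1p=\tfrac1{p_{0}}+\tfrac1{p_{1}}-1<1$, that make those Hardy operators bounded. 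The shorter route is to observe that the classical Young inequality is a bounded bilinear map $\ast\colon L^{a_{0}}\times L^{b_{0}}\to L^{c_{0}}$ whenever $1/c_{0}=1/a_{0}+1/b_{0}-1$ with exponents in $[1,\infty]$, to choose two such triples straddling $(p_{0},p_{1},p)$ (possible since these exponents lie strictly between $1$ and $\infty$), and to invoke bilinear real interpolation together with $(L^{a_{0}},L^{a_{1}})_{\theta,r}=L^{p,r}$; this delivers the stated estimate with $1/r=1/r_{0}+1/r_{1}$. I expect the first route to be where the real work sits — the bookkeeping in O'Neil's pointwise inequality and the endpoint verification of the Hardy bounds — whereas the second is clean once one is content to cite the bilinear interpolation theorem.
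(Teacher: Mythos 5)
Your proposal is correct; the paper itself offers no proof of this proposition but simply refers to \cite{L. Graf14} and \cite{O'Neil}, and your sketch reproduces exactly the standard arguments from those references: elementary rearrangement identities for (ii)--(iv), submultiplicativity of the rearrangement plus H\"older in $dt/t$ for (v), and O'Neil's pointwise bound (or bilinear interpolation of Young's inequality) for (vi). The only points requiring care are the routine $r_i=\infty$ modifications and the verification of the Hardy-operator bounds in (vi), where the strict inequalities $1<p,p_0,p_1<\infty$ enter precisely as you indicate.
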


\begin{proof}
For the proof, see \cite{L. Graf14} and \cite{O'Neil}.
\end{proof}

We recall the following Hardy-Littlewood inequality; see \cite{L. Graf14}.

\begin{lemma}
\label{Hardy-Littlewood inequality}Let $f$ and $g$ be two non-negative
functions on $\mathbb{R}^{n}$. The inequality%
\begin{equation*}
\int_{\mathbb{R}^{n}}f(x)g(x)dx\leq \int_{0}^{\infty }f^{\ast }(t)g^{\ast
}(t)dt
\end{equation*}%
holds.
\end{lemma}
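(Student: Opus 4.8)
The plan is to use the layer-cake (Cavalieri) representation of non-negative functions together with Tonelli's theorem, everything being interpreted with values in $[0,\infty]$. First I would write, for every $x\in\mathbb{R}^{n}$,
\[
f(x)=\int_{0}^{\infty}\chi_{\{f>\lambda\}}(x)\,d\lambda\quad\text{and}\quad g(x)=\int_{0}^{\infty}\chi_{\{g>\mu\}}(x)\,d\mu ,
\]
so that, multiplying, integrating over $\mathbb{R}^{n}$ and applying Tonelli's theorem (all integrands being non-negative and measurable),
\[
\int_{\mathbb{R}^{n}}f(x)g(x)\,dx=\int_{0}^{\infty}\!\int_{0}^{\infty}\big|\{f>\lambda\}\cap\{g>\mu\}\big|\,d\lambda\,d\mu .
\]
The elementary inclusion bound $\big|\{f>\lambda\}\cap\{g>\mu\}\big|\le\min\!\big(m_{f}(\lambda),m_{g}(\mu)\big)$ then gives the upper estimate $\int_{0}^{\infty}\!\int_{0}^{\infty}\min(m_{f}(\lambda),m_{g}(\mu))\,d\lambda\,d\mu$ for the left-hand side.

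Next I would carry out the same computation for the right-hand side of the asserted inequality. The crucial point is the equimeasurability of $f$ and $f^{\ast}$: since $f^{\ast}$ is non-increasing and right-continuous on $(0,\infty)$ and is the generalized inverse of $m_{f}$, one checks from the definition $f^{\ast}(t)=\inf\{\lambda>0:m_{f}(\lambda)\le t\}$ together with the monotonicity and right-continuity of $m_{f}$ that $\{t>0:f^{\ast}(t)>\lambda\}$ coincides, up to a null set, with the interval $\big(0,m_{f}(\lambda)\big)$; consequently $m_{f^{\ast}}(\lambda)=m_{f}(\lambda)$ for all $\lambda>0$, and likewise for $g$. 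Applying the layer-cake representation to $f^{\ast}$ and $g^{\ast}$ and Tonelli's theorem once more,
\[
\int_{0}^{\infty}f^{\ast}(t)g^{\ast}(t)\,dt=\int_{0}^{\infty}\!\int_{0}^{\infty}\big|\{f^{\ast}>\lambda\}\cap\{g^{\ast}>\mu\}\big|\,d\lambda\,d\mu=\int_{0}^{\infty}\!\int_{0}^{\infty}\min\!\big(m_{f}(\lambda),m_{g}(\mu)\big)\,d\lambda\,d\mu ,
\]
because $\{f^{\ast}>\lambda\}$ and $\{g^{\ast}>\mu\}$ are nested intervals of lengths $m_{f}(\lambda)$ and $m_{g}(\mu)$, so that their intersection has measure exactly $\min(m_{f}(\lambda),m_{g}(\mu))$. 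Combining this with the estimate from the first step proves the inequality.

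I expect the main obstacle to be the rigorous verification of the identity $\big|\{t>0:f^{\ast}(t)>\lambda\}\big|=m_{f}(\lambda)$, and hence of the equimeasurability $m_{f^{\ast}}=m_{f}$; this is where one must use carefully the precise definition of the non-increasing rearrangement and the elementary properties (monotonicity, right-continuity, left limits) of the distribution function $m_{f}$, and also cover the degenerate cases where $m_{f}(\lambda)$ or $m_{g}(\mu)$ is infinite, for which the intersection bound and the interval computation still hold in $[0,\infty]$. The remaining steps — the layer-cake formula and the two applications of Tonelli's theorem — are routine once the relevant joint measurability in $(x,\lambda,\mu)$ is observed, so I would present them briefly and devote the bulk of the write-up to the equimeasurability claim.
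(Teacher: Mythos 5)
Your proposal is correct and complete: the layer-cake representation plus Tonelli, the bound $\lvert\{f>\lambda\}\cap\{g>\mu\}\rvert\le\min(m_f(\lambda),m_g(\mu))$, and the equimeasurability identity $\lvert\{t>0:f^{\ast}(t)>\lambda\}\rvert=m_f(\lambda)$ (which in fact holds exactly, since $f^{\ast}(t)>\lambda$ if and only if $m_f(\lambda)>t$, by the right-continuity and monotonicity of $m_f$) together give the inequality, with all degenerate infinite-measure cases handled in $[0,\infty]$. The paper itself gives no proof of this lemma and only cites Grafakos, and your argument is essentially the standard proof found there; the only implicit hypothesis you should state is that $f$ and $g$ are measurable, as required by the definition of the rearrangement.
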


Recall that 
\begin{equation}
\big\|\chi _{A}\big\|_{L^{{p,r}}}=(\frac{p}{r})^{\frac{1}{r}}|A|^{\frac{1}{p}%
},\quad \big\|\chi _{A}\big\|_{L^{{p,\infty }}}=|A|^{\frac{1}{p}},\quad
0<p,r<\infty  \label{est-function1}
\end{equation}%
for any measurable set $A\subset \mathbb{R}^{n}$\ of finite measure and%
\begin{equation}
\big\|f(\lambda \cdot )\big\|_{L^{p,r}}=\lambda ^{-n/p}\big\|f\big\|%
_{L^{p,r}},\quad f\in L^{p,r},0<\lambda <\infty ,  \label{dilation-lorentz}
\end{equation}

Now, we define the Lorentz-Herz spaces $\dot{K}_{{p,r}}^{{\alpha },q}$.

\begin{definition}
\label{Herz-lorentz}Let $0<p<\infty ,0<q,r\leq \infty $ and $\alpha \in 
\mathbb{R}$. The homogeneous Lorentz-Herz space $\dot{K}_{{p,r}}^{{\alpha }%
,q}$ is defined as the set of all functions $f\in L_{\mathrm{loc}}^{{p,r}}({%
\mathbb{R}^{n}}\backslash \{0\})$ such that 
\begin{equation*}
\big\|f\big\|_{\dot{K}_{{p,r}}^{{\alpha },q}}=\Big(\sum\limits_{k=-\infty
}^{\infty }2^{k{\alpha q}}\big\|f\,\chi _{k}\big\|_{L^{p,r}}^{q}\Big)%
^{1/q}<\infty
\end{equation*}%
with the usual modification if $q=\infty $, i.e., 
\begin{equation*}
\big\|f\big\|_{\dot{K}_{{p,r}}^{{\alpha },\infty }}=\sup_{k\in \mathbb{Z}}%
\big(2^{k{\alpha }}\big\|f\,\chi _{k}\big\|_{L^{p,r}}\big).
\end{equation*}
\end{definition}

\begin{remark}
Suppose $0<q\leq \infty $\ and $\alpha \in \mathbb{R}$. If either $%
0<p,r<\infty $ or $r=\infty $\ and $0<p<\infty $, then $\dot{K}_{{p,r}}^{{%
\alpha },q}$ is a quasi-Banach ideal space with the Fatou property. More
detailed about Lorentz-Herz spaces is given \cite{AKS}, \cite{FPV} and \cite%
{T11}. There is another definition of Lorentz-Herz spaces; see \cite{LYH08}.
\end{remark}

We now collect some inequalities in the theory of Lorentz-Herz spaces which
we will use throughout the paper. We begin with H\"{o}lder's inequality.

\begin{proposition}
\label{holder's}Let $0<p_{i}<\infty ,0<q_{i},r_{i}\leq \infty $ and $\alpha
_{i}\in \mathbb{R},i\in \{0,1\}$. Suppose 
\begin{equation*}
\alpha =\alpha _{0}+\alpha _{1},\quad \frac{1}{p}=\frac{1}{p_{0}}+\frac{1}{%
p_{1}},\quad \frac{1}{r}=\frac{1}{r_{0}}+\frac{1}{r_{1}}\quad \text{and}%
\quad \frac{1}{q}=\frac{1}{q_{0}}+\frac{1}{q_{1}}.
\end{equation*}%
Then 
\begin{equation}
\big\|fg\big\|_{\dot{K}_{p,r}^{\alpha ,q}}\leq \big\|f\big\|_{\dot{K}%
_{p_{0},r_{0}}^{\alpha _{0},q_{0}}}\big\|g\big\|_{\dot{K}_{p_{1},r_{1}}^{%
\alpha _{1},q_{1}}}  \label{Holder}
\end{equation}%
holds for all $f\in \dot{K}_{p_{0},r_{0}}^{\alpha _{0},q_{0}}$\ and all\ $%
g\in \dot{K}_{p_{1},r_{1}}^{\alpha _{1},q_{1}}.$
\end{proposition}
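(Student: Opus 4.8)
The plan is to reduce the Lorentz-Herz Hölder inequality to two classical tools: Hölder's inequality for Lorentz spaces on $\mathbb{R}^n$ (Proposition~\ref{Holder's-convolution-lorentz}(v)), applied annulus by annulus, and the discrete Hölder inequality for the sequence spaces $\ell^{q_0}$, $\ell^{q_1}$, $\ell^q$ indexed by $k\in\mathbb{Z}$. First I would treat the case $q_0,q_1,r_0,r_1<\infty$; the endpoint cases $q_i=\infty$ or $r_i=\infty$ are then handled by the obvious modifications (replacing the relevant sum or integral by a supremum), and I would remark on this at the end rather than carry it out.

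The key observation is that on each dyadic annulus $R_k$ one has the pointwise identity $fg\,\chi_k=(f\chi_k)(g\chi_k)$, so that by Proposition~\ref{Holder's-convolution-lorentz}(v), with $\tfrac1p=\tfrac1{p_0}+\tfrac1{p_1}$ and $\tfrac1r=\tfrac1{r_0}+\tfrac1{r_1}$,
\begin{equation*}
\big\|fg\,\chi_k\big\|_{L^{p,r}}=\big\|(f\chi_k)(g\chi_k)\big\|_{L^{p,r}}\leq c\,\big\|f\chi_k\big\|_{L^{p_0,r_0}}\big\|g\chi_k\big\|_{L^{p_1,r_1}}.
\end{equation*}
Multiplying by $2^{k\alpha}=2^{k\alpha_0}2^{k\alpha_1}$ (using $\alpha=\alpha_0+\alpha_1$) gives, for every $k$,
\begin{equation*}
2^{k\alpha}\big\|fg\,\chi_k\big\|_{L^{p,r}}\leq c\,\big(2^{k\alpha_0}\big\|f\chi_k\big\|_{L^{p_0,r_0}}\big)\big(2^{k\alpha_1}\big\|g\chi_k\big\|_{L^{p_1,r_1}}\big).
\end{equation*}
Now raise to the power $q$, sum over $k\in\mathbb{Z}$, and apply the discrete Hölder inequality with exponents $q_0/q$ and $q_1/q$ (whose reciprocals sum to $1$ since $\tfrac1q=\tfrac1{q_0}+\tfrac1{q_1}$) to the two sequences $\big(2^{k\alpha_0 q}\|f\chi_k\|_{L^{p_0,r_0}}^{q}\big)_k$ and $\big(2^{k\alpha_1 q}\|g\chi_k\|_{L^{p_1,r_1}}^{q}\big)_k$. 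Taking $q$-th roots yields exactly $\|fg\|_{\dot K^{\alpha,q}_{p,r}}\leq c\,\|f\|_{\dot K^{\alpha_0,q_0}_{p_0,r_0}}\|g\|_{\dot K^{\alpha_1,q_1}_{p_1,r_1}}$.

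I expect no serious obstacle here; the only mild subtleties are bookkeeping ones. One is that Proposition~\ref{Holder's-convolution-lorentz}(v) as stated produces a constant $c$, so the resulting inequality carries a constant unless one verifies the sharp constant (the statement to be proved has no $c$, so either the normalization of the Lorentz quasi-norm makes $c=1$ on products of the given form, or the statement should be read up to an absolute constant — I would simply keep track of $c$ and note it matches the cited proposition). A second point is that the discrete Hölder step requires $q\le q_i$, which is automatic from $\tfrac1q=\tfrac1{q_0}+\tfrac1{q_1}$; and when $q<1$ one still has Hölder's inequality for sequences in the form needed (only the triangle inequality fails for $q<1$, which we do not use). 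Finally, for the endpoint exponents I would replace $\ell^q$-sums by suprema and Lorentz integrals by their $r=\infty$ analogues, invoking the corresponding endpoint version of Proposition~\ref{Holder's-convolution-lorentz}(v) and the trivial $\sup$-Hölder inequality; these cases are routine and I would dispatch them in a sentence.
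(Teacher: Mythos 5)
Your proposal is correct and follows essentially the same route as the paper, which likewise deduces \eqref{Holder} by applying Proposition \ref{Holder's-convolution-lorentz}/(v) on each annulus (using $fg\,\chi_k=(f\chi_k)(g\chi_k)$) and then Hölder's inequality in $\ell^q$ over $k\in\mathbb{Z}$. Your remark about the constant $c$ is a fair observation, since the paper's cited Lorentz-space Hölder inequality does carry a constant even though \eqref{Holder} is stated without one.
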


\begin{proof}
The estimate \eqref{Holder}\ follows from Proposition \ref%
{Holder's-convolution-lorentz}/(v) and H\"{o}lder's inequality in sequence
spaces $\ell ^{q}$.
\end{proof}

We present an interpolation inequality, namely if a function $f$ is in $\dot{%
K}_{p_{0},r_{0}}^{\alpha _{0},q_{0}}\cap \dot{K}_{p_{1},r_{1}}^{\alpha
_{1},q_{1}}$, then it also lies in $\dot{K}_{p,r}^{\alpha ,q}$, with some
suitable assumptions on the parameters of such spaces.

\begin{lemma}
Let $0<\theta <1,0<p_{i}<\infty ,0<q_{i},r_{i}\leq \infty $ and $\alpha
_{i}\in \mathbb{R},i\in \{0,1\}$. Suppose 
\begin{equation*}
\alpha =(1-\theta )\alpha _{0}+\theta \alpha _{1},\quad \frac{1}{p}=\frac{%
1-\theta }{p_{0}}+\frac{\theta }{p_{1}},\quad \frac{1}{r}=\frac{1-\theta }{%
r_{0}}+\frac{\theta }{r_{1}}\quad \text{and}\quad \frac{1}{q}=\frac{1-\theta 
}{q_{0}}+\frac{\theta }{q_{1}}.
\end{equation*}%
We have the so-called interpolation inequalities: 
\begin{equation}
\big\|f\big\|_{\dot{K}_{p,r}^{\alpha ,q}}\leq \big\|f\big\|_{\dot{K}%
_{p_{0},r_{0}}^{\alpha _{0},q_{0}}}^{1-\theta }\big\|f\big\|_{\dot{K}%
_{p_{1},r_{1}}^{\alpha _{1},q_{1}}}^{\theta }  \label{Interpolation}
\end{equation}%
holds for all $f\in \dot{K}_{p_{0},r_{0}}^{\alpha _{0},q_{0}}\cap \dot{K}%
_{p_{1},r_{1}}^{\alpha _{1},q_{1}}$.
\end{lemma}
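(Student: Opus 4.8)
The plan is to obtain \eqref{Interpolation} from two successive applications of H\"{o}lder's inequality: first on each dyadic annulus $R_{k}$ at the level of the Lorentz (quasi-)norm, and then on the sequence $\{2^{k\alpha }\|f\chi _{k}\|_{L^{p,r}}\}_{k\in \mathbb{Z}}$ at the level of the $\ell ^{q}$ sum.

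\emph{Step 1 (Lorentz interpolation on a fixed annulus).} I claim that for any measurable $g$ and any triple with $\frac{1}{p}=\frac{1-\theta }{p_{0}}+\frac{\theta }{p_{1}}$, $\frac{1}{r}=\frac{1-\theta }{r_{0}}+\frac{\theta }{r_{1}}$ one has
\[
\big\|g\big\|_{L^{p,r}}\leq \big\|g\big\|_{L^{p_{0},r_{0}}}^{1-\theta }\,\big\|g\big\|_{L^{p_{1},r_{1}}}^{\theta }.
\]
Assume first $0<r,r_{0},r_{1}<\infty $. Writing $t^{1/p}=t^{(1-\theta )/p_{0}}\,t^{\theta /p_{1}}$ gives $\big(t^{1/p}g^{\ast }(t)\big)^{r}=\big(t^{1/p_{0}}g^{\ast }(t)\big)^{(1-\theta )r}\big(t^{1/p_{1}}g^{\ast }(t)\big)^{\theta r}$, and H\"{o}lder's inequality for the measure $\frac{dt}{t}$ with the conjugate exponents $\frac{r_{0}}{(1-\theta )r}$ and $\frac{r_{1}}{\theta r}$ (conjugacy is exactly the hypothesis on $r$) yields
\[
\int_{0}^{\infty }\big(t^{1/p}g^{\ast }(t)\big)^{r}\frac{dt}{t}\leq \Big(\int_{0}^{\infty }\big(t^{1/p_{0}}g^{\ast }(t)\big)^{r_{0}}\frac{dt}{t}\Big)^{\frac{(1-\theta )r}{r_{0}}}\Big(\int_{0}^{\infty }\big(t^{1/p_{1}}g^{\ast }(t)\big)^{r_{1}}\frac{dt}{t}\Big)^{\frac{\theta r}{r_{1}}};
\]
taking $r$-th roots gives the claim. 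If some $r_{i}=\infty $ (hence possibly $r=\infty $) the same computation applies with the corresponding integral replaced by $\sup_{t>0}$. (Alternatively, one may invoke Proposition \ref{Holder's-convolution-lorentz}/(ii),(v) applied to the factorization $|g|=|g|^{1-\theta }|g|^{\theta }$; this route introduces a harmless constant but not the value $1$.)

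\emph{Step 2 (summation in $k$).} Apply Step 1 with $g=f\chi _{k}$ and multiply by $2^{k\alpha }=2^{k\alpha _{0}(1-\theta )}\,2^{k\alpha _{1}\theta }$, which is legitimate because $\alpha =(1-\theta )\alpha _{0}+\theta \alpha _{1}$, to obtain
\[
2^{k\alpha }\big\|f\chi _{k}\big\|_{L^{p,r}}\leq \big(2^{k\alpha _{0}}\big\|f\chi _{k}\big\|_{L^{p_{0},r_{0}}}\big)^{1-\theta }\big(2^{k\alpha _{1}}\big\|f\chi _{k}\big\|_{L^{p_{1},r_{1}}}\big)^{\theta },\qquad k\in \mathbb{Z}.
\]
Raising to the power $q$, summing over $k\in \mathbb{Z}$, and applying H\"{o}lder's inequality in $\ell ^{1}$ with the conjugate exponents $\frac{q_{0}}{(1-\theta )q}$ and $\frac{q_{1}}{\theta q}$ (conjugacy is the hypothesis on $q$) gives
\[
\sum_{k\in \mathbb{Z}}\big(2^{k\alpha }\big\|f\chi _{k}\big\|_{L^{p,r}}\big)^{q}\leq \Big(\sum_{k\in \mathbb{Z}}\big(2^{k\alpha _{0}}\big\|f\chi _{k}\big\|_{L^{p_{0},r_{0}}}\big)^{q_{0}}\Big)^{\frac{(1-\theta )q}{q_{0}}}\Big(\sum_{k\in \mathbb{Z}}\big(2^{k\alpha _{1}}\big\|f\chi _{k}\big\|_{L^{p_{1},r_{1}}}\big)^{q_{1}}\Big)^{\frac{\theta q}{q_{1}}},
\]
that is, $\|f\|_{\dot{K}_{p,r}^{\alpha ,q}}^{q}\leq \|f\|_{\dot{K}_{p_{0},r_{0}}^{\alpha _{0},q_{0}}}^{(1-\theta )q}\,\|f\|_{\dot{K}_{p_{1},r_{1}}^{\alpha _{1},q_{1}}}^{\theta q}$; taking $q$-th roots yields \eqref{Interpolation}. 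If $q=\infty $ or some $q_{i}=\infty $, replace the sums by suprema: the per-$k$ bound above, together with $\sup_{k}(a_{k}^{1-\theta }b_{k}^{\theta })\leq (\sup_{k}a_{k})^{1-\theta }(\sup_{k}b_{k})^{\theta }$, gives the conclusion directly.

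\emph{Main difficulty.} There is no deep obstacle here; the only points requiring attention are (a) performing the Lorentz step by a direct integral H\"{o}lder argument so as to obtain the sharp constant $1$ appearing in the statement, rather than the constant coming from Proposition \ref{Holder's-convolution-lorentz}/(v), and (b) the routine bookkeeping of the endpoint cases $r_{i}\in \{\infty \}$ and $q_{i}\in \{\infty \}$.
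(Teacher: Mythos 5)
Your proof is correct, and its overall structure is the same as the paper's: interpolate the Lorentz quasi-norm on each annulus $R_{k}$ and then apply H\"older's inequality in $\ell^{q}$ using $\alpha=(1-\theta)\alpha_{0}+\theta\alpha_{1}$. The only difference lies in how the per-annulus step is carried out: the paper writes $f\chi_{k}=|f|^{1-\theta}|f|^{\theta}\chi_{k}$ and invokes the Lorentz H\"older inequality of Proposition \ref{Holder's-convolution-lorentz}/(v) together with \eqref{pro1-lorentz}, which produces the bound only up to an implicit constant ($\lesssim$), whereas your direct H\"older argument on $\int_{0}^{\infty}\bigl(t^{1/p}g^{\ast}(t)\bigr)^{r}\tfrac{dt}{t}$ with the conjugate exponents $\tfrac{r_{0}}{(1-\theta)r}$ and $\tfrac{r_{1}}{\theta r}$ yields the inequality with constant exactly $1$, which is what the lemma as stated actually asserts; your handling of the endpoint cases $r_{i}=\infty$ and $q_{i}=\infty$ by suprema is also fine. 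So your route is a mild sharpening of the paper's argument rather than a genuinely different one.
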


\begin{proof}
From H\"{o}lder's inequality and \eqref{pro1-lorentz} 
\begin{equation*}
\big\|f\chi _{k}\big\|_{L^{p,r}}=\big\||f|^{1-\theta }|f|^{\theta }\chi _{k}%
\big\|_{L^{p,r}}\lesssim \big\|f\chi _{k}\big\|_{L^{p_{0},r_{0}}}^{1-\theta }%
\big\|f\chi _{k}\big\|_{L^{p_{1},r_{1}}}^{\theta }
\end{equation*}%
for any $k\in \mathbb{Z}$, where the implicit constant is independent of $k$%
. Using the H\"{o}lder inequality in sequence spaces $\ell ^{q}$ and the
fact that $\alpha =(1-\theta )\alpha _{0}+\theta \alpha _{1}$, we obtain the
desired estimate \eqref{Interpolation}.
\end{proof}

The following lemma and proposition give some preliminary results for
Lorentz-Herz spaces.

\begin{lemma}
\label{embeddings1-lorentz}$\mathrm{(i)}$ Let $0<p<\infty
,0<q_{1},q_{2},r\leq \infty $ and $\alpha \in \mathbb{R}$. Then 
\begin{equation}
\dot{K}_{p,r}^{\alpha ,q_{1}}\hookrightarrow \dot{K}_{p,r}^{\alpha ,q_{2}},
\label{emb1}
\end{equation}%
if and only if $0<q_{1}\leq q_{2}\leq \infty .\newline
\mathrm{(ii)}\ $Let $0<p<\infty ,0<q\leq \infty $\ and $\alpha \in \mathbb{R}
$. The space $\dot{K}_{{p,p}}^{{\alpha },q}$\ coincides with the Herz space $%
\dot{K}_{{p}}^{{\alpha },q}$.$\newline
\mathrm{(iii)}$\ Let $0<p<\infty ,0<q\leq \infty ,0<r_{1}\leq r_{2}\leq
\infty \ $and $\alpha \in \mathbb{R}$. Then 
\begin{equation*}
\dot{K}_{p,r_{1}}^{\alpha ,q}\hookrightarrow \dot{K}_{p,r_{2}}^{\alpha ,q}.
\end{equation*}%
$\mathrm{(iv)}$\ Let $0<p,s<\infty ,0<q\leq \infty \ $and $\alpha \in 
\mathbb{R}$. Then 
\begin{equation*}
\big\||f|^{s}\big\|_{\dot{K}_{p,r}^{\alpha ,q}}=\big\|f\big\|_{\dot{K}%
_{ps,rs}^{\alpha /s,qs}}^{s}.
\end{equation*}
\end{lemma}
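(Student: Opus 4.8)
The plan is to prove each of the four parts by reducing to the corresponding statements for the underlying Lorentz spaces $L^{p,r}$ (collected in Proposition~\ref{Holder's-convolution-lorentz} and \eqref{est-function1}), together with elementary facts about weighted $\ell^q$-summation over $\mathbb{Z}$. None of the four items should require any genuinely new idea; the content is bookkeeping with the weights $2^{k\alpha q}$.

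For part~(i), the sufficiency direction $0<q_1\le q_2\le\infty$ is the monotonicity of $\ell^{q}$-quasi-norms: since $\ell^{q_1}\hookrightarrow\ell^{q_2}$ when $q_1\le q_2$, applying this to the nonnegative sequence $\big(2^{k\alpha}\|f\chi_k\|_{L^{p,r}}\big)_{k\in\mathbb{Z}}$ gives \eqref{emb1} with constant $1$. For the necessity direction I would argue by contradiction via a test-function (really a test-sequence) construction: if $q_2<q_1$, pick $f$ supported in a single annulus $R_{k_0}$ so that $\|f\|_{\dot K^{\alpha,q}_{p,r}}=2^{k_0\alpha}\|f\chi_{k_0}\|_{L^{p,r}}$ for every $q$ — this already shows any single-annulus $f$ has equal norms, so to separate $q_1$ from $q_2$ I instead spread mass over $N$ annuli with a suitable profile (e.g.\ take $f$ with $\|f\chi_{k_j}\|_{L^{p,r}}=2^{-k_j\alpha}a_j$ for $j=1,\dots,N$, which is possible by choosing characteristic functions of subsets of the $R_{k_j}$ of prescribed measure, using \eqref{est-function1}) and choose $(a_j)$ realizing the failure of $\ell^{q_1}\hookrightarrow\ell^{q_2}$; letting $N\to\infty$ contradicts a uniform embedding constant.

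Part~(ii) is immediate from Proposition~\ref{Holder's-convolution-lorentz}/(iii): $L^{p,p}=L^{p}$ with equivalent norms, so $\|f\chi_k\|_{L^{p,p}}\approx\|f\chi_k\|_{p}$ uniformly in $k$, and summing against $2^{k\alpha q}$ gives $\dot K^{\alpha,q}_{p,p}=\dot K^{\alpha,q}_{p}$. Part~(iii) is the same reduction using Proposition~\ref{Holder's-convolution-lorentz}/(iv): the embedding $L^{p,r_1}\hookrightarrow L^{p,r_2}$ for $r_1\le r_2$ is applied termwise, and then one sums the resulting inequality $2^{k\alpha}\|f\chi_k\|_{L^{p,r_2}}\le c\,2^{k\alpha}\|f\chi_k\|_{L^{p,r_1}}$ in $\ell^q$. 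Part~(iv) follows from \eqref{pro1-lorentz}: for each $k$, $\||f|^{s}\chi_k\|_{L^{p,r}}=\|f\chi_k\|_{L^{ps,rs}}^{s}$; therefore
\begin{equation*}
\big\||f|^{s}\big\|_{\dot K^{\alpha,q}_{p,r}}^{q}
=\sum_{k\in\mathbb{Z}}2^{k\alpha q}\|f\chi_k\|_{L^{ps,rs}}^{sq}
=\sum_{k\in\mathbb{Z}}\Big(2^{k(\alpha/s)(qs)}\|f\chi_k\|_{L^{ps,rs}}^{qs}\Big),
\end{equation*}
and the right-hand side is exactly $\big\|f\big\|_{\dot K^{\alpha/s,\,qs}_{ps,\,rs}}^{sq}$, i.e.\ raising to the power $1/q$ and then to $s$ gives the claim; the case $q=\infty$ is the analogous identity with the supremum.

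The only step with any subtlety is the necessity half of part~(i): one must make sure the test configuration can be realized inside genuine annuli $R_{k_j}$ (each of measure $\approx 2^{k_j n}$), which is where \eqref{est-function1} and the freedom to choose $k_j$ far apart come in, and one must track that the quasi-triangle constant of $\dot K^{\alpha,q}_{p,r}$ does not secretly rescue the embedding — this is handled by using disjointly supported pieces, for which the quasi-norm behaves like a genuine $\ell^q$ sum up to the $L^{p,r}$ quasi-norm constant only, uniformly in $N$. I expect that to be the main (modest) obstacle; everything else is a direct transfer of the Lorentz-space facts already stated.
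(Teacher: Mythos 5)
Your proposal is correct and follows essentially the same route as the paper: sufficiency in (i) via monotonicity of weighted $\ell^q$-sums, necessity via a test function spread over $N$ annuli built from \eqref{est-function1} (the paper uses the concrete choice $f_N=\sum_{j=1}^{N}2^{-(\alpha+\frac{n}{p})j}\chi_j$, giving norms $c\,N^{1/q_i}$), and (ii)--(iv) by termwise reduction to $L^{p,p}=L^p$, $L^{p,r_1}\hookrightarrow L^{p,r_2}$, and \eqref{pro1-lorentz}. Your worry about the quasi-triangle constant is unnecessary since the disjointly supported pieces occupy distinct annuli, so the Herz quasi-norm is exactly the weighted $\ell^q$ sum; otherwise nothing is missing.
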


\begin{proof}
We will do the proof in two steps.

\textit{Step 1.} We will prove (i). Assume that $0<q_{1}\leq q_{2}<\infty $.
Let $f\in \dot{K}_{p,r}^{\alpha ,q_{1}}$\ and 
\begin{equation*}
I=\Big(\sum\limits_{k=\mathbb{-\infty }}^{\infty }2^{k{\alpha q}_{1}}\big\|%
f\chi _{k}\big\|_{{L}^{p,r}}^{q_{1}}\Big)^{1/q_{1}}.
\end{equation*}%
If $I=0$, then nothing to prove. We have 
\begin{equation*}
\sum\limits_{k=\mathbb{-\infty }}^{\infty }2^{k{\alpha q}_{2}}\big\|\frac{f}{%
I}\chi _{k}\big\|_{{L}^{p,r}}^{q_{2}}=\sum\limits_{k=\mathbb{-\infty }%
}^{\infty }\big(2^{k{\alpha }}\big\|\frac{f}{I}\chi _{k}\big\|_{{L}^{p,r}}%
\big)^{q_{2}-q_{1}+q_{1}}.
\end{equation*}%
Observe that 
\begin{equation*}
\big(2^{k{\alpha }}\big\|\frac{f}{I}\chi _{k}\big\|_{{L}^{p,r}}\big)%
^{q_{2}-q_{1}}\leq 1
\end{equation*}%
for any $k\in \mathbb{Z}$. Therefore 
\begin{equation*}
\sum\limits_{k=\mathbb{-\infty }}^{\infty }2^{k{\alpha q}_{2}}\big\|\frac{f}{%
I}\chi _{k}\big\|_{{L}^{p,r}}^{q_{2}}\leq \sum\limits_{k=\mathbb{-\infty }%
}^{\infty }\big(2^{k{\alpha }}\big\|\frac{f}{I}\chi _{k}\big\|_{{L}^{p,r}}%
\big)^{q_{1}}=1,
\end{equation*}%
which gives the desired estimate. Now, let 
\begin{equation*}
f_{N}=\sum\limits_{j=1}^{N}2^{-(\alpha +\frac{n}{p})j}\chi _{j},\quad N\in 
\mathbb{N}.
\end{equation*}%
By \eqref{est-function1}, we have 
\begin{equation*}
\big\|f_{N}\chi _{k}\big\|_{{L}^{p,r}}=\left\{ 
\begin{array}{ccc}
0, & \text{if} & k\notin \{1,...,N\}, \\ 
c2^{-\alpha k}, & \text{if} & k\in \{1,...,N\},%
\end{array}%
\right.
\end{equation*}%
where the positive constant $c$ is independent of $k$ and $N$. Hence 
\begin{equation*}
\big\|f\big\|_{\dot{K}_{{p,r}}^{{\alpha },q_{i}}}=c\text{ }N^{\frac{1}{q_{i}}%
},\quad i\in \{1,2\}.
\end{equation*}%
If \eqref{emb1} holds, then $N^{\frac{1}{q_{2}}-\frac{1}{q_{1}}}\lesssim 1$,
where the implicit constant is independent of $N$. Observe for $N$, tends to
infinity, then $0<q_{1}\leq q_{2}<\infty $\ becomes necessary. Obviously
that $\dot{K}_{p,r}^{\alpha ,q_{1}}\hookrightarrow \dot{K}_{p,r}^{\alpha
,\infty }$ if and only if $0<q_{1}\leq \infty .$

\textit{Step 2.} We prove (ii), (iii) and (iv). The desired result follows
by the fact that ${L}^{p,p}=L^{p}$, ${L}^{p,r_{1}}\hookrightarrow {L}%
^{p,r_{2}},0<r_{1}\leq r_{2}\leq \infty $\ and \eqref{pro1-lorentz}. The
proof is complete.
\end{proof}

Let $K_{p,r}^{\alpha ,q}$ be the inhomogeneous Lorentz-Herz spaces. More
precisely, the set of all functions $f\in L_{\mathrm{loc}}^{{p,r}}({\mathbb{R%
}^{n}})$ such that 
\begin{equation*}
\big\|f\big\|_{K_{{p,r}}^{{\alpha },q}}=\big\|f\,\chi _{B_{0}}\big\|_{{L}%
^{p,r}}+\Big(\sum\limits_{k=1}^{\infty }2^{k{\alpha q}}\big\|f\,\chi _{k}%
\big\|_{{L}^{p,r}}^{q}\Big)^{1/q}<\infty .
\end{equation*}

\begin{proposition}
\label{embeddings1 copy(1)-lorentz}$\mathrm{(i)}$ Let $0<p<\infty ,0<r,q\leq
\infty $ and $\alpha >0$. Then 
\begin{equation*}
\dot{K}_{p,r}^{\alpha ,q}\cap L^{p,r}=K_{p,r}^{\alpha ,q},
\end{equation*}%
in the sense of equivalent quasi-norms.$\newline
\mathrm{(ii)}$ Let $0<q\leq \infty ,0<r_{2},r_{1}\leq \infty ,\alpha \in 
\mathbb{R}\ $and suppose $0<p_{2}<p_{1}<\infty $. Then 
\begin{equation*}
\dot{K}_{p_{1},r_{1}}^{\alpha ,q}\hookrightarrow \dot{K}_{p_{2},r_{2}}^{%
\alpha -\frac{n}{p_{2}}+\frac{n}{p_{1}},q}
\end{equation*}%
holds.
\end{proposition}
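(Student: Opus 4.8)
The plan is to treat the two assertions separately.

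For part (ii) I would reduce everything to a single local estimate on one annulus: for any measurable set $E\subset\mathbb{R}^n$ of finite measure, any $g$ supported in $E$, and any $0<p_2<p_1<\infty$, $0<r_1,r_2\le\infty$, one has $\|g\|_{L^{p_2,r_2}}\lesssim|E|^{1/p_2-1/p_1}\|g\|_{L^{p_1,r_1}}$ with constant depending only on $p_1,p_2,r_2$. I would prove this by first using $L^{p_1,r_1}\hookrightarrow L^{p_1,\infty}$ (Proposition \ref{Holder's-convolution-lorentz}\,(iv)), then the pointwise bound $g^{\ast}(t)\le t^{-1/p_1}\|g\|_{L^{p_1,\infty}}$ together with the fact that $g^{\ast}$ vanishes on $(|E|,\infty)$, so that $\int_0^{|E|}\big(t^{1/p_2}g^{\ast}(t)\big)^{r_2}\tfrac{dt}{t}\le\|g\|_{L^{p_1,\infty}}^{r_2}\int_0^{|E|}t^{r_2(1/p_2-1/p_1)}\tfrac{dt}{t}$; the exponent $r_2(1/p_2-1/p_1)$ is positive, the integral converges and equals a constant times $|E|^{r_2(1/p_2-1/p_1)}$, with the obvious $\sup$-modification when $r_2=\infty$. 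Applying this with $E=R_k$, $g=f\chi_k$ and $|R_k|\approx2^{kn}$ gives $\|f\chi_k\|_{L^{p_2,r_2}}\lesssim2^{kn(1/p_2-1/p_1)}\|f\chi_k\|_{L^{p_1,r_1}}$ uniformly in $k\in\mathbb{Z}$. Substituting this into the definition of the $\dot{K}^{\alpha-n/p_2+n/p_1,q}_{p_2,r_2}$ quasi-norm, the surplus powers combine as $2^{k(\alpha-n/p_2+n/p_1)q}\,2^{knq(1/p_2-1/p_1)}=2^{k\alpha q}$, and summing over $k$ (taking the supremum if $q=\infty$) yields $\|f\|_{\dot{K}^{\alpha-n/p_2+n/p_1,q}_{p_2,r_2}}\lesssim\|f\|_{\dot{K}^{\alpha,q}_{p_1,r_1}}$. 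That $f$ lies in $L^{p_2,r_2}_{\mathrm{loc}}(\mathbb{R}^n\setminus\{0\})$ is clear since a compact subset of $\mathbb{R}^n\setminus\{0\}$ meets only finitely many $R_k$.

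For part (i), the inclusion $\dot{K}^{\alpha,q}_{p,r}\cap L^{p,r}\hookrightarrow K^{\alpha,q}_{p,r}$ is immediate: by monotonicity of the Lorentz quasi-norm $\|f\chi_{B_0}\|_{L^{p,r}}\le\|f\|_{L^{p,r}}$, and $\big(\sum_{k\ge1}2^{k\alpha q}\|f\chi_k\|_{L^{p,r}}^q\big)^{1/q}\le\|f\|_{\dot{K}^{\alpha,q}_{p,r}}$. For the reverse inclusion I would bound $\|f\|_{\dot{K}^{\alpha,q}_{p,r}}$ and $\|f\|_{L^{p,r}}$ separately by $\|f\|_{K^{\alpha,q}_{p,r}}$. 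Splitting $\sum_{k\in\mathbb{Z}}=\sum_{k\le0}+\sum_{k\ge1}$ and using $R_k\subset B_0$ for $k\le0$, monotonicity gives $\|f\chi_k\|_{L^{p,r}}\le\|f\chi_{B_0}\|_{L^{p,r}}$, and since $\alpha>0$ the series $\sum_{k\le0}2^{k\alpha q}$ converges, whence $\sum_{k\le0}2^{k\alpha q}\|f\chi_k\|_{L^{p,r}}^q\lesssim\|f\chi_{B_0}\|_{L^{p,r}}^q$ (with the obvious modification for $q=\infty$); this yields $\|f\|_{\dot{K}^{\alpha,q}_{p,r}}\lesssim\|f\|_{K^{\alpha,q}_{p,r}}$. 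For the $L^{p,r}$-norm I would use the quasi-triangle inequality $\|f\|_{L^{p,r}}\lesssim\|f\chi_{B_0}\|_{L^{p,r}}+\|f\chi_{\mathbb{R}^n\setminus B_0}\|_{L^{p,r}}$ and estimate the tail by the standard disjoint-support inequality in Lorentz spaces: for $g_k$ with pairwise disjoint supports, $\big\|\sum_k g_k\big\|_{L^{p,r}}\lesssim\big(\sum_k\|g_k\|_{L^{p,r}}^{t}\big)^{1/t}$ with $t=\min(p,r)$ (a consequence of $m_{\sum_k g_k}=\sum_k m_{g_k}$ and the discrete description of $\|\cdot\|_{L^{p,r}}$). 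Applying this to $g_k=f\chi_k$, $k\ge1$, and then using $\|f\chi_k\|_{L^{p,r}}\le2^{-k\alpha}\big(\sum_{j\ge1}2^{j\alpha q}\|f\chi_j\|_{L^{p,r}}^q\big)^{1/q}$ together with $\alpha>0$ (so that $\sum_{k\ge1}2^{-k\alpha t}<\infty$), one obtains $\|f\chi_{\mathbb{R}^n\setminus B_0}\|_{L^{p,r}}\lesssim\big(\sum_{k\ge1}2^{k\alpha q}\|f\chi_k\|_{L^{p,r}}^q\big)^{1/q}\le\|f\|_{K^{\alpha,q}_{p,r}}$, as required.

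I expect the only genuinely non-routine ingredient to be the disjoint-support summation inequality in the Lorentz quasi-norm needed in (i), together with the care required so that the various geometric-series estimates (all of which exploit $\alpha>0$) and the degenerate cases $q=\infty$ and $r=\infty$ are handled uniformly. Part (ii) is then essentially bookkeeping, once the finite-measure embedding $L^{p_1,r_1}(E)\hookrightarrow L^{p_2,r_2}(E)$ with constant $\approx|E|^{1/p_2-1/p_1}$ is established.
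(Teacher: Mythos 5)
Your argument is correct and follows essentially the same route as the paper: part (i) is proved exactly as in the paper (trivial forward inclusion; for the converse, the $k\le 0$ terms are absorbed via $R_k\subset B_0$ and the geometric series from $\alpha>0$, and the tail $\|f\chi_{\mathbb{R}^n\setminus B_0}\|_{L^{p,r}}$ is controlled by the disjoint-support summation inequality in $L^{p,r}$, which the paper imports from Seeger--Trebels with exponent $\tau<\min(1,p,r)$ and you justify directly with exponent $\min(p,r)$). For part (ii) the only difference is cosmetic: you derive the annulus estimate $\|f\chi_k\|_{L^{p_2,r_2}}\lesssim 2^{kn(1/p_2-1/p_1)}\|f\chi_k\|_{L^{p_1,r_1}}$ by a direct computation with the decreasing rearrangement, whereas the paper obtains the same bound via O'Neil's H\"older inequality against $\|\chi_k\|$ and \eqref{est-function1}; both reduce to the identical local embedding, so the proofs coincide in substance.
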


\begin{proof}
We proceed in two steps.

\textit{Step 1. Proof of }$\mathrm{(i)}$. Let $f\in \dot{K}_{p,r}^{\alpha
,q}\cap L^{p,r}$. Obviously 
\begin{equation*}
\big\|f\chi _{B_{0}}\big\|_{{L}^{p,r}}\leq \big\|f\big\|_{{L}^{p,r}}\quad 
\text{and}\quad \sum_{k=1}^{\infty }2^{k\alpha q}\big\|f\chi _{k}\big\|_{{L}%
^{p,r}}^{q}\leq \big\|f\big\|_{\dot{K}_{p,r}^{\alpha ,q}}^{q}.
\end{equation*}%
Hence, 
\begin{equation*}
\big\|f\big\|_{K_{p,r}^{\alpha ,q}}\leq \big\|f\big\|_{\dot{K}_{p,r}^{\alpha
,q}\cap L^{p,r}}.
\end{equation*}%
Now, let $f\in K_{p,r}^{\alpha ,q}$. Since\ $R_{k}\subset B_{0},k\in \mathbb{%
Z}\backslash \mathbb{N}$ and $\alpha >0$,\ we obtain

\begin{equation*}
\sum_{k=-\infty }^{0}2^{k\alpha q}\big\|f\chi _{k}\big\|_{{L}^{p,r}}^{q}\leq
\sum_{k=-\infty }^{0}2^{k\alpha q}\big\|f\chi _{B_{0}}\big\|_{{L}%
^{p,r}}^{q}\lesssim \big\|f\chi _{B_{0}}\big\|_{{L}^{p,r}}^{q}\leq \big\|f%
\big\|_{K_{p,r}^{\alpha ,q}}^{q}.
\end{equation*}%
Therefore $f\in \dot{K}_{p,r}^{\alpha ,q}$\ and 
\begin{equation*}
\big\|f\big\|_{\dot{K}_{p,r}^{\alpha ,q}}\lesssim \big\|f\big\|%
_{K_{p,r}^{\alpha ,q}}.
\end{equation*}%
We will prove that $f\in L^{p,r}$. Observe that 
\begin{equation*}
\big\|f\big\|_{{L}^{p,r}}\lesssim \big\|f\chi _{B_{0}}\big\|_{{L}^{p,r}}+%
\big\|f\chi _{\mathbb{R}^{n}\setminus B_{0}}\big\|_{{L}^{p,r}}.
\end{equation*}%
Let $0<\tau <\min (1,p,r)$. By \cite[(19)]{ST19}, we have 
\begin{align*}
\big\|f\chi _{\mathbb{R}^{n}\setminus B_{0}}\big\|_{{L}^{p,r}}\lesssim & %
\Big(\sum_{k=1}^{\infty }\big\|f\chi _{k}\big\|_{{L}^{p,r}}^{\tau }\Big)%
^{1/\tau } \\
\lesssim & \sup_{k\in \mathbb{N}}\big(2^{k\alpha }\big\|f\chi _{k}\big\|_{{L}%
^{p,r}}\big),
\end{align*}%
since $\alpha >0$. Consequently%
\begin{equation*}
\big\|f\big\|_{\dot{K}_{p}^{\alpha ,q}\cap {L}^{p,r}}\lesssim \big\|f\big\|%
_{K_{p,r}^{\alpha ,q}}.
\end{equation*}%
This estimate gives the desired result.

\textit{Step 2. Proof of }$\mathrm{(ii)}$. Let $\frac{1}{p_{2}}=\frac{1}{%
p_{1}}+\frac{1}{v},0<v<\infty $. By H\"{o}lder's inequality and %
\eqref{est-function1}, we obtain 
\begin{align*}
\big\|f\chi _{k}\big\|_{{L}^{p_{2},r_{2}}}& \lesssim \big\|f\chi _{k}\big\|%
_{L^{p_{1},\infty }}\big\|\chi _{k}\big\|_{L^{v,r_{1}}} \\
& \lesssim 2^{(\frac{n}{p_{2}}-\frac{n}{p_{1}})k}\big\|f\chi _{k}\big\|_{{L}%
^{p_{1},r_{1}}}
\end{align*}%
for any $k\in \mathbb{Z}$, where the implicit constant is independent of $k$
. This estimate yields the desired embeddings. This finishes the proof.
\end{proof}

Let $V_{\alpha ,p,r,q}$ be the set of $(\alpha ,p,r,q)\in \mathbb{R}\times
(1,\infty )^{2}\times \lbrack 1,\infty ]$\ such that:

\textbullet\ $\alpha <n-\frac{n}{p}$, $1<r,p<\infty $ and $1\leq q\leq
\infty ,$

\textbullet\ $\alpha =n-\frac{n}{p}$, $1<r,p<\infty \ $and$\ q=1,$

The next lemma gives a necessary and sufficient condition on the parameters $%
\alpha $, $p,r$ and $q$, in order to make sure that 
\begin{equation*}
\langle T_{f},\varphi \rangle =\int_{\mathbb{R}^{n}}f(x)\varphi (x)dx,\quad
\varphi \in \mathcal{D}(\mathbb{R}^{n}),f\in \dot{K}_{p,r}^{\alpha ,q}
\end{equation*}%
generates a regular distribution $T_{f}\in \mathcal{D}^{\prime }({\mathbb{R}%
^{n}})$.

\begin{lemma}
\label{regular-distribution-lorentz}Let $1<r,p<\infty ,1\leq q\leq \infty $
and $\alpha \in \mathbb{R}$. Then 
\begin{equation*}
\dot{K}_{p,r}^{\alpha ,q}\hookrightarrow \mathcal{D}^{\prime }(\mathbb{R}%
^{n}),
\end{equation*}%
holds if and only if $(\alpha ,p,r,q)\in V_{\alpha ,p,r,q}.$
\end{lemma}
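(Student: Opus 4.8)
The plan is to prove the embedding $\dot{K}_{p,r}^{\alpha,q}\hookrightarrow \mathcal{D}'(\mathbb{R}^n)$ by testing against a generic $\varphi\in\mathcal{D}(\mathbb{R}^n)$, decomposing the integral dyadically, and estimating $\big\|\varphi\chi_k\big\|$ in the appropriate dual Lorentz space; the necessity direction will be handled by producing explicit counterexamples in $\dot{K}_{p,r}^{\alpha,q}$ whose integral against a fixed test function diverges. For the sufficiency direction, given $f\in\dot{K}_{p,r}^{\alpha,q}$ and $\varphi\in\mathcal{D}(\mathbb{R}^n)$, I would write
\[
\Big|\int_{\mathbb{R}^n}f(x)\varphi(x)\,dx\Big|\leq \sum_{k\in\mathbb{Z}}\int_{\mathbb{R}^n}|f(x)\chi_k(x)||\varphi(x)|\,dx,
\]
and apply Hölder's inequality in Lorentz spaces (Proposition~\ref{Holder's-convolution-lorentz}/(v) with exponents $(p,r)$ and $(p',r')$, where $\frac1p+\frac1{p'}=1$, $\frac1r+\frac1{r'}=1$) to get $\int |f\chi_k||\varphi|\lesssim \big\|f\chi_k\big\|_{L^{p,r}}\big\|\varphi\chi_k\big\|_{L^{p',r'}}$. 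Since $r>1$ and $p>1$ this is legitimate. Then I insert $2^{k\alpha}$ and its reciprocal and apply Hölder in the sequence space $\ell^q$–$\ell^{q'}$:
\[
\sum_{k\in\mathbb{Z}}\big\|f\chi_k\big\|_{L^{p,r}}\big\|\varphi\chi_k\big\|_{L^{p',r'}}\leq \big\|f\big\|_{\dot{K}_{p,r}^{\alpha,q}}\Big(\sum_{k\in\mathbb{Z}}2^{-k\alpha q'}\big\|\varphi\chi_k\big\|_{L^{p',r'}}^{q'}\Big)^{1/q'},
\]
so it suffices to show the last factor is finite, i.e.\ that $\varphi$ lies in the dual space $\dot{K}_{p',r'}^{-\alpha,q'}$ (with the obvious $\sup$ modification if $q=1$, $q'=\infty$).

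To control $\big\|\varphi\chi_k\big\|_{L^{p',r'}}$ I would split into $k\leq 0$ and $k>0$. For $k>0$, using $\big\|\varphi\chi_k\big\|_{L^{p',r'}}\leq \big\|\varphi\chi_k\big\|_\infty\big\|\chi_{B_k}\big\|_{L^{p',r'}}\lesssim 2^{kn/p'}\sup_{|x|\geq 2^{k-1}}|\varphi(x)|$ together with the rapid decay of $\varphi\in\mathcal{S}(\mathbb{R}^n)$ gives, for any large $M$, a bound $\lesssim 2^{kn/p'}2^{-kM}$, so $\sum_{k>0}2^{-k\alpha q'}\big\|\varphi\chi_k\big\|_{L^{p',r'}}^{q'}<\infty$ unconditionally (no restriction on $\alpha$ here). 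For $k\leq 0$, $\big\|\varphi\chi_k\big\|_{L^{p',r'}}\lesssim \big\|\varphi\big\|_\infty \big\|\chi_{R_k}\big\|_{L^{p',r'}}\approx 2^{kn/p'}$ (using \eqref{est-function1}), hence $\sum_{k\leq 0}2^{-k\alpha q'}2^{kn q'/p'}<\infty$ requires $\frac{n}{p'}-\alpha>0$, that is $\alpha<\frac{n}{p'}=n-\frac{n}{p}$; in the boundary case $\alpha=n-\frac{n}{p}$ the series $\sum_{k\leq 0}1$ diverges, but if $q=1$ (so $q'=\infty$) the relevant quantity is $\sup_{k\leq 0}2^{k(n/p'-\alpha)}=\sup_{k\leq 0}1<\infty$. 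This is exactly the condition $(\alpha,p,r,q)\in V_{\alpha,p,r,q}$.

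For necessity, I would assume $(\alpha,p,r,q)\notin V_{\alpha,p,r,q}$ and exhibit $f\in\dot{K}_{p,r}^{\alpha,q}$ that does not define a distribution — concretely, reuse the test functions $f_N=\sum_{j=1}^N 2^{-(\alpha+n/p)j}\chi_j$ from the proof of Lemma~\ref{embeddings1-lorentz}, or rather the corresponding function supported near the origin, $g=\sum_{j\leq 0}a_j\chi_j$ with $a_j$ chosen so that $\sum_{j\leq 0}2^{j\alpha q}\big\|a_j\chi_j\big\|_{L^{p,r}}^q=\sum_{j\leq 0}|a_j|^q 2^{j(\alpha+n/p)q}<\infty$ while $\int_{B_0}|g(x)|\,dx=\sum_{j\leq 0}|a_j||R_j|\approx\sum_{j\leq 0}|a_j|2^{jn}=\infty$; such a choice exists precisely when $\alpha>n-\frac{n}{p}$, or when $\alpha=n-\frac np$ and $q>1$ (here one takes $a_j=2^{-j(\alpha+n/p)}|j|^{-\beta}$ for a suitable $\beta\in(1/q,1]$). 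Then for any $\varphi\in\mathcal{D}$ with $\varphi\equiv 1$ near $0$, $\langle T_g,\varphi\rangle$ is not well-defined as an absolutely convergent integral; a small additional argument (choosing signs, or noting $g\geq 0$ and $\varphi\geq 0$ near origin) shows $T_g\notin\mathcal{D}'$.

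The main obstacle I anticipate is the endpoint bookkeeping: making the $q=1$ versus $q>1$ dichotomy at $\alpha=n-\frac np$ rigorous in both directions, and in the necessity direction constructing a counterexample that genuinely fails to be a distribution (not merely failing absolute convergence against one test function) — one must be careful that membership in $\mathcal{D}'$ only requires continuity on $\mathcal{D}$, so the cleanest route is to take nonnegative $g$ and test against a nonnegative bump, where absolute divergence of the integral does force $T_g\notin\mathcal{D}'$. The Lorentz-space norm computations themselves are routine given \eqref{est-function1}, \eqref{dilation-lorentz} and Proposition~\ref{Holder's-convolution-lorentz}.
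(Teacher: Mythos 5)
Your proposal is correct and follows essentially the same route as the paper: sufficiency via Hölder's inequality in Lorentz spaces on the dyadic shells together with \eqref{est-function1} (the paper bounds $\big\|f\big\|_{L^{1}(B(0,2^{N}))}$ directly and sums a geometric series, which is your $\ell^{q}$--$\ell^{q'}$ step in disguise), and necessity via a nonnegative function concentrated at the origin with a logarithmic correction at the endpoint $\alpha=n-\frac{n}{p}$, your discrete sum $\sum_{j\leq 0}a_{j}\chi_{j}$ being exactly the shell-wise version of the paper's examples $|x|^{-n}\chi_{0<|\cdot|<1}$ and $|x|^{-n}(|\log|x||)^{-1}\chi_{0<|\cdot|<\frac{1}{2}}$. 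The endpoint bookkeeping you flag ($q=1$ versus $q>1$ at $\alpha=n-\frac{n}{p}$) works out precisely as you describe, so no gap remains.
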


\begin{proof}
The proof is a slight variant of \cite{Dr-Sobolev}. For the convenience of
the reader, we give some details. We divide the proof into two steps.

\textit{Step 1.} Assume that $(\alpha ,p,r,q)\in V_{\alpha ,p,r,q}$, $f\in 
\dot{K}_{p,r}^{\alpha ,q}$ and $B(0,2^{N})\subset \mathbb{R}^{n},N\in 
\mathbb{Z}$. By similarity we only consider the case $\alpha <n-\frac{n}{p}$%
, $1<r,p<\infty $ and $1\leq q\leq \infty $. H\"{o}lder's inequality and %
\eqref{est-function1} give 
\begin{align*}
\big\|f\big\|_{L^{1}(B(0,2^{N}))}=& \sum_{i=-\infty }^{N}\big\|f\chi _{i}%
\big\|_{1} \\
\lesssim & \sum_{i=-\infty }^{N}\big\|f\chi _{i}\big\|_{L^{p,r}}\big\|\chi
_{i}\big\|_{L^{p^{\prime },r^{\prime }}} \\
=& c2^{N(n-\frac{n}{p}-\alpha )}\sum_{i=-\infty }^{N}2^{(i-N)(n-\frac{n}{p}%
-\alpha )}2^{i\alpha }\big\|f\chi _{i}\big\|_{L^{p,r}} \\
\lesssim & 2^{N(n-\frac{n}{p}-\alpha )}\big\|f\big\|_{\dot{K}_{p,r}^{\alpha
,q}}.
\end{align*}

\textit{Step 2.} Assume that $(\alpha ,p,r,q)\notin V_{\alpha ,p,r,q}$. We
distinguish two cases.

\textit{Case 1.} $\alpha >n-\frac{n}{p}$. We set $f(x)=|x|^{-n}\chi
_{0<|\cdot |<1}(x)$. We obtain $f\in \dot{K}_{p,r}^{\alpha ,q}$ for any $%
1<r,p<\infty $ and $1\leq q\leq \infty $\ whereas $f\notin L_{\mathrm{loc}%
}^{1}(\mathbb{R}^{n})$. Indeed, by \eqref{est-function1}, we find 
\begin{align*}
\big\|f\big\|_{\dot{K}_{p,r}^{\alpha ,q}}^{q}& =\sum_{k\in \mathbb{Z}%
,2^{k}<2}2^{k\alpha q}\big\|f\chi _{k}\big\|_{L^{p,r}}^{q} \\
& \lesssim \sum_{k\in \mathbb{Z},2^{k}<2}2^{k(\alpha -n)q}\big\|\chi
_{k}\chi _{0<|\cdot |<1}\big\|_{L^{p,r}}^{q} \\
& \lesssim \sum_{k\in \mathbb{Z},2^{k}<2}2^{k(\alpha -n+\frac{n}{p})q} \\
& <\infty ,
\end{align*}%
with the usual modification if $q=\infty $. Obviously, $f\notin L_{\mathrm{%
loc}}^{1}(\mathbb{R}^{n})$.

\textit{Case 2.} $\alpha =n-\frac{n}{p}$, $1<r,p<\infty $ and $1<q\leq
\infty $. We consider the function $f$ defined by 
\begin{equation*}
f(x)=|x|^{-n}(|\log |x||)^{-1}\chi _{0<|\cdot |<\frac{1}{2}}(x).
\end{equation*}%
An easy computation yields that 
\begin{equation*}
\big\|f\big\|_{\dot{K}_{p,r}^{n-\frac{n}{p},q}}^{q}\lesssim
\sum_{k=1}^{\infty }k^{-q}<\infty ,
\end{equation*}%
which gives that $f\in \dot{K}_{p,r}^{n-\frac{n}{p},q}$, with the usual
modifications when $q=\infty $. It is easily seen that $f$ does not belong
to $L_{\mathrm{loc}}^{1}(\mathbb{R}^{n})$. The lemma is now proved.
\end{proof}

We collect some assertions which will be of some use for us. If $x\in 
\mathbb{R}^{n}$ and $R,N>0$, then we put $\eta _{R,N}(x)=R^{n}(1+R\left\vert
x\right\vert )^{-N}$.

\begin{lemma}
\label{est-eta}\textit{Let }$R>0,0<p<\infty $ and $0<r\leq \infty $\textit{.
Then there exists a constant }$c>0$ \textit{independent }$R$\textit{\ such
that }for any $N>\frac{n}{p}$ we have%
\begin{equation}
\big\|\eta _{R,N}\big\|_{L^{p,r}}\leq cR^{n-\frac{n}{p}}.  \label{est-teta}
\end{equation}
\end{lemma}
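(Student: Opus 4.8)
\textbf{Proof plan for Lemma \ref{est-eta}.}
The estimate \eqref{est-teta} is scale-invariant, so the plan is to first reduce to the case $R=1$ by a dilation argument and then compute the Lorentz quasi-norm of $\eta_{1,N}$ directly. Explicitly, since $\eta_{R,N}(x)=R^n(1+R|x|)^{-N}=R^n\,\eta_{1,N}(Rx)$, the dilation identity \eqref{dilation-lorentz} gives
\begin{equation*}
\big\|\eta_{R,N}\big\|_{L^{p,r}}=R^n\big\|\eta_{1,N}(R\,\cdot)\big\|_{L^{p,r}}=R^n\cdot R^{-n/p}\big\|\eta_{1,N}\big\|_{L^{p,r}}=R^{n-n/p}\big\|\eta_{1,N}\big\|_{L^{p,r}},
\end{equation*}
so it suffices to show $\big\|\eta_{1,N}\big\|_{L^{p,r}}\le c<\infty$ for every $N>n/p$, with $c$ depending only on $n,p,r,N$ but not on $R$ (which is now irrelevant).

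For the case $R=1$, I would compute the non-increasing rearrangement of $\eta_{1,N}(x)=(1+|x|)^{-N}$. This function is radial and radially decreasing, so its distribution function is easy: for $0<\lambda\le 1$, the set $\{(1+|x|)^{-N}>\lambda\}$ is the ball $\{|x|<\lambda^{-1/N}-1\}$, whose measure is $c_n(\lambda^{-1/N}-1)^n$, while for $\lambda>1$ the set is empty. Inverting, one finds $\eta_{1,N}^*(t)\approx (1+t^{1/n})^{-N}$ for $t>0$ (more precisely $\eta_{1,N}^*(t)=(1+(t/c_n)^{1/n})^{-N}$), which behaves like $1$ for $t\lesssim 1$ and like $t^{-N/n}$ for $t\gtrsim 1$. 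Plugging this into the Lorentz quasi-norm, for $0<r<\infty$,
\begin{equation*}
\big\|\eta_{1,N}\big\|_{L^{p,r}}^r\approx\int_0^\infty t^{r/p}\big(1+t^{1/n}\big)^{-Nr}\,\frac{dt}{t}\approx\int_0^1 t^{r/p-1}\,dt+\int_1^\infty t^{r/p-Nr/n-1}\,dt.
\end{equation*}
The first integral converges since $r/p>0$, and the second converges precisely when $r/p-Nr/n<0$, i.e. $N>n/p$, which is exactly the hypothesis; the case $r=\infty$ is handled analogously by taking $\sup_{t>0}t^{1/p}(1+t^{1/n})^{-N}$, which is finite under the same condition $N>n/p$. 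This yields the desired bound $\big\|\eta_{1,N}\big\|_{L^{p,r}}\le c$.

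There is no serious obstacle here; the only point requiring a little care is the explicit identification of the rearrangement $\eta_{1,N}^*$ and the bookkeeping of the convergence condition on the tail integral, which must come out to be exactly $N>n/p$. An alternative that avoids computing the rearrangement explicitly is to use \eqref{est-function1} and a dyadic decomposition: write $\eta_{1,N}\lesssim\sum_{j\ge 0}2^{-jN}\chi_{B(0,2^{j+1})}$, estimate each $\big\|\chi_{B(0,2^{j+1})}\big\|_{L^{p,r}}\approx 2^{j n/p}$ via \eqref{est-function1}, and sum the geometric-type series $\sum_{j\ge 0}2^{-jN}2^{jn/p}=\sum_{j\ge 0}2^{-j(N-n/p)}<\infty$, using the quasi-triangle inequality in $L^{p,r}$ (with a preliminary $\tau$-th power trick if $p$ or $r$ is less than $1$, as in the proof of Proposition \ref{embeddings1 copy(1)-lorentz}). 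Either route is routine; I would present the dilation reduction followed by whichever of the two computations is shorter.
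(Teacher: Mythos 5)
Your proposal is correct and follows essentially the same route as the paper, which likewise computes the non-increasing rearrangement $(\eta_{R,N})^{\ast}(t)=R^{n}(1+Rt^{1/n})^{-N}$ (up to dimensional constants) and concludes from $N>\frac{n}{p}$ that the Lorentz quasi-norm integral converges with the stated power of $R$. Your preliminary dilation reduction to $R=1$ via \eqref{dilation-lorentz} is just a cosmetic reorganization of the same computation, and both your main argument and the dyadic alternative are sound.
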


\begin{proof}
Simple calculation yields that%
\begin{equation*}
(\eta _{R,N})^{\ast }(t)=R^{n}(1+Rt^{1/n})^{-N},\quad t>0.
\end{equation*}%
Since $N>\frac{n}{p}$, we obtain the desired conclusion \eqref{est-teta}.
\end{proof}

\begin{lemma}
\label{Lp,r-estimate}L\textit{et }$0<p<\infty ,0<q,\beta \leq \infty
,0<r_{0}\leq r_{1}\leq \infty $\ and\ $\alpha \in \mathbb{R}$. Assume that $%
p\neq \beta $ or $p=\beta \geq r_{0}$. Then%
\begin{equation}
\Big\|\Big(\sum\limits_{j=0}^{\infty }|f_{j}|^{\beta }\Big)^{1/\beta }\Big\|%
_{\dot{K}_{p,r_{1}}^{\alpha ,q}}\lesssim \Big(\sum\limits_{j=0}^{\infty }%
\big\|f_{j}\big\|_{\dot{K}_{p,r_{0}}^{\alpha ,q}}^{\tau }\Big)^{1/\tau }
\label{lp,r-estimate}
\end{equation}%
for any $0<\tau \leq \min (p,r_{1},q,\beta )$, whenever the right-hand side
of \eqref{lp,r-estimate} is finite.
\end{lemma}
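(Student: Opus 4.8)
The plan is to reduce the vector-valued inequality \eqref{lp,r-estimate} to the scalar mixed-norm estimate already encoded in Lemma~\ref{Lp,r-estimate} for a single function together with a $\tau$-triangle-inequality argument in $\ell^\beta$. Concretely, I would first fix $0<\tau\le\min(p,r_1,q,\beta)$ and observe that since $\tau\le\beta$ we have the elementary inequality
\begin{equation*}
\Big(\sum_{j=0}^{\infty}|f_j|^{\beta}\Big)^{\tau/\beta}\le\sum_{j=0}^{\infty}|f_j|^{\tau}
\end{equation*}
pointwise. Raising to power $1/\tau$ and taking the $\dot K_{p,r_1}^{\alpha,q}$-quasi-norm, it suffices to bound $\big\|\big(\sum_j|f_j|^{\tau}\big)^{1/\tau}\big\|_{\dot K_{p,r_1}^{\alpha,q}}$. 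Using Lemma~\ref{embeddings1-lorentz}/(iv) with the exponent $\tau$, this equals $\big\|\sum_j|f_j|^{\tau}\big\|_{\dot K_{p/\tau,\,r_1/\tau}^{\alpha/\tau,\,q/\tau}}^{1/\tau}$, so the problem becomes: prove that $\dot K_{p/\tau,\,r_1/\tau}^{\alpha/\tau,\,q/\tau}$ obeys the ordinary triangle inequality (i.e.\ is a norm, up to constants) on nonnegative functions, with the $r_0\le r_1$ relaxation built in.

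Second, I would establish that triangle inequality. Write $g_j=|f_j|^{\tau}$, so $g_j\ge 0$, and set $P=p/\tau\ge 1$, $Q=q/\tau\ge 1$, $R_1=r_1/\tau\ge 1$, $R_0=r_0/\tau$. For each dyadic annulus $k\in\mathbb Z$ we have, by the embedding $L^{P,R_0}\hookrightarrow L^{P,R_1}$ (Proposition~\ref{Holder's-convolution-lorentz}/(iv), valid since $R_0\le R_1$) and the fact that $L^{P,R_0}$ is a normed space when $P>1$ (or at least satisfies a triangle inequality with a uniform constant; here the hypothesis ``$p\neq\beta$ or $p=\beta\ge r_0$'' is exactly what guarantees $P>1$ or $P=1,R_0\le 1$, so that $L^{P,R_0}$ is a Banach space or $L^1$),
\begin{equation*}
\Big\|\Big(\sum_{j}g_j\Big)\chi_k\Big\|_{L^{p,r_1}}\le\Big\|\Big(\sum_{j}g_j\Big)\chi_k\Big\|_{L^{p,r_0}}\lesssim\sum_{j}\big\|g_j\chi_k\big\|_{L^{p,r_0}}.
\end{equation*}
Then I multiply by $2^{k\alpha}$, take the $\ell^{q}$-norm over $k$, and apply the triangle inequality in $\ell^{q}$ (valid since $q\ge\tau\ge 1$ after the rescaling, i.e.\ $Q\ge 1$) to pull the sum over $j$ outside:
\begin{equation*}
\Big\|\Big(\sum_j g_j\Big)\Big\|_{\dot K_{p,r_1}^{\alpha,q}}\lesssim\sum_j\big\|g_j\big\|_{\dot K_{p,r_0}^{\alpha,q}}.
\end{equation*}
Undoing the substitution $g_j=|f_j|^{\tau}$ via Lemma~\ref{embeddings1-lorentz}/(iv) once more gives $\sum_j\|g_j\|_{\dot K_{p,r_0}^{\alpha,q}}=\sum_j\|f_j\|_{\dot K_{ps,rs}^{\cdot}}^{\tau}$ — more precisely $\big\||f_j|^{\tau}\big\|_{\dot K_{p,r_0}^{\alpha,q}}=\big\|f_j\big\|_{\dot K_{p\tau,r_0\tau}^{\alpha/\tau,q\tau}}^{\tau}$; but the exponents here should be matched so that the final bound reads $\big(\sum_j\|f_j\|_{\dot K_{p,r_0}^{\alpha,q}}^{\tau}\big)^{1/\tau}$, which is achieved by applying (iv) with the right scaling so that the inner space on the right of \eqref{lp,r-estimate} is exactly $\dot K_{p,r_0}^{\alpha,q}$. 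Chaining the three displays and taking $1/\tau$-th powers yields \eqref{lp,r-estimate}.

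The main obstacle is the bookkeeping around the condition ``$p\neq\beta$ or $p=\beta\ge r_0$'' and the choice of $\tau$: I must check that after dividing all exponents by $\tau$ the Lorentz space $L^{p/\tau,\,r_0/\tau}$ (equivalently $L^{p,r_0}$ raised to the $1/\tau$ power) genuinely satisfies a triangle inequality with constant independent of $k$ and of the number of terms. When $p>\beta$ one can pick $\tau<p$ freely and get $p/\tau>1$, so $L^{p/\tau,\cdot}$ is normable; when $p<\beta$ likewise $\tau\le p<\beta$ forces $\tau<\beta$ strictly only if $p<\beta$, and one still gets $p/\tau\ge 1$ with the borderline $p/\tau=1$ handled because then necessarily $\tau=p$ and one needs $r_0/\tau\le 1$, which is where the case $p=\beta\ge r_0$ (giving $\tau=p=\beta\ge r_0$) is used. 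The remaining routine points — the pointwise $\ell^\beta$–$\ell^\tau$ inequality, the scaling identity from Lemma~\ref{embeddings1-lorentz}/(iv), and the $\ell^q$ Minkowski inequality — are standard and I would state them without detailed computation.
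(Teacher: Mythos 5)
There is a genuine gap at the heart of your argument. After the pointwise reduction $\big(\sum_j|f_j|^{\beta}\big)^{1/\beta}\le\big(\sum_j|f_j|^{\tau}\big)^{1/\tau}$, everything hinges on the claim that the $\tau$-convexified Lorentz space $L^{p/\tau,\,r_0/\tau}$ satisfies a triangle inequality with a constant independent of the number of terms, and you assert that the hypothesis ``$p\neq\beta$ or $p=\beta\ge r_0$'' guarantees either $p/\tau>1$ or $p/\tau=1$ with $r_0/\tau\le1$. That is not so: the lemma is claimed for every $0<\tau\le\min(p,r_1,q,\beta)$, and when $p<\beta$ the hypothesis is vacuous, so the endpoint $\tau=p$ with $r_0>p$ is admissible (take $p=1$, $\beta=r_0=r_1=q=2$, $\tau=1$). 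There your argument needs $\big\|\sum_j g_j\big\|_{L^{1,2}}\lesssim\sum_j\|g_j\|_{L^{1,2}}$ uniformly in the number of (nonnegative) summands, which is false: $L^{1,\rho}$ with $\rho>1$ is not normable, and non-normability of a quasi-normed space is equivalent to failure of the uniform finite triangle inequality. No choice of intermediate exponent rescues this, since any $\sigma\in[\tau,\beta]$ either gives $p/\sigma<1$ or lands you again in a non-normable $L^{1,\rho}$, $\rho>1$. The pointwise passage from $\ell^{\beta}$ to $\ell^{\tau}$ discards exactly the cancellation/size gain that makes the true inequality hold when $p\neq\beta$; this is why the paper does not argue by convexity but instead reduces to $r_0=r_1$ and invokes the genuinely vector-valued Lorentz inequality of Seeger--Trebels, \cite[Proposition 4.1]{ST19}, on each annulus (that is the inequality \eqref{seeger}), reserving the elementary Minkowski-type step only for the outer $\ell^{q}$ sum over $k$. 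Note also that this endpoint $\tau=\min(p,r_1,q,\beta)$ is precisely what is used later (Theorem \ref{embeddings2-lorentz}), so it cannot be sacrificed.

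Two secondary points. First, even in the ``good'' regime your normability claim needs care: if $r_0<\tau$ then $L^{p/\tau,\,r_0/\tau}$ has second index below $1$ and again fails the uniform triangle inequality even for $p/\tau>1$; this is repairable by inserting the normed space $L^{p/\tau,1}$ between $L^{p/\tau,\,r_0/\tau}$ and $L^{p/\tau,\,r_1/\tau}$ (possible because $\tau\le r_1$), and similarly the case $p=\beta\ge r_0$, $\tau=p$, should be routed through $L^{1}$ rather than $L^{1,\,r_0/p}$. Second, a bookkeeping slip: by Lemma \ref{embeddings1-lorentz}(iv) with $s=1/\tau$ the rescaled Herz space is $\dot{K}_{p/\tau,\,r_1/\tau}^{\alpha\tau,\,q/\tau}$, not $\dot{K}_{p/\tau,\,r_1/\tau}^{\alpha/\tau,\,q/\tau}$; this does not affect the structure of the argument, but the main gap above does: your route proves the lemma only for $\tau<p$, or for $\tau=p$ under $r_0\le p$, and cannot reach the stated range without the \cite{ST19} input.
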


\begin{proof}
By the embedding ${L}^{p,r_{0}}\hookrightarrow {L}^{p,r_{1}}$, we only
consider the case $r_{0}=r_{1}$. Let\ $k\in \mathbb{Z}$. We have%
\begin{equation}
\Big\|\Big(\sum\limits_{j=0}^{\infty }|f_{j}|^{\beta }\Big)^{1/\beta }\chi
_{k}\Big\|_{{L}^{p,r_{1}}}\lesssim \Big(\sum\limits_{j=0}^{\infty }\big\|%
f_{j}\chi _{k}\big\|_{{L}^{p,r_{1}}}^{\tau }\Big)^{1/\tau },  \label{seeger}
\end{equation}%
see \cite[Proposition 4.1]{ST19}. The proof of \eqref{lp,r-estimate} follows
from the monotonicity in $q$ of the $\ell ^{q}$-norm. More precisely, by the
inequality 
\begin{equation*}
\Big\|\sum\limits_{j=0}^{\infty }g_{j}\Big\|_{\ell ^{\delta }}\leq \Big(%
\sum\limits_{j=0}^{\infty }\big\|g_{j}\big\|_{\ell ^{\delta }}^{v}\Big)%
^{1/v},\quad \{g_{j}\}_{j\in \mathbb{N}_{0}}\in \ell ^{\delta }
\end{equation*}%
for any $0<v\leq \min (1,\delta )$.
\end{proof}

\begin{lemma}
\label{Lp,r-estimate copy(1)}L\textit{et }$0<p<\infty ,0<q,\beta \leq \infty
,0<r_{0}\leq r_{1}\leq \infty $\ and\ $\alpha \in \mathbb{R}$. Assume that $%
p\neq \beta $ or $p=\beta \leq r_{1}$. Then%
\begin{equation}
\Big(\sum\limits_{j=0}^{\infty }\big\|f_{j}\big\|_{\dot{K}_{p,r_{1}}^{\alpha
,q}}^{\tau }\Big)^{1/\tau }\lesssim \Big\|\Big(\sum\limits_{j=0}^{\infty
}|f_{j}|^{\beta }\Big)^{1/\beta }\Big\|_{\dot{K}_{p,r_{0}}^{\alpha ,q}}
\label{lp,r-estimate1}
\end{equation}%
for any $\tau \geq \max (p,r_{0},q,\beta )$, whenever the right-hand side of %
\eqref{lp,r-estimate1} is finite.
\end{lemma}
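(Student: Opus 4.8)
The statement is the ``reverse'' counterpart of Lemma~\ref{Lp,r-estimate}, and the plan is to run the same scheme in the opposite direction. First I would dispose of the case $\tau=\infty$: then the left-hand side of \eqref{lp,r-estimate1} is $\sup_{j}\big\|f_{j}\big\|_{\dot{K}_{p,r_{1}}^{\alpha ,q}}$, and since $|f_{j}|\leq\big(\sum_{i}|f_{i}|^{\beta}\big)^{1/\beta}$ pointwise while $\dot{K}_{p,r_{1}}^{\alpha ,q}$ is a quasi-Banach ideal space, the estimate follows immediately from the monotonicity of the quasi-norm together with the embedding $\dot{K}_{p,r_{0}}^{\alpha ,q}\hookrightarrow\dot{K}_{p,r_{1}}^{\alpha ,q}$ (which comes from $L^{p,r_{0}}\hookrightarrow L^{p,r_{1}}$, Proposition~\ref{Holder's-convolution-lorentz}(iv)). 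Hence we may assume $\tau<\infty$, and since $\tau\geq q$ also $q<\infty$.

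The core is the following fixed-annulus inequality: for every $k\in\mathbb{Z}$,
\begin{equation*}
\Big(\sum_{j=0}^{\infty}\big\|f_{j}\chi_{k}\big\|_{L^{p,r_{1}}}^{\tau}\Big)^{1/\tau}\lesssim\Big\|\Big(\sum_{j=0}^{\infty}|f_{j}|^{\beta}\Big)^{1/\beta}\chi_{k}\Big\|_{L^{p,r_{0}}},
\end{equation*}
with implicit constant independent of $k$; this is precisely the reverse of the Fefferman--Stein type inequality \eqref{seeger} (see \cite[Proposition~4.1]{ST19}) exploited in Lemma~\ref{Lp,r-estimate}, and it holds in the stated range $\tau\geq\max(p,r_{0},q,\beta)$ under the dichotomy ``$p\neq\beta$ or $p=\beta\leq r_{1}$''. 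Granting it, the passage to the Lorentz--Herz norm is routine. Since $m:=\tau/q\geq1$, the triangle inequality in $\ell^{m}$ gives $\big(\sum_{j}(\sum_{k}a_{j,k})^{m}\big)^{1/m}\leq\sum_{k}\big(\sum_{j}a_{j,k}^{m}\big)^{1/m}$ with $a_{j,k}=2^{k\alpha q}\|f_{j}\chi_{k}\|_{L^{p,r_{1}}}^{q}$, that is,
\begin{equation*}
\Big(\sum_{j=0}^{\infty}\big\|f_{j}\big\|_{\dot{K}_{p,r_{1}}^{\alpha ,q}}^{\tau}\Big)^{q/\tau}\leq\sum_{k\in\mathbb{Z}}2^{k\alpha q}\Big(\sum_{j=0}^{\infty}\big\|f_{j}\chi_{k}\big\|_{L^{p,r_{1}}}^{\tau}\Big)^{q/\tau};
\end{equation*}
inserting the fixed-annulus inequality in each summand, summing over $k$ and recognising $\big\|(\sum_{j}|f_{j}|^{\beta})^{1/\beta}\big\|_{\dot{K}_{p,r_{0}}^{\alpha ,q}}^{q}$ yields \eqref{lp,r-estimate1} (for $q=\infty$ the $\ell^{m}$-step is replaced by the supremum over $k$).

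Thus the whole weight of the proof lies in the fixed-annulus inequality, which I expect to be the main obstacle. On $R_{k}$ one works with the decreasing rearrangement and the homogeneity identity $\big\|g\chi_{k}\big\|_{L^{p,r}}^{s}=\big\||g|^{s}\chi_{k}\big\|_{L^{p/s,r/s}}$ coming from \eqref{pro1-lorentz}; raising to a suitable power $s$ and using the embeddings $L^{p,r_{0}}\hookrightarrow L^{p,r_{1}}$ and $\ell^{\beta}\hookrightarrow\ell^{\tau}$ one reduces the claim to a superadditivity (reverse triangle) property of a Lorentz quasi-norm with small exponents for sums of non-negative functions. It is exactly here that the restriction ``$p=\beta\leq r_{1}$'' in the critical case $p=\beta$ is needed, and exactly here that one must track carefully the two indices $r_{0}\leq r_{1}$ and the precise admissible range of $\tau$; all of this is already contained in the Lorentz-space machinery of \cite[\S4]{ST19} (and mirrors the corresponding step in Lemma~\ref{Lp,r-estimate}), while the reduction to a single annulus and the summation in $k$ are elementary. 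Alternatively, in the reflexive range $1<p,q,r_{0},r_{1}<\infty$ one could derive \eqref{lp,r-estimate1} from Lemma~\ref{Lp,r-estimate} by duality of the Lorentz--Herz spaces, but the direct argument above is more economical and covers all the admissible parameters.
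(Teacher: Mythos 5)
Your proposal is correct and follows essentially the same route as the paper: after the reduction via $L^{p,r_{0}}\hookrightarrow L^{p,r_{1}}$, one interchanges the sums with Minkowski's inequality in $\ell^{\tau/q}$ (using $\tau\geq q$) and then applies the fixed-annulus Lorentz inequality, which is exactly \cite[Proposition 4.2]{ST19} cited in the paper rather than something you need to re-derive from the rearrangement machinery.
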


\begin{proof}
Again, by the embedding ${L}^{p,r_{0}}\hookrightarrow {L}^{p,r_{1}}$, we
only consider the case $r_{0}=r_{1}$. Let\ $k\in \mathbb{Z}$. Since\ $\tau
\geq q$, we obtain%
\begin{align*}
& \Big(\sum\limits_{j=0}^{\infty }\Big(\sum\limits_{k=-\infty }^{\infty
}2^{k\alpha q}\big\|f_{j}\chi _{k}\big\|_{{L}^{p,r_{1}}}^{q}\Big)^{\tau /q}%
\Big)^{1/\tau } \\
& \lesssim \Big(\sum\limits_{k=-\infty }^{\infty }2^{k\alpha q}\Big(%
\sum\limits_{j=0}^{\infty }\big\|f_{j}\chi _{k}\big\|_{{L}^{p,r_{1}}}^{\tau }%
\Big)^{q/\tau }\Big)^{1/q}.
\end{align*}%
To prove \eqref{lp,r-estimate1} we use the inequality%
\begin{equation*}
\Big(\sum\limits_{j=0}^{\infty }\big\|f_{j}\chi _{k}\big\|_{{L}%
^{p,r_{1}}}^{\tau }\Big)^{1/\tau }\lesssim \Big\|\Big(\sum\limits_{j=0}^{%
\infty }|f_{j}|^{\beta }\Big)^{1/\beta }\chi _{k}\Big\|_{{L}^{p,r_{0}}},
\end{equation*}%
see \cite[Proposition 4.2]{ST19}.
\end{proof}

We shall also need the following elementary fact.

\begin{lemma}
\label{Lp-estimate}Let $0<p\leq \infty $ and $f_{k}\in L_{\mathrm{loc}}^{p}(%
\mathbb{R}^{n}),k\in \mathbb{N}_{0}$.\ Then, for any $0<\tau \leq \min
(1,p), $%
\begin{equation*}
\Big\|\sum\limits_{k=0}^{\infty }f_{k}\Big\|_{p}\leq \Big(%
\sum\limits_{k=0}^{\infty }\big\|f_{k}\big\|_{p}^{\tau }\Big)^{\frac{1}{\tau 
}}.
\end{equation*}
\end{lemma}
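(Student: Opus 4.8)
The plan is to reduce the assertion to the ordinary triangle inequality in $L^{p/\tau}$, exploiting that raising to the power $\tau\le 1$ turns the left-hand side into something subadditive. The elementary fact I would start from is the pointwise inequality
\begin{equation*}
\Big|\sum_{k=0}^{\infty }a_{k}\Big|^{\tau }\leq \sum_{k=0}^{\infty }|a_{k}|^{\tau },\qquad 0<\tau \leq 1,
\end{equation*}
valid for any sequence of complex numbers (first for finite sums by the concavity/subadditivity of $t\mapsto t^{\tau }$ on $[0,\infty )$, then for the infinite sum by passing to the limit). If the right-hand side of the claimed inequality is infinite there is nothing to prove, so I would assume $\sum_{k}\big\|f_{k}\big\|_{p}^{\tau }<\infty $.

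Next I would treat the case $0<p<\infty $. Since $0<\tau \leq \min (1,p)$ we have $p/\tau \geq 1$, so $\big\|\cdot \big\|_{p/\tau }$ is a genuine norm and satisfies the (countable) triangle inequality; combining this with the pointwise bound above and with the identity $\big\||g|^{\tau }\big\|_{p/\tau }=\big\|g\big\|_{p}^{\tau }$ gives
\begin{equation*}
\Big\|\sum_{k=0}^{\infty }f_{k}\Big\|_{p}^{\tau }=\Big\|\Big|\sum_{k=0}^{\infty }f_{k}\Big|^{\tau }\Big\|_{p/\tau }\leq \Big\|\sum_{k=0}^{\infty }|f_{k}|^{\tau }\Big\|_{p/\tau }\leq \sum_{k=0}^{\infty }\big\||f_{k}|^{\tau }\big\|_{p/\tau }=\sum_{k=0}^{\infty }\big\|f_{k}\big\|_{p}^{\tau }.
\end{equation*}
Taking the power $1/\tau $ yields the desired estimate. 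The case $p=\infty $ (so $\tau \leq 1$) is handled in exactly the same way, replacing $\big\|\cdot \big\|_{p/\tau }$ by $\big\|\cdot \big\|_{\infty }$, which is again a norm with the countable triangle inequality, and using $\big\||g|^{\tau }\big\|_{\infty }=\big\|g\big\|_{\infty }^{\tau }$.

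The only point requiring a little care — the ``main obstacle,'' though it is routine — is the passage from finite to countably infinite sums: one must justify the countable triangle inequality in $L^{p/\tau }$ and the countable pointwise subadditivity. Both follow from the monotone convergence theorem applied to the increasing sequence of partial sums of $\sum_{k}|f_{k}|^{\tau }$, which simultaneously shows (since the final bound is finite) that $\sum_{k}f_{k}$ converges absolutely almost everywhere to a function in $L^{p}$, so that the left-hand side is meaningful. No further estimates are needed.
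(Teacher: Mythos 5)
Your proof is correct: the pointwise $\tau$-subadditivity $|\sum_k a_k|^{\tau}\le\sum_k|a_k|^{\tau}$, the identity $\||g|^{\tau}\|_{p/\tau}=\|g\|_{p}^{\tau}$, and the countable triangle inequality in the genuine norm $\|\cdot\|_{p/\tau}$ (via monotone convergence) give exactly the claimed bound, and your closing remark that $\ell^{\tau}\hookrightarrow\ell^{1}$ together with the finiteness of $\sum_k|f_k|^{\tau}$ a.e.\ justifies the almost everywhere absolute convergence of $\sum_k f_k$. The paper states this lemma without proof as an elementary fact, and your argument is the standard one, including the correct treatment of the case $p=\infty$.
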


We finish this section with the following Hardy-type inequality.

\begin{lemma}
\label{lem:lq-inequality}Let $0<a<1$ and $0<q\leq \infty $. Let $%
\{\varepsilon _{k}\}_{k\in \mathbb{Z}}$ be a sequence of positive real
numbers, such that 
\begin{equation*}
\big\|\{\varepsilon _{k}\}_{k\in \mathbb{Z}}\big\|_{\ell ^{q}}=I<\infty .
\end{equation*}%
Then the sequences 
\begin{equation*}
\big\{\delta _{k}:\delta _{k}=\sum_{j=-\infty }^{k}a^{k-j}\varepsilon _{j}%
\big\}_{k\in \mathbb{Z}}\quad \text{and}\quad \big\{\eta _{k}:\eta
_{k}=\sum_{j=k}^{\infty }a^{j-k}\varepsilon _{j}\big\}_{k\in \mathbb{Z}}
\end{equation*}%
belong to $\ell ^{q}$, and 
\begin{equation*}
\big\|\{\delta _{k}\}_{k\in \mathbb{Z}}\big\|_{\ell ^{q}}+\big\|\{\eta
_{k}\}_{k\in \mathbb{Z}}\big\|_{\ell ^{q}}\leq c\,I,
\end{equation*}%
with $c>0$ only depending on $a$ and $q$.
\end{lemma}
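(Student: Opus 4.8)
\textbf{Proof plan for Lemma~\ref{lem:lq-inequality}.} The plan is to prove the bound for $\{\delta_k\}$; the estimate for $\{\eta_k\}$ is entirely symmetric (replace $k-j$ by $j-k$ and reverse the direction of summation), so I would only remark on it at the end. The case $q=\infty$ is immediate: since $\varepsilon_j\le I$ for all $j$, one has $\delta_k\le I\sum_{j=-\infty}^{k}a^{k-j}=I\sum_{m=0}^{\infty}a^m=\frac{I}{1-a}$, so $c=\frac{1}{1-a}$ works. Similarly, the case $q\le 1$ is quick: writing $\delta_k^q\le\sum_{j=-\infty}^{k}a^{(k-j)q}\varepsilon_j^q$ by the $q$-subadditivity of $t\mapsto t^q$, summing over $k\in\mathbb{Z}$ and interchanging the order of summation gives $\sum_k\delta_k^q\le\big(\sum_{m=0}^{\infty}a^{mq}\big)\sum_j\varepsilon_j^q=\frac{1}{1-a^q}\,I^q$.

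For the main range $1<q<\infty$, the tool is a Minkowski/convolution argument. Write $\delta_k=\sum_{m=0}^{\infty}a^m\varepsilon_{k-m}$, i.e. $\{\delta_k\}$ is the discrete convolution of $\{\varepsilon_k\}$ with the summable sequence $\{a^m\mathbf{1}_{m\ge0}\}$. First I would apply the triangle inequality in $\ell^q$ (over the variable $k$) to the sum over $m$:
\begin{equation*}
\big\|\{\delta_k\}_k\big\|_{\ell^q}=\Big\|\sum_{m=0}^{\infty}a^m\{\varepsilon_{k-m}\}_k\Big\|_{\ell^q}\le\sum_{m=0}^{\infty}a^m\big\|\{\varepsilon_{k-m}\}_k\big\|_{\ell^q}.
\end{equation*}
Since $\ell^q$ is translation invariant, $\big\|\{\varepsilon_{k-m}\}_k\big\|_{\ell^q}=\big\|\{\varepsilon_k\}_k\big\|_{\ell^q}=I$ for every $m\ge0$, so the right-hand side equals $I\sum_{m=0}^{\infty}a^m=\frac{I}{1-a}$. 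This in fact gives the uniform constant $c=\frac{1}{1-a}$ for all $q\ge1$, and combined with the $q\le1$ estimate above one can take, say, $c=\frac{1}{1-a}+\big(\frac{1}{1-a^q}\big)^{1/q}$, or simply note the bound $\frac{1}{1-\min(a,a^q)^{\,}}$ type constant depending only on $a$ and $q$, as claimed.

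The only point requiring a little care — and the place I would expect a referee to look — is the legitimacy of interchanging $\sum_m$ and the $\ell^q$-norm (equivalently, the use of Minkowski's inequality for the countable sum): this is justified because all terms are nonnegative, so monotone convergence applies and no cancellation issue arises; one may first prove the estimate for the finite partial sums $\sum_{m=0}^{M}$ and then let $M\to\infty$ using the monotone convergence theorem in $\ell^q$. Once the $\ell^q$-bound on $\{\delta_k\}$ is established, finiteness of $\big\|\{\delta_k\}\big\|_{\ell^q}$ (hence $\{\delta_k\}\in\ell^q$, and in particular each $\delta_k<\infty$) is automatic. The argument for $\{\eta_k\}$ is obtained by the substitution $j\mapsto -j$, which turns the backward tail $\sum_{j=k}^{\infty}a^{j-k}\varepsilon_j$ into a forward tail of the reflected sequence $\{\varepsilon_{-j}\}$, whose $\ell^q$-norm is again $I$; adding the two bounds gives the stated inequality.
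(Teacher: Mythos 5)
Your argument is correct and complete: the $q=\infty$ and $0<q\le 1$ cases via direct estimation and $q$-subadditivity, and the case $q\ge 1$ via Minkowski's inequality together with translation invariance of $\ell^q$ (justified for the countable sum by nonnegativity and monotone convergence), yield the stated bound with a constant depending only on $a$ and $q$, and the $\{\eta_k\}$ part follows by the reflection $j\mapsto -j$ exactly as you say. The paper states this standard Hardy-type inequality without giving a proof, so there is nothing to compare against; your write-up is the usual textbook argument and would serve as a valid proof of the lemma.
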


\subsection{Maximal inequalities}

Various important results have been proved in Herz\ space $\dot{K}%
_{p}^{\alpha ,q}$ under some assumptions on $\alpha ,p$ and $q$. The
conditions $-\frac{n}{p}<\alpha <n(1-\frac{1}{p}),1<p<\infty $ and $0<q\leq
\infty $ is crucial in the study of the boundedness of classical operators
in $\dot{K}_{p}^{\alpha ,q}$ spaces. This fact was first realized by Li and
Yang \cite{LiYang96} with the proof of the boundedness of the maximal
function were the vector valued extension is given in \cite{TangYang2000}.
The aim is to extend some maximal inequalities\ to Lorentz-Herz spaces. Let
us recall the vector-valued maximal inequality in Lorentz spaces, \cite[%
Lemma 5.1]{ST19}.\ As usual, we put%
\begin{equation*}
\mathcal{M}(f)(x)=\sup_{B}\frac{1}{|B|}\int_{B}\left\vert f(y)\right\vert
dy,\quad f\in L_{\mathrm{loc}}^{1}(\mathbb{R}^{n}),
\end{equation*}%
where the supremum\ is taken over all balls of $\mathbb{R}^{n}$ and $x\in B$%
. Also we set 
\begin{equation*}
\mathcal{M}_{\sigma }(f)=\left( \mathcal{M}(\left\vert f\right\vert ^{\sigma
})\right) ^{\frac{1}{\sigma }},\quad 0<\sigma <\infty .
\end{equation*}

\begin{theorem}
\label{Maximal-Inq3-lorentz}Let $1<p<\infty ,0<r\leq \infty $ and $1<\beta
\leq \infty $. If $\{f_{k}\}_{k\in \mathbb{N}_{0}}$ is a sequence of locally
integrable functions on $\mathbb{R}^{n}$, then 
\begin{equation*}
\Big\|\Big(\sum_{k=0}^{\infty }(\mathcal{M}(f_{k}))^{\beta }\Big)^{1/\beta }%
\Big\|_{L^{{p,r}}}\lesssim \Big\|\Big(\sum_{k=0}^{\infty }|f_{k}|^{\beta }%
\Big)^{1/\beta }\Big\|_{L^{{p,r}}},
\end{equation*}
\end{theorem}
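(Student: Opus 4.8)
The plan is to deduce the vector-valued maximal inequality on Lorentz spaces from the Fefferman–Stein inequality on $L^p$, which is the classical statement underlying the scalar Lorentz version. The key tool is real interpolation: the Fefferman–Stein operator $\{f_k\} \mapsto \{\mathcal{M}(f_k)\}$ is bounded on $L^p(\ell^\beta)$ for every $1 < p < \infty$ and every $1 < \beta \le \infty$, and it is sublinear. A sublinear operator bounded on $L^{p_0}(\ell^\beta)$ and on $L^{p_1}(\ell^\beta)$ for $p_0 < p < p_1$ is bounded on the real-interpolation space $(L^{p_0}(\ell^\beta), L^{p_1}(\ell^\beta))_{\theta, r}$, and by the standard identification of real interpolation spaces of vector-valued $L^p$-spaces this interpolation space is $L^{p,r}(\ell^\beta)$ with $\frac{1}{p} = \frac{1-\theta}{p_0} + \frac{\theta}{p_1}$. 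So first I would fix $1 < p < \infty$, choose $1 < p_0 < p < p_1 < \infty$, recall the $L^{p_i}(\ell^\beta)$-boundedness of the Fefferman–Stein maximal operator, and then invoke the interpolation theorem for sublinear operators.

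Alternatively—and this is probably the cleanest route given the cited reference \cite[Lemma 5.1]{ST19}—one can use that the \emph{scalar} Lorentz maximal inequality $\|\mathcal{M}f\|_{L^{p,r}} \lesssim \|f\|_{L^{p,r}}$ together with a duality/linearization trick handles the $\ell^\beta$-valued case. Concretely, for the case $1 < \beta < \infty$ one writes
\begin{equation*}
\Big\| \Big(\sum_k (\mathcal{M}f_k)^\beta\Big)^{1/\beta}\Big\|_{L^{p,r}}
= \Big\| \sum_k (\mathcal{M}f_k)^\beta \Big\|_{L^{p/\beta,\, r/\beta}}^{1/\beta},
\end{equation*}
using \eqref{pro1-lorentz}, and then estimates the norm of the sum in $L^{p/\beta, r/\beta}$ by testing against a nonnegative $g$ in the associate space and applying the pointwise bound for $\mathcal{M}$ via the Hardy–Littlewood inequality (Lemma \ref{Hardy-Littlewood inequality}) combined with boundedness of the adjoint maximal operator on the associate Lorentz space $L^{(p/\beta)',\,(r/\beta)'}$. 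The case $\beta = \infty$ is immediate from the scalar inequality since $\mathcal{M}(\sup_k |f_k|) \ge \sup_k \mathcal{M}(f_k)$ pointwise.

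The steps, in order, would be: (1) treat $\beta = \infty$ directly from monotonicity of $\mathcal{M}$ and the scalar Lorentz maximal bound; (2) for $1 < \beta < \infty$, reduce via \eqref{pro1-lorentz} to an inequality in $L^{p/\beta, r/\beta}$ where now $p/\beta$ may be any number in $(0,\infty)$, so one cannot directly apply the scalar maximal bound there—this forces the duality argument or, better, the interpolation argument which is insensitive to $p/\beta < 1$; (3) carry out the interpolation between $L^{p_0}(\ell^\beta)$ and $L^{p_1}(\ell^\beta)$, identifying the intermediate space as $L^{p,r}(\ell^\beta)$. Since the reference \cite{ST19} is already cited for the scalar case, I expect the paper simply to cite it again or to reproduce the short interpolation proof.

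The main obstacle is the case $1 < \beta < \infty$ with $p < \beta$: the naive reduction via power-rescaling pushes the exponent below $1$, where Lorentz ``spaces'' are only quasi-normed and the maximal operator is unbounded, so the scalar result does not apply to the rescaled problem. The resolution is that the Fefferman–Stein \emph{vector-valued} inequality on $L^{p_i}(\ell^\beta)$ holds for \emph{all} $1 < p_i < \infty$ regardless of how $p_i$ compares to $\beta$, and real interpolation of sublinear operators then delivers the full range $1 < p < \infty$; one never needs to interpolate in $\beta$ at all. Establishing (or citing) the precise real-interpolation identity $(L^{p_0}(\ell^\beta), L^{p_1}(\ell^\beta))_{\theta,r} = L^{p,r}(\ell^\beta)$ is the only genuinely technical point, and it is standard (e.g. Bergh–Löfström or the vector-valued version of the classical $(L^{p_0},L^{p_1})_{\theta,r} = L^{p,r}$ theorem).
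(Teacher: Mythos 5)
Your proposal is correct. Note, however, that the paper does not actually prove this statement: Theorem \ref{Maximal-Inq3-lorentz} is simply recalled from the literature with the citation \cite[Lemma 5.1]{ST19}, so there is no internal argument to compare against. Your first route (real interpolation) is the standard proof and is sound: the Fefferman--Stein operator $\{f_k\}\mapsto\{\mathcal{M}f_k\}$ is sublinear into the Banach lattice $\ell^{\beta}$, its boundedness on $L^{p_0}(\ell^{\beta})$ and $L^{p_1}(\ell^{\beta})$ for $1<p_0<p<p_1<\infty$ transfers to the real interpolation space because for any splitting $f=f_0+f_1$ one can redistribute $Tf$ as $g_0+g_1$ with $|g_i|\le|Tf_i|$ (lattice trick), and the identification $(L^{p_0}(\ell^{\beta}),L^{p_1}(\ell^{\beta}))_{\theta,r}=L^{p,r}(\ell^{\beta})$ follows from the $A$-valued K-functional computation $K(t,f;L^{p_0}(A),L^{p_1}(A))\approx K(t,\|f(\cdot)\|_{A};L^{p_0},L^{p_1})$. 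You are also right to flag that the naive power-rescaling reduction via \eqref{pro1-lorentz} breaks down when $p<\beta$, and right that the $\beta=\infty$ case follows from $\sup_k\mathcal{M}f_k\le\mathcal{M}(\sup_k|f_k|)$ together with the scalar Lorentz maximal bound. In short, your argument fills in a proof that the paper outsources entirely to \cite{ST19}.
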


The extension of Fefferman-Stein vector-valued maximal inequality\ to
Lorentz-Herz spaces relies on Theorem \ref{Maximal-Inq3-lorentz}.

\begin{lemma}
\label{Maximal-Inq copy(2)-lorentz}Let $1<p<\infty ,1<\beta \leq \infty $\
and $0<r,q\leq \infty $. If $\{f_{k}\}_{k\in \mathbb{N}_{0}}$ is a sequence
of locally integrable functions on $\mathbb{R}^{n}$\ and $-\frac{n}{p}%
<\alpha <n(1-\frac{1}{p})$, then 
\begin{equation*}
\Big\|\Big(\sum_{k=0}^{\infty }(\mathcal{M}(f_{k}))^{\beta }\Big)^{1/\beta }%
\Big\|_{\dot{K}_{p,r}^{\alpha ,q}}\lesssim \Big\|\Big(\sum_{k=0}^{\infty
}|f_{k}|^{\beta }\Big)^{1/\beta }\Big\|_{\dot{K}_{p,r}^{\alpha ,q}},
\end{equation*}%
with the usual modification if $\beta =\infty .$
\end{lemma}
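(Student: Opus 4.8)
The plan is to reduce the Lorentz–Herz estimate to the vector-valued maximal inequality in Lorentz spaces, Theorem \ref{Maximal-Inq3-lorentz}, applied on each dyadic annulus, and then to control the resulting ``off-diagonal'' contributions by the Hardy-type inequality in Lemma \ref{lem:lq-inequality}. The key point is that the maximal function $\mathcal{M}(f_k)$ on an annulus $R_m$ is influenced by the behaviour of $f_k$ on all annuli $R_j$; the condition $-\frac{n}{p}<\alpha<n(1-\frac1p)$ is precisely what makes the geometric series in $j$ summable when we measure with the weight $2^{m\alpha q}$.

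First I would fix $k\in\mathbb{Z}$ (the annulus index, which I rename from $m$ to avoid clash with the summation index $k$ of the sum defining $f_k$; let me use $m$ for the annulus) and split
\begin{equation*}
\mathcal{M}(f_k)\,\chi_m \le \mathcal{M}\big(f_k\,\chi_{3B_{m+1}\setminus B_{m-2}}\big)\,\chi_m + \sum_{\substack{j\in\mathbb{Z}\\ |j-m|\ge 2}} \mathcal{M}\big(f_k\,\chi_j\big)\,\chi_m .
\end{equation*}
For the ``local'' term, Theorem \ref{Maximal-Inq3-lorentz} gives
$\big\|(\sum_k(\mathcal{M}(f_k\chi_{3B_{m+1}\setminus B_{m-2}}))^\beta)^{1/\beta}\chi_m\big\|_{L^{p,r}}\lesssim \big\|(\sum_k|f_k|^\beta)^{1/\beta}\chi_{3B_{m+1}\setminus B_{m-2}}\big\|_{L^{p,r}}$, which after multiplying by $2^{m\alpha}$, taking the $\ell^q$-norm over $m$, and using that $3B_{m+1}\setminus B_{m-2}$ meets only boundedly many annuli, is controlled by $\big\|(\sum_k|f_k|^\beta)^{1/\beta}\big\|_{\dot K_{p,r}^{\alpha,q}}$. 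For the ``far'' terms, when $x\in R_m$ and $\operatorname{supp}(f_k\chi_j)\subset R_j$ with $j$ far from $m$, the smallest ball containing both $x$ and a point of $R_j$ has radius comparable to $2^{\max(m,j)}$, so pointwise
\begin{equation*}
\mathcal{M}(f_k\chi_j)(x)\lesssim 2^{-n\max(m,j)}\|f_k\chi_j\|_1 \lesssim 2^{-n\max(m,j)}\,2^{jn/p'}\|f_k\chi_j\|_{L^{p,r}}
\end{equation*}
by Hölder in Lorentz spaces and \eqref{est-function1}; here I would first deal with the scalar case $\beta=\infty$ trivially and otherwise apply Minkowski/the triangle inequality in $\ell^\beta(k)$ to pass the sum over $j$ outside.

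Putting these pieces together, for the far part I would obtain, for $j<m$,
$\|(\sum_k(\mathcal{M}(f_k\chi_j))^\beta)^{1/\beta}\chi_m\|_{L^{p,r}}\lesssim 2^{-mn}2^{jn/p'} \cdot 2^{mn/p}\|(\sum_k|f_k|^\beta)^{1/\beta}\chi_j\|_{L^{p,r}} = 2^{-(m-j)(n-n/p)}\|(\sum_k|f_k|^\beta)^{1/\beta}\chi_j\|_{L^{p,r}}$ (using $\|\chi_m\|_{L^{p,r}}\approx 2^{mn/p}$), and similarly for $j>m$ a factor $2^{-(j-m)n/p}$. Multiplying by $2^{m\alpha}$ and inserting $2^{-j\alpha}2^{j\alpha}$, the weight on the summand $2^{j\alpha}\|\cdots\chi_j\|_{L^{p,r}}$ becomes $2^{-(m-j)(n-n/p-\alpha)}$ for $j<m$ and $2^{-(j-m)(n/p+\alpha)}$ for $j>m$; both exponents are strictly positive exactly because $-\frac np<\alpha<n-\frac np$. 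Then Lemma \ref{lem:lq-inequality} (applied with $a=2^{-(n-n/p-\alpha)}$ and $a=2^{-(n/p+\alpha)}$ respectively, to the sequence $\varepsilon_j=2^{j\alpha}\|(\sum_k|f_k|^\beta)^{1/\beta}\chi_j\|_{L^{p,r}}$) gives that the $\ell^q$-norm in $m$ of the far part is $\lesssim \big\|(\sum_k|f_k|^\beta)^{1/\beta}\big\|_{\dot K_{p,r}^{\alpha,q}}$, as desired. The main obstacle is the bookkeeping in the ``far'' estimate — making the pointwise bound on $\mathcal{M}(f_k\chi_j)$ uniform in $k$ and correctly handling the vector-valued $\ell^\beta$ summation before summing the geometric series in $j$ — but this is standard once one separates $j<m$ from $j>m$; the case $q<1$ or $\beta<1$ needs only the usual $\tau$-triangle inequality and is absorbed into Lemma \ref{lem:lq-inequality} with the understanding that its proof extends to $0<q\le\infty$.
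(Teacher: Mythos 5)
Your proposal follows essentially the route the paper intends: the paper's own proof is only a two-line sketch referring to the Herz-space argument of Tang--Yang together with H\"older's inequality in Lorentz spaces and Theorem \ref{Maximal-Inq3-lorentz}, and your annulus-wise local/far decomposition -- Theorem \ref{Maximal-Inq3-lorentz} for the local part, the size estimate for $\mathcal{M}(f_k\chi_j)$ plus \eqref{est-function1} and Lemma \ref{lem:lq-inequality} for the far part -- is exactly that argument written out, and your exponent bookkeeping ($n-\tfrac{n}{p}-\alpha>0$ for $j<m$, $\tfrac{n}{p}+\alpha>0$ for $j>m$) is correct.

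One step should be reordered to make the far-field estimate airtight. As displayed, you apply H\"older per $k$ (producing $\|f_k\chi_j\|_{L^{p,r}}$) and then take the $\ell^\beta(k)$ sum, but the passage from $\big(\sum_k\|f_k\chi_j\|_{L^{p,r}}^\beta\big)^{1/\beta}$ to $\big\|\big(\sum_k|f_k|^\beta\big)^{1/\beta}\chi_j\big\|_{L^{p,r}}$, which your "putting the pieces together" inequality implicitly uses, fails in general when $\beta<\max(p,r)$ (compare Lemma \ref{Lp,r-estimate copy(1)}, which requires $\tau\ge\max(p,r_0,q,\beta)$). The fix is to perform the $\ell^\beta(k)$ summation at the level of the $L^1$ bounds first: Minkowski's inequality (valid since $\beta\ge 1$) gives $\big(\sum_k\|f_k\chi_j\|_1^\beta\big)^{1/\beta}\le\big\|\big(\sum_k|f_k|^\beta\big)^{1/\beta}\chi_j\big\|_1$, and only then apply H\"older in Lorentz spaces to this single $L^1$ norm (for $r<1$ pass through $L^{p,r}\hookrightarrow L^{p,1}$ before H\"older). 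With that reordering your stated far-field bound, the weight computation, and the application of Lemma \ref{lem:lq-inequality} go through verbatim.
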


\begin{proof}
The proof follows easily by the same way as that the proof of vector-valued
maximal inequality in Herz spaces; see \cite{TangYang2000}, but now one has
to use the H\"{o}lder's inequality for Lorentz spaces, and Theorem \ref%
{Maximal-Inq3-lorentz}. The proof is complete.
\end{proof}

From Lemma \ref{Maximal-Inq copy(2)-lorentz} we immediately obtain the
following statement.

\begin{lemma}
\label{Maximal-Inq copy(1)-lorentz}Let $1<p<\infty \ $and $0<r,q\leq \infty $%
. . Let $f\in \dot{K}_{p,r}^{\alpha ,q}$ and $-\frac{n}{p}<\alpha <n(1-\frac{%
1}{p})$. Then 
\begin{equation*}
\big\|\mathcal{M}(f)\big\|_{\dot{K}_{p,r}^{\alpha ,q}}\lesssim \big\|f\big\|%
_{\dot{K}_{p,r}^{\alpha ,q}}
\end{equation*}%
holds.
\end{lemma}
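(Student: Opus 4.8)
The plan is to obtain the estimate as an immediate specialization of the vector-valued maximal inequality in Lemma~\ref{Maximal-Inq copy(2)-lorentz}, applied to a sequence that has only one non-zero term. First I would fix the exponent $\beta=2$ (any value in $(1,\infty]$ would do) and, given $f\in\dot{K}_{p,r}^{\alpha,q}$, introduce the sequence $\{f_k\}_{k\in\mathbb{N}_0}$ defined by $f_0=f$ and $f_k=0$ for all $k\geq 1$. Before invoking the lemma one has to know that $f$ is locally integrable on all of $\mathbb{R}^n$ so that $\mathcal{M}(f)$ is classically defined: away from the origin this is clear since $1<p<\infty$ forces $L^{p,r}_{\mathrm{loc}}(\mathbb{R}^n\setminus\{0\})\hookrightarrow L^1_{\mathrm{loc}}(\mathbb{R}^n\setminus\{0\})$, and local integrability across the origin follows exactly as in Step~1 of the proof of Lemma~\ref{regular-distribution-lorentz}, using the upper constraint $\alpha<n(1-\tfrac1p)=n-\tfrac np$ together with H\"older's inequality for Lorentz spaces (and the embedding $L^{p,r}\hookrightarrow L^{p,1}$ to cover small $r$). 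Thus $\{f_k\}_{k\in\mathbb{N}_0}$ is a legitimate sequence of locally integrable functions on $\mathbb{R}^n$.

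With this choice one has, pointwise, $\big(\sum_{k=0}^{\infty}(\mathcal{M}(f_k))^{2}\big)^{1/2}=\mathcal{M}(f_0)=\mathcal{M}(f)$ and $\big(\sum_{k=0}^{\infty}|f_k|^{2}\big)^{1/2}=|f|$. The hypotheses of Lemma~\ref{Maximal-Inq copy(2)-lorentz}, namely $1<p<\infty$, $1<\beta=2\leq\infty$, $0<r,q\leq\infty$ and $-\tfrac np<\alpha<n(1-\tfrac1p)$, are all satisfied, so that lemma yields
\[
\big\|\mathcal{M}(f)\big\|_{\dot{K}_{p,r}^{\alpha,q}}=\Big\|\Big(\sum_{k=0}^{\infty}(\mathcal{M}(f_k))^{2}\Big)^{1/2}\Big\|_{\dot{K}_{p,r}^{\alpha,q}}\lesssim\Big\|\Big(\sum_{k=0}^{\infty}|f_k|^{2}\Big)^{1/2}\Big\|_{\dot{K}_{p,r}^{\alpha,q}}=\big\|f\big\|_{\dot{K}_{p,r}^{\alpha,q}},
\]
which is exactly the asserted inequality; in particular the finiteness of the right-hand side forces $\mathcal{M}(f)\in\dot{K}_{p,r}^{\alpha,q}$.

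Since all the real work is already contained in Lemma~\ref{Maximal-Inq copy(2)-lorentz} (and, underneath it, in the Lorentz-space vector-valued maximal inequality of Theorem~\ref{Maximal-Inq3-lorentz} and the classical Herz-space argument of Li--Yang and Tang--Yang), I do not expect any genuine obstacle here. The only point that deserves a line of care is the one flagged above: verifying that every $f\in\dot{K}_{p,r}^{\alpha,q}$ is locally integrable on the whole of $\mathbb{R}^n$ — including a neighbourhood of the origin — which is guaranteed precisely by the condition $\alpha<n-\tfrac np$ inherited from the hypothesis $\alpha<n(1-\tfrac1p)$. Everything else is the routine collapse of a vector-valued inequality to a single summand.
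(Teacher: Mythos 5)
Your proof is correct and follows the same route as the paper, which obtains this lemma as an immediate consequence of the vector-valued inequality in Lemma \ref{Maximal-Inq copy(2)-lorentz} by collapsing it to a single summand. The extra remarks on local integrability of $f$ near the origin are a harmless (and reasonable) addition of detail that the paper leaves implicit.
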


\begin{remark}
We consider sublinear operators satisfying the size condition 
\begin{equation}
|Tf(x)|\lesssim \int_{{\mathbb{R}^{n}}}\frac{|f(y)|}{|x-y|^{n}}\,dy,\quad
x\notin \mathrm{supp}\,f,  \label{size1}
\end{equation}%
for integrable and compactly supported functions $f$. Condition \eqref{size1}
was first considered in \cite{SW94} and it is satisfied by several classical
operators in Harmonic Analysis, such as Calder\'{o}n-Zygmund operators, the
Carleson maximal operator and the Hardy-Littlewood maximal operator (see 
\cite{LuYang95}, \cite{SW94}). The results of this part can be extended to
sublinear operators satisfying the size condition \eqref{size1}; see \cite%
{LuYang95}. More precisely, we have the following statement.
\end{remark}

\begin{theorem}
\label{est-maximal copy(1)-lorentz}Let $1<p<\infty ,1<\beta \leq \infty $\
and $0<r,q\leq \infty $. Let $\{f_{k}\}_{k\in \mathbb{N}_{0}}$ be a sequence
of integrable and compactly\ supported functions\ on $\mathbb{R}^{n}$\ and $-%
\frac{n}{p}<\alpha <n(1-\frac{1}{p})$. Suppose a sublinear operator $T$
satisfies the size condition \eqref{size1}. Then, if $T$ is bounded on $%
L^{p,r}(\ell ^{\beta })$, that means%
\begin{equation*}
\Big\|\Big(\sum_{k=0}^{\infty }|Tf_{k}|^{\beta }\Big)^{1/\beta }\Big\|_{L^{{%
p,r}}}\lesssim \Big\|\Big(\sum_{k=0}^{\infty }|f_{k}|^{\beta }\Big)^{1/\beta
}\Big\|_{L^{{p,r}}},
\end{equation*}%
then we have 
\begin{equation}
\Big\|\Big(\sum_{k=0}^{\infty }|Tf_{k}|^{\beta }\Big)^{1/\beta }\Big\|_{\dot{%
K}_{p,r}^{\alpha ,q}}\lesssim \Big\|\Big(\sum_{k=0}^{\infty }|f_{k}|^{\beta }%
\Big)^{1/\beta }\Big\|_{\dot{K}_{p,r}^{\alpha ,q}}.  \label{new-est}
\end{equation}%
In particular, if $f$ is integrable and compactly supported function on $%
\mathbb{R}^{n}$ and $T$ satisfies the size condition \eqref{size1} which
bounded on $L^{p,r}$, then we have 
\begin{equation}
\big\|Tf\big\|_{\dot{K}_{p,r}^{\alpha ,q}}\lesssim \big\|f\big\|_{\dot{K}%
_{p,r}^{\alpha ,q}}.  \label{new-est1}
\end{equation}
\end{theorem}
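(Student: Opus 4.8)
The strategy is to reduce \eqref{new-est} to the already available Lorentz-space estimate (the boundedness of $T$ on $L^{p,r}(\ell^\beta)$) via the standard decomposition of each $f_k$ adapted to the dyadic annuli $R_j$, exactly as in the proof of the vector-valued maximal inequality on Herz spaces in \cite{TangYang2000}, but with $L^p$ replaced everywhere by $L^{p,r}$. First I would write $f_k=\sum_{j\in\mathbb{Z}}f_k\chi_j$ and, for a fixed annulus index $k$ (on which we measure the output), split the sum into the ``near'' part $\sum_{|j-k|\le 1}$, the ``inner'' part $\sum_{j\le k-2}$, and the ``outer'' part $\sum_{j\ge k+2}$. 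For the near part one simply invokes the hypothesis: the $L^{p,r}(\ell^\beta)$-boundedness of $T$ gives $\big\|(\sum_k|T(f_k\chi_j)|^\beta)^{1/\beta}\chi_k\big\|_{L^{p,r}}\lesssim \big\|(\sum_k|f_k|^\beta)^{1/\beta}\chi_j\big\|_{L^{p,r}}$, and since only three values of $j$ occur this is harmless after multiplying by $2^{k\alpha}$ and summing in $\ell^q$.

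For the inner and outer parts one uses the size condition \eqref{size1}: if $x\in R_k$ and $j\le k-2$ then $|x-y|\approx 2^k$ for $y\in R_j$, so $|T(f_k\chi_j)(x)|\lesssim 2^{-kn}\int_{R_j}|f_k(y)|\,dy$; by the Hölder inequality for Lorentz spaces (Proposition \ref{Holder's-convolution-lorentz}/(v)) and \eqref{est-function1}, $\int_{R_j}|f_k|\lesssim \|f_k\chi_j\|_{L^{p,r}}2^{jn/p'}$. Taking $\ell^\beta$ in $k$ and then the $L^{p,r}(R_k)$-quasinorm (using \eqref{est-function1} for $\|\chi_k\|_{L^{p,r}}\approx 2^{kn/p}$) yields, for the inner part,
\begin{equation*}
2^{k\alpha}\Big\|\Big(\sum_k|T(f_k\chi_j)|^\beta\Big)^{1/\beta}\chi_k\Big\|_{L^{p,r}}\lesssim 2^{(k-j)(\alpha-n+n/p)}\,2^{j\alpha}\Big\|\Big(\sum_k|f_k|^\beta\Big)^{1/\beta}\chi_j\Big\|_{L^{p,r}},
\end{equation*}
and symmetrically for the outer part one gets a factor $2^{(j-k)(\alpha+n/p)}$. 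The exponents $\alpha-n+n/p<0$ and $\alpha+n/p>0$ are exactly guaranteed by the hypothesis $-\tfrac{n}{p}<\alpha<n(1-\tfrac{1}{p})$, so both are summable geometric weights.

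The final step is to sum over $k\in\mathbb{Z}$ in $\ell^q$. Applying the Hardy-type inequality Lemma \ref{lem:lq-inequality} (with $a=2^{\alpha-n+n/p}$ for the inner sum and $a=2^{-\alpha-n/p}$ for the outer sum, after a change of summation variable) converts the double sum $\sum_k(\sum_j a^{|k-j|}\varepsilon_j)^q$ into $\lesssim\sum_j\varepsilon_j^q$, where $\varepsilon_j=2^{j\alpha}\big\|(\sum_k|f_k|^\beta)^{1/\beta}\chi_j\big\|_{L^{p,r}}$; by definition this last quantity is $\lesssim\big\|(\sum_k|f_k|^\beta)^{1/\beta}\big\|_{\dot K^{\alpha,q}_{p,r}}^q$. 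Collecting the three pieces proves \eqref{new-est}, and \eqref{new-est1} is the special case $\beta$ arbitrary with a single function. The only point requiring real care—the ``hard part''—is verifying that the hypothesis on $\alpha$ produces strictly the right signs for both geometric series simultaneously and that the Lorentz-Hölder estimate for $\int_{R_j}|f_k|$ behaves with the correct power of $2^j$; everything else is bookkeeping that mirrors the classical Herz-space argument, with Proposition \ref{Holder's-convolution-lorentz} and Lemma \ref{lem:lq-inequality} doing the work that $L^p$-Hölder and the classical Hardy inequality do there.
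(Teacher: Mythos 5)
Your proposal is correct and takes essentially the route the paper intends: the paper does not write this proof out but defers to the classical annular decomposition of \cite{TangYang2000} and \cite{LuYang95}, replacing $L^{p}$-H\"older by the Lorentz H\"older inequality and the $L^{p}(\ell^{\beta})$ hypothesis by the $L^{p,r}(\ell^{\beta})$ one, which is exactly your near/inner/outer splitting combined with \eqref{est-function1} and Lemma \ref{lem:lq-inequality}. The only slip is the sign in the outer-part factor, which should be $2^{(k-j)(\alpha+\frac{n}{p})}$ for $j\geq k+2$ (i.e.\ decaying in $j-k$, as your subsequent choice $a=2^{-\alpha-\frac{n}{p}}$ in the Hardy-type lemma correctly presupposes).
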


\begin{remark}
\label{est-maximal copy(2)-lorentz}Let $1<p<\infty ,1<\beta \leq \infty
,0<r,q\leq \infty $\ and\ $-\frac{n}{p}<\alpha <n(1-\frac{1}{p})$. Theorem %
\ref{est-maximal copy(1)-lorentz} can be extended to the following way.
Suppose a sublinear operator $T$ satisfies the size conditions 
\begin{equation*}
|Tf(x)|\leq \frac{C}{|x|^{n}}\big\|f\big\|_{1},\quad \mathrm{supp}\,f\subset
R_{k},\quad |x|\geq 2^{k+1},k\in \mathbb{Z}
\end{equation*}%
and 
\begin{equation*}
|Tf(x)|\leq C2^{-kn}\big\|f\big\|_{1},\quad \mathrm{supp}\,f\subset
R_{k},\quad |x|\leq 2^{k-2},k\in \mathbb{Z}.
\end{equation*}%
Then, if $T$ is bounded on $L^{p,r}(\ell ^{\beta })$, then we have\ %
\eqref{new-est}. In particular, if $T$ is bounded on $L^{p,r}$, then we have %
\eqref{new-est1}.
\end{remark}

\begin{remark}
Let $0<p<\infty $ and $0<r,\beta \leq \infty $. We recall that the space $%
L^{p,r}(\ell ^{\beta })$ is defined to be the set of all sequences $%
\{f_{k}\}_{k\in \mathbb{N}_{0}}$ of functions such that%
\begin{equation*}
\big\|\{f_{k}\}_{k\in \mathbb{N}_{0}}\big\|_{L^{p,r}(\ell ^{\beta })}=\Big\|%
\Big(\sum_{k=0}^{\infty }|f_{k}|^{\beta }\Big)^{1/\beta }\Big\|%
_{L^{p,r}}<\infty
\end{equation*}%
with the usual modifications if $q=\infty $
\end{remark}

In what follows we use the following simple lemma.

\begin{lemma}
\label{est-maximal}Let $x\in \mathbb{R}^{n},N>0,m>n$\ and $\omega \in 
\mathcal{S(}\mathbb{R}^{n})$. Then there exists a positive constant\ $c>0$
independent of $N$ and $x$ such that for all $f\in L_{\mathrm{loc}%
}^{1}\left( \mathbb{R}^{n}\right) ,$ 
\begin{equation*}
|\omega _{N}\ast f\left( x\right) |\leq c\mathcal{M}(f)(x),
\end{equation*}%
where $\omega _{N}=N^{n}\omega (N\cdot )$.
\end{lemma}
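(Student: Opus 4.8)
The plan is to reduce the convolution estimate to the classical bound $|\omega_N \ast f(x)| \lesssim \mathcal{M}(f)(x)$ for a fixed Schwartz function, by exploiting the dilation structure so that the constant does not depend on $N$. First I would recall the elementary pointwise inequality: if $\omega \in \mathcal{S}(\mathbb{R}^n)$ and $m > n$, then $|\omega(y)| \leq c_\omega (1+|y|)^{-m}$ for all $y$, with $c_\omega$ depending only on a finite Schwartz seminorm of $\omega$. Consequently $|\omega_N(y)| = N^n |\omega(Ny)| \leq c_\omega N^n (1+N|y|)^{-m} = c_\omega\, \eta_{N,m}(y)$ in the notation $\eta_{R,N}(x)=R^n(1+R|x|)^{-N}$ introduced just before the lemma. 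So it suffices to prove $(\eta_{N,m}\ast |f|)(x) \leq c\,\mathcal{M}(f)(x)$ with $c$ independent of $N$, which by rescaling ($y \mapsto y/N$ inside the integral) collapses to the case $N=1$: one checks $(\eta_{N,m}\ast |f|)(x) = (\eta_{1,m} \ast |f(\cdot/1)|)$-type identity, more precisely $\int N^n(1+N|x-y|)^{-m}|f(y)|\,dy = \int (1+|z|)^{-m}|f(x - z/N)|\,dz$, and the standard annular decomposition applies uniformly.

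The key steps, in order, are: (1) bound $|\omega_N|$ pointwise by $c_\omega \eta_{N,m}$; (2) write $(\eta_{N,m}\ast|f|)(x)$ and split $\mathbb{R}^n$ into the ball $B(x,1/N)$ and the dyadic annuli $A_j = \{ 2^{j-1}/N \le |x-y| < 2^j/N\}$ for $j \geq 1$; (3) on $B(x,1/N)$ estimate $\eta_{N,m} \leq N^n$ and $\frac{1}{|B(x,1/N)|}\int_{B(x,1/N)}|f| \le c\,\mathcal{M}(f)(x)$, so the contribution is $\lesssim \mathcal{M}(f)(x)$; (4) on each annulus $A_j$, use $(1+N|x-y|)^{-m} \leq (2^{j-1})^{-m} = c\,2^{-jm}$ and $A_j \subset B(x, 2^j/N)$, so $\int_{A_j} N^n(1+N|x-y|)^{-m}|f(y)|\,dy \lesssim N^n 2^{-jm} \int_{B(x,2^j/N)} |f| \lesssim 2^{-jm} 2^{jn}\mathcal{M}(f)(x) = 2^{-j(m-n)}\mathcal{M}(f)(x)$; (5) sum over $j\ge 1$ using $m>n$ so that $\sum_{j\ge 1} 2^{-j(m-n)} < \infty$, obtaining the total bound $c\,\mathcal{M}(f)(x)$ with $c$ depending only on $m$, $n$ and a Schwartz seminorm of $\omega$, in particular independent of $N$ and $x$.

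The only point requiring a little care — and the closest thing to an obstacle — is making sure the constant is genuinely uniform in $N$: this is exactly why the dilation invariance of the whole construction matters, since the annular radii $2^j/N$ scale together with the measure $N^n\,dy$ and the decay exponent is dimensionless. There is no real analytic difficulty here; the lemma is a standard maximal-function domination and I would present it briefly, essentially via steps (1)–(5) above, noting that it is the same computation underlying the classical fact that an approximate identity with a Schwartz (or merely radially decreasing $L^1$) majorant is controlled by $\mathcal{M}$.
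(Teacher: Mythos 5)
Your argument is correct and complete: the pointwise bound $|\omega_N|\leq c_\omega\,\eta_{N,m}$ followed by the ball-plus-dyadic-annuli decomposition, with the sum converging precisely because $m>n$, is the classical proof that an approximate identity with a radially decreasing integrable majorant is dominated by $\mathcal{M}$, and the uniformity in $N$ comes out exactly as you describe. The paper states this lemma without proof (treating it as standard), so your write-up supplies precisely the argument it implicitly relies on; nothing is missing.
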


\subsection{Plancherel-Polya-Nikolskij inequality}

The classical Plancherel-Polya-Nikolskij inequality (cf. \cite[1.3.2/5, Rem.
1.4.1/4]{T83}), says that $\big\|f\big\|_{q}$ can be estimated by%
\begin{equation*}
c\ R^{n\left( 1/p-1/q\right) }\big\|f\big\|_{p}
\end{equation*}%
for any $0<p\leq q\leq \infty $, $R>0$ and any $f\in L^{p}\cap \mathcal{S}%
^{\prime }(\mathbb{R}^{n})$\ with supp\ $\mathcal{F}f\subset \{\xi \in 
\mathbb{R}^{n}:|\xi |\leq R\}$. The constant $c>0$\ is independent of $R$.
This inequality plays an important role in theory of function spaces and
PDE's. Our aim is to extend this result to Lorentz-Herz spaces. Let us start
with the following lemma.

The following lemma is the $\dot{K}_{p}^{\alpha ,q}$-version of the
Plancherel-Polya-Nikolskij inequality. For the proof; see \cite{Drihem1.13}.

\begin{lemma}
\label{Bernstein-Herz-ine1}\textit{Let }$\alpha _{1},\alpha _{2}\in \mathbb{R%
}\mathit{\ }$\textit{and} $0<s,p,q,r\leq \infty $. \textit{We suppose that }$%
\alpha _{1}+\frac{n}{s}>0,0<p\leq s\leq \infty $ and $\alpha _{2}\geq \alpha
_{1}$. \textit{Then there exists a positive constant }$c>0$\textit{\
independent of }$R$\textit{\ such that for all }$f\in \dot{K}_{p}^{\alpha
_{2},\theta }\cap \mathcal{S}^{\prime }(\mathbb{R}^{n})$\textit{\ with }$%
\mathrm{\ supp}$\textit{\ }$\mathcal{F}f\subset \{\xi \in \mathbb{R}%
^{n}:|\xi |\leq R\}$\textit{, we have} 
\begin{equation*}
\big\|f\big\|_{\dot{K}_{s}^{\alpha _{1},r}}\leq c\text{ }R^{\frac{n}{p}-%
\frac{n}{s}+\alpha _{2}-\alpha _{1}}\big\|f\big\|_{\dot{K}_{p}^{\alpha
_{2},\theta }},
\end{equation*}%
where 
\begin{equation*}
\theta =\left\{ 
\begin{array}{ccc}
r, & \text{if} & \alpha _{2}=\alpha _{1}, \\ 
q, & \text{if} & \alpha _{2}>\alpha _{1}.%
\end{array}%
\right.
\end{equation*}
\end{lemma}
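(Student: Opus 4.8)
The plan is to adapt the classical Plancherel--Polya--Nikolskij scheme to the annular structure of the Herz spaces, in the spirit of \cite{Drihem1.13}. First I would pick $\Phi\in\mathcal{S}(\mathbb{R}^{n})$ with $\mathcal{F}\Phi(\xi)=1$ for $|\xi|\le1$, set $\Phi_{R}=R^{n}\Phi(R\,\cdot)$, and use $\supp\mathcal{F}f\subset\{|\xi|\le R\}$ to write $f=\Phi_{R}\ast f$. Since $|\Phi_{R}(z)|\le c_{M}\eta_{R,M}(z)$ for every $M>0$ (with $\eta_{R,N}(z)=R^{n}(1+R|z|)^{-N}$), and since $f$ is an entire function of exponential type $R$, the standard sub-mean inequality (cf. \cite{T83}) gives, for $t$ small (at any rate $0<t<\min(1,p)$) and $tM>n$,
\[
|f(x)|^{t}\le c\,(\eta_{R,tM}\ast|f|^{t})(x),\qquad x\in\mathbb{R}^{n};
\]
the parameter $t$ will be taken small and $M$ large, with the precise constraints emerging below.

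Next I would write $|f|^{t}=\sum_{j\in\mathbb{Z}}|f|^{t}\chi_{j}$ and bound $\big\|f\chi_{k}\big\|_{s}^{t}=\big\||f|^{t}\chi_{k}\big\|_{s/t}$ by the sum of the contributions of the pieces, splitting the $j$-sum into the \emph{near} range $|j-k|\le1$ and the \emph{far} range $|j-k|\ge2$. For the near pieces I would apply Young's convolution inequality (with the exponent $v$ given by $\tfrac1v=1-t(\tfrac1p-\tfrac1s)$, so that $v\ge1$ for $t$ small) together with the case $L^{v}=L^{v,v}$ of Lemma~\ref{est-eta} (valid once $tMv>n$), obtaining $\big\|\chi_{k}(\eta_{R,tM}\ast(|f|^{t}\chi_{j}))\big\|_{s/t}\le\big\|\eta_{R,tM}\big\|_{v}\big\|f\chi_{j}\big\|_{p}^{t}\le c\,R^{t(n/p-n/s)}\big\|f\chi_{j}\big\|_{p}^{t}$. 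For the far pieces, on $R_{k}\times R_{j}$ with $|j-k|\ge2$ one has $|x-y|\approx2^{\max(j,k)}$, hence $\eta_{R,tM}(x-y)\le cR^{n}(1+R2^{\max(j,k)})^{-tM}$, while H\"older's inequality gives $\int_{R_{j}}|f|^{t}\le c\,2^{jn(1-t/p)}\big\|f\chi_{j}\big\|_{p}^{t}$; since $|R_{k}|\approx2^{kn}$ this produces the discrete kernel
\[
K_{k,j}=R^{n}(1+R2^{\max(j,k)})^{-tM}\,2^{kt(\alpha_{1}+n/s)}\,2^{jt(n/t-n/p-\alpha_{2})},
\]
for which $2^{k\alpha_{1}t}\big\|\chi_{k}(\eta_{R,tM}\ast(|f|^{t}\chi_{j}))\big\|_{s/t}\le c\,K_{k,j}\big(2^{j\alpha_{2}}\big\|f\chi_{j}\big\|_{p}\big)^{t}$.

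The summation is the heart of the matter. For the far part, choosing $t$ so small that $n/t>n/p+\alpha_{2}$ (possible since $\alpha_{2}\ge\alpha_{1}>-n/p$) and $M$ so large that the resulting powers of $u=R2^{\max(j,k)}$ are negative at infinity, summing the geometric series in the smaller index gives $\sup_{k}\sum_{j}K_{k,j}+\sup_{j}\sum_{k}K_{k,j}\le c\,R^{t\beta}$ with $\beta=\tfrac np-\tfrac ns+\alpha_{2}-\alpha_{1}$; a Schur-type argument on sequences (using $\ell^{q}\hookrightarrow\ell^{r}$ when $r\ge q$, and H\"older's inequality in the index together with the decay of $K_{k,j}$ when $r<q$) then bounds the far contribution to $\big\|f\big\|_{\dot{K}_{s}^{\alpha_{1},r}}$ by $c\,R^{\beta}\big\|f\big\|_{\dot{K}_{p}^{\alpha_{2},q}}$. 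For the near part, after extracting $R^{n/p-n/s}$ one is reduced to proving $\big\|f\big\|_{\dot{K}_{p}^{\alpha_{1},r}}\le c\,R^{\alpha_{2}-\alpha_{1}}\big\|f\big\|_{\dot{K}_{p}^{\alpha_{2},\theta}}$ for band-limited $f$ of type $R$; when $\alpha_{2}=\alpha_{1}$ this is trivial ($\theta=r$). When $\alpha_{2}>\alpha_{1}$ ($\theta=q$) I would split the $k$-sum at $2^{k}\approx R^{-1}$: for $2^{k}\gtrsim R^{-1}$ use $2^{k\alpha_{1}}\big\|f\chi_{k}\big\|_{p}=2^{-k(\alpha_{2}-\alpha_{1})}2^{k\alpha_{2}}\big\|f\chi_{k}\big\|_{p}\le R^{\alpha_{2}-\alpha_{1}}2^{k\alpha_{2}}\big\|f\chi_{k}\big\|_{p}$ and H\"older; for $2^{k}\lesssim R^{-1}$ the sub-mean inequality gives $\big\|f\chi_{k}\big\|_{p}^{t}\le c\,2^{knt/p}S$ with $S=\sum_{j}R^{n}(1+R2^{j})^{-tM}2^{jn(1-t/p)}\big\|f\chi_{j}\big\|_{p}^{t}$ \emph{independent of $k$}, whence the geometric series $\sum_{2^{k}\lesssim R^{-1}}2^{k(\alpha_{1}+n/p)r}$ converges \emph{precisely because} $\alpha_{1}+\tfrac np>0$ (which holds since $\alpha_{1}+\tfrac ns>0$ and $p\le s$), yielding the factor $R^{-(\alpha_{1}+n/p)}$, while $S\le c\,R^{n/p+\alpha_{2}}\big\|f\big\|_{\dot{K}_{p}^{\alpha_{2},q}}^{t}$; the product of the two powers of $R$ is exactly $R^{t(\alpha_{2}-\alpha_{1})}$. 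Combining the near and far contributions yields the lemma, with the usual modifications when $q=\infty$, $r=\infty$ or $s=\infty$.

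I expect the main obstacle to be this last, essentially bookkeeping, step: one must choose $t$ small and $M$ large so that \emph{every} geometric series in the annulus indices converges, and then verify that the surviving powers of $u=R2^{k}$ are non-negative as $u\to0$ and sufficiently negative as $u\to\infty$. The hypothesis $\alpha_{1}+\tfrac ns>0$ is exactly what forces convergence of the critical sum over the low annuli $2^{k}\lesssim R^{-1}$ (equivalently, positivity of the exponent $\alpha_{1}+n/s$ occurring in $K_{k,j}$), and it is also what keeps the constant $c$ independent of $R$.
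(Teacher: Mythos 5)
Your proposal is correct and follows essentially the same route as the paper: for this lemma the paper only cites \cite{Drihem1.13}, but its detailed proof of the Lorentz--Herz analogue (Lemma \ref{Bernstein-Herz-ine1-lorentz}) rests on exactly your ingredients --- the sub-mean/convolution inequality with the kernel $\eta_{R,N}$ (Lemma \ref{r-trick}), a splitting of the annuli at $2^{k}\approx R^{-1}$, Young's inequality for the diagonal interactions, and geometric series whose convergence is governed by $\alpha_{1}+\frac{n}{s}>0$ together with a small integrability parameter and a large decay exponent. Your reorganization (near/far indexing by annuli with a Schur-type kernel bound, instead of the paper's spatial split into $V_{R,k}^{1},V_{R,k}^{2},V_{R,k}^{3}$ and its sup-on-balls estimate, Lemma \ref{Key-est1-lorentz}) is only a cosmetic difference.
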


\begin{remark}
Lemma \ref{Bernstein-Herz-ine1}\ extends and improves classical\
Plancherel-Polya-Nikolskij inequality by taking $\alpha _{1}=\alpha _{2}=0,$ 
$r=s$ and by using the embedding$\ \ell ^{p}\hookrightarrow \ell ^{s}$.
\end{remark}

In the previous lemma we have not treated the case $s\leq p$. The next lemma
gives a positive answer; see also \cite{Drihem1.13}.

\begin{lemma}
\label{Bernstein-Herz-ine2}\textit{Let }$\alpha _{1},\alpha _{2}\in \mathbb{R%
}\mathit{\ }$\textit{and} $0<s,p,q,r\leq \infty $. \textit{We suppose that }$%
\alpha _{1}+\frac{n}{s}>0,0<s\leq p\leq \infty $ and $\alpha _{2}>\alpha
_{1}+\frac{n}{s}-\frac{n}{p}$. \textit{Then there exists a positive constant 
}$c$\textit{\ independent of }$R$\textit{\ such that for all }$f\in \dot{K}%
_{p}^{\alpha _{2},q}\cap \mathcal{S}^{\prime }(\mathbb{R}^{n})$\textit{\
with }$\mathrm{supp}$\textit{\ }$\mathcal{F}f\subset \{\xi \in \mathbb{R}%
^{n}:|\xi |\leq R\}$\textit{, we have} 
\begin{equation*}
\big\|f\big\|_{\dot{K}_{s}^{\alpha _{1},r}}\leq c\text{ }R^{\frac{n}{p}-%
\frac{n}{s}+\alpha _{2}-\alpha _{1}}\big\|f\big\|_{\dot{K}_{p}^{\alpha
_{2},q}}.
\end{equation*}
\end{lemma}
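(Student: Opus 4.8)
The plan is to reduce Lemma~\ref{Bernstein-Herz-ine2} to the already-established Lemma~\ref{Bernstein-Herz-ine1} by means of the sharp embedding for Lorentz-Herz spaces recorded in Proposition~\ref{embeddings1 copy(1)-lorentz}/(ii) (specialized to $p=r$, i.e.\ to the classical Herz setting). First I would fix an auxiliary exponent $u$ satisfying $0<u<p$ and chosen so that
\begin{equation*}
\alpha_2 \;>\; \alpha_1 + \frac{n}{s} - \frac{n}{u} \;\ge\; \alpha_1 + \frac{n}{s} - \frac{n}{p},
\end{equation*}
which is possible because of the strict hypothesis $\alpha_2 > \alpha_1 + \frac{n}{s} - \frac{n}{p}$: one only needs $\frac{n}{u}$ slightly larger than $\frac{n}{p}$, i.e.\ $u$ slightly smaller than $p$. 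The idea is that passing from $\dot K_p^{\alpha_2,q}$ down to a space $\dot K_u^{\beta,q}$ with smaller integrability exponent $u\le s$ puts us back into the regime $0<u\le s$ covered by Lemma~\ref{Bernstein-Herz-ine1}, at the cost of shifting the smoothness index $\alpha_2$, and the strict inequality gives exactly the room to absorb that shift.

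The key steps, in order, are: (1) Apply the Herz-space embedding $\dot K_p^{\alpha_2,q}\hookrightarrow \dot K_u^{\alpha_2 - \frac{n}{u}+\frac{n}{p},\,q}$ (Proposition~\ref{embeddings1 copy(1)-lorentz}/(ii) with $r_1=r_2=p=r$, $p_1=p$, $p_2=u$), so that
\begin{equation*}
\big\|f\big\|_{\dot K_u^{\beta,q}} \;\lesssim\; \big\|f\big\|_{\dot K_p^{\alpha_2,q}}, \qquad \beta := \alpha_2 - \frac{n}{u} + \frac{n}{p}.
\end{equation*}
(2) Check the hypotheses of Lemma~\ref{Bernstein-Herz-ine1} with the substitution $(p,s,\alpha_2,\alpha_1)\rightsquigarrow(u,s,\beta,\alpha_1)$: namely $0<u\le s\le\infty$ (ensured by the choice $u<p$ and $p\ge s$ would fail — here I must be careful, see below), $\alpha_1+\frac{n}{s}>0$ (given), and $\beta\ge\alpha_1$, which by the definition of $\beta$ is exactly $\alpha_2-\frac nu+\frac np\ge\alpha_1$, i.e.\ $\alpha_2\ge\alpha_1+\frac nu-\frac np$ — and since $\frac nu-\frac np = \frac ns - \frac np - (\frac ns-\frac nu)$, one arranges $u$ so that this holds with the slack coming from the strict hypothesis. (3) Apply Lemma~\ref{Bernstein-Herz-ine1} to get $\big\|f\big\|_{\dot K_s^{\alpha_1,r}}\le c\,R^{\frac nu-\frac ns+\beta-\alpha_1}\big\|f\big\|_{\dot K_u^{\beta,q}}$, and (4) compute the exponent: $\frac nu-\frac ns+\beta-\alpha_1 = \frac nu - \frac ns + \alpha_2 - \frac nu + \frac np - \alpha_1 = \frac np - \frac ns + \alpha_2 - \alpha_1$, which is precisely the claimed power of $R$. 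Chaining (1)–(4) gives the lemma.

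The main obstacle — and the point requiring genuine care — is the ordering constraint: Lemma~\ref{Bernstein-Herz-ine1} requires $0<p\le s$, but in Lemma~\ref{Bernstein-Herz-ine2} we are in the opposite regime $s\le p$. Simply choosing $u<p$ does not by itself put us below $s$. The correct choice is $u\le s$ (e.g.\ $u=s$, or $u$ slightly below $s$ if one wants strict inequalities to close), and then one must verify that the embedding in step (1) is still valid — it is, since Proposition~\ref{embeddings1 copy(1)-lorentz}/(ii) only requires $0<p_2<p_1<\infty$, i.e.\ $0<u<p$, which holds because $u\le s\le p$ and the case $u=p$ is trivial. With $u=s$ one gets $\beta=\alpha_2-\frac ns+\frac np$, the condition $\beta\ge\alpha_1$ becomes $\alpha_2\ge\alpha_1+\frac ns-\frac np$, which is \emph{implied} (non-strictly) by the strict hypothesis of the lemma, and Lemma~\ref{Bernstein-Herz-ine1} applies with the $\alpha_2>\alpha_1$ branch (hence $\theta=q$), yielding the $R$-exponent $\frac ns-\frac ns+\beta-\alpha_1=\beta-\alpha_1=\alpha_2-\alpha_1+\frac np-\frac ns$ as required. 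One should also note that the Fourier-support condition $\operatorname{supp}\mathcal Ff\subset\{|\xi|\le R\}$ is preserved trivially (it is a hypothesis on $f$, not altered by the embedding), and that the finiteness needed to invoke Lemma~\ref{Bernstein-Herz-ine1} is guaranteed by step (1). A remark after the proof could record that taking $\alpha_1=\alpha_2-\frac ns+\frac np$ and $r=s=q$ recovers, via $\ell^s\hookrightarrow\ell^p$, the classical Plancherel–Polya–Nikolskij inequality in the range $s\le p$.
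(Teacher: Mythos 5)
Your reduction is correct, and it is a genuinely different (more structural) route than the one the paper follows. For this classical Herz lemma the paper simply cites \cite{Drihem1.13}, but its own proof of the Lorentz analogue, Lemma \ref{Bernstein-Herz-ine2-lorentz}, is direct: the sum over $k$ is split at $2^{k}\sim 1/R$; the part with $2^{k}\leq 1/R$ is handled by the sup-norm estimate of Lemma \ref{Key-est1-lorentz} exactly as in the case $p\leq s$; and the part with $2^{k}>1/R$ is handled by H\"older's inequality on each annulus, $\|f\chi_{k}\|_{L^{s}}\lesssim 2^{kn(\frac{1}{s}-\frac{1}{p})}\|f\chi_{k}\|_{L^{p}}$, followed by summing the geometric series $\sum_{2^{k}R>1}(2^{k}R)^{(\frac{n}{s}-\frac{n}{p}-\alpha_{2}+\alpha_{1})r}$, convergent precisely because of the strict hypothesis. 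Your argument factors the same two ingredients: the annulus-wise H\"older step is exactly the content of Proposition \ref{embeddings1 copy(1)-lorentz}/(ii), and what remains after the embedding is the case $p=s$ of Lemma \ref{Bernstein-Herz-ine1}, whose proof already contains the low-annulus estimate. The exponent arithmetic checks out ($\beta-\alpha_{1}=\alpha_{2}-\alpha_{1}+\frac{n}{p}-\frac{n}{s}$), and the strict inequality is used exactly where it must be, namely to get $\beta>\alpha_{1}$ and hence $\theta=q$ in Lemma \ref{Bernstein-Herz-ine1}. What your route buys is that the $I_{R}$ estimate need not be redone and the statement is exposed as a formal consequence of the Sobolev-type embedding plus the already-settled regime. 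Two small bookkeeping points: to land in the classical Herz space $\dot{K}_{s}^{\beta,q}=\dot{K}_{s,s}^{\beta,q}$ you should take $p_{1}=r_{1}=p$ and $p_{2}=r_{2}=s$ in the Proposition (your ``$r_{1}=r_{2}=p$'' would produce $\dot{K}_{s,p}^{\beta,q}$, which is the larger space when $s\leq p$); and the endpoint $p=\infty$ is excluded from the Proposition as stated ($p_{1}<\infty$), though the annulus estimate $\|f\chi_{k}\|_{L^{s}}\lesssim 2^{kn/s}\|f\chi_{k}\|_{L^{\infty}}$ extends it trivially, and $s=\infty$ forces $s=p$ so that case reduces directly to Lemma \ref{Bernstein-Herz-ine1}.
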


The following lemma plays a crucial role in our proofs.

\begin{lemma}
\label{r-trick}Let $r,R,N>0$, $m>n$ and $\theta ,\omega \in \mathcal{S}%
\left( \mathbb{R}^{n}\right) $ with $\mathrm{supp}$ $\mathcal{F}\omega
\subset \{\xi \in \mathbb{R}^{n}:|\xi |\leq 2\}$. Then there exists $%
c=c(r,m,n)>0$ such that for all $g\in \mathcal{S}^{\prime }(\mathbb{R}^{n})$%
, we have 
\begin{equation}
\left\vert \theta _{R}\ast \omega _{N}\ast g\left( x\right) \right\vert \leq
c\text{ }\max \Big(1,\Big(\frac{N}{R}\Big)^{m}\Big)(\eta _{N,m}\ast
\left\vert \omega _{N}\ast g\right\vert ^{r}(x))^{1/r},\quad x\in \mathbb{R}%
^{n},  \label{r-trick-est}
\end{equation}%
where $\theta _{R}=R^{n}\theta (R\cdot )$, $\omega _{N}=N^{n}\omega (N\cdot
) $ and $\eta _{N,m}=N^{n}(1+N\left\vert \cdot \right\vert )^{-m}$.
\end{lemma}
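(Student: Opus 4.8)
The plan is to reduce \eqref{r-trick-est} to a pointwise convolution estimate for a single rapidly decreasing kernel, which is the standard ``$r$-trick'' of Frazier and Jawerth and of Rychkov. First I would write $h=\omega_N\ast g$, which is an entire function of exponential type since $\operatorname{supp}\mathcal F h\subset\operatorname{supp}\mathcal F\omega_N\subset\{|\xi|\le 2N\}$; in particular $h\in\mathcal S'$ is a smooth (slowly increasing) function, so all the manipulations below make sense. Then $\theta_R\ast\omega_N\ast g=\theta_R\ast h$, and the goal becomes
\begin{equation*}
\big|\theta_R\ast h(x)\big|\le c\,\max\Big(1,\Big(\tfrac NR\Big)^m\Big)\big(\eta_{N,m}\ast|h|^r(x)\big)^{1/r}.
\end{equation*}

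The key step is a Peetre-type maximal inequality. Since $h$ has Fourier support in $\{|\xi|\le 2N\}$, one has the classical pointwise bound
\begin{equation*}
\big(\theta_R\ast h\big)^\ast_{N,s}(x):=\sup_{y\in\mathbb R^n}\frac{|\theta_R\ast h(x-y)|}{(1+N|y|)^{s}}\le c\,(\eta_{N,s}\ast|h|^r(x))^{1/r}
\end{equation*}
for $s>n/r$, and for any $\theta_R\ast h$ whose Fourier transform is supported in a ball of radius $\lesssim N$. To see this one writes $\theta_R\ast h=\Phi_N\ast(\theta_R\ast h)$ for a fixed $\Phi\in\mathcal S$ with $\widehat\Phi=1$ on $\{|\xi|\le 2\}$ (using that $\widehat{\theta_R\ast h}$ lives in $\{|\xi|\le 2N\}$ as soon as $R\ge$ something — but in fact we only need $\widehat{\theta_R\ast h}$ supported in $\{|\xi|\lesssim\max(R,N)\}$, so one should take $\Phi_N$ replaced by $\Phi_{\max(R,N)}$ and this is exactly where the factor $\max(1,(N/R)^m)$ enters), expands
\begin{equation*}
|\theta_R\ast h(x-y)|\le\int_{\mathbb R^n}|\Phi_N(z)|\,|\theta_R\ast h(x-y-z)|\,dz,
\end{equation*}
bounds $|\Phi_N(z)|\lesssim\eta_{N,L}(z)$ for large $L$, and then uses $(1+N|y+z|)^{s}\lesssim(1+N|y|)^s(1+N|z|)^s$ to pull the supremum in $y$ through; the remaining integral is handled by Hölder in the form $|F|\le(\eta\ast|F|^r)^{1/r}\cdot\|\eta\|_{r'}^{1/?}$ after first absorbing powers of $\eta$, exactly as in \cite{T83} (1.3.1) or Rychkov's lemma. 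Finally one estimates $|\theta_R\ast h(x)|=|\theta_R\ast h(x-0)|\le(\theta_R\ast h)^\ast_{N,m}(x)$ and invokes the displayed maximal bound with $s=m>n$ (so $m>n\ge n/r$ once $r\le1$; for $r\ge1$ note $\eta_{N,m}\ast|h|^r$ can only be larger after Jensen, so it suffices to prove the case $0<r<1$ and the general case follows, or simply require $m>n/\min(1,r)$ — but the statement already assumes $m>n$ and in the intended applications $r$ will be chosen small, so $m>n$ together with a harmless decrease of $r$ gives $m>n/r$).

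The main obstacle is bookkeeping the dilation factor $\max(1,(N/R)^m)$ correctly. When $R\ge N$ the Fourier support of $\theta_R\ast h$ still sits in $\{|\xi|\le 2N\}$ and the reproducing identity $\theta_R\ast h=\Phi_N\ast\theta_R\ast h$ holds with constants independent of $R$, giving the ``$1$'' in the maximum. When $R<N$ one instead reproduces with $\Phi$ dilated at scale $N$ against the kernel $\theta_R$ itself: writing $\theta_R=N^n\theta(R\cdot)$ and comparing $(1+R|x|)^{-m}$ with $(1+N|x|)^{-m}$ costs a factor $(N/R)^{m}$, which is precisely the second entry of the maximum. So the proof splits into these two regimes; in each, the steps are (i) a reproducing formula adapted to the larger of the two frequency scales, (ii) the pointwise kernel bound $|\Phi_\bullet(z)|\lesssim\eta_{\bullet,m}(z)$ together with the submultiplicativity $(1+a|y+z|)^m\le(1+a|y|)^m(1+a|z|)^m$, and (iii) the $r$-power Hölder trick turning $\sup_y(1+N|y|)^{-m}|h(x-y)|$ into $(\eta_{N,m}\ast|h|^r(x))^{1/r}$, with $m>n$ ensuring integrability. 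Routine constants depending only on $r,m,n$ are absorbed throughout.
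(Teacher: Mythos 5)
The paper states Lemma \ref{r-trick} without proof (it is the classical Frazier--Jawerth/Rychkov ``$r$-trick''), so your proposal has to stand on its own, and it has genuine gaps at exactly the two quantitatively delicate points. First, your explanation of where the factor $\max(1,(N/R)^m)$ comes from rests on a misconception: since $\mathcal{F}(\theta_R\ast\omega_N\ast g)=\mathcal{F}\theta_R\cdot\mathcal{F}(\omega_N\ast g)$, the spectrum of $\theta_R\ast\omega_N\ast g$ lies in $\{|\xi|\le 2N\}$ for \emph{every} $R$; convolution never enlarges the Fourier support, so the reproducing identity with $\Phi_N$ is always available and nothing forces you to pass to $\Phi_{\max(R,N)}$ --- hence that is not where the factor enters. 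It enters through the weight integral $\int|\theta_R(y)|(1+N|y|)^a\,dy=\int|\theta(u)|(1+\tfrac NR|u|)^a\,du\approx\max\bigl(1,(N/R)^a\bigr)$ in the direct chain $|\theta_R\ast h(x)|\le h^{\ast}_{a,N}(x)\int|\theta_R(y)|(1+N|y|)^a\,dy$, $h=\omega_N\ast g$. Moreover, your displayed ``maximal bound'' has $|h|^r$ on the right but $\theta_R\ast h$ inside the supremum on the left; the Peetre--Rychkov inequality controls $\sup_y|F(x-y)|(1+N|y|)^{-a}$ by $\bigl(N^n\int|F(z)|^r(1+N|x-z|)^{-ar}dz\bigr)^{1/r}$ with the \emph{same} $F$ on both sides and under $ar>n$, and converting $|\theta_R\ast h|^r$ into $|h|^r$ inside the convolution is precisely the nontrivial issue for $r<1$ (your ``H\"older \dots $\|\eta\|_{r'}^{1/?}$'' step does not exist when $r<1$).

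Second, the interplay of $m$, $n$, $r$ cannot be waved away. Running the correct argument with $a=m/r$ (so that the kernel $\eta_{N,m}$ of the statement appears and only $m>n$ is needed) yields the factor $\max(1,(N/R)^{m/r})$; choosing $a=m$ yields the stated factor $\max(1,(N/R)^m)$ but forces the kernel $\eta_{N,mr}$ and the hypothesis $mr>n$. Your remedy ``a harmless decrease of $r$'' goes the wrong way: decreasing $r$ strengthens the conclusion (Jensen with the normalized kernel) but makes the needed condition $m>n/r$ strictly harder, so no such reduction exists. In fact, with kernel $\eta_{N,m}$ and factor $(N/R)^m$ the estimate \eqref{r-trick-est} fails for small $r$: take $g=\delta_0$, $\omega\ge 0$ of Fej\'er type, $\theta$ a Gaussian, $|x|=1/R$ and $t=N/R\to\infty$; then the left-hand side is $\approx R^n=N^nt^{-n}$ while the right-hand side is $\approx t^m\,N^n\,t^{-m/r}$, so the inequality would force $m(1-r)/r\le n$, contradicting $m>n$ for, e.g., $n=1$, $r=1/2$, $m=2$. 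So the clean proof gives \eqref{r-trick-est} with $\max(1,(N/R)^{m/r})$ (equivalently with $\eta_{N,mr}$ under $mr>n$), which is all this paper actually uses, since Lemma \ref{r-trick} is only invoked with $N/R\le 2$ or $N=R$, where either factor is a harmless constant. A correct write-up should follow the direct route above and state honestly which exponent appears in the $\max$.
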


\begin{lemma}
\label{Key-est1-lorentz}\textit{Let }$\alpha \in \mathbb{R},0<p<\infty
,0<r,q\leq \infty $ \textit{and }$R\geq H>0$\textit{. Then there exists a
constant }$c>0$ \textit{independent of }$R$ and $H$\textit{\ such that for
all }$f\in \dot{K}_{p,r}^{\alpha ,q}\cap \mathcal{S}^{\prime }(\mathbb{R}%
^{n})$\textit{\ with }$\mathrm{supp}$\textit{\ }$\mathcal{F}f\subset \{\xi
\in \mathbb{R}^{n}:|\xi |\leq R\}$, we have 
\begin{equation*}
\sup_{x\in B(0,\frac{1}{H})}|f(x)|\leq c\ \big(\frac{R}{H}\big)^{\frac{n}{d}%
}H^{\frac{n}{p}+\alpha }\big\|f\big\|_{\dot{K}_{p,r}^{\alpha ,q}}
\end{equation*}%
for any $0<d<\min \big(p,r,\frac{n}{\frac{n}{p}+\alpha }\big).$
\end{lemma}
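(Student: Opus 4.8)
The plan is to dominate $f$ pointwise on $B(0,1/H)$ by a Hardy--Littlewood-type average of $|f|^{d}$ (via the $r$-trick, Lemma~\ref{r-trick}), and then to evaluate that average annulus by annulus, using the definition of the $\dot{K}_{p,r}^{\alpha,q}$-quasi-norm together with H\"older's inequality for Lorentz spaces. Note that the hypothesis already forces $\frac{n}{p}+\alpha>0$, which is exactly what is needed below.

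\emph{Step 1: pointwise reduction.} Fix $m>n$ (to be chosen large) and let $\vartheta$ be as in \eqref{function-v}, so that $\omega:=\mathcal{F}^{-1}\vartheta\in\mathcal{S}(\mathbb{R}^{n})$ satisfies $\mathcal{F}\omega=1$ on $\{|\xi|\le1\}$ and $\supp\mathcal{F}\omega\subset\{|\xi|\le3/2\}$. Since $\supp\mathcal{F}f\subset\{|\xi|\le R\}$ and $\mathcal{F}(\omega_{R})(\xi)=\vartheta(\xi/R)$ equals $1$ on $\{|\xi|\le R\}$, we have $\omega_{R}\ast f=f$, where $\omega_{R}=R^{n}\omega(R\cdot)$. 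I would then apply Lemma~\ref{r-trick} with $\theta=\omega$, $N=R$, $g=f$ and with its free parameter taken to be $d$; because $\max(1,(N/R)^{m})=1$ and $\omega_{R}\ast f=f$, this gives
\[
|f(x)|=\big|\theta_{R}\ast\omega_{R}\ast f(x)\big|\le c\,\big(\eta_{R,m}\ast|f|^{d}(x)\big)^{1/d},\qquad x\in\mathbb{R}^{n}.
\]
Thus everything reduces to proving $\eta_{R,m}\ast|f|^{d}(x)\lesssim (R/H)^{n}H^{d(n/p+\alpha)}\,\|f\|_{\dot{K}_{p,r}^{\alpha,q}}^{d}$ for all $x\in B(0,1/H)$ (which a posteriori also shows the convolution is finite).

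\emph{Step 2: the annular estimate.} Write $\eta_{R,m}\ast|f|^{d}(x)=\sum_{k\in\mathbb{Z}}I_{k}$ with $I_{k}=\int_{R_{k}}R^{n}(1+R|x-y|)^{-m}|f(y)|^{d}\,dy$. For each $k$, writing $|f|^{d}\chi_{k}=(|f|^{d}\chi_{k})\cdot\chi_{k}$ and using H\"older's inequality for Lorentz spaces (Proposition~\ref{Holder's-convolution-lorentz}(v)), which is legitimate since $d<\min(p,r)$ forces $p/d>1$ and $r/d>1$, together with \eqref{pro1-lorentz} and \eqref{est-function1},
\[
\int_{R_{k}}|f(y)|^{d}\,dy=\big\||f|^{d}\chi_{k}\big\|_{L^{1}}\lesssim\big\|f\chi_{k}\big\|_{L^{p,r}}^{d}\,\big\|\chi_{k}\big\|_{L^{(p/d)',(r/d)'}}\lesssim 2^{kn(1-d/p)}\,\big\|f\chi_{k}\big\|_{L^{p,r}}^{d}.
\]
Since $2^{k\alpha}\|f\chi_{k}\|_{L^{p,r}}\le\|f\|_{\dot{K}_{p,r}^{\alpha,q}}$ for every $k\in\mathbb{Z}$ and every $q\in(0,\infty]$, this yields, with $\sigma:=n-d\big(\tfrac{n}{p}+\alpha\big)$,
\[
\int_{R_{k}}|f(y)|^{d}\,dy\lesssim 2^{k\sigma}\,\|f\|_{\dot{K}_{p,r}^{\alpha,q}}^{d},\qquad k\in\mathbb{Z},
\]
where $0<\sigma<n$ because $0<d<\frac{n}{n/p+\alpha}$.

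\emph{Step 3: summation and balancing of $R$ and $H$.} Choose $l\in\mathbb{Z}$ with $2^{l-1}\le 1/H<2^{l}$, so $B(0,1/H)\subset B_{l}$ and $2^{l}\approx 1/H$, and split $\sum_{k}I_{k}$ at $k=l+1$. For $k\le l+1$ I would simply bound $(1+R|x-y|)^{-m}\le1$, giving $I_{k}\lesssim R^{n}2^{k\sigma}\|f\|^{d}$; since $\sigma>0$ the series over $k\le l+1$ sums to $\lesssim R^{n}2^{l\sigma}\|f\|^{d}\approx R^{n}H^{-\sigma}\|f\|^{d}=(R/H)^{n}H^{d(n/p+\alpha)}\|f\|^{d}$. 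For $k\ge l+2$, the geometry gives $|x-y|\ge 2^{k-1}-2^{l}\ge 2^{k-2}$ for $y\in R_{k}$ and $x\in B(0,1/H)$, so $(1+R|x-y|)^{-m}\lesssim(R2^{k})^{-m}$ and $I_{k}\lesssim R^{n-m}2^{k(\sigma-m)}\|f\|^{d}$; choosing $m>n$ (hence $m>\sigma$) makes the series over $k\ge l+2$ converge to $\lesssim R^{n-m}2^{l(\sigma-m)}\|f\|^{d}\approx R^{n-m}H^{m}H^{-\sigma}\|f\|^{d}\le R^{n}H^{-\sigma}\|f\|^{d}=(R/H)^{n}H^{d(n/p+\alpha)}\|f\|^{d}$, the last inequality using $H\le R$. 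Adding the two parts gives $\eta_{R,m}\ast|f|^{d}(x)\lesssim(R/H)^{n}H^{d(n/p+\alpha)}\|f\|_{\dot{K}_{p,r}^{\alpha,q}}^{d}$ for all $x\in B(0,1/H)$, and feeding this into Step~1 (raising to the power $1/d$ and taking $\sup_{x\in B(0,1/H)}$) yields the claim. The only real difficulty is the bookkeeping here: the ``near'' series ($k\to-\infty$) converges precisely because $d<\frac{n}{n/p+\alpha}$, the ``far'' series ($k\to+\infty$) converges because the Schwartz decay of $\omega$ lets us take $m>n$ arbitrarily large, and the hypothesis $H\le R$ is exactly what allows $R^{n-m}H^{m}$ to be absorbed into $R^{n}$; everything else is routine.
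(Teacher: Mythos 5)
Your argument is correct and is exactly the one the paper intends (the paper merely cites \cite{Drihem1.13}, where the Herz-space version is proved by the same scheme: the $r$-trick reduction $|f|\lesssim(\eta_{R,m}\ast|f|^{d})^{1/d}$, the annulus-by-annulus H\"older estimate $\int_{R_{k}}|f|^{d}\lesssim 2^{k(n-d(n/p+\alpha))}\|f\|_{\dot{K}_{p,r}^{\alpha,q}}^{d}$, and the split of the sum at $2^{k}\approx 1/H$ using $\sigma>0$ near the origin, $m>n$ far away, and $H\le R$ to absorb $R^{n-m}H^{m}$). The only cosmetic point is that $\omega_{R}\ast f$ equals $f$ up to the fixed normalization constant coming from the convention $\mathcal{F}(f\ast g)=(2\pi)^{n/2}\mathcal{F}f\,\mathcal{F}g$, which is harmless.
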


\begin{proof}
The proof follows by the same arguments as in \cite{Drihem1.13}.
\end{proof}

The following lemma is the $\dot{K}_{p,r}^{\alpha ,q}$-version of
Plancherel-Polya-Nikolskij inequality.

\begin{lemma}
\label{Bernstein-Herz-ine1-lorentz}\textit{Let }$\alpha _{1},\alpha _{2}\in 
\mathbb{R},0<p\leq s<\infty \mathit{\ }$\textit{and} $0<q,r,r_{1},r_{2}\leq
\infty $. \textit{We suppose that }$\alpha _{1}+\frac{n}{s}>0$ and $\alpha
_{2}\geq \alpha _{1}$. \textit{Then there exists a positive constant }$c>0$%
\textit{\ independent of }$R$\textit{\ such that for all }$f\in \dot{K}%
_{p,r_{2}}^{\alpha _{2},\theta }\cap \mathcal{S}^{\prime }(\mathbb{R}^{n})$%
\textit{\ with }$\mathrm{\ supp}$\textit{\ }$\mathcal{F}f\subset \{\xi \in 
\mathbb{R}^{n}:|\xi |\leq R\}$\textit{, we have} 
\begin{equation*}
\big\|f\big\|_{\dot{K}_{s,r_{1}}^{\alpha _{1},r}}\leq c\text{ }R^{\frac{n}{p}%
-\frac{n}{s}+\alpha _{2}-\alpha _{1}}\big\|f\big\|_{\dot{K}%
_{p,r_{2}}^{\alpha _{2},\theta }},
\end{equation*}%
where 
\begin{equation*}
\theta =\left\{ 
\begin{array}{ccc}
r, & \text{if} & \alpha _{2}=\alpha _{1}, \\ 
q, & \text{if} & \alpha _{2}>\alpha _{1}.%
\end{array}%
\right.
\end{equation*}
\end{lemma}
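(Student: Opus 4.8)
The plan is to reduce the Lorentz--Herz version to the classical Herz version (Lemma~\ref{Bernstein-Herz-ine1}) by a decomposition over dyadic annuli combined with the local sup-bound of Lemma~\ref{Key-est1-lorentz}. First I would fix $f\in \dot{K}_{p,r_{2}}^{\alpha _{2},\theta }\cap \mathcal{S}^{\prime }(\mathbb{R}^{n})$ with $\operatorname{supp}\mathcal{F}f\subset\{|\xi|\le R\}$ and recall that, since $f$ is band-limited, it is smooth and of at most polynomial growth, so all the pointwise estimates below make sense. The key observation is that for a band-limited $f$ one controls the local maximum on a ball $B(0,2^{k})$ by the Lorentz--Herz quasi-norm via Lemma~\ref{Key-est1-lorentz}: taking $H=2^{-k}$ there gives
\begin{equation*}
\sup_{x\in B(0,2^{k})}|f(x)|\le c\,\big(R2^{k}\big)^{\frac{n}{d}}2^{-k(\frac{n}{p}+\alpha _{2})}\big\|f\big\|_{\dot{K}_{p,r_{2}}^{\alpha _{2},\theta }}
\end{equation*}
for suitable $0<d<\min(p,r_{2},\tfrac{n}{n/p+\alpha_{2}})$. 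This will be used to absorb the "low-frequency/near-origin" part, i.e.\ the annuli $R_{k}$ with $k$ very negative relative to $R$.

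The main step is to estimate $\big\|f\chi _{k}\big\|_{L^{s,r_{1}}}$ for each $k\in\mathbb{Z}$ and then reassemble the $\ell^{r}$ sum with weights $2^{k\alpha_{1} r}$. For the "bulk" range I would use Hölder's inequality for Lorentz spaces (Proposition~\ref{Holder's-convolution-lorentz}(v)) to write, with $\tfrac1s=\tfrac1p+\tfrac1v$,
\begin{equation*}
\big\|f\chi _{k}\big\|_{L^{s,r_{1}}}\le c\,\big\|f\chi _{k}\big\|_{L^{p,r_{2}}}\big\|\chi _{k}\big\|_{L^{v,\infty}}\approx 2^{k(\frac{n}{s}-\frac{n}{p})}\big\|f\chi _{k}\big\|_{L^{p,r_{2}}},
\end{equation*}
using $\tfrac{n}{s}-\tfrac{n}{p}<0$. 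This alone loses the factor $R^{n/p-n/s+\alpha_2-\alpha_1}$ we want and does not yet exploit band-limitation; so the sharper route is the following two-regime split. Fix the integer $k_{0}$ with $2^{-k_{0}}\approx 1/R$. For $k\le k_{0}$ (the annuli inside $B(0,1/R)$, roughly) I would bound $|f|$ on $R_{k}$ by $\sup_{B(0,2^{k})}|f|$ and use the displayed consequence of Lemma~\ref{Key-est1-lorentz} together with $\|\chi_k\|_{L^{s,r_1}}\approx 2^{kn/s}$; the resulting geometric series in $k$ over $k\le k_0$ converges precisely because $\tfrac{n}{d}+\tfrac{n}{s}-\tfrac{n}{p}-\alpha_{2}>0$ (which holds for $d$ close enough to the upper bound, by the constraint $\alpha_1+\tfrac ns>0$ and $\alpha_2\ge\alpha_1$), and it produces the correct power of $R$. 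For $k\ge k_{0}$ one uses the Hölder estimate above combined with the classical Plancherel--Polya--Nikolskij estimate or, more directly, Lemma~\ref{r-trick} / Lemma~\ref{est-maximal} applied to $f=\mathcal{F}^{-1}\Phi\ast f$ with $\Phi\equiv1$ on $\{|\xi|\le R\}$ to pass from $L^{s,r_1}$ back to $L^{p,r_2}$ on a fixed dilate of $R_k$, picking up the factor $R^{n/p-n/s}$; summing against $2^{k\alpha_1 r}$ and using $\alpha_2\ge\alpha_1$ reduces this tail to $R^{\alpha_2-\alpha_1}\big\|f\big\|_{\dot{K}_{p,r_2}^{\alpha_2,\theta}}$ via the monotonicity embedding Lemma~\ref{embeddings1-lorentz}(i) when $\theta=q\ge r$ in the case $\alpha_2>\alpha_1$.

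Concretely, the cleanest organization is: (1) by a scaling/dilation argument (using \eqref{dilation-lorentz} and homogeneity of the $\dot K$-norms) reduce to $R=1$; (2) split $\mathbb{Z}=\{k\le 0\}\cup\{k\ge 1\}$; (3) on $\{k\ge1\}$ apply Lemma~\ref{est-eta} and Lemma~\ref{r-trick} with $R=N=1$ to get $|f\chi_k|\lesssim (\eta_{1,m}\ast|f|^{\tau})^{1/\tau}\chi_k$ for small $\tau$, then bound the convolution on $R_k$ by $\sum_{j}2^{-m|k-j|}\|f\chi_j\|$-type sums and apply the Hardy-type Lemma~\ref{lem:lq-inequality}; (4) on $\{k\le0\}$ use the sup-bound from Lemma~\ref{Key-est1-lorentz} with $H=2^{-k}$, $R=1$; (5) sum the two pieces in $\ell^{r}$ against $2^{k\alpha_1 r}$, invoking Lemma~\ref{embeddings1-lorentz}(i) to replace the exponent $\theta$ by $r$ where necessary. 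The main obstacle I anticipate is step (3)/(4): one must verify that the geometric series in $k$ genuinely converge, i.e.\ that the exponents coming from $\alpha_1+\tfrac ns>0$ and the freedom in choosing $d$ (resp.\ $\tau$) near its threshold are compatible, and one must be careful that the constant stays independent of $R$ after undoing the scaling — exactly the bookkeeping that makes the analogous classical proof in \cite{Drihem1.13} work, now with Lorentz norms in place of $L^p$ norms, which is legitimate since every tool used (Hölder, the $r$-trick, the maximal inequality, the characteristic-function norms \eqref{est-function1}) has been established in the Lorentz(-Herz) setting earlier in this section.
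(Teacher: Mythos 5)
Your overall architecture (split the annuli at the scale $1/R$, handle the small annuli via Lemma \ref{Key-est1-lorentz} and the convergence of $\sum 2^{k(\alpha_1+\frac ns)}$, handle the large annuli by the $r$-trick and weighted summation) is the same as the paper's, but two of your key steps do not survive scrutiny. The most serious one is the ``H\"older estimate''
\begin{equation*}
\big\|f\chi _{k}\big\|_{L^{s,r_{1}}}\lesssim 2^{k(\frac{n}{s}-\frac{n}{p})}\big\|f\chi _{k}\big\|_{L^{p,r_{2}}},\qquad \tfrac1s=\tfrac1p+\tfrac1v,
\end{equation*}
on which your treatment of the regime $2^{k}>1/R$ rests: in this lemma $p\le s$, so $\tfrac1v=\tfrac1s-\tfrac1p\le 0$ and no such exponent $v$ exists; on a set of finite measure H\"older only lets you pass from the \emph{larger} to the \emph{smaller} exponent, which is exactly why this inequality is the engine of the companion Lemma \ref{Bernstein-Herz-ine2-lorentz} (where $s\le p$, see \eqref{H-est1}) and is unavailable here. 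The paper replaces it by the convolution inequality of Proposition \ref{Holder's-convolution-lorentz}(vi) applied to $\eta_{R,dN}\ast|f|^{d}$ together with Lemma \ref{est-eta} (this is the term $V^{2}_{R,k}$), and it must in addition control the leakage from the inner ball $B(0,2^{k-2})$ and from the outer annuli (the terms $V^{1}_{R,k}$ and $V^{3}_{R,k}$), using the spatial decay of $\eta_{R,N}$ and Lemma \ref{Key-est1-lorentz}; your sketch only gestures at this with ``$\sum_j 2^{-m|k-j|}\|f\chi_j\|$-type sums''. Also, closing the case $\alpha_2>\alpha_1$ does not require $\theta=q\ge r$ via Lemma \ref{embeddings1-lorentz}(i): one takes the supremum $\sup_k 2^{k\alpha_2}\|f\chi_{\widetilde C_k}\|_{L^{p,r_2}}\lesssim\|f\|_{\dot K^{\alpha_2,q}_{p,r_2}}$ out and sums $(2^kR)^{(\alpha_1-\alpha_2)r}\lesssim1$, which works for every $q$.

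The second problem is your use of Lemma \ref{Key-est1-lorentz} on the small annuli ``with $H=2^{-k}$'' (after rescaling, for $k\le 0$ with $R=1$): the lemma requires $R\ge H$, and the claimed per-annulus bound $\sup_{B(0,2^{k})}|f|\lesssim (R2^{k})^{n/d}2^{-k(\frac np+\alpha_2)}\|f\|$ is in fact false as $k\to-\infty$ (its right-hand side tends to $0$ while the left-hand side tends to $|f(0)|$). This is fixable — bound $\sup_{B(0,2^{k})}|f|$ for all $2^{k}\le 1/R$ by the single quantity $\sup_{B(0,2/R)}|f|$ and apply the lemma once with $H\approx R$, exactly as in the paper's estimate of $I_R$; the geometric series coming from $\alpha_1+\frac ns>0$ then still closes this part. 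But as written both the low-frequency step and the high-frequency step contain genuine errors, so the proposal does not yet constitute a proof.
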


\begin{proof}
The proof is based on ideas of \cite{Drihem1.13}. By the embedding $%
L^{s,r_{2}}\hookrightarrow L^{s,r_{1}}$, when $0<r_{2}<r_{1}<\infty $, we
can assume only that $0<r_{1}\leq r_{2}<\infty $. We choose $N$ such that 
\begin{equation}
N>\max \Big(\frac{n}{s},\frac{n}{d},\frac{n}{s}-\alpha _{2}+\alpha _{1}+%
\frac{n}{d},\frac{n}{d}-\alpha _{2}\Big).  \label{aux6}
\end{equation}%
Write%
\begin{equation}
\sum\limits_{k=-\infty }^{\infty }2^{k\alpha _{1}r}\big\|f\chi _{k}\big\|%
_{L^{s,r_{1}}}^{r}=I_{R}+II_{R},  \label{aux5}
\end{equation}%
with 
\begin{equation*}
I_{R}=\sum\limits_{k\in \mathbb{Z},2^{k}\leq \frac{1}{R}}2^{k\alpha _{1}r}%
\big\|f\chi _{k}\big\|_{L^{s,r_{1}}}^{r},\quad II_{R}=\sum\limits_{k\in 
\mathbb{Z},2^{k}>\frac{1}{R}}2^{k\alpha _{1}r}\big\|f\chi _{k}\big\|%
_{L^{s,r_{1}}}^{r}.
\end{equation*}%
We will estimate each term separately.

\textit{Step 1. Estimate of }$I_{R}$. Lemma \ref{Key-est1-lorentz}\ and %
\eqref{est-function1} give for any $R>0$ 
\begin{equation*}
I_{R}\leq \sup_{x\in B(0,2/R)}|f(x)|^{r}\sum\limits_{k\in \mathbb{Z}%
,2^{k}\leq \frac{1}{R}}2^{k(\alpha _{1}+\frac{n}{s})r}\leq c\text{ }R^{(%
\frac{n}{p}-\frac{n}{s}+\alpha _{2}-\alpha _{1})r}\big\|f\big\|_{\dot{K}%
_{p,r_{2}}^{\alpha _{2},q}}^{r},
\end{equation*}%
because of $\alpha _{1}+\frac{n}{s}>0$ and $2^{k-1}R<1$.

\textit{Step 2. Estimate of }$II_{R}$. We set 
\begin{equation*}
\widetilde{C}_{k}=\left\{ x\in \mathbb{R}^{n}:\,2^{k-2}\leq |x|\leq
2^{k+2}\right\} ,\quad k\in \mathbb{Z}.
\end{equation*}%
Let $0<d<\min \big(p,r_{2},\frac{n}{\frac{n}{p}+\alpha _{2}}\big)$. By Lemma %
\ref{r-trick} and H\"{o}lder's inequality, we obtain 
\begin{align*}
\left\vert f(x)\right\vert \leq & c\text{ }\Big(\int_{\mathbb{R}%
^{n}}\left\vert f(y)\right\vert ^{d}\eta _{R,dN}(x-y)dy\Big)^{\frac{1}{d}} \\
\lesssim & V_{R,k}^{1}(x)+V_{R,k}^{2}(x)+V_{R,k}^{3}(x)
\end{align*}%
for any $R>0,N>\frac{n}{d}$ and any $x\in C_{k}$, where the implicit
constant is independent of $x,k$ and $R$, and 
\begin{equation*}
V_{R,k}^{1}(x)=\Big(\int_{B(0,2^{k-2})}\left\vert f(y)\right\vert ^{p}\eta
_{R,pN}(x-y)dy\Big)^{\frac{1}{p}},
\end{equation*}%
\begin{equation*}
V_{R,k}^{2}(x)=\Big(\int_{\widetilde{C}_{k}}\left\vert f(y)\right\vert
^{d}\eta _{R,dN}(x-y)dy\Big)^{\frac{1}{d}}
\end{equation*}%
and 
\begin{equation*}
V_{R,k}^{3}(x)=\Big(\int_{\mathbb{R}^{n}\backslash B(0,2^{k+2})}\left\vert
f(y)\right\vert ^{p}\eta _{R,pN}(x-y)dy\Big)^{\frac{1}{p}}.
\end{equation*}%
\textit{Substep 2.1. Estimate of }$V_{R,k}^{1}$. It is easy to verify that
if $x\in R_{k}$ and $y\in B(0,2^{k-2})$, then $\left\vert x-y\right\vert
>2^{k-2}$. This estimate and Lemma \ref{Key-est1-lorentz}, yield for any $%
x\in R_{k}$ and any $2^{k}R>1$ 
\begin{align}
V_{R,k}^{1}(x)\leq & c\text{ }\sup_{y\in B(0,2^{k-2})}\left\vert
f(y)\right\vert \Big(\int_{2^{k-2}<\left\vert z\right\vert <2^{k+1}}\eta
_{R,pN}(z)dz\Big)^{\frac{1}{p}}  \notag \\
\leq & c\text{ }R^{(\frac{n}{p}-N)}\left( 2^{k}R\right) ^{\frac{n}{d}%
}2^{-\left( \alpha _{2}+N\right) k}\big\|f\big\|_{\dot{K}_{p,r_{2}}^{\alpha
_{2},q}},  \label{est-V2-lorentz}
\end{align}%
where the positive constant $c$ is independent of $x,R,k$ and $f$. From %
\eqref{est-V2-lorentz}, \eqref{aux6} and \eqref{est-function1},\ we get 
\begin{align*}
& \sum\limits_{k\in \mathbb{Z},2^{k}>\frac{2}{R}}2^{k\alpha _{1}r}\big\|%
V_{R,k}^{1}\chi _{k}\big\|_{L^{s,r_{1}}}^{r} \\
\leq & c\text{ }R^{(\frac{n}{p}-N+\frac{n}{d})r}\big\|f\big\|_{\dot{K}%
_{p,r_{2}}^{\alpha _{2},q}}^{r}\sum\limits_{k\in \mathbb{Z},2^{k}>\frac{2}{R}%
}2^{k(\frac{n}{s}+\frac{n}{d}+\alpha _{1}-\alpha _{2}-N)r} \\
\leq & c\text{ }R^{(\frac{n}{p}-\frac{n}{s}+\alpha _{2}-\alpha _{1})r}\big\|f%
\big\|_{\dot{K}_{p,r_{2}}^{\alpha _{2},q}}^{r}.
\end{align*}%
\textit{Substep 2.2. Estimate of }$V_{R,k}^{2}$. Let $v_{1}$ and $v_{2}$ be
two positive real numbers such that $\frac{d}{s}+1=\frac{d}{p}+\frac{1}{v_{1}%
}$ and $\frac{d}{r_{1}}=\frac{d}{r_{2}}+\frac{1}{v_{2}}$. Since $N>\frac{n}{%
v_{1}}$, applying Proposition \ref{Holder's-convolution-lorentz}/(ii),(vi)
and Lemma\ \ref{est-eta}, we obtain 
\begin{align*}
\big\|V_{R,k}^{2}\chi _{k}\big\|_{L^{s,r_{1}}}\lesssim & \big\|\eta
_{R,dN}\ast (\left\vert f\right\vert ^{d}\chi _{\widetilde{C}_{k}})\big\|%
_{L^{\frac{s}{d},\frac{r_{1}}{d}}}^{\frac{1}{d}} \\
\lesssim & \big\|\eta _{R,dN}\big\|_{L^{v_{1},v_{2}}}^{\frac{1}{d}}\big\|%
\left\vert f\right\vert ^{d}\chi _{\widetilde{C}_{k}}\big\|_{L^{\frac{p}{d},%
\frac{r_{2}}{d}}}^{\frac{1}{d}} \\
\lesssim & R^{\frac{n}{p}-\frac{n}{s}}\big\|f\chi _{\widetilde{C}_{k}}\big\|%
_{L^{p,r_{2}}}.
\end{align*}%
This leads to 
\begin{align*}
& \Big(\sum\limits_{k\in \mathbb{Z},2^{k}>\frac{1}{R}}2^{k\alpha _{1}r}\big\|%
V_{R,k}^{2}\chi _{k}\big\|_{L^{s,r_{1}}}^{r}\Big)^{\frac{1}{r}} \\
\lesssim & \text{ }R^{\frac{n}{p}-\frac{n}{s}}\Big(\sum\limits_{k\in \mathbb{%
Z},2^{k}>\frac{1}{R}}2^{k\left( \alpha _{1}-\alpha _{2}\right) r}2^{k\alpha
_{2}r}\big\|f\chi _{\widetilde{C}_{k}}\big\|_{L^{p,r_{2}}}^{r}\Big)^{\frac{1%
}{r}} \\
\lesssim & \text{ }R^{\frac{n}{p}-\frac{n}{s}+\alpha _{2}-\alpha
_{1}}\sup_{k\in \mathbb{Z}}\Big(2^{k\alpha _{2}}\big\|f\chi _{\widetilde{C}%
_{k}}\big\|_{L^{p,r_{2}}}\Big)\Big(\sum\limits_{k\in \mathbb{Z},2^{k}>\frac{1%
}{R}}\left( 2^{k}R\right) ^{\left( \alpha _{1}-\alpha _{2}\right) r}\Big)^{%
\frac{1}{r}} \\
\lesssim & \text{ }R^{\frac{n}{p}-\frac{n}{s}+\alpha _{2}-\alpha _{1}}\big\|f%
\big\|_{\dot{K}_{p,r_{2}}^{\alpha _{2},q}},
\end{align*}%
if $\alpha _{2}>\alpha _{1}$, where the implicit constant is independent of $%
R$. The case $\alpha _{2}=\alpha _{1}$ can be easily solved.

\textit{Substep 2.3. Estimate of }$V_{R,k}^{3}$. Let $x\in C_{k}$ and $%
\varrho =\min (1,p)$. We see that $(V_{R,k}^{3}(x))^{\varrho }$ can be
estimated from above by 
\begin{equation*}
\sum\limits_{i=0}^{\infty }\Big(\int_{C_{k+i+3}}\left\vert f(y)\right\vert
^{p}\eta _{R,pN}(x-y)dy\Big)^{\frac{\varrho }{p}}.
\end{equation*}%
Since $\left\vert x-y\right\vert >3\cdot 2^{k+i}$ for any $x\in C_{k}$ and
any $y\in C_{k+i+3}$, the right-hand side of the last term\ is bounded by 
\begin{align*}
& c\text{ }R^{\varrho (\frac{n}{p}-N)}\sum\limits_{i=0}^{\infty
}2^{-(k+i)\varrho N}\big\|f\chi _{C_{k+i+3}}\big\|_{p}^{\varrho } \\
=& c\text{ }R^{\varrho (\frac{n}{p}-N)}\sum\limits_{j=k+3}^{\infty
}2^{-j\varrho N}\big\|f\chi _{C_{j}}\big\|_{p}^{\varrho } \\
\lesssim & \text{ }R^{\varrho (\frac{n}{p}-N)}\sum\limits_{j=k+3}^{\infty
}2^{j\varrho (\frac{n}{p}-N)}\sup_{x\in B(0,2^{j})}\left\vert
f(x)\right\vert ^{\varrho } \\
\lesssim & R^{\varrho (\frac{n}{p}-N+\frac{n}{d})}\sum\limits_{j=k+3}^{%
\infty }2^{j\varrho (\frac{n}{d}-N-\alpha _{2})}\big\|f\big\|_{\dot{K}%
_{p,r_{2}}^{\alpha _{2},q}}^{\varrho } \\
\lesssim & \text{ }R^{\varrho (\frac{n}{p}-N+\frac{n}{d})}2^{k\varrho (\frac{%
n}{d}-N-\alpha _{2})}\big\|f\big\|_{\dot{K}_{p,r_{2}}^{\alpha
_{2},q}}^{\varrho },
\end{align*}%
where we have used Lemma \ref{Key-est1-lorentz}, since $2^{j}>2^{k}>\frac{1}{%
R}$, and \eqref{aux6}. Consequently 
\begin{align*}
& \sum\limits_{k\in \mathbb{Z},2^{k}>\frac{1}{R}}2^{k\alpha _{1}r}\big\|%
V_{R,k}^{3}\chi _{k}\big\|_{L^{s,r_{1}}}^{r} \\
\lesssim &\text{ }R^{(\frac{n}{p}-N+\frac{n}{d})r}\big\|f\big\|_{\dot{K}%
_{p,r_{2}}^{\alpha _{2},q}}^{r}\sum\limits_{k\in \mathbb{Z},2^{k}>\frac{1}{R}%
}2^{k(\frac{n}{s}-\alpha _{2}+\alpha _{1}-N+\frac{n}{d})r} \\
\lesssim & \text{ }R^{(\frac{n}{p}-\frac{n}{s}+\alpha _{2}-\alpha _{1})r}%
\big\|f\big\|_{\dot{K}_{p,r_{2}}^{\alpha _{2},q}}^{r}\sum\limits_{k\in 
\mathbb{Z},2^{k}>\frac{1}{R}}\left( 2^{k}R\right) ^{(\frac{n}{s}-\alpha
_{2}+\alpha _{1}-N+\frac{n}{d})r} \\
\lesssim &\text{ }R^{(\frac{n}{p}-\frac{n}{s}+\alpha _{2}-\alpha _{1})r}%
\big\|f\big\|_{\dot{K}_{p,r_{2}}^{\alpha _{2},q}}^{r},
\end{align*}%
where we have used again \eqref{est-function1}\ and \eqref{aux6}. The proof
is complete.
\end{proof}

\begin{remark}
Lemma \ref{Bernstein-Herz-ine1-lorentz}\ improves Plancherel-Polya-Nikolskij
inequality in Herz spaces; see Lemma \ref{Bernstein-Herz-ine1}, where we
choose $r_{1}=s,p\leq r_{2}$ and we use the embedding$\ L^{p}\hookrightarrow
L^{p,r_{2}}$
\end{remark}

In the previous lemma we have not treated the case $s\leq p$. The next lemma
gives a positive answer.

\begin{lemma}
\label{Bernstein-Herz-ine2-lorentz}\textit{Let }$\alpha _{1},\alpha _{2}\in 
\mathbb{R}\ $\textit{and} $0<q,r,r_{1},r_{2}\leq \infty $. \textit{We
suppose that }$\alpha _{1}+\frac{n}{s}>0,0<s<p<\infty $\ and $\alpha
_{2}>\alpha _{1}+\frac{n}{s}-\frac{n}{p}$. \textit{Then there exists a
positive constant }$c$\textit{\ independent of }$R$\textit{\ such that for
all }$f\in \dot{K}_{p,r_{2}}^{\alpha _{2},q}\cap \mathcal{S}^{\prime }(%
\mathbb{R}^{n})$\textit{\ with }$\mathrm{\ supp}$\textit{\ }$\mathcal{F}%
f\subset \{\xi \in \mathbb{R}^{n}:|\xi |\leq R\}$\textit{, we have} 
\begin{equation*}
\big\|f\big\|_{\dot{K}_{s,r_{1}}^{\alpha _{1},r}}\leq c\text{ }R^{\frac{n}{p}%
-\frac{n}{s}+\alpha _{2}-\alpha _{1}}\big\|f\big\|_{\dot{K}%
_{p,r_{2}}^{\alpha _{2},q}}.
\end{equation*}
\end{lemma}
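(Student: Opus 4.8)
The plan is to adapt the proof of Lemma \ref{Bernstein-Herz-ine1-lorentz} to the range $0<s<p<\infty$, following the strategy of Lemma \ref{Bernstein-Herz-ine2} in the classical Herz setting. By the embedding $L^{s,r_{2}}\hookrightarrow L^{s,r_{1}}$ when $r_{2}<r_{1}$ we may assume $r_{1}\le r_{2}<\infty$, and as before we split
\begin{equation*}
\sum_{k\in\mathbb{Z}}2^{k\alpha_{1}r}\big\|f\chi_{k}\big\|_{L^{s,r_{1}}}^{r}=I_{R}+II_{R},
\end{equation*}
where $I_{R}$ runs over $2^{k}\le 1/R$ and $II_{R}$ over $2^{k}>1/R$. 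The term $I_{R}$ is handled exactly as in Substep~1 of Lemma \ref{Bernstein-Herz-ine1-lorentz}: apply Lemma \ref{Key-est1-lorentz} with $H\approx R$ to bound $\sup_{x\in B(0,2/R)}|f(x)|$ by $cR^{n/p+\alpha_{2}}\|f\|_{\dot K_{p,r_{2}}^{\alpha_{2},q}}$, then sum the geometric series $\sum_{2^{k}\le 1/R}2^{k(\alpha_{1}+n/s)r}$, which converges because $\alpha_{1}+n/s>0$, to obtain the factor $R^{(n/p-n/s+\alpha_{2}-\alpha_{1})r}$.

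For $II_{R}$ the decomposition $|f(x)|\lesssim V_{R,k}^{1}(x)+V_{R,k}^{2}(x)+V_{R,k}^{3}(x)$ from Lemma \ref{r-trick} (with exponent $0<d<\min(p,r_{2},\frac{n}{n/p+\alpha_{2}})$) is still valid on each $C_{k}$. The estimates of $V_{R,k}^{1}$ and $V_{R,k}^{3}$ carry over verbatim, since they only use the pointwise bound of Lemma \ref{Key-est1-lorentz} and summation of geometric series in $k$ (with $N$ chosen large as in \eqref{aux6}), and nowhere did those two substeps use $p\le s$. The substep requiring genuine change is the estimate of $V_{R,k}^{2}$: in Lemma \ref{Bernstein-Herz-ine1-lorentz} one invoked Proposition \ref{Holder's-convolution-lorentz}/(vi), i.e.\ Young's convolution inequality in Lorentz spaces, which needs $s/d\ge p/d$, i.e.\ $s\ge p$ — precisely the opposite of our hypothesis. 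So I would replace this: since now $s<p$, one has $L^{p,r_{2}}\hookrightarrow L^{s,r_{2}}$ locally is false, but instead use that $\eta_{R,dN}$ is an $L^{1}$-normalized (up to $R^{-\text{power}}$) kernel so that $\eta_{R,dN}\ast(|f|^{d}\chi_{\widetilde C_{k}})\lesssim \|\eta_{R,dN}\|_{1}\,\mathcal M(|f|^{d}\chi_{\widetilde C_{k}})$ pointwise via Lemma \ref{est-maximal}, hence by $L^{s/d,r_{1}}$-boundedness of $\mathcal M$ (from Theorem \ref{Maximal-Inq3-lorentz}, valid since $s/d>1$ can be arranged — one only needs $d<s$, compatible with $d<p$ and $s<p$) one gets $\|V_{R,k}^{2}\chi_{k}\|_{L^{s,r_{1}}}\lesssim R^{n/p-n/s}\|f\chi_{\widetilde C_{k}}\|_{L^{p,r_{2}}}$ after accounting for the scaling $\|\eta_{R,dN}\|_{1}\approx R^{n-nN}$ combined with the support constraint $|\widetilde C_{k}|\approx 2^{kn}$ and the condition $2^{k}R>1$; alternatively one uses Hölder in Lorentz spaces $L^{s/d,r_{1}/d}\hookleftarrow L^{p/d,r_{2}/d}\cdot L^{v_{1},v_{2}}$ with $\frac{d}{s}=\frac{d}{p}+\frac{1}{v_{1}}$ (now $v_{1}>0$ exactly because $s<p$) and $\|\chi_{\widetilde C_{k}}\|_{L^{v_{1},v_{2}}}\approx 2^{kn/v_{1}}$, giving the extra factor $2^{k(n/s-n/p)}$ that is absorbed in the $k$-summation provided $\alpha_{2}>\alpha_{1}+n/s-n/p$ — this is where that hypothesis enters. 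Then summing $\sum_{2^{k}>1/R}2^{k(\alpha_{1}-\alpha_{2}+n/s-n/p)r}(2^{k}R)^{\cdots}$ and using $\sup_{k}2^{k\alpha_{2}}\|f\chi_{\widetilde C_{k}}\|_{L^{p,r_{2}}}\lesssim\|f\|_{\dot K_{p,r_{2}}^{\alpha_{2},q}}$ closes this substep.

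The main obstacle is exactly this $V_{R,k}^{2}$ substep: one must correctly track the $R$-powers through the Lorentz-space Hölder/Young step and verify that the new loss factor $2^{k(n/s-n/p)}$ is dominated by the gain $2^{k(\alpha_{1}-\alpha_{2})}$ coming from the weight, which forces the strict inequality $\alpha_{2}>\alpha_{1}+\frac{n}{s}-\frac{n}{p}$ (note that unlike the $s\ge p$ case, here $\alpha_{2}=\alpha_{1}$ is \emph{not} allowed, matching the classical Lemma \ref{Bernstein-Herz-ine2}). Once this is in place, collecting the bounds on $I_{R}$, $V_{R,k}^{1}$, $V_{R,k}^{2}$, $V_{R,k}^{3}$ in \eqref{aux5} yields $\|f\|_{\dot K_{s,r_{1}}^{\alpha_{1},r}}\le cR^{n/p-n/s+\alpha_{2}-\alpha_{1}}\|f\|_{\dot K_{p,r_{2}}^{\alpha_{2},q}}$, completing the proof.
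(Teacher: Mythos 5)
Your overall strategy works, but it is considerably heavier than the paper's own argument, which for this lemma abandons the $V_{R,k}^{1},V_{R,k}^{2},V_{R,k}^{3}$ machinery altogether. In the paper, $I_{R}$ is indeed taken from the previous lemma, and $II_{R}$ is handled in one stroke: after the same reduction to $0<r_{1}\le r_{2}$, since $s<p$ one may choose $\frac{1}{s}=\frac{1}{p}+\frac{1}{v_{1}}$ and $\frac{1}{r_{1}}=\frac{1}{r_{2}}+\frac{1}{v_{2}}$ and apply H\"{o}lder's inequality on each annulus, $\big\|f\chi _{k}\big\|_{L^{s,r_{1}}}\lesssim \big\|\chi _{k}\big\|_{L^{v_{1},v_{2}}}\big\|f\chi _{k}\big\|_{L^{p,r_{2}}}\lesssim 2^{kn(\frac{1}{s}-\frac{1}{p})}\big\|f\chi _{k}\big\|_{L^{p,r_{2}}}$, then absorb the loss $2^{k(\frac{n}{s}-\frac{n}{p})}$ into $2^{k(\alpha _{1}-\alpha _{2})}$ and sum the geometric series over $2^{k}>1/R$ --- exactly the absorption mechanism you describe in your last paragraph, but applied directly to $f\chi _{k}$, with no convolution with $\eta _{R,dN}$, no maximal function, and no use of the spectral support in the $II_{R}$ part (which is why the remark following the lemma can record the endpoint case $\alpha _{2}=\alpha _{1}+\frac{n}{s}-\frac{n}{p}$, $r=q$, for arbitrary $f\in \dot{K}_{p,r_{2}}^{\alpha _{2},q}$). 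Your route is not wrong --- the $V^{1}$ and $V^{3}$ estimates do carry over verbatim, your diagnosis that the Young step of the previous lemma breaks down for $s<p$ is accurate, and you correctly locate where $\alpha _{2}>\alpha _{1}+\frac{n}{s}-\frac{n}{p}$ enters --- but it buys nothing over the direct H\"{o}lder argument and obscures the fact that band-limitedness is only needed on the region $2^{k}\le 1/R$.

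One caveat: the intermediate claims in your first (maximal-function) fix of the $V_{R,k}^{2}$ step are false as stated. First, $\big\|\eta _{R,dN}\big\|_{1}$ is comparable to a constant independent of $R$ when $dN>n$, not to $R^{n-nN}$. Second, the uniform-in-$k$ bound $\big\|V_{R,k}^{2}\chi _{k}\big\|_{L^{s,r_{1}}}\lesssim R^{\frac{n}{p}-\frac{n}{s}}\big\|f\chi _{\widetilde{C}_{k}}\big\|_{L^{p,r_{2}}}$ cannot hold: that route only yields $\big\|V_{R,k}^{2}\chi _{k}\big\|_{L^{s,r_{1}}}\lesssim \big\|f\chi _{\widetilde{C}_{k}}\big\|_{L^{s,r_{1}}}\lesssim 2^{kn(\frac{1}{s}-\frac{1}{p})}\big\|f\chi _{\widetilde{C}_{k}}\big\|_{L^{p,r_{2}}}$, and on the range $2^{k}R>1$ one has $2^{kn(\frac{1}{s}-\frac{1}{p})}\ge R^{\frac{n}{p}-\frac{n}{s}}$, so the $R$-power cannot be extracted for fixed $k$ (letting $R\rightarrow \infty $ with $f$ fixed also contradicts the claimed bound, since $V_{R,k}^{2}$ then recovers $|f|$ on $R_{k}$ while the right-hand side tends to zero). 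The $R$-power must come, as in your second (H\"{o}lder) variant and in the paper, from the summation in $k$ against the weight; with that correction your proof closes.
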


\begin{proof}
We employ the notations $II_{R}$ and $I_{R}$ from \eqref{aux5}. The estimate
of $I_{R}$ follows easily from the previous lemma. We only need to estimate
the part $II_{R}$. By the embedding $L^{s,r_{2}}\hookrightarrow L^{s,r_{1}}$%
, when $0<r_{2}<r_{1}<\infty $, we can assume only that $0<r_{1}\leq
r_{2}<\infty $. Let%
\begin{equation*}
\frac{1}{s}=\frac{1}{p}+\frac{1}{v_{1}}\text{\quad and\quad }\frac{1}{r_{1}}=%
\frac{1}{r_{2}}+\frac{1}{v_{2}}.
\end{equation*}%
H\"{o}lder's inequality and \eqref{est-function1} give 
\begin{equation}
\big\|f\chi _{k}\big\|_{L^{s,r_{1}}}\lesssim \big\|\chi _{k}\big\|%
_{L^{v_{1},v_{2}}}\big\|f\chi _{k}\big\|_{L^{p,r_{2}}}\lesssim 2^{kn(\frac{1%
}{s}-\frac{1}{p})}\big\|f\chi _{k}\big\|_{L^{p,r_{2}}},  \label{H-est1}
\end{equation}%
where the implicit constant is independent of $k$. Therefore, 
\begin{align*}
II_{R}\leq & \sum\limits_{k\in \mathbb{Z},2^{k}>\frac{1}{R}}2^{k(\frac{n}{s}-%
\frac{n}{p}-\alpha _{2}+\alpha _{1})r}2^{k\alpha _{2}r}\big\|f\chi _{k}\big\|%
_{L^{p,r_{2}}}^{r} \\
\leq & \sup_{k\in \mathbb{Z}}\big(2^{k\alpha _{2}}\big\|f\chi _{k}\big\|%
_{L^{p,r_{2}}}\big)^{r}\sum\limits_{k\in \mathbb{Z},2^{k}>\frac{1}{R}}2^{k(%
\frac{n}{s}-\frac{n}{p}-\alpha _{2}+\alpha _{1})r} \\
\lesssim & R^{(\frac{n}{p}-\frac{n}{s}+\alpha _{2}-\alpha _{1})r}\big\|f%
\big\|_{\dot{K}_{p,r_{2}}^{\alpha _{2},q}}^{r}\sum\limits_{k\in \mathbb{Z}%
,2^{k}>\frac{1}{R}}\left( 2^{k}R\right) ^{(\frac{n}{s}-\frac{n}{p}-\alpha
_{2}+\alpha _{1})r} \\
\lesssim & R^{(\frac{n}{p}-\frac{n}{s}+\alpha _{2}-\alpha _{1})r}\big\|f%
\big\|_{\dot{K}_{p,r_{2}}^{\alpha _{2},q}}^{r},
\end{align*}%
since $2^{k}R>1$. The proof is complete.
\end{proof}

\begin{remark}
Using the estimate \eqref{H-est1}, we easily obtain that Lemma {\ref%
{Bernstein-Herz-ine2-lorentz} }is true for $\alpha _{2}=\alpha _{1}+\frac{n}{%
s}-\frac{n}{p},r=q$ and any $f\in \dot{K}_{p,r_{2}}^{\alpha _{2},q}$. Also,
Lemma {\ref{Bernstein-Herz-ine2-lorentz} }extends and improves Lemma {\ref%
{Bernstein-Herz-ine2}.}
\end{remark}

\section{Lorentz Herz-type Besov and Triebel-Lizorkin spaces}

In this section, we present the spaces $\dot{K}_{p,r}^{\alpha ,q}B_{\beta
}^{s}$ and $\dot{K}_{p,r}^{\alpha ,q}F_{\beta }^{s}$ on which we work,
establish their $\varphi $-transform characterizations and interpolation
inequalities, lifting property and Fatou property.

\subsection{The $\protect\varphi $-transform of $\dot{K}_{p,r}^{\protect%
\alpha ,q}B_{\protect\beta }^{s}$ and $\dot{K}_{p,r}^{\protect\alpha ,q}F_{%
\protect\beta }^{s}$}

Select a pair of Schwartz functions $\Phi $ and $\varphi $ such that

\begin{equation}
\text{\textrm{supp}}\mathcal{F}\Phi \subset \{\xi \in \mathbb{R}^{n}:|\xi
|\leq 2\}\text{\quad and\quad }|\mathcal{F}\Phi (\xi )|\geq c>0,
\label{Ass1}
\end{equation}%
if $|\xi |\leq \frac{5}{3}$ and 
\begin{equation}
\text{\textrm{supp}}\mathcal{F}\varphi \subset \{\xi \in \mathbb{R}^{n}:%
\frac{1}{2}\leq |\xi |\leq 2\}\text{\quad and\quad }|\mathcal{F}\varphi (\xi
)|\geq c>0,  \label{Ass2}
\end{equation}%
if $\frac{3}{5}\leq |\xi |\leq \frac{5}{3}$, where $c>0$. Throughout the
section we put $\tilde{\varphi}(x)=\overline{\varphi (-x)},x\in \mathbb{R}%
^{n}$.

Now, we define the spaces under consideration.

\begin{definition}
\label{B-F-def-lorentz}Let $\alpha ,s\in \mathbb{R},0<p<\infty ,0<r,q,\beta
\leq \infty ,\Phi $ and $\varphi $\ satisfy $\mathrm{\eqref{Ass1}}$\ and\ $%
\mathrm{\eqref{Ass2}}$, respectively and we put $\varphi _{k}=2^{kn}\varphi
(2^{k}\cdot ),k\in \mathbb{N}$.\newline
$\mathrm{(i)}$ The\ Lorentz Herz-type Besov space $\dot{K}_{p,r}^{\alpha
,q}B_{\beta }^{s}$\ is defined to be the set of all $f\in \mathcal{S}%
^{\prime }(\mathbb{R}^{n})$\ such that 
\begin{equation*}
\big\|f\big\|_{\dot{K}_{p,r}^{\alpha ,q}B_{\beta }^{s}}=\Big(%
\sum\limits_{k=0}^{\infty }2^{ks\beta }\big\|\varphi _{k}\ast f\big\|_{\dot{K%
}_{p,r}^{\alpha ,q}}^{\beta }\Big)^{1/\beta }<\infty ,
\end{equation*}%
where $\varphi _{0}$ is replaced by $\Phi $, with the obvious modification if%
\textit{\ }$\beta =\infty $.\newline
$\mathrm{(ii)}$ Let $0<q<\infty $. The Lorentz Herz-type Triebel-Lizorkin
space $\dot{K}_{p,r}^{\alpha ,q}F_{\beta }^{s}$ is defined to be the set of
all $f\in \mathcal{S}^{\prime }(\mathbb{R}^{n})$\ such that 
\begin{equation*}
\big\|f\big\|_{\dot{K}_{p,r}^{\alpha ,q}F_{\beta }^{s}}=\Big\|\Big(%
\sum\limits_{k=0}^{\infty }2^{ks\beta }|\varphi _{k}\ast f|^{\beta }\Big)%
^{1/\beta }\Big\|_{\dot{K}_{p,r}^{\alpha ,q}}<\infty ,
\end{equation*}%
where $\varphi _{0}$ is replaced by $\Phi $, with the obvious modification
if $\beta =\infty .$
\end{definition}

\begin{remark}
One recognizes immediately that if $\alpha =0$ and $p=q$, then 
\begin{equation}
\dot{K}_{p,p}^{0,p}B_{\beta }^{s}=B_{p,\beta }^{s}\quad \text{and}\quad \dot{%
K}_{p,p}^{0,p}F_{\beta }^{s}=F_{p,\beta }^{s}.  \notag
\end{equation}
\end{remark}

Next, we present the definition of Herz-type Besov and Triebel-Lizorkin
spaces.

\begin{definition}
Let $\alpha ,s\in \mathbb{R},0<p,q,\beta \leq \infty ,\Phi $ and $\varphi $\
satisfy $\mathrm{\eqref{Ass1}}$\ and\ $\mathrm{\eqref{Ass2}}$, respectively
and we put $\varphi _{k}=2^{kn}\varphi (2^{k}\cdot ),k\in \mathbb{N}$.%
\newline
$\mathrm{(i)}$ The\ Herz-type Besov space $\dot{K}_{p}^{\alpha ,q}B_{\beta
}^{s}$\ is defined to be the set of all $f\in \mathcal{S}^{\prime }(\mathbb{R%
}^{n})$\ such that 
\begin{equation*}
\big\|f\big\|_{\dot{K}_{p}^{\alpha ,q}B_{\beta }^{s}}=\Big(%
\sum\limits_{k=0}^{\infty }2^{ks\beta }\big\|\varphi _{k}\ast f\big\|_{\dot{K%
}_{p}^{\alpha ,q}}^{\beta }\Big)^{1/\beta }<\infty ,
\end{equation*}%
where $\varphi _{0}$ is replaced by $\Phi $, with the obvious modification if%
\textit{\ }$\beta =\infty $.\newline
$\mathrm{(ii)}$ Let $0<p,q<\infty $. The Herz-type Triebel-Lizorkin space $%
\dot{K}_{p}^{\alpha ,q}F_{\beta }^{s}$ is defined to be the set of all $f\in 
\mathcal{S}^{\prime }(\mathbb{R}^{n})$\ such that 
\begin{equation*}
\big\|f\big\|_{\dot{K}_{p}^{\alpha ,q}F_{\beta }^{s}}=\Big\|\Big(%
\sum\limits_{k=0}^{\infty }2^{ks\beta }|\varphi _{k}\ast f|^{\beta }\Big)%
^{1/\beta }\Big\|_{\dot{K}_{p}^{\alpha ,q}}<\infty ,
\end{equation*}%
where $\varphi _{0}$ is replaced by $\Phi $, with the obvious modification
if $\beta =\infty .$
\end{definition}

\begin{remark}
$\mathrm{(i)}$ We have%
\begin{equation}
\dot{K}_{p,p}^{\alpha ,q}B_{\beta }^{s}=\dot{K}_{p}^{\alpha ,q}B_{\beta
}^{s}\quad \text{and}\quad \dot{K}_{p,p}^{\alpha ,p}F_{\beta }^{s}=\dot{K}%
_{p}^{\alpha ,q}F_{\beta }^{s},  \notag
\end{equation}%
We refer, in particular, to the papers \cite{Drihem1.13}, \cite{XuYang05}
and \cite{Xu05} for a comprehensive treatment of $\dot{K}_{p}^{\alpha
,q}B_{\beta }^{s}$ and $\dot{K}_{p}^{\alpha ,q}F_{\beta }^{s}$. \newline
$\mathrm{(ii)}$ Notice that the spaces $\dot{K}_{p,r}^{\alpha ,q}B_{\beta
}^{s}$ are also\ considered in \cite{FPV}\ to study the bilinear estimates
and uniqueness of mild solutions for the Navier-Stokes equations.
\end{remark}

Let $\Phi $ and $\varphi $ satisfy, respectively, \eqref{Ass1} and %
\eqref{Ass2}. From, \cite[Section 12]{FJ90}, there exist functions $\Psi \in 
\mathcal{S}(\mathbb{R}^{n})$ satisfying \eqref{Ass1} and $\psi \in \mathcal{S%
}(\mathbb{R}^{n})$ satisfying \eqref{Ass2} such that%
\begin{equation}
\mathcal{F}\tilde{\Phi}(\xi )\mathcal{F}\Psi (\xi )+\sum_{k=1}^{\infty }%
\mathcal{F}\tilde{\varphi}(2^{-k}\xi )\mathcal{F}\psi (2^{-k}\xi )=1,\quad
\xi \in \mathbb{R}^{n}.  \label{Ass3}
\end{equation}%
A basic tool to study the above function spaces is the following Calder\'{o}%
n reproducing formula, see \cite[(12.4)]{FJ90} and \cite[Lemma 2.3]{YSY10}.

\begin{lemma}
\label{DW-lemma1}Let $\Phi ,\Psi \in \mathcal{S}(\mathbb{R}^{n})$ satisfy %
\eqref{Ass1} and $\varphi ,\psi \in \mathcal{S}(\mathbb{R}^{n})$ satisfy %
\eqref{Ass2} such that \eqref{Ass3} holds. Then for all\ $f\in \mathcal{S}%
^{\prime }(\mathbb{R}^{n}),$%
\begin{align}
f=& \tilde{\Phi}\ast \Psi \ast f+\sum_{k=1}^{\infty }\widetilde{\varphi }%
_{k}\ast \psi _{k}\ast f  \notag \\
=& \sum_{m\in \mathbb{Z}^{n}}\tilde{\Phi}\ast f(m)\Psi
_{m}+\sum_{k=1}^{\infty }2^{-k\frac{n}{2}}\sum_{m\in \mathbb{Z}^{n}}%
\widetilde{\varphi }_{k}\ast f(2^{-k}m)\psi _{k,m},  \label{proc2}
\end{align}%
in$\ \mathcal{S}^{\prime }(\mathbb{R}^{n})$, where%
\begin{equation*}
\Psi _{m}=\Psi (\cdot -m)\quad \text{and}\quad \psi _{k,m}=2^{k\frac{n}{2}%
}\psi (2^{k}\cdot -m),\quad m\in \mathbb{Z}^{n},k\in \mathbb{N}.
\end{equation*}
\end{lemma}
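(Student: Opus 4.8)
The identity \eqref{proc2} is classical; here is the route I would follow, referring the computational details to \cite[Section~12]{FJ90} and \cite[Lemma~2.3]{YSY10}. There are two steps: first the continuous reproducing formula (the first line of \eqref{proc2}), then the discretisation of each band-limited building block (the second line). For the first step I would pass to the Fourier side: writing $\mathcal{F}\widetilde{\varphi}_k(\xi)=\mathcal{F}\tilde{\varphi}(2^{-k}\xi)$ and $\mathcal{F}\psi_k(\xi)=\mathcal{F}\psi(2^{-k}\xi)$, identity \eqref{Ass3} gives
\begin{equation*}
\mathcal{F}\big(\tilde{\Phi}\ast\Psi\ast f\big)+\sum_{k=1}^{\infty}\mathcal{F}\big(\widetilde{\varphi}_k\ast\psi_k\ast f\big)=\Big(\mathcal{F}\tilde{\Phi}\,\mathcal{F}\Psi+\sum_{k=1}^{\infty}\mathcal{F}\tilde{\varphi}(2^{-k}\cdot)\,\mathcal{F}\psi(2^{-k}\cdot)\Big)\mathcal{F}f=\mathcal{F}f .
\end{equation*}
To make this rigorous I would test against $\phi\in\mathcal{S}(\mathbb{R}^n)$ and use that, by \eqref{Ass2}, $\operatorname{supp}\mathcal{F}(\widetilde{\varphi}_k\ast\psi_k\ast f)\subset\{2^{k-1}\le|\xi|\le2^{k+1}\}$, so that at each fixed frequency only boundedly many summands are non-zero; this local finiteness, together with the finite order of $\mathcal{F}f$ and the rapid decay of $\phi$, yields convergence of the series in $\mathcal{S}'(\mathbb{R}^n)$, and applying $\mathcal{F}^{-1}$ gives the first line of \eqref{proc2}.

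For the second step, write $\tilde{\Phi}\ast\Psi\ast f=\Psi\ast g_{0}$ with $g_{0}=\tilde{\Phi}\ast f$, and $\widetilde{\varphi}_k\ast\psi_k\ast f=\psi_k\ast g_{k}$ with $g_{k}=\widetilde{\varphi}_k\ast f$ for $k\ge1$. By \eqref{Ass1} and \eqref{Ass2}, $\operatorname{supp}\mathcal{F}g_{0}\subset\{|\xi|\le2\}$ and $\operatorname{supp}\mathcal{F}g_{k}\subset\{2^{k-1}\le|\xi|\le2^{k+1}\}$, so by the Paley--Wiener--Schwartz theorem each $g_{k}$ is a smooth function of at most polynomial growth and the samples $g_{0}(m)$ and $g_{k}(2^{-k}m)$ are well defined. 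The heart of the matter is the sampling identity: for band-limited $g$ with $\operatorname{supp}\mathcal{F}g\subset\{|\xi|\le2^{k+1}\}$ and any $\eta\in\mathcal{S}(\mathbb{R}^n)$,
\begin{equation*}
\eta_{k}\ast g=2^{-kn}\sum_{m\in\mathbb{Z}^n}g(2^{-k}m)\,\eta_{k}(\cdot-2^{-k}m),\qquad\eta_{k}=2^{kn}\eta(2^{k}\cdot),
\end{equation*}
which I would prove on the Fourier side: the Fourier transform of the right-hand side is $\mathcal{F}\eta(2^{-k}\cdot)$ times $2^{-kn}\sum_{m}g(2^{-k}m)e^{-i2^{-k}m\cdot\xi}$, and by Poisson summation (equivalently a Fourier-series expansion over the cube $[-\pi2^{k},\pi2^{k}]^{n}$) the last factor is a multiple of the $2\pi2^{k}\mathbb{Z}^n$-periodisation of $\mathcal{F}g$; since $2^{k+1}<\pi2^{k}$ (because $2<\pi$) there is no aliasing, so on $\operatorname{supp}\mathcal{F}\eta(2^{-k}\cdot)$ this periodisation coincides with $\mathcal{F}g$, whence $\mathcal{F}\eta(2^{-k}\cdot)\,\mathcal{F}g=\mathcal{F}(\eta_{k}\ast g)$. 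Applying this with $\eta=\Psi$ and lattice $\mathbb{Z}^n$ to $g_{0}$, and with $\eta=\psi$ to each $g_{k}$, and rewriting $2^{-kn}\psi_{k}(\cdot-2^{-k}m)=2^{-kn/2}\psi_{k,m}$, produces exactly the second line of \eqref{proc2}.

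The point I expect to require the most care is the convergence of the resulting double series in $\mathcal{S}'(\mathbb{R}^n)$ and the admissibility of summing it in the stated order. For this I would bound $|\widetilde{\varphi}_k\ast f(2^{-k}m)|$ by a Peetre-type maximal function of $f$, which grows at most polynomially in $k$ and $|m|$ with exponent governed by the order of $f$ as a tempered distribution, and then pair against $\phi\in\mathcal{S}(\mathbb{R}^n)$ using the uniform Schwartz decay of the $\psi_{k,m}$; summing the resulting estimates, which are geometric in $k$ and rapidly decaying in $m$, gives absolute convergence of the pairing, hence convergence in $\mathcal{S}'(\mathbb{R}^n)$. All of this is standard, and for the omitted computations I would refer to \cite[Section~12]{FJ90} and \cite[Lemma~2.3]{YSY10}.
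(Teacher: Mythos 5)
Your outline is correct and is essentially the same argument the paper relies on: the paper offers no proof of Lemma \ref{DW-lemma1} beyond citing \cite[Section 12]{FJ90} and \cite[Lemma 2.3]{YSY10}, and your two steps (the Fourier-side use of \eqref{Ass3} with local finiteness of the spectral supports, then Paley--Wiener--Schwartz plus the no-aliasing sampling identity via Fourier series on the cube $[-\pi 2^{k},\pi 2^{k}]^{n}$, with convergence of the double series in $\mathcal{S}^{\prime }(\mathbb{R}^{n})$ controlled through the infinitely many vanishing moments of $\psi $ and the polynomial growth of $\widetilde{\varphi }_{k}\ast f$) are exactly the classical Frazier--Jawerth proof behind that citation. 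No gaps worth noting.
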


Let $\Phi ,\Psi ,\varphi ,\psi \in \mathcal{S}(\mathbb{R}^{n})$ satisfying %
\eqref{Ass1}, \eqref{Ass2} and \eqref{Ass3}. The $\varphi $-transform $%
S_{\varphi }$ is defined by setting 
\begin{equation*}
(S_{\varphi }f)_{0,m}=\langle f,\Phi _{m}\rangle \quad \text{and}\quad
(S_{\varphi }f)_{k,m}=\langle f,\varphi _{k,m}\rangle ,
\end{equation*}%
where 
\begin{equation*}
\Phi _{m}=\Phi (\cdot -m)\quad \text{and}\quad \varphi _{k,m}=2^{k\frac{n}{2}%
}\varphi (2^{k}\cdot -m),\quad m\in \mathbb{Z}^{n},k\in \mathbb{N}.
\end{equation*}%
The inverse $\varphi $-transform $T_{\psi }$ is defined by 
\begin{equation*}
T_{\psi }\lambda =\sum_{m\in \mathbb{Z}^{n}}\lambda _{0,m}\Psi
_{m}+\sum_{k=1}^{\infty }\sum_{m\in \mathbb{Z}^{n}}\lambda _{k,m}\psi _{k,m},
\end{equation*}%
where $\lambda =\{\lambda _{k,m}\}_{k\in \mathbb{N}_{0},m\in \mathbb{Z}%
^{n}}\subset \mathbb{C}$, see \cite[p. 131]{FJ90}.

Now, we introduce the corresponding sequence spaces of $\dot{K}%
_{p,r}^{\alpha ,q}B_{\beta }^{s}$ and $\dot{K}_{p,r}^{\alpha ,q}F_{\beta
}^{s}$.

\begin{definition}
\label{sequence-space-lorentz}Let $\alpha ,s\in \mathbb{R},0<p<\infty
,0<r,q\leq \infty $ and $0<\beta \leq \infty $.\newline
$\mathrm{(i)}$ The\ space $\dot{K}_{p,r}^{\alpha ,q}b_{\beta }^{s}$\ is
defined to be the set of all complex valued sequences $\lambda =\{\lambda
_{k,m}\}_{k\in \mathbb{N}_{0},m\in \mathbb{Z}^{n}}$ such that%
\begin{equation*}
\big\|\lambda \big\|_{\dot{K}_{p,r}^{\alpha ,q}b_{\beta }^{s}}=\Big(%
\sum_{k=0}^{\infty }2^{k(s+\frac{n}{2})\beta }\big\|\sum\limits_{m\in 
\mathbb{Z}^{n}}\lambda _{k,m}\chi _{k,m}\big\|_{\dot{K}_{p,r}^{\alpha
,q}}^{\beta }\Big)^{1/\beta }<\infty .
\end{equation*}%
$\mathrm{(ii)}$ Let $0<q<\infty $. The\ space $\dot{K}_{p,r}^{\alpha
,q}f_{\beta }^{s}$\ is defined to be the set of all complex valued sequences 
$\lambda =\{\lambda _{k,m}\}_{k\in \mathbb{N}_{0},m\in \mathbb{Z}^{n}}$ such
that 
\begin{equation*}
\big\|\lambda \big\|_{\dot{K}_{p,r}^{\alpha ,q}f_{\beta }^{s}}=\Big\|\Big(%
\sum_{k=0}^{\infty }\sum\limits_{m\in \mathbb{Z}^{n}}2^{k(s+\frac{n}{2}%
)\beta }|\lambda _{k,m}|^{\beta }\chi _{k,m}\Big)^{1/\beta }\Big\|_{\dot{K}%
_{p,r}^{\alpha ,q}}<\infty .
\end{equation*}
\end{definition}

For simplicity, in what follows, we use $\dot{K}_{p,r}^{\alpha ,q}A_{\beta
}^{s}$ to denote either $\dot{K}_{p,r}^{\alpha ,q}B_{\beta }^{s}$ or $\dot{K}%
_{p,r}^{\alpha ,q}F_{\beta }^{s}$. The case $q=\infty $ is excluded when $%
\dot{K}_{p,r}^{\alpha ,q}A_{\beta }^{s}$ means $\dot{K}_{p,r}^{\alpha
,q}F_{\beta }^{s}$. In the same way we shall use the abbreviation $\dot{K}%
_{p,r}^{\alpha ,q}a_{\beta }^{s}$ in place of $\dot{K}_{p,r}^{\alpha
,q}b_{\beta }^{s}$ and $\dot{K}_{p,r}^{\alpha ,q}f_{\beta }^{s}$.

Notice that, the spaces $\dot{K}_{p,r}^{\alpha ,q}A_{\beta }^{s}$ are
quasi-normed spaces, it holds%
\begin{equation*}
\big\|f+g\big\|_{\dot{K}_{p,r}^{\alpha ,q}A_{\beta }^{s}}\lesssim \big\|f%
\big\|_{\dot{K}_{p,r}^{\alpha ,q}A_{\beta }^{s}}+\big\|g\big\|_{\dot{K}%
_{p,r}^{\alpha ,q}A_{\beta }^{s}}
\end{equation*}%
for all $f,g\in \dot{K}_{p,r}^{\alpha ,q}A_{\beta }^{s}$, where the implicit
constant is independent of $f$ and $g$.

The following lemma ensure that $\dot{K}_{p,r}^{\alpha ,q}a_{\beta }^{s}$ is
well defined.

\begin{lemma}
\label{Inv-phi-trans-lorentz}Let $s\in \mathbb{R},0<p<\infty ,0<r,q,\beta
\leq \infty $ and $\alpha >-\frac{n}{p}$. Let $\Psi $ and $\psi $\ satisfy,
respectively, $\mathrm{\eqref{Ass1}}$\ and\ $\mathrm{\eqref{Ass2}}$. Then
for all $\lambda \in \dot{K}_{p,r}^{\alpha ,q}a_{\beta }^{s}$ 
\begin{equation*}
T_{\psi }\lambda =\sum_{m\in \mathbb{Z}^{n}}\lambda _{0,m}\Psi
_{m}+\sum_{k=1}^{\infty }\sum_{m\in \mathbb{Z}^{n}}\lambda _{k,m}\psi _{k,m},
\end{equation*}%
converges in $\mathcal{S}^{\prime }(\mathbb{R}^{n})$; moreover, $T_{\psi }:%
\dot{K}_{p,r}^{\alpha ,q}a_{\beta }^{s}\rightarrow \mathcal{S}^{\prime }(%
\mathbb{R}^{n})$ is continuous.
\end{lemma}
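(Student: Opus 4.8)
The goal is to show that for $\lambda\in\dot K_{p,r}^{\alpha,q}a_\beta^s$, the series $T_\psi\lambda$ converges in $\mathcal S'(\mathbb R^n)$ and that $T_\psi$ acts continuously into $\mathcal S'(\mathbb R^n)$. The strategy is the standard one: test against an arbitrary $\phi\in\mathcal S(\mathbb R^n)$, estimate $|\langle\Psi_m,\phi\rangle|$ and $|\langle\psi_{k,m},\phi\rangle|$ by exploiting the Schwartz decay of $\Psi,\psi,\phi$ together with the moment (cancellation) conditions coming from $\mathrm{supp}\,\mathcal F\psi\subset\{\tfrac12\le|\xi|\le2\}$, and then sum the resulting numerical series against $\|\lambda\|_{\dot K_{p,r}^{\alpha,q}a_\beta^s}$ using the geometric-type Hardy inequality (Lemma \ref{lem:lq-inequality}) and the $L^{p,r}$ estimates for characteristic functions \eqref{est-function1}.

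\textbf{Step 1: pointwise bounds on the test pairings.} First I would record that for each $N>0$ and each $L\in\mathbb N_0$ there is a constant so that
\begin{equation*}
|\langle\psi_{k,m},\phi\rangle|\lesssim 2^{-k n/2}2^{-kL}\,(1+|2^{-k}m|)^{-N},\qquad k\in\mathbb N,
\end{equation*}
and $|\langle\Psi_m,\phi\rangle|\lesssim(1+|m|)^{-N}$; here $L$ can be taken arbitrarily large because $\psi$ has vanishing moments of all orders (its Fourier transform vanishes near the origin), while for the $k=0$ term only decay in $m$ is available. These are classical computations (cf. \cite{FJ90}, \cite{YSY10}); I would cite them rather than reproduce them.

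\textbf{Step 2: from the coefficient norm to a pointwise bound on the coefficients.} Next I would show that membership in $\dot K_{p,r}^{\alpha,q}a_\beta^s$ controls each single coefficient: for $x$ in the cube $Q_{k,m}$ of side $2^{-k}$ centred near $2^{-k}m$, the characteristic function $\chi_{k,m}$ has $\dot K_{p,r}^{\alpha,q}$-norm comparable to $2^{-kn/p}2^{j(\alpha+n/p)}$ up to a factor depending only on which annulus $R_j$ the cube meets (this uses \eqref{est-function1} and Definition \ref{Herz-lorentz}), and hence
\begin{equation*}
|\lambda_{k,m}|\lesssim 2^{k(s+n/2)}\,2^{kn/p}\,2^{-j(\alpha+n/p)}\,\|\lambda\|_{\dot K_{p,r}^{\alpha,q}a_\beta^s},\qquad Q_{k,m}\subset R_j,
\end{equation*}
for both the $B$- and $F$-scales (for the $F$-scale one uses that the $\ell^\beta$ sum dominates a single term pointwise, exactly as in the classical Triebel–Lizorkin case). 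The hypothesis $\alpha>-n/p$ enters precisely here and in Step 3, guaranteeing that $2^{-j(\alpha+n/p)}\to0$ as $j\to+\infty$ so the cubes far from the origin contribute a summable (geometric) series.

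\textbf{Step 3: summation and the continuity estimate.} Combining Steps 1 and 2, I would bound $\sum_{k,m}|\lambda_{k,m}|\,|\langle\psi_{k,m},\phi\rangle|$ by a double sum over $k\ge1$ and over annuli $R_j$; the sum over $m$ with $Q_{k,m}\subset R_j$ has at most $\approx 2^{(k+j)n}$ terms, the decay $(1+|2^{-k}m|)^{-N}$ with $N$ large makes the $m$-sum finite, and then one is left with a sum over $k$ and $j$ of the form $\sum_{k}2^{-kL'}\sum_j 2^{-j(\alpha+n/p)}$ times $\|\lambda\|$, where $L'$ is as large as we like by choosing $L$ in Step 1. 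Splitting the $j$-sum into $j\le k$ and $j>k$ and applying Lemma \ref{lem:lq-inequality} (or simply a direct geometric estimate, since after Step 2 everything is already an $\ell^\infty$-type bound) yields
\begin{equation*}
\sum_{k\in\mathbb N_0}\sum_{m\in\mathbb Z^n}|\lambda_{k,m}|\,|\langle(\cdot)_{k,m},\phi\rangle|\lesssim \|\phi\|_{\mathcal S_M}\,\|\lambda\|_{\dot K_{p,r}^{\alpha,q}a_\beta^s}
\end{equation*}
for a suitable seminorm index $M$. Absolute convergence for every $\phi\in\mathcal S(\mathbb R^n)$ gives convergence of $T_\psi\lambda$ in $\mathcal S'(\mathbb R^n)$, and the displayed bound is exactly the continuity of $T_\psi:\dot K_{p,r}^{\alpha,q}a_\beta^s\to\mathcal S'(\mathbb R^n)$.

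\textbf{Main obstacle.} The only genuine difficulty is the interplay between the two geometric scales — the dyadic frequency index $k$ and the dyadic annulus index $j$ of the Herz decomposition — when the cube $Q_{k,m}$ sits at distance $\approx 2^j$ from the origin with $2^j$ either much larger or much smaller than $2^{-k}$. One must organize the sum so that the Schwartz decay of $\phi$ (which gives decay in $|2^{-k}m|\approx 2^{j+k}\cdot 2^{-k}=2^j$ for far cubes, but no help for cubes near the origin) is traded against the $2^{-j(\alpha+n/p)}$ factor; this is where $\alpha>-n/p$ is indispensable and where the case $\alpha\le -n/p$ would break down. Everything else is a routine adaptation of the classical argument in \cite{FJ90} and its Herz-space version in \cite{Drihem1.13}, so I would keep those parts brief and refer to the literature.
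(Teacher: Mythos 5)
Your outline is essentially the paper's own proof: estimate the pairings $\langle \Psi _{m},\phi \rangle $ and $\langle \psi _{k,m},\phi \rangle $ by Schwartz decay together with the vanishing moments of $\psi $ (the paper quotes exactly this as \eqref{convolution}, from \cite{YSY10}), convert the sequence norm into a pointwise bound on each coefficient via \eqref{est-function1}, and then sum against these decays; the treatment of the $F$-scale by dominating a single term of the $\ell ^{\beta }$ sum is also how the paper proceeds. So the architecture is right, but two of your intermediate claims need repair. First, the quantitative statements in your Step 2 are off: if $Q_{k,m}\subset R_{j}$ then $\big\|\chi _{k,m}\big\|_{\dot{K}_{p,r}^{\alpha ,q}}\approx 2^{j\alpha }2^{-kn/p}$ (not $2^{-kn/p}2^{j(\alpha +n/p)}$), and the resulting single-coefficient bound reads $|\lambda _{k,m}|\lesssim 2^{-k(s+\frac{n}{2})}2^{kn/p}2^{-j\alpha }\big\|\lambda \big\|_{\dot{K}_{p,r}^{\alpha ,q}a_{\beta }^{s}}$, with the opposite sign on $s+\frac{n}{2}$ from what you wrote; your Step 3 is unaffected because all it uses is geometric growth in $2^{k}$ and polynomial growth in $1+|2^{-k}m|$, which the moment order $L$ and the Schwartz order $N$ absorb. (The paper gets the analogous bound, $|\lambda _{k,m}|\lesssim 2^{k(\frac{n}{h}-s-\frac{n}{2})}(1+|2^{-k}m|)^{\frac{n}{t}-\alpha }\big\|\lambda \big\|$, by averaging $|\lambda _{k,m}|^{h}$ over $Q_{k,m}$ and applying H\"{o}lder over the annuli, with $\frac{1}{h}=\frac{1}{p}+\frac{1}{t}$.) Second, your localization of where $\alpha >-\frac{n}{p}$ enters is misplaced: the far cubes, $j\rightarrow +\infty $, are handled by the Schwartz decay alone, whatever the sign of $\alpha +\frac{n}{p}$; in the paper's argument the hypothesis is used on the origin side, namely for the convergence of the sum over annuli $R_{j}$ with $j\rightarrow -\infty $ inside a fixed cube (the sum \eqref{sum3} and its analogue for $k\geq 1$), which is guaranteed by choosing $0<h<\min \big(p,r,q,\frac{n}{\alpha +\frac{n}{p}}\big)$ so that $\frac{n}{t}-\alpha >0$. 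With these two corrections your proposal coincides with the paper's proof.
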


\begin{proof}
Since the proof for $\dot{K}_{p,r}^{\alpha ,q}b_{\beta }^{s}$ is similar, we
only consider $\dot{K}_{p,r}^{\alpha ,q}f_{\beta }^{s}$. Let $0<h<\min \big(%
p,r,q,\frac{n}{\alpha +\frac{n}{p}}\big)$, with 
\begin{equation*}
\frac{1}{h}=\frac{1}{p}+\frac{1}{t}=\frac{1}{r}+\frac{1}{d}=\frac{1}{q}+%
\frac{1}{v},\quad t,d,v>0.
\end{equation*}%
Let $\lambda \in \dot{K}_{p,r}^{\alpha ,q}f_{\beta }^{s}$ and $\varphi \in 
\mathcal{S}(\mathbb{R}^{n})$. We set 
\begin{equation*}
I_{1}=\sum_{m\in \mathbb{Z}^{n}}|\lambda _{0,m}||\langle \Psi _{m},\varphi
\rangle |\quad \text{and}\quad I_{2}=\sum_{k=1}^{\infty }\sum_{m\in \mathbb{Z%
}^{n}}|\lambda _{k,m}||\langle \psi _{k,m},\varphi \rangle |.
\end{equation*}%
It suffices to show that both $I_{1}$ and $I_{2}$ are dominated by $c\big\|%
\varphi \big\|_{\mathcal{S}_{M}}\big\|\lambda \big\|_{\dot{K}_{p,r}^{\alpha
,q}f_{\beta }^{s}}$ for some $M\in \mathbb{N}$.

\textit{Estimate of }$I_{1}$. Let $M,L\in \mathbb{N}$ be such that $M>L+n$.
Since\ $\varphi ,\Psi \in \mathcal{S}(\mathbb{R}^{n})$, we obtain%
\begin{align*}
|\langle \Psi _{m},\varphi \rangle |\leq & \int_{\mathbb{R}^{n}}|\Psi
(x-m)||\varphi (x)|dx \\
\leq & \big\|\varphi \big\|_{\mathcal{S}_{M}}\big\|\Psi \big\|_{\mathcal{S}%
_{L}}\int_{\mathbb{R}^{n}}(1+|x-m|)^{-L-n}(1+|x|)^{-M-n}dx \\
\leq & \big\|\varphi \big\|_{\mathcal{S}_{M}}\big\|\Psi \big\|_{\mathcal{S}%
_{L}}(1+|m|)^{-L-n}.
\end{align*}%
The last estimate follow by the inequality%
\begin{equation*}
(1+|x-m|)^{-L-n}\leq (1+|m|)^{-L-n}(1+|x|)^{L+n},\quad x\in \mathbb{R}%
^{n},m\in \mathbb{Z}^{n}.
\end{equation*}%
By H\"{o}lder's inequality, we obtain 
\begin{align}
|\lambda _{0,m}|^{h}& =\frac{1}{|Q_{0,m}|}\sum_{j=-\infty }^{\infty }\big\|%
\lambda _{0,m}\chi _{0,m}\chi _{j}\big\|_{h}^{h}  \notag \\
& \leq c\sum_{j=-\infty }^{\infty }\big\|\lambda _{0,m}\chi _{0,m}\chi _{j}%
\big\|_{L^{p,r}}^{h}\big\|\chi _{0,m}\chi _{j}\big\|_{L^{t,d}}^{h}  \notag \\
& \leq c\sum_{j=-\infty }^{\infty }\big\|\lambda _{0,m}\chi _{0,m}\chi _{j}%
\big\|_{L^{p,r}}^{h}\big\|\chi _{j}\big\|_{L^{t,d}}^{h},  \label{sum1}
\end{align}%
where the\ positive constant $c$ is independent of $m\in \mathbb{Z}^{n}$.
Observe that the sum in $\mathrm{\eqref{sum1}}$ can be rewritten as%
\begin{equation}
\sum_{j\in \mathbb{Z},2^{j-1}\leq \sqrt{n}(\mathbf{1}+|m|)}\big\|\lambda
_{0,m}\chi _{0,m}\chi _{j}\big\|_{L^{p,r}}^{h}\big\|\chi _{j}\big\|%
_{L^{t,d}}^{h}.  \label{sum2}
\end{equation}%
H\"{o}lder's inequality in the Lebesgue sequence spaces gives that $\mathrm{%
\eqref{sum2}}$ is bounded from above by 
\begin{align}
& \Big(\sum_{j\in \mathbb{Z},2^{j-1}\leq \sqrt{n}(\mathbf{1}+|m|)}2^{j\alpha
q}\big\|\lambda _{0,m}\chi _{0,m}\chi _{j}\big\|_{L^{p,r}}^{q}\Big)^{h/q}%
\Big(\sum_{j\in \mathbb{Z},2^{j-1}\leq \sqrt{n}(\mathbf{1}+|m|)}2^{-j\alpha
v}\big\|\chi _{j}\big\|_{L^{t,d}}^{v}\Big)^{h/v}  \notag \\
\leq & c\big\|\lambda \big\|_{\dot{K}_{p,r}^{\alpha ,q}f_{\beta }^{s}}^{h}%
\Big(\sum_{j\in \mathbb{Z},2^{j-1}\leq \sqrt{n}(\mathbf{1}+|m|)}2^{j(\frac{n%
}{t}-\alpha )v}\Big)^{h/v}  \notag \\
\leq & c(1+|m|)^{(\frac{n}{t}-\alpha )h}\big\|\lambda \big\|_{\dot{K}%
_{p,r}^{\alpha ,q}f_{\beta }^{s}}^{h},  \label{sum3}
\end{align}%
since $\frac{n}{t}-\alpha >0$. Inserting \eqref{sum3} in \eqref{sum2}, we
obtain%
\begin{equation*}
|\lambda _{0,m}|\lesssim (1+|m|)^{\frac{n}{t}-\alpha }\big\|\lambda \big\|_{%
\dot{K}_{p,r}^{\alpha ,q}f_{\beta }^{s}},
\end{equation*}%
where the\ implicit constant is independent\ of $m\in \mathbb{Z}^{n}$. If we
choose $L$ large enough, then 
\begin{equation*}
I_{1}\lesssim \big\|\varphi \big\|_{\mathcal{S}_{M}}\big\|\lambda \big\|_{%
\dot{K}_{p,r}^{\alpha ,q}f_{\beta }^{s}}.
\end{equation*}

\textit{Estimate of }$I_{2}$. Let us recall the following estimate; see
Lemma 2.4 in \cite{YSY10}. Since $\psi $ has vanishing moments of any order,
we see that for any $L,M>0$ there exists a positive constant $C=C(M,n)$ such
that for all $k\in \mathbb{N}$ and all $x\in \mathbb{R}^{n},$%
\begin{equation}
|\psi _{k}\ast \varphi (x)|\leq C2^{-kL}\big\|\psi \big\|_{\mathcal{S}_{M+1}}%
\big\|\varphi \big\|_{\mathcal{S}_{M+1}}\big(1+|x|\big)^{-n-L},
\label{convolution}
\end{equation}%
where the positive constant $C$ is independent of $k$ and $x$. We set $%
\breve{\varphi}=\varphi (-\cdot )$. From \eqref{convolution}, we get 
\begin{align*}
\left\vert \langle \psi _{k,m},\varphi \rangle \right\vert & =2^{-k\frac{n}{2%
}}|\psi _{k}\ast \breve{\varphi}(-2^{-k}m)| \\
& \lesssim 2^{-k(\frac{n}{2}+L)}\big\|\psi \big\|_{\mathcal{S}_{M+1}}\big\|%
\varphi \big\|_{\mathcal{S}_{M+1}}\big(1+|2^{-k}m|\big)^{-n-L}.
\end{align*}%
We use the same schema as in the estimate of $I_{1}$ we arrive at the
inequality 
\begin{equation*}
|\lambda _{k,m}|^{h}\leq \frac{c}{|Q_{k,m}|}\sum_{j\in \mathbb{Z}%
,2^{j-1}\leq 2^{-k}\sqrt{n}+2^{-k}|m|}\big\|\lambda _{k,m}\chi _{k,m}\chi
_{j}\big\|_{L^{p,r}}^{h}\big\|\chi _{j}\big\|_{L^{t,d}}^{h},
\end{equation*}%
where the\ positive constant $c$ is independent of $k\in \mathbb{N}$ and $%
m\in \mathbb{Z}^{n}$. Again, by H\"{o}lder's inequality in the Lebesgue
sequence spaces we obtain 
\begin{align*}
|\lambda _{k,m}|^{h}\leq & c2^{k(n-sh-\frac{n}{2}h)}\big\|\lambda \big\|_{%
\dot{K}_{p,r}^{\alpha ,q}f_{\beta }^{s}}^{h}\Big(\sum_{j\in \mathbb{Z}%
,2^{j-1}\leq 2^{-k}\sqrt{n}+2^{-k}|m|}2^{j(\frac{n}{t}-\alpha )v}\Big)^{h/v}
\\
\leq & c2^{k(n-sh-\frac{n}{2}h)}(1+|2^{-k}m|)^{(\frac{n}{t}-\alpha )h}\big\|%
\lambda \big\|_{\dot{K}_{p,r}^{\alpha ,q}f_{\beta }^{s}}^{h},
\end{align*}%
since $\frac{n}{t}-\alpha >0$. Thus,%
\begin{align*}
I_{2}\lesssim & \big\|\varphi \big\|_{\mathcal{S}_{M+1}}\big\|\psi \big\|_{%
\mathcal{S}_{M+1}}\big\|\lambda \big\|_{\dot{K}_{p,r}^{\alpha ,q}f_{\beta
}^{s}}\sum_{k=1}^{\infty }\sum_{m\in \mathbb{Z}^{n}}2^{k(\frac{n}{h}%
-s-n-L)}(1+|2^{-k}m|)^{\frac{n}{t}-\alpha -n-L} \\
\lesssim & \big\|\varphi \big\|_{\mathcal{S}_{M+1}}\big\|\psi \big\|_{%
\mathcal{S}_{M+1}}\big\|\lambda \big\|_{\dot{K}_{p,r}^{\alpha ,q}f_{\beta
}^{s}},
\end{align*}%
if $L$ is sufficiently large. The proof is complete.
\end{proof}

For a sequence $\lambda =\{\lambda _{k,m}\}_{k\in \mathbb{N}_{0},m\in 
\mathbb{Z}^{n}}\subset \mathbb{C},0<\gamma <\infty $ and a fixed $d>0$, set%
\begin{equation*}
\lambda _{k,m,\gamma ,d}^{\ast }=\Big(\sum_{h\in \mathbb{Z}^{n}}\frac{%
|\lambda _{k,h}|^{\gamma }}{(1+2^{k}|2^{-k}h-2^{-k}m|)^{d}}\Big)^{1/\gamma }
\end{equation*}%
and $\lambda _{\gamma ,d}^{\ast }:=\{\lambda _{k,m,\gamma ,d}^{\ast
}\}_{k\in \mathbb{N}_{0},m\in \mathbb{Z}^{n}}\subset \mathbb{C}$ with the
usual modification if $\gamma =\infty $.

\begin{lemma}
\label{lamda-equi-lorentz}Let $s\in \mathbb{R},0<p<\infty ,0<r,q\leq \infty
,0<\beta \leq \infty $ and $\alpha >-\frac{n}{p}$. Let$\ $ 
\begin{equation*}
\gamma =\left\{ 
\begin{array}{ccc}
\min (p,\beta ), & \text{if} & \dot{K}_{p,r}^{\alpha ,q}a_{\beta }^{s}=\dot{K%
}_{p,r}^{\alpha ,q}f_{\beta }^{s} \\ 
p, & \text{if} & \dot{K}_{p,r}^{\alpha ,q}a_{\beta }^{s}=\dot{K}%
_{p,r}^{\alpha ,q}b_{\beta }^{s}%
\end{array}%
\right.
\end{equation*}%
and%
\begin{equation*}
d>\frac{n\gamma }{\min \big(\frac{n}{\alpha +\frac{n}{p}},\gamma \big)}.
\end{equation*}%
Then 
\begin{equation}
\big\|\lambda _{\gamma ,d}^{\ast }\big\|_{\dot{K}_{p,r}^{\alpha ,q}a_{\beta
}^{s}}\approx \big\|\lambda \big\|_{\dot{K}_{p,r}^{\alpha ,q}a_{\beta }^{s}}.
\label{First}
\end{equation}
\end{lemma}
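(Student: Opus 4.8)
The plan is to adapt the classical argument of Frazier--Jawerth for the equivalence $\|\lambda^{\ast}_{\gamma,d}\|\approx\|\lambda\|$ (see \cite{FJ90} and \cite{YSY10}) to the Lorentz--Herz setting, using the maximal inequality of Lemma \ref{Maximal-Inq copy(2)-lorentz} in place of the usual Fefferman--Stein inequality. Since $|\lambda_{k,m}|\le\lambda^{\ast}_{k,m,\gamma,d}$ pointwise, the inequality $\|\lambda\|_{\dot{K}_{p,r}^{\alpha,q}a_{\beta}^{s}}\lesssim\|\lambda^{\ast}_{\gamma,d}\|_{\dot{K}_{p,r}^{\alpha,q}a_{\beta}^{s}}$ is trivial, and all the work lies in the reverse estimate. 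First I would record the pointwise bound: for a fixed dyadic cube $Q_{k,m}$ and any $x\in Q_{k,m}$,
\begin{equation*}
\lambda^{\ast}_{k,m,\gamma,d}\lesssim\Big(\mathcal{M}\Big(\sum_{h\in\mathbb{Z}^{n}}|\lambda_{k,h}|^{\gamma}\chi_{k,h}\Big)(x)\Big)^{1/\gamma},
\end{equation*}
which is the standard estimate obtained by splitting the sum over $h$ into the dyadic annuli $\{h:2^{\ell}\le 1+|h-m|<2^{\ell+1}\}$, using that $d>n$ so that the geometric series in $\ell$ converges, and comparing the average over the enlarged cube with the Hardy--Littlewood maximal function. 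This reduces everything to controlling a vector-valued (in $k$) maximal operator acting on the functions $g_{k}:=\sum_{h}|\lambda_{k,h}|^{\gamma}\chi_{k,h}$.

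Next I would split into the $F$-case and the $B$-case. In the $F$-case, $\gamma=\min(p,\beta)$, so $p/\gamma>1$ and $\beta/\gamma>1$ (with the usual interpretation when $\beta=\infty$), and by the pointwise bound and $|\lambda^{\ast}|^{\beta}=(|\lambda^{\ast}|^{\gamma})^{\beta/\gamma}$ one reduces, after pulling out the $2^{k(s+n/2)\beta}$ weights into the functions, to an inequality of the form
\begin{equation*}
\Big\|\Big(\sum_{k}\big(\mathcal{M}(G_{k})\big)^{\beta/\gamma}\Big)^{\gamma/\beta}\Big\|_{\dot{K}_{p/\gamma,\,r/\gamma}^{\alpha\gamma,\,q/\gamma}}\lesssim\Big\|\Big(\sum_{k}G_{k}^{\beta/\gamma}\Big)^{\gamma/\beta}\Big\|_{\dot{K}_{p/\gamma,\,r/\gamma}^{\alpha\gamma,\,q/\gamma}},
\end{equation*}
where $G_{k}=2^{k(s+n/2)\gamma}g_{k}$; here Lemma \ref{embeddings1-lorentz}/(iv) is used to pass between $\||\cdot|^{\gamma}\|_{\dot{K}_{p,r}^{\alpha,q}}$ and $\|\cdot\|_{\dot{K}_{p/\gamma,r/\gamma}^{\alpha/\gamma,q/\gamma}}$. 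The point is that Lemma \ref{Maximal-Inq copy(2)-lorentz} applies with exponents $(p/\gamma,\beta/\gamma,r/\gamma,q/\gamma)$ precisely because $\alpha>-n/p$ combined with $d>\frac{n\gamma}{\min(n/(\alpha+n/p),\gamma)}$ translates exactly into the admissibility range $-\frac{n}{p/\gamma}<\alpha\gamma<n(1-\frac{\gamma}{p})$ after one checks the arithmetic (this uses $\alpha+n/p>0$ to get the lower bound, and the role of $d$ is to guarantee that the decay $(1+2^{k}|2^{-k}h-2^{-k}m|)^{-d}$ is strong enough to dominate the sum by a genuine maximal function in the $\gamma$-th power sense; when $\gamma\ge n/(\alpha+n/p)$ one needs $d>n$, otherwise a larger $d$). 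In the $B$-case $\gamma=p$, so $p/\gamma=1$ and there is no vector-valued sum to handle inside $\dot{K}$; instead, for each fixed $k$ one uses the scalar maximal bound $\|\mathcal{M}g_{k}\|_{\dot{K}_{1,r/p}^{\alpha p,q/p}}\lesssim\|g_{k}\|_{\dot{K}_{1,r/p}^{\alpha p,q/p}}$ (Lemma \ref{Maximal-Inq copy(1)-lorentz}, again with the translated exponents), then sums the resulting bounds in $\ell^{\beta}$ with the weights $2^{k(s+n/2)\beta}$, which is immediate.

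The main obstacle I expect is twofold: first, verifying carefully that the admissibility condition $-\frac{n}{p}<\alpha<n(1-1/p)$ required by Lemma \ref{Maximal-Inq copy(2)-lorentz} is in fact implied by the hypotheses $\alpha>-n/p$ and the lower bound on $d$ after the substitution $(p,r,q)\mapsto(p/\gamma,r/\gamma,q/\gamma)$ — one has to check that choosing $\gamma\le n/(\alpha+n/p)$ (which is allowed when this quantity is $\le p$ or $\le\beta$ as appropriate) makes $\alpha\gamma<n(1-\gamma/p)$ automatic, and that the condition on $d$ is exactly what is needed so that the ``$\gamma$-th power maximal function'' estimate $\lambda^{\ast}_{k,m,\gamma,d}\lesssim(\mathcal{M}(\sum_{h}|\lambda_{k,h}|^{\gamma}\chi_{k,h}))^{1/\gamma}$ holds with the enlarged-cube comparison still valid (the borderline is when $n/(\alpha+n/p)<\gamma$, forcing $d>n$ only, versus the opposite case forcing the sharper lower bound). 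Second, one must treat the $k=0$ term, where $\chi_{0,m}$ has the role of $\Phi$ and the normalization differs; this is routine but needs a separate, easy line. The rest — the annular decomposition giving the pointwise bound, and the homogeneity Lemma \ref{embeddings1-lorentz}/(iv) — is standard bookkeeping, so I would present those briefly and spend the detail on the exponent verification and the invocation of Lemma \ref{Maximal-Inq copy(2)-lorentz}.
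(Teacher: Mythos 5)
Your easy direction and your annular pointwise estimate are fine (indeed $d>n$ follows from the hypothesis, so the geometric series converges), but the heart of the argument — the reduction to the vector-valued maximal inequality at the exponent $\gamma$ itself — breaks down, and this is exactly the point the paper's proof is designed to avoid. In the $F$-case you claim $p/\gamma>1$ and $\beta/\gamma>1$; since $\gamma=\min(p,\beta)$, one of these ratios is always exactly $1$, so Lemma \ref{Maximal-Inq copy(2)-lorentz} (which needs $1<p<\infty$, $1<\beta\leq\infty$) does not apply after your rescaling. In the $B$-case the situation is worse: with $\gamma=p$ you invoke a scalar maximal bound on $\dot{K}_{1,r/p}^{\alpha p,q/p}$, but Lemma \ref{Maximal-Inq copy(1)-lorentz} requires $1<p<\infty$ and $\mathcal{M}$ is not bounded at exponent $1$. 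Moreover, the Herz admissibility $\alpha\gamma<n(1-\gamma/p)$, i.e.\ $\gamma<\frac{n}{\alpha+n/p}$, is \emph{not} implied by the hypotheses of the lemma: the assumption on $d$ allows $\gamma\geq\frac{n}{\alpha+n/p}$, in which case your translated exponents violate the admissibility range. You flag these as points to check, but checking them reveals they fail; awareness of the obstacle is not a fix.

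The missing idea is the auxiliary exponent. Choose $a$ with
\begin{equation*}
\frac{n\gamma}{d}<a<\min\Big(\frac{n}{\alpha+\frac{n}{p}},\gamma\Big),
\end{equation*}
which exists precisely because of the stated lower bound on $d$. Inside each annulus $\Omega_{j,m}$ use the embedding $\ell^{a}\hookrightarrow\ell^{\gamma}$ to replace $\sum_{h\in\Omega_{j,m}}|\lambda_{k,h}|^{\gamma}$ by $\big(\sum_{h\in\Omega_{j,m}}|\lambda_{k,h}|^{a}\big)^{\gamma/a}$; the annular sum then converges because $\frac{n\gamma}{a}-d<0$, and one obtains the pointwise bound $\lambda_{k,m,\gamma,d}^{\ast}\lesssim\mathcal{M}_{a}\big(\sum_{h}\lambda_{k,h}\chi_{k,h}\big)(x)$ for $x\in Q_{k,m}$, with the maximal function at exponent $a$ acting on the \emph{unsquared} sum. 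Since $a<\min(p,\beta)$ and $a<\frac{n}{\alpha+n/p}$ (equivalently $\alpha a<n(1-\frac{a}{p})$), the rescaled exponents are strictly admissible and Lemma \ref{Maximal-Inq copy(2)-lorentz} (resp.\ its scalar version in the $B$-case) applies, completing the reverse inequality. With this modification your outline coincides with the paper's proof; without it the argument has a genuine gap at the endpoint exponents.
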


\begin{proof}
By similarity, we only consider $\dot{K}_{p,r}^{\alpha ,q}f_{\beta }^{s}$.
Obviously, 
\begin{equation*}
\big\|\lambda \big\|_{\dot{K}_{p,r}^{\alpha ,q}f_{\beta }^{s}}\leq \big\|%
\lambda _{\min (p,\beta ),d}^{\ast }\big\|_{\dot{K}_{p,r}^{\alpha
,q}f_{\beta }^{s}}.
\end{equation*}%
Let $\frac{n\min (p,\beta )}{d}<a<\min \big(\frac{n}{\alpha +\frac{n}{p}}%
,\min (p,\beta )\big),{j}\in \mathbb{N}$ and $m\in \mathbb{Z}^{n}$. {Define }
\begin{equation*}
{\Omega _{j,m}=\{h\in \mathbb{Z}^{n}:2^{j-1}<|h-m|\leq 2^{j}\}\quad }\text{%
and}{\quad \Omega _{0,m}=\{h\in \mathbb{Z}^{n}:|h-m|\leq 1\}}.
\end{equation*}%
Then 
\begin{align*}
\sum_{h\in \mathbb{Z}^{n}}\frac{|\lambda _{k,h}|^{\min (p,\beta )}}{\big(%
1+|h-m|\big)^{d}}& =\sum\limits_{j=0}^{\infty }\sum\limits_{h\in {\Omega
_{j,m}}}\frac{|\lambda _{k,h}|^{\min (p,\beta )}}{\big(1+|h-m|\big)^{d}} \\
& \lesssim \sum\limits_{j=0}^{\infty }2^{-dj}\sum\limits_{h\in {\Omega _{j,m}%
}}|\lambda _{k,h}|^{\min (p,\beta )} \\
& \lesssim \sum\limits_{j=0}^{\infty }2^{-dj}\Big(\sum\limits_{h\in {\Omega
_{j,m}}}|\lambda _{k,h}|^{a}\Big)^{\min (p,\beta )/a},
\end{align*}%
where the last estimate follows by the embedding $\ell ^{a}\hookrightarrow
\ell ^{\min (p,\beta )}$. The last expression can be rewritten as 
\begin{equation}
c\sum\limits_{j=0}^{\infty }2^{(\frac{n\min (p,\beta )}{a}-d)j}\Big(%
2^{(k-j)n}\int_{\cup _{z\in {\Omega _{j,m}}}Q_{k,z}}\sum\limits_{h\in {%
\Omega _{j,m}}}|\lambda _{k,h}|^{a}\chi _{k,h}(y)dy\Big)^{\min (p,\beta )/a}.
\label{estimate-lamda}
\end{equation}%
Let $y\in \cup _{z\in {\Omega _{j,m}}}Q_{k,z}$ and $x\in Q_{k,m}$. Then $%
y\in Q_{k,z}$ for some $z\in {\Omega _{j,m}}$ which implies that ${\
2^{j-1}<|z-m|\leq 2^{j}}$. From this it follows that 
\begin{align*}
|y-x|& \leq |y-2^{-k}z|+|x-2^{-k}z| \\
& \leq \sqrt{n}\text{ }2^{-k}+|x-2^{-k}m|+2^{-k}|z-m| \\
& \leq 2^{j-k+\delta _{n}},\quad \delta _{n}\in \mathbb{N},
\end{align*}%
which implies that $y$ is located in the ball $B(x,2^{j-k+\delta _{n}})$.
Therefore, $\mathrm{\eqref{estimate-lamda}}$ can be estimated from above by 
\begin{equation*}
c\Big(\mathcal{M}_{a}\big(\sum\limits_{h\in \mathbb{Z}^{n}}\lambda
_{k,h}\chi _{k,h}\big)(x)\Big)^{\min (p,\beta )},
\end{equation*}%
where the positive constant $c$ is independent of $k$ and $x$. Consequently 
\begin{equation}
\big\|\lambda _{\min (p,\beta ),d}^{\ast }\big\|_{\dot{K}_{p,r}^{\alpha
,q}f_{\beta }^{s}}  \label{estimate-lamda1}
\end{equation}%
does not exceed 
\begin{equation*}
c\Big\|\Big(\sum_{k=0}^{\infty }2^{k(s+\frac{n}{2})\beta }\big(\mathcal{M}%
_{a}\big(\sum\limits_{h\in \mathbb{Z}^{n}}\lambda _{k,h}\chi _{k,h}\big)\big)%
^{\beta }\Big)^{1/\beta }\Big\|_{\dot{K}_{p,r}^{\alpha ,q}}.
\end{equation*}%
Applying Lemma {\ref{Maximal-Inq copy(2)-lorentz}} we obtain that %
\eqref{estimate-lamda1} is dominated by 
\begin{equation*}
c\Big\|\Big(\sum_{k=0}^{\infty }2^{k(s+\frac{n}{2})\beta }\sum\limits_{h\in 
\mathbb{Z}^{n}}|\lambda _{k,h}|^{\beta }\chi _{k,h}\Big)^{1/\beta }\Big\|_{%
\dot{K}_{p,r}^{\alpha ,q}}=c\big\|\lambda \big\|_{\dot{K}_{p,r}^{\alpha
,q}f_{\beta }^{s}},
\end{equation*}%
which completes the proof of Lemma {\ref{lamda-equi-lorentz}.}
\end{proof}

Now, we have the following result which is called the $\varphi $-transform
characterization in the sense of Frazier and Jawerth. It will play an
important role in the rest of this section.

\begin{theorem}
\label{phi-tran-lorentz}Let $s\in \mathbb{R},0<p<\infty ,0<r,q\leq \infty
,0<\beta \leq \infty $ and $\alpha >-\frac{n}{p}$. Let $\Phi ,\Psi \in 
\mathcal{S}(\mathbb{R}^{n})$ satisfy \eqref{Ass1} and $\varphi ,\psi \in 
\mathcal{S}(\mathbb{R}^{n})$ satisfy \eqref{Ass2} such that \eqref{Ass3}
holds. The operators 
\begin{equation*}
S_{\varphi }:\dot{K}_{p,r}^{\alpha ,q}A_{\beta }^{s}\rightarrow \dot{K}%
_{p,r}^{\alpha ,q}a_{\beta }^{s}
\end{equation*}%
and 
\begin{equation*}
T_{\psi }:\dot{K}_{p,r}^{\alpha ,q}a_{\beta }^{s}\rightarrow \dot{K}%
_{p,r}^{\alpha ,q}A_{\beta }^{s}
\end{equation*}%
are bounded. Furthermore, $T_{\psi }\circ S_{\varphi }$ is the identity on $%
\dot{K}_{p,r}^{\alpha ,q}A_{\beta }^{s}$.
\end{theorem}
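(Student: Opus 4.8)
The plan is to follow the now-classical route of Frazier–Jawerth (adapted to the quasi-Banach, weighted setting as in \cite{FJ90}, \cite{YSY10}), reducing the boundedness of $S_\varphi$ and $T_\psi$ to two pointwise estimates plus the vector-valued maximal inequality in Lorentz-Herz spaces, Lemma~\ref{Maximal-Inq copy(2)-lorentz}. First I would record that the identity $T_\psi\circ S_\varphi=\mathrm{id}$ on $\dot K_{p,r}^{\alpha,q}A_\beta^s$ is nothing but the Calder\'on reproducing formula \eqref{proc2} from Lemma~\ref{DW-lemma1}, once we know each of the two operators maps into the correct space (so that the composition makes sense as a bona fide operator between the relevant spaces); convergence of the defining series for $T_\psi$ is already guaranteed by Lemma~\ref{Inv-phi-trans-lorentz}. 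So the whole theorem comes down to the two mapping properties.

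\textbf{Boundedness of $S_\varphi$.} Given $f\in\dot K_{p,r}^{\alpha,q}A_\beta^s$, I would estimate the coefficients $(S_\varphi f)_{k,m}=\langle f,\varphi_{k,m}\rangle$ by the standard device: for $x\in Q_{k,m}$ one has, using that $\varphi$ (and $\Phi$) are Schwartz with the support conditions \eqref{Ass2} (resp.\ \eqref{Ass1}), the pointwise bound
\begin{equation*}
|(S_\varphi f)_{k,m}|\lesssim 2^{-kn/2}\,\inf_{y\in Q_{k,m}}\big(\eta_{2^k,L}*|\varphi_k*f|\big)(y)
\end{equation*}
for any large $L$, where $\eta_{2^k,L}$ is the function from Lemma~\ref{est-eta}; this is exactly the reasoning behind \eqref{convolution}. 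Combining this with Lemma~\ref{r-trick} (the ``$r$-trick'') to pass from the convolution with $\eta$ to a power of the Hardy--Littlewood maximal function $\mathcal M_a(\varphi_k*f)$ with $a$ small (so that $\alpha>-n/p$ still allows $1<p/a$ and $-\frac{n}{p/a}<\alpha<n(1-\frac{a}{p})$ fails to be an obstruction — this is where the hypothesis $\alpha>-n/p$ is used), one gets
\begin{equation*}
\big\|\sum_{m}|(S_\varphi f)_{k,m}|\,2^{kn/2}\chi_{k,m}\big\|\;\lesssim\;\big\|\mathcal M_a(\varphi_k*f)\big\|
\end{equation*}
pointwise (for $F$) or in $\dot K_{p,r}^{\alpha,q}$ (for $B$). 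Then summing in $k$ with the weight $2^{ks}$ in the $\ell^\beta$ (resp.\ $\dot K_{p,r}^{\alpha,q}(\ell^\beta)$) sense and applying Lemma~\ref{Maximal-Inq copy(2)-lorentz} removes the maximal function and yields $\|S_\varphi f\|_{\dot K_{p,r}^{\alpha,q}a_\beta^s}\lesssim\|f\|_{\dot K_{p,r}^{\alpha,q}A_\beta^s}$.

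\textbf{Boundedness of $T_\psi$.} For the reverse direction I would take $\lambda\in\dot K_{p,r}^{\alpha,q}a_\beta^s$, set $g=T_\psi\lambda$, and compute $\varphi_\nu*g$ by plugging in the series and using the almost-orthogonality estimate: since $\mathcal F\varphi_\nu$ and $\mathcal F\psi_k$ have disjoint supports unless $|\nu-k|\le 1$ (and similarly for the $\Phi,\Psi$ term), and $\psi$ has vanishing moments of all orders, one obtains for each fixed $\nu$ and $x\in\mathbb R^n$
\begin{equation*}
2^{\nu s}|\varphi_\nu*g(x)|\;\lesssim\;\sum_{k:|k-\nu|\le 1}2^{-|k-\nu|L}\,2^{ks}\,2^{-kn/2}\big(\lambda^*_{k,\cdot,a,d}\big)\ \text{evaluated at }x,
\end{equation*}
i.e.\ a pointwise bound by (a finite shift of) the majorant $\lambda^*_{\gamma,d}$ from Lemma~\ref{lamda-equi-lorentz}, with $\gamma$ chosen as there and $d$ large. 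Taking the $\ell^\beta(2^{ks})$-norm (Besov) or the corresponding vector-valued norm (Triebel--Lizorkin), summing the geometric factor $2^{-|k-\nu|L}$ via Lemma~\ref{lem:lq-inequality}, and finally invoking the equivalence \eqref{First} of Lemma~\ref{lamda-equi-lorentz}, gives $\|T_\psi\lambda\|_{\dot K_{p,r}^{\alpha,q}A_\beta^s}\lesssim\|\lambda^*_{\gamma,d}\|_{\dot K_{p,r}^{\alpha,q}a_\beta^s}\approx\|\lambda\|_{\dot K_{p,r}^{\alpha,q}a_\beta^s}$.

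\textbf{Main obstacle.} The routine part is the almost-orthogonality and the Schwartz-tail bookkeeping; the genuinely load-bearing step is making sure the maximal operator $\mathcal M_a$ with $a<\min(1,p,r,\beta)$ close to the critical value is still admissible in Lemma~\ref{Maximal-Inq copy(2)-lorentz}, which requires the parameter of the Herz space to satisfy $-\frac{n}{p/a}<\alpha<n\big(1-\frac{a}{p}\big)$; one has to check that the condition $\alpha>-n/p$ together with $\alpha\in\mathbb R$ leaves room to pick such an $a$ (the lower bound is automatic, the upper bound always holds since $\alpha$ may be any real but $a$ can be taken small enough only when... — actually the upper constraint forces $a$ small, which is always possible). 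Equivalently, in Lemma~\ref{lamda-equi-lorentz} this is precisely the role of the choice $d>n\gamma/\min(n/(\alpha+n/p),\gamma)$ and $\tfrac{n\min(p,\beta)}{d}<a<\min(n/(\alpha+n/p),\min(p,\beta))$, so I would simply cite those two lemmas and thereby absorb the difficulty. Thus the only real work in the proof is assembling the pointwise estimates for $S_\varphi$ and $T_\psi$ and then quoting Lemmas~\ref{Maximal-Inq copy(2)-lorentz}, \ref{lamda-equi-lorentz}, \ref{r-trick}, \ref{lem:lq-inequality} and the reproducing formula \eqref{proc2}.
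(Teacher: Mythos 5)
Your proposal is correct and follows essentially the same route as the paper, which disposes of this theorem by declaring it a straightforward adaptation of \cite[Theorem 2.2]{FJ90} combined with Lemma \ref{lamda-equi-lorentz}; your two pointwise estimates, the use of Lemmas \ref{r-trick}, \ref{Maximal-Inq copy(2)-lorentz} and \ref{lamda-equi-lorentz}, and the reproducing formula \eqref{proc2} are exactly the ingredients that adaptation consists of. The only blemish is the garbled admissibility condition for $\mathcal M_a$ in your last paragraph (it should read $-\tfrac{n}{p/a}<\alpha a<n(1-\tfrac{a}{p})$, i.e.\ $a<\tfrac{n}{\alpha+n/p}$, which the hypothesis $\alpha>-\tfrac{n}{p}$ always permits for small $a$), but you correctly identify that this is absorbed by the cited lemmas.
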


\begin{proof}
The proof is a straightforward adaptation of {\cite[Theorem 2.2]{FJ90} with
the help of Lemma \ref{lamda-equi-lorentz}. }The proof is complete.
\end{proof}

\begin{remark}
This theorem can then be exploited to obtain a variety of results for the
spaces $\dot{K}_{p,r}^{\alpha ,q}A_{\beta }^{s}$, where arguments can be
equivalently transferred to the sequence space, which is often more
convenient to handle. More precisely, under the same hypothesis of Theorem %
\ref{phi-tran-lorentz}, we obtain 
\begin{equation*}
\big\|\{\langle f,\varphi _{k,m}\rangle \}_{k\in \mathbb{N}_{0},m\in \mathbb{%
Z}^{n}}\big\|_{\dot{K}_{p,r}^{\alpha ,q}a_{\beta }^{s}}\approx \big\|f\big\|%
_{\dot{K}_{p,r}^{\alpha ,q}A_{\beta }^{s}}.
\end{equation*}
\end{remark}

From Theorem \ref{phi-tran-lorentz}, we obtain the next important property
of the spaces\ $\dot{K}_{p,r}^{\alpha ,q}A_{\beta }^{s}$.

\begin{corollary}
\label{Indpendent-lorentz}Let $s\in \mathbb{R},0<p<\infty ,0<r,q\leq \infty
,0<\beta \leq \infty $ and $\alpha >-\frac{n}{p}$. The definition of the
spaces $\dot{K}_{p,r}^{\alpha ,q}A_{\beta }^{s}$ is independent of the
choices of $\Phi $ and $\varphi $.
\end{corollary}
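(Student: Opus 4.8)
The plan is to derive Corollary \ref{Indpendent-lorentz} as an immediate consequence of the $\varphi$-transform characterization in Theorem \ref{phi-tran-lorentz}, using the standard Frazier--Jawerth bootstrapping argument. Suppose we are given two admissible pairs $(\Phi,\varphi)$ and $(\Phi^{(1)},\varphi^{(1)})$, each satisfying \eqref{Ass1} and \eqref{Ass2}; write $\|f\|_{\dot{K}_{p,r}^{\alpha,q}A_\beta^s}$ and $\|f\|^{(1)}_{\dot{K}_{p,r}^{\alpha,q}A_\beta^s}$ for the corresponding quasi-norms. The goal is to show these are equivalent for every $f\in\mathcal{S}'(\mathbb{R}^n)$ for which one (hence both) is finite.

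The first step is to fix, via \cite[Section 12]{FJ90}, dual pairs: functions $\Psi,\psi$ associated with $(\Phi,\varphi)$ so that \eqref{Ass3} holds, and likewise $\Psi^{(1)},\psi^{(1)}$ associated with $(\Phi^{(1)},\varphi^{(1)})$. Then I would invoke Theorem \ref{phi-tran-lorentz} twice. On the one hand, $S_\varphi:\dot{K}_{p,r}^{\alpha,q}A_\beta^s\to\dot{K}_{p,r}^{\alpha,q}a_\beta^s$ is bounded, so $\|S_\varphi f\|_{\dot{K}_{p,r}^{\alpha,q}a_\beta^s}\lesssim\|f\|_{\dot{K}_{p,r}^{\alpha,q}A_\beta^s}$. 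On the other hand, applying the theorem to the pair $(\Phi^{(1)},\varphi^{(1)})$, the synthesis operator $T_{\psi^{(1)}}:\dot{K}_{p,r}^{\alpha,q}a_\beta^s\to\dot{K}_{p,r}^{\alpha,q}A_\beta^s$ is bounded with respect to $\|\cdot\|^{(1)}$, i.e. $\|T_{\psi^{(1)}}\lambda\|^{(1)}_{\dot{K}_{p,r}^{\alpha,q}A_\beta^s}\lesssim\|\lambda\|_{\dot{K}_{p,r}^{\alpha,q}a_\beta^s}$, and moreover $T_{\psi^{(1)}}\circ S_{\varphi^{(1)}}=\mathrm{id}$. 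The remaining ingredient is the Calder\'on reproducing identity of Lemma \ref{DW-lemma1} applied to the pair $(\Phi,\varphi)$: it gives $f=T_\psi(S_\varphi f)$ in $\mathcal{S}'(\mathbb{R}^n)$. Hence $S_{\varphi^{(1)}}f=(S_{\varphi^{(1)}}\circ T_\psi)(S_\varphi f)$, and composing,
\begin{equation*}
\|f\|^{(1)}_{\dot{K}_{p,r}^{\alpha,q}A_\beta^s}=\|T_{\psi^{(1)}}S_{\varphi^{(1)}}f\|^{(1)}_{\dot{K}_{p,r}^{\alpha,q}A_\beta^s}\lesssim\|S_{\varphi^{(1)}}f\|_{\dot{K}_{p,r}^{\alpha,q}a_\beta^s}=\|S_{\varphi^{(1)}}T_\psi S_\varphi f\|_{\dot{K}_{p,r}^{\alpha,q}a_\beta^s}.
\end{equation*}
The composition $S_{\varphi^{(1)}}\circ T_\psi$ is a bounded operator on $\dot{K}_{p,r}^{\alpha,q}a_\beta^s$ — this is precisely the content of the almost-diagonal type estimate in \cite[Theorem 2.2]{FJ90} transferred to our sequence space, which already underlies the proof of Theorem \ref{phi-tran-lorentz} — so the last quantity is $\lesssim\|S_\varphi f\|_{\dot{K}_{p,r}^{\alpha,q}a_\beta^s}\lesssim\|f\|_{\dot{K}_{p,r}^{\alpha,q}A_\beta^s}$. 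Reversing the roles of the two pairs gives the opposite inequality, establishing the equivalence.

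The only point requiring care — and the step I expect to be the main obstacle — is making rigorous the claim that $S_{\varphi^{(1)}}\circ T_\psi$ is bounded on the sequence space $\dot{K}_{p,r}^{\alpha,q}a_\beta^s$, since a priori one only has $T_\psi$ bounded from the sequence space into $\dot{K}_{p,r}^{\alpha,q}A_\beta^s$ and $S_{\varphi^{(1)}}$ bounded back; composing them is legitimate, but one must check the composition is defined on all of $\dot{K}_{p,r}^{\alpha,q}a_\beta^s$ and that $T_\psi\lambda$ is a genuine tempered distribution (granted by Lemma \ref{Inv-phi-trans-lorentz}, which needs $\alpha>-\frac{n}{p}$, exactly the hypothesis in force here). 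Alternatively, and perhaps more cleanly, one bypasses the composition issue entirely: apply $S_{\varphi^{(1)}}$ to the identity $f=T_\psi S_\varphi f$ directly and then use that $S_{\varphi^{(1)}}\circ T_\psi$ acts on sequences as an almost diagonal matrix in the sense of Frazier--Jawerth, whose boundedness on $\dot{K}_{p,r}^{\alpha,q}a_\beta^s$ is established in Section 4 of the paper (or can be extracted from the proof of Theorem \ref{phi-tran-lorentz}). Given all the machinery already assembled, the argument is short; accordingly I would state it in just a few lines, noting that it is a standard adaptation of \cite[Corollary 2.3]{FJ90} and that Lemma \ref{lamda-equi-lorentz} and Theorem \ref{phi-tran-lorentz} supply everything needed.
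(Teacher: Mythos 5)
Your proposal is correct and follows essentially the paper's route: the paper states this corollary as an immediate consequence of Theorem \ref{phi-tran-lorentz}, and your write-up is exactly the standard Frazier--Jawerth bootstrapping (Calder\'on reproducing formula, boundedness of $S_{\varphi}$ and $T_{\psi^{(1)}}$, and almost-diagonality of $S_{\varphi^{(1)}}\circ T_{\psi}$ on the sequence space) that makes this deduction explicit. One minor caution: if you invoke Theorem \ref{almost-diag-est} for the boundedness of $S_{\varphi^{(1)}}\circ T_{\psi}$, note it is stated only for $\beta<\infty$, so for $\beta=\infty$ one should instead extract the boundedness from the proof of Theorem \ref{phi-tran-lorentz} via Lemma \ref{lamda-equi-lorentz}, as you indicate in your alternative.
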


Let $\{\varphi _{k}\}_{k\in \mathbb{N}_{0}}$ be a resolution of unity; see %
\eqref{partition}. We set

\begin{equation*}
\big\|\lambda \big\|_{\dot{K}_{p,r}^{\alpha ,q}B_{\beta }^{s}}^{\varphi
_{0},\varphi _{1}}=\Big(\sum_{k=0}^{\infty }2^{ks\beta }\big\|\mathcal{F}%
^{-1}\varphi _{k}\ast f\big\|_{\dot{K}_{p,r}^{\alpha ,q}}^{\beta }\Big)%
^{1/\beta }
\end{equation*}%
and 
\begin{equation*}
\big\|\lambda \big\|_{\dot{K}_{p,r}^{\alpha ,q}F_{\beta }^{s}}^{\varphi
_{0},\varphi _{1}}=\Big\|\Big(\sum_{k=0}^{\infty }2^{ks\beta }|\mathcal{F}%
^{-1}\varphi _{k}\ast f|^{\beta }\Big)^{1/\beta }\Big\|_{\dot{K}%
_{p,r}^{\alpha ,q}}.
\end{equation*}

\begin{theorem}
\label{partition-equi-lorentz}Let $s\in \mathbb{R},0<p<\infty ,0<r,q\leq
\infty ,0<\beta \leq \infty $ and $\alpha >-\frac{n}{p}$. A tempered
distribution $f$ belongs to $\dot{K}_{p,r}^{\alpha ,q}A_{\beta }^{s}$ if and
only if%
\begin{equation*}
\big\|f\big\|_{\dot{K}_{p,r}^{\alpha ,q}A_{\beta }^{s}}^{\varphi
_{0},\varphi _{1}}<\infty .
\end{equation*}%
Furthermore, the quasi-norms\ $\big\|f\big\|_{\dot{K}_{p,r}^{\alpha
,q}A_{\beta }^{s}}$\ and $\big\|f\big\|_{\dot{K}_{p,r}^{\alpha ,q}A_{\beta
}^{s}}^{\varphi _{0},\varphi _{1}}$ are equivalent.
\end{theorem}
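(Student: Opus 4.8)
The plan is to compare the admissible pair $\Phi,\varphi$ fixed in Definition~\ref{B-F-def-lorentz} with the frequency cut-offs of the smooth dyadic resolution of unity \eqref{partition}, which for clarity I write $\{\phi_k\}_{k\in\mathbb{N}_0}$ (so $\sum_{j\ge0}\phi_j\equiv1$, $\supp\phi_0\subset\{|\xi|\le3/2\}$ and $\supp\phi_k\subset\{2^{k-1}\le|\xi|\le3\cdot2^{k-1}\}$ for $k\ge1$). By symmetry it is enough to prove $\|f\|_{\dot{K}_{p,r}^{\alpha,q}A_\beta^s}\lesssim\|f\|^{\varphi_0,\varphi_1}_{\dot{K}_{p,r}^{\alpha,q}A_\beta^s}$, the converse being obtained in exactly the same way with the roles of $\{\varphi_k\}$ and $\{\mathcal F^{-1}\phi_k\}$ interchanged (note that $\mathcal F^{-1}\phi_k$ is, for $k\ge2$, a fixed dilate of $\mathcal F^{-1}\phi_1\in\mathcal S(\mathbb R^n)$). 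First I would use that the supports of $\mathcal F\varphi_k$ (with $\varphi_0=\Phi$) and of $\phi_j$ are disjoint unless $|j-k|\le N_0$ for a fixed integer $N_0=N_0(\Phi,\varphi)$; together with $\sum_j\phi_j\equiv1$ this produces the finite decomposition $\varphi_k\ast f=\sum_{|j-k|\le N_0}\varphi_k\ast g_j$, where $g_j:=\mathcal F^{-1}\phi_j\ast f$. Thus the theorem reduces to the estimate, uniform in $k$, $\|\varphi_k\ast g_j\|_{\dot{K}_{p,r}^{\alpha,q}}\lesssim\|g_j\|_{\dot{K}_{p,r}^{\alpha,q}}$ for $|j-k|\le N_0$, after which one re-assembles the $\ell^\beta$-quasi-norm ($B$-case) or the $\dot{K}_{p,r}^{\alpha,q}$-valued $\ell^\beta$-quasi-norm ($F$-case, where $q<\infty$) and re-indexes $\sum_k\approx\sum_j$ using $|j-k|\le N_0$; Lemma~\ref{Lp,r-estimate}, Lemma~\ref{Lp,r-estimate copy(1)} and the $\ell^q$-Hardy inequality Lemma~\ref{lem:lq-inequality} handle this bookkeeping, and the usual $\sup$-modifications cover $\beta=\infty$ (resp.\ $q=\infty$ in the $B$-case).

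For the pointwise step I would fix $|j-k|\le N_0$ and observe that $g_j$ is band-limited to $\{|\xi|\le c\,2^j\}$. Picking an auxiliary $\omega\in\mathcal S(\mathbb R^n)$ with $\supp\mathcal F\omega\subset\{|\xi|\le2\}$ and $\mathcal F\omega\equiv1$ on $\{|\xi|\le c\}$, so that $\omega_{2^j}\ast g_j=g_j$, Lemma~\ref{r-trick} applied with $\theta=\varphi$ (resp.\ $\Phi$), $R=2^k$, $N=2^j$ gives, for every $m>n$ and every $0<t<\min\big(p,r,q,\beta,\tfrac{n}{\alpha+n/p}\big)$,
\[
|\varphi_k\ast g_j(x)|\le c\,\max\!\big(1,2^{(j-k)m}\big)\big(\eta_{2^j,m}\ast|g_j|^t(x)\big)^{1/t}\le c\,\big(\eta_{2^k,m}\ast|g_j|^t(x)\big)^{1/t},\qquad x\in\mathbb R^n,
\]
where $|j-k|\le N_0$ is used to absorb $\max(1,2^{(j-k)m})$ and to replace $\eta_{2^j,m}$ by $\eta_{2^k,m}$ up to a constant. (Since $\varphi\in\mathcal S(\mathbb R^n)$, Lemma~\ref{est-maximal} would instead give the cruder bound $|\varphi_k\ast g_j(x)|\le c\,\mathcal M(g_j)(x)$; it is the $r$-trick form, with $t$ small, that lets the argument run in the full range $\alpha>-n/p$, $0<p<\infty$.)

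It then remains to show $\big\|(\eta_{2^k,m}\ast|g_j|^t)^{1/t}\big\|_{\dot{K}_{p,r}^{\alpha,q}}\lesssim\|g_j\|_{\dot{K}_{p,r}^{\alpha,q}}$ with constant independent of $k$, and I would do this ring by ring, following the scheme of the proof of Lemma~\ref{Bernstein-Herz-ine1-lorentz}: for $x$ in a dyadic ring $R_l$, split $\eta_{2^k,m}\ast|g_j|^t(x)$ into the contributions of $B(0,2^{l-2})$, of $\widetilde{C}_l:=\{2^{l-2}\le|y|\le2^{l+2}\}$, and of $\mathbb R^n\setminus B(0,2^{l+2})$. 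The ``comparable'' middle term is estimated by the elementary pointwise bound $\eta_{2^k,m}\ast h\lesssim\mathcal M(h)$ ($m>n$) followed, after raising to the power $t$ and passing to the plain Lorentz space $L^{p/t,r/t}$ via \eqref{pro1-lorentz}, by boundedness of the (in the $F$-case $\ell^{\beta/t}$-valued) Hardy--Littlewood maximal operator there (Theorem~\ref{Maximal-Inq3-lorentz}, legitimate since $t<\min(p,r,q,\beta)$), giving $\big\|(\eta_{2^k,m}\ast(|g_j|^t\chi_{\widetilde{C}_l}))^{1/t}\chi_l\big\|_{L^{p,r}}\lesssim\|g_j\chi_{\widetilde{C}_l}\|_{L^{p,r}}$; the finite overlap of the $\widetilde{C}_l$ with the rings then allows one to sum $2^{l\alpha q}(\cdots)^q$ over $l$. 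The two ``far'' terms are controlled by the band-limited sup-bound Lemma~\ref{Key-est1-lorentz} together with Lemma~\ref{est-eta} and summation of geometric series, exploiting that $m$ may be taken as large as desired and that $\alpha+\tfrac np>0$. Reassembling yields $\|g_j\|_{\dot{K}_{p,r}^{\alpha,q}}$. The main obstacle is precisely this last step: unlike in the unweighted setting one cannot simply invoke boundedness of $\mathcal M$ on $\dot{K}_{p,r}^{\alpha,q}$ (which would impose the spurious restriction $\alpha<n(1-1/p)$), so the long-range tails of the localizing kernel $\eta_{2^k,m}$ must be estimated directly, and the delicate point is to verify that the resulting geometric series converge under the sole assumption $\alpha>-n/p$ --- the same kind of careful bookkeeping already carried out in the Plancherel--Polya--Nikolskij Lemmas~\ref{Bernstein-Herz-ine1-lorentz} and \ref{Bernstein-Herz-ine2-lorentz}, with the $\ell^\beta$-sum propagated throughout the $F$-case by means of Theorem~\ref{Maximal-Inq3-lorentz} and Lemma~\ref{lem:lq-inequality}.
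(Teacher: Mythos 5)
Your first two steps coincide with the paper's own proof of Theorem \ref{partition-equi-lorentz}: a bounded-overlap decomposition of $\varphi_k\ast f$ over neighbouring frequency blocks (the paper obtains the analogous identity from the Calder\'on reproducing formula, Lemma \ref{DW-lemma1}, rather than directly from $\sum_j\phi_j\equiv1$ --- an immaterial difference), followed by the $r$-trick bound $|\varphi_k\ast g_j|\lesssim(\eta_{2^k,m}\ast|g_j|^t)^{1/t}$ from Lemma \ref{r-trick}. The gap is in how you close. Your reason for refusing the maximal-function finish is a misconception: nobody applies $\mathcal M$ to $g_j$ in $\dot K_{p,r}^{\alpha,q}$ itself. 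Since $m>n$, Lemma \ref{est-maximal} gives $\eta_{2^k,m}\ast|g_j|^t\lesssim\mathcal M(|g_j|^t)$, hence $|\varphi_k\ast g_j|\lesssim\mathcal M_t(g_j)$ pointwise; applying Lemma \ref{Maximal-Inq copy(2)-lorentz} to the family $2^{kst}|g_k|^t$ in $\dot K_{p/t,r/t}^{\alpha t,q/t}(\ell^{\beta/t})$ (via Lemma \ref{embeddings1-lorentz}/(iv)) requires exactly $p/t>1$, $\beta/t>1$ and $-\frac{nt}{p}<\alpha t<n(1-\frac tp)$, i.e. $t<\min\big(p,\beta,\frac{n}{\alpha+n/p}\big)$, which is a condition you have already imposed on $t$. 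So the restriction $\alpha<n(1-\frac1p)$ is not ``spurious'': after the small-power trick it is automatic for every $\alpha>-\frac np$, and this single application of the vector-valued maximal inequality settles the $B$- and $F$-cases simultaneously. This is precisely how the paper argues (with $0<\tau<\min(\frac{n}{\alpha+n/p},p,\beta)$), the converse direction following by the same argument, as you note.

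Because you decline that route, the substantive part of your proposal --- the uniform estimate $\|(\eta_{2^k,m}\ast|g_j|^t)^{1/t}\|_{\dot K_{p,r}^{\alpha,q}}\lesssim\|g_j\|_{\dot K_{p,r}^{\alpha,q}}$ and, more importantly, an $\ell^\beta$-valued version of it for the $F$-space, where the sum over $k$ sits inside the Herz--Lorentz norm --- is only announced, not proved. The deferred pieces are exactly the delicate ones: on rings $R_l$ with $2^l\lesssim2^{-k}$ the crude H\"older/tail bound for the far contributions produces factors of order $2^{(n-m)(k+l)/t}$ which blow up as $l\to-\infty$, so one must switch there to the band-limited sup-bound of Lemma \ref{Key-est1-lorentz}, as in the proof of Lemma \ref{Bernstein-Herz-ine1-lorentz}; and in the $F$-case these tail estimates control each term only by $\|g_j\|_{\dot K_{p,r}^{\alpha,q}}$ rather than by a pointwise functional, so the $\ell^\beta$-sum has to be recovered through $2^{js}\|g_j\|_{\dot K_{p,r}^{\alpha,q}}\le\|f\|_{\dot K_{p,r}^{\alpha,q}F_\beta^s}^{\varphi_0,\varphi_1}$ and a resummation you never carry out. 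As written the proof is incomplete at its crucial step, and the detour is motivated by an incorrect claim; replacing the ring-by-ring programme by Lemmas \ref{est-maximal} and \ref{Maximal-Inq copy(2)-lorentz} as above finishes the argument immediately.
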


\begin{proof}
Let $\Phi ,\Psi \in \mathcal{S}(\mathbb{R}^{n})$ satisfy \eqref{Ass1} and $%
\varphi ,\psi \in \mathcal{S}(\mathbb{R}^{n})$ satisfy \eqref{Ass2} such
that \eqref{Ass3} holds. From Lemma {\ref{DW-lemma1} and by inspecting the
support conditions we obtain }%
\begin{equation*}
\mathcal{F}^{-1}\varphi _{k}\ast f=\sum_{j=k-1}^{k+1}\mathcal{F}^{-1}\varphi
_{k}\ast \widetilde{\varphi }_{j}\ast \psi _{j}\ast f+\left\{ 
\begin{array}{ccc}
0, & \text{if} & k\geq 3 \\ 
\mathcal{F}^{-1}\varphi _{k}\ast \tilde{\Phi}\ast \Psi \ast f, & \text{if} & 
k\in \{1,2\}%
\end{array}%
\right.
\end{equation*}%
and%
\begin{equation*}
\mathcal{F}^{-1}\varphi _{0}\ast f=\mathcal{F}^{-1}\varphi _{0}\ast 
\widetilde{\varphi }_{1}\ast \psi _{1}\ast f+\mathcal{F}^{-1}\varphi
_{0}\ast \tilde{\Phi}\ast \Psi \ast f.
\end{equation*}%
Let $j\in \{k-1,k,k+1\},k\geq 3$. Applying Lemmas {\ref{r-trick} and \ref%
{est-maximal}}, we conclude that%
\begin{equation*}
|\mathcal{F}^{-1}\varphi _{k}\ast \widetilde{\varphi }_{j}\ast \psi _{j}\ast
f|\lesssim \mathcal{M}_{\tau }(\widetilde{\varphi }_{j}\ast f),\quad 0<\tau
<\infty ,
\end{equation*}%
where the implicit constant is independent of $j$ and $k$. Similarly, when $%
k\in \{0,1,2\}$, we see that 
\begin{equation*}
|\mathcal{F}^{-1}\varphi _{k}\ast \tilde{\Phi}\ast \Psi \ast f|+|\mathcal{F}%
^{-1}\varphi _{0}\ast \widetilde{\varphi }_{1}\ast \psi _{1}\ast f|\lesssim 
\mathcal{M}_{\tau }(\tilde{\Phi}\ast f)+\mathcal{M}_{\tau }(\widetilde{%
\varphi }_{1}\ast f),\quad 0<\tau <\infty .
\end{equation*}%
If we choose $0<\tau <\min (\frac{n}{\alpha +\frac{n}{p}},p,\beta )$, then
by Lemma {\ref{Maximal-Inq copy(2)-lorentz}, we get}%
\begin{equation*}
\big\|f\big\|_{\dot{K}_{p,r}^{\alpha ,q}A_{\beta }^{s}}^{\varphi
_{0},\varphi _{1}}\lesssim \big\|f\big\|_{\dot{K}_{p,r}^{\alpha ,q}A_{\beta
}^{s}}.
\end{equation*}%
The opposite inequality follows by the same argument, with the help of the
smooth resolution of unity\ \eqref{partition}. The proof is complete.
\end{proof}

As an immediate conclusion of Theorem {\ref{partition-equi-lorentz} }we
obtain the next important property of the spaces $\dot{K}_{p,r}^{\alpha
,q}A_{\beta }^{s}$.

\begin{corollary}
Let $\{\varpi _{k}\}_{k\in \mathbb{N}_{0}}$ and $\{\varphi _{k}\}_{k\in 
\mathbb{N}_{0}}$ be two resolutions of unity. Let $s\in \mathbb{R}%
,0<p<\infty ,0<q,r,\beta \leq \infty $ and $\alpha >-\frac{n}{p}$. Let $f\in 
\dot{K}_{p,r}^{\alpha ,q}A_{\beta }^{s}$. Then 
\begin{equation*}
\big\|f\big\|_{\dot{K}_{p,r}^{\alpha ,q}A_{\beta }^{s}}^{\{\varpi
_{k}\}_{k\in \mathbb{N}_{0}}}\approx \big\|f\big\|_{\dot{K}_{p,r}^{\alpha
,q}A_{\beta }^{s}}^{\{\varphi _{k}\}_{k\in \mathbb{N}_{0}}}\approx \big\|f%
\big\|_{\dot{K}_{p,r}^{\alpha ,q}A_{\beta }^{s}}.
\end{equation*}
\end{corollary}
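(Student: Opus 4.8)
The plan is to deduce this corollary directly from Theorem \ref{partition-equi-lorentz}, used twice. Both $\{\varpi_k\}_{k\in\mathbb{N}_0}$ and $\{\varphi_k\}_{k\in\mathbb{N}_0}$ are smooth dyadic resolutions of unity in the sense of \eqref{partition}, so each of them satisfies the hypotheses of Theorem \ref{partition-equi-lorentz} with the very same indices $s\in\mathbb{R}$, $0<p<\infty$, $0<q,r,\beta\leq\infty$ and $\alpha>-\frac{n}{p}$ (the case $q=\infty$ being excluded when $A=F$, in accordance with the standing convention). Fix once and for all a pair $\Phi,\varphi$ satisfying \eqref{Ass1} and \eqref{Ass2}, and let $\big\|\cdot\big\|_{\dot{K}_{p,r}^{\alpha,q}A_\beta^s}$ denote the quasi-norm of Definition \ref{B-F-def-lorentz} associated with this pair; by Corollary \ref{Indpendent-lorentz} it is, up to equivalence, independent of the chosen pair, so it may serve as a reference quasi-norm.

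First I would apply Theorem \ref{partition-equi-lorentz} to the resolution $\{\varpi_k\}_{k\in\mathbb{N}_0}$: it asserts that $f\in\dot{K}_{p,r}^{\alpha,q}A_\beta^s$ if and only if $\big\|f\big\|_{\dot{K}_{p,r}^{\alpha,q}A_\beta^s}^{\{\varpi_k\}_{k\in\mathbb{N}_0}}<\infty$, and that
\begin{equation*}
\big\|f\big\|_{\dot{K}_{p,r}^{\alpha,q}A_\beta^s}^{\{\varpi_k\}_{k\in\mathbb{N}_0}}\approx\big\|f\big\|_{\dot{K}_{p,r}^{\alpha,q}A_\beta^s}.
\end{equation*}
Applying the same theorem to $\{\varphi_k\}_{k\in\mathbb{N}_0}$ yields
\begin{equation*}
\big\|f\big\|_{\dot{K}_{p,r}^{\alpha,q}A_\beta^s}^{\{\varphi_k\}_{k\in\mathbb{N}_0}}\approx\big\|f\big\|_{\dot{K}_{p,r}^{\alpha,q}A_\beta^s}.
\end{equation*}
Chaining the two equivalences by transitivity of $\approx$ gives the asserted
\begin{equation*}
\big\|f\big\|_{\dot{K}_{p,r}^{\alpha,q}A_\beta^s}^{\{\varpi_k\}_{k\in\mathbb{N}_0}}\approx\big\|f\big\|_{\dot{K}_{p,r}^{\alpha,q}A_\beta^s}^{\{\varphi_k\}_{k\in\mathbb{N}_0}}\approx\big\|f\big\|_{\dot{K}_{p,r}^{\alpha,q}A_\beta^s}.
\end{equation*}

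There is essentially no obstacle here; the only point worth checking is that the implicit constants delivered by Theorem \ref{partition-equi-lorentz} depend only on the resolution of unity being used (through the Schwartz seminorms of its members) and on the fixed indices $n,s,p,q,r,\beta,\alpha$, never on $f$. This is transparent from the proof of that theorem, where the comparison is carried out via Lemmas \ref{r-trick} and \ref{est-maximal} together with the vector-valued maximal inequality of Lemma \ref{Maximal-Inq copy(2)-lorentz} applied with $0<\tau<\min\big(\frac{n}{\alpha+\frac{n}{p}},p,\beta\big)$, all with constants of precisely this dependence. Hence the equivalence constants in the displayed chain depend only on $\{\varpi_k\}$, $\{\varphi_k\}$ and the fixed indices, which is exactly what the statement claims.
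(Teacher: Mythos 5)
Your proposal is correct and matches the paper's intent exactly: the corollary is stated there as an immediate consequence of Theorem \ref{partition-equi-lorentz}, obtained by applying that theorem to each resolution of unity and chaining the resulting equivalences. Your additional remark on the dependence of the implicit constants is accurate and consistent with the proof of that theorem.
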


\begin{remark}
The function $\vartheta $ defined in \eqref{function-v} can be replaced by 
\begin{equation*}
\mu (x)=1\quad \text{for}\quad \lvert x\rvert \leq 1\quad \text{and}\quad
\mu (x)=0\quad \text{for}\quad \lvert x\rvert \geq 2.
\end{equation*}%
We put $\varphi _{0}(x)=\mu (x),\,\varphi _{1}(x)=\mu (x)-\mu (2x)$ and $%
\varphi _{k}(x)=\varphi _{1}(2^{-k}x)\ $for $k=2,3,....$ Then we have $%
\mathrm{supp}\varphi _{k}\subset \{x\in {\mathbb{R}^{n}}:2^{k-1}\leq \lvert
x\rvert \leq 2^{k}\}\ $and \eqref{partition} is true.
\end{remark}

\begin{lemma}
Let $s\in \mathbb{R},0<p<\infty ,0<r,q\leq \infty ,0<\beta \leq \infty $ and 
$\alpha >-\frac{n}{p}$. The spaces $\dot{K}_{p,r}^{\alpha ,q}a_{\beta }^{s}$
are quasi-Banach spaces.
\end{lemma}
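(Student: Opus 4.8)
The plan is: since it has already been recorded that $\dot{K}_{p,r}^{\alpha,q}a_\beta^s$ is a quasi-normed vector space (with $\|\cdot\|^d$ subadditive, $d=\min(1,p,q,\beta)$), it remains to prove completeness; here and below $a_\beta^s$ stands for either $b_\beta^s$ or $f_\beta^s$, and the cases $\beta=\infty$ (and, for $b_\beta^s$, $q=\infty$) are handled by the usual replacement of sums by suprema. The first step is a \emph{coordinate-projection estimate}: for every fixed $k\in\mathbb{N}_{0}$, $m\in\mathbb{Z}^n$ there is a finite positive constant $C_{k,m}$, depending only on $k$, $m$ and the fixed parameters, with $|\lambda_{k,m}|\le C_{k,m}\,\|\lambda\|_{\dot{K}_{p,r}^{\alpha,q}a_\beta^s}$ for every sequence $\lambda$. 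This rests on three facts: homogeneity gives $\|\lambda_{k,m}\chi_{k,m}\|_{\dot{K}_{p,r}^{\alpha,q}}=|\lambda_{k,m}|\,\|\chi_{k,m}\|_{\dot{K}_{p,r}^{\alpha,q}}$; the ideal (lattice) property of $\dot{K}_{p,r}^{\alpha,q}$, applied to the pointwise domination of $|\lambda_{k,m}|\chi_{k,m}$ by $\big|\sum_{m'}\lambda_{k,m'}\chi_{k,m'}\big|$ (resp., in the $f$-case, by $\big(\sum_{k',m'}2^{k'(s+n/2)\beta}|\lambda_{k',m'}|^\beta\chi_{k',m'}\big)^{1/\beta}$), shows $2^{k(s+n/2)}\|\lambda_{k,m}\chi_{k,m}\|_{\dot{K}_{p,r}^{\alpha,q}}\le\|\lambda\|_{\dot{K}_{p,r}^{\alpha,q}a_\beta^s}$, a single term of the defining expression being controlled by the whole; and $0<\|\chi_{k,m}\|_{\dot{K}_{p,r}^{\alpha,q}}<\infty$ — positivity being obvious and finiteness being exactly where $\alpha>-\frac{n}{p}$ enters, since the cube $Q_{k,m}$ meets the rings $R_j$ only for $j$ bounded above (and, when $0\in\overline{Q_{k,m}}$, down to $j=-\infty$), whence $\|\chi_{k,m}\|_{\dot{K}_{p,r}^{\alpha,q}}^{q}\lesssim\sum_{j}2^{j(\alpha+n/p)q}<\infty$.

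Given a Cauchy sequence $\{\lambda^{(\ell)}\}_{\ell}$ in $\dot{K}_{p,r}^{\alpha,q}a_\beta^s$, applying the coordinate-projection estimate to $\lambda^{(\ell)}-\lambda^{(\ell')}$ shows that for each $(k,m)$ the numerical sequence $\{\lambda^{(\ell)}_{k,m}\}_{\ell}$ is Cauchy in $\mathbb{C}$; set $\lambda_{k,m}:=\lim_{\ell}\lambda^{(\ell)}_{k,m}$ and $\lambda:=\{\lambda_{k,m}\}$. It remains to show $\lambda^{(\ell)}\to\lambda$ in quasi-norm. Fix $\varepsilon>0$ and $N$ with $\|\lambda^{(\ell)}-\lambda^{(\ell')}\|_{\dot{K}_{p,r}^{\alpha,q}a_\beta^s}<\varepsilon$ for $\ell,\ell'\ge N$. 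In the $b$-case, for each fixed $k$ the function $g_k^{(\ell)}:=\sum_{m}(\lambda^{(\ell)}_{k,m}-\lambda_{k,m})\chi_{k,m}$ is the a.e. pointwise limit as $\ell'\to\infty$ of $g_k^{(\ell,\ell')}:=\sum_{m}(\lambda^{(\ell)}_{k,m}-\lambda^{(\ell')}_{k,m})\chi_{k,m}$ (a.e. point lies in exactly one $Q_{k,m}$), so the Fatou property of $\dot{K}_{p,r}^{\alpha,q}$ (see the Remark following Definition~\ref{Herz-lorentz}) gives $\|g_k^{(\ell)}\|_{\dot{K}_{p,r}^{\alpha,q}}\le\liminf_{\ell'}\|g_k^{(\ell,\ell')}\|_{\dot{K}_{p,r}^{\alpha,q}}$; multiplying by $2^{k(s+n/2)\beta}$, summing over $k\le K$, using superadditivity of $\liminf$ over finite sums, and letting $K\to\infty$ yields $\|\lambda^{(\ell)}-\lambda\|_{\dot{K}_{p,r}^{\alpha,q}b_\beta^s}\le\varepsilon$ for $\ell\ge N$. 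In the $f$-case one runs the same argument with the single function $G^{(\ell,\ell')}:=\big(\sum_{k,m}2^{k(s+n/2)\beta}|\lambda^{(\ell)}_{k,m}-\lambda^{(\ell')}_{k,m}|^\beta\chi_{k,m}\big)^{1/\beta}$: Fatou's lemma for the series in $k$ (each term converges as $\ell'\to\infty$) gives $G^{(\ell)}\le\liminf_{\ell'}G^{(\ell,\ell')}$ a.e., where $G^{(\ell)}$ is the analogous expression with $\lambda^{(\ell')}$ replaced by $\lambda$, and then the Fatou property of $\dot{K}_{p,r}^{\alpha,q}$ gives $\|\lambda^{(\ell)}-\lambda\|_{\dot{K}_{p,r}^{\alpha,q}f_\beta^s}=\|G^{(\ell)}\|_{\dot{K}_{p,r}^{\alpha,q}}\le\liminf_{\ell'}\|G^{(\ell,\ell')}\|_{\dot{K}_{p,r}^{\alpha,q}}\le\varepsilon$ for $\ell\ge N$. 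In either case $\lambda=\lambda^{(N)}+(\lambda-\lambda^{(N)})$ lies in $\dot{K}_{p,r}^{\alpha,q}a_\beta^s$ and $\lambda^{(\ell)}\to\lambda$, so the space is complete.

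I expect the main obstacle to be organizing the two-tiered Fatou argument cleanly — invoking a lower-semicontinuity (Fatou) inequality once at the level of the outer $\ell^\beta$-sum (or the sum over $k$) and once at the level of the Herz–Lorentz quasi-norm $\|\cdot\|_{\dot{K}_{p,r}^{\alpha,q}}$ — and making sure the relevant a.e. pointwise limits are correctly identified. By contrast, the coordinate-projection estimate and the role of $\alpha>-\frac{n}{p}$ in the finiteness of $\|\chi_{k,m}\|_{\dot{K}_{p,r}^{\alpha,q}}$ are routine.
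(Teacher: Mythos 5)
Your proof is correct and is essentially the standard argument for completeness of such sequence spaces: bound each coordinate functional via the lattice property and the finiteness of $\big\|\chi _{k,m}\big\|_{\dot{K}_{p,r}^{\alpha ,q}}$ (which is exactly where $\alpha >-\frac{n}{p}$ enters), pass to pointwise limits of the coefficients, and conclude by Fatou-type lower semicontinuity of the Lorentz--Herz quasi-norm. The paper itself gives no details here (it only refers to \cite{Dr-AOT23}), and the argument there is of the same type, so your proposal matches the intended proof.
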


\begin{proof}
The proof is very similar as in \cite{Dr-AOT23}.
\end{proof}

Applying this lemma and Theorem \ref{phi-tran-lorentz} we obtain the
following useful properties of the spaces $\dot{K}_{p,r}^{\alpha ,q}A_{\beta
}^{s}$.

\begin{theorem}
Let $s\in \mathbb{R},0<p<\infty ,0<r,q\leq \infty ,0<\beta \leq \infty $ and 
$\alpha >-\frac{n}{p}$. The spaces $\dot{K}_{p,r}^{\alpha ,q}A_{\beta }^{s}$
are quasi-Banach spaces.
\end{theorem}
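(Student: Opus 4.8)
The plan is to deduce the result from the $\varphi$-transform theorem, which exhibits $\dot{K}_{p,r}^{\alpha,q}A_{\beta}^{s}$ as a retract of the sequence space $\dot{K}_{p,r}^{\alpha,q}a_{\beta}^{s}$: since a retract of a complete quasi-normed space is itself complete, and the relevant sequence space is quasi-Banach by the preceding lemma, the assertion will follow. No genuinely hard step is involved; the work is entirely in assembling the pieces already proved.

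First I would record that $\dot{K}_{p,r}^{\alpha,q}A_{\beta}^{s}$ is a quasi-normed space: the quasi-triangle inequality with exponent $d=\min(1,p,q,\beta)$ was already noted after Definition~\ref{sequence-space-lorentz}, homogeneity is immediate, and $\big\|f\big\|_{\dot{K}_{p,r}^{\alpha,q}A_{\beta}^{s}}=0$ forces $\mathcal{F}^{-1}\varphi_{k}\ast f=0$ for every $k$ (via Theorem~\ref{partition-equi-lorentz}), hence $f=0$ in $\mathcal{S}^{\prime}(\mathbb{R}^{n})$ by the Littlewood--Paley decomposition. It then remains to prove completeness.

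Next, let $(f_{j})_{j}$ be a Cauchy sequence in $\dot{K}_{p,r}^{\alpha,q}A_{\beta}^{s}$. By Theorem~\ref{phi-tran-lorentz} the operator $S_{\varphi}\colon\dot{K}_{p,r}^{\alpha,q}A_{\beta}^{s}\to\dot{K}_{p,r}^{\alpha,q}a_{\beta}^{s}$ is bounded, so $(S_{\varphi}f_{j})_{j}$ is Cauchy in $\dot{K}_{p,r}^{\alpha,q}a_{\beta}^{s}$; by the preceding lemma this space is quasi-Banach, so $S_{\varphi}f_{j}\to\lambda$ there for some $\lambda\in\dot{K}_{p,r}^{\alpha,q}a_{\beta}^{s}$. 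Since $T_{\psi}\colon\dot{K}_{p,r}^{\alpha,q}a_{\beta}^{s}\to\dot{K}_{p,r}^{\alpha,q}A_{\beta}^{s}$ is bounded and $T_{\psi}\circ S_{\varphi}=\mathrm{id}$ on $\dot{K}_{p,r}^{\alpha,q}A_{\beta}^{s}$, we obtain
\[ \big\|f_{j}-T_{\psi}\lambda\big\|_{\dot{K}_{p,r}^{\alpha,q}A_{\beta}^{s}}=\big\|T_{\psi}(S_{\varphi}f_{j}-\lambda)\big\|_{\dot{K}_{p,r}^{\alpha,q}A_{\beta}^{s}}\lesssim\big\|S_{\varphi}f_{j}-\lambda\big\|_{\dot{K}_{p,r}^{\alpha,q}a_{\beta}^{s}}\longrightarrow 0, \]
so $f_{j}\to T_{\psi}\lambda\in\dot{K}_{p,r}^{\alpha,q}A_{\beta}^{s}$. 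Hence the space is complete. The same argument applies verbatim to both $\dot{K}_{p,r}^{\alpha,q}B_{\beta}^{s}$ and $\dot{K}_{p,r}^{\alpha,q}F_{\beta}^{s}$, with $q=\infty$ excluded in the latter case.

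I do not expect a real obstacle here: the proof is a formal consequence of Theorem~\ref{phi-tran-lorentz} together with the completeness of the sequence spaces. The only mild point to watch is that the two notions of convergence invoked --- in the quasi-norm of $\dot{K}_{p,r}^{\alpha,q}A_{\beta}^{s}$ and in $\mathcal{S}^{\prime}(\mathbb{R}^{n})$ --- are consistent; this is ensured by the continuity of $S_{\varphi}$ and $T_{\psi}$ relative to $\mathcal{S}^{\prime}(\mathbb{R}^{n})$ recorded in Lemma~\ref{Inv-phi-trans-lorentz}, so that the limit $T_{\psi}\lambda$ is well defined and unambiguous.
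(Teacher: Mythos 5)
Your argument is correct and coincides with the paper's own proof: both pass a Cauchy sequence through $S_{\varphi}$, use the completeness of the sequence space $\dot{K}_{p,r}^{\alpha,q}a_{\beta}^{s}$ established in the preceding lemma, and then apply the boundedness of $T_{\psi}$ together with $T_{\psi}\circ S_{\varphi}=\mathrm{id}$ from Theorem \ref{phi-tran-lorentz} to recover the limit in $\dot{K}_{p,r}^{\alpha,q}A_{\beta}^{s}$. Your additional remarks on the quasi-norm axioms and on consistency of convergence in $\mathcal{S}^{\prime}(\mathbb{R}^{n})$ are harmless elaborations of the same scheme.
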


\begin{proof}
Let $\{U_{i}\}_{i\in \mathbb{N}_{0}}$ be a Cauchy sequence in $\dot{K}%
_{p,r}^{\alpha ,q}A_{\beta }^{s}$. From Theorem \ref{phi-tran-lorentz}, $%
\{S_{\varphi }U_{i}\}_{i\in \mathbb{N}_{0}}$ is Cauchy sequence in $\dot{K}%
_{p,r}^{\alpha ,q}a_{\beta }^{s}$, this has a limit $\lambda =\{\lambda
_{j,m}\}_{j\in \mathbb{N}_{0},m\in \mathbb{Z}^{n}}$ by the completeness of
the sequence space $\dot{K}_{p,r}^{\alpha ,q}a_{\beta }^{s}$. Using again
Theorem \ref{phi-tran-lorentz}, we easily obtain 
\begin{equation*}
T_{\psi }\lambda =\lim_{i\rightarrow \infty }T_{\psi }S_{\varphi
}U_{i}=\lim_{i\rightarrow \infty }U_{i},
\end{equation*}%
where the limit is in $\dot{K}_{p,r}^{\alpha ,q}A_{\beta }^{s}$.
\end{proof}

\begin{remark}
Let $s_{0},s_{1}\in \mathbb{R},0<p_{i}<\infty ,0<q_{i},\beta _{i},r_{i}\leq
\infty ,\alpha _{i}>-\frac{n}{p_{i}},i\in \{0,1\}$ and $0<\theta <1$. Put%
\begin{equation}
\alpha =(1-\theta )\alpha _{0}+\theta \alpha _{1},\quad \frac{1}{p}=\frac{%
1-\theta }{p_{0}}+\frac{\theta }{p_{1}},\quad \frac{1}{q}=\frac{1-\theta }{%
q_{0}}+\frac{\theta }{q_{1}},  \label{interpolation-ine-lorentz1}
\end{equation}%
\begin{equation}
\frac{1}{r}=\frac{1-\theta }{r_{0}}+\frac{\theta }{r_{1}},\quad s=(1-\theta
)s_{0}+\theta s_{1}  \label{interpolation-ine-lorentz2}
\end{equation}%
and%
\begin{equation*}
\frac{1}{\beta }=\frac{1-\theta }{\beta _{0}}+\frac{\theta }{\beta _{1}}.
\end{equation*}%
As an immediate consequence of H\"{o}lder's inequality we have the so-called
interpolation inequalities:%
\begin{equation}
\big\|f\big\|_{\dot{K}_{p,r}^{\alpha ,q}A_{\beta }^{s}}\leq \big\|f\big\|_{%
\dot{K}_{p_{0},r_{0}}^{\alpha _{0},q_{0}}A_{\beta _{0}}^{s_{0}}}^{1-\theta }%
\big\|f\big\|_{\dot{K}_{p_{1},r_{1}}^{\alpha _{1},q_{1}}A_{\beta
_{1}}^{s_{1}}}^{\theta }  \label{interpolation-ine-lorentz}
\end{equation}%
holds for all $f\in \dot{K}_{p_{0},r_{0}}^{\alpha _{0},q_{0}}A_{\beta
_{0}}^{s_{0}}\cap \dot{K}_{p_{1},r_{1}}^{\alpha _{1},q_{1}}A_{\beta
_{1}}^{s_{1}}$.
\end{remark}

For Lorentz Herz-type Triebel-Lizorkin spaces inequality %
\eqref{interpolation-ine-lorentz} can be improved by using the following
statement which can be found in \cite{BM01}.

\begin{lemma}
\label{BM-lemma}Let real numbers $s_{1}<s_{0}$ be given, and $0<\sigma <1$.
For $0<q\leq \infty $ there is $c>0$ such that%
\begin{equation*}
\Big(\sum_{j=0}^{\infty }2^{\left( \sigma s_{0}+\left( 1-\sigma \right)
s_{1}\right) qj}|a_{j}|^{q}\Big)^{1/q}\leq c\sup_{j\in \mathbb{N}%
_{0}}(2^{s_{0}j}|a_{j}|)^{\sigma }\sup_{j\in \mathbb{N}%
_{0}}(2^{s_{1}j}|a_{j}|)^{1-\sigma }
\end{equation*}%
holds for all complex sequences $\left\{ 2^{s_{0}j}a_{j}\right\} _{j\in 
\mathbb{N}_{0}}$\ in $\ell ^{\infty }$ with the usual modification if $%
q=\infty .$
\end{lemma}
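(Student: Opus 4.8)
The plan is to derive this from an elementary two-sided geometric estimate; it is the discrete analogue of a Gagliardo--Nirenberg type interpolation between two weighted $\ell^{\infty}$-norms. Put $A=\sup_{j\in\mathbb{N}_{0}}2^{s_{0}j}|a_{j}|$ and $B=\sup_{j\in\mathbb{N}_{0}}2^{s_{1}j}|a_{j}|$, and write $s=\sigma s_{0}+(1-\sigma)s_{1}$, so that $s_{1}<s<s_{0}$. We must show that the left-hand side is $\le cA^{\sigma}B^{1-\sigma}$. If $A=0$ then all $a_{j}$ vanish and the inequality is trivial, so assume $0<B\le A<\infty$; here $B\le A$ because $2^{s_{0}j}\ge 2^{s_{1}j}$ for $j\ge 0$. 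The case $q=\infty$ is immediate: from the pointwise bounds $|a_{j}|\le 2^{-s_{0}j}A$ and $|a_{j}|\le 2^{-s_{1}j}B$ one gets $|a_{j}|=|a_{j}|^{\sigma}|a_{j}|^{1-\sigma}\le 2^{-sj}A^{\sigma}B^{1-\sigma}$ for every $j$, i.e. $\sup_{j}2^{sj}|a_{j}|\le A^{\sigma}B^{1-\sigma}$.

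For $0<q<\infty$ I would fix an integer $N\in\mathbb{N}_{0}$, to be optimized at the end, and split $\sum_{j=0}^{\infty}2^{sqj}|a_{j}|^{q}$ into the ranges $0\le j<N$ and $j\ge N$. On the low range I use $|a_{j}|\le 2^{-s_{1}j}B$; since $s-s_{1}=\sigma(s_{0}-s_{1})>0$, summing the resulting geometric progression gives $\sum_{0\le j<N}2^{sqj}|a_{j}|^{q}\le B^{q}\sum_{j=0}^{N-1}2^{\sigma(s_{0}-s_{1})qj}\le c_{1}\,2^{\sigma(s_{0}-s_{1})qN}B^{q}$, with $c_{1}$ depending only on $q,\sigma,s_{0}-s_{1}$. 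On the high range I use $|a_{j}|\le 2^{-s_{0}j}A$; since $s-s_{0}=-(1-\sigma)(s_{0}-s_{1})<0$, the tail is a convergent geometric series and $\sum_{j\ge N}2^{sqj}|a_{j}|^{q}\le A^{q}\sum_{j\ge N}2^{-(1-\sigma)(s_{0}-s_{1})qj}\le c_{2}\,2^{-(1-\sigma)(s_{0}-s_{1})qN}A^{q}$. Adding these, $\sum_{j=0}^{\infty}2^{sqj}|a_{j}|^{q}\le c\big(2^{\sigma(s_{0}-s_{1})qN}B^{q}+2^{-(1-\sigma)(s_{0}-s_{1})qN}A^{q}\big)$.

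It then remains to balance the two terms by an appropriate choice of $N$. The real number $N^{\ast}=(s_{0}-s_{1})^{-1}\log_{2}(A/B)$ is $\ge 0$ (because $A\ge B$) and equates the two exponentials; I would take $N=\lfloor N^{\ast}\rfloor$ when $N^{\ast}\ge 1$ and $N=0$ otherwise. Using $2^{(s_{0}-s_{1})N^{\ast}}=A/B$ together with $\sigma+(1-\sigma)=1$ to recombine the powers of $A$ and $B$, one checks that each of $2^{\sigma(s_{0}-s_{1})qN}B^{q}$ and $2^{-(1-\sigma)(s_{0}-s_{1})qN}A^{q}$ is $\le c\,A^{\sigma q}B^{(1-\sigma)q}$ with a constant depending only on $q$ and $s_{0}-s_{1}$; in the remaining case $N=0$ one uses $B\le A<2^{s_{0}-s_{1}}B$ directly. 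Taking $q$-th roots yields the claimed inequality. There is no genuine obstacle here: the statement is a purely real-variable fact, and the only mild subtlety is that the optimal threshold $N^{\ast}$ must be rounded to a nonnegative integer, which costs only a bounded multiplicative factor; alternatively one may simply invoke \cite{BM01}, where the estimate is established.
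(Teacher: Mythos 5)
Your argument is correct: the splitting of the sum at the balanced threshold $N^{\ast}=(s_{0}-s_{1})^{-1}\log_{2}(A/B)$, with the $s_{1}$-bound on the low range and the $s_{0}$-bound on the tail, is exactly the standard (Brezis--Mironescu) proof of this discrete interpolation inequality, and all the steps (including $B\leq A$, the rounding of $N^{\ast}$, and the degenerate case $N^{\ast}<1$) check out. The paper itself gives no proof and simply cites \cite{BM01}, so your write-up is essentially the argument of that reference made explicit.
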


\begin{lemma}
Let $s_{0},s_{1}\in \mathbb{R}$ be such that $s_{0}<s_{1}$. Let $%
0<p_{i}<\infty ,0<q_{i},\beta ,r_{i}\leq \infty ,\alpha _{i}>-\frac{n}{p_{i}}%
,i\in \{0,1\}$ and $0<\theta <1$. Under the same additional restrictions %
\eqref{interpolation-ine-lorentz1} and \eqref{interpolation-ine-lorentz2} we
have%
\begin{equation*}
\big\|f\big\|_{\dot{K}_{p,r}^{\alpha ,q}F_{\beta }^{s}}\leq \big\|f\big\|_{%
\dot{K}_{p_{0},r_{0}}^{\alpha _{0},q_{0}}F_{\infty }^{s_{0}}}^{1-\theta }%
\big\|f\big\|_{\dot{K}_{p_{1},r_{1}}^{\alpha _{1},q_{1}}F_{\infty
}^{s_{1}}}^{\theta }
\end{equation*}%
holds for all $f\in \dot{K}_{p_{0},r_{0}}^{\alpha _{0},q_{0}}F_{\infty
}^{s_{0}}\cap \dot{K}_{p_{1},r_{1}}^{\alpha _{1},q_{1}}F_{\infty }^{s_{1}}$.
\end{lemma}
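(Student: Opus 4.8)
The plan is to reduce the estimate to a pointwise bound on the Littlewood–Paley pieces and then apply Lemma \ref{BM-lemma} together with H\"older's inequality for Lorentz--Herz spaces (Proposition \ref{holder's}). Write $f_{k}=\varphi_{k}\ast f$ (with $\varphi_{0}$ replaced by $\Phi$). Since $s_{0}<s=(1-\theta)s_{0}+\theta s_{1}<s_{1}$ and $\frac1\beta=\frac{1-\theta}{\beta}+\frac{\theta}{\beta}$ (the same $\beta$ on all three spaces), the idea is to estimate, for each fixed $x$, the $\ell^{\beta}$-sum $\big(\sum_{k}2^{ks\beta}|f_{k}(x)|^{\beta}\big)^{1/\beta}$ by Lemma \ref{BM-lemma} applied to the sequence $a_{k}=|f_{k}(x)|$, with the roles of $s_{0},s_{1}$ there matched to our $s_{1},s_{0}$ (Lemma \ref{BM-lemma} requires the smaller exponent in the second supremum). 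This yields the pointwise inequality
\begin{equation*}
\Big(\sum_{k=0}^{\infty}2^{ks\beta}|f_{k}(x)|^{\beta}\Big)^{1/\beta}\lesssim \Big(\sup_{k}2^{ks_{1}k}|f_{k}(x)|\Big)^{\theta}\Big(\sup_{k}2^{ks_{0}k}|f_{k}(x)|\Big)^{1-\theta},
\end{equation*}
i.e. a pointwise product of the two $F_{\infty}$-defining functions raised to powers $\theta$ and $1-\theta$ respectively.

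Next I would take the $\dot{K}_{p,r}^{\alpha,q}$-quasi-norm of both sides. Writing $g_{i}(x)=\sup_{k}2^{ks_{i}k}|f_{k}(x)|$ for $i=0,1$, the left side is exactly $\|f\|_{\dot{K}_{p,r}^{\alpha,q}F_{\beta}^{s}}$ and the right side is $\big\|g_{1}^{\theta}g_{0}^{1-\theta}\big\|_{\dot{K}_{p,r}^{\alpha,q}}$. Now apply H\"older's inequality for Lorentz--Herz spaces, Proposition \ref{holder's}, with the splitting $\alpha=(1-\theta)\alpha_{0}+\theta\alpha_{1}$, $\frac1p=\frac{1-\theta}{p_{0}}+\frac{\theta}{p_{1}}$, $\frac1r=\frac{1-\theta}{r_{0}}+\frac{\theta}{r_{1}}$, $\frac1q=\frac{1-\theta}{q_{0}}+\frac{\theta}{q_{1}}$, applied to the two factors $g_{1}^{\theta}\in\dot{K}_{p_{1}/\theta,\,r_{1}/\theta}^{\alpha_{1}\theta,\,q_{1}/\theta}$ and $g_{0}^{1-\theta}\in\dot{K}_{p_{0}/(1-\theta),\,r_{0}/(1-\theta)}^{\alpha_{0}(1-\theta),\,q_{0}/(1-\theta)}$. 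Using Lemma \ref{embeddings1-lorentz}(iv) (the power rule $\||h|^{s}\|_{\dot{K}_{p,r}^{\alpha,q}}=\|h\|_{\dot{K}_{ps,rs}^{\alpha/s,qs}}^{s}$) one rewrites $\|g_{1}^{\theta}\|_{\dot{K}_{p_{1}/\theta,r_{1}/\theta}^{\alpha_{1}\theta,q_{1}/\theta}}=\|g_{1}\|_{\dot{K}_{p_{1},r_{1}}^{\alpha_{1},q_{1}}}^{\theta}$ and similarly for $g_{0}$, and since $\|g_{i}\|_{\dot{K}_{p_{i},r_{i}}^{\alpha_{i},q_{i}}}=\|f\|_{\dot{K}_{p_{i},r_{i}}^{\alpha_{i},q_{i}}F_{\infty}^{s_{i}}}$ by definition of the $F_{\infty}$ space, the desired inequality
\begin{equation*}
\big\|f\big\|_{\dot{K}_{p,r}^{\alpha,q}F_{\beta}^{s}}\lesssim \big\|f\big\|_{\dot{K}_{p_{0},r_{0}}^{\alpha_{0},q_{0}}F_{\infty}^{s_{0}}}^{1-\theta}\big\|f\big\|_{\dot{K}_{p_{1},r_{1}}^{\alpha_{1},q_{1}}F_{\infty}^{s_{1}}}^{\theta}
\end{equation*}
follows. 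The statement as written has no constant (claims $\le$), so I would either absorb the constant from Lemma \ref{BM-lemma} into an equivalent quasi-norm, or simply present it with an implicit constant as is standard; one should remark on this.

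The main obstacle is a bookkeeping one rather than a deep difficulty: one must check that the exponent splitting of Proposition \ref{holder's} is applied with the correct parameters after the $\theta$-th and $(1-\theta)$-th powers are taken, i.e. that $\frac{\theta}{p_{1}}+\frac{1-\theta}{p_{0}}=\frac1p$ etc.\ translate correctly into $\frac{1}{p_{1}/\theta}+\frac{1}{p_{0}/(1-\theta)}=\frac1p$, and likewise for $r$, $q$, and the additivity of the $\alpha$'s; and that the power rule Lemma \ref{embeddings1-lorentz}(iv) is invoked with admissible (finite, positive) parameters, which forces $0<\theta<1$ and the standing assumption $\alpha_{i}>-n/p_{i}$ to be used so that the Herz indices remain in range. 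One should also verify that $g_{0},g_{1}$ are genuinely in the respective Lorentz--Herz spaces (finiteness is the hypothesis $f\in\dot{K}_{p_{0},r_{0}}^{\alpha_{0},q_{0}}F_{\infty}^{s_{0}}\cap\dot{K}_{p_{1},r_{1}}^{\alpha_{1},q_{1}}F_{\infty}^{s_{1}}$), and that Lemma \ref{BM-lemma} is applicable pointwise because $\{2^{s_{1}k}|f_{k}(x)|\}_{k}\in\ell^{\infty}$ for a.e.\ $x$ whenever the right-hand side is finite.
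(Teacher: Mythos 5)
Your proposal is correct and follows essentially the same route as the paper: a pointwise application of Lemma \ref{BM-lemma} to the sequence $\{|\varphi_{k}\ast f(x)|\}_{k}$ (with the roles of $s_{0},s_{1}$ swapped to match that lemma's hypothesis $s_{1}<s_{0}$), followed by H\"older's inequality in the Lorentz--Herz scale together with the power rule to identify the resulting factors with the $F_{\infty}$ quasi-norms. Your added remarks on the implicit constant from Lemma \ref{BM-lemma} and on the exponent bookkeeping are accurate but do not change the argument; the only blemish is the typographical doubling of $k$ in the exponents $2^{ks_{i}k}$ of your pointwise display.
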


\begin{proof}
Let $\{\varphi _{k}\}_{k\in \mathbb{N}_{0}}$ be a smooth dyadic resolution
of unity. By Lemma \ref{BM-lemma}, we obtain 
\begin{equation*}
\Big(\sum_{k=0}^{\infty }2^{ks\beta }|\mathcal{F}^{-1}\varphi _{k}\ast
f|^{\beta }\Big)^{1/\beta }\leq \sup_{k\in \mathbb{N}_{0}}(2^{ks_{0}}|%
\mathcal{F}^{-1}\varphi _{k}\ast f|)^{1-\theta }\sup_{k\in \mathbb{N}%
_{0}}(2^{ks_{1}}|\mathcal{F}^{-1}\varphi _{k}\ast f|)^{\theta }.
\end{equation*}%
The\ rest is an immediate consequence of H\"{o}lder's inequality.
\end{proof}

\subsection{Lifting property and Fatou property}

Let $\sigma $ be a real number. Recall that the lifting operator $I_{\sigma
} $ is defined by%
\begin{equation*}
\mathcal{F}(I_{\sigma }f)\equiv (1+|\cdot |^{2})^{\sigma /2}\mathcal{F}%
(f),\quad f\in \mathcal{S}^{\prime }(\mathbb{R}^{n}),
\end{equation*}%
see, for example, \cite[p. 58]{T83}. It is well known that $I_{\sigma }$ is
a one-to-one mapping from $\mathcal{S}^{\prime }(\mathbb{R}^{n})$ onto
itself. We have the following result, where the proof can be obtained as in 
\cite[Theorem 4.5]{XuYang05}.

\begin{theorem}
\label{Lifting-lorentz}Let $s,\sigma \in \mathbb{R},m\in \mathbb{N}%
,0<p<\infty ,0<q,r,\beta \leq \infty $ and $\alpha >-\frac{n}{p}$. Then the
operator $I_{\sigma }$ maps $\dot{K}_{p,r}^{\alpha ,q}A_{\beta }^{s}$
isomorphically onto $\dot{K}_{p,r}^{\alpha ,q}A_{\beta }^{s-\sigma }$ and $%
\big\|I_{\sigma }\big\|_{\dot{K}_{p,r}^{\alpha ,q}A_{\beta }^{s-\sigma }}$
is an equivalent quasi-norm on $\dot{K}_{p,r}^{\alpha ,q}A_{\beta
}^{s-\sigma }$. Furthermore%
\begin{equation*}
\sum\limits_{|\gamma |\leq m}\big\|D^{\gamma }f\big\|_{\dot{K}_{p,r}^{\alpha
,q}A_{\beta }^{s-m}}
\end{equation*}%
and%
\begin{equation*}
\big\|f\big\|_{\dot{K}_{p,r}^{\alpha ,q}A_{\beta
}^{s-m}}+\sum\limits_{j=0}^{n}\Big\|\frac{\partial ^{m}f}{\partial x_{j}^{m}}%
\Big\|_{\dot{K}_{p,r}^{\alpha ,q}A_{\beta }^{s-m}},
\end{equation*}%
\textit{are an equivalent quasi-norm in }$\dot{K}_{p,r}^{\alpha ,q}A_{\beta
}^{s}$.
\end{theorem}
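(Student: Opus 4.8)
The plan is to combine a Fourier--multiplier argument with the resolution-of-unity characterization of Theorem \ref{partition-equi-lorentz} and the Fefferman--Stein type maximal inequalities in Lorentz--Herz spaces (Lemmas \ref{Maximal-Inq copy(2)-lorentz} and \ref{Maximal-Inq copy(1)-lorentz}), following the scheme of \cite[Theorem 4.5]{XuYang05}. Throughout, I fix a smooth dyadic resolution of unity $\{\varphi_k\}_{k\in\mathbb{N}_0}$ (on the Fourier side) and set $\widetilde{\varphi}_k=\varphi_{k-1}+\varphi_k+\varphi_{k+1}$ (with $\varphi_{-1}:=0$), so that $\widetilde{\varphi}_k\equiv 1$ on $\operatorname{supp}\varphi_k$; I also fix $0<\tau<\min\big(p,\beta,q,\tfrac{n}{\alpha+n/p}\big)$, exactly as in the proof of Theorem \ref{partition-equi-lorentz}.

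The core is an auxiliary multiplier statement: if $b\in C^\infty(\mathbb{R}^n)$ and $\rho\in\mathbb{R}$ are such that for every $k\in\mathbb{N}_0$ one can write $\varphi_k(\xi)b(\xi)=2^{k\rho}\lambda_k(\xi)$ with $\operatorname{supp}\lambda_k\subset\operatorname{supp}\varphi_k$ and with the rescaled symbols $\lambda_k(2^k\cdot)$ bounded in $\mathcal{S}(\mathbb{R}^n)$ uniformly in $k\ge 1$ (the $k=0$ symbol being a fixed compactly supported Schwartz function), then $T_b\colon f\mapsto\mathcal{F}^{-1}[b\,\mathcal{F}f]$ maps $\dot{K}_{p,r}^{\alpha,q}A_\beta^{s}$ boundedly into $\dot{K}_{p,r}^{\alpha,q}A_\beta^{s-\rho}$. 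Indeed $\mathcal{F}^{-1}\varphi_k\ast T_bf=2^{k\rho}\,\mathcal{F}^{-1}\lambda_k\ast\mathcal{F}^{-1}\widetilde{\varphi}_k\ast f$; the uniform Schwartz bounds give $|\mathcal{F}^{-1}\lambda_k(x)|\lesssim \eta_{2^k,M}(x)$ for every $M$, and applying the $r$-trick (Lemma \ref{r-trick}, with $\mathcal{F}^{-1}\widetilde{\varphi}_k$ in the role of the band-limited factor) together with Lemma \ref{est-maximal} yields, uniformly in $k$,
\[
\big|\mathcal{F}^{-1}\varphi_k\ast T_bf\big|\lesssim 2^{k\rho}\,\mathcal{M}_\tau\big(\mathcal{F}^{-1}\widetilde{\varphi}_k\ast f\big).
\]
Inserting this into the $\dot{K}_{p,r}^{\alpha,q}A_\beta^{s-\rho}$-quasi-norm, splitting $\widetilde{\varphi}_k$ into its three constituents and passing from the weight $2^{ks}$ to $2^{js}$ for $|j-k|\le 1$, and then invoking Lemma \ref{Maximal-Inq copy(2)-lorentz} (for the $F$-scale) or Lemma \ref{Maximal-Inq copy(1)-lorentz} applied block-by-block (for the $B$-scale) to remove $\mathcal{M}_\tau$ --- using Lemma \ref{embeddings1-lorentz}(iv) to rewrite $\mathcal{M}_\tau$ in terms of $\mathcal{M}$ on a rescaled Lorentz--Herz space, where the range $\alpha>-n/p$ is precisely what makes the maximal inequality applicable --- together with Theorem \ref{partition-equi-lorentz}, gives $\|T_bf\|_{\dot{K}_{p,r}^{\alpha,q}A_\beta^{s-\rho}}\lesssim\|f\|_{\dot{K}_{p,r}^{\alpha,q}A_\beta^{s}}$.

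Granting this, the isomorphism follows at once: $b(\xi)=(1+|\xi|^2)^{\sigma/2}$ meets the hypotheses with $\rho=\sigma$, since on $\operatorname{supp}\varphi_k$ ($k\ge1$) we have $|\xi|\approx 2^k$ and $(1+|\xi|^2)^{\sigma/2}=|\xi|^\sigma(1+|\xi|^{-2})^{\sigma/2}$, so $\lambda_k(\xi)=2^{-k\sigma}\varphi_k(\xi)(1+|\xi|^2)^{\sigma/2}$ is uniformly controlled after rescaling, while the $k=0$ piece is compactly supported and Schwartz. Hence $I_\sigma\colon\dot{K}_{p,r}^{\alpha,q}A_\beta^{s}\to\dot{K}_{p,r}^{\alpha,q}A_\beta^{s-\sigma}$ and, replacing $\sigma$ by $-\sigma$, $I_{-\sigma}\colon\dot{K}_{p,r}^{\alpha,q}A_\beta^{s-\sigma}\to\dot{K}_{p,r}^{\alpha,q}A_\beta^{s}$ are bounded; since $I_\sigma$ is a bijection of $\mathcal{S}'(\mathbb{R}^n)$ with inverse $I_{-\sigma}$, $I_\sigma$ is the asserted isomorphism and $\|I_\sigma\,\cdot\|_{\dot{K}_{p,r}^{\alpha,q}A_\beta^{s-\sigma}}$ is an equivalent quasi-norm on $\dot{K}_{p,r}^{\alpha,q}A_\beta^{s}$. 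For the two derivative characterizations, the inequality ``$\lesssim$'' follows from the multiplier statement applied to $D^\gamma$ (symbol $(i\xi)^\gamma$, $\rho=|\gamma|$, using $|\xi^\gamma|\le|\xi|^{|\gamma|}\approx 2^{k|\gamma|}$ on $\operatorname{supp}\varphi_k$), whence $\|D^\gamma f\|_{\dot{K}_{p,r}^{\alpha,q}A_\beta^{s-m}}\lesssim\|f\|_{\dot{K}_{p,r}^{\alpha,q}A_\beta^{s-m+|\gamma|}}\lesssim\|f\|_{\dot{K}_{p,r}^{\alpha,q}A_\beta^{s}}$ whenever $|\gamma|\le m$, the last embedding being immediate since $s-m+|\gamma|\le s$. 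For ``$\gtrsim$'' I use the elliptic trick: with $P(\xi)=1+\sum_{j=1}^n\xi_j^{2m}$ (respectively $P(\xi)=1+\sum_{|\gamma|=m}\xi^{2\gamma}$) one has $P>0$ everywhere and $P(\xi)\gtrsim\max(1,|\xi|^{2m})$, so $n_k:=\varphi_k/P$ is (after rescaling) an admissible symbol of ``size'' $2^{-2km}$ and $n_k\xi_j^m$ (resp.\ $n_k\xi^\gamma$) one of size $2^{-km}$; writing $\varphi_k(\xi)\mathcal{F}f(\xi)=n_k(\xi)\big[\mathcal{F}f(\xi)+\sum_{j=1}^n(i)^{-m}\xi_j^m\,\mathcal{F}(\partial_j^m f)(\xi)\big]$ (resp.\ with $\sum_{|\gamma|=m}(i)^{-m}\xi^\gamma\mathcal{F}(D^\gamma f)$) and repeating the pointwise estimate of the multiplier step yields, uniformly in $k$,
\[
\big|\mathcal{F}^{-1}\varphi_k\ast f\big|\lesssim 2^{-2km}\mathcal{M}_\tau\big(\mathcal{F}^{-1}\widetilde{\varphi}_k\ast f\big)+2^{-km}\sum_{j=1}^n\mathcal{M}_\tau\big(\mathcal{F}^{-1}\widetilde{\varphi}_k\ast\partial_j^m f\big);
\]
multiplying by $2^{ks}$ (note $2^{ks-2km}\le 2^{k(s-m)}$ and $2^{ks-km}=2^{k(s-m)}$ for $k\ge 0$), inserting into the $\dot{K}_{p,r}^{\alpha,q}A_\beta^{s}$-quasi-norm, and arguing once more via the maximal inequalities and Theorem \ref{partition-equi-lorentz}, we obtain $\|f\|_{\dot{K}_{p,r}^{\alpha,q}A_\beta^{s}}\lesssim\|f\|_{\dot{K}_{p,r}^{\alpha,q}A_\beta^{s-m}}+\sum_{j=1}^n\|\partial_j^m f\|_{\dot{K}_{p,r}^{\alpha,q}A_\beta^{s-m}}$ (resp.\ $\lesssim\sum_{|\gamma|\le m}\|D^\gamma f\|_{\dot{K}_{p,r}^{\alpha,q}A_\beta^{s-m}}$, the $\gamma=0$ summand absorbing the first term).

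The main obstacle I expect is the auxiliary multiplier statement, i.e.\ the uniform-in-$k$ pointwise bound $|\mathcal{F}^{-1}\varphi_k\ast T_bf|\lesssim 2^{k\rho}\mathcal{M}_\tau(\mathcal{F}^{-1}\widetilde{\varphi}_k\ast f)$: one must extract genuinely $k$-independent Schwartz estimates for the rescaled symbols occurring in each step, treat the low-frequency block $k=0$ separately, and combine correctly the $r$-trick (Lemma \ref{r-trick}) with Lemma \ref{est-maximal}. Once this pointwise control is in hand, the Lorentz--Herz structure is dealt with routinely through the already-established maximal inequalities and the resolution-of-unity characterization; the only bookkeeping is keeping $\tau$ small enough --- which is possible precisely because $\alpha>-n/p$ --- so that Lemmas \ref{Maximal-Inq copy(2)-lorentz}--\ref{Maximal-Inq copy(1)-lorentz} apply after the $\mathcal{M}_\tau$-rescaling via Lemma \ref{embeddings1-lorentz}(iv).
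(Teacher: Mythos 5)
Your proof is correct and follows essentially the same route as the paper, which simply defers to \cite[Theorem 4.5]{XuYang05}: the standard Fourier-multiplier argument (uniform Schwartz bounds on the rescaled symbols, the $r$-trick of Lemma \ref{r-trick} plus Lemma \ref{est-maximal}, then the Lorentz--Herz maximal inequalities with $\tau$ chosen small thanks to $\alpha>-\frac{n}{p}$), together with the elliptic trick $P(\xi)=1+\sum_j\xi_j^{2m}$ for the derivative characterizations. The details you supply are sound and consistent with the machinery the paper itself uses in the proof of Theorem \ref{partition-equi-lorentz}.
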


Next, we prove that the spaces $\dot{K}_{p,r}^{\alpha ,q}A_{\beta }^{s}$
satisfy the Fatou property. First we recall the definition of the Fatou
property; see, e.g., \cite{Fr86} and \cite[p. 48]{YSY10}.

\begin{definition}
Let $(A,\big\Vert\cdot \big\Vert_{A})$ be a Banach space with $\mathcal{S}(%
\mathbb{R}^{n})\hookrightarrow A\hookrightarrow \mathcal{S}^{\prime }(%
\mathbb{R}^{n})$. We say $A$ has the Fatou property if there exists a
constant $c$ such that from 
\begin{equation*}
g_{m}\rightharpoonup g\quad \text{if}\quad m\longrightarrow \infty \quad 
\text{(weak convergence in }\mathcal{S}^{\prime }(\mathbb{R}^{n})\text{)}
\end{equation*}%
and%
\begin{equation*}
\underset{m\longrightarrow \infty }{\mathrm{lim}\text{ }\mathrm{inf}}\text{ }%
\big\Vert g_{m}\big\Vert_{A}\leq M
\end{equation*}%
it follows $g\in A$ and $\big\Vert g\big\Vert_{A}\leq c$ $M$ with $c$
independent of $g$ and $\{g_{m}\}_{m\in \mathbb{N}_{0}}\subset A$.
\end{definition}

\begin{proposition}
\label{Fatou-lorentz}Let $0<p,q,r<\infty ,0<\beta <\infty ,s\in \mathbb{R}%
^{n}$ and $\alpha >-\frac{n}{p}$. The spaces $\dot{K}_{p,r}^{\alpha
,q}A_{\beta }^{s}$ have the Fatou property.
\end{proposition}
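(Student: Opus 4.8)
The plan is to reduce the Fatou property of $\dot{K}_{p,r}^{\alpha ,q}A_{\beta }^{s}$ to an elementary lower-semicontinuity argument in the Littlewood--Paley description of the norm. Suppose $g_{m}\rightharpoonup g$ in $\mathcal{S}^{\prime }(\mathbb{R}^{n})$ and $\liminf_{m\to\infty}\|g_{m}\|_{\dot{K}_{p,r}^{\alpha ,q}A_{\beta }^{s}}\le M$. Fix a smooth dyadic resolution of unity $\{\varphi_{k}\}_{k\in\mathbb{N}_{0}}$ (with $\varphi_{0}$ replaced by $\Phi$). Since each $\mathcal{F}^{-1}\varphi_{k}\ast(\cdot)$ is continuous from $\mathcal{S}^{\prime }(\mathbb{R}^{n})$ to $\mathcal{S}^{\prime }(\mathbb{R}^{n})$, and since the convolution of a tempered distribution with a Schwartz function of compactly supported Fourier transform is a $C^{\infty}$ function of polynomial growth that depends continuously on the distribution in the topology of pointwise convergence, we get for every $k$ and every $x\in\mathbb{R}^{n}$
\begin{equation*}
\mathcal{F}^{-1}\varphi_{k}\ast g_{m}(x)\longrightarrow \mathcal{F}^{-1}\varphi_{k}\ast g(x),\qquad m\to\infty .
\end{equation*}

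First I would pass to a subsequence $\{g_{m_{\ell}}\}_{\ell}$ along which $\|g_{m_{\ell}}\|_{\dot{K}_{p,r}^{\alpha ,q}A_{\beta }^{s}}\to \liminf_{m}\|g_{m}\|_{\dot{K}_{p,r}^{\alpha ,q}A_{\beta }^{s}}\le M$, so in particular this subsequence is norm-bounded by any $M'>M$. Next I would apply Fatou's lemma (for the Lorentz quasi-norm, for the outer $\ell^{q}_{k\in\mathbb{Z}}$ weighted sum over dyadic annuli $R_{k}$, and, in the $F$-case, for the inner $\ell^{\beta}$ sum over the frequency index $j$) successively: pointwise convergence of $\mathcal{F}^{-1}\varphi_{j}\ast g_{m_{\ell}}$ forces, for the $F$-scale,
\begin{equation*}
\Big(\sum_{j=0}^{\infty}2^{js\beta}|\mathcal{F}^{-1}\varphi_{j}\ast g|^{\beta}\Big)^{1/\beta}\le \liminf_{\ell\to\infty}\Big(\sum_{j=0}^{\infty}2^{js\beta}|\mathcal{F}^{-1}\varphi_{j}\ast g_{m_{\ell}}|^{\beta}\Big)^{1/\beta}
\end{equation*}
pointwise, and then the lower semicontinuity of $\|\cdot\|_{L^{p,r}}$ under pointwise $\liminf$ (which follows since the distribution function, hence the non-increasing rearrangement, is lower semicontinuous under pointwise $\liminf$ by the ordinary Fatou lemma applied to $m_{f}(\lambda)$), applied annulus by annulus, together with Fatou for the $\ell^{q}$-sum in $k$, yields
\begin{equation*}
\|g\|_{\dot{K}_{p,r}^{\alpha ,q}F_{\beta }^{s}}\le \liminf_{\ell\to\infty}\|g_{m_{\ell}}\|_{\dot{K}_{p,r}^{\alpha ,q}F_{\beta }^{s}}\le M .
\end{equation*}
The $B$-scale is identical except that the inner $\ell^{\beta}$-sum sits outside the $\dot{K}_{p,r}^{\alpha ,q}$ quasi-norm, so one applies Fatou in the order: pointwise, then $L^{p,r}$ over each $R_{k}$, then $\ell^{q}$ in $k$, then $\ell^{\beta}$ in $j$. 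In either case this shows $g\in \dot{K}_{p,r}^{\alpha ,q}A_{\beta }^{s}$ with norm $\le M$, so $c=1$ works.

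The main obstacle — and the only point requiring genuine care — is the lower semicontinuity of the Lorentz quasi-norm $\|\cdot\|_{L^{p,r}}$ under pointwise $\liminf$ when $r\neq p$, since $\|\cdot\|_{L^{p,r}}$ is defined through the rearrangement rather than directly by an integral of $|f|$. The cleanest way I would handle this is to note that if $h_{\ell}\ge 0$ and $h=\liminf_{\ell}h_{\ell}$ pointwise, then for every $\lambda>0$ the standard Fatou lemma gives $\{h>\lambda\}\subset\liminf_{\ell}\{h_{\ell}>\lambda\}$ up to measure zero, whence $m_{h}(\lambda)\le\liminf_{\ell}m_{h_{\ell}}(\lambda)$, which forces $h^{\ast}(t)\le\liminf_{\ell}h_{\ell}^{\ast}(t)$ for every $t>0$; then one more application of Fatou (now in the variable $t$, against the measure $t^{r/p-1}\,dt$) gives $\|h\|_{L^{p,r}}\le\liminf_{\ell}\|h_{\ell}\|_{L^{p,r}}$, and similarly for $r=\infty$ directly from $h^{\ast}\le\liminf h_{\ell}^{\ast}$. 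Restricting $h_{\ell}$ to a fixed annulus $R_{k}$ (multiplying by $\chi_{k}$, which commutes with the pointwise $\liminf$) then delivers the annulus-wise estimate. The density hypothesis $\mathcal{S}(\mathbb{R}^{n})\hookrightarrow A\hookrightarrow\mathcal{S}^{\prime }(\mathbb{R}^{n})$ in the definition is automatic here from Theorem~\ref{partition-equi-lorentz} and the embedding $\dot{K}_{p,r}^{\alpha ,q}\hookrightarrow\mathcal{D}^{\prime }(\mathbb{R}^{n})$ (for the relevant parameter ranges), so no additional work is needed beyond remarking on it; the assumption $p,q,r,\beta<\infty$ is used only to ensure $A$ is a genuine Banach (quasi-Banach) space matching the framework of the stated Fatou-property definition.
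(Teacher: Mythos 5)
Your proposal is correct and follows essentially the same route as the paper: weak convergence gives pointwise convergence of the Littlewood--Paley blocks $\mathcal{F}^{-1}\varphi_{k}\ast g_{m}$, and then one invokes Fatou-type lower semicontinuity of the quasi-norm level by level (the paper does this by bounding the truncated sums $\sum_{k=0}^{N}$ via Fatou's lemma and then letting $N\to\infty$ with Beppo Levi, whereas you iterate $\liminf$ directly through the $\ell^{\beta}$, $L^{p,r}$ and $\ell^{q}$ layers). Your explicit verification that $\|\cdot\|_{L^{p,r}}$ is lower semicontinuous under pointwise $\liminf$, via $m_{h}(\lambda)\leq\liminf_{\ell}m_{h_{\ell}}(\lambda)$ and hence $h^{\ast}\leq\liminf_{\ell}h_{\ell}^{\ast}$, correctly fills in the one step the paper leaves implicit in its appeal to ``Fatou's lemma.''
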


\begin{proof}
By similarity, we only consider the space $\dot{K}_{p,r}^{\alpha ,q}F_{\beta
}^{s}$. Let $\Phi $ and $\varphi $\ satisfy \eqref{Ass1}\ and\ \eqref{Ass2},
respectively. By the assumption it follows that for all $k\in \mathbb{N}_{0}$
\begin{equation*}
\varphi _{k}\ast f_{m}\rightarrow \varphi _{k}\ast f
\end{equation*}%
as $m\rightarrow \infty $, where when $k=0$, $\varphi _{0}$ is replaced by $%
\Phi $. Fatou's lemma yields%
\begin{equation*}
\Big\|\Big(\sum\limits_{k=0}^{N}2^{ks\beta }|\varphi _{k}\ast f|^{\beta }%
\Big)^{1/\beta }\Big\|_{\dot{K}_{p,r}^{\alpha ,q}}\leq \underset{%
m\longrightarrow \infty }{\mathrm{lim}\text{ }\mathrm{inf}}\Big\|\Big(%
\sum\limits_{k=0}^{N}2^{ks\beta }|\varphi _{k}\ast f_{m}|^{\beta }\Big)%
^{1/\beta }\Big\|_{\dot{K}_{p,r}^{\alpha ,q}}.
\end{equation*}%
This combined with Beppo Levi's lemma yields the desired conclusion. The
proof is complete.
\end{proof}

\begin{remark}
$\mathrm{(i)}$ The Fatou property of Besov and Triebel-Lizorkin spaces has
been proved by Franke \cite{Fr86}; see also Franke and Runst \cite{FR65}. 
\newline
$\mathrm{(ii)}$ Bourdaud and Meyer \cite{BM91} gave an independent proof
restricted to Besov spaces.\newline
$\mathrm{(iii)}$ There are spaces which do not have the Fatou property. For
example, $L^{1}$ and $C$; see \cite{Fr86}.\newline
$\mathrm{(iv)}$ Fatou property plays an essential role in mathematical
analysis such as nonlinear problems; see \cite{BM91} and \cite{RuSi96}.
\end{remark}

\section{Embeddings}

In this section, we establish basic embeddings, Sobolev, Jawerth and Franke
embeddings for the spaces under consideration. The following theorem gives
basic embeddings of the spaces $\dot{K}_{p,r}^{\alpha ,q}A_{\beta }^{s}$.

\begin{theorem}
\label{embeddings1.1-lorentz}\textit{Let }$s\in \mathbb{R},0<p<\infty
,0<r,q\leq \infty $\textit{\ and }$\alpha >-\frac{n}{p}$\textit{.}\newline
$\mathrm{(i)}$\textit{\ If }$0<\beta _{1}\leq \beta _{2}\leq \infty $, then%
\begin{equation}
\dot{K}_{p,r}^{\alpha ,q}A_{\beta _{1}}^{s}\hookrightarrow \dot{K}%
_{p,r}^{\alpha ,q}A_{\beta _{2}}^{s}.  \label{embed1}
\end{equation}%
$\mathrm{(ii)}$\textit{\ If }$0<\beta _{1},\beta _{2}\leq \infty $ and $%
\varepsilon >0$, then%
\begin{equation}
\dot{K}_{p,r}^{\alpha ,q}A_{\beta _{1}}^{s+\varepsilon }\hookrightarrow \dot{%
K}_{p,r}^{\alpha ,q}A_{\beta _{2}}^{s}.  \label{embed2}
\end{equation}%
$\mathrm{(iii)}$\textit{\ If }$0<q_{1}\leq q_{2}\leq \infty $, then%
\begin{equation}
\dot{K}_{p,r}^{\alpha ,q_{1}}A_{\beta }^{s}\hookrightarrow \dot{K}%
_{p,r}^{\alpha ,q_{2}}A_{\beta }^{s}.  \label{embed3}
\end{equation}%
$\mathrm{(iv)}$\textit{\ }Let $0<r_{2},r_{1}\leq \infty ,\alpha \in \mathbb{R%
}\ $and suppose $0<p_{1}<p_{2}<\infty $, then%
\begin{equation}
\dot{K}_{p_{2},r_{2}}^{\alpha ,q}A_{\beta }^{s}\hookrightarrow \dot{K}%
_{p_{1},r_{1}}^{m,q}A_{\beta }^{s},  \label{embed4}
\end{equation}%
where $m=\alpha -n\big(\frac{1}{p_{1}}-\frac{1}{p_{2}}\big).$\newline
$\mathrm{(v)}$\textit{\ If }$0<r_{1}\leq r_{2}\leq \infty $, then%
\begin{equation}
\dot{K}_{p,r_{1}}^{\alpha ,q}A_{\beta }^{s}\hookrightarrow \dot{K}%
_{p,r_{2}}^{\alpha ,q}A_{\beta }^{s}.  \label{embed5}
\end{equation}
\end{theorem}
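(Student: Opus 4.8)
The plan is to reduce every inclusion to the corresponding property of the underlying Lorentz--Herz space $\dot{K}_{p,r}^{\alpha,q}$ together with an elementary estimate for $\ell^{\beta}$ sequences, working directly with the Fourier-analytic quasi-norms of Definition \ref{B-F-def-lorentz} (equivalently, with the resolution-of-unity quasi-norms of Theorem \ref{partition-equi-lorentz}). The single structural fact used repeatedly is that $\dot{K}_{p,r}^{\alpha,q}$ is a quasi-Banach ideal (lattice) space (see the remark after Definition \ref{Herz-lorentz}), so that $0\le g_{1}\le g_{2}$ implies $\|g_{1}\|_{\dot{K}_{p,r}^{\alpha,q}}\le\|g_{2}\|_{\dot{K}_{p,r}^{\alpha,q}}$. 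Throughout, the $B$-scale and the $F$-scale are handled in parallel: for $\dot{K}_{p,r}^{\alpha,q}B_{\beta}^{s}$ the scalar estimates are applied term by term to the pieces $\varphi_{k}\ast f$ and then one passes to the $\ell^{\beta}$-quasinorm of the weighted sequence in $k$, while for $\dot{K}_{p,r}^{\alpha,q}F_{\beta}^{s}$ they are applied either pointwise inside the $\ell^{\beta}$-sum or directly to the single non-negative function $g:=\big(\sum_{k\ge 0}2^{ks\beta}|\varphi_{k}\ast f|^{\beta}\big)^{1/\beta}$.

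For $\mathrm{(i)}$ one uses $\ell^{\beta_{1}}\hookrightarrow\ell^{\beta_{2}}$ when $\beta_{1}\le\beta_{2}$: in the Besov case this gives $\|f\|_{\dot{K}_{p,r}^{\alpha,q}B_{\beta_{2}}^{s}}\le\|f\|_{\dot{K}_{p,r}^{\alpha,q}B_{\beta_{1}}^{s}}$ at once, and in the Triebel--Lizorkin case the pointwise inequality $\big(\sum_{k}2^{ks\beta_{2}}|\varphi_{k}\ast f|^{\beta_{2}}\big)^{1/\beta_{2}}\le\big(\sum_{k}2^{ks\beta_{1}}|\varphi_{k}\ast f|^{\beta_{1}}\big)^{1/\beta_{1}}$ together with the lattice property yields \eqref{embed1}. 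For $\mathrm{(ii)}$ I first invoke $\mathrm{(i)}$ to replace $\beta_{1}$ by $\infty$ in the source, so it remains to prove $\dot{K}_{p,r}^{\alpha,q}A_{\infty}^{s+\varepsilon}\hookrightarrow\dot{K}_{p,r}^{\alpha,q}A_{\beta_{2}}^{s}$. Since $k\ge 0$ and $\varepsilon>0$, writing $2^{ks}=2^{-k\varepsilon}2^{k(s+\varepsilon)}$ and using the convergence of the geometric series $\sum_{k\ge 0}2^{-k\varepsilon\beta_{2}}$, one bounds the $B_{\beta_{2}}^{s}$- resp.\ $F_{\beta_{2}}^{s}$-quasi-norm by a constant multiple of $\sup_{k}2^{k(s+\varepsilon)}\|\varphi_{k}\ast f\|_{\dot{K}_{p,r}^{\alpha,q}}$ resp.\ $\big\|\sup_{k}2^{k(s+\varepsilon)}|\varphi_{k}\ast f|\big\|_{\dot{K}_{p,r}^{\alpha,q}}$, i.e.\ by $\|f\|_{\dot{K}_{p,r}^{\alpha,q}A_{\infty}^{s+\varepsilon}}$ (in the $F$-case the geometric-series step is carried out pointwise and then the lattice property is applied; the case $\beta_{2}=\infty$ is the trivial bound $2^{ks}\le 2^{k(s+\varepsilon)}$).

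Parts $\mathrm{(iii)}$ and $\mathrm{(v)}$ are immediate from Lemma \ref{embeddings1-lorentz}: part $\mathrm{(i)}$ there gives $\dot{K}_{p,r}^{\alpha,q_{1}}\hookrightarrow\dot{K}_{p,r}^{\alpha,q_{2}}$ for $q_{1}\le q_{2}$, and part $\mathrm{(iii)}$ gives $\dot{K}_{p,r_{1}}^{\alpha,q}\hookrightarrow\dot{K}_{p,r_{2}}^{\alpha,q}$ for $r_{1}\le r_{2}$; applying either embedding to each $\varphi_{k}\ast f$ (Besov case) or to the function $g$ above (Triebel--Lizorkin case), while keeping the outer $\ell^{\beta}$- resp.\ $\dot{K}_{p,r}^{\alpha,q}$-structure unchanged, gives \eqref{embed3} and \eqref{embed5}. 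For $\mathrm{(iv)}$ the decisive input is the Sobolev-type embedding of Lorentz--Herz spaces, Proposition \ref{embeddings1 copy(1)-lorentz}(ii), which for $p_{1}<p_{2}$ reads $\dot{K}_{p_{2},r_{2}}^{\alpha,q}\hookrightarrow\dot{K}_{p_{1},r_{1}}^{m,q}$ with $m=\alpha-n(1/p_{1}-1/p_{2})$; applying it to each $\varphi_{k}\ast f$ (Besov) and to the single function $g$ (Triebel--Lizorkin, using that $g\in\dot{K}_{p_{2},r_{2}}^{\alpha,q}$ whenever the $F$-quasi-norm is finite) yields \eqref{embed4}. The only points requiring a little care are the passage to $\beta=\infty$ in $\mathrm{(ii)}$ and, in the $F$-scale, the verification that the scalar estimates may equivalently be run pointwise inside the $\ell^{\beta}$-sum or on the gathered function $g$ — both routine once the ideal property of $\dot{K}_{p,r}^{\alpha,q}$ is in hand — so I do not expect any genuine obstacle here.
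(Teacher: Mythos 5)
Your proposal is correct and follows essentially the same route as the paper: parts (i), (iii) and (v) are reduced to the sequence-space embeddings and Lemma \ref{embeddings1-lorentz}, part (ii) to the pointwise bound by $\sup_{k}2^{k(s+\varepsilon)}|\varphi_{k}\ast f|$ via the convergent geometric series together with $\ell^{\beta_{1}}\hookrightarrow\ell^{\infty}$, and part (iv) to Proposition \ref{embeddings1 copy(1)-lorentz}(ii). The only difference is that you spell out the lattice/ideal-property bookkeeping in the $F$-scale, which the paper leaves implicit.
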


\begin{proof}
The emdeddings \eqref{embed1}, \eqref{embed3} and \eqref{embed5} are ready
consequence of the embeddings between Lebesgue sequence spaces and Lemma \ref%
{embeddings1-lorentz}. Let $\Phi $ and $\varphi $\ satisfy $\mathrm{%
\eqref{Ass1}}$\ and\ $\mathrm{\eqref{Ass2}}$, respectively and $f\in \dot{K}%
_{p,r}^{\alpha ,q}F_{\beta _{1}}^{s+\varepsilon }$. To prove \eqref{embed2},
since $\varepsilon >0$ we see that 
\begin{equation*}
\Big\|\Big(\sum\limits_{k=0}^{\infty }2^{ks\beta _{2}}|\varphi _{k}\ast
f|^{\beta _{2}}\Big)^{1/\beta _{2}}\Big\|_{\dot{K}_{p,r}^{\alpha ,q}}\leq c%
\Big\|\sup_{k\in \mathbb{N}_{0}}\big(2^{k(s+\varepsilon )}|\varphi _{k}\ast
f|\big)\Big\|_{\dot{K}_{p,r}^{\alpha ,q}}.
\end{equation*}%
The desired estimate follows by the embeddings $\ell ^{\beta
_{1}}\hookrightarrow \ell ^{\infty }$. The $B$-case follows from a similar
argument. The emdeddings \eqref{embed4}, follows immediately from
Proposition \ref{embeddings1 copy(1)-lorentz}.
\end{proof}

Similarly as in \cite{Drihem1.13} and \cite[Proposition. 2.3.2/2]{T83}, we
obtain the following basic embeddings between the spaces $\dot{K}%
_{p,r}^{\alpha ,q}B_{\beta }^{s}$ and $\dot{K}_{p,r}^{\alpha ,q}F_{\beta
}^{s}$.

\begin{theorem}
\label{embeddings2-lorentz}\textit{Let }$s\in \mathbb{R},0<p<\infty
,0<q,\beta \leq \infty ,0<r_{0}\leq r_{1}\leq \infty $\textit{\ and }$\alpha
>-\frac{n}{p}$\textit{. \newline
}$\mathrm{(i)}$\textit{\ }Assume that $p\neq \beta $ or $p=\beta \geq r_{0}$%
. \textit{Then}%
\begin{equation*}
\dot{K}_{p,r_{0}}^{\alpha ,q}B_{\min \left( p,\beta ,r_{1},q\right)
}^{s}\hookrightarrow \dot{K}_{p,r_{1}}^{\alpha ,q}F_{\beta }^{s}.
\end{equation*}%
$\mathrm{(ii)}$\textit{\ }Assume that $p\neq \beta $ or $p=\beta \leq r_{1}$%
. \textit{Then}%
\begin{equation*}
\dot{K}_{p,r_{0}}^{\alpha ,q}F_{\beta }^{s}\hookrightarrow \dot{K}%
_{p,r_{1}}^{\alpha ,q}B_{\max \left( p,\beta ,r_{0},q\right) }^{s}.
\end{equation*}
\end{theorem}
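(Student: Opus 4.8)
The plan is to derive both embeddings as immediate consequences of the two vector-valued Lorentz--Herz inequalities already established, namely Lemma~\ref{Lp,r-estimate} and Lemma~\ref{Lp,r-estimate copy(1)}, applied to the Littlewood--Paley pieces $f_j = 2^{js}\,\varphi_j\ast f$ (with $\varphi_0$ replaced by $\Phi$), $j\in\mathbb{N}_0$, arising from the Fourier-analytic Definition~\ref{B-F-def-lorentz}. By Corollary~\ref{Indpendent-lorentz} the resulting quasi-norms do not depend on the chosen pair $\Phi,\varphi$ satisfying \eqref{Ass1}--\eqref{Ass2}, so I fix one such pair once and for all.

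For part (i), I would set $\tau=\min(p,\beta,r_1,q)$, note that $0<\tau\le\min(p,r_1,q,\beta)$, and observe that the hypothesis ``$p\neq\beta$ or $p=\beta\ge r_0$'' is precisely the one demanded in Lemma~\ref{Lp,r-estimate}. Applying that lemma to $f_j = 2^{js}\varphi_j\ast f$ then gives
\[
\big\|f\big\|_{\dot{K}_{p,r_1}^{\alpha,q}F_\beta^s}
=\Big\|\Big(\sum_{j=0}^\infty 2^{js\beta}|\varphi_j\ast f|^{\beta}\Big)^{1/\beta}\Big\|_{\dot{K}_{p,r_1}^{\alpha,q}}
\lesssim \Big(\sum_{j=0}^\infty 2^{js\tau}\big\|\varphi_j\ast f\big\|_{\dot{K}_{p,r_0}^{\alpha,q}}^{\tau}\Big)^{1/\tau}
=\big\|f\big\|_{\dot{K}_{p,r_0}^{\alpha,q}B_{\tau}^s},
\]
which is exactly $\dot{K}_{p,r_0}^{\alpha,q}B_{\min(p,\beta,r_1,q)}^s\hookrightarrow\dot{K}_{p,r_1}^{\alpha,q}F_\beta^s$; the right-hand side is finite as soon as $f$ lies in the source space, which is the only additional requirement of Lemma~\ref{Lp,r-estimate}. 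For part (ii), dually, I would put $\tau=\max(p,\beta,r_0,q)$, so $\tau\ge\max(p,r_0,q,\beta)$, and use that ``$p\neq\beta$ or $p=\beta\le r_1$'' is the hypothesis of Lemma~\ref{Lp,r-estimate copy(1)}; applying it to $f_j=2^{js}\varphi_j\ast f$ (whose right-hand side $\|f\|_{\dot{K}_{p,r_0}^{\alpha,q}F_\beta^s}$ is finite precisely when $f$ lies in the source space) yields
\[
\big\|f\big\|_{\dot{K}_{p,r_1}^{\alpha,q}B_{\tau}^s}
=\Big(\sum_{j=0}^\infty 2^{js\tau}\big\|\varphi_j\ast f\big\|_{\dot{K}_{p,r_1}^{\alpha,q}}^{\tau}\Big)^{1/\tau}
\lesssim \Big\|\Big(\sum_{j=0}^\infty 2^{js\beta}|\varphi_j\ast f|^{\beta}\Big)^{1/\beta}\Big\|_{\dot{K}_{p,r_0}^{\alpha,q}}
=\big\|f\big\|_{\dot{K}_{p,r_0}^{\alpha,q}F_\beta^s},
\]
that is, $\dot{K}_{p,r_0}^{\alpha,q}F_\beta^s\hookrightarrow\dot{K}_{p,r_1}^{\alpha,q}B_{\max(p,\beta,r_0,q)}^s$.

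I do not anticipate a serious obstacle, since the analytic core of the matter---the $L^{p,r}$-inequalities of \cite{ST19} and their Herz-averaged forms---is already packed into Lemmas~\ref{Lp,r-estimate} and \ref{Lp,r-estimate copy(1)}, so the genuinely nontrivial interchange of the $\ell^\beta$-summation with the $\dot{K}_{p,r}^{\alpha,q}$-quasi-norm is done for us, and what remains is bookkeeping. The one point demanding care is exactly that bookkeeping of the index conditions: the equality case $p=\beta$ is admissible only under the one-sided restrictions $p=\beta\ge r_0$ in (i) and $p=\beta\le r_1$ in (ii), and these must be matched against the hypotheses of the two lemmas, which is why the statement carries those side conditions. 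The endpoint values $q=\infty$ (occurring only in the Besov parts) and $\beta=\infty$ are handled by the usual $\ell^\infty$-modification, and since $r_0\le r_1$ one may, as in the proofs of the two lemmas, reduce throughout to the case $r_0=r_1$ using $L^{p,r_0}\hookrightarrow L^{p,r_1}$, equivalently $\dot{K}_{p,r_0}^{\alpha,q}\hookrightarrow\dot{K}_{p,r_1}^{\alpha,q}$ from Lemma~\ref{embeddings1-lorentz}.
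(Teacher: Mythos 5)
Your proof is correct and follows exactly the paper's route: the paper likewise deduces part (i) from Lemma \ref{Lp,r-estimate} and part (ii) from Lemma \ref{Lp,r-estimate copy(1)}, applied to the Littlewood--Paley pieces, merely stating this without writing out the substitution $f_j=2^{js}\varphi_j\ast f$ that you carry out. Your bookkeeping of the index conditions and of the finiteness hypothesis matches the lemmas' statements, so nothing is missing.
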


\begin{proof}
The proof of (i) is a consequence of Lemma \ref{Lp,r-estimate}. To prove
(ii), we use Lemma \ref{Lp,r-estimate copy(1)}.
\end{proof}

\begin{remark}
Theorem \ref{embeddings2-lorentz}\ \ when $\alpha =0,p=q=r$ generalizes the
corresponding results on Besov and Triebel-Lizorkin spaces established in 
\cite[ Section 2.3]{T83}.
\end{remark}

The same arguments as in \cite{Drihem1.13} yield the following theorem.

\begin{theorem}
\label{embeddings-S-inf-lorentz}Let $s\in \mathbb{R},0<p<\infty ,0<r,q\leq
\infty ,0<\beta \leq \infty $ and $\alpha >-\frac{n}{p}$.\newline
$\mathrm{(i)}$ We have the embedding 
\begin{equation}
\mathcal{S}(\mathbb{R}^{n})\hookrightarrow \dot{K}_{p,r}^{\alpha ,q}A_{\beta
}^{s}.  \label{embedding}
\end{equation}%
In addition if $0<q,r<\infty $ and $0<\beta <\infty $, then $\mathcal{S}(%
\mathbb{R}^{n})$ is dense in $\dot{K}_{p,r}^{\alpha ,q}A_{\beta }^{s}$.%
\newline
$\mathrm{(ii)}$ We have the embedding 
\begin{equation}
\dot{K}_{p,r}^{\alpha ,q}A_{\beta }^{s}\hookrightarrow \mathcal{S}^{\prime }(%
\mathbb{R}^{n}).  \label{embeddingsSch}
\end{equation}
\end{theorem}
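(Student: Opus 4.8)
The plan is to establish the embedding \eqref{embedding}, the density claim, and the embedding \eqref{embeddingsSch} separately, relying throughout on the $\varphi$-transform apparatus of Theorem \ref{phi-tran-lorentz} and Lemma \ref{Inv-phi-trans-lorentz} together with the reproducing formula of Lemma \ref{DW-lemma1}. First I would prove $\mathcal{S}(\mathbb{R}^{n})\hookrightarrow \dot{K}_{p,r}^{\alpha,q}A_{\beta}^{s}$ by a direct estimate. Fix $\Phi,\varphi$ satisfying \eqref{Ass1}, \eqref{Ass2}. Since $\mathcal{F}\varphi$ vanishes near the origin, $\varphi$ has vanishing moments of every order, so the usual Taylor-expansion device (cf. \eqref{convolution}) yields, for all $L,N>0$ and a suitable $M=M(L,N)$,
\[
|\varphi_{k}\ast f(x)|\lesssim 2^{-kL}(1+|x|)^{-N}\,\|f\|_{\mathcal{S}_{M}},\quad k\in \mathbb{N},
\]
together with $|\Phi\ast f(x)|\lesssim (1+|x|)^{-N}\|f\|_{\mathcal{S}_{M}}$. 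The decisive point is that $\|(1+|\cdot|)^{-N}\|_{\dot{K}_{p,r}^{\alpha,q}}<\infty$ once $N$ is large enough: splitting the Herz sum and invoking \eqref{est-function1}, the part $\sum_{j\le 0}2^{j\alpha q}\|\chi_{j}\|_{L^{p,r}}^{q}\approx\sum_{j\le 0}2^{j(\alpha+n/p)q}$ converges \emph{exactly} because $\alpha>-n/p$, while the part $j\ge 1$ converges for $N$ large. Inserting these pointwise bounds into the $B$- and $F$-quasi-norms of Definition \ref{B-F-def-lorentz} and summing the resulting geometric series in $k$ (choosing $L>s$) gives $\|f\|_{\dot{K}_{p,r}^{\alpha,q}A_{\beta}^{s}}\lesssim\|f\|_{\mathcal{S}_{M}}$; in the $F$-case one first performs the $\ell^{\beta}$-summation pointwise in $x$ and only then applies the $\dot{K}_{p,r}^{\alpha,q}$-quasi-norm.

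For \eqref{embeddingsSch}, note that by definition any $f\in\dot{K}_{p,r}^{\alpha,q}A_{\beta}^{s}$ already belongs to $\mathcal{S}'(\mathbb{R}^{n})$, so only continuity of the inclusion is at issue. By Lemma \ref{DW-lemma1} we have $f=T_{\psi}S_{\varphi}f$ in $\mathcal{S}'(\mathbb{R}^{n})$; since $S_{\varphi}\colon\dot{K}_{p,r}^{\alpha,q}A_{\beta}^{s}\to\dot{K}_{p,r}^{\alpha,q}a_{\beta}^{s}$ is bounded (Theorem \ref{phi-tran-lorentz}) and the estimates in the proof of Lemma \ref{Inv-phi-trans-lorentz}, applied with $\lambda=S_{\varphi}f$, give $|\langle T_{\psi}\lambda,\varphi\rangle|\lesssim\|\varphi\|_{\mathcal{S}_{M}}\|\lambda\|_{\dot{K}_{p,r}^{\alpha,q}a_{\beta}^{s}}$ for every $\varphi\in\mathcal{S}(\mathbb{R}^{n})$, we conclude $|\langle f,\varphi\rangle|\lesssim\|\varphi\|_{\mathcal{S}_{M}}\|f\|_{\dot{K}_{p,r}^{\alpha,q}A_{\beta}^{s}}$, which is the asserted continuity. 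This is also precisely where the hypothesis $\alpha>-n/p$ re-enters, through Lemma \ref{Inv-phi-trans-lorentz}.

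For density when $0<q,r,\beta<\infty$, I would first show that the set $D$ of finitely supported sequences is dense in $\dot{K}_{p,r}^{\alpha,q}a_{\beta}^{s}$. Since the sum over $k$ is one-sided and $\beta<\infty$, truncating the levels $k>N$ is harmless; within finitely many levels one truncates the sum over $m\in\mathbb{Z}^{n}$, noting that for each fixed $k,j$ only finitely many cubes $Q_{k,m}$ meet $R_{j}$, so the truncated tail vanishes on $R_{j}$ for $N$ large, and a dominated-convergence argument in the Herz sum — legitimate because $p,q,r<\infty$ (and $\beta<\infty$ in the $F$-case) supply the needed absolute continuity of the $L^{p,r}$-, $\ell^{q}$- and $\ell^{\beta}$-quasi-norms — forces the tail quasi-norm to $0$. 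Then, since $T_{\psi}$ is bounded from $\dot{K}_{p,r}^{\alpha,q}a_{\beta}^{s}$ into $\dot{K}_{p,r}^{\alpha,q}A_{\beta}^{s}$ and $T_{\psi}\circ S_{\varphi}$ is the identity (Theorem \ref{phi-tran-lorentz}), the set $T_{\psi}(D)$ is dense in $\dot{K}_{p,r}^{\alpha,q}A_{\beta}^{s}$; but every element of $T_{\psi}(D)$ is a finite linear combination of the Schwartz functions $\Psi_{m}$ and $\psi_{k,m}$, hence lies in $\mathcal{S}(\mathbb{R}^{n})$, so $\mathcal{S}(\mathbb{R}^{n})$ is dense. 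I expect the genuine technical obstacle to be this last truncation step in the Lorentz-Herz sequence space, where the inner $L^{p,r}$-quasi-norm, the outer Herz $\ell^{q}$-sum and the $\ell^{\beta}$-sum must be controlled simultaneously; the remainder is a routine transcription of the arguments in \cite{Drihem1.13}.
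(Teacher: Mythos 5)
Your argument is correct, and it is worth noting that it does not follow the route the paper has in mind: the paper gives no proof of this theorem but refers to the arguments of \cite{Drihem1.13}, which are direct Fourier-analytic ones (for the embedding into $\mathcal{S}'(\mathbb{R}^{n})$ one pairs the Littlewood-Paley pieces $\varphi_{k}\ast f$ against a test function and controls $\|\varphi_{k}\ast f\|_{\infty}$ by a Plancherel--Polya--Nikolskij/local bound of the type of Lemma \ref{Key-est1-lorentz}, and density is obtained by truncating the Littlewood-Paley decomposition), whereas you route parts (ii) and the density claim through the discrete machinery: $f=T_{\psi}S_{\varphi}f$, the boundedness of $S_{\varphi}$ from Theorem \ref{phi-tran-lorentz}, and the estimate $|\langle T_{\psi}\lambda,\phi\rangle|\lesssim\|\phi\|_{\mathcal{S}_{M}}\|\lambda\|_{\dot{K}_{p,r}^{\alpha ,q}a_{\beta }^{s}}$ contained in the proof of Lemma \ref{Inv-phi-trans-lorentz}, followed by density of finitely supported sequences in $\dot{K}_{p,r}^{\alpha ,q}a_{\beta }^{s}$ and the identity $T_{\psi}\circ S_{\varphi}=\mathrm{id}$. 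This is legitimate and non-circular, since Theorem \ref{phi-tran-lorentz} and Lemma \ref{Inv-phi-trans-lorentz} precede the statement and only require $\alpha>-\frac{n}{p}$; what it buys is that the only genuinely new work is the truncation argument in the sequence space, where your appeal to the absolute continuity of the $L^{p,r}$-quasi-norm for $r<\infty$ (convergence in measure plus domination gives $f_{N}^{\ast}(t)\to0$ and then dominated convergence in $\int_{0}^{\infty}t^{r/p}(f_{N}^{\ast}(t))^{r}\frac{dt}{t}$) together with $q,\beta<\infty$ is exactly the right mechanism, and it makes transparent why these finiteness assumptions enter the density statement. Part (i) you prove by the same direct moment/decay estimate (essentially \eqref{convolution} plus $\|(1+|\cdot|)^{-N}\|_{\dot{K}_{p,r}^{\alpha ,q}}<\infty$ for $\alpha>-\frac{n}{p}$ and $N$ large) that the reference-based proof would use, so there the two approaches coincide. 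The only caveat is presentational: since you borrow the bound from inside the proof of Lemma \ref{Inv-phi-trans-lorentz} rather than its statement (which only asserts continuity of $T_{\psi}$ into $\mathcal{S}'(\mathbb{R}^{n})$), you should either cite that continuity as giving precisely such a seminorm estimate or restate the inequality explicitly before using it.
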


\subsection{Sobolev embeddings for the spaces $\dot{K}_{p,r}^{\protect\alpha %
,q}B_{\protect\beta }^{s}$}

We next consider embeddings of Sobolev-type in $\dot{K}_{p,r}^{\alpha
,q}B_{\beta }^{s}$. It is well-known that%
\begin{equation}
B_{q,\beta }^{s_{2}}\hookrightarrow B_{s,\beta }^{s_{1}},
\label{Sobolev-emb-lorentz}
\end{equation}%
if $s_{1}-\frac{n}{s}=s_{2}-\frac{n}{q}$, where $0<q\leq s\leq \infty $ and $%
0<\beta \leq \infty $; see, e.g., \cite[Theorem 2.7.1]{T83}). In the
following theorem we generalize these embeddings to Lorentz Herz-type Besov
spaces.

\begin{theorem}
\label{embeddings3-lorentz}\textit{Let }$\alpha _{1},\alpha
_{2},s_{1},s_{2}\in \mathbb{R},0<s,p<\infty ,0<q,r,r_{1},r_{2},\beta \leq
\infty ,\alpha _{1}>-\frac{n}{s}\ $\textit{and }$\alpha _{2}>-\frac{n}{p}$. 
\textit{We suppose that }%
\begin{equation}
s_{1}-\frac{n}{s}-\alpha _{1}\leq s_{2}-\frac{n}{p}-\alpha _{2}.
\label{newexp1-lorentz}
\end{equation}%
\textit{Let }$0<p\leq s<\infty $ and $\alpha _{2}\geq \alpha _{1}$ or $%
0<s<p<\infty $\ and 
\begin{equation}
\alpha _{2}+\frac{n}{p}\geq \alpha _{1}+\frac{n}{s}.  \label{newexp2-lorentz}
\end{equation}%
Then%
\begin{equation}
\dot{K}_{p,r_{2}}^{\alpha _{2},\theta }B_{\beta }^{s_{2}}\hookrightarrow 
\dot{K}_{s,r_{1}}^{\alpha _{1},r}B_{\beta }^{s_{1}},
\label{Sobolev-emb1-lorentz}
\end{equation}%
where%
\begin{equation*}
\theta =\left\{ 
\begin{array}{ccc}
r, & \text{if} & \alpha _{2}+\frac{n}{p}=\alpha _{1}+\frac{n}{s},\quad
s<p\quad \text{or}\quad \alpha _{2}=\alpha _{1},\quad p\leq s \\ 
q, & \text{if} & \alpha _{2}+\frac{n}{p}>\alpha _{1}+\frac{n}{s},\quad
s<p\quad \text{or}\quad \alpha _{2}>\alpha _{1},\quad p\leq s.%
\end{array}%
\right.
\end{equation*}%
The conditions \eqref{newexp1-lorentz} and \eqref{newexp2-lorentz} become
necessary.
\end{theorem}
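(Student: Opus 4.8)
The plan is to reduce the embedding \eqref{Sobolev-emb1-lorentz} to a Plancherel--Polya--Nikolskij inequality on Lorentz--Herz spaces applied frequency block by frequency block, exactly as in the classical proof of \eqref{Sobolev-emb-lorentz} and as in \cite{Drihem1.13}. First I would fix a smooth dyadic resolution of unity $\{\varphi_k\}_{k\in\mathbb{N}_0}$ and use Theorem \ref{partition-equi-lorentz} so that it suffices to estimate $\big\|\mathcal{F}^{-1}\varphi_k\ast f\big\|_{\dot{K}_{s,r_1}^{\alpha_1,r}}$ against $\big\|\mathcal{F}^{-1}\varphi_k\ast f\big\|_{\dot{K}_{p,r_2}^{\alpha_2,\theta}}$ with the right power of $2^k$. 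Since $\mathcal{F}^{-1}\varphi_k\ast f$ has Fourier support in a ball of radius $R\approx 2^k$, I would apply Lemma \ref{Bernstein-Herz-ine1-lorentz} in the case $0<p\le s<\infty$ (with $\alpha_2\ge\alpha_1$), and Lemma \ref{Bernstein-Herz-ine2-lorentz} together with the subsequent remark in the case $0<s<p<\infty$ (where the remark covers the borderline $\alpha_2+\frac{n}{p}=\alpha_1+\frac{n}{s}$ with $\theta=q$... wait, in the borderline the remark gives $r=q$, matching $\theta=r$ once we track that $r$ here plays the role of the outer index). This yields, for each $k$,
\begin{equation*}
\big\|\mathcal{F}^{-1}\varphi_k\ast f\big\|_{\dot{K}_{s,r_1}^{\alpha_1,r}}\lesssim 2^{k(\frac{n}{p}-\frac{n}{s}+\alpha_2-\alpha_1)}\big\|\mathcal{F}^{-1}\varphi_k\ast f\big\|_{\dot{K}_{p,r_2}^{\alpha_2,\theta}},
\end{equation*}
with the implicit constant independent of $k$.

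Next I would multiply by $2^{ks_1}$, raise to the power $\beta$ (with the usual modification if $\beta=\infty$), and sum over $k\in\mathbb{N}_0$. The exponent of $2^k$ becomes
\begin{equation*}
s_1+\tfrac{n}{p}-\tfrac{n}{s}+\alpha_2-\alpha_1 = s_1-\tfrac{n}{s}-\alpha_1+\tfrac{n}{p}+\alpha_2 \le s_2,
\end{equation*}
precisely by hypothesis \eqref{newexp1-lorentz}. Since we are summing over $k\ge 0$ only (the spaces are inhomogeneous in the Littlewood--Paley index), the factor $2^{k(s_1+\frac{n}{p}-\frac{n}{s}+\alpha_2-\alpha_1-s_2)}\le 1$ can simply be absorbed, giving
\begin{equation*}
\big\|f\big\|_{\dot{K}_{s,r_1}^{\alpha_1,r}B_\beta^{s_1}}\lesssim\Big(\sum_{k=0}^\infty 2^{ks_2\beta}\big\|\mathcal{F}^{-1}\varphi_k\ast f\big\|_{\dot{K}_{p,r_2}^{\alpha_2,\theta}}^\beta\Big)^{1/\beta}=\big\|f\big\|_{\dot{K}_{p,r_2}^{\alpha_2,\theta}B_\beta^{s_2}},
\end{equation*}
which is \eqref{Sobolev-emb1-lorentz}. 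One has to be a little careful with the choice of $\theta$: when $\alpha_2>\alpha_1$ (resp. $\alpha_2+\frac{n}{p}>\alpha_1+\frac{n}{s}$) Lemma \ref{Bernstein-Herz-ine1-lorentz} (resp. Lemma \ref{Bernstein-Herz-ine2-lorentz}) allows the inner index on the right to be $q$, whereas in the critical equality case we are forced to take the inner index equal to $r$, which is the outer Besov summation index --- this is exactly the dichotomy recorded in the statement.

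For the necessity of \eqref{newexp1-lorentz} and \eqref{newexp2-lorentz}, I would test the embedding on explicit families. To see \eqref{newexp1-lorentz} is necessary, take lacunary-type distributions $f$ with $\mathcal{F}^{-1}\varphi_k\ast f$ supported in $R_k$ (or near a single dyadic annulus $|\xi|\approx 2^N$), for which both norms reduce to a single term, and let $N\to\infty$; the two scaling exponents must then satisfy \eqref{newexp1-lorentz}, otherwise the estimate fails for large $N$. To see \eqref{newexp2-lorentz} is necessary in the range $s<p$, I would use a spatially spread-out example: a function whose $k$-th Littlewood--Paley piece is a sum of bumps located on a large ball, so that the Herz structure in the spatial variable becomes active and forces the balance $\alpha_2+\frac{n}{p}\ge\alpha_1+\frac{n}{s}$ by the same mechanism as in Proposition \ref{embeddings1 copy(1)-lorentz}(ii) and Lemma \ref{Bernstein-Herz-ine2-lorentz}; one computes both norms explicitly and lets the radius of the ball tend to infinity. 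The main obstacle I anticipate is not the positive direction --- that is a fairly mechanical consequence of the Bernstein-type lemmas --- but rather engineering clean extremal examples for the necessity part that simultaneously control the Lorentz index, the Herz index, and the Besov index, and verifying that no cancellation between these scales rescues a forbidden choice of parameters.
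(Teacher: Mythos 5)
Your proposal is correct and follows essentially the same route as the paper: the sufficiency is obtained by applying the Lorentz--Herz Plancherel--Polya--Nikolskij inequalities (Lemmas \ref{Bernstein-Herz-ine1-lorentz} and \ref{Bernstein-Herz-ine2-lorentz}) to each Littlewood--Paley block and summing, with \eqref{newexp1-lorentz} absorbing the resulting power of $2^{k}$, and your choice of $\theta$ matches the dichotomy forced by those lemmas. Your necessity tests are also the paper's: a function with Fourier support in a single dyadic annulus $|\xi|\approx 2^{N}$, $N\to+\infty$, for \eqref{newexp1-lorentz}, and a dilate $\varpi(2^{N}\cdot)$ with $N\to-\infty$ (i.e.\ spread over a large ball) for \eqref{newexp2-lorentz}, both evaluated via the dilation identity \eqref{dilation-lorentz}.
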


\begin{proof}
\textit{Step 1. Sufficiency.} Let $\{\varphi _{j}\}_{j\in \mathbb{N}_{0}}$\
be a smooth dyadic resolution of unity and $f\in \dot{K}_{p,r_{2}}^{\alpha
_{2},\theta }B_{\beta }^{s_{2}}$. By Lemmas \ref{Bernstein-Herz-ine1-lorentz}%
\ and\ \ref{Bernstein-Herz-ine2-lorentz}, we obtain%
\begin{equation}
\big\|\mathcal{F}^{-1}\varphi _{j}\ast f\big\|_{\dot{K}_{s,r_{1}}^{\alpha
_{1},r}}\leq c\text{ }2^{j(\alpha _{2}+\frac{n}{p}-\frac{n}{s}-\alpha _{1})}%
\big\|\mathcal{F}^{-1}\varphi _{j}\ast f\big\|_{\dot{K}_{p,r_{2}}^{\alpha
_{2},\theta }},  \label{bernstein2-lorentz}
\end{equation}%
where $c>0$ is independent of $j\in \mathbb{N}_{0}$. However the desired
embedding is an immediate consequence of \eqref{bernstein2-lorentz}.

\textit{Step 2. }We prove the necessity\ of \eqref{newexp1-lorentz}. Let $%
\omega \in \mathcal{S}(\mathbb{R}^{n})$ be a function such that \textrm{supp}%
$\mathcal{F}\omega \subset \{\xi \in \mathbb{R}^{n}:\frac{3}{4}<|\xi |<1\}$.
For $x\in \mathbb{R}^{n}$ and $N\in \mathbb{N}$ we put $f_{N}(x)=\omega
(2^{N}x)$. First we have $\omega \in \dot{K}_{p,r_{2}}^{\alpha _{2},\theta
}\cap \dot{K}_{s,r_{1}}^{\alpha _{1},r}$. Due to the support properties of
the function $\omega $ we have for any $j\in \mathbb{N}_{0}$%
\begin{equation*}
\mathcal{F}^{-1}\varphi _{j}\ast f_{N}=\left\{ 
\begin{array}{cc}
f_{N}, & j=N, \\ 
0, & \text{otherwise.}%
\end{array}%
\right.
\end{equation*}%
This leads to%
\begin{align*}
\big\|f_{N}\big\|_{\dot{K}_{s,r_{1}}^{\alpha _{1},r}B_{\beta }^{s_{1}}}=&
2^{s_{1}N}\big\|f_{N}\big\|_{\dot{K}_{s,r_{1}}^{\alpha _{1},r}} \\
=& 2^{s_{1}N}\Big(\sum_{k=-\infty }^{\infty }2^{k\alpha _{1}r}\big\|%
f_{N}\chi _{k}\big\|_{L^{s,r_{1}}}^{r}\Big)^{1/r} \\
=& 2^{(s_{1}-\frac{n}{s})N}\Big(\sum_{k=-\infty }^{\infty }2^{k\alpha _{1}r}%
\big\|\omega \chi _{k+N}\big\|_{L^{s,r_{1}}}^{r}\Big)^{1/r} \\
=& 2^{(s_{1}-\alpha _{1}-\frac{n}{s})N}\big\|\omega \big\|_{\dot{K}%
_{s,r_{1}}^{\alpha _{1},r}},
\end{align*}%
with the help of \eqref{dilation-lorentz}, since 
\begin{align*}
\big\|f_{N}\chi _{k}\big\|_{L^{s,r_{1}}}& =\big\|\omega (2^{N}\cdot )\chi
_{R_{k}}\big\|_{L^{s,r_{1}}} \\
& =2^{-\frac{n}{s}N}\big\|\omega \chi _{R_{k}}(2^{-N}\cdot )\big\|%
_{L^{s,r_{1}}} \\
& =2^{-\frac{n}{s}N}\big\|\omega \chi _{k+N}\big\|_{L^{s,r_{1}}}
\end{align*}%
for any $k\in \mathbb{Z}$. The same arguments give 
\begin{equation*}
\big\|f_{N}\big\|_{\dot{K}_{p,r_{2}}^{\alpha _{2},\theta }B_{\beta
}^{s_{2}}}=2^{(s_{2}-\alpha _{2}-\frac{n}{p})N}\big\|\omega \big\|_{\dot{K}%
_{p,r_{2}}^{\alpha _{2},\theta }}.
\end{equation*}%
If the embeddings \eqref{Sobolev-emb1-lorentz}$\ $holds then for any $N\in 
\mathbb{N}$%
\begin{equation*}
2^{(s_{1}-s_{2}-\alpha _{1}+\alpha _{2}-\frac{n}{s}+\frac{n}{p})N}\leq c.
\end{equation*}%
Thus, we conclude that \eqref{newexp1-lorentz} must necessarily hold by
letting $N\rightarrow +\infty $.

\textit{Step 3. }We prove the necessity\ of \eqref{newexp2-lorentz}. Let $%
\varpi \in \mathcal{S}(\mathbb{R}^{n})$ be a function such that \textrm{supp 
}$\mathcal{F}\varpi \subset \left\{ \xi \in \mathbb{R}^{n}:|\xi |<1\right\} $%
. For $x\in \mathbb{R}^{n}$ and $N\in \mathbb{Z}\backslash \mathbb{N}_{0}$
we put $f_{N}(x)=\varpi (2^{N}x)$. We have $\varpi \in \dot{K}%
_{p,r_{2}}^{\alpha _{2},\theta }\cap \dot{K}_{s,r_{1}}^{\alpha _{1},r}$. It
is easy to see that%
\begin{equation*}
\mathcal{F}^{-1}\varphi _{j}\ast f_{N}=\left\{ 
\begin{array}{cc}
f_{N}, & j=0, \\ 
0, & \text{otherwise}.%
\end{array}%
\right.
\end{equation*}%
This yields%
\begin{equation*}
\big\|f_{N}\big\|_{\dot{K}_{s,r_{1}}^{\alpha _{1},r}B_{\beta }^{s_{1}}}=%
\big\|f_{N}\big\|_{\dot{K}_{s,r_{1}}^{\alpha _{1},r}}=2^{-\left( \alpha _{1}+%
\frac{n}{s}\right) N}\big\|\varpi \big\|_{\dot{K}_{s,r_{1}}^{\alpha _{1},r}}.
\end{equation*}%
Similarly, we have%
\begin{equation*}
\big\|f_{N}\big\|_{\dot{K}_{p,r_{2}}^{\alpha _{2},\theta }B_{\beta
}^{s_{2}}}=2^{-(\alpha _{2}+\frac{n}{p})N}\big\|\varpi \big\|_{\dot{K}%
_{p,r_{2}}^{\alpha _{2},\theta }}.
\end{equation*}%
If the embedding \eqref{Sobolev-emb1-lorentz}$\ $holds, then for any $N\in 
\mathbb{Z}\backslash \mathbb{N}_{0}$ 
\begin{equation*}
2^{-(\alpha _{1}-\alpha _{2}+\frac{n}{s}-\frac{n}{p})N}\leq c.
\end{equation*}%
Thus, we conclude that \eqref{newexp2-lorentz} must necessarily hold by
letting $N\rightarrow -\infty $. The proof is complete.
\end{proof}

\begin{remark}
If $\alpha _{1}=\alpha _{2}=0$, $p=q=r_{2}$ and $r=s=r_{1}$, then Theorem %
\ref{embeddings3-lorentz} reduces to the known results on $B_{p,\beta }^{s}$%
; see \eqref{Sobolev-emb-lorentz}, by using the embedding $\ell
^{q}\hookrightarrow \ell ^{s}$. Also under the hypothesis of such theorem,
we have $s_{1}\leq s_{2}$ becomes necessary.
\end{remark}

\begin{corollary}
\label{embeddings3-cor}\textit{Under the hypotheses of }Theorem \ref%
{embeddings3-lorentz}, with $0<p\leq r_{2}\leq \infty $, we have 
\begin{equation*}
\dot{K}_{p}^{\alpha _{2},\theta }B_{\beta }^{s_{2}}\hookrightarrow \dot{K}%
_{p,r_{2}}^{\alpha _{2},\theta }B_{\beta }^{s_{2}}\hookrightarrow \dot{K}%
_{s}^{\alpha _{1},r}B_{\beta }^{s_{1}}.
\end{equation*}
\end{corollary}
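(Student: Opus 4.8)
The plan is to obtain both embeddings in the chain by combining Theorem \ref{embeddings3-lorentz} with the elementary monotonicity of Lorentz-Herz spaces in the secondary (Lorentz) index, together with the identification of $\dot{K}_{p,p}^{\alpha,q}$ with the classical Herz space $\dot{K}_{p}^{\alpha,q}$.

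For the first embedding, I would begin from Lemma \ref{embeddings1-lorentz}$\mathrm{(ii)}$, which identifies $\dot{K}_{p}^{\alpha_2,\theta}$ with $\dot{K}_{p,p}^{\alpha_2,\theta}$, so that $\dot{K}_{p}^{\alpha_2,\theta}B_{\beta}^{s_2}=\dot{K}_{p,p}^{\alpha_2,\theta}B_{\beta}^{s_2}$ with equivalent quasi-norms (this passes to the Besov scale since the identification is applied to each Littlewood-Paley block before taking the weighted $\ell^{\beta}$-norm). Since by hypothesis $p\leq r_2$, the embedding $\dot{K}_{p,p}^{\alpha_2,\theta}B_{\beta}^{s_2}\hookrightarrow\dot{K}_{p,r_2}^{\alpha_2,\theta}B_{\beta}^{s_2}$ is the special case $r_1=p$ of Theorem \ref{embeddings1.1-lorentz}$\mathrm{(v)}$; equivalently, one applies the scalar embedding $L^{p,p}=L^{p}\hookrightarrow L^{p,r_2}$ from Proposition \ref{Holder's-convolution-lorentz}$\mathrm{(iv)}$ to each $\mathcal{F}^{-1}\varphi_{k}\ast f$ and then sums with weights $2^{ks_2\beta}$. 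No cancellation or maximal-function argument is needed here.

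For the second embedding, I would invoke Theorem \ref{embeddings3-lorentz} with the target Lorentz index chosen as $r_1=s$. All parameter constraints of that theorem — $\alpha_1>-\frac{n}{s}$, $\alpha_2>-\frac{n}{p}$, the relation \eqref{newexp1-lorentz}, and (in the case $s<p$) the relation \eqref{newexp2-lorentz} — are independent of $r_1$, so this choice is admissible and the value of $\theta$ is unchanged. This yields $\dot{K}_{p,r_2}^{\alpha_2,\theta}B_{\beta}^{s_2}\hookrightarrow\dot{K}_{s,s}^{\alpha_1,r}B_{\beta}^{s_1}$, and a second application of Lemma \ref{embeddings1-lorentz}$\mathrm{(ii)}$ identifies $\dot{K}_{s,s}^{\alpha_1,r}B_{\beta}^{s_1}$ with $\dot{K}_{s}^{\alpha_1,r}B_{\beta}^{s_1}$. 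Concatenating the two displayed embeddings gives the assertion.

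Since every ingredient is already established, there is no substantial obstacle; the only points requiring a moment's care are verifying that the hypotheses of Theorem \ref{embeddings3-lorentz} are inherited verbatim under the specialization $r_1=s$, and recognizing that the extra assumption $p\leq r_2$ is exactly what makes the first embedding valid (for $r_2<p$ the Lorentz monotonicity would fail). Both checks are routine.
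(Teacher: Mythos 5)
Your proposal is correct and takes essentially the same route as the paper: the paper's proof likewise identifies $\dot{K}_{p}^{\alpha _{2},\theta }B_{\beta }^{s_{2}}=\dot{K}_{p,p}^{\alpha _{2},\theta }B_{\beta }^{s_{2}}$ and $\dot{K}_{s,s}^{\alpha _{1},r}B_{\beta }^{s_{1}}=\dot{K}_{s}^{\alpha _{1},r}B_{\beta }^{s_{1}}$, uses the monotonicity in the secondary Lorentz index under $p\leq r_{2}$ for the first inclusion, and applies Theorem \ref{embeddings3-lorentz} with the target index $r_{1}=s$ for the second. Your version merely makes explicit the routine checks (blockwise identification on the Besov scale, independence of $\theta$ from $r_{1}$) that the paper leaves implicit.
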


\begin{proof}
From Theorem \ref{embeddings3-lorentz}, we obtain%
\begin{equation*}
\dot{K}_{p}^{\alpha _{2},\theta }B_{\beta }^{s_{2}}=\dot{K}_{p,p}^{\alpha
_{2},\theta }B_{\beta }^{s_{2}}\hookrightarrow \dot{K}_{p,r_{2}}^{\alpha
_{2},\theta }B_{\beta }^{s_{2}}\hookrightarrow \dot{K}_{s,s}^{\alpha
_{1},r}B_{\beta }^{s_{1}}=\dot{K}_{s}^{\alpha _{1},r}B_{\beta }^{s_{1}}.
\end{equation*}
\end{proof}

\begin{remark}
Corollary \ref{embeddings3-cor}\ extends and improves\ Sobolev embeddings of
Herz-type Besov spaces\ given\ in \cite{Drihem1.13}. In particular Sobolev
embeddings for Besov spaces\ of power weight\ obtained\ in \cite{MM12}.
\end{remark}

In the following theorems, we compare our spaces above with classical Besov
spaces. From Theorem \ref{embeddings3-lorentz}\ and the fact that $\dot{K}%
_{s}^{0,s}B_{\beta }^{s_{1}}=B_{s,\beta }^{s_{1}}$\ we immediately arrive at
the following result.

\begin{theorem}
\label{embeddings4-lorentz}\textit{Let }$\alpha ,s_{1},s_{2}\in \mathbb{R}%
,0<s,p<\infty ,0<q,r_{2}\leq \infty ,s_{1}-\frac{n}{s}\leq s_{2}-\frac{n}{p}%
-\alpha $\textit{\ and }$0<\beta \leq \infty $\textit{.} \textit{I}f 
\begin{equation*}
\alpha \geq 0,\quad 0<p\leq s<\infty \quad \text{or}\quad \alpha +\frac{n}{p}%
\geq \frac{n}{s}\quad \text{and}\quad 0<s<p<\infty ,
\end{equation*}%
then%
\begin{equation*}
\dot{K}_{p,r_{2}}^{\alpha ,\theta }B_{\beta }^{s_{2}}\hookrightarrow
B_{s,\beta }^{s_{1}},
\end{equation*}%
where%
\begin{equation}
\theta =\left\{ 
\begin{array}{ccc}
s, & \text{if} & \alpha +\frac{n}{p}=\frac{n}{s},\quad s<p\quad \text{or}%
\quad \alpha =0,\quad p\leq s, \\ 
q, & \text{if} & \alpha +\frac{n}{p}>\frac{n}{s},\quad s<p\quad \text{or}%
\quad \alpha >0,\quad p\leq s.%
\end{array}%
\right.  \label{aux10-lorentz}
\end{equation}
\end{theorem}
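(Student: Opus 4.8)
The plan is to obtain this theorem as a direct specialization of Theorem~\ref{embeddings3-lorentz}. First I would apply that theorem with $\alpha_{1}=0$, $\alpha_{2}=\alpha$, and with both Lorentz parameters on the target side chosen equal to $s$, i.e.\ $r_{1}=r=s$, while keeping $p$, $s$, $q$, $r_{2}$, $s_{1}$, $s_{2}$, $\beta$ as given (this is admissible since $0<s<\infty$). With these choices the hypothesis $\alpha_{1}+\frac{n}{s}>0$ of Theorem~\ref{embeddings3-lorentz} reduces to $\frac{n}{s}>0$, which is automatic, and $\alpha_{2}>-\frac{n}{p}$ follows from $\alpha\geq 0$ in the regime $p\leq s$ and from $\alpha+\frac{n}{p}\geq\frac{n}{s}>0$ in the regime $s<p$.

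Next I would check that the remaining conditions transcribe correctly. The inequality \eqref{newexp1-lorentz} becomes exactly $s_{1}-\frac{n}{s}\leq s_{2}-\frac{n}{p}-\alpha$, which is the standing assumption of Theorem~\ref{embeddings4-lorentz}. In the range $0<p\leq s<\infty$ the requirement $\alpha_{2}\geq\alpha_{1}$ reads $\alpha\geq 0$, and in the range $0<s<p<\infty$ the requirement \eqref{newexp2-lorentz} reads $\alpha+\frac{n}{p}\geq\frac{n}{s}$; both coincide with the hypotheses of Theorem~\ref{embeddings4-lorentz}. Finally, with $r=s$ the index $\theta$ furnished by Theorem~\ref{embeddings3-lorentz} equals $s$ precisely when $\alpha_{2}=\alpha_{1}$ (that is $\alpha=0$) with $p\leq s$, or when $\alpha_{2}+\frac{n}{p}=\alpha_{1}+\frac{n}{s}$ (that is $\alpha+\frac{n}{p}=\frac{n}{s}$) with $s<p$, and equals $q$ in the complementary cases; this is exactly the dichotomy \eqref{aux10-lorentz}.

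Consequently Theorem~\ref{embeddings3-lorentz} yields $\dot{K}_{p,r_{2}}^{\alpha,\theta}B_{\beta}^{s_{2}}\hookrightarrow\dot{K}_{s,s}^{0,s}B_{\beta}^{s_{1}}$, and to conclude I would identify the target space: by Lemma~\ref{embeddings1-lorentz}/(ii) one has $\dot{K}_{s,s}^{0,s}=\dot{K}_{s}^{0,s}$, and by the already noted identity $\dot{K}_{p,p}^{0,p}B_{\beta}^{s}=B_{p,\beta}^{s}$ applied with $p=s$ we get $\dot{K}_{s,s}^{0,s}B_{\beta}^{s_{1}}=\dot{K}_{s}^{0,s}B_{\beta}^{s_{1}}=B_{s,\beta}^{s_{1}}$. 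Chaining the two embeddings gives the asserted $\dot{K}_{p,r_{2}}^{\alpha,\theta}B_{\beta}^{s_{2}}\hookrightarrow B_{s,\beta}^{s_{1}}$. Since all the analytic content is already contained in Theorem~\ref{embeddings3-lorentz} (which in turn rests on the Plancherel--Polya--Nikolskij inequalities of Lemmas~\ref{Bernstein-Herz-ine1-lorentz} and~\ref{Bernstein-Herz-ine2-lorentz}), there is no genuine obstacle in this proof; the only point requiring care is the bookkeeping of the parameter restrictions and of the case split defining $\theta$, which I have verified above matches \eqref{aux10-lorentz} exactly.
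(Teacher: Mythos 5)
Your proposal is correct and follows essentially the same route as the paper: the author also obtains Theorem \ref{embeddings4-lorentz} as an immediate specialization of Theorem \ref{embeddings3-lorentz} with $\alpha_{1}=0$ and target Lorentz/outer indices equal to $s$, together with the identification $\dot{K}_{s}^{0,s}B_{\beta}^{s_{1}}=\dot{K}_{s,s}^{0,s}B_{\beta}^{s_{1}}=B_{s,\beta}^{s_{1}}$. Your parameter bookkeeping (hypotheses, the inequality \eqref{newexp1-lorentz}, and the dichotomy defining $\theta$ in \eqref{aux10-lorentz}) matches the paper's intended argument exactly.
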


Using Corollary \ref{embeddings3-cor}, we have the following useful
consequence.

\begin{corollary}
\label{embeddings-besov1}Let $s_{1},s_{2}\in \mathbb{R},0<p\leq s<\infty
,0<q\leq \infty ,s_{1}-\frac{n}{s}\leq s_{2}-\frac{n}{p}$\ and $0<\beta \leq
\infty $. Then%
\begin{equation*}
B_{p,\beta }^{s_{2}}\hookrightarrow \dot{K}_{p,s}^{0,s}B_{\beta
}^{s_{2}}\hookrightarrow B_{s,\beta }^{s_{1}}.
\end{equation*}
\end{corollary}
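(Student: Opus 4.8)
The plan is to deduce this from Corollary \ref{embeddings3-cor} (hence ultimately from Theorem \ref{embeddings3-lorentz}) by a single well-chosen specialization of the parameters, together with the identifications $\dot{K}_{p,p}^{\alpha ,q}B_{\beta }^{s}=\dot{K}_{p}^{\alpha ,q}B_{\beta }^{s}$ and $\dot{K}_{p,p}^{0,p}B_{\beta }^{s}=B_{p,\beta }^{s}$ recorded earlier, plus the elementary monotonicity embeddings of Theorem \ref{embeddings1.1-lorentz}. First I would rewrite the two endpoint spaces: $B_{p,\beta }^{s_{2}}=\dot{K}_{p,p}^{0,p}B_{\beta }^{s_{2}}$ and $B_{s,\beta }^{s_{1}}=\dot{K}_{s,s}^{0,s}B_{\beta }^{s_{1}}=\dot{K}_{s}^{0,s}B_{\beta }^{s_{1}}$, so that it suffices to produce the chain $\dot{K}_{p,p}^{0,p}B_{\beta }^{s_{2}}\hookrightarrow \dot{K}_{p,s}^{0,s}B_{\beta }^{s_{2}}\hookrightarrow \dot{K}_{s}^{0,s}B_{\beta }^{s_{1}}$.

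For the right-hand embedding I would apply Corollary \ref{embeddings3-cor} with $\alpha _{1}=\alpha _{2}=0$ and $r_{1}=r_{2}=r=s$. One checks the hypotheses of Theorem \ref{embeddings3-lorentz} are met: $0<p\leq s<\infty $, $0=\alpha _{1}>-\frac{n}{s}$, $0=\alpha _{2}>-\frac{n}{p}$, $\alpha _{2}\geq \alpha _{1}$, and condition \eqref{newexp1-lorentz} reduces to $s_{1}-\frac{n}{s}\leq s_{2}-\frac{n}{p}$, which is exactly the assumed hypothesis; moreover $0<p\leq r_{2}=s\leq \infty $. Since we are in the case $\alpha _{2}=\alpha _{1}$ with $p\leq s$, the case distinction in Theorem \ref{embeddings3-lorentz} forces $\theta =r=s$, and Corollary \ref{embeddings3-cor} yields $\dot{K}_{p}^{0,s}B_{\beta }^{s_{2}}\hookrightarrow \dot{K}_{p,s}^{0,s}B_{\beta }^{s_{2}}\hookrightarrow \dot{K}_{s}^{0,s}B_{\beta }^{s_{1}}$; in particular the tail $\dot{K}_{p,s}^{0,s}B_{\beta }^{s_{2}}\hookrightarrow \dot{K}_{s}^{0,s}B_{\beta }^{s_{1}}=B_{s,\beta }^{s_{1}}$ is the second embedding we need.

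For the left-hand embedding I would use only that $p\leq s$: by Theorem \ref{embeddings1.1-lorentz}(iii) (monotonicity in the inner index $q$) one has $\dot{K}_{p,p}^{0,p}B_{\beta }^{s_{2}}\hookrightarrow \dot{K}_{p,p}^{0,s}B_{\beta }^{s_{2}}$, and by Theorem \ref{embeddings1.1-lorentz}(v) (monotonicity in the Lorentz index $r$) one has $\dot{K}_{p,p}^{0,s}B_{\beta }^{s_{2}}\hookrightarrow \dot{K}_{p,s}^{0,s}B_{\beta }^{s_{2}}$; composing gives $B_{p,\beta }^{s_{2}}=\dot{K}_{p,p}^{0,p}B_{\beta }^{s_{2}}\hookrightarrow \dot{K}_{p,s}^{0,s}B_{\beta }^{s_{2}}$. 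Chaining this with the previous step yields $B_{p,\beta }^{s_{2}}\hookrightarrow \dot{K}_{p,s}^{0,s}B_{\beta }^{s_{2}}\hookrightarrow B_{s,\beta }^{s_{1}}$, which is the assertion.

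There is no genuine analytic difficulty here; the whole argument is a bookkeeping exercise in how the indices of $\dot{K}_{p,r}^{\alpha ,q}B_{\beta }^{s}$ specialize, and all the substantive work sits inside Theorem \ref{embeddings3-lorentz} and the Plancherel--Polya--Nikolskij-type Lemmas \ref{Bernstein-Herz-ine1-lorentz} and \ref{Bernstein-Herz-ine2-lorentz} it rests on. The only point demanding care is reading off $\theta $ from the case distinction in Theorem \ref{embeddings3-lorentz}: with $\alpha _{1}=\alpha _{2}$ and $p\leq s$ one is in the branch $\theta =r$, and the choice $r=s$ is precisely what collapses the target space $\dot{K}_{s,s}^{0,s}B_{\beta }^{s_{1}}$ to $B_{s,\beta }^{s_{1}}$; note also that the parameter $q$ in the statement then plays no role, which is why $0<q\leq \infty $ may be assumed without restriction.
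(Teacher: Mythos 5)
Your proof is correct and follows essentially the same route as the paper: both reduce the statement to Corollary \ref{embeddings3-cor} (hence Theorem \ref{embeddings3-lorentz}) specialized to $\alpha_{1}=\alpha_{2}=0$, $\theta=r=s$, $r_{1}=r_{2}=s$, together with the identifications $B_{p,\beta}^{s_{2}}=\dot{K}_{p,p}^{0,p}B_{\beta}^{s_{2}}$ and $\dot{K}_{s,s}^{0,s}B_{\beta}^{s_{1}}=B_{s,\beta}^{s_{1}}$. The only difference is that you make explicit, via Theorem \ref{embeddings1.1-lorentz}(iii) and (v), the elementary monotonicity step $B_{p,\beta}^{s_{2}}\hookrightarrow \dot{K}_{p,s}^{0,s}B_{\beta}^{s_{2}}$ that the paper leaves implicit, which is a harmless (indeed welcome) refinement rather than a different argument.
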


\begin{proof}
By Corollary \ref{embeddings3-cor}, the desired embeddings are an immediate
consequence of the fact that 
\begin{equation*}
B_{p,\beta }^{s_{2}}=\dot{K}_{p}^{0,p}B_{\beta }^{s_{2}}\hookrightarrow \dot{%
K}_{p,s}^{0,s}B_{\beta }^{s_{2}}\hookrightarrow \dot{K}_{s,s}^{0,s}B_{\beta
}^{s_{1}}=B_{s,\beta }^{s_{1}}.
\end{equation*}%
The proof is complete.
\end{proof}

Let us define 
\begin{equation*}
\sigma _{p}=\frac{n}{\min (1,p)}-n\quad \text{and}\quad \overline{p}=\max
(1,p).
\end{equation*}%
By Theorem \ref{embeddings4-lorentz} and the Sobolev-type embeddings\ %
\eqref{Sobolev-emb-lorentz}, we get%
\begin{equation*}
\dot{K}_{p,r}^{\alpha ,q}B_{\beta }^{s_{2}}\hookrightarrow B_{p,\beta
}^{s_{1}}\hookrightarrow B_{\overline{p},1}^{0}
\end{equation*}%
for any $0<p<\infty ,0<q,\beta ,r\leq \infty ,\alpha >0,\sigma
_{p}<s_{1}\leq s_{2}-\alpha $. Let $\{\varphi _{j}\}_{j\in \mathbb{N}_{0}}$
be the smooth dyadic resolution of unity. We further conclude that%
\begin{equation*}
\big\|f\big\|_{\overline{p}}\leq \sum\limits_{j=0}^{\infty }\big\|\mathcal{F}%
^{-1}\varphi _{j}\ast f\big\|_{\overline{p}}=\big\|f\big\|_{B_{\overline{p}%
,1}^{0}}\leq c\big\|f\big\|_{\dot{K}_{p,r}^{\alpha ,q}B_{\beta }^{s_{2}}}
\end{equation*}%
This shows that under the above assumptions the elements from $\dot{K}%
_{p,r}^{\alpha ,q}B_{\beta }^{s_{2}}$\ are regular distributions.

\begin{proposition}
\textit{Let }$\alpha >0,0<s,p<\infty ,0<q,r\leq \infty $ \textit{and }$%
0<\beta \leq \infty $\textit{. }If $s>\sigma _{p}+\alpha $,\ \textit{then}%
\begin{equation*}
\dot{K}_{p,r}^{\alpha ,q}B_{\beta }^{s}\hookrightarrow L^{\overline{p}}.
\end{equation*}
\end{proposition}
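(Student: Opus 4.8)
The plan is to obtain this embedding as a composition of three simpler ones: a Sobolev-type embedding inside the Lorentz Herz-type Besov scale that lands in a classical Besov space, a classical Sobolev embedding for Besov spaces raising the integrability index to $\overline{p}$ while lowering the smoothness to $0$, and finally the elementary inclusion $B_{\overline{p},1}^{0}\hookrightarrow L^{\overline{p}}$. This is essentially the chain already sketched in the paragraph preceding the statement, now used with $s_{2}=s$.

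First I would fix an auxiliary parameter $s_{1}$ with $\sigma _{p}<s_{1}\le s-\alpha$; such a choice is possible precisely because the hypothesis $s>\sigma _{p}+\alpha$ makes this interval non-empty. Next I would apply Theorem \ref{embeddings4-lorentz} with target Lebesgue exponent equal to $p$ (so the role of the target exponent there is played by $p$ and $\alpha _{1}=0$): since $\alpha >0$ and trivially $p\le p$, the differential-dimension hypothesis becomes $s_{1}-\frac{n}{p}\le s-\frac{n}{p}-\alpha$, i.e. $s_{1}\le s-\alpha$, which holds by the choice of $s_{1}$, and the corresponding value of the middle index is $\theta =q$. This gives $\dot{K}_{p,r}^{\alpha ,q}B_{\beta }^{s}\hookrightarrow B_{p,\beta }^{s_{1}}$. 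Then I would invoke the classical Sobolev embedding \eqref{Sobolev-emb-lorentz} to pass from $B_{p,\beta }^{s_{1}}$ to $B_{\overline{p},1}^{0}$: one checks the identity $\frac{n}{p}-\frac{n}{\overline{p}}=\sigma _{p}$ (separately in the cases $p<1$, where $\overline{p}=1$, and $p\ge 1$, where $\overline{p}=p$), so that $s_{1}-\frac{n}{p}>-\frac{n}{\overline{p}}$; since this differential-dimension inequality is strict, the third indices $\beta$ and $1$ impose no further restriction. Finally, for $\overline{p}\ge 1$ the triangle inequality in $L^{\overline{p}}$ applied to the Littlewood--Paley decomposition $f=\sum _{j}\mathcal{F}^{-1}\varphi _{j}\ast f$ yields $\big\|f\big\|_{\overline{p}}\le \sum _{j}\big\|\mathcal{F}^{-1}\varphi _{j}\ast f\big\|_{\overline{p}}=\big\|f\big\|_{B_{\overline{p},1}^{0}}$, that is $B_{\overline{p},1}^{0}\hookrightarrow L^{\overline{p}}$. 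Composing the three embeddings proves the claim.

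There is no genuine obstacle here, since all the ingredients are already established in the excerpt; the only points that require care are the bookkeeping of the parameter constraints in Theorem \ref{embeddings4-lorentz} (ensuring we land in the branch with $\theta =q$, and that the available strict inequality $s_{1}>\sigma _{p}$ removes any need for a condition on $\beta$) and the routine verification of $\frac{n}{p}-\frac{n}{\overline{p}}=\sigma _{p}$.
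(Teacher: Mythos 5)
Your argument is correct and coincides with the paper's own proof, which is exactly the chain $\dot{K}_{p,r}^{\alpha ,q}B_{\beta }^{s}\hookrightarrow B_{p,\beta }^{s_{1}}\hookrightarrow B_{\overline{p},1}^{0}\hookrightarrow L^{\overline{p}}$ sketched in the paragraph immediately preceding the statement, using Theorem \ref{embeddings4-lorentz} with target exponent $p$ and $\theta =q$, the classical Sobolev embedding, and the triangle inequality for the Littlewood--Paley decomposition. Your parameter bookkeeping (the choice $\sigma _{p}<s_{1}\leq s-\alpha$ and the identity $\frac{n}{p}-\frac{n}{\overline{p}}=\sigma _{p}$) is accurate, so nothing further is needed.
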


Concerning embeddings into $L^{\infty }$, we have the following result.

\begin{theorem}
\label{bounded1-lorentz}\textit{Let }$\alpha \geq 0,0<p<\infty \ $\textit{%
and }$0<q,r\leq \infty $\textit{. Then}%
\begin{equation*}
\dot{K}_{p,r}^{\alpha ,q}B_{\beta }^{s}\hookrightarrow L^{\infty },
\end{equation*}%
if and only 
\begin{equation*}
s>\alpha +\frac{n}{p}\quad \text{or}\quad s=\alpha +\frac{n}{p}\text{ and }%
0<\beta \leq 1.
\end{equation*}
\end{theorem}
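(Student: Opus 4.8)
The plan is to reduce the embedding to a single Plancherel--Polya--Nikolskij--type inequality into $L^{\infty}$ for frequency-localized pieces, and to prove its sharpness by two explicit families of test functions. Fix a smooth dyadic resolution of unity $\{\varphi_{j}\}_{j\in\Nzero}$ and write $f_{k}=\mathcal{F}^{-1}\varphi_{k}\ast f$ (with $\varphi_{0}$ replaced by a suitable $\Phi$); by Theorem~\ref{partition-equi-lorentz} the quasi-norm of $\dot{K}_{p,r}^{\alpha,q}B_{\beta}^{s}$ may be computed from these. The decisive estimate is
\[
\big\|f_{k}\big\|_{\infty}\lesssim 2^{k(\frac{n}{p}+\alpha)}\big\|f_{k}\big\|_{\dot{K}_{p,r}^{\alpha,q}},\qquad k\in\Nzero ,
\]
with constant independent of $k$. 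Granting it, since $\alpha\geq0$ forces $\frac{n}{p}+\alpha>0$, whenever $\|f\|_{\dot{K}_{p,r}^{\alpha,q}B_{\beta}^{s}}<\infty$ the series $\sum_{k}f_{k}$ converges absolutely in $L^{\infty}$, hence to $f$, and
\[
\big\|f\big\|_{\infty}\leq\sum_{k=0}^{\infty}\big\|f_{k}\big\|_{\infty}\lesssim\sum_{k=0}^{\infty}2^{-k(s-\alpha-\frac{n}{p})}\,\big(2^{ks}\big\|f_{k}\big\|_{\dot{K}_{p,r}^{\alpha,q}}\big).
\]
If $s>\alpha+\frac{n}{p}$ the weights $2^{-k(s-\alpha-n/p)}$ form a summable sequence, so H\"older's inequality in $k$ (when $\beta\geq1$) or the embedding $\ell^{\beta}\hookrightarrow\ell^{1}$ together with $2^{-k(s-\alpha-n/p)}\leq1$ (when $0<\beta\leq1$) yields $\|f\|_{\infty}\lesssim\|f\|_{\dot{K}_{p,r}^{\alpha,q}B_{\beta}^{s}}$. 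If $s=\alpha+\frac{n}{p}$ and $0<\beta\leq1$ the weights equal $1$ and $\ell^{\beta}\hookrightarrow\ell^{1}$ gives $\|f\|_{\infty}\leq\sum_{k}2^{ks}\|f_{k}\|_{\dot{K}_{p,r}^{\alpha,q}}\leq\|f\|_{\dot{K}_{p,r}^{\alpha,q}B_{\beta}^{s}}$.

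Proving the displayed estimate is the main obstacle, precisely because the Herz quasi-norm is anchored at the origin while $\|f_{k}\|_{\infty}$ is a supremum over all of $\Rn$. For $|x|\leq2^{-k}$ it follows at once from Lemma~\ref{Key-est1-lorentz} with $H=2^{k}$ and $R\approx2^{k}$ (an admissible exponent $0<d<\min(p,r,\frac{n}{\alpha+n/p})$ exists because $\alpha\geq0>-\frac{n}{p}$). For $|x|>2^{-k}$, choose $\omega\in\mathcal{S}(\Rn)$ with $\mathcal{F}\omega\equiv1$ on a neighbourhood of $\mathrm{supp}\,\mathcal{F}\varphi$ and write $f_{k}=\mathcal{F}^{-1}\varphi_{k}\ast\omega_{2^{k}}\ast f$; Lemma~\ref{r-trick} then gives, for $m>n$ large and a fixed $0<d<\min(1,p,r,\frac{n}{\alpha+n/p})$,
\[
|f_{k}(x)|^{d}\lesssim\eta_{2^{k},m}\ast|g_{k}|^{d}(x),\qquad g_{k}=\omega_{2^{k}}\ast f .
\]
Split the last integral over $B(0,2^{-k})$ and over the annuli $R_{l}$ with $l>-k$. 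The part over $B(0,2^{-k})$ is bounded by $\sup_{B(0,2^{-k})}|g_{k}|^{d}$ (controlled again by Lemma~\ref{Key-est1-lorentz}) times the integral of $\eta_{2^{k},m}(x-\cdot)$ over $B(0,2^{-k})$, which is $\lesssim2^{-(k+j)m}\leq1$ when $|x|\approx2^{j}$, $j>-k$. Over each $R_{l}$ one uses H\"older's inequality for Lorentz spaces (Proposition~\ref{Holder's-convolution-lorentz}(v), with the exponents $p/d>1$ and $r/d$), the crude bound $\|g_{k}\chi_{l}\|_{L^{p,r}}\leq2^{-l\alpha}\|g_{k}\|_{\dot{K}_{p,r}^{\alpha,\infty}}$, and Lemma~\ref{est-eta} (when $R_{l}$ is comparable to $|x|$) or the pointwise decay of $\eta_{2^{k},m}$ (otherwise). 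The finitely many geometric series produced all converge because $n(1-\frac{d}{p})-d\alpha>0$ and $m$ is large, and, using $j>-k$ and $\alpha\geq0$ (so that $2^{-j\alpha}\leq2^{k\alpha}$), they sum to $\lesssim2^{kd(\frac{n}{p}+\alpha)}\|g_{k}\|_{\dot{K}_{p,r}^{\alpha,q}}^{d}$; hence $|f_{k}(x)|\lesssim2^{k(\frac{n}{p}+\alpha)}\|g_{k}\|_{\dot{K}_{p,r}^{\alpha,q}}$, and $\|g_{k}\|_{\dot{K}_{p,r}^{\alpha,q}}\lesssim\sum_{|i-k|\leq1}\|f_{i}\|_{\dot{K}_{p,r}^{\alpha,q}}$ by the support choice for $\omega$, costing only a constant in the summation above. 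The one genuinely delicate point is the bookkeeping near the origin: summing $\sum_{l}2^{-l\alpha}\|\chi_{l}\|_{L^{(p/d)',(r/d)'}}$ down to $l=-\infty$ would diverge, which is why $B(0,2^{-k})$ is separated off and handled through the sup-estimate of Lemma~\ref{Key-est1-lorentz}.

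For the necessity, suppose $\dot{K}_{p,r}^{\alpha,q}B_{\beta}^{s}\hookrightarrow L^{\infty}$. First pick $\omega\in\mathcal{S}(\Rn)$, $\omega\not\equiv0$, with $\mathrm{supp}\,\mathcal{F}\omega\subset\{\frac{3}{4}<|\xi|<1\}$, so that $\omega\in\dot{K}_{p,r}^{\alpha,q}$ (by \eqref{embedding}). For $N\in\N$ set $f_{N}=\omega(2^{N}\cdot)$; the support condition gives $\mathcal{F}^{-1}\varphi_{k}\ast f_{N}=f_{N}$ for $k=N$ and $0$ otherwise, so, reindexing the annuli via \eqref{dilation-lorentz},
\[
\big\|f_{N}\big\|_{\dot{K}_{p,r}^{\alpha,q}B_{\beta}^{s}}=2^{Ns}\big\|f_{N}\big\|_{\dot{K}_{p,r}^{\alpha,q}}=2^{N(s-\frac{n}{p}-\alpha)}\big\|\omega\big\|_{\dot{K}_{p,r}^{\alpha,q}},
\]
while $\|f_{N}\|_{\infty}=\|\omega\|_{\infty}$ is independent of $N$; letting $N\to\infty$ forces $s\geq\alpha+\frac{n}{p}$. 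To exclude $s=\alpha+\frac{n}{p}$ with $\beta>1$, choose $\psi\in\mathcal{S}(\Rn)$ with $\mathrm{supp}\,\mathcal{F}\psi\subset\{\frac{4}{5}<|\xi|<\frac{9}{10}\}$ and $\psi(0)\neq0$, fix an integer $c\geq3$, put $g_{j}=\psi(2^{cj}\cdot)$ and $F_{N}=\sum_{j=1}^{N}g_{j}$. The sets $\mathrm{supp}\,\mathcal{F}g_{j}$ are pairwise disjoint and $\mathcal{F}^{-1}\varphi_{k}\ast g_{j}$ equals $g_{j}$ for $k=cj$ and $0$ otherwise, so, with $\|g_{j}\|_{\dot{K}_{p,r}^{\alpha,q}}=2^{-cj(\frac{n}{p}+\alpha)}\|\psi\|_{\dot{K}_{p,r}^{\alpha,q}}$ and $s=\alpha+\frac{n}{p}$,
\[
\big\|F_{N}\big\|_{\dot{K}_{p,r}^{\alpha,q}B_{\beta}^{s}}^{\beta}=\sum_{j=1}^{N}2^{cjs\beta}\big\|g_{j}\big\|_{\dot{K}_{p,r}^{\alpha,q}}^{\beta}=\big\|\psi\big\|_{\dot{K}_{p,r}^{\alpha,q}}^{\beta}\sum_{j=1}^{N}1=N\big\|\psi\big\|_{\dot{K}_{p,r}^{\alpha,q}}^{\beta},
\]
whereas $\|F_{N}\|_{\infty}\geq|F_{N}(0)|=N|\psi(0)|$. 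Thus $\|F_{N}\|_{\infty}\big/\|F_{N}\|_{\dot{K}_{p,r}^{\alpha,q}B_{\beta}^{s}}\gtrsim N^{1-1/\beta}\to\infty$ when $\beta>1$, a contradiction. Hence the embedding holds only if $s>\alpha+\frac{n}{p}$, or $s=\alpha+\frac{n}{p}$ and $0<\beta\leq1$, which together with the sufficiency completes the proof.
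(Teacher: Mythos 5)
Your proof is correct, but it follows a genuinely different route from the paper's. For sufficiency the paper does not prove a pointwise Nikolskij inequality into $L^{\infty}$; it chains existing embeddings, namely $\dot{K}_{p,r}^{\alpha ,q}B_{\beta }^{\alpha +\frac{n}{p}}\hookrightarrow \dot{K}_{v}^{\alpha ,q}B_{1}^{\alpha +\frac{n}{v}}\hookrightarrow B_{\infty ,1}^{0}\hookrightarrow L^{\infty }$ (Theorem~\ref{embeddings4-lorentz}, Lemma~\ref{Bernstein-Herz-ine1} and the classical fact $B_{\infty ,1}^{0}\hookrightarrow L^{\infty }$), which is shorter but leans on the Sobolev-embedding machinery of Section~4; your direct estimate $\big\|\mathcal{F}^{-1}\varphi _{k}\ast f\big\|_{\infty }\lesssim 2^{k(\frac{n}{p}+\alpha )}\big\|\mathcal{F}^{-1}\varphi _{k}\ast f\big\|_{\dot{K}_{p,r}^{\alpha ,q}}$ is self-contained and makes transparent exactly where $\alpha \geq 0$ enters (the step $2^{-j\alpha }\leq 2^{k\alpha }$ for $|x|\approx 2^{j}>2^{-k}$). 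One small point you gloss over: passing from $\|g_{k}\|_{\dot{K}_{p,r}^{\alpha ,q}}$ to $\sum_{|i-k|\leq 1}\|f_{i}\|_{\dot{K}_{p,r}^{\alpha ,q}}$ requires the convolution/maximal argument of Theorem~\ref{partition-equi-lorentz} with a small auxiliary exponent $\tau$, since $\mathcal{M}$ itself need not be bounded on $\dot{K}_{p,r}^{\alpha ,q}$ when $\alpha \geq n(1-\frac{1}{p})$ or $p\leq 1$ — but this is standard and not a gap (alternatively, take $\mathcal{F}\theta \equiv 1$ on $\mathrm{supp}\,\mathcal{F}\varphi$ in Lemma~\ref{r-trick} so that no auxiliary $g_{k}$ is needed). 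For necessity, the paper only treats the borderline case $s=\alpha +\frac{n}{p}$, $\beta >1$, and does so with the logarithmic extremal function $f_{1-\frac{1}{\beta },\frac{1}{2}+\frac{1}{\beta }}$ of \eqref{triebel-function}, whose membership in $\dot{K}_{p,r}^{\alpha ,q}B_{\beta }^{\alpha +\frac{n}{p}}$ is only established later (Lemma~\ref{Bourdaud-Triebel}); your lacunary sum $F_{N}=\sum_{j=1}^{N}\psi (2^{cj}\cdot )$ gives the same conclusion by an elementary $N$ versus $N^{1/\beta }$ comparison, and your dilation argument supplies the necessity of $s\geq \alpha +\frac{n}{p}$, which the paper leaves implicit. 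Both approaches are sound; yours trades brevity for independence from the embedding theorems and from Section~6.
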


\begin{proof}
Let $0<p<v<\infty $. It follows from Theorem \ref{embeddings4-lorentz} that 
\begin{equation*}
\dot{K}_{p,r}^{\alpha ,q}B_{\beta }^{\alpha +\frac{n}{p}}\hookrightarrow 
\dot{K}_{v}^{\alpha ,q}B_{1}^{\alpha +\frac{n}{v}}\hookrightarrow B_{\infty
,1}^{0},
\end{equation*}%
where the second embeddings follows by Lemma \ref{Bernstein-Herz-ine1}.
Hence the result follows by the embedding $B_{\infty ,1}^{0}\hookrightarrow
L^{\infty }$; see \cite[Proposition 2.5.7]{T83}. Let $\varrho $ be a $%
C^{\infty }$ function on $\mathbb{R}$ such that $\varrho (x)=1$ for $x\leq
e^{-3}$ and $\varrho (x)=0$ for $x\geq e^{-2}$. Let $(\lambda ,\sigma )\in 
\mathbb{R}^{2}$ and%
\begin{equation*}
f_{\lambda ,\sigma }(x)=|\log |x||^{\lambda }|\log |\log |x|||^{-\sigma
}\varrho (|x|).
\end{equation*}%
Let $U_{\beta }$ be the set of $(\lambda ,\sigma )\in \mathbb{R}^{2}$ such
that:

\textbullet\ $\lambda =1-\frac{1}{\beta }$ and $\sigma >\frac{1}{\beta }$,
or $\lambda <1-\frac{1}{\beta }$, in case $1<\beta <\infty $,

\textbullet\ $\lambda =0$ and $\sigma >0$, or $\lambda <0$, in case $\beta
=1 $,

\textbullet\ $\lambda =1$ and $\sigma >1$, or $\lambda <1$, in case $\beta
=\infty .$

Let $(\lambda ,\sigma )\in \mathbb{R}^{2},0<p<\infty ,0<r,q\leq \infty
,1\leq \beta \leq \infty ,\alpha >-\frac{n}{p}$ and 
\begin{equation*}
(\lambda ,\sigma )\in U_{\beta }.
\end{equation*}%
Let $f_{\lambda ,\sigma }$ be the function defined by %
\eqref{triebel-function}; see below. In Subsection 6.3, we will prove that $%
f_{\lambda ,\sigma }\in \dot{K}_{p,r}^{\alpha ,q}B_{\beta }^{\alpha +\frac{n%
}{p}}$ if and only if $(\lambda ,\sigma )\in U_{\beta }$. We choose $\lambda
=1-\frac{1}{\beta }$ and $\sigma =\frac{1}{2}+\frac{1}{\beta }$. Then 
\begin{equation*}
f_{1-\frac{1}{\beta },\frac{1}{2}+\frac{1}{\beta }}\in \dot{K}_{p,r}^{\alpha
,q}B_{\beta }^{\alpha +\frac{n}{p}},
\end{equation*}%
but $f\notin L^{\infty }(\mathbb{R}^{n})$.
\end{proof}

The following statement holds by Theorem \ref{embeddings3-lorentz} and the
fact that $\dot{K}_{p,p}^{0,p}B_{\beta }^{s_{2}}=B_{p,\beta }^{s_{2}}$.

\begin{theorem}
\label{embeddings5-lorentz}\textit{Let }$\alpha ,s_{1},s_{2}\in \mathbb{R}%
,0<s,p<\infty ,0<r_{1}\leq \infty ,s_{1}-\frac{n}{s}-\alpha \leq s_{2}-\frac{%
n}{p}$\textit{\ and }$0<\beta ,r\leq \infty $\textit{.\ I}f 
\begin{equation*}
-\frac{n}{s}<\alpha \leq 0,\quad 0<p\leq s<\infty
\end{equation*}%
or%
\begin{equation*}
-\frac{n}{s}<\alpha \leq \frac{n}{p}-\frac{n}{s},\quad 0<s<p<\infty ,
\end{equation*}%
then%
\begin{equation*}
B_{p,\beta }^{s_{2}}\hookrightarrow \dot{K}_{s,r_{1}}^{\alpha ,\theta
}B_{\beta }^{s_{1}},
\end{equation*}%
where%
\begin{equation*}
\theta =\left\{ 
\begin{array}{ccc}
p, & \text{if} & \alpha =\frac{n}{p}-\frac{n}{s},\quad s<p\quad \text{or}%
\quad \alpha =0,\quad p\leq s, \\ 
r, & \text{if} & -\frac{n}{s}<\alpha <\frac{n}{p}-\frac{n}{s},\quad s<p\quad 
\text{or}\quad -\frac{n}{s}<\alpha <0,\quad p\leq s.%
\end{array}%
\right.
\end{equation*}
\end{theorem}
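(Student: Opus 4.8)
The plan is to deduce the stated embedding directly from the Sobolev-type embedding of Theorem \ref{embeddings3-lorentz}, once the classical Besov space on the left is written as a Lorentz Herz-type Besov space. By Lemma \ref{embeddings1-lorentz}(ii) and the identity recorded after Definition \ref{B-F-def-lorentz} we have $B_{p,\beta}^{s_2}=\dot{K}_p^{0,p}B_\beta^{s_2}=\dot{K}_{p,p}^{0,p}B_\beta^{s_2}$, so $B_{p,\beta}^{s_2}$ is precisely the source space appearing in Theorem \ref{embeddings3-lorentz} with Herz exponent $0$, with Lorentz exponent equal to $p$, and with third exponent equal to $p$. Hence I would apply Theorem \ref{embeddings3-lorentz} with the same $p,s,s_1,s_2,\beta,r_1$ and with $\alpha_2=0$, $\alpha_1=\alpha$, $r_2=p$; the only thing to arrange is that the exponent $\theta$ of Theorem \ref{embeddings3-lorentz} is forced to be $p$ on the source side (so that it really reproduces $B_{p,\beta}^{s_2}$) while on the target side it agrees with the exponent $\theta$ claimed in the present statement.

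First I would check the hypotheses of Theorem \ref{embeddings3-lorentz} under the present assumptions. The relation $s_1-\frac{n}{s}-\alpha_1\leq s_2-\frac{n}{p}-\alpha_2$ there becomes $s_1-\frac{n}{s}-\alpha\leq s_2-\frac{n}{p}$, which is assumed; the admissibility conditions $\alpha_1>-\frac{n}{s}$ and $\alpha_2>-\frac{n}{p}$ become $\alpha>-\frac{n}{s}$ (assumed) and $0>-\frac{n}{p}$ (automatic); and, with $\alpha_2=0$ and $\alpha_1=\alpha$, the dichotomy ``$0<p\leq s$, $\alpha_2\geq\alpha_1$'' or ``$0<s<p$, $\alpha_2+\frac{n}{p}\geq\alpha_1+\frac{n}{s}$'' turns into ``$0<p\leq s$, $\alpha\leq 0$'' or ``$0<s<p$, $\alpha\leq\frac{n}{p}-\frac{n}{s}$'', which together with $\alpha>-\frac{n}{s}$ is exactly the case split of the statement. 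Next I would match the exponent. In Theorem \ref{embeddings3-lorentz} the source carries exponent $r$ in the equality regime and $q$ in the strict regime; with $\alpha_2=0$, $\alpha_1=\alpha$ the equality regime reads $\alpha=\frac{n}{p}-\frac{n}{s}$ (for $s<p$) or $\alpha=0$ (for $p\leq s$), and there I would take the free parameter $r$ of Theorem \ref{embeddings3-lorentz} equal to $p$, so that the source exponent is $r=p$ and the target is $\dot{K}_{s,r_1}^{\alpha,p}B_\beta^{s_1}$, matching $\theta=p$ as asserted. In the strict regime, $\alpha<\frac{n}{p}-\frac{n}{s}$ (for $s<p$) or $\alpha<0$ (for $p\leq s$), I would instead take $q=p$, so the source exponent is $q=p$ while the target exponent $r$ is left free and set equal to the present $\theta=r$. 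Feeding these choices into Theorem \ref{embeddings3-lorentz} yields $B_{p,\beta}^{s_2}=\dot{K}_{p,p}^{0,p}B_\beta^{s_2}\hookrightarrow\dot{K}_{s,r_1}^{\alpha,\theta}B_\beta^{s_1}$, which is the claim.

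The only real obstacle is bookkeeping: keeping straight which of the four exponents $q,r,r_1,r_2$ in Theorem \ref{embeddings3-lorentz} is being specialized, and verifying that its two-branch exponent rule lines up with the two branches of $\theta$ in the present statement. No analytic ingredient beyond Theorem \ref{embeddings3-lorentz} (hence beyond the Plancherel-Polya-Nikolskij type Lemmas \ref{Bernstein-Herz-ine1-lorentz} and \ref{Bernstein-Herz-ine2-lorentz}) is needed.
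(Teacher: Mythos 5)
Your proposal is correct and is essentially the paper's own argument: the paper derives this theorem precisely by combining Theorem \ref{embeddings3-lorentz} with the identification $\dot{K}_{p,p}^{0,p}B_{\beta}^{s_{2}}=B_{p,\beta}^{s_{2}}$, specializing $\alpha_{2}=0$, $\alpha_{1}=\alpha$, $r_{2}=p$ and choosing the free exponent ($r$ in the equality regime, $q$ in the strict regime) equal to $p$ exactly as you do. Your parameter bookkeeping matches the two-branch rule for $\theta$, so nothing further is needed.
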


As a consequence, one obtains the following corollary.

\begin{corollary}
\label{embeddings-besov2}Let $s_{1},s_{2}\in \mathbb{R},0<\max (p,r_{1})\leq
s<\infty ,s_{1}-\frac{n}{s}\leq s_{2}-\frac{n}{p}$\ and $0<\beta \leq \infty 
$. Then%
\begin{equation}
B_{p,\beta }^{s_{2}}\hookrightarrow \dot{K}_{s,r_{1}}^{0,p}B_{\beta
}^{s_{1}}\hookrightarrow B_{s,\beta }^{s_{1}}.  \label{new-emb}
\end{equation}
\end{corollary}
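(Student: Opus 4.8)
The plan is to deduce this corollary directly from Theorem \ref{embeddings5-lorentz} specialized to $\alpha=0$, combined with the elementary monotonicity embeddings of Theorem \ref{embeddings1.1-lorentz} and the coincidence $\dot{K}_{p,p}^{0,p}B_{\beta}^{s}=B_{p,\beta}^{s}$ recorded after Definition \ref{B-F-def-lorentz}. Concretely, the argument is to establish the chain
\begin{equation*}
B_{p,\beta}^{s_{2}}=\dot{K}_{p,p}^{0,p}B_{\beta}^{s_{2}}\hookrightarrow \dot{K}_{s,r_{1}}^{0,p}B_{\beta}^{s_{1}}\hookrightarrow \dot{K}_{s,s}^{0,s}B_{\beta}^{s_{1}}=B_{s,\beta}^{s_{1}},
\end{equation*}
and to justify the two nontrivial inclusions in turn.

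For the first inclusion one applies Theorem \ref{embeddings5-lorentz} with $\alpha=0$, keeping $s$, $p$, $r_{1}$ and $\beta$ unchanged. Since $0<p\le s<\infty$ by the hypothesis $\max(p,r_{1})\le s$, the case ``$-\frac{n}{s}<\alpha\le 0$, $0<p\le s<\infty$'' of that theorem applies with $\alpha=0$; the numerical condition $s_{1}-\frac{n}{s}-\alpha\le s_{2}-\frac{n}{p}$ reduces exactly to the assumed $s_{1}-\frac{n}{s}\le s_{2}-\frac{n}{p}$, and the secondary index is $\theta=p$ because $\alpha=0$ and $p\le s$. This yields $B_{p,\beta}^{s_{2}}\hookrightarrow \dot{K}_{s,r_{1}}^{0,p}B_{\beta}^{s_{1}}$.

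For the second inclusion, since $B_{s,\beta}^{s_{1}}=\dot{K}_{s,s}^{0,s}B_{\beta}^{s_{1}}$, it suffices to pass from $\dot{K}_{s,r_{1}}^{0,p}B_{\beta}^{s_{1}}$ to $\dot{K}_{s,s}^{0,s}B_{\beta}^{s_{1}}$, and this is done in two steps: first raise the inner exponent from $p$ to $s$ via Theorem \ref{embeddings1.1-lorentz}(iii) (legitimate since $p\le s$), then raise the Lorentz parameter from $r_{1}$ to $s$ via Theorem \ref{embeddings1.1-lorentz}(v) (legitimate since $r_{1}\le s$); both monotonicity conditions are guaranteed by $\max(p,r_{1})\le s$. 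Concatenating gives $\dot{K}_{s,r_{1}}^{0,p}B_{\beta}^{s_{1}}\hookrightarrow \dot{K}_{s,s}^{0,s}B_{\beta}^{s_{1}}=B_{s,\beta}^{s_{1}}$.

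There is no genuine obstacle here: the statement is a bookkeeping consequence of results already established. The only points requiring a little care are checking that $\alpha=0$ is an admissible value in Theorem \ref{embeddings5-lorentz} and that it forces $\theta=p$, so that the intermediate space is precisely $\dot{K}_{s,r_{1}}^{0,p}B_{\beta}^{s_{1}}$ and the subsequent monotonicity embeddings apply; the case $\beta=\infty$ needs no separate treatment since the inner exponents involved ($p$ and $s$) are finite, so none of the excluded Triebel--Lizorkin-type situations arises.
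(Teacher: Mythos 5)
Your proposal is correct and follows essentially the same route as the paper: the first inclusion comes from Theorem \ref{embeddings5-lorentz} with $\alpha=0$ and $\theta=p$, and the second from the monotonicity embeddings $\dot{K}_{s,r_{1}}^{0,p}B_{\beta}^{s_{1}}\hookrightarrow \dot{K}_{s,s}^{0,s}B_{\beta}^{s_{1}}=B_{s,\beta}^{s_{1}}$ valid since $\max(p,r_{1})\leq s$. You merely make explicit the two parameter-monotonicity steps that the paper leaves implicit.
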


\begin{proof}
To prove \eqref{new-emb} it is sufficient to\ choose in Theorem \ref%
{embeddings5-lorentz}, $\theta =p$ and $\alpha =0$. Then the desired
embedding is an immediate consequence of the fact that 
\begin{equation*}
\dot{K}_{s,r_{1}}^{0,p}B_{\beta }^{s_{1}}\hookrightarrow \dot{K}%
_{s,s}^{0,s}B_{\beta }^{s_{1}}=B_{s,\beta }^{s_{1}}.
\end{equation*}
\end{proof}

\begin{remark}
Corollaries \ref{embeddings-besov1}\ and \ref{embeddings-besov2} extend and
improve\ Sobolev embeddings of Besov spaces.
\end{remark}

\subsection{Sobolev embeddings for the spaces $\dot{K}_{p,r}^{\protect\alpha %
,q}F_{\protect\beta }^{s}$}

It is well-known that%
\begin{equation}
F_{q,\infty }^{s_{2}}\hookrightarrow F_{s,\beta }^{s_{1}}
\label{Sobolev-Tr-Li-lorentz}
\end{equation}%
if $s_{1}-\frac{n}{s}=s_{2}-\frac{n}{q}$, where $0<q<s<\infty $ and $0<\beta
\leq \infty $; see, e.g., \cite[Theorem 2.7.1]{T83}. In this subsection, we
generalize these embeddings to Lorentz-Herz-type Triebel-Lizorkin spaces. We
need the Sobolev embeddings properties of the sequence\ spaces $\dot{K}%
_{p,r_{1}}^{\alpha ,r}f_{\infty }^{s}$. Put $c_{n}=1+\lfloor \log _{2}(2%
\sqrt{n}+1)\rfloor $, which will be fixed throughout this section.

\begin{theorem}
\label{embeddings3 copy(1)-lorentz}\textit{Let }$\alpha _{1},\alpha
_{2},s_{1},s_{2}\in \mathbb{R},0<s,r,p,q<\infty ,0<\theta ,r_{1}\leq \infty
,\alpha _{1}>-\frac{n}{s}\ $\textit{and }$\alpha _{2}>-\frac{n}{p}$. \textit{%
We suppose that\ }%
\begin{equation}
s_{1}-\frac{n}{s}-\alpha _{1}=s_{2}-\frac{n}{p}-\alpha _{2}.
\label{newexp1.1-lorentz}
\end{equation}%
\textit{Let }$0<p<s<\infty $ and $\alpha _{2}>\alpha _{1}$. Then%
\begin{equation}
\dot{K}_{p,\infty }^{\alpha _{2},r}f_{\infty }^{s_{2}}\hookrightarrow \dot{K}%
_{s,r_{1}}^{\alpha _{1},q}f_{\theta }^{s_{1}},
\label{Sobolev-emb1.1-lorentz}
\end{equation}%
if and only if $0<r\leq q<\infty $.
\end{theorem}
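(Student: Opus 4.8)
For \emph{necessity}: since the scaling relation \eqref{newexp1.1-lorentz} is already assumed, the only thing to rule out is $r>q$. The plan is to test the embedding on a sequence $\lambda$ which, for each $j\le-1$, has a single nonzero entry $\lambda_{k(j),m(j)}=b_j$ sitting on one dyadic cube $Q_{k(j),m(j)}\subset R_j$ at a scale $k(j)\approx-j$ (so that a level-$k(j)$ cube fits inside $R_j$), all other entries being $0$. Using $\big\|\chi_A\big\|_{L^{p,\varrho}}\approx|A|^{1/p}$ from \eqref{est-function1} and the disjointness of the supports, a direct computation gives $\big\|\lambda\big\|_{\dot K_{p,\infty}^{\alpha_2,r}f_\infty^{s_2}}\approx\big\|\{2^{j\mu}b_j\}_{j\le-1}\big\|_{\ell^r}$ and $\big\|\lambda\big\|_{\dot K_{s,r_1}^{\alpha_1,q}f_\theta^{s_1}}\approx\big\|\{2^{j\mu}b_j\}_{j\le-1}\big\|_{\ell^q}$ \emph{with the same exponent} $\mu=\alpha_1+\tfrac ns-s_1-\tfrac n2=\alpha_2+\tfrac np-s_2-\tfrac n2$; the coincidence of these two exponents is exactly \eqref{newexp1.1-lorentz} (and it is precisely the point of testing at the ``critical'' scale $k(j)\approx-j$). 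Picking $\{2^{j\mu}b_j\}_{j\le-1}\in\ell^r\setminus\ell^q$, which is possible exactly when $r>q$, yields a sequence with finite source norm and infinite target norm.

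For \emph{sufficiency}, assume $0<r\le q<\infty$. By the monotonicity embeddings of Theorem \ref{embeddings1.1-lorentz} (in the inner index, in the Lorentz index, and in the Herz index — the last using $r\le q$) it suffices to prove $\dot K_{p,\infty}^{\alpha_2,r}f_\infty^{s_2}\hookrightarrow\dot K_{s,t}^{\alpha_1,r}f_t^{s_1}$ for one fixed $t$ with $0<t<\min(1,p,s,r,\theta,r_1)$. Set $G:=\sup_{k\ge0}2^{k(s_2+n/2)}\sum_m|\lambda_{k,m}|\chi_{k,m}$, so the source norm is $\big\|G\big\|_{\dot K_{p,\infty}^{\alpha_2,r}}$, put $h_k:=2^{k(s_1+n/2)}\sum_m|\lambda_{k,m}|\chi_{k,m}$ and $F:=(\sum_{k\ge0}h_k^t)^{1/t}$, so the target norm is $\big\|F\big\|_{\dot K_{s,t}^{\alpha_1,r}}$; note $s_1<s_2$ under the hypotheses. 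For a fixed annulus $R_j$ one splits the scales at $k=-j+c_n$. On the \emph{fine} range $k\ge\max(0,-j+c_n)$ every cube meeting $R_j$ lies in $\hat R_j:=\bigcup_{|i|\le c_n}R_{j+i}$, so $h_k\chi_j$ is dominated by the level-$k$ step function $\tilde h_{k,j}:=2^{k(s_1+n/2)}\sum_{\{m:\,Q_{k,m}\cap R_j\neq\emptyset\}}|\lambda_{k,m}|\chi_{k,m}$, which moreover satisfies $\tilde h_{k,j}\le 2^{k(s_1-s_2)}G\chi_{\hat R_j}$. Using the power rule \eqref{pro1-lorentz} together with the triangle inequality in $L^{s/t,1}$ one has $\big\|F\chi_j\big\|_{L^{s,t}}\le(\sum_k\|h_k\chi_j\|_{L^{s,t}}^t)^{1/t}$, and the elementary Nikol'skij-type inequality $\big\|\sum_m c_m\chi_{k,m}\big\|_{L^{s,t}}\lesssim 2^{kn(1/p-1/s)}\big\|\sum_m c_m\chi_{k,m}\big\|_{L^{p,\infty}}$ for level-$k$ step functions (immediate from the explicit rearrangement of a step function, since $p<s$) gives $\big\|h_k\chi_j\big\|_{L^{s,t}}\lesssim 2^{kn(1/p-1/s)+k(s_1-s_2)}\big\|G\chi_{\hat R_j}\big\|_{L^{p,\infty}}=2^{k(\alpha_1-\alpha_2)}\big\|G\chi_{\hat R_j}\big\|_{L^{p,\infty}}$, the exponent simplifying by \eqref{newexp1.1-lorentz}. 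Since $\alpha_1-\alpha_2<0$, summing $t$-th powers over the fine range yields $\lesssim\big\|G\chi_{\hat R_j}\big\|_{L^{p,\infty}}$ for $j\ge c_n$ and $\lesssim 2^{j(\alpha_2-\alpha_1)}\big\|G\chi_{\hat R_j}\big\|_{L^{p,\infty}}$ for $j<c_n$.

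For $j<c_n$ there is also the \emph{coarse} range $0\le k<-j+c_n$, where the cubes meeting $R_j$ are macroscopic. For such a cube $Q=Q_{k,m}$ the constant value of the $k$-block of $G$ on $Q$ is $2^{k(s_2+n/2)}|\lambda_{k,m}|\le\inf_QG\le|Q\cap R_{j^\ast}|^{-1/p}\big\|G\chi_{j^\ast}\big\|_{L^{p,\infty}}$, where $j^\ast$ is the index of the outermost annulus meeting $Q$; a geometric observation on dyadic cubes gives $j^\ast=\max(-k,j)+O(c_n)$ and $|Q\cap R_{j^\ast}|\gtrsim 2^{-kn}$, whence $2^{k(s_1+n/2)}|\lambda_{k,m}|\lesssim 2^{k(n/s+\alpha_1-\alpha_2)}a_{j^\ast}$ with $a_{j'}:=\big\|G\chi_{j'}\big\|_{L^{p,\infty}}$. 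Since $\big\|\chi_{R_j}\big\|_{L^{s,t}}\approx 2^{jn/s}$ and only $O(1)$ macroscopic cubes meet $R_j$, this gives $\big\|h_k\chi_j\big\|_{L^{s,t}}\lesssim 2^{jn/s}\,2^{k(n/s+\alpha_1-\alpha_2)}\max_{|i|\le c_n}a_{\max(-k,j)+i}$, and summing $t$-th powers over the $O(|j|)$ coarse scales produces the coarse contribution to $\big\|F\chi_j\big\|_{L^{s,t}}$.

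Assembling, $\big\|\lambda\big\|_{\dot K_{s,t}^{\alpha_1,r}f_t^{s_1}}^{\,r}=\sum_j 2^{j\alpha_1 r}\big\|F\chi_j\big\|_{L^{s,t}}^{r}$ splits into the fine and coarse contributions. In the fine part the factor $2^{j(\alpha_2-\alpha_1)}$ arising for $j<c_n$ converts the $\alpha_1$-weight into the $\alpha_2$-weight, while for $j\ge c_n$ one uses $\alpha_1<\alpha_2$; after absorbing the passage from $\hat R_j$ to $R_j$ by the finite $\varrho$-subadditivity of $\big\|\cdot\big\|_{L^{p,\infty}}^\varrho$ ($\varrho<\min(1,p)$) this part is $\lesssim\sum_{j'}2^{j'\alpha_2 r}a_{j'}^{r}=\big\|G\big\|_{\dot K_{p,\infty}^{\alpha_2,r}}^{\,r}$. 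The coarse part is a one-sided, geometrically weighted double sum in $(j,j')$ whose decisive ratios are governed by $\alpha_1+\tfrac ns$ and by $\tfrac ns+\alpha_1-\alpha_2$; here the hypothesis $\alpha_1>-\tfrac ns$ makes the relevant geometric factor summable, and the Hardy-type inequality of Lemma \ref{lem:lq-inequality} collapses the double sum again to $\lesssim\big\|G\big\|_{\dot K_{p,\infty}^{\alpha_2,r}}^{\,r}$. The condition $r\le q$ itself is used only in the opening reduction. The main obstacle is the coarse part: one must control macroscopic cubes straddling many annuli near the origin and, crucially, make sure the output is measured with the \emph{source} weight $\alpha_2$ rather than $\alpha_1$ — this is exactly what forces the sharper identification $j^\ast=\max(-k,j)+O(c_n)$ (the crude bound $j^\ast\le-k+c_n$ being insufficient) and makes the Hardy-type inequality indispensable.
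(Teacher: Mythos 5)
Your proposal is correct, and it shares the paper's overall skeleton: the same kind of necessity example (one coefficient per scale placed near the origin, so that both quasi-norms collapse to weighted $\ell^{r}$ resp.\ $\ell^{q}$ norms of one and the same sequence, the exponents matching precisely because of \eqref{newexp1.1-lorentz}), the same opening reduction $q\to r$ via $\ell^{r}\hookrightarrow\ell^{q}$, the same split of each annulus contribution into coarse scales ($k<-j+c_{n}$) and fine scales, and the same use of Lemma \ref{lem:lq-inequality} together with $\alpha_{1}+\tfrac{n}{s}>0$ for the coarse part and of $\alpha_{2}>\alpha_{1}$ for the fine part. Where you genuinely deviate is in how the two key estimates are executed. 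You additionally shrink the inner exponents $\theta ,r_{1}$ to one small $t$ (legitimate, since smaller inner exponents give larger target quasi-norms), and this lets you replace the paper's duality computation in $L^{s/\theta _{1},r_{1}/\theta _{1}}$ (Hardy--Littlewood rearrangement inequality, dyadic splitting, $g^{\ast \ast }$ estimates applied to $\varpi _{v,k}$ and $\digamma _{k}$) by the elementary rearrangement inequality $\big\|\sum_{m}c_{m}\chi _{k,m}\big\|_{L^{s,t}}\lesssim 2^{kn(\frac{1}{p}-\frac{1}{s})}\big\|\sum_{m}c_{m}\chi _{k,m}\big\|_{L^{p,\infty }}$ for level-$k$ step functions, valid because $p<s$; this is a cleaner packaging of the same rearrangement phenomenon and buys a shorter fine-scale estimate. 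Your coarse part also differs: instead of averaging $|\lambda _{v,m}|^{t}$ over cubes inside $B(0,2^{c_{n}-v+2})$ and using H\"{o}lder with an auxiliary exponent plus Lemma \ref{Lp-estimate}, you bound $2^{k(s_{2}+\frac{n}{2})}|\lambda _{k,m}|\leq \inf_{Q}G\leq |Q\cap R_{j^{\ast }}|^{-1/p}\big\|G\chi _{j^{\ast }}\big\|_{L^{p,\infty }}$ on the outermost annulus, which correctly produces the source weight $2^{j^{\ast }\alpha _{2}}$ so that the Hardy-type collapse works. The one assertion that deserves an explicit argument is the ``geometric observation'' that $|Q_{k,m}\cap R_{j^{\ast }}|\gtrsim 2^{-kn}$ with $j^{\ast }=\max (-k,j)+O(c_{n})$: for a general cube the outermost annulus may meet it only in an arbitrarily thin sliver, so this genuinely uses the coarse-regime constraint that the cube lies within $O(2^{c_{n}})$ times its side length of the origin; a clean proof is to rescale by $2^{k}$, reducing the claim to the finitely many unit dyadic cubes $Q_{0,m}$ with $|m|\lesssim 2^{c_{n}}$, for each of which the outermost annulus of positive-measure intersection occupies a fixed positive fraction of the cube. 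With that line added, your argument is complete, and the hypotheses enter exactly where they do in the paper: $r\leq q$ only in the opening reduction, $\alpha _{2}>\alpha _{1}$ only in the fine scales, and $\alpha _{1}>-\tfrac{n}{s}$ only in the coarse scales.
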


\begin{proof}
First the necessity of \eqref{newexp1.1-lorentz} follows by using the same
type of arguments as in the proof of Theorem \ref{embeddings3-lorentz}. The
rest of the proof is in two steps

\textit{Step 1.} Let us prove that $0<r\leq q<\infty $ is necessary. In the
calculations below we consider the 1-dimensional case for simplicity. For
any $v\in \mathbb{N}_{0}$ and $N\geq 1$, we put 
\begin{equation*}
\lambda _{v,m}^{N}=\left\{ 
\begin{array}{ccc}
2^{-(s_{1}-\frac{1}{s}-\alpha _{1}+\frac{n}{2})v}\sum_{i=1}^{N}\chi
_{i}(2^{v-1}), & \text{if} & m=1, \\ 
0, &  & \text{otherwise,}%
\end{array}%
\right.
\end{equation*}%
$\lambda ^{N}=\{\lambda _{v,m}^{N}:v\in \mathbb{N}_{0},m\in \mathbb{Z}\}$.
Let $0<\beta <\infty $. We have 
\begin{equation*}
\big\|\lambda ^{N}\big\|_{\dot{K}_{s,r_{1}}^{\alpha _{1},q}f_{\beta
}^{s_{1}}}^{q}=\sum_{k=-\infty }^{\infty }2^{\alpha _{1}kq}\Big\|\Big(%
\sum_{v=0}^{\infty }\sum\limits_{m\in \mathbb{Z}}2^{v(s_{1}+\frac{n}{2}%
)\beta }|\lambda _{v,m}^{N}|^{\beta }\chi _{v,m}\Big)^{1/\beta }\chi _{k}%
\Big\|_{L^{s,r_{1}}}^{q}.
\end{equation*}%
We can rewrite the last statement as follows: 
\begin{align*}
& \sum_{k=1-N}^{0}2^{\alpha _{1}kq}\Big\|\Big(\sum_{v=1}^{N}2^{(\frac{1}{s}%
+\alpha _{1})v\beta }\chi _{v,1}\Big)^{1/\beta }\chi _{k}\Big\|%
_{L^{s,r_{1}}}^{q} \\
=& \sum_{k=1-N}^{0}2^{\alpha _{1}kq}\big\|2^{(\frac{1}{s}+\alpha
_{1})(1-k)}\chi _{1-k,1}\big\|_{L^{s,r_{1}}}^{q} \\
=& c\text{ }N,
\end{align*}%
where the constant $c>0$ does not depend on $N$. Now%
\begin{equation*}
\big\|\lambda ^{N}\big\|_{\dot{K}_{p,r_{2}}^{\alpha _{2},r}f_{\theta
}^{s_{2}}}^{r}=\sum_{k=-\infty }^{\infty }2^{\alpha _{2}kr}\Big\|\Big(%
\sum_{v=0}^{\infty }2^{v(s_{2}+\frac{n}{2})\theta }|\lambda
_{v,1}^{N}|^{\theta }\chi _{v,1}\Big)^{1/\theta }\chi _{k}\Big\|%
_{L^{p,r_{2}}}^{r}.
\end{equation*}%
Again we can rewrite the last statement as follows:%
\begin{align*}
& \sum_{k=1-N}^{0}2^{\alpha _{2}kr}\Big\|\Big(\sum_{v=1}^{N}2^{(s_{2}-s_{1}+%
\frac{1}{s}+\alpha _{1})v\theta }\chi _{v,1}\Big)^{1/\theta }\chi _{k}\Big\|%
_{L^{p,r_{2}}}^{r} \\
=& \sum_{k=1-N}^{0}2^{\alpha _{2}kr}\big\|2^{(s_{2}-s_{1}+\frac{1}{s}+\alpha
_{1})(1-k)}\chi _{1-k,1}\big\|_{L^{p,r_{2}}}^{r} \\
=& c\text{ }N,
\end{align*}%
where the constant $c>0$ does not depend on $N$ and we have used %
\eqref{est-function1}. If the embeddings \eqref{Sobolev-emb1.1-lorentz}
holds then for any $N\in \mathbb{N}$, $N^{\frac{1}{q}-\frac{1}{r}}\leq C$.
Thus, we conclude that $0<r\leq q<\infty $ must necessarily hold by letting $%
N\rightarrow +\infty $.

\textit{Step 2.} We consider the sufficiency of the conditions. In view of
the embedding $\ell ^{r}\hookrightarrow \ell ^{q}$, it is sufficient to
prove that 
\begin{equation*}
\dot{K}_{p,\infty }^{\alpha _{2},r}f_{\infty }^{s_{2}}\hookrightarrow \dot{K}%
_{s,r_{1}}^{\alpha _{1},r}f_{\theta }^{s_{1}}.
\end{equation*}%
Let $\lambda \in \dot{K}_{p,\infty }^{\alpha _{2},r}f_{\theta }^{s_{2}}$. We
have%
\begin{align}
\big\|\lambda \big\|_{\dot{K}_{s,r_{1}}^{\alpha _{1},r}f_{\theta
}^{s_{1}}}\lesssim & \Big(\sum\limits_{k=-\infty }^{0}2^{k\alpha _{1}r}\Big\|%
\Big(\sum_{v=0}^{\infty }2^{v(s_{1}+\frac{n}{2})\theta }\sum_{m\in \mathbb{Z}%
^{n}}|\lambda _{v,m}|^{\theta }\chi _{v,m}\chi _{k}\Big)^{1/\theta }\Big\|%
_{L^{s,r_{1}}}^{r}\Big)^{1/r}  \label{est2-lorentz} \\
& +\Big(\sum\limits_{k=1}^{\infty }2^{k\alpha _{1}r}\Big\|\Big(%
\sum_{v=0}^{\infty }2^{v(s_{1}+\frac{n}{2})\theta }\sum_{m\in \mathbb{Z}%
^{n}}|\lambda _{v,m}|^{\theta }\chi _{v,m}\chi _{k}\Big)^{1/\theta }\Big\|%
_{L^{s,r_{1}}}^{r}\Big)^{1/r}.  \label{est 1-lorentz}
\end{align}%
The right-hand side of \eqref{est2-lorentz} can be estimated\ from above by%
\begin{align*}
& c\Big(\sum\limits_{k=-\infty }^{0}2^{k\alpha _{1}r}\Big\|\Big(%
\sum_{v=0}^{1+c_{n}-k}2^{v(s_{1}+\frac{n}{2})\theta }\sum_{m\in \mathbb{Z}%
^{n}}|\lambda _{v,m}|^{\theta }\chi _{v,m}\chi _{k}\Big)^{1/\theta }\Big\|%
_{L^{s,r_{1}}}^{r}\Big)^{1/r} \\
& +c\Big(\sum\limits_{k=-\infty }^{0}2^{k\alpha _{1}r}\Big\|\Big(%
\sum_{v=2+c_{n}-k}^{\infty }2^{v(s_{1}+\frac{n}{2})\theta }\sum_{m\in 
\mathbb{Z}^{n}}|\lambda _{v,m}|^{\theta }\chi _{v,m}\chi _{k}\Big)^{1/\theta
}\Big\|_{L^{s,r_{1}}}^{r}\Big)^{1/r} \\
& =I+II.
\end{align*}%
\textit{Estimation of }$I$\textit{.} Let $x\in R_{k}\cap Q_{v,m}$ and $y\in
Q_{v,m}$. We have $|x-y|\leq 2\sqrt{n}2^{-v}<2^{c_{n}-v}$ and from this it
follows that $|y|<2^{c_{n}-v}+2^{k}\leq 2^{c_{n}-v+2}$, which implies that $%
y $ is located in the ball $B(0,2^{c_{n}-v+2})$. This leads to%
\begin{equation*}
|\lambda _{v,m}|^{t}\chi _{R_{k}\cap Q_{v,m}}(x)\leq 2^{nv}\int_{\mathbb{R}%
^{n}}|\lambda _{v,m}|^{t}\chi _{v,m}(y)dy\leq
2^{nv}\int_{B(0,2^{c_{n}-v+2})}|\lambda _{v,m}|^{t}\chi _{v,m}(y)dy,
\end{equation*}%
where $t>0$. Therefore for any $x\in R_{k}$, we obtain that 
\begin{align*}
\sum_{m\in \mathbb{Z}^{n}}|\lambda _{v,m}|^{t}\chi _{v,m}(x)\leq &
2^{nv}\int_{B(0,2^{c_{n}-v+2})}\sum_{m\in \mathbb{Z}^{n}}|\lambda
_{v,m}|^{t}\chi _{v,m}(y)dy \\
=& 2^{nv}\Big\|\sum_{m\in \mathbb{Z}^{n}}|\lambda _{v,m}|\chi _{v,m}\chi
_{B(0,2^{c_{n}-v+2})}\Big\|_{L^{t,t}}^{t}.
\end{align*}%
This yields%
\begin{align*}
& 2^{\alpha _{1}k}\Big\|\Big(\sum_{v=0}^{1+c_{n}-k}2^{v(s_{1}+\frac{n}{2}%
)\theta }\sum_{m\in \mathbb{Z}^{n}}|\lambda _{v,m}|^{\theta }\chi _{v,m}\chi
_{k}\Big)^{1/\theta }\Big\|_{L^{s,r_{1}}} \\
\lesssim & 2^{(\alpha _{1}+\frac{n}{s})k}\Big(%
\sum_{v=0}^{1+c_{n}-k}2^{v(s_{1}+\frac{n}{2}+\frac{n}{t})\theta }\Big\|%
\sum_{m\in \mathbb{Z}^{n}}|\lambda _{v,m}|\chi _{v,m}\chi
_{B(0,2^{c_{n}-v+2})}\Big\|_{L^{t,t}}^{\theta }\Big)^{1/\theta },
\end{align*}%
with the help of \eqref{est-function1}, where the implicit\ constant is
independent of $k$. We may choose $t>0$ such that $\frac{1}{t}>\max (\frac{1%
}{p},\frac{1}{r_{2}},\frac{1}{p}+\frac{\alpha _{2}}{n})$. Put $\varkappa
=\min (1,t)$ and 
\begin{equation*}
\frac{1}{t}=\frac{1}{p}+\frac{1}{h}=\frac{1}{\infty }+\frac{1}{t},\quad 
\frac{n}{h}=\alpha _{2}+\frac{n}{d},\quad 0<d<\infty .
\end{equation*}%
Using \eqref{newexp1.1-lorentz} and Lemmas \ref{Lp-estimate} and \ref%
{lem:lq-inequality} we estimate $I^{r}$ by%
\begin{align}
& c\sum_{v=0}^{\infty }2^{v(s_{2}-\frac{n}{p}-\alpha _{2}+\frac{n}{t}+\frac{n%
}{2})r}\Big\|\sum_{m\in \mathbb{Z}^{n}}|\lambda _{v,m}|\chi _{v,m}\chi
_{B(0,2^{c_{n}-v+2})}\Big\|_{L^{t,t}}^{r}  \notag \\
\leq & c\sum_{v=0}^{\infty }2^{v(s_{2}-\frac{n}{p}-\alpha _{2}+\frac{n}{t}+%
\frac{n}{2})r}\Big(\sum_{i\leq -v}\Big\|\sum_{m\in \mathbb{Z}^{n}}|\lambda
_{v,m}|\chi _{v,m}\chi _{i+c_{n}+2}\Big\|_{L^{t,t}}^{\varkappa }\Big)%
^{r/\varkappa }.  \label{sobolev-lorentz}
\end{align}%
By H\"{o}lder's inequality and \eqref{est-function1}, we obtain%
\begin{align*}
& \Big\|\sum_{m\in \mathbb{Z}^{n}}|\lambda _{v,m}|\chi _{v,m}\chi
_{i+c_{n}+2}\Big\|_{L^{t,t}} \\
& \lesssim \Big\|\sum_{m\in \mathbb{Z}^{n}}|\lambda _{v,m}|\chi _{v,m}\chi
_{i+c_{n}+2}\Big\|_{L^{p,\infty }}\big\|\chi _{i+c_{n}+2}\big\|_{L^{h,t}} \\
& \lesssim 2^{i(\frac{n}{d}+\alpha _{2})}\Big\|\sum_{m\in \mathbb{Z}%
^{n}}|\lambda _{v,m}|\chi _{v,m}\chi _{i+c_{n}+2}\Big\|_{L^{p,\infty }} \\
& \lesssim 2^{i(\frac{n}{d}+\alpha _{2})-(s_{2}+\frac{n}{2})v}\Big\|%
\sup_{j\in \mathbb{N}_{0}}2^{(s_{2}+\frac{n}{2})j}\sum_{m\in \mathbb{Z}%
^{n}}|\lambda _{j,m}|\chi _{j,m}\chi _{i+c_{n}+2}\Big\|_{L^{p,\infty }},
\end{align*}%
where the implicit constant is independent of $i$\ and $v$. Inserting this
estimate in \eqref{sobolev-lorentz} and applying Lemma \ref%
{lem:lq-inequality}, we get 
\begin{align*}
I^{r}& \lesssim \sum_{v=0}^{\infty }2^{v\frac{nr}{d}}\Big(\sum_{i\leq
-v}2^{i(\frac{n}{d}+\alpha _{2})\varkappa }\Big\|\sup_{j\in \mathbb{N}%
_{0}}2^{(s_{2}+\frac{n}{2})j}\sum_{m\in \mathbb{Z}^{n}}|\lambda _{j,m}|\chi
_{j,m}\chi _{i+c_{n}+2}\Big\|_{L^{p,\infty }}^{\varkappa }\Big)^{r/\varkappa
} \\
& \lesssim \sum_{i=0}^{\infty }2^{-\alpha _{2}ir}\Big\|\sup_{j\in \mathbb{N}%
_{0}}\Big(2^{(s_{2}+\frac{n}{2})j}\sum_{m\in \mathbb{Z}^{n}}|\lambda
_{j,m}|\chi _{j,m}\chi _{-i+c_{n}+2}\Big)\Big\|_{L^{p,\infty }}^{r} \\
& \lesssim \big\|\lambda \big\|_{\dot{K}_{p,\infty }^{\alpha
_{2},r}f_{\infty }^{s_{2}}}^{r}.
\end{align*}

\textit{Estimation of }$II$\textit{.} Since $\alpha _{2}>\alpha _{1}$, by %
\eqref{seeger} we obtain 
\begin{align}
& 2^{k\alpha _{1}}\Big\|\Big(\sum_{v=2+c_{n}-k}^{\infty }2^{v(s_{1}+\frac{n}{%
2})\theta }\sum_{m\in \mathbb{Z}^{n}}|\lambda _{v,m}|^{\theta }\chi
_{v,m}\chi _{k}\Big)^{1/\theta }\Big\|_{L^{s,r_{1}}}  \notag \\
& \lesssim \sup_{v\in \mathbb{N}_{0}}\Big\|2^{v(s_{1}+\alpha _{2}-\alpha
_{1}+\frac{n}{2})+k\alpha _{2}}\sum_{m\in \mathbb{Z}^{n}}|\lambda
_{v,m}|\chi _{v,m}\chi _{k}\Big\|_{L^{s,r_{1}}},  \label{Lorentz1}
\end{align}%
where the implicit constant is independent of $k$. We see that it suffices
to show that \eqref{Lorentz1} can be estimated from above by 
\begin{equation*}
c\Big\|\sup_{v\in \mathbb{N}_{0}}\Big(2^{v(s_{2}+\frac{n}{2})+k\alpha
_{2}}\sum_{m\in \mathbb{Z}^{n}}|\lambda _{v,m}|\chi _{v,m}\chi _{k}\Big)%
\Big\|_{L^{p,\infty }}
\end{equation*}%
for any $k\leq 0$, where the positive constant $c$ is independent of $k$.
Observe that%
\begin{equation*}
\left\vert 2^{-v}m\right\vert \leq \left\vert x-2^{-v}m\right\vert
+\left\vert x\right\vert \leq \sqrt{n}2^{-v}+2^{k}\leq 2^{k+1}
\end{equation*}%
and 
\begin{equation*}
\left\vert 2^{-v}m\right\vert \geq \left\vert \left\vert
x-2^{-v}m\right\vert -\left\vert x\right\vert \right\vert \geq 2^{k-1}-\sqrt{%
n}2^{-v}\geq 2^{k-2}
\end{equation*}%
if $x\in R_{k}\cap Q_{v,m}$ and $v\geq c_{n}+2-k$. Hence $m$ is located in%
\begin{equation*}
A_{k+v}=\{m\in \mathbb{Z}^{n}:2^{k+v-2}\leq \left\vert m\right\vert \leq
2^{k+v+1}\}.
\end{equation*}%
Observe that $cardA_{k+v}\leq 2^{2n(k+v+1)}$. Let%
\begin{equation*}
\tilde{\lambda}_{v,m_{1}}^{1,k}=\max_{m\in A_{k+v}}\left\vert \lambda
_{v,m}\right\vert ,\quad m_{1}\in \mathbb{Z}^{n}
\end{equation*}%
and (decreasing rearrangement of $\{\lambda _{v,m}\}_{m\in A_{k+v}}$) 
\begin{equation*}
\tilde{\lambda}_{v,m_{j}}^{j,k}=\max_{m^{i}\in
A_{k+v},i=1,...,j}\sum_{i=1}^{j}\left\vert \lambda _{v,m^{i}}\right\vert
-\sum_{i=1}^{j-1}\tilde{\lambda}_{v,m_{i}}^{i,k},\quad m_{j}\in \mathbb{Z}%
^{n},j\geq 2.
\end{equation*}%
Then%
\begin{equation*}
2^{v(s_{2}+\frac{n}{2})+k\alpha _{2}}\sum_{m\in A_{k+v}}|\lambda _{v,m}|\chi
_{v,m}=2^{v(s_{2}+\frac{n}{2})+k\alpha _{2}}\sum_{i=1}^{cardA_{k+v}}\tilde{%
\lambda}_{v,m_{i}}^{i,k}\chi _{v,m_{i}}=\varpi _{v,k}.
\end{equation*}%
It is not difficult to see that%
\begin{equation*}
\varpi _{v,k}^{\ast }(t)=2^{v(s_{2}+\frac{n}{2})+k\alpha
_{2}}\sum_{i=1}^{cardA_{k+v}}\tilde{\lambda}_{v,m_{i}}^{i,k}\tilde{\chi}%
_{[B_{i-1,v},B_{i,v})}(t),
\end{equation*}%
with 
\begin{equation*}
B_{0,v}=0,\quad B_{i,v}=\sum_{j=1}^{i}\left\vert Q_{v,m_{j}}\right\vert
=2^{-vn}i,\quad i=1,...,cardA_{k+v},
\end{equation*}%
where $\tilde{\chi}_{[B_{i-1,v},B_{i,v})}$ is the characteristic function of
the interval $[B_{i-1,v},B_{i,v})$.\ In addition, we have 
\begin{equation*}
Q_{k,m}\subset \breve{R}_{k}\quad \text{if}\quad v\geq c_{n}+2-k\quad \text{%
and}\quad m\in A_{k+v},
\end{equation*}
where $\breve{R}_{k}=\cup _{i=-2}^{3}R_{k+i}$, and 
\begin{align*}
\varpi _{v,k}& \leq 2^{v(s_{2}+\frac{n}{2})+k\alpha _{2}}\sum_{m\in \mathbb{Z%
}^{n}}|\lambda _{v,m}|\chi _{v,m}\chi _{\breve{R}_{k}} \\
& \leq \sup_{v\in \mathbb{N}_{0}}\big(2^{v(s_{2}+\frac{n}{2})+k\alpha
_{2}}\sum_{m\in \mathbb{Z}^{n}}|\lambda _{v,m}|\chi _{v,m}\chi _{\breve{R}%
_{k}}\big) \\
& =\digamma _{k}.
\end{align*}%
Using \eqref{newexp1.1-lorentz}, we get 
\begin{equation}
\Big\|2^{v(s_{1}+\alpha _{2}-\alpha _{1}-s_{2})}\varpi _{v,k}\Big\|%
_{L^{s,r_{1}}}^{\theta _{1}}=\Big\|2^{v(\frac{n}{s}-\frac{n}{p})\theta
_{1}}\varpi _{v,k}^{\theta _{1}}\Big\|_{L^{s/\theta _{1},r_{1}/\theta _{1}}}
\label{Lorentz2}
\end{equation}%
for any $0<\theta _{1}<\infty $ and any $v\in \mathbb{N}_{0},k\in \mathbb{Z}$%
. We choose $\theta _{1}<\min (s,r_{1})$. Using duality, the right-hand side
of \eqref{Lorentz2} is dominated by%
\begin{equation*}
c\sup \int_{\mathbb{R}^{n}}2^{v(\frac{n}{s}-\frac{n}{p})\theta _{1}}(\varpi
_{v,k}(x))^{\theta _{1}}g(x)dx,
\end{equation*}%
where the supremum is taken\ over all $g\in L^{(s/\theta _{1})^{\prime
},(r_{1}/\theta _{1})^{\prime }}$ such that $\big\|g\big\|_{L^{(s/\theta
_{1})^{\prime },(r_{1}/\theta _{1})^{\prime }}}\leq 1$. It follows from
Lemma \ref{Hardy-Littlewood inequality} that 
\begin{equation*}
2^{v(\frac{n}{s}-\frac{n}{p})\theta _{1}}\int_{\mathbb{R}^{n}}(\varpi
_{v,k}(x))^{\theta _{1}}g(x)dx\leq 2^{v(\frac{n}{s}-\frac{n}{p})\theta
_{1}}\int_{0}^{\infty }(\varpi _{v,k}^{\ast }(t))^{\theta _{1}}g^{\ast
}(t)dt.
\end{equation*}%
We have%
\begin{equation}
\int_{0}^{\infty }(\varpi _{v,k}^{\ast }(t))^{\theta _{1}}g^{\ast
}(t)dt=\int_{0}^{2^{-vn}}(\varpi _{v,k}^{\ast }(t))^{\theta _{1}}g^{\ast
}(t)dt+\sum_{l=0}^{\infty }\int_{2^{(l-v)n}}^{2^{(l-v)n+n}}(\varpi
_{v,k}^{\ast }(t))^{\theta _{1}}g^{\ast }(t)dt.  \label{Lorentz2.1}
\end{equation}

We see that $\varpi _{v,k}^{\ast }$ is constant in $[0,2^{-vn})$\ and $%
\varpi _{v,k}^{\ast }\leq \digamma _{k}^{\ast }$. Then first term on the
right-hand side of \eqref{Lorentz2.1} is bounded by%
\begin{align*}
(\varpi _{v,k}^{\ast }(2^{-vn-1}))^{\theta _{1}}\int_{0}^{2^{-vn}}g^{\ast
}(t)dt& \leq 2^{-vn}(\varpi _{v,k}^{\ast }(2^{-vn-1}))^{\theta _{1}}g^{\ast
\ast }(2^{-vn}) \\
& \leq 2^{-vn}(\digamma _{k}^{\ast }(2^{-vn-1}))^{\theta _{1}}g^{\ast \ast
}(2^{-vn}) \\
& \leq 2^{-vn(1-\frac{1}{p})\theta _{1}}\sup_{v\in \mathbb{N}_{0}}\big(2^{-%
\frac{vn}{p}}\digamma _{k}^{\ast }(2^{-vn-1})\big)^{\theta _{1}}g^{\ast \ast
}(2^{-vn}) \\
& \leq 2^{v(\frac{n}{p}-\frac{n}{s})\theta _{1}}\sup_{v\in \mathbb{Z}}\big(%
2^{-\frac{vn}{p}}\digamma _{k}^{\ast }(2^{-vn-1})\big)^{\theta
_{1}}\sup_{v\in \mathbb{Z}}\big(2^{-vn(1-\frac{\theta _{1}}{s})}g^{\ast \ast
}(2^{-vn})\big) \\
& \leq 2^{v(\frac{n}{p}-\frac{n}{s})\theta _{1}}\big\|\digamma _{k}\big\|%
_{L^{p,\infty }}^{\theta _{1}}\big\|g\big\|_{L^{(s/\theta _{1})^{\prime
},\infty }}.
\end{align*}%
Now, the second term on the right-hand side of \eqref{Lorentz2.1} can be
estimated from above by%
\begin{align}
& c\sum_{l=0}^{\infty }(\digamma _{k}^{\ast }(2^{(l-v)n}))^{\theta
_{1}}2^{(l-v)n}g^{\ast }(2^{(l-v)n})  \notag \\
& =c2^{v(\frac{n}{p}-\frac{n}{s})\theta _{1}}\sum_{l=0}^{\infty }(\digamma
_{k}^{\ast }(2^{(l-v)n}))^{\theta _{1}}2^{(l-v)n}2^{v(\frac{n}{s}-\frac{n}{p}%
)\theta _{1}}g^{\ast }(2^{(l-v)n})  \notag \\
& =c2^{v(\frac{n}{p}-\frac{n}{s})\theta _{1}}\sum_{l=0}^{\infty }2^{(l-v)n%
\frac{\theta _{1}}{p}}(\digamma _{k}^{\ast }(2^{(l-v)n}))^{\theta
_{1}}2^{(l-v)n(1-\frac{\theta _{1}}{p})}2^{v(\frac{n}{s}-\frac{n}{p})\theta
_{1}}g^{\ast }(2^{(l-v)n}).  \label{Lorentz3}
\end{align}%
The term inside the sum in \eqref{Lorentz3} is dominated by%
\begin{align}
& \sup_{v\in \mathbb{N}_{0}}\big(2^{(l-v)\frac{n}{p}}(\digamma _{k}^{\ast
}(2^{(l-v)n})\big)^{\theta _{1}}\sup_{v\in \mathbb{N}_{0}}\Big(2^{(l-v)n(1-%
\frac{\theta _{1}}{p})}2^{v(\frac{n}{s}-\frac{n}{p})\theta _{1}}g^{\ast
}(2^{(l-v)n})\Big)  \notag \\
& \leq 2^{l(\frac{n}{s}-\frac{n}{p})\theta _{1}}\big\|\digamma _{k}\big\|%
_{L^{p,\infty }}^{\theta _{1}}\sup_{v\in \mathbb{N}_{0}}\big(2^{(l-v)n(1-%
\frac{\theta _{1}}{s})}g^{\ast }(2^{(l-v)n})\big)  \notag \\
& \leq 2^{l(\frac{n}{s}-\frac{n}{p})\theta _{1}}\big\|\digamma _{k}\big\|%
_{L^{p,\infty }}^{\theta _{1}}\big\|g\big\|_{L^{(s/\theta _{1})^{\prime
},\infty }}.  \label{Lorentz4}
\end{align}%
We insert \eqref{Lorentz4} in \eqref{Lorentz3} we get \eqref{Lorentz2} is
bounded by $\big\|\digamma _{k}\big\|_{L^{p,\infty }}^{\theta _{1}}$. This
leads to 
\begin{equation*}
II\lesssim \big\|\lambda \big\|_{\dot{K}_{p,\infty }^{\alpha
_{2},r}f_{\infty }^{s_{2}}}.
\end{equation*}

\textit{Estimate of \eqref{est 1-lorentz}}. The arguments here are quite
similar to those used in the estimation of $II$. The proof is complete.
\end{proof}

Now, we deal with $\alpha _{1}=\alpha _{2}$ in Theorem \ref{embeddings3
copy(1)-lorentz}.

\begin{theorem}
\label{embeddings3 copy(2)-lorentz}\textit{Let }$\alpha ,s_{1},s_{2}\in 
\mathbb{R},0<s,r,p,q<\infty ,0<\theta ,r_{1}\leq \infty $ \textit{and }$%
\alpha >-\frac{n}{s}$. \textit{We suppose that\ }%
\begin{equation}
s_{1}-\frac{n}{s}=s_{2}-\frac{n}{p}.  \label{newexp1.1-lorentz-bis}
\end{equation}%
\textit{Let }$0<p<s<\infty $. Then%
\begin{equation*}
\dot{K}_{p,r_{1}}^{\alpha ,r}f_{\infty }^{s_{2}}\hookrightarrow \dot{K}%
_{s,r_{1}}^{\alpha ,q}f_{\theta }^{s_{1}},
\end{equation*}%
if and only if $0<r\leq q<\infty $.
\end{theorem}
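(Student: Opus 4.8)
The plan is to follow the proof of Theorem \ref{embeddings3 copy(1)-lorentz} with the specialisation $\alpha _{1}=\alpha _{2}=\alpha $; the structural difference is that the source and target Lorentz exponents now coincide (both equal to $r_{1}$), which streamlines the H\"{o}lder estimates, while the new difficulty is that the strict inequality $\alpha _{2}>\alpha _{1}$ used there is no longer available. For the necessity of $0<r\leq q<\infty $ -- which, since $r,q\in (0,\infty )$ are part of the hypotheses, amounts to $r\leq q$ -- I would reuse the test sequence from Step~1 of that proof, working in dimension one: put $\lambda _{v,m}^{N}=2^{-(s_{1}-\frac{1}{s}-\alpha +\frac{1}{2})v}$ when $m=1$ and $1\leq v\leq N$, and $0$ otherwise. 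Since $Q_{v,1}$ lies in the annulus $R_{1-v}$, each Herz term receives a contribution from a single scale $v$; using \eqref{est-function1} and the dimensional balance \eqref{newexp1.1-lorentz-bis} one obtains $\big\|\lambda ^{N}\big\|_{\dot{K}_{s,r_{1}}^{\alpha ,q}f_{\theta }^{s_{1}}}\approx N^{1/q}$ and $\big\|\lambda ^{N}\big\|_{\dot{K}_{p,r_{1}}^{\alpha ,r}f_{\infty }^{s_{2}}}\approx N^{1/r}$, with constants independent of $N$, so the embedding forces $N^{1/q}\lesssim N^{1/r}$ for all $N$, hence $r\leq q$.

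For sufficiency I would first invoke $\ell ^{r}\hookrightarrow \ell ^{q}$ to reduce to the case $q=r$, i.e.\ to proving $\dot{K}_{p,r_{1}}^{\alpha ,r}f_{\infty }^{s_{2}}\hookrightarrow \dot{K}_{s,r_{1}}^{\alpha ,r}f_{\theta }^{s_{1}}$. Given $\lambda $ in the left-hand space I would split the outer Herz sum into $k\leq 0$ and $k\geq 1$ and, for $k\leq 0$, split the inner sum over $v$ at $v=1+c_{n}-k$, exactly as in Step~2 of the proof of Theorem \ref{embeddings3 copy(1)-lorentz}, getting three pieces $I,II,III$. The coarse piece $I$ ($k\leq 0$, $0\leq v\leq 1+c_{n}-k$, where cubes may protrude toward the origin) would be handled as the term $I$ there: the mean-value bound $\sum _{m}|\lambda _{v,m}|^{t}\chi _{v,m}(x)\leq 2^{nv}\big\|\sum _{m}|\lambda _{v,m}|\chi _{v,m}\chi _{B(0,2^{c_{n}-v+2})}\big\|_{L^{t,t}}^{t}$ for $x\in R_{k}$, with $t>0$ chosen so that $\tfrac{1}{t}>\max \big(\tfrac{1}{p},\tfrac{1}{r_{1}},\tfrac{1}{p}+\tfrac{\alpha }{n}\big)$ -- admissible because $\alpha >-\tfrac{n}{s}>-\tfrac{n}{p}$ -- followed by \eqref{est-function1}, H\"{o}lder's inequality for Lorentz spaces (Proposition \ref{Holder's-convolution-lorentz}), the scaling relation \eqref{newexp1.1-lorentz-bis}, Lemma \ref{Lp-estimate}, and the Hardy-type Lemma \ref{lem:lq-inequality}, the only change being that the weak norm $L^{p,\infty }$ of the source in that argument is replaced by $L^{p,r_{1}}$. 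The fine pieces $II$ ($k\leq 0$, $v\geq 2+c_{n}-k$) and $III$ ($k\geq 1$) have the feature that each cube $Q_{v,m}$ meeting $R_{k}$ lies in a boundedly enlarged annulus $\breve{R}_{k}=\cup _{|i|\leq c_{n}^{\prime }}R_{k+i}$ with $c_{n}^{\prime }$ depending only on $n$; these I would treat following the estimation of the term $II$ and of \eqref{est 1-lorentz} -- the decreasing-rearrangement argument over the index sets $A_{k+v}$, the Hardy-Littlewood inequality Lemma \ref{Hardy-Littlewood inequality}, and \eqref{seeger} -- which ultimately amounts to the classical Triebel-Lizorkin embedding \eqref{Sobolev-Tr-Li-lorentz} (valid since $p<s$ and $s_{1}-\tfrac{n}{s}=s_{2}-\tfrac{n}{p}$), localised to $\breve{R}_{k}$ and transferred to the Lorentz $L^{p,r_{1}}$ and $L^{s,r_{1}}$ level.

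The point I expect to need the most care is precisely this last re-summation over $k$ in the fine pieces: in Theorem \ref{embeddings3 copy(1)-lorentz} the strict inequality $\alpha _{2}>\alpha _{1}$ was available to absorb a geometric factor, and here it is not. The resolution I would pursue is that the confinement $Q_{v,m}\subset \breve{R}_{k}$ restricts the relevant coefficients $\lambda _{v,m}$ to a fixed finite band of annuli about $R_{k}$, so the Herz weight $2^{k\alpha }$ is comparable to a constant over the cubes in play, and after summing the localised Sobolev estimates the outer sum collapses -- by a shift of the summation index together with the bounded overlap of the family $\{\breve{R}_{k}\}_{k}$ -- to a harmless multiple of $\big\|\lambda \big\|_{\dot{K}_{p,r_{1}}^{\alpha ,r}f_{\infty }^{s_{2}}}$, where one crucially uses that the outer Herz exponent is the same ($r$) on both sides. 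Combining $I$, $II$ and $III$ then yields the asserted embedding.
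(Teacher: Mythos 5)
Your necessity argument, the reduction to $q=r$ via $\ell ^{r}\hookrightarrow \ell ^{q}$, the treatment of the coarse piece, and the final re-summation over $k$ using the bounded overlap of $\{\breve{R}_{k}\}_{k}$ all match the paper and are sound. The gap is in the fine pieces, and it comes from a misdiagnosis of where the hypothesis $\alpha _{2}>\alpha _{1}$ was used in Theorem \ref{embeddings3 copy(1)-lorentz}. It was \emph{not} used in the outer re-summation over $k$ (that step works for $\alpha _{1}=\alpha _{2}$ exactly as before); it was used, via \eqref{seeger} in \eqref{Lorentz1}, \emph{inside each fixed annulus} $R_{k}$, to exploit the geometric factor $2^{(k+v)(\alpha _{1}-\alpha _{2})}$ (with $k+v\geq c_{n}+2$) and thereby collapse the $\ell ^{\theta }$-sum over $v\geq c_{n}+2-k$ to a supremum over $v$, i.e.\ to pass from the $f_{\theta }$-structure of the target to the $f_{\infty }$-structure of the source. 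When $\alpha _{1}=\alpha _{2}$ this collapse is unavailable, and the entire difficulty of the theorem is the fixed-$k$ estimate
\begin{equation*}
2^{k\alpha }\Big\|\Big(\sum_{v\geq c_{n}+2-k}2^{v(s_{1}+\frac{n}{2})\theta }\sum_{m\in \mathbb{Z}^{n}}|\lambda _{v,m}|^{\theta }\chi _{v,m}\chi _{k}\Big)^{1/\theta }\Big\|_{L^{s,r_{1}}}\lesssim \Big\|\sup_{v}\big(2^{v(s_{2}+\frac{n}{2})+k\alpha }\sum_{m}|\lambda _{v,m}|\chi _{v,m}\chi _{\breve{R}_{k}}\big)\Big\|_{L^{p,r_{1}}},
\end{equation*}
i.e.\ \eqref{Lorentz10}. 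Your proposed resolution (the Herz weight is essentially constant on $\breve{R}_{k}$, bounded overlap, equal outer exponent $r$) only addresses the harmless $k$-sum; it does nothing for the $v$-summation inside $L^{s,r_{1}}$, which is where the problem actually sits.

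Your fallback, "the classical embedding \eqref{Sobolev-Tr-Li-lorentz} localised to $\breve{R}_{k}$ and transferred to the Lorentz $L^{p,r_{1}}$, $L^{s,r_{1}}$ level", cannot simply be cited: the classical statement concerns $L^{p}$ and $L^{s}$ (secondary indices $p$ and $s$), whereas here both sides carry the \emph{same} secondary index $r_{1}$, and establishing this Lorentz-level local estimate is precisely the new content of the proof. The paper does it by keeping the full $v$-sum, dualising the $L^{s/\theta ,r_{1}/\theta }$-norm, applying Lemma \ref{Hardy-Littlewood inequality} and the constancy of $\varpi _{v,k}^{\ast }$ on $[0,2^{-vn})$, and then using H\"{o}lder's inequality \emph{in the $v$-sum} with exponents $r_{1}/\theta $ and $(r_{1}/\theta )^{\prime }$, so that the $\digamma _{k}^{\ast }$-factors resum to $\big\|\digamma _{k}\big\|_{L^{p,r_{1}}}^{\theta }$ -- this is exactly where the matching of the source's Lorentz index $r_{1}$ with the target's is consumed -- with a separate case $\theta \geq r_{1}$ handled by lowering $\theta $ to some $r_{2}<r_{1}$ (and a preliminary reduction to $\theta \leq p$). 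None of this is present, or replaceable by your bounded-overlap observation, in the proposal, so as written the sufficiency part does not go through.
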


\begin{proof}
In view the proof of Theorem \ref{embeddings3 copy(1)-lorentz}, we consider
only the sufficiency of the conditions and we employ the same notations as
in such theorem, but with $\alpha _{1}=\alpha _{2}$. We only need to
estimate\ $II$ of Theorem \ref{embeddings3 copy(1)-lorentz}. In view of the
embedding $\ell ^{r}\hookrightarrow \ell ^{q}$, it is sufficient to prove
that 
\begin{equation*}
\dot{K}_{p,r_{1}}^{\alpha ,r}f_{\infty }^{s_{2}}\hookrightarrow \dot{K}%
_{s,r_{1}}^{\alpha ,r}f_{\theta }^{s_{1}}.
\end{equation*}%
We can suppose that $\theta \leq p$, since the opposite case can be obtained
by the fact that\ $\ell ^{p}\hookrightarrow \ell ^{\theta }$, if $p\leq
\theta $. Let $\lambda \in \dot{K}_{p,r_{1}}^{\alpha ,r}f_{\infty }^{s_{2}}$%
. Here the estimates are inspired by \cite{ST19}\ and\ \cite{Vybiral08}. We
distinguish two cases.

\textit{Case 1. }$\theta <r_{1}$. We need to prove that 
\begin{equation}
2^{k\alpha }\Big\|\Big(\sum_{v=c_{n}+2-k}^{\infty }2^{v(s_{1}+\frac{n}{2}%
)\theta }\sum_{m\in \mathbb{Z}^{n}}|\lambda _{v,m}|^{\theta }\chi _{v,m}\chi
_{k}\Big)^{1/\theta }\Big\|_{L^{s,r_{1}}}\lesssim \big\|\digamma _{k}\big\|%
_{L^{p,r_{1}}},  \label{Lorentz10}
\end{equation}%
where the implicit constant is independent of $k$. The left-hand side of %
\eqref{Lorentz10} with power $\theta $ can be estimated from above by%
\begin{equation}
\Big\|\Big(\sum_{v=c_{n}+2-k}^{\infty }2^{v(s_{1}-s_{2})\theta }\varpi
_{v,k}^{\theta }\Big)^{1/\theta }\Big\|_{L^{s,r_{1}}}^{\theta }=\Big\|%
\sum_{v=c_{n}+2-k}^{\infty }2^{v(s_{1}-s_{2})\theta }\varpi _{v,k}^{\theta }%
\Big\|_{L^{s/\theta ,r_{1}/\theta }}.  \label{Lorentz6}
\end{equation}%
Using duality the right-hand side of \eqref{Lorentz6} is comparable to%
\begin{equation}
\sup_{g\in L^{(s/\theta )^{\prime },(r_{1}/\theta )^{\prime }},\big\|g\big\|%
_{L^{(s/\theta )^{\prime },(r_{1}/\theta )^{\prime }}}\leq 1}\int_{\mathbb{R}%
^{n}}\sum_{v=c_{n}+2-k}^{\infty }2^{v(s_{1}-s_{2})\theta }(\varpi
_{v,k}(x))^{\theta }g(x)dx.  \label{Lorentz5}
\end{equation}%
It follows from Lemma \ref{Hardy-Littlewood inequality} that%
\begin{equation}
\sum_{v=c_{n}+2-k}^{\infty }2^{v(s_{1}-s_{2})\theta }\int_{\mathbb{R}%
^{n}}(\varpi _{v,k}(x))^{\theta }g(x)dx\leq \sum_{v=0}^{\infty
}2^{v(s_{1}-s_{2})\theta }\int_{0}^{\infty }(\varpi _{v,k}^{\ast
}(t))^{\theta }g^{\ast }(t)dt.  \label{sum3-lorentz}
\end{equation}%
Since, $\varpi _{v,k}^{\ast }$ is constant in $[0,2^{-vn})$, we have%
\begin{align*}
\int_{0}^{\infty }(\varpi _{v,k}^{\ast }(t))^{\theta }g^{\ast }(t)dt& \leq
\varpi _{v,k}^{\ast }(2^{-vn-1})\int_{0}^{2^{-vn}}g^{\ast
}(t)dt+\sum_{l=0}^{\infty }\int_{2^{(l-v)n}}^{2^{(l-v)n+n}}(\varpi
_{v,k}^{\ast }(t))^{\theta }g^{\ast }(t)dt \\
& \lesssim \sum_{l=0}^{\infty }(\varpi _{v,k}^{\ast }(2^{(l-v)n-1}))^{\theta
}2^{(l-v)n}g^{\ast \ast }(2^{(l-v)n+n}).
\end{align*}%
Inserting this estimate in \eqref{sum3-lorentz}, we get%
\begin{align}
& \sum_{v=c_{n}+2-k}^{\infty }2^{v(s_{1}-s_{2})\theta }\int_{\mathbb{R}%
^{n}}(\varpi _{v,k}(x))^{\theta }g(x)dx  \notag \\
& \lesssim \sum_{v=c_{n}+2-k}^{\infty }\sum_{l=0}^{\infty
}2^{v(s_{1}-s_{2})\theta }(\digamma _{k}^{\ast }(2^{(l-v)n-1}))^{\theta
}2^{(l-v)n}g^{\ast \ast }(2^{(l-v)n+n}),  \label{lorentz1-bis}
\end{align}%
where the implicit constant is independent of $k$. Since $s_{1}-s_{2}=\tfrac{%
n}{s}-\frac{n}{p}$, we obtain that \eqref{lorentz1-bis} is just%
\begin{align}
& c\sum_{l=0}^{\infty }\sum_{v=c_{n}+2-k}^{\infty }(\digamma _{k}^{\ast
}((2^{(l-v)n-1}))^{\theta }2^{(l-v)n}2^{v(\frac{n}{s}-\frac{n}{p})\theta
}g^{\ast \ast }(2^{(l-v)n+n})  \notag \\
& =c\sum_{l=0}^{\infty }\sum_{v=c_{n}+2-k}^{\infty }2^{(l-v)n\frac{\theta }{p%
}}(\digamma _{k}^{\ast }(2^{(l-v)n-1}))^{\theta }2^{(l-v)n(1-\frac{\theta }{p%
})}2^{v(\frac{n}{s}-\frac{n}{p})\theta }g^{\ast \ast }(2^{(l-v)n+n}).
\label{Lorentz7}
\end{align}%
H\"{o}lder's inequality implies that the\ second sum in \eqref{Lorentz7} can
be estimated from above by%
\begin{align}
& \Big(\sum_{v=c_{n}+2-k}^{\infty }2^{(l-v)n\frac{r_{1}}{p}}(\digamma
_{k}^{\ast }(2^{(l-v)n-1}))^{r_{1}}\Big)^{\theta /r_{1}}  \notag \\
& \times \Big(\sum_{v=0}^{\infty }\big(2^{(l-v)n(1-\frac{\theta }{p})}2^{v(%
\frac{n}{s}-\frac{n}{p})\theta }g^{\ast \ast }(2^{(l-v)n+n})\big)%
^{(r_{1}/\theta _{1})^{\prime }}\Big)^{1/(r_{1}/\theta )^{\prime }}  \notag
\\
& \leq \big\|\digamma _{k}\big\|_{L^{p,r_{1}}}^{\theta }  \notag \\
& \times \Big(\sum_{v=0}^{\infty }\big(2^{(l-v)n(1-\frac{\theta }{p})}2^{v(%
\frac{n}{s}-\frac{n}{p})\theta }g^{\ast \ast }(2^{(l-v)n+n})\big)%
^{(r_{1}/\theta )^{\prime }}\Big)^{1/(r_{1}/\theta )^{\prime }}.
\label{Lorentz9}
\end{align}%
Observe that%
\begin{align}
& \sum_{v=0}^{\infty }\big(2^{(l-v)n(1-\frac{\theta }{p})}2^{v(\frac{n}{s}-%
\frac{n}{p})\theta }g^{\ast \ast }(2^{(l-v)n+n})\big)^{(r_{1}/\theta
)^{\prime }}  \notag \\
& \leq 2^{l(\frac{n}{s}-\frac{n}{p})(\frac{r_{1}}{\theta })^{\prime }\theta
}\sum_{v=0}^{\infty }\big(2^{(l-v)n(1-\frac{\theta }{p})}2^{(v-l)(\frac{n}{s}%
-\frac{n}{p})\theta }g^{\ast \ast }(2^{(l-v)n+n})\big)^{(r_{1}/\theta
)^{\prime }}  \notag \\
& \leq 2^{l(\frac{n}{s}-\frac{n}{p})(\frac{r_{1}}{\theta })^{\prime }\theta
}\sum_{v=0}^{\infty }\big(2^{(l-v)n(1-\frac{\theta }{s})}g^{\ast \ast
}(2^{(l-v)n+n})\big)^{(r_{1}/\theta )^{\prime }}  \notag \\
& \leq 2^{l(\frac{n}{s}-\frac{n}{p})(\frac{r_{1}}{\theta })^{\prime }\theta }%
\big\|g\big\|_{L^{(s/\theta )^{\prime },(r_{1}/\theta )^{\prime
}}}^{(r_{1}/\theta )^{\prime }}.  \label{Lorentz8}
\end{align}%
We insert \eqref{Lorentz8} in \eqref{Lorentz9}, we get \eqref{Lorentz5} is
bounded by $c\big\|\digamma _{k}\big\|_{L^{p,r_{1}}}^{\theta }$. This prove %
\eqref{Lorentz10}.

\textit{Case 2. }$\theta \geq r_{1}$. Let $r_{2}>0$ be such that $%
r_{2}<r_{1} $. The left-hand side of \eqref{Lorentz10} is bounded by 
\begin{equation}
\Big\|\Big(\sum_{v=0}^{\infty }2^{v(s_{1}-s_{2})r_{2}}\varpi _{v,k}^{r_{2}}%
\Big)^{1/r_{2}}\Big\|_{L^{s,r_{1}}}^{r_{2}}.  \label{Lorentz11}
\end{equation}%
Now, repeating the arguments of Case 1, we obtain that \eqref{Lorentz11} is
bounded by $c\big\|\digamma _{k}\big\|_{L^{p,r_{1}}}$. The proof is complete.
\end{proof}

Finally, we deal with $0<s\leq p<\infty $ in Theorem \ref{embeddings3
copy(1)-lorentz}.

\begin{theorem}
\label{embeddings3 copy(3)-lorentz}\textit{Let }$\alpha _{1},\alpha
_{2},s_{1},s_{2}\in \mathbb{R},0<s,p,q,r<\infty ,0<\beta ,r_{1},r_{2}\leq
\infty ,\alpha _{1}>-\frac{n}{s}\ $\textit{and }$\alpha _{2}>-\frac{n}{p}$. 
\textit{Assume }\eqref{newexp1.1-lorentz} and%
\begin{equation}
\alpha _{2}+\frac{n}{p}\geq \alpha _{1}+\frac{n}{s}.
\label{newexp1.3-lorentz}
\end{equation}%
\textit{Let }$0<s\leq p<\infty $. Then%
\begin{equation*}
\dot{K}_{p,r_{2}}^{\alpha _{2},r}f_{\theta }^{s_{2}}\hookrightarrow \dot{K}%
_{s,r_{1}}^{\alpha _{1},q}f_{\beta }^{s_{1}},
\end{equation*}%
if and only if $0<r\leq q<\infty $, where%
\begin{equation*}
\theta =\left\{ 
\begin{array}{ccc}
\beta , & \text{if} & \alpha _{2}+\frac{n}{p}=\alpha _{1}+\frac{n}{s}, \\ 
\infty , &  & \text{otherwise,}%
\end{array}%
\right.
\end{equation*}%
and $r_{1}=r_{2}$ if $s=p.$
\end{theorem}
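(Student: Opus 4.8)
The plan is to avoid any new hard estimate and instead reduce the statement to results already established for the case $0<p<s$, namely Theorem~\ref{embeddings3 copy(1)-lorentz}, together with the change-of-integrability embedding for Lorentz--Herz spaces, Proposition~\ref{embeddings1 copy(1)-lorentz}(ii) (equivalently Theorem~\ref{embeddings1.1-lorentz}(iv)). For \emph{necessity}, I would argue exactly as in Step~1 of the proof of Theorem~\ref{embeddings3 copy(1)-lorentz}: testing the embedding on the sequences $\lambda^{N}$ concentrated on the frequencies $v\in\{1,\dots,N\}$ at the single lattice point $m=1$ produces two quasi-norms comparable to $N^{1/q}$ and $N^{1/r}$, so that $N^{1/q-1/r}\lesssim 1$ for every $N$ forces $r\le q$; this construction is insensitive to the relation between $s$ and $p$. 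The necessity of the scaling identity \eqref{newexp1.1-lorentz} follows, as in Theorems~\ref{embeddings3-lorentz} and~\ref{embeddings3 copy(1)-lorentz}, by plugging a single dilated bump $f_{N}=\omega(2^{N}\cdot)$ with $\mathcal{F}\omega$ supported in a fixed annulus and letting $N\to\pm\infty$; together with \eqref{newexp1.3-lorentz} this also makes $s_{1}\le s_{2}$ necessary.

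For \emph{sufficiency} I would split according to $\theta$. In the borderline case $\alpha_{2}+\frac{n}{p}=\alpha_{1}+\frac{n}{s}$ (so $\theta=\beta$), the relation \eqref{newexp1.1-lorentz} forces $s_{1}=s_{2}$, and, using $\ell^{r}\hookrightarrow\ell^{q}$ (Lemma~\ref{embeddings1-lorentz}(i)) to reduce to $q=r$, the claim becomes $\|G\|_{\dot{K}_{s,r_{1}}^{\alpha_{1},r}}\lesssim\|G\|_{\dot{K}_{p,r_{2}}^{\alpha_{2},r}}$ for the non-negative aggregate $G=\big(\sum_{v\geq 0}2^{v(s_{2}+n/2)\beta}\sum_{m}|\lambda_{v,m}|^{\beta}\chi_{v,m}\big)^{1/\beta}$, with $\alpha_{1}=\alpha_{2}-\frac{n}{s}+\frac{n}{p}$. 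When $s<p$ this is precisely Proposition~\ref{embeddings1 copy(1)-lorentz}(ii) applied to the single function $G$; when $s=p$ one has $\alpha_{1}=\alpha_{2}$ and $r_{1}=r_{2}$ by hypothesis, so it is trivial.

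In the generic case $\alpha_{2}+\frac{n}{p}>\alpha_{1}+\frac{n}{s}$ (so $\theta=\infty$) I would insert an auxiliary integrability exponent. Using strictness, pick $\widetilde{p}$ with $0<\widetilde{p}<s$ (or $0<\widetilde{p}<p$ when $s=p$) and $\alpha_{1}+\frac{n}{\widetilde{p}}<\alpha_{2}+\frac{n}{p}$, and set $\widetilde{\alpha}=\alpha_{2}-\frac{n}{\widetilde{p}}+\frac{n}{p}$, so that $\widetilde{\alpha}>\alpha_{1}$, $\widetilde{\alpha}>-\frac{n}{\widetilde{p}}$, and, by \eqref{newexp1.1-lorentz}, $s_{2}-\frac{n}{\widetilde{p}}-\widetilde{\alpha}=s_{2}-\frac{n}{p}-\alpha_{2}=s_{1}-\frac{n}{s}-\alpha_{1}$. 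Then Theorem~\ref{embeddings1.1-lorentz}(iv), applied to the $F_{\infty}$-aggregate (equivalently Proposition~\ref{embeddings1 copy(1)-lorentz}(ii) applied to $\sup_{v}2^{v(s_{2}+n/2)}\sum_{m}|\lambda_{v,m}|\chi_{v,m}$), gives $\dot{K}_{p,r_{2}}^{\alpha_{2},r}F_{\infty}^{s_{2}}\hookrightarrow\dot{K}_{\widetilde{p},\infty}^{\widetilde{\alpha},r}F_{\infty}^{s_{2}}$, and Theorem~\ref{embeddings3 copy(1)-lorentz}, whose hypotheses $0<\widetilde{p}<s$, $\widetilde{\alpha}>\alpha_{1}>-\frac{n}{s}$, the scaling identity above and $0<r\le q<\infty$ are all in force, gives $\dot{K}_{\widetilde{p},\infty}^{\widetilde{\alpha},r}F_{\infty}^{s_{2}}\hookrightarrow\dot{K}_{s,r_{1}}^{\alpha_{1},q}F_{\beta}^{s_{1}}$; composing the two finishes the case.

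The main obstacle will not be a hard inequality but getting the parameter bookkeeping of the last step right: one must check that the auxiliary exponent $\widetilde{p}$ can be chosen so that the scaling identity is preserved exactly under the change-of-integrability step while keeping $\widetilde{\alpha}>\alpha_{1}$, and this is precisely where the strictness in \eqref{newexp1.3-lorentz} (i.e.\ the hypothesis $\theta=\infty$) is indispensable, which is why the borderline $\theta=\beta$ has to be peeled off and treated by the change-of-integrability embedding instead. If instead one insisted on a self-contained argument modelled on the proof of Theorem~\ref{embeddings3 copy(1)-lorentz}, the genuinely delicate point would be the high-frequency contribution $v\geq c_{n}+2-k$, to be handled by the decreasing-rearrangement and Hardy--Littlewood duality device combined with $s\le p$ and the Hardy-type Lemma~\ref{lem:lq-inequality}, the low-frequency part $v\leq c_{n}+1-k$ being controlled by H\"older's inequality in Lorentz spaces over each annulus $R_{k}$, which supplies the factor $2^{kn(1/s-1/p)}$.
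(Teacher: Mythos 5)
Your proposal is correct, and for the sufficiency part it takes a genuinely different route from the paper. The paper reuses the decomposition from Theorem \ref{embeddings3 copy(1)-lorentz} and only re-estimates the high-frequency term $II$ directly: H\"older's inequality on each annulus $R_k$ together with \eqref{est-function1} converts $\|\cdot\chi_k\|_{L^{s,r_1}}$ into $2^{kn(1/s-1/p)}\|\cdot\chi_k\|_{L^{p,r_2}}$, and the resulting geometric factor $2^{(v+k)(\alpha_1-\alpha_2+\frac{n}{s}-\frac{n}{p})}$ (negative exponent when the inequality in \eqref{newexp1.3-lorentz} is strict) is summed over $v$ to pass to the $\sup_v$; the borderline case is then dismissed as easy. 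You instead avoid any new estimate by factoring the embedding: in the borderline case $\alpha_2+\frac{n}{p}=\alpha_1+\frac{n}{s}$ you observe $s_1=s_2$ and apply the change-of-integrability embedding of Proposition \ref{embeddings1 copy(1)-lorentz}(ii) to the single aggregate function $G$, and in the generic case you interpose $\dot{K}_{\widetilde p,\infty}^{\widetilde\alpha,r}f_\infty^{s_2}$ with $\widetilde p<s$ chosen so that $\widetilde\alpha=\alpha_2-\frac{n}{\widetilde p}+\frac{n}{p}>\alpha_1$ and the scaling identity is preserved, then invoke Theorem \ref{embeddings3 copy(1)-lorentz}. Your parameter bookkeeping checks out (strictness of \eqref{newexp1.3-lorentz} is exactly what lets $\widetilde p$ be pushed below $s$ while keeping $\widetilde\alpha>\alpha_1$, and $\widetilde\alpha+\frac{n}{\widetilde p}=\alpha_2+\frac{n}{p}>0$), and the necessity argument via the sequences $\lambda^N$ is the same as the paper's and indeed does not use the relation between $s$ and $p$. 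What each approach buys: yours is modular and transparent but leans on the full strength of the hard case $p<s$; the paper's is more elementary in this regime, since $s\le p$ makes the H\"older step on annuli essentially free and gives a self-contained two-line estimate.
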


\begin{proof}
First the necessity of \eqref{newexp1.1-lorentz} and %
\eqref{newexp1.3-lorentz} follow by using the same type of arguments as in
the proof of Theorem \ref{embeddings3-lorentz}. We need only estimate only $%
II$ of Theorem \ref{embeddings3 copy(1)-lorentz}. For simplicity, we put $%
\beta =1$. H\"{o}lder's inequality and \eqref{est-function1} imply that%
\begin{align*}
II\leq & \Big\|\sum_{v=0}^{\infty }2^{(\frac{n}{s}-\frac{n}{p}+\alpha
_{1}-\alpha _{2})(v+k)}2^{v(s_{2}+\frac{n}{2})+k\alpha _{2}}\sum_{m\in 
\mathbb{Z}^{n}}|\lambda _{v,m}|\chi _{v,m}\chi _{k}\Big\|_{L^{p,r_{2}}}^{s}
\\
\leq & \Big\|\sup_{v\in \mathbb{N}_{0}}\sum_{m\in \mathbb{Z}^{n}}2^{v(s_{2}+%
\frac{n}{2})+k\alpha _{2}}|\lambda _{v,m}|\chi _{v,m}\chi _{k}\Big\|%
_{L^{p,r_{2}}}^{s},
\end{align*}%
whenever $\alpha _{2}+\frac{n}{p}>\alpha _{1}+\frac{n}{s}$. The remaining
case can be easily solved. The proof is complete.
\end{proof}

From Theorems \ref{phi-tran-lorentz}, \ref{embeddings3 copy(1)-lorentz},\ref%
{embeddings3 copy(2)-lorentz} and \ref{embeddings3 copy(3)-lorentz}, we have
the following Sobolev embedding for\ spaces\ $\dot{K}_{p,r}^{\alpha
,q}F_{\beta }^{s}$.

\begin{theorem}
\label{embeddings3 copy(4)-lorentz}\textit{Let }$\alpha ,\alpha _{1},\alpha
_{2},s_{1},s_{2}\in \mathbb{R},0<s,r,p,q<\infty ,0<\theta ,r_{1},r_{2},\beta
\leq \infty ,\alpha _{1}>-\frac{n}{s}\ $\textit{and }$\alpha _{2}>-\frac{n}{p%
}$. \newline
$\mathrm{(i)}$ Under the hypothesis of Theorem \ref{embeddings3
copy(1)-lorentz}\ we have%
\begin{equation*}
\dot{K}_{p,\infty }^{\alpha _{2},r}F_{\infty }^{s_{2}}\hookrightarrow \dot{K}%
_{s,r_{1}}^{\alpha _{1},q}F_{\theta }^{s_{1}}.
\end{equation*}%
The condition \eqref{newexp1.1-lorentz} becomes necessary.\newline
$\mathrm{(ii)}$\ Under the hypothesis of Theorem \ref{embeddings3
copy(2)-lorentz} we have%
\begin{equation*}
\dot{K}_{p,r_{1}}^{\alpha ,r}F_{\infty }^{s_{2}}\hookrightarrow \dot{K}%
_{s,r_{1}}^{\alpha ,q}F_{\theta }^{s_{1}}.
\end{equation*}%
The condition \eqref{newexp1.1-lorentz-bis} becomes necessary.\newline
$\mathrm{(iii)}$ Under the hypothesis of Theorem \ref{embeddings3
copy(3)-lorentz} we have%
\begin{equation*}
\dot{K}_{p,r_{2}}^{\alpha _{2},r}F_{\theta }^{s_{2}}\hookrightarrow \dot{K}%
_{s,r_{1}}^{\alpha _{1},q}F_{\beta }^{s_{1}}.
\end{equation*}%
The conditions \eqref{newexp1.1-lorentz} and \eqref{newexp1.3-lorentz}
become necessary.
\end{theorem}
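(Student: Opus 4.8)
The plan is to prove Theorem \ref{embeddings3 copy(4)-lorentz} by transference: each of the three function-space embeddings will be deduced from the corresponding sequence-space embedding in Theorems \ref{embeddings3 copy(1)-lorentz}, \ref{embeddings3 copy(2)-lorentz} and \ref{embeddings3 copy(3)-lorentz} by conjugating with the $\varphi$-transform. Fix admissible $\Phi,\Psi\in\mathcal S(\mathbb R^n)$ and $\varphi,\psi\in\mathcal S(\mathbb R^n)$ satisfying \eqref{Ass1}, \eqref{Ass2} and \eqref{Ass3}. In all three parts the inner Herz exponent of the source space (namely $r$) and of the target space (namely $q$) is finite, and $\alpha_2>-\frac np$, $\alpha_1>-\frac ns$; hence Theorem \ref{phi-tran-lorentz} applies simultaneously to both endpoint spaces, giving that $S_\varphi$ is bounded from each source space into the matching sequence space, that $T_\psi$ is bounded from each target sequence space into $\dot K_{s,r_1}^{\alpha_1,q}F_\theta^{s_1}$ (respectively $\dot K_{s,r_1}^{\alpha_1,q}F_\beta^{s_1}$), and that $T_\psi\circ S_\varphi$ is the identity on the source space.

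For the sufficiency in part (i), let $f\in\dot K_{p,\infty}^{\alpha_2,r}F_\infty^{s_2}$. By Theorem \ref{phi-tran-lorentz}, $S_\varphi f$ belongs to $\dot K_{p,\infty}^{\alpha_2,r}f_\infty^{s_2}$ with norm $\lesssim\|f\|_{\dot K_{p,\infty}^{\alpha_2,r}F_\infty^{s_2}}$; by Theorem \ref{embeddings3 copy(1)-lorentz}, under its hypotheses $S_\varphi f\in\dot K_{s,r_1}^{\alpha_1,q}f_\theta^{s_1}$ with a control of the quasi-norm; and since $f=T_\psi(S_\varphi f)$ with $T_\psi$ bounded from $\dot K_{s,r_1}^{\alpha_1,q}f_\theta^{s_1}$ to $\dot K_{s,r_1}^{\alpha_1,q}F_\theta^{s_1}$, we conclude that $f\in\dot K_{s,r_1}^{\alpha_1,q}F_\theta^{s_1}$ and
\[
\big\|f\big\|_{\dot K_{s,r_1}^{\alpha_1,q}F_\theta^{s_1}}\lesssim\big\|S_\varphi f\big\|_{\dot K_{s,r_1}^{\alpha_1,q}f_\theta^{s_1}}\lesssim\big\|S_\varphi f\big\|_{\dot K_{p,\infty}^{\alpha_2,r}f_\infty^{s_2}}\lesssim\big\|f\big\|_{\dot K_{p,\infty}^{\alpha_2,r}F_\infty^{s_2}}.
\]
Parts (ii) and (iii) follow by the identical three-step chain, with Theorem \ref{embeddings3 copy(1)-lorentz} replaced by Theorem \ref{embeddings3 copy(2)-lorentz}, respectively Theorem \ref{embeddings3 copy(3)-lorentz}, and with the source and target sequence spaces taken exactly as there (so in (iii) the source is $\dot K_{p,r_2}^{\alpha_2,r}f_\theta^{s_2}$ with $\theta$ as prescribed, and $r_1=r_2$ when $s=p$).

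For the necessity of the balance conditions \eqref{newexp1.1-lorentz} (in (i) and (iii)), \eqref{newexp1.1-lorentz-bis} (in (ii)) and \eqref{newexp1.3-lorentz} (in (iii)), the plan is to argue directly on the function side with the dilated test functions already used in Steps 2 and 3 of the proof of Theorem \ref{embeddings3-lorentz}. For $\omega\in\mathcal S(\mathbb R^n)$ with $\operatorname{supp}\mathcal F\omega\subset\{\frac34<|\xi|<1\}$ and $f_N(x)=\omega(2^Nx)$, $N\in\mathbb N$, only the $j=N$ Littlewood--Paley block survives, so $\|f_N\|_{\dot K_{p,r}^{\alpha,q}F_\beta^{s}}=2^{sN}\|f_N\|_{\dot K_{p,r}^{\alpha,q}}=2^{(s-\alpha-n/p)N}\|\omega\|_{\dot K_{p,r}^{\alpha,q}}$ by \eqref{dilation-lorentz} and \eqref{est-function1}; comparing the source and target quasi-norms and letting $N\to+\infty$ forces $s_1-\frac ns-\alpha_1\le s_2-\frac np-\alpha_2$, while the second family $f_N(x)=\varpi(2^Nx)$ with $\operatorname{supp}\mathcal F\varpi\subset\{|\xi|<1\}$, $N\in\mathbb Z\setminus\mathbb N_0$, activates only the $j=0$ block and, as $N\to-\infty$, forces $\alpha_2+\frac np\ge\alpha_1+\frac ns$; combined with the scaling relations assumed in the relevant sequence-space theorem these give the asserted necessity. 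The sharper necessity of \eqref{newexp1.1-lorentz} as an equality, and of $0<r\le q<\infty$ in parts (i)--(ii), is obtained by transporting through $T_\psi$ the sparse test sequences $\lambda^N$ built in Step 1 of the proof of Theorem \ref{embeddings3 copy(1)-lorentz} and invoking the norm equivalence $\|\lambda\|_{\dot K_{p,r}^{\alpha,q}a_\beta^{s}}\approx\|f\|_{\dot K_{p,r}^{\alpha,q}A_\beta^{s}}$ provided by Theorem \ref{phi-tran-lorentz}, which reduces the matter to the sequence-space computations already carried out there.

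The chaining in the sufficiency proof is purely formal; the point that demands care — and the one I expect to be the main obstacle — is the parameter bookkeeping. One must check that the precise hypotheses of the invoked sequence-space theorem (the exact value of $\theta$, the constraint $0<r\le q<\infty$, the balance equality, and the restriction $r_1=r_2$ when $s=p$) are inherited verbatim, and that Theorem \ref{phi-tran-lorentz} is legitimately applicable to both endpoint $F$-spaces at once, i.e.\ that the exclusion of $q=\infty$ in the $F$-case is never triggered — which holds here because every inner Herz exponent occurring is finite. Once these verifications are in place, the proof is complete.
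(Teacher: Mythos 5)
Your proposal is correct and follows essentially the same route as the paper: the sufficiency is obtained by conjugating the sequence-space embeddings of Theorems \ref{embeddings3 copy(1)-lorentz}, \ref{embeddings3 copy(2)-lorentz} and \ref{embeddings3 copy(3)-lorentz} with the $\varphi$-transform of Theorem \ref{phi-tran-lorentz}, and the necessity is reduced, via the dilated test functions of Theorem \ref{embeddings3-lorentz} and the norm equivalence from the $\varphi$-transform, to the computations already done on the sequence side. Your parameter bookkeeping (finiteness of the inner exponents, the value of $\theta$, and $r_1=r_2$ when $s=p$) matches the paper's hypotheses, so nothing further is needed.
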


From Theorem\ \ref{embeddings3 copy(4)-lorentz}\ and the fact that $\dot{K}%
_{s}^{0,s}F_{\beta }^{s_{1}}=F_{s,\beta }^{s_{1}}$\ we immediately arrive at
the following results.

\begin{theorem}
\label{embeddings4.1-lorentz}Let $s_{1},s_{2}\in \mathbb{R},0<s,p<\infty
,0<\theta ,r_{1},r_{2},\beta \leq \infty ,\alpha >-\frac{n}{p}$ and%
\begin{equation*}
s_{1}-\frac{n}{s}=s_{2}-\frac{n}{p}-\alpha .
\end{equation*}%
$\mathrm{(i)}$ Assume that $0<p<s<\infty ,0<r\leq s<\infty $\ and $\alpha >0$%
. Then%
\begin{equation*}
\dot{K}_{p,\infty }^{\alpha ,r}F_{\theta }^{s_{2}}\hookrightarrow F_{s,\beta
}^{s_{1}}.
\end{equation*}%
$\mathrm{(ii)}$\ Assume that $0<p<s<\infty \ $and $0<\max (r,r_{1})\leq
s<\infty $. Then%
\begin{equation*}
\dot{K}_{p,r_{1}}^{0,r}F_{\theta }^{s_{2}}\hookrightarrow F_{s,\beta
}^{s_{1}}.
\end{equation*}%
$\mathrm{(iii)}$\ Assume that $0<s\leq p<\infty ,0<r\leq s<\infty $ and\ $%
\alpha \geq \frac{n}{s}-\frac{n}{p}$.. Then 
\begin{equation*}
\dot{K}_{p,r_{2}}^{\alpha ,r}F_{\theta }^{s_{2}}\hookrightarrow F_{s,\beta
}^{s_{1}},
\end{equation*}%
where $r_{1}=r_{2}$ if $s=p$ and%
\begin{equation*}
\theta =\left\{ 
\begin{array}{ccc}
\beta , & \text{if} & 0<s\leq p<\infty \text{ and }\alpha =\frac{n}{s}-\frac{%
n}{p}, \\ 
\infty , &  & \text{otherwise.}%
\end{array}%
\right.
\end{equation*}
\end{theorem}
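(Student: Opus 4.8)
The plan is to obtain all three embeddings as immediate consequences of Theorem~\ref{embeddings3 copy(4)-lorentz}, specialized to $\alpha_1=0$, combined with the identification
\begin{equation*}
\dot{K}_{s,s}^{0,s}F_\beta^{s_1}=\dot{K}_s^{0,s}F_\beta^{s_1}=F_{s,\beta}^{s_1},
\end{equation*}
which follows from Lemma~\ref{embeddings1-lorentz}/(ii) (the Lorentz--Herz space $\dot{K}_{s,s}^{0,s}$ coincides with the Herz space $\dot{K}_s^{0,s}$, and this passes to the Triebel--Lizorkin scale built on it) and the identity $\dot{K}_s^{0,s}F_\beta^{s_1}=F_{s,\beta}^{s_1}$ recorded just before the statement. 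Thus it suffices, in each case, to produce an embedding of the space on the left into $\dot{K}_{s,s}^{0,s}F_\beta^{s_1}$.

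First I would bring the left-hand side into the precise shape required by Theorem~\ref{embeddings3 copy(4)-lorentz}. Since $\ell^\theta\hookrightarrow\ell^\infty$, Theorem~\ref{embeddings1.1-lorentz}/(i) gives $\dot{K}_{p,r_1}^{\alpha,r}F_\theta^{s_2}\hookrightarrow\dot{K}_{p,r_1}^{\alpha,r}F_\infty^{s_2}$ for all admissible indices, which handles the passage from $F_\theta^{s_2}$ to $F_\infty^{s_2}$ needed in cases~(i) and~(ii). In case~(ii) I would additionally raise the Lorentz fine index from $r_1$ to $s$ by means of Theorem~\ref{embeddings1.1-lorentz}/(v), namely $\dot{K}_{p,r_1}^{0,r}F_\infty^{s_2}\hookrightarrow\dot{K}_{p,s}^{0,r}F_\infty^{s_2}$, valid because $r_1\le s$; this is exactly why the hypothesis there is $\max(r,r_1)\le s$. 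In case~(iii) the left-hand index $\theta$ equals $\beta$ precisely when $\alpha=\frac{n}{s}-\frac{n}{p}$ and equals $\infty$ otherwise, matching Theorem~\ref{embeddings3 copy(3)-lorentz}, so no preliminary step is needed.

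Then I would apply Theorem~\ref{embeddings3 copy(4)-lorentz} with the uniform choices $\alpha_1=0$, $\alpha_2=\alpha$, $q=s$, and Lorentz fine index on the target side equal to $s$: part~(i) for the first assertion, part~(ii) for the second, part~(iii) for the third. With these choices the hypotheses of the cited Theorems~\ref{embeddings3 copy(1)-lorentz}, \ref{embeddings3 copy(2)-lorentz} and~\ref{embeddings3 copy(3)-lorentz} reduce respectively to: $0<p<s<\infty$, $\alpha>0$, the dimensional balance $s_1-\frac{n}{s}=s_2-\frac{n}{p}-\alpha$ and $0<r\le s<\infty$; then $0<p<s<\infty$, the same balance with $\alpha=0$ and $0<r\le s<\infty$; and finally $0<s\le p<\infty$, $\alpha+\frac{n}{p}\ge\frac{n}{s}$, the balance, $0<r\le s<\infty$, the stated form of $\theta$, and the coincidence of the two Lorentz fine indices when $s=p$, which --- since the target index is $s$ --- is exactly the side condition that $r_1=r_2$ when $s=p$. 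Each application outputs an embedding into $\dot{K}_{s,s}^{0,s}F_\beta^{s_1}=F_{s,\beta}^{s_1}$; chaining it with the reductions of the previous paragraph yields the claim.

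The proof is therefore purely formal: it assembles results already established in the paper and introduces no new estimate. The one point demanding care is the parameter bookkeeping --- checking that the specialization $q=s$ is compatible with the constraint $0<r\le q<\infty$ present in all three source theorems (whence the assumption $0<r\le s<\infty$), that $\alpha_1=0<\alpha_2$ forces $\alpha>0$ in case~(i), and that the equality of Lorentz fine indices imposed in Theorem~\ref{embeddings3 copy(3)-lorentz} when $s=p$ is faithfully reflected by the displayed side condition. I expect this verification to be the only nontrivial aspect.
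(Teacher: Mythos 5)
Your proposal is correct and follows essentially the same route as the paper: the paper obtains this theorem as an immediate consequence of Theorem \ref{embeddings3 copy(4)-lorentz} specialized to $\alpha_{1}=0$, $q=s$ and target Lorentz index $s$, together with the identification $\dot{K}_{s,s}^{0,s}F_{\beta}^{s_{1}}=\dot{K}_{s}^{0,s}F_{\beta}^{s_{1}}=F_{s,\beta}^{s_{1}}$. Your additional monotonicity reductions ($\ell^{\theta}\hookrightarrow\ell^{\infty}$ and raising the Lorentz fine index via Theorem \ref{embeddings1.1-lorentz}) and the parameter bookkeeping correctly fill in what the paper leaves implicit.
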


Using the fact that $F_{p,\theta }^{s}=\dot{K}_{p,p}^{\alpha ,p}F_{\theta
}^{s}$, we obtain from Theorem\ \ref{embeddings3 copy(4)-lorentz} the
following results.

\begin{theorem}
\label{embeddings5.1-lorentz}Let $s_{1},s_{2}\in \mathbb{R},0<s,p<\infty
,0<\theta ,r_{1},\beta \leq \infty ,\alpha >-\frac{n}{s}$ and%
\begin{equation*}
s_{1}-\frac{n}{s}-\alpha =s_{2}-\frac{n}{p}.
\end{equation*}%
$\mathrm{(i)}$ Assume that $0<p<s<\infty ,0<p\leq q<\infty $\ and $\alpha <0$%
.\ Then%
\begin{equation*}
F_{p,\theta }^{s_{2}}\hookrightarrow \dot{K}_{s,r_{1}}^{\alpha ,q}F_{\beta
}^{s_{1}}.
\end{equation*}%
$\mathrm{(ii)}$\ Assume that $0<p<s<\infty $ and $0<p\leq \min
(r_{1},q)<\infty $. Then%
\begin{equation*}
F_{p,\theta }^{s_{2}}\hookrightarrow \dot{K}_{s,r_{1}}^{0,q}F_{\beta
}^{s_{1}}.
\end{equation*}%
$\mathrm{(iii)}$\ Assume that $0<s<p\leq q<\infty $\ and\ $\alpha \leq \frac{%
n}{p}-\frac{n}{s}$. Then 
\begin{equation*}
F_{p,\theta }^{s_{2}}\hookrightarrow \dot{K}_{s,r_{1}}^{\alpha ,q}F_{\beta
}^{s_{1}},
\end{equation*}%
where%
\begin{equation*}
\theta =\left\{ 
\begin{array}{ccc}
\beta , & \text{if} & 0<s<p<\infty \text{ and }\alpha =\frac{n}{p}-\frac{n}{s%
}, \\ 
\infty , &  & \text{otherwise.}%
\end{array}%
\right.
\end{equation*}
\end{theorem}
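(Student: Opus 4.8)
The plan is to derive all three parts from Theorem \ref{embeddings3 copy(4)-lorentz} by means of the identification $F_{p,\theta}^{s_2}=\dot{K}_{p,p}^{0,p}F_{\theta}^{s_2}$ together with the elementary embeddings collected in Theorem \ref{embeddings1.1-lorentz}. The first observation is that the Sobolev balance $s_1-\frac{n}{s}-\alpha=s_2-\frac{n}{p}$ is exactly the exponent relation \eqref{newexp1.1-lorentz} (equivalently \eqref{newexp1.1-lorentz-bis} when $\alpha=0$) under the choice $\alpha_1=\alpha$, $\alpha_2=0$. So once $\alpha_2=0$ is fixed, the only things left to verify are the matching of the remaining indices and the side conditions, since no new estimate is needed.

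For $\mathrm{(i)}$ I would first write
\[
F_{p,\theta}^{s_2}=\dot{K}_{p,p}^{0,p}F_{\theta}^{s_2}\hookrightarrow \dot{K}_{p,\infty}^{0,p}F_{\infty}^{s_2},
\]
raising the fine index from $\theta$ to $\infty$ by \eqref{embed1} and the Lorentz index from $p$ to $\infty$ by \eqref{embed5}. Since $p\le q<\infty$ and $\alpha_2=0>\alpha=\alpha_1$, the hypotheses of Theorem \ref{embeddings3 copy(1)-lorentz} hold with source parameters $(\alpha_2,r)=(0,p)$, so Theorem \ref{embeddings3 copy(4)-lorentz}$\mathrm{(i)}$ yields $\dot{K}_{p,\infty}^{0,p}F_{\infty}^{s_2}\hookrightarrow \dot{K}_{s,r_1}^{\alpha,q}F_{\beta}^{s_1}$; composing gives the claim. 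Part $\mathrm{(ii)}$ follows in the same way, the target being reached through the source space $\dot{K}_{p,r_1}^{0,p}F_{\infty}^{s_2}$ of Theorem \ref{embeddings3 copy(4)-lorentz}$\mathrm{(ii)}$: one uses \eqref{embed1} for the fine index and \eqref{embed5} for the Lorentz index (this is the step needing $p\le r_1$), while $p\le q$ supplies the required $r=p\le q$. For $\mathrm{(iii)}$ no preliminary embedding beyond matching is necessary: with $\alpha_2=0$, $r=r_2=p$ (and $r_1$ free since $s<p$), the source space of Theorem \ref{embeddings3 copy(4)-lorentz}$\mathrm{(iii)}$ is precisely $\dot{K}_{p,p}^{0,p}F_{\theta}^{s_2}=F_{p,\theta}^{s_2}$, because the dichotomy defining $\theta$ here ($\theta=\beta$ when $\alpha=\frac{n}{p}-\frac{n}{s}$, and $\theta=\infty$ otherwise) is exactly the dichotomy for the source fine index in Theorem \ref{embeddings3 copy(3)-lorentz}, where $\alpha_2+\frac{n}{p}=\alpha_1+\frac{n}{s}$ becomes $\alpha=\frac{n}{p}-\frac{n}{s}$; moreover $\alpha\le\frac{n}{p}-\frac{n}{s}$ is $\alpha_2+\frac{n}{p}\ge\alpha_1+\frac{n}{s}$ and $p\le q$ is $r=p\le q$, so Theorem \ref{embeddings3 copy(4)-lorentz}$\mathrm{(iii)}$ applies directly.

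Since every step reduces to already established embeddings, there is no analytic difficulty; the main obstacle is organisational, namely confirming that the substitutions $\alpha_2=0$, $r=p$, $r_2=p$ turn the hypotheses of Theorems \ref{embeddings3 copy(1)-lorentz}--\ref{embeddings3 copy(3)-lorentz} exactly into the hypotheses listed in the present statement (the sign condition $\alpha<0$ in $\mathrm{(i)}$, the condition $\alpha\le\frac{n}{p}-\frac{n}{s}$ in $\mathrm{(iii)}$, and the correct value of $\theta$), and that the chain of elementary embeddings from Theorem \ref{embeddings1.1-lorentz} is legitimate in each case. I expect this bookkeeping, especially the $\theta$-dichotomy in $\mathrm{(iii)}$, to be the only place where care is needed.
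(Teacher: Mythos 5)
Your proposal is correct and follows essentially the same route as the paper: the paper derives this theorem precisely by identifying $F_{p,\theta }^{s_{2}}$ with $\dot{K}_{p,p}^{0,p}F_{\theta }^{s_{2}}$ (taking $\alpha _{2}=0$, $r=r_{2}=p$) and invoking Theorem \ref{embeddings3 copy(4)-lorentz}. Your bookkeeping — lifting the Lorentz and fine indices via \eqref{embed1} and \eqref{embed5} in parts $\mathrm{(i)}$ and $\mathrm{(ii)}$, and checking that the $\theta$-dichotomy and the condition $\alpha \leq \frac{n}{p}-\frac{n}{s}$ match those of Theorem \ref{embeddings3 copy(3)-lorentz} in part $\mathrm{(iii)}$ — is exactly the verification the paper leaves implicit.
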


\begin{remark}
Theorem \ref{embeddings5.1-lorentz}/(ii) extends and improves Sobolev
embeddings of Triebel-Lizorkin spaces. Indeed, we choose $0<r_{1},q<\infty $
such that 
\begin{equation*}
0<p\leq \min (r_{1},q)\leq \max (r_{1},q)<s<\infty .
\end{equation*}%
Then, we have 
\begin{equation*}
F_{p,\theta }^{s_{2}}\hookrightarrow \dot{K}_{s,r_{1}}^{0,q}F_{\beta
}^{s_{1}}\hookrightarrow \dot{K}_{s,s}^{0,s}F_{\beta }^{s_{1}}=F_{s,\beta
}^{s_{1}}.
\end{equation*}%
In particular%
\begin{equation*}
W_{p}^{s_{2}}\hookrightarrow \dot{K}_{s,r_{1}}^{0,q}F_{2}^{s_{1}}%
\hookrightarrow W_{s}^{s_{1}},
\end{equation*}%
whenever $s_{1},s_{2}\in \mathbb{N}_{0}.$
\end{remark}

From Theorem \ref{embeddings3 copy(4)-lorentz} and the fact that 
\begin{equation*}
\dot{K}_{p,p}^{\alpha ,r}F_{2}^{0}=\dot{K}_{p}^{\alpha ,r}
\end{equation*}%
for $1<r,p,r_{1}<\infty $ and $-\frac{n}{p}<\alpha <n-\frac{n}{p}$; see \cite%
{XuYang03}, we obtain the following embeddings between Herz and
Triebel-Lizorkin spaces.

\begin{corollary}
\label{corollary-lorentz 1}Let $0<s,p<\infty ,0<\theta ,\beta \leq \infty
,1<r<\infty $ and $-\frac{n}{p}<\alpha <n-\frac{n}{p}.\newline
\mathrm{(i)}$\ Assume that $1<p<s<\infty $ and $\alpha >0$. Then%
\begin{equation*}
\dot{K}_{p}^{\alpha ,r}\hookrightarrow \dot{K}_{s,p}^{0,r}F_{\beta }^{\frac{n%
}{s}-\frac{n}{p}-\alpha }.
\end{equation*}%
$\mathrm{(ii)}$\ Assume that $1<p<s<\infty $. Then%
\begin{equation*}
\dot{K}_{p}^{0,r}\hookrightarrow \dot{K}_{s,p}^{0,r}F_{\beta }^{\frac{n}{s}-%
\frac{n}{p}}.
\end{equation*}%
$\mathrm{(iii)}$\ Assume that $\max (1,s)<p<\infty $\ and\ $\alpha \geq 
\frac{n}{s}-\frac{n}{p}$. Then 
\begin{equation*}
\dot{K}_{p}^{\alpha ,r}\hookrightarrow \dot{K}_{s,p}^{0,r}F_{\beta }^{\frac{n%
}{s}-\frac{n}{p}-\alpha },
\end{equation*}%
where $\beta =2$\ if\ $\max (1,s)<p<\infty $ and $\alpha =\frac{n}{s}-\frac{n%
}{p}$.
\end{corollary}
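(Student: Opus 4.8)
The plan is to derive all three embeddings from Theorem~\ref{embeddings3 copy(4)-lorentz}, using the identity
\[
\dot{K}_{p,p}^{\alpha,r}F_{2}^{0}=\dot{K}_{p}^{\alpha,r},\qquad 1<r,p<\infty,\quad -\tfrac{n}{p}<\alpha<n-\tfrac{n}{p},
\]
recalled just above from \cite{XuYang03}; it applies in each case because the standing hypothesis $-\frac{n}{p}<\alpha<n-\frac{n}{p}$ is imposed, $1<r<\infty$, and $1<p<s<\infty$ in $\mathrm{(i)}$, $\mathrm{(ii)}$ while $\max(1,s)<p<\infty$ (hence $1<p<\infty$) in $\mathrm{(iii)}$. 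First I would relax the fine index and the Lorentz exponent on the source side: by Theorem~\ref{embeddings1.1-lorentz}\,$\mathrm{(i)}$ (with $\beta_{1}=2\leq\beta_{2}=\infty$) and $\mathrm{(v)}$ (with $r_{1}=p\leq r_{2}=\infty$),
\[
\dot{K}_{p}^{\alpha,r}=\dot{K}_{p,p}^{\alpha,r}F_{2}^{0}\hookrightarrow\dot{K}_{p,p}^{\alpha,r}F_{\infty}^{0}\hookrightarrow\dot{K}_{p,\infty}^{\alpha,r}F_{\infty}^{0},
\]
so that $\dot{K}_{p}^{\alpha,r}$ is now in the shape required as a source space in Theorem~\ref{embeddings3 copy(4)-lorentz}.

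For $\mathrm{(i)}$ I would apply Theorem~\ref{embeddings3 copy(4)-lorentz}\,$\mathrm{(i)}$ with $\alpha_{2}=\alpha$, $\alpha_{1}=0$, $s_{2}=0$, $s_{1}=\frac{n}{s}-\frac{n}{p}-\alpha$, target Lorentz index $r_{1}=p$, both summability exponents equal to $r$, and target fine index $\beta$: then \eqref{newexp1.1-lorentz} reads $s_{1}-\frac{n}{s}=-\frac{n}{p}-\alpha$, true by the choice of $s_{1}$, and the remaining conditions of Theorem~\ref{embeddings3 copy(1)-lorentz} ($0<p<s<\infty$, $\alpha_{1}>-\frac{n}{s}$, $\alpha_{2}>-\frac{n}{p}$, $\alpha_{2}>\alpha_{1}$, $0<r\leq r<\infty$) reduce to the hypotheses $1<p<s<\infty$, $\alpha>0$. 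For $\mathrm{(ii)}$ the Herz exponent is unchanged, so I would instead use Theorem~\ref{embeddings3 copy(4)-lorentz}\,$\mathrm{(ii)}$ (resting on Theorem~\ref{embeddings3 copy(2)-lorentz}) with $\alpha=0$, $s_{2}=0$, $s_{1}=\frac{n}{s}-\frac{n}{p}$, $r_{1}=p$ and summability exponent $r$; here only $\dot{K}_{p,p}^{0,r}F_{2}^{0}\hookrightarrow\dot{K}_{p,p}^{0,r}F_{\infty}^{0}$ is needed, and \eqref{newexp1.1-lorentz-bis} becomes $s_{1}-\frac{n}{s}=-\frac{n}{p}$. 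For $\mathrm{(iii)}$, where $s<p$ and $\alpha\geq\frac{n}{s}-\frac{n}{p}$, I would invoke Theorem~\ref{embeddings3 copy(4)-lorentz}\,$\mathrm{(iii)}$ (resting on Theorem~\ref{embeddings3 copy(3)-lorentz}) with $\alpha_{2}=\alpha$, $\alpha_{1}=0$, $r_{2}=r_{1}=p$, summability exponent $r$, $s_{2}=0$, $s_{1}=\frac{n}{s}-\frac{n}{p}-\alpha$, so that \eqref{newexp1.3-lorentz} becomes $\alpha+\frac{n}{p}\geq\frac{n}{s}$. In each case the target produced by the theorem is exactly $\dot{K}_{s,p}^{0,r}F_{\beta}^{\,\frac{n}{s}-\frac{n}{p}-\alpha}$, the right-hand side of the corollary.

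The only delicate point is the fine index in $\mathrm{(iii)}$. Theorem~\ref{embeddings3 copy(3)-lorentz} gives $\theta=\infty$ when $\alpha_{2}+\frac{n}{p}>\alpha_{1}+\frac{n}{s}$, i.e.\ when $\alpha>\frac{n}{s}-\frac{n}{p}$ strictly; then $F_{2}^{0}\hookrightarrow F_{\infty}^{0}=F_{\theta}^{0}$ still holds and the target fine index $\beta$ is free. When $\alpha=\frac{n}{s}-\frac{n}{p}$ one has $\theta=\beta$, which forces the source fine index to be $2$, whence the restriction $\beta=2$ in this borderline situation, as stated. I expect no real obstacle: the proof is a matter of choosing parameters in Theorem~\ref{embeddings3 copy(4)-lorentz}, checking the accompanying (in)equalities, and combining this with the two monotonicity embeddings of Theorem~\ref{embeddings1.1-lorentz} and the identity $\dot{K}_{p,p}^{\alpha,r}F_{2}^{0}=\dot{K}_{p}^{\alpha,r}$.
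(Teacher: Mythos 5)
Your proposal is correct and follows essentially the same route as the paper, which obtains the corollary precisely by combining Theorem \ref{embeddings3 copy(4)-lorentz} with the identification $\dot{K}_{p,p}^{\alpha ,r}F_{2}^{0}=\dot{K}_{p}^{\alpha ,r}$ from \cite{XuYang03}; your parameter choices ($\alpha_{2}=\alpha$, $\alpha_{1}=0$, $s_{2}=0$, $s_{1}=\frac{n}{s}-\frac{n}{p}-\alpha$, $q=r$, $r_{1}=p$) and the monotonicity embeddings of Theorem \ref{embeddings1.1-lorentz} just make explicit the bookkeeping the paper leaves implicit, including the correct explanation of the restriction $\beta=2$ in the borderline case $\alpha=\frac{n}{s}-\frac{n}{p}$ of part (iii).
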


\begin{corollary}
\label{corollary-lorentz 2}Let $0<s,p<\infty ,0<\theta \leq \infty
,1<r<\infty \ $and $-\frac{n}{s}<\alpha <n-\frac{n}{s}.\newline
\mathrm{(i)}$ Assume that $\max (1,p)<s<\infty $ and $\alpha <0$. Then%
\begin{equation*}
\dot{K}_{p,s}^{0,r}F_{\theta }^{\frac{n}{p}-\frac{n}{s}-\alpha
}\hookrightarrow \dot{K}_{s}^{\alpha ,r}.
\end{equation*}%
$\mathrm{(ii)}$\ Assume that $\max (1,p)<s<\infty $. Then%
\begin{equation*}
\dot{K}_{p,s}^{0,r}F_{\theta }^{\frac{n}{p}-\frac{n}{s}}\hookrightarrow \dot{%
K}_{s}^{0,r}.
\end{equation*}%
$\mathrm{(iii)}$\ Assume that $1<s<p<\infty $ and\ $\alpha \leq \frac{n}{p}-%
\frac{n}{s}$. Then 
\begin{equation*}
\dot{K}_{p,s}^{0,r}F_{\theta }^{\frac{n}{p}-\frac{n}{s}-\alpha
}\hookrightarrow \dot{K}_{s}^{\alpha ,r},
\end{equation*}%
where $\theta =2$ if $1<s<p<\infty $ and $\alpha =\frac{n}{s}-\frac{n}{p}.$
\end{corollary}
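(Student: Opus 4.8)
The plan is to derive the three embeddings of Corollary~\ref{corollary-lorentz 2} directly from Theorem~\ref{embeddings3 copy(4)-lorentz} by forcing the target space to be a Triebel--Lizorkin space of zero smoothness and microscopic index $2$, and then using the identification $\dot{K}_{s,s}^{\alpha,r}F_2^0=\dot{K}_s^{\alpha,r}$ recalled just before the statement, which is legitimate precisely because $1<r,s<\infty$ and $-\frac{n}{s}<\alpha<n-\frac{n}{s}$. Concretely, in each part I would apply Theorem~\ref{embeddings3 copy(4)-lorentz} with target parameters $s_1=0$, microscopic index $2$, Lorentz index $r_1=s$, integrability $q=r$ and $\alpha_1=\alpha$, and with source parameters $\alpha_2=0$ and $s_2=\frac{n}{p}-\frac{n}{s}-\alpha$. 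This choice makes the balance condition $s_1-\frac{n}{s}-\alpha_1=s_2-\frac{n}{p}-\alpha_2$ of Theorems~\ref{embeddings3 copy(1)-lorentz}, \ref{embeddings3 copy(2)-lorentz} and \ref{embeddings3 copy(3)-lorentz} hold, and the index restriction $0<r\le q<\infty$ is automatic since $q=r$.

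The three cases of the corollary then correspond to the three parts of Theorem~\ref{embeddings3 copy(4)-lorentz}. Part (i): the extra requirement $\alpha_2>\alpha_1$ of Theorem~\ref{embeddings3 copy(1)-lorentz} reads $\alpha<0$, and together with $0<p<s<\infty$ this is exactly $\max(1,p)<s<\infty$ and $\alpha<0$. Part (ii): this is the borderline value $\alpha=0$, covered by part (ii) of the theorem (where $\alpha_1=\alpha_2$), again under $\max(1,p)<s<\infty$. Part (iii): here one uses part (iii) of the theorem with $0<s<p<\infty$ and $\alpha_2+\frac{n}{p}\ge\alpha_1+\frac{n}{s}$, i.e.\ $\alpha\le\frac{n}{p}-\frac{n}{s}$; the microscopic index of the source is forced to be $2$ exactly in the borderline subcase $\alpha=\frac{n}{p}-\frac{n}{s}$ and may be taken arbitrary (say $\theta$) otherwise, since in the non-borderline case Theorem~\ref{embeddings3 copy(3)-lorentz} only requires the source to sit in an $F_\infty$-space.

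A little bookkeeping reconciles the source space produced by Theorem~\ref{embeddings3 copy(4)-lorentz} --- which carries Lorentz index $\infty$ in part (i) and microscopic index $\infty$ in parts (i), (ii) and in the non-borderline case of (iii) --- with the space $\dot{K}_{p,s}^{0,r}F_\theta^{s_2}$ occurring in the corollary. For this I would precompose with the elementary embeddings of Theorem~\ref{embeddings1.1-lorentz}: part (i) of that theorem gives $\dot{K}_{p,s}^{0,r}F_\theta^{s_2}\hookrightarrow\dot{K}_{p,s}^{0,r}F_\infty^{s_2}$ and part (v) gives $\dot{K}_{p,s}^{0,r}F_\infty^{s_2}\hookrightarrow\dot{K}_{p,\infty}^{0,r}F_\infty^{s_2}$, both available because the source has $\alpha_2=0>-\frac{n}{p}$ and $r<\infty$. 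Chaining these with the relevant part of Theorem~\ref{embeddings3 copy(4)-lorentz} and the identity $\dot{K}_{s,s}^{\alpha,r}F_2^0=\dot{K}_s^{\alpha,r}$ yields the three claimed inclusions.

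No genuine obstacle arises here: all the substance is already contained in Theorem~\ref{embeddings3 copy(4)-lorentz} and in the zero-smoothness identification of Herz spaces with Triebel--Lizorkin spaces. The only point needing attention is the verification that every parameter constraint of the three underlying theorems survives the specialization --- in particular $\alpha_1=\alpha>-\frac{n}{s}$, $\alpha_2=0>-\frac{n}{p}$, $0<p<s<\infty$ (respectively $0<s<p<\infty$ in part (iii)), and the sign condition on $\alpha$ separating the three cases --- together with careful tracking of which microscopic and Lorentz indices are free and which are forced.
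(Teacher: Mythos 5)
Your proposal is correct and follows essentially the same route as the paper, which obtains this corollary precisely by specializing Theorem \ref{embeddings3 copy(4)-lorentz} (target smoothness $s_{1}=0$, microscopic index $2$, $q=r$, $r_{1}=s$, $\alpha_{1}=\alpha$, $\alpha_{2}=0$) and invoking the identification $\dot{K}_{s,s}^{\alpha ,r}F_{2}^{0}=\dot{K}_{s}^{\alpha ,r}$ valid for $1<s,r<\infty$ and $-\frac{n}{s}<\alpha <n-\frac{n}{s}$, the monotonicity embeddings of Theorem \ref{embeddings1.1-lorentz} supplying the same bookkeeping you describe. Your reading of the borderline subcase in part (iii) as $\alpha =\frac{n}{p}-\frac{n}{s}$ is the intended one, since the underlying condition in Theorem \ref{embeddings3 copy(3)-lorentz} is $\alpha_{2}+\frac{n}{p}=\alpha_{1}+\frac{n}{s}$, so the corollary's printed ``$\alpha =\frac{n}{s}-\frac{n}{p}$'' is a sign slip.
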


\begin{remark}
Corollaries \ref{corollary-lorentz 1} and \ref{corollary-lorentz 2} extend
and improve the corresponding results of \cite{Drihem2.13}. In particular
Sobolev embeddings for Triebel-Lizorkin spaces\ of power weight\ obtained\
in \cite{MM12}.
\end{remark}

\begin{corollary}
Let $s_{1},s_{2},s_{3}\in \mathbb{R},0<t\leq p<s<\infty ,0<\beta \leq \infty 
$ are real numbers such that%
\begin{equation*}
s_{1}-\frac{n}{s}=s_{2}-\frac{n}{p}=s_{3}-\frac{n}{t}.
\end{equation*}%
Then%
\begin{equation*}
F_{t,\infty }^{s_{3}}\hookrightarrow \dot{K}_{p,s}^{0,s}F_{\infty
}^{s_{2}}\hookrightarrow F_{s,\beta }^{s_{1}}.
\end{equation*}
\end{corollary}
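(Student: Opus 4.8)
The plan is to obtain the asserted chain of embeddings simply by specializing two Sobolev-type embedding results already proved in this section, namely Theorem~\ref{embeddings5.1-lorentz}/(ii) for the left inclusion and Theorem~\ref{embeddings4.1-lorentz}/(ii) for the right inclusion, and then composing them. Throughout, the admissibility conditions of the form $\alpha>-n/p$ or $\alpha>-n/s$ required by those theorems are trivially satisfied here, since in both inclusions the relevant Herz exponent equals $0$. The only genuine bookkeeping is to match the two scaling identities
\[
s_{2}-\tfrac{n}{p}=s_{3}-\tfrac{n}{t},\qquad s_{1}-\tfrac{n}{s}=s_{2}-\tfrac{n}{p}
\]
(the two halves of the hypothesis $s_{1}-\tfrac{n}{s}=s_{2}-\tfrac{n}{p}=s_{3}-\tfrac{n}{t}$) against the exact relations demanded by those theorems.

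For the right-hand inclusion $\dot{K}_{p,s}^{0,s}F_{\infty}^{s_{2}}\hookrightarrow F_{s,\beta}^{s_{1}}$ I would apply Theorem~\ref{embeddings4.1-lorentz}/(ii) with the choices $r=r_{1}=s$, $\theta=\infty$, keeping $s_{1},s_{2},p,s,\beta$ as given: the hypothesis $0<p<s<\infty$ is part of the assumptions, the requirement $0<\max(r,r_{1})=s\le s<\infty$ holds, and the scaling relation of that theorem (with Herz exponent $0$) is precisely $s_{1}-\tfrac{n}{s}=s_{2}-\tfrac{n}{p}$. This delivers the right-hand embedding at once.

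For the left-hand inclusion $F_{t,\infty}^{s_{3}}\hookrightarrow \dot{K}_{p,s}^{0,s}F_{\infty}^{s_{2}}$ I would split into two cases. If $t<p$, this is exactly Theorem~\ref{embeddings5.1-lorentz}/(ii) after renaming its parameters $(p,s,s_{2},s_{1},\theta,\beta,r_{1},q)$ to $(t,p,s_{3},s_{2},\infty,\infty,s,s)$: the strict inequality $0<t<p<\infty$ is the case hypothesis, $0<t\le\min(s,s)=s<\infty$ follows from $t\le p<s$, and the scaling relation of that theorem becomes $s_{2}-\tfrac{n}{p}=s_{3}-\tfrac{n}{t}$, as needed. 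If $t=p$, the identity forces $s_{3}=s_{2}$, so $F_{t,\infty}^{s_{3}}=F_{p,\infty}^{s_{2}}$; recalling the identification $F_{p,\infty}^{s_{2}}=\dot{K}_{p,p}^{0,p}F_{\infty}^{s_{2}}$ noted after Definition~\ref{B-F-def-lorentz}, the monotonicity embeddings of Theorem~\ref{embeddings1.1-lorentz}/(v) and /(iii) (both applicable since $p\le s$) give
\[
F_{p,\infty}^{s_{2}}=\dot{K}_{p,p}^{0,p}F_{\infty}^{s_{2}}\hookrightarrow \dot{K}_{p,s}^{0,p}F_{\infty}^{s_{2}}\hookrightarrow \dot{K}_{p,s}^{0,s}F_{\infty}^{s_{2}}.
\]

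Composing the two inclusions then yields the corollary. I do not expect any real obstacle: the proof is a direct concatenation of results already available. The one point that needs care is the boundary configuration $t=p$, which is excluded by the strict hypothesis $0<p<s<\infty$ of Theorem~\ref{embeddings5.1-lorentz}/(ii) and therefore has to be treated separately by the elementary monotonicity embeddings in $r$ and $q$, as indicated above.
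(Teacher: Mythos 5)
Your proof is correct; both inclusions follow from results already established in the paper, and the parameter bookkeeping you carry out (including the admissibility of $\alpha=0$ and the two scaling identities) checks out. The route differs slightly from the paper's in the left-hand inclusion: the paper factors it as
\begin{equation*}
F_{t,\infty }^{s_{3}}\hookrightarrow F_{p,\infty }^{s_{2}}=\dot{K}_{p,p}^{0,p}F_{\infty }^{s_{2}}\hookrightarrow \dot{K}_{p,s}^{0,s}F_{\infty }^{s_{2}},
\end{equation*}
using only the classical Sobolev embedding for Triebel--Lizorkin spaces together with the identification $F_{p,\infty }^{s_{2}}=\dot{K}_{p,p}^{0,p}F_{\infty }^{s_{2}}$ and the elementary monotonicity embeddings in the Lorentz and Herz indices (Theorem \ref{embeddings1.1-lorentz}), which handles $t\leq p$ uniformly with no case distinction; the right-hand inclusion is then Theorem \ref{embeddings4.1-lorentz}/(ii) with $r=r_{1}=s$, exactly as you do. You instead invoke Theorem \ref{embeddings5.1-lorentz}/(ii) directly for $t<p$ and treat the boundary case $t=p$ separately by the same monotonicity embeddings. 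Both arguments are legitimate concatenations of the paper's results; the paper's factorization is marginally more economical (classical Sobolev plus monotonicity, no case split), while yours shows that the new Sobolev-type theorem for the Lorentz--Herz scale already covers the strict case without passing through $F_{p,\infty }^{s_{2}}$.
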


\begin{proof}
To prove this result, it is sufficient to choose in Theorem \ref%
{embeddings4.1-lorentz}/(ii) $r=s=r_{1}$.\ However, the desired embeddings
are an immediate consequence of the fact that 
\begin{equation*}
F_{t,\infty }^{s_{3}}\hookrightarrow F_{p,\infty }^{s_{2}}=\dot{K}%
_{p,p}^{0,p}F_{\infty }^{s_{2}}\hookrightarrow \dot{K}_{p,s}^{0,s}F_{\infty
}^{s_{2}}\hookrightarrow F_{s,\beta }^{s_{1}}.
\end{equation*}
\end{proof}

\begin{corollary}
Let $s_{1},s_{2}\in \mathbb{R},0<p\leq r_{1}<s<\infty ,s_{1}-\frac{n}{s}%
=s_{2}-\frac{n}{p}$\ and $0<\beta \leq \infty $. Then%
\begin{equation*}
F_{p,\infty }^{s_{2}}\hookrightarrow \dot{K}_{s,r_{1}}^{0,p}F_{\beta
}^{s_{1}}\hookrightarrow F_{s,\beta }^{s_{1}}.
\end{equation*}
\end{corollary}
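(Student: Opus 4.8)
The plan is to derive the two embeddings separately, each from a result already proved in this section. For the left-hand inclusion $F_{p,\infty }^{s_{2}}\hookrightarrow \dot{K}_{s,r_{1}}^{0,p}F_{\beta }^{s_{1}}$ I would apply Theorem~\ref{embeddings5.1-lorentz}/(ii) with the parameter choice $\alpha =0$ and $q=p$. Its hypotheses are met: the balance relation $s_{1}-\frac{n}{s}-\alpha =s_{2}-\frac{n}{p}$ reduces, for $\alpha =0$, to the assumed identity $s_{1}-\frac{n}{s}=s_{2}-\frac{n}{p}$; the strict inequality $0<p<s<\infty $ follows from $p\leq r_{1}<s$; and $0<p\leq \min (r_{1},q)=\min (r_{1},p)=p<\infty $ because $p\leq r_{1}$. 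Since $\alpha =0\neq \frac{n}{p}-\frac{n}{s}$ (as $p<s$), we land in the ``otherwise'' branch of the definition of $\theta $ in that theorem, so $\theta =\infty $, which matches the summability index of the source space $F_{p,\infty }^{s_{2}}$. This gives the first embedding.

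For the right-hand inclusion $\dot{K}_{s,r_{1}}^{0,p}F_{\beta }^{s_{1}}\hookrightarrow F_{s,\beta }^{s_{1}}$ I would combine the basic embeddings of Theorem~\ref{embeddings1.1-lorentz} with the identity $\dot{K}_{s,s}^{0,s}F_{\beta }^{s_{1}}=F_{s,\beta }^{s_{1}}$. Concretely, since $p\leq s$, part~(iii) of that theorem yields $\dot{K}_{s,r_{1}}^{0,p}F_{\beta }^{s_{1}}\hookrightarrow \dot{K}_{s,r_{1}}^{0,s}F_{\beta }^{s_{1}}$, and since $r_{1}\leq s$, part~(v) yields $\dot{K}_{s,r_{1}}^{0,s}F_{\beta }^{s_{1}}\hookrightarrow \dot{K}_{s,s}^{0,s}F_{\beta }^{s_{1}}=F_{s,\beta }^{s_{1}}$. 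Chaining this after the first embedding proves the corollary, in the same spirit as the preceding corollaries.

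I do not expect any genuine obstacle: the statement is a bookkeeping consequence of Theorems~\ref{embeddings5.1-lorentz} and~\ref{embeddings1.1-lorentz}. The only point deserving a moment's attention is verifying that the degenerate choice $q=p$ is admissible in Theorem~\ref{embeddings5.1-lorentz}/(ii) and that it forces $\theta =\infty $ via the ``otherwise'' clause; once that is noted, no further computation is required.
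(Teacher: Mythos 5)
Your proposal is correct and follows essentially the same route as the paper: apply Theorem \ref{embeddings5.1-lorentz}/(ii) with $q=p$ (forcing $\theta=\infty$), then pass to $\dot{K}_{s,s}^{0,s}F_{\beta}^{s_{1}}=F_{s,\beta}^{s_{1}}$ via the monotonicity embeddings. Your write-up is in fact slightly more careful than the paper's, since you keep the general $r_{1}$ throughout and invoke Theorem \ref{embeddings1.1-lorentz}(iii) and (v) explicitly.
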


\begin{proof}
In Theorem\ \ref{embeddings5.1-lorentz}/(ii)\ we choose $p=q,r_{1}=s$. Then
the desired embeddings are an immediate consequence of the fact that 
\begin{equation*}
F_{p,\infty }^{s_{2}}\hookrightarrow \dot{K}_{s,s}^{0,p}F_{\beta
}^{s_{1}}\hookrightarrow \dot{K}_{s,s}^{0,s}F_{\beta }^{s_{1}}=F_{s,\beta
}^{s_{1}}.
\end{equation*}
\end{proof}

By Theorem \ref{embeddings4.1-lorentz}/(ii), we get%
\begin{equation*}
\dot{K}_{p,\infty }^{\alpha ,r}F_{\infty }^{s_{2}}\hookrightarrow F_{p,\beta
}^{s_{1}}
\end{equation*}%
for any $\alpha >0,s_{1}\leq s_{2}-\alpha ,0<r\leq p<\infty $\ and\ $0<\beta
\leq \infty $. Let $\{\varphi _{j}\}_{j\in \mathbb{N}_{0}}$\ be a smooth
dyadic resolution of unity. Recall that%
\begin{equation*}
\big\|f\big\|_{\overline{p}}\leq \sum\limits_{j=0}^{\infty }\big\|\mathcal{F}%
^{-1}\varphi _{j}\ast f\big\|_{\overline{p}}=\big\|f\big\|_{B_{\overline{p}%
,1}^{0}}
\end{equation*}%
for any $f\in B_{\overline{p},1}^{0}$. In addition from the fact that 
\begin{equation*}
F_{p,\beta }^{s}\hookrightarrow B_{\overline{p},1}^{0}
\end{equation*}%
for any $s>\max (0,\frac{n}{p}-n)$, where $\overline{p}=\max (1,p)$, we get 
\begin{equation*}
\big\|f\big\|_{\overline{p}}\lesssim \big\|f\big\|_{F_{p,\beta }^{s}}
\end{equation*}%
for any $f\in F_{p,\beta }^{s}$. This shows that under some suitable
assumptions the elements of $\dot{K}_{p,\infty }^{\alpha ,r}F_{\beta }^{s}$\
are regular distributions.

\begin{proposition}
\textit{Let }$\alpha >0,s\in \mathbb{R},0<r\leq p<\infty $ \textit{and }$%
0<\beta \leq \infty $\textit{.}\ If $s>\frac{n}{p}-n+\alpha $ and $0<p\leq 1$%
\ or\ $s>\alpha $ and $1<p<\infty $, \textit{then}%
\begin{equation*}
\dot{K}_{p,\infty }^{\alpha ,r}F_{\beta }^{s}\hookrightarrow L^{\overline{p}%
}.
\end{equation*}
\end{proposition}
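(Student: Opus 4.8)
The plan is to deduce the embedding from the classical Sobolev embedding for Besov spaces, after first passing from the Triebel--Lizorkin scale to the ordinary Besov scale. Begin with the bookkeeping: writing $\sigma _{p}=\frac{n}{\min (1,p)}-n$ one has $\sigma _{p}=\frac{n}{p}-n$ when $0<p\leq 1$ and $\sigma _{p}=0$ when $1<p<\infty $, while $\overline{p}=\max (1,p)$ satisfies $\frac{n}{p}-\frac{n}{\overline{p}}=\sigma _{p}$ in either case; hence the two hypotheses on $s$ are exactly the single condition $s>\sigma _{p}+\alpha $. Consequently one may fix a real number $s_{1}$ with
\begin{equation*}
\sigma _{p}<s_{1}\leq s-\alpha .
\end{equation*}
The aim is then to produce the chain of continuous embeddings
\begin{equation*}
\dot{K}_{p,\infty }^{\alpha ,r}F_{\beta }^{s}\hookrightarrow \dot{K}_{p,\infty }^{\alpha ,r}B_{\infty }^{s}\hookrightarrow B_{p,\infty }^{s_{1}}\hookrightarrow L^{\overline{p}}.
\end{equation*}

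For the first arrow I would invoke Theorem \ref{embeddings2-lorentz}(ii) with $r_{0}=r_{1}=\infty $ and $q=r$: then $\max (p,\beta ,\infty ,r)=\infty $ and the side condition ``$p\neq \beta $ or $p=\beta \leq r_{1}$'' holds automatically, giving $\dot{K}_{p,\infty }^{\alpha ,r}F_{\beta }^{s}\hookrightarrow \dot{K}_{p,\infty }^{\alpha ,r}B_{\infty }^{s}$. (Alternatively one may combine $F_{\beta }^{s}\hookrightarrow F_{\infty }^{s}$ from Theorem \ref{embeddings1.1-lorentz}(i) with the embedding $\dot{K}_{p,\infty }^{\alpha ,r}F_{\infty }^{s}\hookrightarrow F_{p,\beta }^{s_{1}}$ recorded in the discussion preceding the proposition.) For the second arrow I would apply Theorem \ref{embeddings4-lorentz} with the target integrability exponent (denoted $s$ in that theorem) equal to $p$, and $\alpha >0$; this is the case ``$\alpha \geq 0$, $0<p\leq s<\infty $'' of Theorem \ref{embeddings4-lorentz}, in which the exponent there is $\theta =q$, so taking $q=r$, $r_{2}=\infty $, $\beta =\infty $ and using $s_{1}-\frac{n}{p}\leq s-\frac{n}{p}-\alpha $ we obtain $\dot{K}_{p,\infty }^{\alpha ,r}B_{\infty }^{s}\hookrightarrow B_{p,\infty }^{s_{1}}$. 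Finally, $B_{p,\infty }^{s_{1}}\hookrightarrow L^{\overline{p}}$ because $s_{1}>\sigma _{p}$: the Sobolev embedding for Besov spaces gives $B_{p,\infty }^{s_{1}}\hookrightarrow B_{\overline{p},\infty }^{s_{1}-\sigma _{p}}$, an elementary smoothness embedding gives $B_{\overline{p},\infty }^{s_{1}-\sigma _{p}}\hookrightarrow B_{\overline{p},1}^{0}$ since $s_{1}-\sigma _{p}>0$, and $\big\|f\big\|_{\overline{p}}\leq \big\|f\big\|_{B_{\overline{p},1}^{0}}$ because $\overline{p}\geq 1$; see \cite{T83}. (One could equally conclude through $F_{p,\beta }^{s_{1}}\hookrightarrow B_{\overline{p},1}^{0}$, $s_{1}>\max (0,\frac{n}{p}-n)$, as in the paragraph before the proposition.) Composing the three arrows yields $\dot{K}_{p,\infty }^{\alpha ,r}F_{\beta }^{s}\hookrightarrow L^{\overline{p}}$, which is the claim.

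I do not anticipate a serious obstacle: the argument is a chain of embeddings already available in the paper together with textbook facts about classical Besov spaces. The one point that needs a little care is that the Triebel--Lizorkin analogue of the Herz--Besov Sobolev embedding, Theorem \ref{embeddings4.1-lorentz}, requires the source and target integrability exponents to be \emph{strictly} ordered; in order to keep the final integrability exponent equal to $p$ it is therefore cleanest to detour through the ordinary Besov scale, where Theorem \ref{embeddings4-lorentz} does cover the limiting situation in which the two exponents coincide, provided $\alpha >0$. Everything else is a routine verification that the parameter restrictions in Theorems \ref{embeddings2-lorentz} and \ref{embeddings4-lorentz} and in the classical Besov embeddings are met under the stated hypotheses $s>\frac{n}{p}-n+\alpha $, $0<p\leq 1$ and $s>\alpha $, $1<p<\infty $.
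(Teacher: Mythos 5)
Your proof is correct. The paper's own justification is the paragraph immediately preceding the proposition: it runs $\dot{K}_{p,\infty}^{\alpha,r}F_{\beta}^{s}\hookrightarrow \dot{K}_{p,\infty}^{\alpha,r}F_{\infty}^{s}\hookrightarrow F_{p,\beta}^{s_{1}}\hookrightarrow B_{\overline{p},1}^{0}\hookrightarrow L^{\overline{p}}$, i.e.\ it collapses the Herz structure while staying in the Triebel--Lizorkin scale and only afterwards passes to $B_{\overline{p},1}^{0}$. You instead pass to the Besov scale already at the Herz level, $\dot{K}_{p,\infty}^{\alpha,r}F_{\beta}^{s}\hookrightarrow \dot{K}_{p,\infty}^{\alpha,r}B_{\infty}^{s}$ via Theorem \ref{embeddings2-lorentz}(ii), and then apply the Herz--Besov Sobolev embedding of Theorem \ref{embeddings4-lorentz} with source and target integrability exponents both equal to $p$, finishing with the classical chain $B_{p,\infty}^{s_{1}}\hookrightarrow B_{\overline{p},\infty}^{s_{1}-\sigma_{p}}\hookrightarrow B_{\overline{p},1}^{0}\hookrightarrow L^{\overline{p}}$. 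Both routes are of exactly the same nature (a composition of embeddings already proved in the paper plus textbook Besov facts) and both hinge on the same numerical condition $s-\alpha>\sigma_{p}$, which is precisely the stated hypothesis. Your detour through the Besov scale buys a cleaner justification of the limiting case of equal integrability exponents: the Triebel--Lizorkin Sobolev embeddings of Theorem \ref{embeddings4.1-lorentz} require $p<s$ strictly in part (i), and part (iii) in the case $s=p$ imposes $r_{1}=r_{2}$, whereas the Besov version in Theorem \ref{embeddings4-lorentz} covers $p=s$ with $\theta=q$ as soon as $\alpha>0$. Indeed, the paper's own citation of Theorem \ref{embeddings4.1-lorentz}/(ii) for the embedding $\dot{K}_{p,\infty}^{\alpha,r}F_{\infty}^{s_{2}}\hookrightarrow F_{p,\beta}^{s_{1}}$ is not a literal match (that part treats $\alpha=0$ and $p<s$), so on this point your argument is the more carefully referenced of the two.
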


\subsection{Jawerth embedding}

The classical Jawerth embedding says that:%
\begin{equation*}
F_{q,\infty }^{s_{2}}\hookrightarrow B_{s,q}^{s_{1}}
\end{equation*}%
if $s_{1}-\frac{n}{s}=s_{2}-\frac{n}{q}$ and $0<q<s<\infty $; see e.g., \cite%
{Ja77}. We will extend this embeddings to Lorentz Herz-type
Besov-Triebel-Lizorkin spaces.\ We follow some ideas of Vyb\'{\i}ral, \cite[%
p. 76]{Vybiral08}, where it is used the technique of non-increasing
rearrangement. First, we will prove the discrete version of Jawerth
embedding.

\begin{theorem}
\label{embeddings6-lorentz}\textit{Let }$\alpha _{1},\alpha
_{2},s_{1},s_{2}\in \mathbb{R},0<r_{1},r_{2}\leq \infty ,0<s,p,q,r<\infty
,\alpha _{1}>-\frac{n}{s}\ $\textit{and }$\alpha _{2}>-\frac{n}{p}$. \textit{%
We suppose that\ }%
\begin{equation}
s_{1}-\frac{n}{s}-\alpha _{1}=s_{2}-\frac{n}{p}-\alpha _{2}.
\label{new-exp1-lorentz}
\end{equation}%
\textit{Under the following assumptions} \textit{\ }%
\begin{equation*}
0<p<s\leq \infty \quad \text{and}\quad \alpha _{2}>\alpha _{1}
\end{equation*}%
we have%
\begin{equation}
\dot{K}_{p,r_{2}}^{\alpha _{2},r}f_{\infty }^{s_{2}}\hookrightarrow \dot{K}%
_{s,r_{1}}^{\alpha _{1},q}b_{r}^{s_{1}}.  \label{FJ-emb1-lorentz}
\end{equation}
\end{theorem}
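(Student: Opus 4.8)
The plan is to prove \eqref{FJ-emb1-lorentz} directly at the level of sequences; the $\varphi$-transform plays no role here (it is only needed afterwards to transfer the embedding to the function spaces). A few harmless reductions come first: if $s=\infty$ one reads $\dot{K}_{\infty,r_1}^{\alpha_1,q}$ as $\dot{K}_\infty^{\alpha_1,q}$ with $\big\|\cdot\big\|_\infty$ replacing the Lorentz norm, a case that is only simpler, so I assume $s<\infty$; and I first prove \eqref{FJ-emb1-lorentz} for $q=r$, the general case following from Lemma \ref{embeddings1-lorentz}(i). Fix $\lambda=\{\lambda_{v,m}\}_{v\in\mathbb{N}_0,m\in\mathbb{Z}^n}\in\dot{K}_{p,r_2}^{\alpha_2,r}f_\infty^{s_2}$, put $g_v=\sum_{m\in\mathbb{Z}^n}\lambda_{v,m}\chi_{v,m}$ and, for $k\in\mathbb{Z}$,
\[
F_k=\Big(\sup_{v\in\mathbb{N}_0}2^{v(s_2+\frac n2)}\sum_{m\in\mathbb{Z}^n}|\lambda_{v,m}|\chi_{v,m}\Big)\chi_{\breve{R}_k},\qquad \breve{R}_k=\bigcup_{i=-2}^{3}R_{k+i}.
\]
A routine computation using that $\breve R_k$ consists of at most six consecutive shells gives $\big\|\lambda\big\|_{\dot{K}_{p,r_2}^{\alpha_2,r}f_\infty^{s_2}}^r\approx\sum_{k\in\mathbb{Z}}2^{k\alpha_2 r}\big\|F_k\big\|_{L^{p,r_2}}^r$, while by definition
\[
\big\|\lambda\big\|_{\dot{K}_{s,r_1}^{\alpha_1,r}b_r^{s_1}}^r=\sum_{k\in\mathbb{Z}}2^{k\alpha_1 r}\sum_{v\in\mathbb{N}_0}2^{v(s_1+\frac n2)r}\big\|g_v\chi_k\big\|_{L^{s,r_1}}^r,
\]
so everything reduces to estimating $\big\|g_v\chi_k\big\|_{L^{s,r_1}}$. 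For fixed $k$ I split the $v$-sum into the high range $v\ge c_n+2-k$ and the low range $0\le v\le c_n+1-k$ (void unless $k\le c_n+1$), with $c_n=1+\lfloor\log_2(2\sqrt n+1)\rfloor$.

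Consider first the high range. There a cube $Q_{v,m}$ meeting $R_k$ has $m$ in the annulus $A_{k+v}=\{m\in\mathbb{Z}^n:2^{k+v-2}\le|m|\le2^{k+v+1}\}$, hence $Q_{v,m}\subset\breve{R}_k$; so, with $\tilde g_{v,k}=\sum_{m:\,Q_{v,m}\cap R_k\ne\emptyset}|\lambda_{v,m}|\chi_{v,m}$, we have $|g_v\chi_k|\le\tilde g_{v,k}\le2^{-v(s_2+\frac n2)}F_k$ pointwise. Passing to non-increasing rearrangements, $(g_v\chi_k)^\ast\le\tilde g_{v,k}^\ast\le2^{-v(s_2+\frac n2)}F_k^\ast$; moreover $\tilde g_{v,k}^\ast$ is constant on $[0,2^{-vn})$ because $\tilde g_{v,k}$ is assembled from full cubes of volume $2^{-vn}$. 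Combining this with the elementary bound $F_k^\ast(t)\le t^{-1/p}\big\|F_k\big\|_{L^{p,\infty}}$ and splitting $\int_0^\infty t^{\frac{r_1}{s}-1}((g_v\chi_k)^\ast(t))^{r_1}\,dt=\int_0^{2^{-vn}}+\int_{2^{-vn}}^\infty$ — the second integral converging precisely because $p<s$, i.e.\ $\frac{r_1}{s}-\frac{r_1}{p}-1<-1$ — both pieces are bounded by $c\,2^{-v(s_2+\frac n2)r_1}2^{vnr_1(\frac1p-\frac1s)}\big\|F_k\big\|_{L^{p,\infty}}^{r_1}$. Using the scaling relation \eqref{new-exp1-lorentz} in the form $s_2+\tfrac n2-\tfrac np+\tfrac ns=s_1+\tfrac n2+\alpha_2-\alpha_1$, this gives $\big\|g_v\chi_k\big\|_{L^{s,r_1}}\le c\,2^{-v(s_1+\frac n2+\alpha_2-\alpha_1)}\big\|F_k\big\|_{L^{p,\infty}}$ for $v\ge c_n+2-k$. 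Inserting this,
\[
\sum_{v\ge c_n+2-k}2^{v(s_1+\frac n2)r}\big\|g_v\chi_k\big\|_{L^{s,r_1}}^r\le c\,\big\|F_k\big\|_{L^{p,\infty}}^r\sum_{v\ge c_n+2-k}2^{-v(\alpha_2-\alpha_1)r}\le c'\,2^{k(\alpha_2-\alpha_1)r}\big\|F_k\big\|_{L^{p,\infty}}^r,
\]
where both the convergence of the geometric series and the gain $2^{k(\alpha_2-\alpha_1)r}$ (from the $k$-dependent lower summation limit) rest on $\alpha_2>\alpha_1$. Multiplying by $2^{k\alpha_1 r}$, using $\big\|F_k\big\|_{L^{p,\infty}}\le c\big\|F_k\big\|_{L^{p,r_2}}$ and summing over $k$ then controls the high-range part by $\big\|\lambda\big\|_{\dot{K}_{p,r_2}^{\alpha_2,r}f_\infty^{s_2}}^r$.

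It remains to handle the low range $0\le v\le c_n+1-k$ (only $k\le c_n+1$), which is the technical heart of the argument and where I expect the main difficulty. Here the cubes $Q_{v,m}$ meeting $R_k$ are large and clustered at the origin, so $Q_{v,m}\subset B(0,2^{c_n-v+2})$, and for any small $0<t<\min(1,p,r_1,r_2)$ one has $\sup_{x\in R_k}|g_v(x)|\le 2^{nv/t}\big\|\sum_m|\lambda_{v,m}|\chi_{v,m}\chi_{B(0,2^{c_n-v+2})}\big\|_{L^{t,t}}$, whence by \eqref{est-function1} $\big\|g_v\chi_k\big\|_{L^{s,r_1}}\le c\,2^{nk/s}2^{nv/t}\big\|\sum_m|\lambda_{v,m}|\chi_{v,m}\chi_{B(0,2^{c_n-v+2})}\big\|_{L^{t,t}}$. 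Decomposing $B(0,2^{c_n-v+2})$ into the shells $R_j$, $j\le c_n-v+3$, using $(\sum_m|\lambda_{v,m}|\chi_{v,m})\chi_{R_j}\le2^{-v(s_2+\frac n2)}F_j$, applying H\"{o}lder's inequality for Lorentz spaces on the finite-measure sets $\breve{R}_j$ (legitimate since $t<p$) to pass to $\big\|F_j\big\|_{L^{p,r_2}}$, and finally summing the resulting triple series in $v$, $j$ and $k$ by the Hardy-type inequality of Lemma \ref{lem:lq-inequality} together with Lemma \ref{Lp-estimate} — exactly as in the estimation of the term $I$ in the proof of Theorem \ref{embeddings3 copy(1)-lorentz} — one again obtains a bound by $\big\|\lambda\big\|_{\dot{K}_{p,r_2}^{\alpha_2,r}f_\infty^{s_2}}^r$. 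The bookkeeping in this last step — arranging the nested Hardy-type summations over $v$, $j$ and $k$ so that the exponents line up and \eqref{new-exp1-lorentz} is used exactly once — is the part requiring care; the high range, by contrast, is essentially elementary once the rearrangement bound above is established. Combining the two ranges yields \eqref{FJ-emb1-lorentz}.
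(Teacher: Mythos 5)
Your opening reduction is where the argument fails. You claim it suffices to prove \eqref{FJ-emb1-lorentz} for $q=r$, ``the general case following from Lemma \ref{embeddings1-lorentz}(i)''. That lemma gives $\dot{K}_{s,r_1}^{\alpha_1,r}\hookrightarrow\dot{K}_{s,r_1}^{\alpha_1,q}$ only when $r\le q$; for $q<r$ the monotonicity runs the other way (and by the ``only if'' part of the same lemma there is no such embedding), so the target $\dot{K}_{s,r_1}^{\alpha_1,q}b_{r}^{s_1}$ is then strictly stronger than the $q=r$ space you embed into. Since the theorem imposes no relation between $q$ and $r$ (arbitrarily small $q$ is allowed, and this freedom is exactly what $\alpha_2>\alpha_1$ buys), your proof as written only covers $q\ge r$. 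The gap is repairable with tools you already invoke: keep the norm in its original nested form $\sum_{v}\bigl(\sum_{k}2^{(k\alpha_1+v(s_1+\frac n2))q}\|g_v\chi_k\|_{L^{s,r_1}}^{q}\bigr)^{r/q}$. In the high range your bound $\|g_v\chi_k\|_{L^{s,r_1}}\lesssim 2^{-v(s_1+\frac n2+\alpha_2-\alpha_1)}\|F_k\|_{L^{p,\infty}}$ turns the inner $k$-sum into a convolution against the kernel $2^{-(k+v)(\alpha_2-\alpha_1)q}$ (restricted to $k+v\ge c_n+2$), and Lemma \ref{lem:lq-inequality} then collapses the double sum to $c\sum_k 2^{k\alpha_2 r}\|F_k\|_{L^{p,\infty}}^{r}$ for every $0<q<\infty$; the low range is closed the same way using $\alpha_1+\frac ns>0$. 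This is precisely how the paper's estimates of $II$ and $I$ handle general $q$, and it is the only substantive difference from what you wrote: your high-range rearrangement argument itself (constancy of $\tilde g_{v,k}^{\ast}$ on $[0,2^{-vn})$, the weak-type bound $F_k^{\ast}(t)\le t^{-1/p}\|F_k\|_{L^{p,\infty}}$, convergence from $p<s$, and the scaling \eqref{new-exp1-lorentz}) is correct and is in fact a streamlined version of the paper's computation with $h_k^{\ast}$ over dyadic blocks.

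Two smaller points. In the low range your admissible window for the auxiliary exponent, $0<t<\min(1,p,r_1,r_2)$, is not enough: the shell summation you defer to (the estimation of $I$ in Theorem \ref{embeddings3 copy(1)-lorentz} and in the paper's proof of this theorem) needs $\frac nt>\frac np+\alpha_2$ in addition to $t<\min(p,r_2)$, so that after H\"{o}lder the weights $2^{j(\frac nt-\frac np)}$ over the shells $j\le c_n-v+3$ and the factor $2^{v(\frac nt-\frac np-\alpha_2)}$ line up for Lemma \ref{lem:lq-inequality}; with your stated $t$ this can fail when $\alpha_2>0$ ($r_1$ plays no role in choosing $t$). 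Finally, for $r_1=\infty$ the integral expression of $\|\cdot\|_{L^{s,r_1}}$ you use in the high range must be replaced by the supremum form; the argument goes through with the obvious modification, but it should be said.
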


\begin{proof}
Put $c_{n}=1+\lfloor \log _{2}(2\sqrt{n}+1)\rfloor $. Let $\lambda \in \dot{K%
}_{p,r_{2}}^{\alpha _{2},r}f_{\infty }^{s_{2}}$. We have%
\begin{align*}
\big\|\lambda \big\|_{\dot{K}_{s,r_{1}}^{\alpha _{1},q}b_{r}^{s_{1}}}^{r}=&
\sum_{v=0}^{\infty }\Big(\sum\limits_{k=-\infty }^{\infty }2^{(k\alpha
_{1}+v(s_{1}+\frac{n}{2}))q}\big\|\sum\limits_{m\in \mathbb{Z}^{n}}\lambda
_{v,m}\chi _{v,m}\chi _{k}\big\|_{L^{s,r_{1}}}^{q}\Big)^{r/q} \\
\lesssim & I+II,
\end{align*}%
where%
\begin{equation*}
I=\sum_{v=0}^{\infty }\Big(\sum\limits_{k=-\infty }^{c_{n}+1-v}2^{(k\alpha
_{1}+v(s_{1}+\frac{n}{2}))q}\big\|\sum\limits_{m\in \mathbb{Z}^{n}}\lambda
_{v,m}\chi _{v,m}\chi _{k}\big\|_{L^{s,r_{1}}}^{q}\Big)^{r/q}
\end{equation*}%
and%
\begin{equation*}
II=\sum_{v=0}^{\infty }\Big(\sum\limits_{k=c_{n}+2-v}^{\infty }2^{(k\alpha
_{1}+v(s_{1}+\frac{n}{2}))q}\big\|\sum\limits_{m\in \mathbb{Z}^{n}}\lambda
_{v,m}\chi _{v,m}\chi _{k}\big\|_{L^{s,r_{1}}}^{q}\Big)^{r/q}.
\end{equation*}

\textit{Step 1.}\textbf{\ }We will estimate $I$ and $II$, respectively.

\textit{Estimation of }$I$\textit{.} Let $x\in R_{k}\cap Q_{v,m}$ and $y\in
Q_{v,m}$. We have $|x-y|\leq 2\sqrt{n}2^{-v}<2^{c_{n}-v}$ and from this it
follows that $|y|<2^{c_{n}-v}+2^{k}\leq 2^{c_{n}-v+2}$, which implies that $%
y $ is located in the ball $B(0,2^{c_{n}-v+2})$ and 
\begin{equation*}
|\lambda _{v,m}|^{t}\lesssim 2^{nv}\int_{B(0,2^{c_{n}-v+2})}|\lambda
_{v,m}|^{t}\chi _{v,m}(y)dy,
\end{equation*}%
where $t>0$. Then for any $x\in R_{k}$ we obtain%
\begin{align*}
\sum_{m\in \mathbb{Z}^{n}}|\lambda _{v,m}|^{t}\chi _{v,m}(x)\lesssim &
2^{nv}\int_{B(0,2^{c_{n}-v+2})}\sum_{m\in \mathbb{Z}^{n}}|\lambda
_{v,m}|^{t}\chi _{v,m}(y)dy \\
=& c2^{nv}\big\|\sum_{m\in \mathbb{Z}^{n}}\lambda _{v,m}\chi _{v,m}\chi
_{B(0,2^{c_{n}-v+2})}\big\|_{L^{t,t}}^{t},
\end{align*}%
where the positive constant $c$ is independent of $v$ and $x$. Consequently,
with the help of \eqref{est-function1}, we obtain 
\begin{align*}
& 2^{k\alpha _{1}+v(s_{1}+\frac{n}{2})}\Big\|\sum_{m\in \mathbb{Z}%
^{n}}\lambda _{v,m}\chi _{v,m}\chi _{k}\Big\|_{L^{s,r_{1}}} \\
& \lesssim 2^{v(s_{1}+\frac{n}{t}+\frac{n}{2})+k(\alpha _{1}+\frac{n}{s})}%
\Big\|\sum_{m\in \mathbb{Z}^{n}}\lambda _{v,m}\chi _{v,m}\chi
_{B(0,2^{c_{n}-v+2})}\Big\|_{L^{t,t}}.
\end{align*}%
We may choose $t>0$ such that $\frac{1}{t}>\max (\frac{1}{p},\frac{1}{r_{2}},%
\frac{1}{p}+\frac{\alpha _{2}}{n})$. Therefore, since $\alpha _{1}+\frac{n}{s%
}>0$, 
\begin{equation*}
I\lesssim \sum_{v=0}^{\infty }2^{v(s_{1}+\frac{n}{t}-\alpha _{1}-\frac{n}{s}%
-s_{2})r}\sup_{j\in \mathbb{N}_{0}}2^{(s_{2}+\frac{n}{2})jr}\Big\|\sum_{m\in 
\mathbb{Z}^{n}}\lambda _{j,m}\chi _{j,m}\chi _{B(0,2^{c_{n}-v+2})}\Big\|%
_{L^{t,t}}^{r},
\end{equation*}%
which can be estimated from above by 
\begin{equation*}
c\sum_{v=0}^{\infty }2^{v\frac{nr}{d}}\Big(\sum_{i=-\infty }^{-v}2^{(\frac{n%
}{d}+\alpha _{2})\delta i}\sup_{j\in \mathbb{N}_{0}}2^{j(s_{2}+\frac{n}{2}%
)\delta }\Big\|\sum_{m\in \mathbb{Z}^{n}}\lambda _{j,m}\chi _{j,m}\chi
_{i+c_{n}+2}\Big\|_{L^{q,r_{2}}}^{\delta }\Big)^{r/\delta },
\end{equation*}%
by Lemma \ref{Lp-estimate}, \eqref{new-exp1-lorentz} and H\"{o}lder's
inequality, with $\delta =\min (1,t)$ and $\frac{n}{d}=\frac{n}{t}-\frac{n}{p%
}-\alpha _{2}$. Hence Lemma \ref{lem:lq-inequality} implies that 
\begin{equation*}
I\lesssim \sum_{i=0}^{\infty }2^{-\alpha _{2}ir}\sup_{j\in \mathbb{N}%
_{0}}2^{j(s_{2}+\frac{n}{2})r}\Big\|\sum\limits_{m\in \mathbb{Z}^{n}}\lambda
_{j,m}\chi _{j,m}\chi _{2-i+c_{n}}\Big\|_{L^{p,r_{2}}}^{r}\lesssim \big\|%
\lambda \big\|_{\dot{K}_{p,r_{2}}^{\alpha _{2},r}f_{\infty }^{s_{2}}}^{r}.
\end{equation*}

\textit{Estimation of }$II$\textit{.} Let $\alpha _{3}=\alpha _{1}-\alpha
_{2}$. We have 
\begin{align*}
& II \\
& =\sum_{v=0}^{\infty }\Big(\sum\limits_{k=c_{n}+2-v}^{\infty
}2^{(k+v)\alpha _{3}q+v(\frac{n}{s}-\frac{n}{p}+s_{2}+\frac{n}{2})q+k\alpha
_{2}q}\big\|\sum\limits_{m\in \mathbb{Z}^{n}}\lambda _{v,m}\chi _{v,m}\chi
_{k}\big\|_{L^{s,r_{1}}}^{q}\Big)^{r/q},
\end{align*}%
which is bounded by 
\begin{equation*}
\sum_{v=-\infty }^{\infty }\Big(\sum\limits_{k=c_{n}+2-v}^{\infty
}2^{(k+v)\alpha _{3}+k\alpha _{2}q}\sup_{j\geq c_{n}+2-k}\Big(2^{j(\frac{n}{s%
}-\frac{n}{p}+s_{2}+\frac{n}{2})}\big\|\sum\limits_{m\in \mathbb{Z}%
^{n}}\lambda _{j,m}\chi _{j,m}\chi _{k}\big\|_{L^{s,r_{1}}}\Big)^{q}\Big)%
^{r/q}.
\end{equation*}%
Since $\alpha _{2}>\alpha _{1}$, by Lemma \ref{lem:lq-inequality}, we
estimate $II$ by%
\begin{align}
& \sum\limits_{k=-\infty }^{\infty }2^{k\alpha _{2}r}\sup_{j\geq c_{n}+2-k}%
\Big(2^{j(\frac{n}{s}-\frac{n}{p}+s_{2}+\frac{n}{2})}\big\|\sum\limits_{m\in 
\mathbb{Z}^{n}}\lambda _{j,m}\chi _{j,m}\chi _{k}\big\|_{L^{s,r_{1}}}\Big)%
^{r}  \notag \\
\leq & \sum\limits_{k=-\infty }^{\infty }2^{k\alpha _{2}r}\Big(%
\sum_{v=c_{n}+2-k}^{\infty }2^{v(\frac{n}{s}-\frac{n}{p}+s_{2}+\frac{n}{2}%
)r_{2}}\Big\|\sum\limits_{m\in \mathbb{Z}^{n}}\lambda _{v,m}\chi _{v,m}\chi
_{k}\Big\|_{L^{s,r_{1}}}^{r_{2}}\Big)^{r/r_{2}}.  \label{sum-lorentz1}
\end{align}%
We use some decomposition techniques already used in \cite{Vybiral08}. Let $%
\breve{R}_{k}$ and $A_{k+v}$ be as in the proof of Theorem \ref{embeddings3
copy(1)-lorentz}. Put 
\begin{equation*}
h_{k}(x)=\sup_{v\in \mathbb{N}_{0}}2^{v(s_{2}+\frac{n}{2})}\sum\limits_{m\in 
\mathbb{Z}^{n}}\left\vert \lambda _{v,m}\right\vert \chi _{v,m}(x)\chi _{%
\breve{R}_{k}}(x).
\end{equation*}%
Then 
\begin{equation*}
\big\|\lambda \big\|_{\dot{K}_{p,r_{2}}^{\alpha _{2},r}f_{\infty
}^{s_{2}}}\approx \Big(\sum\limits_{k=-\infty }^{\infty }2^{k\alpha _{2}r}%
\big\|h_{k}\big\|_{L^{p,r_{2}}}^{r}\Big)^{1/r}.
\end{equation*}%
Let $x\in Q_{v,m}\cap R_{k}$,\ with $m\in \mathbb{Z}^{n}$, $v\geq c_{n}+2-k$
and $k\in \mathbb{Z}$. Recall that 
\begin{align*}
2^{v(s_{2}+\frac{n}{2})}\sum\limits_{m\in \mathbb{Z}^{n}}|\lambda
_{v,m}|\chi _{v,m}(x)\chi _{k}(x)& \leq 2^{v(s_{2}+\frac{n}{2}%
)}\sum\limits_{m\in A_{k+v}}|\lambda _{v,m}|\chi _{v,m}(x) \\
& =g_{v,k}(x) \\
& \leq h_{k}(x).
\end{align*}%
We have%
\begin{align}
& 2^{v(s_{2}+\frac{n}{2})r_{1}}\Big\|\sum\limits_{m\in \mathbb{Z}%
^{n}}\lambda _{v,m}\chi _{v,m}\chi _{k}\Big\|_{L^{s,r_{1}}}^{r_{1}}  \notag
\\
& \leq \int_{0}^{\infty }(y^{\frac{1}{s}}g_{v,k}^{\ast }(y))^{r_{1}}\frac{dy%
}{y}  \notag \\
& =\int_{0}^{2^{-vn}}(y^{\frac{1}{s}}g_{v,k}^{\ast }(y))^{r_{1}}\frac{dy}{y}%
+\int_{2^{-vn}}^{\infty }(y^{\frac{1}{s}}g_{v,k}^{\ast }(y))^{r_{1}}\frac{dy%
}{y}  \notag \\
& \lesssim 2^{-\frac{vnr_{1}}{s}}(g_{v,k}^{\ast
}(2^{-vn-n}))^{r_{1}}+\int_{2^{-vn}}^{\infty }(y^{\frac{1}{s}}h_{k}^{\ast
}(y))^{r_{1}}\frac{dy}{y},  \label{corr1-lorentz}
\end{align}%
where the implicit constant is independent of $v$ and $k$. By the
monotonicity of $h^{\ast }$, we get%
\begin{align}
\int_{2^{-vn}}^{\infty }(y^{\frac{1}{s}}\left( h_{k}\right) ^{\ast
}(y))^{r_{1}}\frac{dy}{y}& =\sum_{l=0}^{\infty
}\int_{2^{(l-v)n}}^{2^{(l-v)n+n}}(y^{\frac{1}{s}}h_{k}^{\ast }(y))^{r_{1}}%
\frac{dy}{y}  \notag \\
& \leq 2^{-\frac{vnr_{1}}{s}}\sum_{l=0}^{\infty }2^{\frac{nr_{1}}{s}%
l}(h_{k}^{\ast }(2^{(l-v)n}))^{r_{1}}.  \label{corr2-lorentz}
\end{align}%
Inserting \eqref{corr2-lorentz} in \eqref{corr1-lorentz} and using $%
g_{v,k}^{\ast }(2^{-vn-n})\leq h_{k}^{\ast }(2^{-vn-n})$, we obtain that the
sum $\sum_{v=c_{n}+2-k}^{\infty }\cdot \cdot \cdot $ in \eqref{sum-lorentz1}
can be estimated from above by 
\begin{equation}
c\sum_{v=c_{n}+2-k}^{\infty }2^{-\frac{vnr_{2}}{p}}\Big(\sum\limits_{l=0}^{%
\infty }2^{l\frac{nr_{1}}{s}}\left( h_{k}^{\ast }(2^{(l-v)n-n})\right)
^{r_{1}}\Big)^{r_{2}/r_{1}}.  \label{corr4.1-lorentz}
\end{equation}%
We have \eqref{corr4.1-lorentz} can be rewritten as%
\begin{align}
& c\sum_{v=c_{n}+2-k}^{\infty }\Big(\sum\limits_{l=0}^{\infty }2^{l(\frac{1}{%
s}-\frac{1}{p})r_{1}n}2^{(l-v)\frac{nr_{1}}{p}}\left( h_{k}^{\ast
}(2^{(l-v)n-n})\right) ^{r_{1}}\Big)^{r_{2}/r_{1}}  \notag \\
& =c\sum_{v=c_{n}+2-k}^{\infty }\Big(\sum\limits_{j=-v}^{\infty }2^{(j+v)(%
\frac{1}{s}-\frac{1}{p})r_{1}n}2^{j\frac{nr_{1}}{p}}\left( h_{k}^{\ast
}(2^{jn-n})\right) ^{r_{1}}\Big)^{r_{2}/r_{1}}.  \label{corr5-lorentz}
\end{align}%
Applying Lemma \ref{lem:lq-inequality}, we find that \eqref{corr5-lorentz}
is bounded by 
\begin{equation*}
c\sum_{j=-\infty }^{\infty }2^{\frac{njr_{2}}{p}}(h_{k}^{\ast
}(2^{nj}))^{r_{2}}\approx \big\|h_{k}\big\|_{L^{p,r_{2}}}^{r_{2}}.
\end{equation*}%
Consequently, we obtain $II\lesssim \big\|\lambda \big\|_{\dot{K}%
_{p,r_{2}}^{\alpha _{2},r}f_{\infty }^{s_{2}}}.$ The proof is complete.
\end{proof}

Now, we deal with the case $\alpha _{2}=\alpha _{1}.$

\begin{theorem}
\label{embeddings6-lorentz copy(1)}\textit{Let }$\alpha ,s_{1},s_{2}\in 
\mathbb{R},0<r_{1}\leq \infty ,0<s,p,q,r<\infty $ \textit{and }$\alpha >-%
\frac{n}{s}$. \textit{We\ suppose that\ }$0<p<s\leq \infty ,0<q\leq r<\infty
\ $and%
\begin{equation}
s_{1}-\frac{n}{s}=s_{2}-\frac{n}{p}.  \label{new-exp2-lorentz}
\end{equation}%
Then 
\begin{equation*}
\dot{K}_{p,r}^{\alpha ,q}f_{\infty }^{s_{2}}\hookrightarrow \dot{K}%
_{s,r_{1}}^{\alpha ,q}b_{r}^{s_{1}}.
\end{equation*}
\end{theorem}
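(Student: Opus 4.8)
The plan is to rerun the proof of Theorem~\ref{embeddings6-lorentz}, keeping its first half unchanged and replacing only the step that exploited $\alpha_2>\alpha_1$. Put $c_n=1+\lfloor\log_2(2\sqrt n+1)\rfloor$, fix $\lambda\in\dot{K}_{p,r}^{\alpha,q}f_\infty^{s_2}$, and split
\begin{equation*}
\big\|\lambda\big\|_{\dot{K}_{s,r_1}^{\alpha,q}b_r^{s_1}}^{r}\lesssim I+II
\end{equation*}
by subadditivity of the sum over $k$, where $I$ collects the pairs $(v,k)$ with $k\le c_n+1-v$ and $II$ those with $k\ge c_n+2-v$. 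The term $I$ is treated exactly as the term $I$ in the proof of Theorem~\ref{embeddings6-lorentz}: for $x\in R_k\cap Q_{v,m}$ with $k\le c_n+1-v$ every $y\in Q_{v,m}$ lies in $B(0,2^{c_n-v+2})$, whence $|\lambda_{v,m}|^{t}\lesssim 2^{nv}\int_{B(0,2^{c_n-v+2})}|\lambda_{v,m}|^{t}\chi_{v,m}(y)\,dy$; one then invokes Hölder's inequality in Lorentz spaces with an exponent $t>0$ for which $\tfrac1t>\max(\tfrac1p,\tfrac1r,\tfrac1p+\tfrac\alpha n)$, Lemma~\ref{Lp-estimate}, the balance \eqref{new-exp2-lorentz}, and the Hardy-type inequality Lemma~\ref{lem:lq-inequality} (the geometric sum that arises converges because $\tfrac nd+\alpha>0$, with $\tfrac nd=\tfrac nt-\tfrac np-\alpha$), and arrives at $I\lesssim\big\|\lambda\big\|_{\dot{K}_{p,r}^{\alpha,q}f_\infty^{s_2}}^{r}$. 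Crucially, this argument never uses a strict inequality between the two weight exponents, so it carries over verbatim to the present case $\alpha_1=\alpha_2=\alpha$.

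The term $II$ is the heart of the proof and the main obstacle, since the device of Theorem~\ref{embeddings6-lorentz} — factoring out $2^{(k+v)(\alpha_1-\alpha_2)}$ and telescoping the $v$-sum by Lemma~\ref{lem:lq-inequality} because $\alpha_1-\alpha_2<0$ — is no longer at our disposal. Instead the plan is to use $0<q\le r$ together with the rearrangement technique from the proof of Theorem~\ref{embeddings3 copy(2)-lorentz}. Set $\breve{R}_k=\cup_{i=-2}^{3}R_{k+i}$ and $A_{k+v}=\{m\in\mathbb{Z}^n:2^{k+v-2}\le|m|\le 2^{k+v+1}\}$, so that $Q_{v,m}\subset\breve{R}_k$ whenever $v\ge c_n+2-k$ and $m\in A_{k+v}$, while $Q_{v,m}\cap R_k\neq\emptyset$ together with $v\ge c_n+2-k$ forces $m\in A_{k+v}$. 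Putting
\begin{equation*}
h_k=\sup_{v\in\mathbb{N}_0}2^{v(s_2+\frac n2)}\sum_{m\in\mathbb{Z}^n}|\lambda_{v,m}|\chi_{v,m}\chi_{\breve{R}_k}
\end{equation*}
one has $\big(\sum_{k}2^{k\alpha q}\big\|h_k\big\|_{L^{p,r}}^{q}\big)^{1/q}\approx\big\|\lambda\big\|_{\dot{K}_{p,r}^{\alpha,q}f_\infty^{s_2}}$ by the bounded overlap of the $\chi_{\breve R_k}$. By the triangle inequality in $\ell^{r/q}$, legitimate since $r\ge q$, applied to the $v$-summation of the sum over $k$, it then suffices to establish
\begin{equation*}
\sum_{v\ge c_n+2-k}2^{v(s_1+\frac n2)r}\big\|\sum_{m\in\mathbb{Z}^n}\lambda_{v,m}\chi_{v,m}\chi_k\big\|_{L^{s,r_1}}^{r}\lesssim\big\|h_k\big\|_{L^{p,r}}^{r}
\end{equation*}
uniformly in $k\in\mathbb{Z}$; summing this against $2^{k\alpha q}$ yields $II\lesssim\big\|\lambda\big\|_{\dot{K}_{p,r}^{\alpha,q}f_\infty^{s_2}}^{r}$, which combined with the bound for $I$ finishes the proof.

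For the last displayed estimate I would argue along the lines of Cases~1 and~2 in the proof of Theorem~\ref{embeddings3 copy(2)-lorentz}. With $g_{v,k}=2^{v(s_2+\frac n2)}\sum_{m\in A_{k+v}}|\lambda_{v,m}|\chi_{v,m}\le h_k$ one has, for $v\ge c_n+2-k$,
\begin{equation*}
2^{v(s_1+\frac n2)r_1}\big\|\sum_{m\in\mathbb{Z}^n}\lambda_{v,m}\chi_{v,m}\chi_k\big\|_{L^{s,r_1}}^{r_1}\le 2^{v(\frac ns-\frac np)r_1}\int_0^\infty\big(y^{1/s}g_{v,k}^{\ast}(y)\big)^{r_1}\,\frac{dy}{y};
\end{equation*}
since $g_{v,k}^{\ast}$ is constant on $[0,2^{-vn})$ and $g_{v,k}^{\ast}\le h_k^{\ast}$, splitting the integral as $\int_0^{2^{-vn}}+\sum_{l\ge0}\int_{2^{(l-v)n}}^{2^{(l-v)n+n}}$ and using \eqref{new-exp2-lorentz} rewrites the right-hand side through weighted sums over $l$ of $h_k^{\ast}(2^{(l-v)n-n})$, and after the substitution $j=l-v$ and summation over $v$ one is left with a double sum that, by Lemma~\ref{lem:lq-inequality} (convolution with geometric sequences), is controlled by $\sum_j 2^{jnr/p}(h_k^{\ast}(2^{jn-n}))^{r}\approx\big\|h_k\big\|_{L^{p,r}}^{r}$; here one separates the cases $r\le r_1$ and $r>r_1$ as in that proof, passing first to an auxiliary exponent strictly below $r_1$ in the second. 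The points I expect to be delicate are the bookkeeping of the two distinct Lorentz exponents $r$ (from the source space) and $r_1$ (from the target space), which is what forces the $r$-versus-$r_1$ dichotomy, and making precise the interchange of the $\ell^{r/q}$-summation over $v$ with the summation over $k$ in the presence of the mutually disjoint characteristic functions $\chi_k$.
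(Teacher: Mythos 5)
Your proposal is correct and follows essentially the same route as the paper: the paper also reuses the estimate of $I$ from Theorem \ref{embeddings6-lorentz}, passes to $II^{q/r}$ by interchanging the $v$- and $k$-sums (exactly your Minkowski step in $\ell^{r/q}$, which is where $q\le r$ enters), and then bounds the resulting inner $v$-sum uniformly in $k$ by $\|h_k\|_{L^{p,r}}^{r}$ via the rearrangement argument ($g_{v,k}^{\ast}\le h_k^{\ast}$, constancy on $[0,2^{-vn})$, substitution $j=l-v$) and Lemma \ref{lem:lq-inequality}, before summing against $2^{k\alpha q}$. The only cosmetic difference is your anticipated $r$-versus-$r_1$ dichotomy: for the Besov-type target it is not needed, since Lemma \ref{lem:lq-inequality} applies directly with exponent $r/r_1$ for any relative size of $r$ and $r_1$ (the duality step that forced that case split in Theorem \ref{embeddings3 copy(2)-lorentz} does not occur here).
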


\begin{proof}
Put $c_{n}=1+\lfloor \log _{2}(2\sqrt{n}+1)\rfloor $. In view the proof of
Theorem \ref{embeddings6-lorentz}, we estimate only $II$. By the assumption %
\eqref{new-exp2-lorentz} we estimate $II^{\frac{q}{r}}$ by%
\begin{equation}
\sum\limits_{k=-\infty }^{\infty }2^{k\alpha q}\Big(\sum_{v=c_{n}+2-k}^{%
\infty }2^{v(\frac{n}{s}-\frac{n}{p}+s_{2}+\frac{n}{2})r}\Big\|%
\sum\limits_{m\in A_{k+v}}\lambda _{v,m}\chi _{v,m}\chi _{k}\Big\|%
_{L^{s,r_{1}}}^{r}\Big)^{q/r}.  \label{sum-lorentz}
\end{equation}%
Let $h_{k}$ be as in the proof of Theorem \ref{embeddings6-lorentz}. The sum 
$\sum_{v=c_{n}+2-k}^{\infty }\cdot \cdot \cdot $ in \eqref{sum-lorentz} can
be estimated from above by 
\begin{align*}
& \sum_{v=0}^{\infty }2^{-\frac{vnr}{p}}\Big(\sum\limits_{l=0}^{\infty }2^{l%
\frac{nr_{1}}{s}}\left( h_{k}^{\ast }(2^{(l-v)n-n})\right) ^{r_{1}}\Big)%
^{r/r_{1}} \\
& =\sum_{v=0}^{\infty }\Big(\sum\limits_{l=0}^{\infty }2^{l(\frac{1}{s}-%
\frac{1}{p})nr_{1}}2^{(l-v)\frac{nr_{1}}{q}}\left( h_{k}^{\ast
}(2^{(l-v)n-n})\right) ^{r_{1}}\Big)^{r/r_{1}} \\
& =\sum_{v=0}^{\infty }\Big(\sum\limits_{j=-v}^{\infty }2^{(j+v)(\frac{1}{s}-%
\frac{1}{p})nr_{1}}2^{j\frac{nr_{1}}{q}}\left( h_{k}^{\ast
}(2^{jn-n})\right) ^{r_{1}}\Big)^{r/r_{1}} \\
& \lesssim \sum\limits_{j=-\infty }^{\infty }2^{j\frac{nr}{p}}\left(
h_{k}^{\ast }(2^{jn})\right) ^{r},
\end{align*}%
by using Lemma\ \ref{lem:lq-inequality}. Hence, we obtain $II\lesssim \big\|%
\lambda \big\|_{\dot{K}_{p,r}^{\alpha _{2},q}f_{\infty }^{s_{2}}}$.
\end{proof}

\begin{theorem}
\label{embeddings6-lorentz copy(2)}\textit{Let }$\alpha ,s_{1},s_{2}\in 
\mathbb{R},0<r_{1}\leq \infty ,0<s,p,q,r<\infty $ \textit{and }$\alpha >-%
\frac{n}{s}$. \textit{We\ suppose that\ }$0<p<s\leq \infty ,0<r<q\leq \infty 
$\ and%
\begin{equation*}
s_{1}-\frac{n}{s}=s_{2}-\frac{n}{p}.
\end{equation*}%
Then%
\begin{equation*}
\dot{K}_{p,r}^{\alpha ,r}f_{\theta }^{s_{2}}\hookrightarrow \dot{K}%
_{s,r_{1}}^{\alpha ,q}b_{r}^{s_{1}}.
\end{equation*}
\end{theorem}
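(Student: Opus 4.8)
The plan is to obtain the asserted embedding by composing three elementary ones, so that no fresh estimate is required. First I would replace the source space by the one with Besov index $\infty$: since $\ell^{\theta}\hookrightarrow\ell^{\infty}$, one has $\dot{K}_{p,r}^{\alpha,r}f_{\theta}^{s_2}\hookrightarrow\dot{K}_{p,r}^{\alpha,r}f_{\infty}^{s_2}$ (the sequence-space counterpart of \eqref{embed1}; this step is vacuous if $\theta=\infty$). Next I would apply Theorem~\ref{embeddings6-lorentz copy(1)} with its index ``$q$'' specialized to the present value $r$: the hypotheses $0<p<s\le\infty$, $\alpha>-\tfrac{n}{s}$, $0<r_1\le\infty$ and the balance relation $s_1-\tfrac{n}{s}=s_2-\tfrac{n}{p}$ are precisely those assumed here, while the requirement $0<q\le r<\infty$ of that theorem reads $0<r\le r<\infty$, which holds because $r<q\le\infty$ forces $r<\infty$. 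Theorem~\ref{embeddings6-lorentz copy(1)} then gives $\dot{K}_{p,r}^{\alpha,r}f_{\infty}^{s_2}\hookrightarrow\dot{K}_{s,r_1}^{\alpha,r}b_r^{s_1}$.

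Finally, since $r<q\le\infty$, Lemma~\ref{embeddings1-lorentz}/(i) yields $\dot{K}_{s,r_1}^{\alpha,r}\hookrightarrow\dot{K}_{s,r_1}^{\alpha,q}$, and applying this inside the $\ell^r$-sum over the scale $k$ in the definition of the $b_r^{s_1}$-quasi-norm upgrades it to $\dot{K}_{s,r_1}^{\alpha,r}b_r^{s_1}\hookrightarrow\dot{K}_{s,r_1}^{\alpha,q}b_r^{s_1}$; here one only uses that $\dot{K}_{s,r_1}^{\alpha,\cdot}$ is an ideal quasi-Banach space and that the embedding constant is independent of $k$. Chaining the three embeddings completes the argument.

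There is, in this route, essentially no hard step — the whole proof is bookkeeping; the only things to verify are the admissibility of the parameter substitution in the middle step and the routine passage from the Herz embedding to the Besov-sequence embedding in the last step. Should one instead want a self-contained proof paralleling that of Theorem~\ref{embeddings6-lorentz copy(1)}, one would again split $\|\lambda\|_{\dot{K}_{s,r_1}^{\alpha,q}b_r^{s_1}}^{r}$ into the near-diagonal part $I$ (scales $k\le c_n+1-v$) and the far part $II$ (scales $k\ge c_n+2-v$), with $c_n=1+\lfloor\log_2(2\sqrt{n}+1)\rfloor$; the estimate of $I$, which rests only on $\alpha+\tfrac{n}{s}>0$, the balance relation and the Hardy-type Lemma~\ref{lem:lq-inequality}, is unchanged, and in $II$ the sole difference from the case $q\le r$ is that the summation over the scales $k$ must now be kept on the outside in $\ell^q$ and processed, after passing to the non-increasing rearrangement $h_k^{\ast}$, again by Lemma~\ref{lem:lq-inequality}. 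The genuinely delicate point in that direct approach would be the limiting case $q=\infty$, where the outer $\ell^q$-sum degenerates to a supremum.
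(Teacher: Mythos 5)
Your argument is correct, and it is a somewhat different (and tidier) route than the paper's. You treat the statement as a formal corollary of Theorem \ref{embeddings6-lorentz copy(1)}: you invoke that theorem as a black box at the endpoint $q=r$ (its hypothesis $0<q\le r<\infty$ becomes $r\le r$, which is admissible), and sandwich it between the two trivial monotonicity embeddings $\dot{K}_{p,r}^{\alpha,r}f_{\theta}^{s_{2}}\hookrightarrow\dot{K}_{p,r}^{\alpha,r}f_{\infty}^{s_{2}}$ and $\dot{K}_{s,r_{1}}^{\alpha,r}b_{r}^{s_{1}}\hookrightarrow\dot{K}_{s,r_{1}}^{\alpha,q}b_{r}^{s_{1}}$, the latter obtained from Lemma \ref{embeddings1-lorentz}/(i) applied level by level with a constant independent of the level. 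The paper instead re-enters the proof of Theorem \ref{embeddings6-lorentz copy(1)}: it keeps the splitting of $\big\|\lambda\big\|_{\dot{K}_{s,r_{1}}^{\alpha,q}b_{r}^{s_{1}}}^{r}$ into the near-diagonal part $I$ and the far part $II$, bounds $II$ by the double sum in which the Herz exponent $q$ is replaced by $r$ (which is exactly the same use of $\ell^{r}\hookrightarrow\ell^{q}$ that you make at the level of spaces), and then runs the rearrangement and Hardy-type machinery of that earlier theorem. So both proofs rest on the same single observation; yours makes explicit that the result is a corollary of the $q\le r$ case, requires no new estimate, and automatically covers arbitrary $\theta$ as well as $q=\infty$, while the paper's version stays inside the sequence-space estimate. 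A small remark on your alternative ``direct'' sketch: the paper does not keep the $k$-summation in $\ell^{q}$ and reprocess it with Lemma \ref{lem:lq-inequality}; it converts it to $\ell^{r}$ at once and then quotes the earlier argument --- but since that sketch is not load-bearing for your main chain of embeddings, this does not affect the correctness of your proof.
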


\begin{proof}
In this case, we estimate $II$ by%
\begin{equation*}
\sum\limits_{k=-\infty }^{\infty }2^{k\alpha r}\sum_{v=c_{n}+2-k}^{\infty
}2^{v(\frac{n}{s}-\frac{n}{p}+s_{2}+\frac{n}{2})r}\Big\|\sum\limits_{m\in 
\mathbb{Z}^{n}}\lambda _{v,m}\chi _{v,m}\chi _{k}\Big\|_{L^{s,r_{1}}}^{r}.
\end{equation*}%
As in Theorem \ref{embeddings6-lorentz copy(1)} we arrive at the desired
estimate. The proof is complete.
\end{proof}

\begin{theorem}
\label{embeddings6-lorentz copy(3)}\textit{Let }$\alpha _{1},\alpha
_{2},s_{1},s_{2}\in \mathbb{R}$, $0<r,r_{1},r_{2}\leq \infty ,0<s,p,q<\infty
,\alpha _{1}>-\frac{n}{s}\ $\textit{and }$\alpha _{2}>-\frac{n}{p}$. \textit{%
We suppose \eqref{new-exp1-lorentz} and }$0<s\leq p<\infty $. Assume that%
\textit{,}%
\begin{equation*}
\alpha _{2}+\frac{n}{p}>\alpha _{1}+\frac{n}{s}.
\end{equation*}%
Then%
\begin{equation*}
\dot{K}_{p,r_{2}}^{\alpha _{2},r}f_{\theta }^{s_{2}}\hookrightarrow \dot{K}%
_{s,r_{1}}^{\alpha _{1},q}b_{r}^{s_{1}},
\end{equation*}%
where $r_{1}=r_{2}$ if $s=p.$
\end{theorem}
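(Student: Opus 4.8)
The plan is to argue directly with the sequence spaces, following the scheme used for Theorems \ref{embeddings6-lorentz}, \ref{embeddings6-lorentz copy(1)} and \ref{embeddings3 copy(3)-lorentz} (a corresponding embedding for the function spaces would then follow via Theorem \ref{phi-tran-lorentz}). Since $\dot{K}_{p,r_2}^{\alpha_2,r}f_{\theta}^{s_2}\hookrightarrow\dot{K}_{p,r_2}^{\alpha_2,r}f_{\infty}^{s_2}$ for every fine index $\theta$, it suffices to prove $\dot{K}_{p,r_2}^{\alpha_2,r}f_{\infty}^{s_2}\hookrightarrow\dot{K}_{s,r_1}^{\alpha_1,q}b_r^{s_1}$. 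Put $c_n=1+\lfloor\log_2(2\sqrt n+1)\rfloor$, and for $\lambda\in\dot{K}_{p,r_2}^{\alpha_2,r}f_{\infty}^{s_2}$ split, exactly as in the proof of Theorem \ref{embeddings6-lorentz}, the inner $k$-sum in $\|\lambda\|_{\dot{K}_{s,r_1}^{\alpha_1,q}b_r^{s_1}}^r$ into the ranges $k\le c_n+1-v$ and $k\ge c_n+2-v$, so that $\|\lambda\|_{\dot{K}_{s,r_1}^{\alpha_1,q}b_r^{s_1}}^r\lesssim I+II$.

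For $I$ one reproduces verbatim the corresponding step of the proof of Theorem \ref{embeddings6-lorentz}: for $k\le c_n+1-v$ the support of $\sum_{m}\lambda_{v,m}\chi_{v,m}\chi_k$ lies in $B(0,2^{c_n-v+2})$, and choosing $t>0$ with $\tfrac1t>\max(\tfrac1p,\tfrac1{r_2},\tfrac1p+\tfrac{\alpha_2}{n})$, then using \eqref{est-function1}, $\alpha_1+\tfrac ns>0$ (to sum the geometric series over $k$), the relation \eqref{new-exp1-lorentz}, a dyadic decomposition of $B(0,2^{c_n-v+2})$, and Lemmas \ref{Lp-estimate} and \ref{lem:lq-inequality}, one obtains $I\lesssim\|\lambda\|_{\dot{K}_{p,r_2}^{\alpha_2,r}f_{\infty}^{s_2}}^r$. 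This step involves only $p,r_2,\alpha_2$ (through the auxiliary exponent $t$) together with $\alpha_1+\tfrac ns>0$ and \eqref{new-exp1-lorentz}, so it is insensitive to the ordering of $s$ and $p$ and to the sign of $\alpha_2-\alpha_1$.

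The hypotheses $0<s\le p$ and $\alpha_2+\tfrac np>\alpha_1+\tfrac ns$ are used only for $II$. In the range $k\ge c_n+2-v$ the active indices satisfy $m\in A_{k+v}=\{m:2^{k+v-2}\le|m|\le2^{k+v+1}\}$ and $Q_{v,m}\subset\breve R_k=\cup_{i=-2}^{3}R_{k+i}$, as in the proof of Theorem \ref{embeddings3 copy(1)-lorentz}. Since $s\le p$, H\"older's inequality in Lorentz spaces (Proposition \ref{Holder's-convolution-lorentz}, pairing $\sum_{m}\lambda_{v,m}\chi_{v,m}\chi_k$ with $\chi_k$ — applied directly when $r_1\le r_2$, and after the embedding $L^{p,r_2}\hookrightarrow L^{p,r_1}$ when $r_2\le r_1$) together with \eqref{est-function1} yields
\[
\Big\|\sum_{m\in\mathbb Z^n}\lambda_{v,m}\chi_{v,m}\chi_k\Big\|_{L^{s,r_1}}\lesssim 2^{kn(\frac1s-\frac1p)}\Big\|\sum_{m\in\mathbb Z^n}\lambda_{v,m}\chi_{v,m}\chi_k\Big\|_{L^{p,r_2}},
\]
which is an identity up to constants when $s=p$, explaining the hypothesis $r_1=r_2$ in that case. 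Writing $\mu:=\alpha_2+\tfrac np-\alpha_1-\tfrac ns>0$, the relation \eqref{new-exp1-lorentz} gives $k(\alpha_1+\tfrac ns-\tfrac np)+v(s_1+\tfrac n2)=-\mu(k+v)+k\alpha_2+v(s_2+\tfrac n2)$, and since $2^{v(s_2+\frac n2)}\sum_{m}|\lambda_{v,m}|\chi_{v,m}\chi_k\le h_k$ pointwise, where $h_k(x)=\sup_{v\in\mathbb N_0}2^{v(s_2+\frac n2)}\sum_{m}|\lambda_{v,m}|\chi_{v,m}(x)\chi_{\breve R_k}(x)$ and $(\sum_{k\in\mathbb Z}2^{k\alpha_2 r}\|h_k\|_{L^{p,r_2}}^r)^{1/r}\approx\|\lambda\|_{\dot{K}_{p,r_2}^{\alpha_2,r}f_{\infty}^{s_2}}$ as in the proof of Theorem \ref{embeddings6-lorentz}, the term $II$ is dominated by
\[
\sum_{v\ge0}\Big(\sum_{k\ge c_n+2-v}2^{-\mu(k+v)q}\big(2^{k\alpha_2}\|h_k\|_{L^{p,r_2}}\big)^q\Big)^{r/q}.
\]
The strict inequality makes $2^{-\mu(k+v)q}$ summable in $k+v$; a subadditivity argument when $r\le q$ and H\"older's inequality in the $k$-variable (with conjugate exponent $(r/q)'$ and a mild geometric weight) when $r>q$ then bound this by $\sum_{k}2^{k\alpha_2 r}\|h_k\|_{L^{p,r_2}}^r\approx\|\lambda\|_{\dot{K}_{p,r_2}^{\alpha_2,r}f_{\infty}^{s_2}}^r$, completing the proof; when $s=p$ one has $\alpha_2>\alpha_1$, $r_1=r_2$, and the same chain applies without the gain $2^{kn(1/s-1/p)}$.

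The main obstacle lies in $II$: one must make the $L^{s,r_1}\to L^{p,r_2}$ reduction via H\"older legitimate regardless of whether $r_1\le r_2$ or $r_2\le r_1$ (and treat $s=p$ separately, where $r_1=r_2$ is imposed), recombine the powers of $2$ in $k$ and $v$ through \eqref{new-exp1-lorentz} so that the decay factor $2^{-\mu(k+v)}$, $\mu>0$, actually appears — this is precisely where the strict inequality $\alpha_2+\tfrac np>\alpha_1+\tfrac ns$ is indispensable, since without it the resulting double series diverges — and then carry out the final double-sum estimate uniformly in the relative sizes of $q$ and $r$. The estimate of $I$, although lengthy, is routine once it is recognized to coincide with the corresponding step in the proof of Theorem \ref{embeddings6-lorentz}.
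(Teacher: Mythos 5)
Your proposal is correct and follows essentially the same route as the paper: reduce to the fine index $\infty$, reuse the estimate of $I$ from the Jawerth embedding (Theorem \ref{embeddings6-lorentz}), and for $II$ apply H\"older's inequality in Lorentz spaces (using $s\le p$, with $r_1=r_2$ when $s=p$) to pass from $L^{s,r_1}$ to $L^{p,r_2}$, extract the geometric factor $2^{-\mu(k+v)}$ with $\mu=\alpha_2+\frac np-\alpha_1-\frac ns>0$ via \eqref{new-exp1-lorentz}, dominate by the supremum over $v$, and conclude with the Hardy-type inequality of Lemma \ref{lem:lq-inequality}. Your case split $r\le q$ versus $r>q$ is just the standard proof of that lemma, so the arguments coincide.
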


\begin{proof}
Again, we need only to estimate $II$. By H\"{o}lder's inequality we get%
\begin{equation*}
2^{vs_{1}}\Big\|\sum\limits_{m\in \mathbb{Z}^{n}}\lambda _{v,m}\chi
_{v,m}\chi _{k}\Big\|_{L^{s,r_{1}}}\lesssim 2^{(\frac{n}{s}-\frac{n}{p}%
)k+vs_{1}}\Big\|\sum\limits_{m\in \mathbb{Z}^{n}}\lambda _{v,m}\chi
_{v,m}\chi _{k}\Big\|_{L^{p,r_{2}}},
\end{equation*}%
where the implicit constant is independent of $v$ and $k$. Hence $II$ can
be\ estimated from above by%
\begin{align*}
& c\sum_{v=0}^{\infty }2^{v(s_{1}+\frac{n}{2})r}\Big(\sum\limits_{k=-v}^{%
\infty }2^{k(\alpha _{1}+\frac{n}{s}-\frac{n}{p})q}\Big\|\sum\limits_{m\in 
\mathbb{Z}^{n}}\lambda _{v,m}\chi _{v,m}\chi _{k}\Big\|_{L^{p,r_{2}}}^{q}%
\Big)^{r/q} \\
\leq & \sum_{v=0}^{\infty }2^{v(s_{2}+\frac{n}{2})r}\Big(\sum%
\limits_{k=-v}^{\infty }2^{(k+v)(\alpha _{1}-\alpha _{2}+\frac{n}{s}-\frac{n%
}{p})q}2^{k\alpha _{2}q}\Big\|\sum\limits_{m\in \mathbb{Z}^{n}}\lambda
_{v,m}\chi _{v,m}\chi _{k}\Big\|_{L^{p,r_{2}}}^{q}\Big)^{r/q} \\
\leq & \sum_{v=0}^{\infty }\Big(\sum\limits_{k=-v}^{\infty }2^{(k+v)(\alpha
_{1}-\alpha _{2}+\frac{n}{s}-\frac{n}{p})q}2^{k\alpha _{2}q}\Big\|\sup_{j\in 
\mathbb{N}_{0}}\big(2^{j(s_{2}+\frac{n}{2})}\sum\limits_{m\in \mathbb{Z}%
^{n}}|\lambda _{j,m}|\chi _{j,m}\chi _{k}\big)\Big\|_{L^{p,r_{2}}}^{q}\Big)%
^{r/q} \\
\lesssim & \sum_{k=-\infty }^{\infty }2^{k\alpha _{2}r}\Big\|\sup_{j\in 
\mathbb{N}_{0}}\big(2^{j(s_{2}+\frac{n}{2})}\sum\limits_{m\in \mathbb{Z}%
^{n}}|\lambda _{j,m}|\chi _{j,m}\chi _{k}\big)\Big\|_{L^{p,r_{2}}}^{r} \\
\lesssim & \big\|\lambda \big\|_{\dot{K}_{p,r_{2}}^{\alpha _{2},r}f_{\infty
}^{s_{2}}}^{r},
\end{align*}%
by Lemma\ \ref{lem:lq-inequality}.
\end{proof}

\begin{remark}
\label{optimal-lorentz}We have $r$ on the right-hand side of %
\eqref{FJ-emb1-lorentz} is optimal. Indeed, for $v\in \mathbb{N}_{0}$ and $%
N\geq 1$, we put 
\begin{equation*}
\lambda _{v,m}^{N}=\left\{ 
\begin{array}{ccc}
2^{-(s_{1}-\frac{1}{s}-\alpha _{1}+\frac{n}{2})v}\sum_{i=1}^{N}\chi
_{i}(2^{v-1}) & \text{if} & m=1 \\ 
0 &  & \text{otherwise,}%
\end{array}%
\right.
\end{equation*}%
and $\lambda ^{N}=\{\lambda _{v,m}^{N}\}_{v\in \mathbb{N}_{0},m\in \mathbb{Z}%
}$. As in Theorem \ref{embeddings3 copy(1)-lorentz}, we obtain%
\begin{equation*}
\big\|\lambda ^{N}\big\|_{\dot{K}_{p,r_{2}}^{\alpha _{2},\nu }f_{\theta
}^{s_{2}}}^{\nu }=c\text{ }N,
\end{equation*}%
where the constant $c>0$ does not depend on $N$. Now%
\begin{equation*}
\big\|\lambda ^{N}\big\|_{\dot{K}_{s,r_{1}}^{\alpha
_{1},q}b_{r}^{s_{1}}}^{r}=\sum_{v=0}^{\infty }2^{v(s_{1}+\frac{n}{2})r}\Big(%
\sum_{k=-\infty }^{\infty }2^{\alpha _{1}kq}\Big\|\sum\limits_{m\in \mathbb{Z%
}}|\lambda _{v,m}^{N}|\chi _{v,m}\chi _{k}\Big\|_{L^{s,r_{1}}}^{q}\Big)%
^{r/q}.
\end{equation*}%
We rewrite the last statement as follows:%
\begin{equation*}
\big\|\lambda ^{N}\big\|_{\dot{K}_{s,r_{1}}^{\alpha
_{1},q}b_{r}^{s_{1}}}^{r}=\sum_{v=1}^{N}2^{(\frac{1}{s}+\alpha _{1})vr}\Big(%
\sum_{k=1-N}^{0}2^{\alpha _{1}kq}\big\|\chi _{v,1}\chi _{k}\big\|%
_{L^{s,r_{1}}}^{q}\Big)^{r/q}=cN,
\end{equation*}%
where the constant $c>0$ does not depend on $N$. If the embeddings %
\eqref{FJ-emb1-lorentz} holds then for any $N\in \mathbb{N}$, $N^{\frac{1}{r}%
-\frac{1}{\nu }}\leq C$. Thus, we conclude that $0<\nu \leq r<\infty $ must
necessarily hold by letting $N\rightarrow +\infty $.
\end{remark}

Using Theorems \ref{phi-tran-lorentz}, \ref{embeddings6-lorentz}, \ref%
{embeddings6-lorentz copy(1)}, \ref{embeddings6-lorentz copy(2)} and\ \ref%
{embeddings6-lorentz copy(3)}, we have the following Jawerth embedding.

\begin{theorem}
\label{embeddings6.1-lorentz}\textit{Let }$\alpha ,\alpha _{1},\alpha
_{2},s_{1},s_{2}\in \mathbb{R}$, $0<r_{1},r_{2}\leq \infty ,0<s,p,q,r<\infty
,\alpha >-\frac{n}{s},\alpha _{1}>-\frac{n}{s}\ $\textit{and }$\alpha _{2}>-%
\frac{n}{p}$. \newline
$\mathrm{(i)}$ Under the hypothesis of Theorem \ref{embeddings6-lorentz}\ we
have%
\begin{equation}
\dot{K}_{p,r_{2}}^{\alpha _{2},r}F_{\infty }^{s_{2}}\hookrightarrow \dot{K}%
_{s,r_{1}}^{\alpha _{1},q}B_{r}^{s_{1}}.  \label{Jawerth-lorentz}
\end{equation}%
$\mathrm{(ii)}$ Under the hypothesis of Theorem \ref{embeddings6-lorentz
copy(1)} we have%
\begin{equation}
\dot{K}_{p,r}^{\alpha ,q}F_{\infty }^{s_{2}}\hookrightarrow \dot{K}%
_{s,r_{1}}^{\alpha ,q}B_{r}^{s_{1}}.  \label{Jawerth1-lorentz}
\end{equation}%
$\mathrm{(iii)}$ Under the hypothesis of Theorem \ref{embeddings6-lorentz
copy(2)} we have%
\begin{equation}
\dot{K}_{p,r}^{\alpha ,r}F_{\infty }^{s_{2}}\hookrightarrow \dot{K}%
_{s,r_{1}}^{\alpha ,q}B_{r}^{s_{1}}.  \label{Jawerth3-lorentz}
\end{equation}%
$\mathrm{(iv)}$ Under the hypothesis of Theorem \ref{embeddings6-lorentz
copy(3)} we have%
\begin{equation}
\dot{K}_{p,r_{2}}^{\alpha _{2},r}F_{\infty }^{s_{2}}\hookrightarrow \dot{K}%
_{s,r_{1}}^{\alpha _{1},q}B_{r}^{s_{1}}.  \label{Jawerth2-lorentz}
\end{equation}
\end{theorem}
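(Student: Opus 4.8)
The plan is to deduce all four continuous embeddings from the corresponding discrete embeddings already established in Theorems \ref{embeddings6-lorentz}, \ref{embeddings6-lorentz copy(1)}, \ref{embeddings6-lorentz copy(2)} and \ref{embeddings6-lorentz copy(3)}, transferring the problem to the sequence spaces via the $\varphi$-transform of Theorem \ref{phi-tran-lorentz}. Fix $\Phi,\Psi\in\mathcal{S}(\mathbb{R}^n)$ satisfying \eqref{Ass1} and $\varphi,\psi\in\mathcal{S}(\mathbb{R}^n)$ satisfying \eqref{Ass2} such that \eqref{Ass3} holds. The three facts to be chained together are: the boundedness of the analysis operator $S_\varphi$ from the source $F$-space into the corresponding $f$-sequence space; the relevant discrete embedding into the target $b$-sequence space; and the boundedness of the synthesis operator $T_\psi$ from that $b$-space into the target $B$-space, together with the identity $T_\psi\circ S_\varphi=\mathrm{id}$, which holds in $\mathcal{S}^{\prime}(\mathbb{R}^n)$ by the Calder\'on reproducing formula of Lemma \ref{DW-lemma1}. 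Note that $\dot{K}_{p,r_2}^{\alpha_2,r}F_\infty^{s_2}\hookrightarrow\mathcal{S}^{\prime}(\mathbb{R}^n)$ by Theorem \ref{embeddings-S-inf-lorentz}, so the reproducing formula is indeed available for every $f$ in the source space.

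For $\mathrm{(i)}$, let $f\in\dot{K}_{p,r_2}^{\alpha_2,r}F_\infty^{s_2}$ and put $\lambda=S_\varphi f$. First one checks that the hypotheses of Theorem \ref{phi-tran-lorentz} are met for both spaces: the source is controlled by $\alpha_2>-\tfrac{n}{p}$ and the target by $\alpha_1>-\tfrac{n}{s}$, and since $0<r,q<\infty$ the case $q=\infty$ excluded for $F$-type spaces does not occur. Hence $\lambda\in\dot{K}_{p,r_2}^{\alpha_2,r}f_\infty^{s_2}$ with $\big\|\lambda\big\|_{\dot{K}_{p,r_2}^{\alpha_2,r}f_\infty^{s_2}}\lesssim\big\|f\big\|_{\dot{K}_{p,r_2}^{\alpha_2,r}F_\infty^{s_2}}$. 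Applying Theorem \ref{embeddings6-lorentz} yields $\lambda\in\dot{K}_{s,r_1}^{\alpha_1,q}b_r^{s_1}$ with the corresponding norm bound, and then $T_\psi\lambda\in\dot{K}_{s,r_1}^{\alpha_1,q}B_r^{s_1}$; since $T_\psi S_\varphi f=f$ we obtain
\begin{equation*}
\big\|f\big\|_{\dot{K}_{s,r_1}^{\alpha_1,q}B_r^{s_1}}=\big\|T_\psi S_\varphi f\big\|_{\dot{K}_{s,r_1}^{\alpha_1,q}B_r^{s_1}}\lesssim\big\|S_\varphi f\big\|_{\dot{K}_{s,r_1}^{\alpha_1,q}b_r^{s_1}}\lesssim\big\|S_\varphi f\big\|_{\dot{K}_{p,r_2}^{\alpha_2,r}f_\infty^{s_2}}\lesssim\big\|f\big\|_{\dot{K}_{p,r_2}^{\alpha_2,r}F_\infty^{s_2}},
\end{equation*}
which is precisely \eqref{Jawerth-lorentz}.

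Parts $\mathrm{(ii)}$, $\mathrm{(iii)}$ and $\mathrm{(iv)}$ follow verbatim with Theorem \ref{embeddings6-lorentz} replaced by Theorems \ref{embeddings6-lorentz copy(1)}, \ref{embeddings6-lorentz copy(2)} and \ref{embeddings6-lorentz copy(3)}, respectively, giving \eqref{Jawerth1-lorentz}, \eqref{Jawerth3-lorentz} and \eqref{Jawerth2-lorentz}. In the cases $\alpha_1=\alpha_2=\alpha$ one needs $\alpha>-\tfrac{n}{p}$ for the $\varphi$-transform on the source $F$-space; this is automatic since $p<s$ forces $-\tfrac{n}{p}<-\tfrac{n}{s}$, so the hypothesis $\alpha>-\tfrac{n}{s}$ already yields it. I expect no real analytic obstacle here: all the substantive work, namely the non-increasing rearrangement estimates, the Hardy-type summation via Lemma \ref{lem:lq-inequality}, the Hardy--Littlewood inequality of Lemma \ref{Hardy-Littlewood inequality}, and the vector-valued maximal inequality of Lemma \ref{Maximal-Inq copy(2)-lorentz}, has already been absorbed into Theorems \ref{embeddings6-lorentz}--\ref{embeddings6-lorentz copy(3)}. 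The only point requiring care is the parameter bookkeeping: in each of the four cases one must verify that the precise configuration in the hypothesis matches that of the corresponding discrete theorem, that nothing is lost in the reconstruction $T_\psi S_\varphi f=f$ (again Lemma \ref{DW-lemma1}), and that the $B$-space bound for $T_\psi$ supplied by Theorem \ref{phi-tran-lorentz} is available with the stated target parameters $0<s,q,r<\infty$, $0<r_1\le\infty$, $\alpha_1>-\tfrac{n}{s}$.
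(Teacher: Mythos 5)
Your proposal is correct and follows exactly the route the paper takes: the paper derives this theorem in one line by combining the $\varphi$-transform characterization (Theorem \ref{phi-tran-lorentz}) with the discrete Jawerth embeddings of Theorems \ref{embeddings6-lorentz}--\ref{embeddings6-lorentz copy(3)}, which is precisely the $T_\psi\circ S_\varphi=\mathrm{id}$ argument you spell out. Your additional check that $\alpha>-\tfrac{n}{s}$ implies $\alpha>-\tfrac{n}{p}$ when $p<s$ is a correct and worthwhile piece of bookkeeping that the paper leaves implicit.
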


By Theorem \ref{embeddings6.1-lorentz}/(ii) and the fact that 
\begin{equation*}
F_{p,\infty }^{s_{2}}=\dot{K}_{p,p}^{0,p}F_{\infty }^{s_{2}}\hookrightarrow 
\dot{K}_{p,q}^{0,q}F_{\infty }^{s_{2}}\quad \text{and}\quad \dot{K}%
_{s,p}^{0,q}B_{p}^{s_{1}}\hookrightarrow \dot{K}%
_{s,s}^{0,s}B_{p}^{s_{1}}=B_{s,p}^{s_{2}},
\end{equation*}%
with $0<p<q<s<\infty $, we obtain the following embeddings.

\begin{corollary}
Let $0<p<q<s<\infty $\ and\ $s_{1}-\frac{n}{s}=s_{2}-\frac{n}{p}$. Then 
\begin{equation*}
F_{p,\infty }^{s_{2}}\hookrightarrow \dot{K}_{s,p}^{0,q}B_{p}^{s_{1}}%
\hookrightarrow B_{s,p}^{s_{2}}.
\end{equation*}
\end{corollary}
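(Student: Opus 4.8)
The plan is to obtain the asserted chain as a composition of two embeddings, each of which specialises a result already established above: the Jawerth embedding for Lorentz Herz--type Triebel--Lizorkin spaces (Theorem~\ref{embeddings6.1-lorentz}) supplies the left inclusion, and the elementary embeddings of Theorem~\ref{embeddings1.1-lorentz} collapse the Herz and Lorentz structure to give the right inclusion. No new machinery is needed; the work is purely a matter of matching parameters.

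For the left inclusion $F_{p,\infty}^{s_2}\hookrightarrow\dot K_{s,p}^{0,q}B_p^{s_1}$ I would first rewrite $F_{p,\infty}^{s_2}=\dot K_{p,p}^{0,p}F_\infty^{s_2}$ using the Remark following Definition~\ref{B-F-def-lorentz} (namely $\dot K_{p,p}^{0,p}F_\beta^s=F_{p,\beta}^s$), and then invoke part (iii) of Theorem~\ref{embeddings6.1-lorentz}, whose admissible range of parameters is that of Theorem~\ref{embeddings6-lorentz copy(2)}, with the choices $\alpha=0$, source Lorentz index $r=p$, target Lorentz index $r_1=p$, and Herz summation index $q$. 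Under the standing hypothesis $p<q<s<\infty$ all the required conditions hold: $\alpha=0>-\tfrac ns$; $0<p<s\le\infty$; $0<r=p<q\le\infty$; $0<r_1=p\le\infty$; and the dimensional balance $s_1-\tfrac ns=s_2-\tfrac np$ is exactly \eqref{new-exp1-lorentz}. The conclusion of that theorem is $\dot K_{p,p}^{0,p}F_\infty^{s_2}\hookrightarrow\dot K_{s,p}^{0,q}B_p^{s_1}$, which is the left inclusion.

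For the right inclusion $\dot K_{s,p}^{0,q}B_p^{s_1}\hookrightarrow B_{s,p}^{s_2}$ I would apply Theorem~\ref{embeddings1.1-lorentz}: part (v) raises the Lorentz secondary index from $p$ to $s$ (licit since $p\le s$) and part (iii) raises the Herz summation index from $q$ to $s$ (licit since $q\le s$), both inequalities being furnished by $p<q<s$. This gives $\dot K_{s,p}^{0,q}B_p^{s_1}\hookrightarrow\dot K_{s,s}^{0,s}B_p^{s_1}$; by Lemma~\ref{embeddings1-lorentz}(ii) together with the identity $\dot K_{s,s}^{0,s}=L^s$ the space $\dot K_{s,s}^{0,s}B_p^{s_1}$ is a classical Besov space, namely the right-hand member of the chain as recorded in the display immediately preceding the corollary. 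Composing the two inclusions completes the argument.

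I do not expect a genuine obstacle here; the proof is bookkeeping with already-proved theorems. The one point that demands attention is the choice of the correct part of Theorem~\ref{embeddings6.1-lorentz}: one must use the version whose source Lorentz index equals $p$ (part (iii), with $r=p$ and $r_1=p$), so that the target Besov sequence space carries the fine index $p$; applying part (ii) to $\dot K_{p,q}^{0,q}F_\infty^{s_2}$ would only land on the strictly larger space $\dot K_{s,p}^{0,q}B_q^{s_1}$ and would be too weak to feed into the chain as written. After that, it remains only to verify routinely that $p<q<s$ activates each elementary embedding and that all admissibility constraints ($\alpha>-\tfrac ns$, $p<s$, $r<q$, $0<r_1\le\infty$, and the scaling identity) are satisfied.
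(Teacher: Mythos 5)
Your proof is correct and is essentially the paper's argument: the left inclusion comes from the Jawerth embedding Theorem~\ref{embeddings6.1-lorentz} with $\alpha =0$, and the right one from the elementary monotonicity embeddings of Theorem~\ref{embeddings1.1-lorentz} together with the identification of $\dot{K}_{s,s}^{0,s}B_{p}^{s_{1}}$ with a classical Besov space. In fact your instantiation is the cleaner one: invoking part (iii) (i.e.\ the range of Theorem~\ref{embeddings6-lorentz copy(2)}) with $r=r_{1}=p$ lands directly in $\dot{K}_{s,p}^{0,q}B_{p}^{s_{1}}$, whereas the paper's displayed detour through $\dot{K}_{p,q}^{0,q}F_{\infty }^{s_{2}}$ and its citation of part (ii) would only yield the larger space with Besov fine index $q$, exactly as you point out. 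One caveat: the correct identification is $\dot{K}_{s,s}^{0,s}B_{p}^{s_{1}}=B_{s,p}^{s_{1}}$, so the terminal space of the chain is $B_{s,p}^{s_{1}}$ and not $B_{s,p}^{s_{2}}$; since $s_{2}=s_{1}+\frac{n}{p}-\frac{n}{s}>s_{1}$, the inclusion $\dot{K}_{s,p}^{0,q}B_{p}^{s_{1}}\hookrightarrow B_{s,p}^{s_{2}}$ as literally stated would be a gain of smoothness and fails (a dilation argument as in Step 2 of Theorem~\ref{embeddings3-lorentz} shows this), so the $s_{2}$ in the corollary and in the paper's display is a typo. Your final step defers to that display rather than writing the exponent; replace it by the explicit identity $\dot{K}_{s,s}^{0,s}B_{p}^{s_{1}}=B_{s,p}^{s_{1}}$ and the argument is complete.
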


From Theorem \ref{embeddings6.1-lorentz} and the fact that $\dot{K}%
_{p,p}^{\alpha ,q}F_{2}^{0}=\dot{K}_{p}^{\alpha ,q}$ for $1<p,q<\infty $ and 
$-\frac{n}{p}<\alpha <n-\frac{n}{p}$\ we immediately arrive at the following
embedding between Herz and Besov spaces.

\begin{theorem}
\label{embeddings6.1-lorentz copy(1)}\textit{Let }$0<s<\infty ,0<q,r_{1}\leq
\infty ,1<r,p<\infty \ $\textit{and }$0<\alpha <n-\frac{n}{p}$. \textit{We
suppose that}%
\begin{equation*}
1<p<s<\infty
\end{equation*}%
or%
\begin{equation*}
0<\max (1,s)<p<\infty \quad \text{and}\quad \alpha >\tfrac{n}{s}-\tfrac{n}{p}%
.
\end{equation*}%
Then%
\begin{equation*}
\dot{K}_{p}^{\alpha ,r}\hookrightarrow \dot{K}_{s,r_{1}}^{0,q}B_{r}^{\tfrac{n%
}{s}-\tfrac{n}{p}-\alpha }.
\end{equation*}%
In addition, we have%
\begin{equation*}
\dot{K}_{p}^{0,q}\hookrightarrow \dot{K}_{s,r_{1}}^{0,q}B_{p}^{\tfrac{n}{s}-%
\tfrac{n}{p}},\quad 1<q\leq p<s<\infty
\end{equation*}%
and%
\begin{equation*}
L^{p}\hookrightarrow \dot{K}_{s,r_{1}}^{0,q}B_{p}^{\tfrac{n}{s}-\tfrac{n}{p}%
},\quad 1<p<\min (s,q)<\infty .
\end{equation*}
\end{theorem}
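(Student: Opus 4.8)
The plan is to reduce all three statements to the Jawerth embeddings already proved in Theorem \ref{embeddings6.1-lorentz}, using two standard identifications: the coincidence $\dot{K}_{p,p}^{\alpha,q}F_2^0=\dot{K}_p^{\alpha,q}$ for $1<p,q<\infty$ and $-\frac{n}{p}<\alpha<n-\frac{n}{p}$ (see \cite{XuYang03}), and the elementary embedding $\dot{K}_{p,p}^{\alpha,q}F_2^0\hookrightarrow\dot{K}_{p,p}^{\alpha,q}F_\infty^0$ furnished by Theorem \ref{embeddings1.1-lorentz}(i). The parameter hypotheses in the theorem to be proved are arranged precisely so that, after these identifications, the hypotheses of the relevant parts of Theorem \ref{embeddings6.1-lorentz} are met; the argument is thus a chain of embeddings with a routine check of constraints, and essentially no hard analysis is involved.

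For the first embedding, note $\alpha>0$, $1<p,r<\infty$ and $0<\alpha<n-\frac{n}{p}$, whence $\dot{K}_p^{\alpha,r}=\dot{K}_{p,p}^{\alpha,r}F_2^0\hookrightarrow\dot{K}_{p,p}^{\alpha,r}F_\infty^0$. In the case $1<p<s<\infty$ I would apply Theorem \ref{embeddings6.1-lorentz}(i) with $\alpha_2=\alpha$, $\alpha_1=0$, $s_2=0$, $s_1=\frac{n}{s}-\frac{n}{p}-\alpha$, left Lorentz index $r_2=p$, and with the left outer index and the fine Besov index both equal to $r$; then \eqref{new-exp1-lorentz} holds and $\alpha_2>\alpha_1$ since $\alpha>0$. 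In the case $\max(1,s)<p<\infty$ (so $s<p$) and $\alpha>\frac{n}{s}-\frac{n}{p}$, I would instead apply the instance of Theorem \ref{embeddings6.1-lorentz}(iv) coming from Theorem \ref{embeddings6-lorentz copy(3)}, with the same parameter choices; there the requirement $\alpha_2+\frac{n}{p}>\alpha_1+\frac{n}{s}$ is exactly $\alpha>\frac{n}{s}-\frac{n}{p}$, and $s<p$ makes the condition $r_1=r_2$ vacuous. Composing yields $\dot{K}_p^{\alpha,r}\hookrightarrow\dot{K}_{s,r_1}^{0,q}B_r^{\frac{n}{s}-\frac{n}{p}-\alpha}$.

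For the second embedding we have $\alpha=0$, $1<q\le p<s<\infty$, so $\dot{K}_p^{0,q}=\dot{K}_{p,p}^{0,q}F_2^0\hookrightarrow\dot{K}_{p,p}^{0,q}F_\infty^0$; then Theorem \ref{embeddings6.1-lorentz}(ii) (the instance from Theorem \ref{embeddings6-lorentz copy(1)}) applied with $\alpha=0$, $s_2=0$, $s_1=\frac{n}{s}-\frac{n}{p}$, Lorentz index and fine Besov index both equal to $p$, and outer index $q$, gives $\dot{K}_p^{0,q}\hookrightarrow\dot{K}_{s,r_1}^{0,q}B_p^{\frac{n}{s}-\frac{n}{p}}$, its restriction $0<q\le r<\infty$ here reading $q\le p$, which is assumed. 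For the third embedding, $L^p=\dot{K}_{p,p}^{0,p}F_2^0$ (since $\dot{K}_{p,p}^{0,p}F_2^0=\dot{K}_p^{0,p}=L^p$), hence $L^p\hookrightarrow\dot{K}_{p,p}^{0,p}F_\infty^0$, and Theorem \ref{embeddings6.1-lorentz}(ii) with $\alpha=0$, outer index $p$, and fine Besov index $p$ (so its restriction $q\le r$ becomes $p\le p$) gives $L^p\hookrightarrow\dot{K}_{s,r_1}^{0,p}B_p^{\frac{n}{s}-\frac{n}{p}}$; since $p<q$, the basic embedding \eqref{embed3} then raises the outer index to $q$.

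The only point requiring care — the main piece of bookkeeping — is that Theorem \ref{embeddings6-lorentz} and its variants demand the target outer index to be finite, whereas the statement permits $q=\infty$. This is handled by first embedding into $\dot{K}_{s,r_1}^{0,q_0}B_r^{\,\cdot}$ for some finite $q_0\le q$ and then invoking \eqref{embed3} to pass from $q_0$ to $q$ (the same monotonicity device already used above for the third embedding). Beyond this and the verification that each displayed parameter restriction meets the corresponding hypothesis in the cited theorems, no further work is needed.
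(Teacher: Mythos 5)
Your proposal is correct and follows essentially the same route as the paper, which states the result as an immediate consequence of the Jawerth embeddings in Theorem \ref{embeddings6.1-lorentz} combined with the identification $\dot{K}_{p,p}^{\alpha,q}F_{2}^{0}=\dot{K}_{p}^{\alpha,q}$ from \cite{XuYang03} and the elementary monotonicity embeddings; you have simply made the parameter bookkeeping (choice of $\alpha_1,\alpha_2,s_1,s_2,r_2$, the case split $p<s$ versus $s<p$, and the passage to $q=\infty$ via \eqref{embed3}) explicit, and these checks are all accurate.
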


From Theorem \ref{embeddings6.1-lorentz copy(1)} we obtain the following
result.

\begin{corollary}
Under the hypothesis of Theorem \ref{embeddings6.1-lorentz copy(1)}, we have%
\begin{equation*}
\dot{K}_{p}^{\alpha ,r}\hookrightarrow \dot{K}_{s,r_{1}}^{0,q}B_{r}^{\tfrac{n%
}{s}-\tfrac{n}{p}-\alpha }\hookrightarrow B_{s,r}^{\tfrac{n}{s}-\tfrac{n}{p}%
-\alpha }.
\end{equation*}%
In addition, we have%
\begin{equation*}
\dot{K}_{p}^{0,q}\hookrightarrow \dot{K}_{s,r_{1}}^{0,q}B_{p}^{\tfrac{n}{s}-%
\tfrac{n}{p}}\hookrightarrow B_{s,p}^{\tfrac{n}{s}-\tfrac{n}{p}},
\end{equation*}%
whenever $1<q\leq p<s<\infty $ and%
\begin{equation*}
L^{p}\hookrightarrow \dot{K}_{s,r_{1}}^{0,q}B_{p}^{\tfrac{n}{s}-\tfrac{n}{p}%
}\hookrightarrow B_{s,p}^{\tfrac{n}{s}-\tfrac{n}{p}},
\end{equation*}%
whenever $1<p<q<s<\infty .$
\end{corollary}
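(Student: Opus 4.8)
The plan is to read off each of the three displayed chains as a composition of two embeddings: the left-hand inclusion is supplied verbatim by Theorem~\ref{embeddings6.1-lorentz copy(1)}, and the right-hand inclusion is a single reduction embedding of the form $\dot{K}_{s,r_{1}}^{0,q}B_{\beta}^{\sigma}\hookrightarrow B_{s,\beta}^{\sigma}$. Concretely, the first inclusion in the top chain is precisely the conclusion $\dot{K}_{p}^{\alpha ,r}\hookrightarrow \dot{K}_{s,r_{1}}^{0,q}B_{r}^{\frac{n}{s}-\frac{n}{p}-\alpha}$ of that theorem; in the second chain it is the assertion $\dot{K}_{p}^{0,q}\hookrightarrow \dot{K}_{s,r_{1}}^{0,q}B_{p}^{\frac{n}{s}-\frac{n}{p}}$, valid for $1<q\le p<s<\infty$; and in the third chain it is $L^{p}\hookrightarrow \dot{K}_{s,r_{1}}^{0,q}B_{p}^{\frac{n}{s}-\frac{n}{p}}$, valid for $1<p<\min(s,q)<\infty$ (applicable since $p<q<s$ forces $\min(s,q)=q$). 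Hence the whole statement reduces to proving $\dot{K}_{s,r_{1}}^{0,q}B_{\beta}^{\sigma}\hookrightarrow B_{s,\beta}^{\sigma}$ with $(\beta,\sigma)=\big(r,\frac{n}{s}-\frac{n}{p}-\alpha\big)$ in the first case and $(\beta,\sigma)=\big(p,\frac{n}{s}-\frac{n}{p}\big)$ in the remaining two.

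For this reduction embedding I would argue as follows. By the Remark following Definition~\ref{B-F-def-lorentz} one has $B_{s,\beta}^{\sigma}=\dot{K}_{s,s}^{0,s}B_{\beta}^{\sigma}$, so it suffices to establish $\dot{K}_{s,r_{1}}^{0,q}B_{\beta}^{\sigma}\hookrightarrow \dot{K}_{s,s}^{0,s}B_{\beta}^{\sigma}$. This is obtained by first raising the inner summation index from $q$ to $s$ via \eqref{embed3} and then the Lorentz parameter from $r_{1}$ to $s$ via \eqref{embed5}, both from Theorem~\ref{embeddings1.1-lorentz}. Equivalently, one checks directly that $\dot{K}_{s,r_{1}}^{0,q}\hookrightarrow L^{s}$ whenever $\max(q,r_{1})\le s$: estimating annulus by annulus one uses $L^{s,r_{1}}\hookrightarrow L^{s,s}=L^{s}$ followed by $\ell^{q}\hookrightarrow \ell^{s}$, and transferring this inequality termwise through the Littlewood--Paley decomposition yields $\big\|f\big\|_{B_{s,\beta}^{\sigma}}\lesssim \big\|f\big\|_{\dot{K}_{s,r_{1}}^{0,q}B_{\beta}^{\sigma}}$. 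Composing this with the embedding coming from Theorem~\ref{embeddings6.1-lorentz copy(1)} gives the three chains.

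The only point requiring care is the matching of parameters. The reduction step $\dot{K}_{s,r_{1}}^{0,q}\hookrightarrow L^{s}$, and hence the rightmost inclusion in each chain, needs $q\le s$ and $r_{1}\le s$; one verifies that these are exactly the natural conditions met in each of the three stated parameter ranges, where in particular $q<s$ is forced by $1<q\le p<s<\infty$ and by $1<p<q<s<\infty$ in the last two chains. Beyond this routine bookkeeping there is no genuine obstacle, since all the analytic work has already been carried out in the proofs of Theorems~\ref{embeddings6.1-lorentz copy(1)} and \ref{embeddings1.1-lorentz}.
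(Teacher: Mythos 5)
Your route coincides with the paper's (implicit) one: the left-hand inclusion in each chain is exactly the statement of Theorem \ref{embeddings6.1-lorentz copy(1)}, and the right-hand one is the reduction $\dot{K}_{s,r_{1}}^{0,q}B_{\beta }^{\sigma }\hookrightarrow \dot{K}_{s,s}^{0,s}B_{\beta }^{\sigma }=B_{s,\beta }^{\sigma }$, which is how the paper handles the analogous Corollary \ref{embeddings-besov2} and the unlabeled corollary following Theorem \ref{embeddings6.1-lorentz}. So there is no divergence of method; the issue is with your final parameter check.

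The reduction you invoke (via \eqref{embed3} and \eqref{embed5}, equivalently $\dot{K}_{s,r_{1}}^{0,q}\hookrightarrow L^{s}$ applied blockwise) needs $q\leq s$ and $r_{1}\leq s$, and you claim these "are exactly the natural conditions met in each of the three stated parameter ranges". They are not: the hypothesis of Theorem \ref{embeddings6.1-lorentz copy(1)} only assumes $0<q,r_{1}\leq \infty $, so in the first chain nothing forces $q\leq s$ (even $q=\infty $ is allowed), and in none of the three chains is $r_{1}\leq s$ imposed. Moreover $q\leq s$ is not a removable convenience: by Lemma \ref{embeddings1-lorentz}(i) the embedding $\dot{K}_{s,r_{1}}^{0,q}\hookrightarrow \dot{K}_{s,r_{1}}^{0,s}$ holds if and only if $q\leq s$, and the usual $N$-annuli test functions can be taken band-limited (sums of translates of a fixed function with spectrum in a fixed annulus), so for $q>s$ the right-hand inclusion of the first chain actually fails; it cannot be rescued by the Littlewood--Paley structure. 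In the last two chains $q<s$ does hold, as you note, but $r_{1}\leq s$ still has to be either assumed explicitly (this is what the paper does in Corollary \ref{embeddings-besov2}, where $\max (p,r_{1})\leq s$ is part of the hypothesis) or replaced by a blockwise application of Lemma \ref{Bernstein-Herz-ine1-lorentz} with $p=s$ and $\alpha _{1}=\alpha _{2}=0$, which dispenses with the restriction on the Lorentz index for band-limited blocks. As written, your verification step is false; state the additional restrictions $q\leq s$ (needed for the first chain) and $r_{1}\leq s$ (or argue the $r_{1}$-independence via the band-limited lemma) instead of asserting that they follow from the quoted hypotheses.
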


\subsection{Franke embedding}

The classical Franke embedding may be rewritten as follows: 
\begin{equation*}
B_{p,s}^{s_{2}}\hookrightarrow F_{s,\infty }^{s_{1}},
\end{equation*}%
if $s_{1}-\frac{n}{s}=s_{2}-\frac{n}{p}$ and $0<p<s<\infty $, see e.g. \cite%
{Fr86}. As in Section 3 we will extend this embeddings to Lorentz-Herz-type
Besov-Triebel-Lizorkin spaces. Again, we follow some ideas of \cite{ST19}
and \cite[p. 76]{Vybiral08}. We will prove the discrete version of Franke
embedding.

\begin{theorem}
\label{F-emb3-lorentz}\textit{Let }$\alpha _{1},\alpha _{2},s_{1},s_{2}\in 
\mathbb{R},0<s,p,q<\infty ,0<\theta ,r_{1}\leq \infty ,\alpha _{1}>-\frac{n}{%
s}\ $\textit{and }$\alpha _{2}>-\frac{n}{p}$. \textit{We suppose that }%
\begin{equation*}
s_{1}-\tfrac{n}{s}-\alpha _{1}=s_{2}-\tfrac{n}{p}-\alpha _{2}.
\end{equation*}%
\textit{Let }%
\begin{equation}
0<p<s<\infty ,\text{\quad }\alpha _{2}>\alpha _{1}.  \label{Newexp21-lorentz}
\end{equation}%
Then%
\begin{equation}
\dot{K}_{p,\infty }^{\alpha _{2},q}b_{q}^{s_{2}}\hookrightarrow \dot{K}%
_{s,r_{1}}^{\alpha _{1},q}f_{\theta }^{s_{1}}.
\label{Sobolev-emb1.2-lorentz}
\end{equation}
\end{theorem}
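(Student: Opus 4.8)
The plan is to work at the level of sequence spaces: by Theorem \ref{phi-tran-lorentz} the assertion to be proved is exactly \eqref{Sobolev-emb1.2-lorentz}, so I fix $\lambda=\{\lambda_{v,m}\}_{v\in\mathbb N_0,m\in\mathbb Z^n}\in\dot K_{p,\infty}^{\alpha_2,q}b_q^{s_2}$ and estimate $\big\|\lambda\big\|_{\dot K_{s,r_1}^{\alpha_1,q}f_\theta^{s_1}}$. Since $\dot K_{s,r_1'}^{\alpha_1,q}f_\theta^{s_1}\hookrightarrow\dot K_{s,r_1}^{\alpha_1,q}f_\theta^{s_1}$ for $r_1'\le r_1$, we may take $r_1$ as small as convenient. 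The whole computation is governed by the scaling identity $s_1-\tfrac ns-\alpha_1=s_2-\tfrac np-\alpha_2$ together with $p<s$ and $\alpha_2>\alpha_1$: at every occurrence of an auxiliary summation index there remains a strictly positive power of $2$, and it is precisely this gain that will let us pass from the fine exponent $q$ of the source to the \emph{arbitrary} fine exponent $\theta$ of the target by means of the Hardy-type inequality Lemma \ref{lem:lq-inequality}. With $c_n=1+\lfloor\log_2(2\sqrt n+1)\rfloor$ as before, I split the outer $k$-sum defining the $\dot K_{s,r_1}^{\alpha_1,q}$-norm into $k\le 0$ and $k\ge 1$ (the second range being the easy one), and, for each $k$, split the inner $v$-sum into the range $v\le c_n+1-k$, giving a term $I$, and the range $v\ge c_n+2-k$, giving a term $II$, along the geometric dichotomy for $A_{k+v}$ and $\breve R_k$ used in the proof of Theorem \ref{embeddings3 copy(1)-lorentz}.

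\textbf{The near-origin term $I$.} When $v\le c_n+1-k$, every cube $Q_{v,m}$ meeting $R_k$ lies in $B(0,2^{c_n-v+2})$, so on $R_k$ only finitely many $\chi_{v,m}$ are nonzero and, for any $t>0$, $|\lambda_{v,m}|^t\lesssim 2^{nv}\int_{B(0,2^{c_n-v+2})}|\lambda_{v,m}|^t\chi_{v,m}(y)\,dy$. Choosing $t$ with $\tfrac1t>\tfrac1p+\max\!\big(0,\tfrac{\alpha_2}{n}\big)$ and using \eqref{est-function1}, H\"older's inequality in Lorentz spaces, the hypothesis $\alpha_1+\tfrac ns>0$ (to sum the geometric series in $k$), and then Lemma \ref{Lp-estimate} and Lemma \ref{lem:lq-inequality}, one reduces $I$ to $c\,\big\|\lambda\big\|_{\dot K_{p,\infty}^{\alpha_2,q}b_q^{s_2}}$. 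This is the same mechanism as for the term ``$I$'' in the proofs of Theorems \ref{embeddings3 copy(1)-lorentz} and \ref{embeddings6-lorentz}, with the roles of the $b$- and $f$-structures interchanged; here the $\ell^\theta$-summation over $m$ collapses onto a single cube, which is why the value of $\theta$ is irrelevant in this part.

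\textbf{The main term $II$.} For $v\ge c_n+2-k$, a cube $Q_{v,m}$ meeting $R_k$ satisfies $m\in A_{k+v}=\{2^{k+v-2}\le|m|\le 2^{k+v+1}\}$ and $Q_{v,m}\subset\breve R_k=\cup_{i=-2}^{3}R_{k+i}$. I introduce, for fixed $k$,
\[
h_k(x)=\sup_{v\in\mathbb N_0}2^{v(s_2+\frac n2)}\sum_{m\in\mathbb Z^n}|\lambda_{v,m}|\chi_{v,m}(x)\,\chi_{\breve R_k}(x),
\]
and, using on $\breve R_k$ the pointwise bound $h_k^q\le\sum_v 2^{v(s_2+\frac n2)q}\sum_m|\lambda_{v,m}|^q\chi_{v,m}$, property \eqref{pro1-lorentz} and the finite overlap of the $\breve R_k$, one obtains $\big(\sum_k 2^{k\alpha_2 q}\|h_k\|_{L^{p,\infty}}^q\big)^{1/q}\lesssim\big\|\lambda\big\|_{\dot K_{p,\infty}^{\alpha_2,q}b_q^{s_2}}$. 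With $h_k$ at hand, $II$ is estimated following the pattern of the proof of Theorem \ref{embeddings6-lorentz}: for each $v$ one replaces $\sum_{m\in A_{k+v}}|\lambda_{v,m}|\chi_{v,m}\chi_k$ by $\varpi_{v,k}$, whose non-increasing rearrangement is explicit (the underlying cubes are disjoint of equal measure) and satisfies $\varpi_{v,k}^\ast\le h_k^\ast$ and is constant on $[0,2^{-vn})$; this gives, as in \eqref{corr1-lorentz}--\eqref{corr2-lorentz}, a bound of $2^{v(s_1+\frac n2)r_1}\big\|\sum_m\lambda_{v,m}\chi_{v,m}\chi_k\big\|_{L^{s,r_1}}^{r_1}$ by $2^{-vnr_1/s}\sum_{l\ge0}2^{nlr_1/s}h_k^\ast(2^{(l-v)n-n})^{r_1}$. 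Raising to the appropriate power, summing over $v\ge c_n+2-k$ with the weight coming from $s_1-s_2=\tfrac ns-\tfrac np+\alpha_1-\alpha_2$, reindexing $j=l-v$, and applying Lemma \ref{lem:lq-inequality} (once in $v$, once to absorb the decay $2^{(k+v)(\alpha_1-\alpha_2)}$ with $k+v\ge c_n+2$, as in \eqref{sum-lorentz1}), one arrives, as in \eqref{corr5-lorentz}, at $II\lesssim\big(\sum_k 2^{k\alpha_2 q}\|h_k\|_{L^{p,\infty}}^q\big)^{1/q}\lesssim\big\|\lambda\big\|_{\dot K_{p,\infty}^{\alpha_2,q}b_q^{s_2}}$. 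Because the target norm is built on $L^{s,r_1}$ rather than on $L^s$, the rearrangement step must be run through a duality argument based on the Hardy--Littlewood inequality (Lemma \ref{Hardy-Littlewood inequality}), exactly as in the proof of Theorem \ref{embeddings3 copy(1)-lorentz}.

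\textbf{Expected main obstacle.} The delicate point is the term $II$: turning the cube-wise Besov control of $\lambda$ into pointwise-in-$x$ Triebel--Lizorkin control without losing a power of $\mathrm{card}\,A_{k+v}$. The rearrangement device (replacing $\sum_{m\in A_{k+v}}|\lambda_{v,m}|\chi_{v,m}$ by $\varpi_{v,k}$ and dominating $\varpi_{v,k}^\ast\le h_k^\ast$) is what achieves this; combined with the duality forced by the Lorentz quasi-norm and the two uses of Lemma \ref{lem:lq-inequality}, it is here that the hypotheses $p<s$, $\alpha_2>\alpha_1$ and the scaling identity are simultaneously consumed and the arbitrariness of $\theta$ is absorbed. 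Everything else is a routine adaptation of the corresponding steps in Theorems \ref{embeddings3 copy(1)-lorentz} and \ref{embeddings6-lorentz}.
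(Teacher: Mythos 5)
Your treatment of the near\-/origin term $I$ and your closing remarks about the duality/rearrangement step are in the right spirit, but the proof as proposed has a genuine gap at the very first reduction of the main term $II$: you replace the Besov-type source norm by the quantity $\big(\sum_k 2^{k\alpha_2 q}\|h_k\|_{L^{p,\infty}}^q\big)^{1/q}$ with $h_k=\sup_{v}2^{v(s_2+\frac n2)}\sum_m|\lambda_{v,m}|\chi_{v,m}\chi_{\breve R_k}$, and claim this is $\lesssim\|\lambda\|_{\dot K_{p,\infty}^{\alpha_2,q}b_q^{s_2}}$. That claim is exactly the embedding $\dot K_{p,\infty}^{\alpha_2,q}b_q^{s_2}\hookrightarrow\dot K_{p,\infty}^{\alpha_2,q}f_\infty^{s_2}$ (localized to $\breve R_k$), and it is false unless $q\le p$: your own justification via $h_k^q\le\sum_v 2^{v(s_2+\frac n2)q}\sum_m|\lambda_{v,m}|^q\chi_{v,m}$ requires pulling the $v$-sum out of $\|\cdot\|_{L^{p/q,\infty}}$, which needs $p/q>1$. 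A concrete counterexample for $q>p$: fix $k=0$ and for $v=v_0,\dots,v_0+N$ choose $2^{(v-v_0)n}$ cubes $Q_{v,m}\subset R_0$ whose unions $E_v$ are pairwise disjoint with $|E_v|=2^{-v_0n}$, and set $\lambda_{v,m}=2^{-v(s_2+\frac n2)}$; then $\|h_0\|_{L^{p,\infty}}\approx N^{1/p}2^{-v_0n/p}$ while $\|\lambda\|_{\dot K_{p,\infty}^{\alpha_2,q}b_q^{s_2}}\approx N^{1/q}2^{-v_0n/p}$, so the ratio blows up like $N^{1/p-1/q}$. Since the theorem imposes no relation between $p$ and $q$, your route (which would in effect derive the Franke-type embedding from the Sobolev embedding of Theorem \ref{embeddings3 copy(1)-lorentz} by factoring through $f_\infty^{s_2}$) cannot work in general; this is precisely why Franke is not a corollary of Jawerth/Sobolev.

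The fix, which is what the paper's proof does, is to keep the $v$-summation \emph{outside} the $L^{p,\infty}$ quasi-norm so as to respect the Besov structure of the source. Concretely: after using the $u$-power trick ($0<u<\min(\frac s\theta,\frac{r_1}\theta,1)$) to dominate $\big\|\big(\sum_v2^{vs_1\theta}f_{v,k}^\theta\big)^{1/\theta}\big\|_{L^{s,r_1}}^{u\theta}$ by $\sum_v2^{v(s_2+\frac ns-\frac np+\alpha_1-\alpha_2)\theta u}\|f_{v,k}\|_{L^{s,r_1}}^{u\theta}$ — this is also where the $\ell^\theta_v$ inside $L^{s,r_1}$ is handled rigorously, a point your sketch glosses over by estimating the levels $v$ separately as if the target were a $b$-space — one uses $\alpha_2>\alpha_1$ to collapse the $v$-sum to $2^{k(\alpha_2-\alpha_1)\theta u}\sup_v2^{v(s_2+\frac ns-\frac np)\theta u}\|f_{v,k}\|_{L^{s,r_1}}^{u\theta}$, then runs the rearrangement/duality argument \emph{for each fixed $v$} to get $2^{v(\frac ns-\frac np)}\|f_{v,k}\|_{L^{s,r_1}}\lesssim\|w_{v,k}\|_{L^{p,\infty}}$, and only then dominates the supremum of the resulting \emph{scalars} $2^{vs_2}\|w_{v,k}\|_{L^{p,\infty}}$ by their $\ell^q$-sum, which is legitimate for every $q$ and reproduces exactly the $b_q^{s_2}$-norm after summing in $k$ with the gain $2^{k(\alpha_2-\alpha_1)q}$.
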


\begin{proof}
Put $c_{n}=1+\lfloor \log _{2}(2\sqrt{n}+1)\rfloor $. Let $\lambda \in \dot{K%
}_{p,\infty }^{\alpha _{2},q}b_{q}^{s_{2}}$. We have%
\begin{align*}
\big\|\lambda \big\|_{\dot{K}_{s,r_{1}}^{\alpha _{1},q}f_{\theta
}^{s_{1}}}^{q}=& \sum\limits_{k=-\infty }^{\infty }2^{k\alpha _{1}q}\Big\|%
\Big(\sum_{v=0}^{\infty }\sum\limits_{m\in \mathbb{Z}^{n}}2^{v(s_{1}+\frac{n%
}{2})\theta }|\lambda _{v,m}|^{\theta }\chi _{v,m}\chi _{k}\Big)^{1/\theta }%
\Big\|_{L^{s,r_{1}}}^{q} \\
=& J_{1,\alpha _{1}}+J_{2,\alpha _{1}},
\end{align*}%
where%
\begin{equation}
J_{1,\alpha _{1}}=\sum\limits_{k=-\infty }^{0}2^{k\alpha _{1}q}\Big\|\Big(%
\sum_{v=0}^{\infty }\sum\limits_{m\in \mathbb{Z}^{n}}2^{v(s_{1}+\frac{n}{2}%
)\theta }|\lambda _{v,m}|^{\theta }\chi _{v,m}\chi _{k}\Big)^{1/\theta }%
\Big\|_{L^{s,r_{1}}}^{q}  \label{J1-lorentz}
\end{equation}%
and%
\begin{equation}
J_{2,\alpha _{1}}=\sum\limits_{k=1}^{\infty }2^{k\alpha _{1}q}\Big\|\Big(%
\sum_{v=0}^{\infty }\sum\limits_{m\in \mathbb{Z}^{n}}2^{v(s_{1}+\frac{n}{2}%
)\theta }|\lambda _{v,m}|^{\theta }\chi _{v,m}\chi _{k}\Big)^{1/\theta }%
\Big\|_{L^{s,r_{1}}}^{q}.  \label{J2-lorentz}
\end{equation}

\textit{Step 1. Estimation of }$J_{1,\alpha _{1}}$\textit{.} We split the
sum $\sum_{v=0}^{\infty }$ in \eqref{J1-lorentz} into two sums one over $%
0\leq v\leq 1+c_{n}-k$ and one over $v\geq 2+c_{n}-k$. The first term is
denoted by $T_{1,\alpha _{1}}$ and the second term by $T_{2,\alpha _{1}}$.
Obviously 
\begin{equation*}
J_{1,\alpha _{1}}\lesssim T_{1,\alpha _{1}}+T_{2,\alpha _{1}}.
\end{equation*}%
The same analysis as in the proof of Theorem \ref{embeddings6-lorentz} shows
that 
\begin{equation*}
\sum_{m\in \mathbb{Z}^{n}}|\lambda _{v,m}|^{t}\chi _{v,m}(x)\lesssim 2^{nv}%
\Big\|\sum_{m\in \mathbb{Z}^{n}}|\lambda _{v,m}|\chi _{v,m}\chi
_{B(0,2^{c_{n}-v+2})}\Big\|_{L^{t,t}}^{t}
\end{equation*}%
for any $x\in R_{k}$. From Lemma \ref{lem:lq-inequality}, since $\alpha _{1}+%
\frac{n}{s}>0$, we have 
\begin{equation*}
T_{1,\alpha _{1}}\leq c\sum\limits_{v=0}^{\infty }2^{v(s_{1}-\alpha _{1}-%
\frac{n}{s}+\frac{n}{t}+\frac{n}{2})q}\Big\|\sum_{m\in \mathbb{Z}%
^{n}}|\lambda _{v,m}|\chi _{v,m}\chi _{B(0,2^{c_{n}-v+2})}\Big\|%
_{L^{t,t}}^{q}.
\end{equation*}%
We may choose $t>0$ such that $\frac{1}{t}>\max (\frac{1}{p},\frac{1}{r_{2}},%
\frac{1}{p}+\frac{\alpha _{2}}{n})$, $\varkappa =\min (1,t)$ and $\frac{n}{d}%
=\frac{n}{t}-\frac{n}{p}-\alpha _{2}$. By Lemma \ref{Lp-estimate}\ and H\"{o}%
lder's inequality, $T_{1,\alpha _{1}}$ is bounded by%
\begin{equation*}
c\sum\limits_{v=0}^{\infty }2^{v\frac{n}{d}q}\Big(\sum_{i=-\infty }^{-v}2^{i(%
\frac{n}{d}+\alpha _{2})\varkappa i}\sup_{j\in \mathbb{N}_{0}}\Big\|%
\sum_{m\in \mathbb{Z}^{n}}2^{(s_{2}+\frac{n}{2})j}|\lambda _{j,m}|\chi
_{j,m}\chi _{i+c_{n}+2}\Big\|_{L^{p,\infty }}^{\varkappa }\Big)^{q/\varkappa
}.
\end{equation*}%
Using Lemma \ref{lem:lq-inequality}, the last term is bounded by%
\begin{equation*}
c\sum_{i=0}^{\infty }2^{-\alpha _{2}iq}\sup_{j\in \mathbb{N}_{0}}\Big\|%
\sum\limits_{m\in \mathbb{Z}^{n}}2^{(s_{2}+\frac{n}{2})j}|\lambda
_{j,m}|\chi _{j,m}\chi _{2-i+c_{n}}\Big\|_{L^{p,\infty }}^{q}\lesssim \big\|%
\lambda \big\|_{\dot{K}_{p,\infty }^{\alpha _{2},q}b_{\infty }^{s_{2}}}^{q}.
\end{equation*}%
\textit{Estimate of }$T_{2,\alpha _{1}}$. We can suppose that $\theta \leq p$%
, since the opposite cases can be obtained by the fact that\ $\ell
^{p}\hookrightarrow \ell ^{\theta }$. We set 
\begin{equation*}
f_{v,k}=\sum_{m\in \mathbb{Z}^{n}}|\lambda _{v,m}|\chi _{v,m}\chi _{k}.
\end{equation*}%
Let $0<u<\min (\frac{s}{\theta },\frac{r_{1}}{\theta },1)$. Since $\alpha
_{2}>\alpha _{1}$ and $s_{1}=s_{2}+\tfrac{n}{s}-\frac{n}{p}+\alpha
_{1}-\alpha _{2}$, we have 
\begin{align*}
\Big\|\Big(\sum_{v=2+c_{n}-k}^{\infty }2^{vs_{1}\theta }f_{v,k}^{\theta }%
\Big)^{1/\theta }\Big\|_{L^{s,r_{1}}}^{u\theta }& =\Big\|%
\sum_{v=2+c_{n}-k}^{\infty }2^{vs_{1}\theta }f_{v,k}^{\theta }\Big\|%
_{L^{s/\theta ,r_{1}/\theta }}^{u} \\
& \leq \sum_{v=2+c_{n}-k}^{\infty }2^{v(s_{2}+\frac{n}{s}-\frac{n}{p}+\alpha
_{1}-\alpha _{2})\theta u}\big\|f_{v,k}\big\|_{L^{s,r_{1}}}^{u\theta } \\
& \leq 2^{k(\alpha _{2}-\alpha _{1})\theta u}\sup_{v\geq
2+c_{n}-k}2^{v(s_{2}+\frac{n}{s}-\frac{n}{p})\theta u}\big\|f_{v,k}\big\|%
_{L^{s,r_{1}}}^{u\theta } \\
& =2^{k(\alpha _{2}-\alpha _{1})\theta u}\sup_{v\geq 2+c_{n}-k}2^{v(s_{2}+%
\frac{n}{s}-\frac{n}{p})\theta u}\big\|f_{v,k}^{\theta _{1}}\big\|%
_{L^{s/\theta _{1},r_{1}/\theta _{1}}}^{u\theta /\theta _{1}}
\end{align*}%
for any $0<\theta _{1}<\infty $. Here the estimates are inspired by \cite%
{ST19}\ and\ \cite{Vybiral08}. We choose $\theta _{1}<\min (s,r_{1})$. Using
duality, 
\begin{equation*}
2^{v(s_{2}+\frac{n}{s}-\frac{n}{p})\theta _{1}}\big\|f_{v,k}^{\theta _{1}}%
\big\|_{L^{s/\theta _{1},r_{1}/\theta _{1}}}
\end{equation*}%
is comparable to%
\begin{equation}
\sup_{g\in L^{(s/\theta _{1})^{\prime },(r_{1}/\theta _{1})^{\prime }},\big\|%
g\big\|_{L^{(s/\theta _{1})^{\prime },(r_{1}/\theta _{1})^{\prime }}}\leq
1}\int_{\mathbb{R}^{n}}2^{v(s_{2}+\frac{n}{s}-\frac{n}{p})\theta
_{1}}(f_{v,k}(x))^{\theta _{1}}g(x)dx.  \label{duality-lorentz}
\end{equation}%
Put 
\begin{equation*}
w_{v,k}(x)=\sum\limits_{m\in \mathbb{Z}^{n}}\left\vert \lambda
_{v,m}\right\vert \chi _{v,m}(x)\chi _{\breve{R}_{k}}(x).
\end{equation*}%
Let $A_{k+v}$ be as in the proof of Theorem \ref{embeddings3 copy(1)-lorentz}%
, $v\geq c_{n}+2-k$ and $k\in \mathbb{Z}$. Recall that 
\begin{equation*}
f_{v,k}\leq \sum\limits_{m\in A_{k+v}}|\lambda _{v,m}|\chi _{v,m}=\Omega
_{v,k}\leq w_{v,k},
\end{equation*}%
It follows from Lemma \ref{Hardy-Littlewood inequality} that 
\begin{align}
& 2^{v(s_{2}+\frac{n}{s}-\frac{n}{p})\theta _{1}}\int_{\mathbb{R}%
^{n}}(f_{v,k}(x))^{\theta _{1}}g(x)dx  \notag \\
& \leq 2^{v(s_{2}+\frac{n}{s}-\frac{n}{p})\theta _{1}}\int_{0}^{\infty
}(f_{v,k}^{\ast }(t))^{\theta _{1}}g^{\ast }(t)dt  \notag \\
& =2^{v(s_{2}+\frac{n}{s}-\frac{n}{p})\theta
_{1}}\int_{0}^{2^{-vn}}(f_{v,k}^{\ast }(t))^{\theta _{1}}g^{\ast
}(t)dt+2^{v(s_{2}+\frac{n}{s}-\frac{n}{p})\theta _{1}}\int_{2^{-vn}}^{\infty
}(f_{v,k}^{\ast }(t))^{\theta _{1}}g^{\ast }(t)dt.  \label{lorentz1.2}
\end{align}%
We see that $\Omega _{v,k}^{\ast }$ is constant in $[0,2^{-vn})$. Using H%
\"{o}lder's inequality, we obtain%
\begin{align*}
2^{v(s_{2}+\frac{n}{s}-\frac{n}{p})\theta
_{1}}\int_{0}^{2^{-vn}}(f_{v,k}^{\ast }(t))^{\theta _{1}}g^{\ast }(t)dt&
\leq 2^{v(s_{2}+\frac{n}{s}-\frac{n}{p})\theta
_{1}}\int_{0}^{2^{-vn}}(\Omega _{v,k}^{\ast }(t))^{\theta _{1}}g^{\ast }(t)dt
\\
& \leq 2^{v(s_{2}+\frac{n}{s}-\frac{n}{p}-\frac{n}{\theta _{1}})\theta
_{1}}(\Omega _{v,k}^{\ast }(2^{vn-1}))^{\theta _{1}}g^{\ast \ast }(2^{-vn})
\\
& =2^{vs_{2}\theta _{1}}2^{-v\frac{n}{p}\theta _{1}}(\Omega _{v,k}^{\ast
}(2^{vn-1}))^{\theta _{1}}2^{-vn(\frac{s}{\theta _{1}})^{\prime }}g^{\ast
\ast }(2^{-vn}) \\
& \leq 2^{vs_{2}\theta _{1}}\big\|\Omega _{v,k}\big\|_{L^{p,\infty
}}^{\theta _{1}}\big\|g\big\|_{L^{(s/\theta _{1})^{\prime },(r_{1}/\theta
_{1})^{\prime }}} \\
& \leq 2^{vs_{2}\theta _{1}}\big\|w_{v,k}\big\|_{L^{p,\infty }}^{\theta
_{1}}.
\end{align*}

The second term of \eqref{lorentz1.2} is comparable to%
\begin{align}
& c2^{v(s_{2}+\frac{n}{s}-\frac{n}{p})\theta _{1}}\sum_{l=0}^{\infty
}(f_{v,k}^{\ast }(2^{(l-v)n}))^{\theta _{1}}2^{(l-v)n}g^{\ast }(2^{(l-v)n}) 
\notag \\
& =c\sum_{l=0}^{\infty }2^{vs_{2}\theta _{1}}(f_{v,k}^{\ast
}(2^{(l-v)n}))^{\theta _{1}}2^{(l-v)n}2^{v(\frac{n}{s}-\frac{n}{p})\theta
_{1}}g^{\ast }(2^{(l-v)n})  \notag \\
& =c\sum_{l=0}^{\infty }2^{vs_{2}\theta _{1}}2^{(l-v)n\frac{\theta _{1}}{p}%
}(f_{v,k}^{\ast }(2^{(l-v)n}))^{\theta _{1}}2^{(l-v)n(1-\frac{\theta _{1}}{p}%
)}2^{v(\frac{n}{s}-\frac{n}{p})\theta _{1}}g^{\ast }(2^{(l-v)n}),
\label{lorentz2}
\end{align}%
where the positive constant is independent of $v$ and $k$. We have%
\begin{align*}
2^{(l-v)n\frac{\theta _{1}}{p}}(f_{v,k}^{\ast }(2^{(l-v)n}))^{\theta _{1}}&
\leq \sup_{z\geq 0}(2^{(z-v)\frac{n}{p}}(f_{v,k}^{\ast
}(2^{(z-v)n-2}))^{\theta _{1}} \\
& \leq \big\|f_{v,k}\big\|_{L^{p,\infty }}^{\theta _{1}}
\end{align*}%
and

\begin{align*}
2^{(l-v)n(1-\frac{\theta _{1}}{p})}2^{v(\frac{n}{s}-\frac{n}{p})\theta
_{1}}g^{\ast }(2^{(l-v)n})& =2^{l(\frac{n}{s}-\frac{n}{p})\theta
_{1}}2^{(l-v)n(1-\frac{\theta _{1}}{p})}2^{(v-l)(\frac{n}{s}-\frac{n}{p}%
)\theta _{1}}g^{\ast }(2^{(l-v)n}) \\
& =2^{l(\frac{n}{s}-\frac{n}{p})\theta _{1}}2^{(l-v)n(1-\frac{\theta _{1}}{s}%
)}g^{\ast }(2^{(l-v)n}) \\
& \leq 2^{l(\frac{n}{s}-\frac{n}{p})\theta _{1}}\big\|g\big\|_{L^{(s/\theta
_{1})^{\prime },\infty }}.
\end{align*}%
Thus \eqref{lorentz2} is bounded by 
\begin{equation*}
2^{vs_{2}\theta _{1}}\big\|f_{v,k}\big\|_{L^{p,\infty }}^{\theta _{1}}.
\end{equation*}%
Inserting this estimate in \eqref{duality-lorentz} we get%
\begin{align*}
\Big\|\Big(\sum_{v=c_{n}+2-k}^{\infty }2^{vs_{1}\theta }f_{v,k}^{\theta }%
\Big)^{1/\theta }\Big\|_{L^{s,r_{1}}}& \leq 2^{k(\alpha _{2}-\alpha
_{1})}\sup_{v\geq c_{n}+2-k}2^{vs_{2}}\big\|w_{v,k}\big\|_{L^{p,\infty }} \\
& \leq 2^{k(\alpha _{2}-\alpha _{1})}\Big(\sum_{v=c_{n}+2-k}^{\infty
}2^{vs_{2}q}\big\|w_{v,k}\big\|_{L^{p,\infty }}^{q}\Big)^{1/q}.
\end{align*}%
Consequently, we obtain that $T_{2,\alpha _{1}}$ can be estimated from above
by $c\big\|\lambda \big\|_{\dot{K}_{p,\infty }^{\alpha
_{2},q}b_{q}^{s_{2}}}^{q}$.

\textit{Step 2. Estimation of }$J_{2,\alpha _{1}}$\textit{.} The arguments
here are quite similar to those used in the estimation of $T_{2,\alpha _{1}}$%
. The proof is complete.
\end{proof}

\begin{remark}
As in Remark \ref{optimal-lorentz}, $q$\ on the right-hand side of %
\eqref{Sobolev-emb1.2-lorentz} is optimal.
\end{remark}

\begin{theorem}
\label{F-emb3-copy(1)-lorentz}\textit{Let }$\alpha ,s_{1},s_{2}\in \mathbb{R}%
,0<s,p,q<\infty ,0<\theta ,r_{1}\leq \infty $ \textit{and }$\alpha >-\frac{n%
}{s}$. \textit{We suppose that }$0<p<s<\infty $\textit{\ and}%
\begin{equation*}
s_{1}-\tfrac{n}{s}=s_{2}-\tfrac{n}{p}.
\end{equation*}%
Then%
\begin{equation*}
\dot{K}_{p,\infty }^{\alpha ,q}b_{p_{1}}^{s_{2}}\hookrightarrow \dot{K}%
_{s,r_{1}}^{\alpha ,q}f_{\theta }^{s_{1}},
\end{equation*}%
where%
\begin{equation*}
p_{1}=\left\{ 
\begin{array}{ccc}
q, & \text{if} & q\leq r_{1}, \\ 
r_{1}, & \text{if} & q>r_{1}.%
\end{array}%
\right.
\end{equation*}
\end{theorem}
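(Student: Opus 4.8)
The plan is to mimic the proof of Theorem~\ref{F-emb3-lorentz}, specialized to $\alpha_1=\alpha_2=\alpha$. Put $c_n=1+\lfloor\log_2(2\sqrt n+1)\rfloor$ and take $\lambda\in\dot{K}_{p,\infty}^{\alpha,q}b_{p_1}^{s_2}$. First I would split the target quasi-norm $\|\lambda\|_{\dot{K}_{s,r_1}^{\alpha,q}f_\theta^{s_1}}^q=J_{1,\alpha}+J_{2,\alpha}$ into the sums over $k\le 0$ and $k\ge 1$, as in \eqref{J1-lorentz}--\eqref{J2-lorentz}; since the annuli with $k\le0$ and $k\ge1$ enter symmetrically, $J_{2,\alpha}$ is handled exactly like $J_{1,\alpha}$, so it is enough to bound $J_{1,\alpha}$. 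Inside $J_{1,\alpha}$ I would split the sum over $v$ at $v=1+c_n-k$ into a low part $T_{1,\alpha}$ ($0\le v\le1+c_n-k$) and a high part $T_{2,\alpha}$ ($v\ge2+c_n-k$). The bound for $T_{1,\alpha}$ is taken over verbatim from Theorem~\ref{F-emb3-lorentz}: it rests only on $\alpha+\tfrac ns>0$, on the pointwise estimate $\sum_m|\lambda_{v,m}|^t\chi_{v,m}(x)\lesssim 2^{nv}\|\sum_m|\lambda_{v,m}|\chi_{v,m}\chi_{B(0,2^{c_n-v+2})}\|_{L^{t,t}}^t$ on $R_k$, a choice of $t$ with $\tfrac1t>\max(\tfrac1p,\tfrac1p+\tfrac\alpha n)$, H\"older's inequality, \eqref{est-function1}, and Lemmas~\ref{Lp-estimate} and~\ref{lem:lq-inequality}, none of which involves a comparison between $\alpha_1$ and $\alpha_2$; it yields $T_{1,\alpha}\lesssim\|\lambda\|_{\dot{K}_{p,\infty}^{\alpha,q}b_\infty^{s_2}}^q\le\|\lambda\|_{\dot{K}_{p,\infty}^{\alpha,q}b_{p_1}^{s_2}}^q$.

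The whole point lies in $T_{2,\alpha}$. We may assume $\theta\le p$ (otherwise $\ell^p\hookrightarrow\ell^\theta$ reduces to this case). I would then run the same sequence of reductions as in Theorem~\ref{F-emb3-lorentz}: the inequality $\|\sum_v g_v\|_{L^{s/\theta,r_1/\theta}}^u\le\sum_v\|g_v\|_{L^{s/\theta,r_1/\theta}}^u$ for $0<u<\min(s/\theta,r_1/\theta,1)$ together with \eqref{pro1-lorentz}; duality against $L^{(s/\theta_1)',(r_1/\theta_1)'}$ for a suitable $\theta_1<\min(s,r_1)$; the Hardy--Littlewood inequality (Lemma~\ref{Hardy-Littlewood inequality}); and the decomposition over the annuli $A_{k+v}=\{m\in\mathbb{Z}^{n}:2^{k+v-2}\le|m|\le2^{k+v+1}\}$, over $\breve{R}_k=\cup_{i=-2}^{3}R_{k+i}$, and via the decreasing rearrangement of $\{\lambda_{v,m}\}_{m\in A_{k+v}}$ --- literally the computation already carried out in Theorems~\ref{F-emb3-lorentz} and~\ref{embeddings6-lorentz copy(1)}. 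The single point of departure is this: when $\alpha_1=\alpha_2$, the geometric series $\sum_v2^{v(\alpha_1-\alpha_2)\theta u}$ that allowed one, in Theorem~\ref{F-emb3-lorentz}, to replace the sum over $v$ by a supremum before invoking the sharp duality/rearrangement bound is now constant. So instead I would keep the full sum over $v$ and carry out the summation over the Besov index $v$ jointly with the rearrangement index $l$ by Lemma~\ref{lem:lq-inequality}, after the change of index $j=l-v$; the ratio in that inequality is a fixed positive power of $2^{n(\frac{1}{s}-\frac{1}{p})}$, which is $<1$ precisely because $p<s$, and this is the only source of decay in $v$. Balancing the Lorentz fine index $r_1$ coming out of the duality step against the Herz summation index $q$ coming from the outer sum over $k$ is what forces the source Besov exponent to equal $p_1=\min(q,r_1)$; summing over $k$ with the weight $2^{k\alpha q}$ and using the bounded overlap of $\breve{R}_k$ then gives $T_{2,\alpha}\lesssim\|\lambda\|_{\dot{K}_{p,\infty}^{\alpha,q}b_{p_1}^{s_2}}^q$, and the same scheme finishes $J_{2,\alpha}$. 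The passage from the sequence space to $\dot{K}_{p,\infty}^{\alpha,q}b_{p_1}^{s_2}\hookrightarrow\dot{K}_{s,r_1}^{\alpha,q}f_\theta^{s_1}$ is then immediate from Theorem~\ref{phi-tran-lorentz}.

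The hard part will be that combined $(v,l)$-summation in $T_{2,\alpha}$: lacking the decay in $v$ that the strict inequality $\alpha_2>\alpha_1$ supplied in Theorem~\ref{F-emb3-lorentz}, one has to extract \emph{all} the summability from $p<s$ and from the Hardy-type inequality at once, and it is exactly this balancing that produces the sharp index $p_1=\min(q,r_1)$ on the source side; everything else is bookkeeping parallel to the cited theorems.
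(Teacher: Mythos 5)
Your skeleton is the paper's: the same splitting into $J_{1,\alpha}+J_{2,\alpha}$ and $T_{1,\alpha}+T_{2,\alpha}$ at $v=1+c_{n}-k$, the verbatim reuse of the $T_{1,\alpha}$ bound from Theorem \ref{F-emb3-lorentz}, the reduction to $\theta\le p$, duality together with Lemma \ref{Hardy-Littlewood inequality} and the annuli $A_{k+v}$, $\breve{R}_{k}$, and finally the dichotomy $(\sum_{j}|a_{j}|)^{\varrho}\le\sum_{j}|a_{j}|^{\varrho}$ for $q\le r_{1}$ versus Minkowski for $q>r_{1}$, which is exactly what produces $p_{1}=\min(q,r_{1})$. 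You have also correctly located the crux: with $\alpha_{1}=\alpha_{2}$ the geometric factor $2^{v(\alpha_{1}-\alpha_{2})}$ is gone, the sum over $v$ must stay inside the Lorentz quasi-norm, and the only remaining decay is $2^{n(\frac{1}{s}-\frac{1}{p})}<1$.

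The one step you leave open is, however, not correctly described. Lemma \ref{lem:lq-inequality} is a Hardy inequality mapping $\ell^{q}$ to $\ell^{q}$; after duality the quantity to control is a bilinear pairing $\sum_{l\ge0}\sum_{v}2^{vs_{2}\theta}2^{(l-v)n}\bigl(\Omega_{v,k}^{\ast}(2^{(l-v)n-1})\bigr)^{\theta}2^{v(\frac{n}{s}-\frac{n}{p})\theta}g^{\ast\ast}(2^{(l-v)n+n})$, and it must be split into $\bigl(\sum_{v}2^{vs_{2}r_{1}}\|w_{v,k}\|_{L^{p,\infty}}^{r_{1}}\bigr)^{\theta/r_{1}}$ times $\|g\|_{L^{(s/\theta)^{\prime},(r_{1}/\theta)^{\prime}}}^{\theta}$; no Hardy inequality in the shifted index does that, and the naive decoupling (replacing $\Omega_{v,k}^{\ast}(2^{jn-1})$ by $2^{-jn/p}\|w_{v,k}\|_{L^{p,\infty}}$ and summing the $g$-factor separately) diverges as $l-v\to-\infty$ because $1-\theta/p<1-\theta/s$. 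What actually closes the argument, and what the paper does in \eqref{lorentz1.1}--\eqref{lorentz2.1}, is: first use that $\Omega_{v,k}^{\ast}$ is constant on $[0,2^{-vn})$ to restrict to $l\ge0$; then, for each fixed $l$, apply H\"older's inequality in $v$ with exponents $r_{1}/\theta$ and $(r_{1}/\theta)^{\prime}$, absorbing $2^{(l-v)n\theta/p}(\Omega_{v,k}^{\ast})^{\theta}$ into $\|w_{v,k}\|_{L^{p,\infty}}^{\theta}$ via the weak-$L^{p}$ supremum and recognizing the conjugate factor $\sum_{v}\bigl(2^{(l-v)n(1-\theta/s)}g^{\ast\ast}(2^{(l-v)n})\bigr)^{(r_{1}/\theta)^{\prime}}$ as the discretized dual Lorentz norm of $g$; only then does the leftover $2^{l(\frac{n}{s}-\frac{n}{p})\theta}$ get summed over $l\ge0$, which is where $p<s$ enters. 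You also need the paper's second case $\theta\ge r_{1}$ (replace $\theta$ by some $r_{2}<r_{1}$ before dualizing). With the H\"older-in-$v$ step substituted for the Hardy lemma, the rest of your plan coincides with the paper's proof.
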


\begin{proof}
Put $c_{n}=1+\lfloor \log _{2}(2\sqrt{n}+1)\rfloor $. Let $\lambda \in \dot{K%
}_{p,\infty }^{\alpha ,q}b_{p_{1}}^{s_{2}}$. We write as in Theorem \ref%
{F-emb3-lorentz}, 
\begin{equation*}
\big\|\lambda \big\|_{\dot{K}_{s,r_{1}}^{\alpha ,q}f_{\theta
}^{s_{1}}}^{q}=J_{1,\alpha }+J_{2,\alpha }.
\end{equation*}%
We split the sum $\sum_{v=0}^{\infty }$ in \eqref{J1-lorentz}\ with $\alpha
_{1}=\alpha $ into two sums one over $0\leq v\leq c_{n}+1-k$ and one over $%
v\geq c_{n}+2-k$. The first term is denoted by $T_{1,\alpha }$ and the
second term by $T_{2,\alpha }$. Obviously 
\begin{equation*}
J_{1,\alpha }\lesssim T_{1,\alpha }+T_{2,\alpha }.
\end{equation*}%
The same analysis as in the proof of Theorem \ref{F-emb3-lorentz} shows that%
\begin{equation*}
T_{1,\alpha }\lesssim \big\|\lambda \big\|_{\dot{K}_{p,\infty }^{\alpha
,q}b_{\infty }^{s_{2}}}^{q}.
\end{equation*}%
\textit{Estimate of }$T_{2,\alpha }$. We can suppose that $\theta \leq p$,
since the opposite cases can be obtained by the fact that\ $\ell
^{p}\hookrightarrow \ell ^{\theta }$, if $p\leq \theta $. We distinguish two
cases.

\textit{Case 1. }$\theta <r_{1}$. We have\ $T_{2,\alpha }$ can be estimated
from above by%
\begin{equation*}
\sum\limits_{k=-\infty }^{0}2^{k\alpha _{1}q}\Big\|\Big(\sum_{v=c_{n}+2-k}^{%
\infty }\sum_{m\in \mathbb{Z}^{n}}2^{vs_{1}\theta }|\lambda _{v,m}|^{\theta
}\chi _{v,m}\Big)^{1/\theta }\Big\|_{L^{s,r_{1}}}^{q}.
\end{equation*}%
We have 
\begin{equation}
\Big\|\Big(\sum_{v=c_{n}+2-k}^{\infty }2^{vs_{1}\theta }f_{v,k}^{\theta }%
\Big)^{1/\theta }\Big\|_{L^{s,r_{1}}}^{\theta }=\Big\|\sum_{v=c_{n}+2-k}^{%
\infty }2^{vs_{1}\theta }f_{v,k}^{\theta }\Big\|_{L^{s/\theta ,r_{1}/\theta
}}  \label{lorentz3}
\end{equation}%
Using duality, the right-hand side of \eqref{lorentz3} is comparable to%
\begin{equation*}
\sup_{g\in L^{(s/\theta )^{\prime },(r_{1}/\theta )^{\prime }},\big\|g\big\|%
_{L^{(s/\theta )^{\prime },(r_{1}/\theta )^{\prime }}}\leq 1}\int_{\mathbb{R}%
^{n}}\sum_{v=c_{n}+2-k}^{\infty }2^{vs_{1}\theta }(f_{v,k}(x))^{\theta
}g(x)dx.
\end{equation*}%
Put 
\begin{equation*}
w_{v,k}(x)=\sum\limits_{m\in \mathbb{Z}^{n}}\left\vert \lambda
_{v,m}\right\vert \chi _{v,m}(x)\chi _{\breve{R}_{k}}(x).
\end{equation*}%
Let $A_{k+v}$ be as in the proof of Theorem \ref{embeddings3 copy(2)-lorentz}%
, $v\geq c_{n}+2-k$ and $k\in \mathbb{Z}$. We have 
\begin{equation*}
f_{v,k}\leq \sum\limits_{m\in A_{k+v}}|\lambda _{v,m}|\chi _{v,m}=\Omega
_{v,k}\leq w_{v,k}
\end{equation*}%
and $\Omega _{v,k}^{\ast }$ is constant in $[0,2^{-vn})$. It follows from
Lemma \ref{Hardy-Littlewood inequality} that 
\begin{align}
& \sum_{v=c_{n}+2-k}^{\infty }2^{vs_{1}\theta }\int_{\mathbb{R}%
^{n}}(f_{v,k}(x))^{\theta }g(x)dx  \notag \\
& =\sum_{v=c_{n}+2-k}^{\infty }2^{vs_{1}\theta }\int_{0}^{\infty }(\Omega
_{v,k}^{\ast }(t))^{\theta }g^{\ast }(t)dt  \notag \\
& \leq \sum_{v=c_{n}+2-k}^{\infty }\sum_{l=0}^{\infty }2^{vs_{1}\theta
}(\Omega _{v,k}^{\ast }(2^{(l-v)n-1})^{\theta }2^{(l-v)n}g^{\ast \ast
}(2^{(l-v)n+n}).  \label{lorentz1.1}
\end{align}%
Since $s_{1}-s_{2}=\tfrac{n}{s}-\frac{n}{p}$, we obtain that %
\eqref{lorentz1.1} is just%
\begin{align}
& \sum_{l=0}^{\infty }\sum_{v=c_{n}+2-k}^{\infty }2^{vs_{2}\theta }(\Omega
_{v,k}^{\ast }(2^{(l-v)n-1}))^{\theta }2^{(l-v)n}2^{v(\frac{n}{s}-\frac{n}{p}%
)\theta }g^{\ast \ast }(2^{(l-v)n})  \notag \\
& =\sum_{l=0}^{\infty }\sum_{v=c_{n}+2-k}^{\infty }2^{vs_{2}\theta }2^{(l-v)n%
\frac{\theta }{p}}(\Omega _{v,k}^{\ast }(2^{(l-v)n-1}))^{\theta }2^{(l-v)n(1-%
\frac{\theta }{p})}2^{v(\frac{n}{s}-\frac{n}{p})\theta }g^{\ast \ast
}(2^{(l-v)n}).  \label{lorentz2.1}
\end{align}%
H\"{o}lder's inequality implies that the\ second sum in \eqref{lorentz2.1}
can be estimated from above by%
\begin{align*}
& \Big(\sum_{v=0}^{\infty }2^{vs_{2}r_{1}}2^{(l-v)n\frac{r_{1}}{p}%
}(w_{v,k}^{\ast }(2^{(l-v)n-1}))^{r_{1}}\Big)^{\theta /r_{1}} \\
& \times \Big(\sum_{v=0}^{\infty }(2^{(l-v)n(1-\frac{\theta }{p})}2^{v(\frac{%
n}{s}-\frac{n}{p})\theta }g^{\ast \ast }(2^{(l-v)n}))^{(r_{1}/\theta
_{1})^{\prime }}\Big)^{1/(r_{1}/\theta )^{\prime }} \\
& \leq \Big(\sum_{v=0}^{\infty }2^{vs_{2}r_{1}}\sup_{h\geq 0}(2^{(h-v)n\frac{%
r_{1}}{p}}(w_{v,k}^{\ast }(2^{(h-v)n-1}))^{r_{1}})\Big)^{\theta /r_{1}} \\
& \times \Big(\sum_{v=0}^{\infty }(2^{(l-v)n(1-\frac{\theta }{p})}2^{v(\frac{%
n}{s}-\frac{n}{p})\theta }g^{\ast \ast }(2^{(l-v)n}))^{(r_{1}/\theta
)^{\prime }}\Big)^{1/(r_{1}/\theta )^{\prime }} \\
& \leq \Big(\sum_{v=0}^{\infty }2^{vs_{2}r_{1}}\big\|w_{v,k}\big\|%
_{L^{p,\infty }}^{r_{1}}\Big)^{\theta /r_{1}} \\
& \times \Big(\sum_{v=0}^{\infty }\big(2^{(l-v)n(1-\frac{\theta }{p})}2^{v(%
\frac{n}{s}-\frac{n}{p})\theta }g^{\ast \ast }(2^{(l-v)n})\big)%
^{(r_{1}/\theta )^{\prime }}\Big)^{1/(r_{1}/\theta )^{\prime }}.
\end{align*}

Observe that

\begin{align*}
& \sum_{v=0}^{\infty }\big(2^{(l-v)n(1-\frac{\theta }{p})}2^{v(\frac{n}{s}-%
\frac{n}{p})\theta }g^{\ast \ast }(2^{(l-v)n})\big)^{(r_{1}/\theta )^{\prime
}} \\
& \leq 2^{l(\frac{n}{s}-\frac{n}{p})(r_{1}/\theta )^{\prime }\theta
}\sum_{v=0}^{\infty }\big(2^{(l-v)n(1-\frac{\theta }{p})}2^{(v-l)(\frac{n}{s}%
-\frac{n}{p})\theta }g^{\ast \ast }(2^{(l-v)n})\big)^{(r_{1}/\theta
)^{\prime }} \\
& \leq 2^{l(\frac{n}{s}-\frac{n}{p})(r_{1}/\theta )^{\prime }\theta
}\sum_{v=0}^{\infty }\big(2^{(l-v)n(1-\frac{\theta }{s})}g^{\ast \ast
}(2^{(l-v)n})\big)^{(r_{1}/\theta )^{\prime }} \\
& \leq 2^{l(\frac{n}{s}-\frac{n}{p})(r_{1}/\theta )^{\prime }\theta }\big\|g%
\big\|_{L^{(s/\theta )^{\prime },(r_{1}/\theta )^{\prime }}}^{(r_{1}/\theta
)^{\prime }}.
\end{align*}%
Thus \eqref{lorentz2.1} is bounded by 
\begin{equation*}
c\Big(\sum_{v=0}^{\infty }2^{vs_{2}r_{1}}\big\|w_{v,k}\big\|_{L^{p,\infty
}}^{r_{1}}\Big)^{\theta /r_{1}}.
\end{equation*}%
Using the well-known inequality 
\begin{equation*}
\Big(\sum_{j=0}^{\infty }\left\vert a_{j}\right\vert \Big)^{\varrho }\leq
\sum_{j=0}^{\infty }\left\vert a_{j}\right\vert ^{\varrho },\quad \left\{
a_{j}\right\} _{j}\subset \mathbb{C},\text{ }\varrho \in (0,1]
\end{equation*}%
if $q\leq r_{1}$ and Minkowski inequality if $q>r_{1}$ we obtain that $%
T_{2,\alpha }$ can be estimated from above by $c\big\|\lambda \big\|_{\dot{K}%
_{p,\infty }^{\alpha _{2},q}b_{p_{1}}^{s_{2}}}^{q}$.

\textit{Case 2. }$\theta \geq r_{1}$. Let $r_{2}>0$ be such that $%
r_{2}<r_{1} $. The left-hand side of \eqref{lorentz3} is bounded by 
\begin{equation*}
\Big\|\Big(\sum_{v=c_{n}+2-k}^{\infty }2^{vs_{1}r_{2}}f_{v,k}^{r_{2}}\Big)%
^{1/r_{2}}\Big\|_{L^{s,r_{1}}}^{r_{2}}.
\end{equation*}%
Now, repeating the arguments of Case 1, we deduce that $T_{2,\alpha
}\lesssim \big\|\lambda \big\|_{\dot{K}_{p,\infty }^{\alpha
_{2},q}b_{p_{1}}^{s_{2}}}^{q}.$

\textit{Estimation of }$J_{2,\alpha }$\textit{.} We use the same arguments
as in the estimation of $J_{2,\alpha _{1}}$ of Theorem \ref{F-emb3-lorentz}
to obtain $J_{2,\alpha }\lesssim \big\|\lambda \big\|_{\dot{K}_{p,\infty
}^{\alpha _{2},q}b_{p_{1}}^{s_{2}}}^{q}$. The proof is complete.
\end{proof}

\begin{theorem}
\label{F-emb3 copy(2)-lorentz}\textit{Let }$\alpha _{1},\alpha
_{2},s_{1},s_{2}\in \mathbb{R},0<s,p,q<\infty ,0<\theta ,r_{1}\leq \infty
,\alpha _{1}>-\frac{n}{s}\ $\textit{and }$\alpha _{2}>-\frac{n}{p}$. \textit{%
We suppose that }%
\begin{equation*}
s_{1}-\tfrac{n}{s}-\alpha _{1}=s_{2}-\tfrac{n}{p}-\alpha _{2}.
\end{equation*}%
\textit{Let}%
\begin{equation}
0<s<p<\infty \text{\quad and\quad }\alpha _{2}+\tfrac{n}{p}>\alpha _{1}+%
\tfrac{n}{s}.  \label{newexp2.2-lorentz}
\end{equation}%
Then%
\begin{equation*}
\dot{K}_{p,\infty }^{\alpha _{2},q}b_{q}^{s_{2}}\hookrightarrow \dot{K}%
_{s,r_{1}}^{\alpha _{1},q}f_{\theta }^{s_{1}}.
\end{equation*}
\end{theorem}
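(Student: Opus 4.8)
The plan is to mimic the proof of Theorem \ref{F-emb3-lorentz}, but to replace the duality argument used there for the off-diagonal part by the Lorentz--H\"older reduction that is available because $s<p$, in the spirit of the proof of Theorem \ref{embeddings6-lorentz copy(3)}. Put $c_{n}=1+\lfloor\log_{2}(2\sqrt{n}+1)\rfloor$. Since $\|\cdot\|_{\dot{K}_{s,r_{1}}^{\alpha_{1},q}f_{\theta}^{s_{1}}}\le c\,\|\cdot\|_{\dot{K}_{s,r_{1}}^{\alpha_{1},q}f_{\theta_{0}}^{s_{1}}}$ whenever $\theta_{0}\le\theta$ (by $\ell^{\theta_{0}}\hookrightarrow\ell^{\theta}$), one may assume $0<\theta\le\min(1,p,q,r_{1})$. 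For $\lambda\in\dot{K}_{p,\infty}^{\alpha_{2},q}b_{q}^{s_{2}}$ I would split, as in Theorem \ref{F-emb3-lorentz}, $\|\lambda\|_{\dot{K}_{s,r_{1}}^{\alpha_{1},q}f_{\theta}^{s_{1}}}^{q}=J_{1,\alpha_{1}}+J_{2,\alpha_{1}}$ ($k\le0$ resp.\ $k\ge1$) and further $J_{1,\alpha_{1}}\lesssim T_{1,\alpha_{1}}+T_{2,\alpha_{1}}$ according to $0\le v\le c_{n}+1-k$ resp.\ $v\ge c_{n}+2-k$. The piece $T_{1,\alpha_{1}}$ is then estimated verbatim as in Theorem \ref{F-emb3-lorentz}: on that range the distance argument makes $\sum_{m}|\lambda_{v,m}|\chi_{v,m}$ constant on $R_{k}$, and a choice $t>0$ with $\tfrac1t>\max(\tfrac1p,\tfrac1p+\tfrac{\alpha_{2}}{n})$ (possible since $\alpha_{2}>-\tfrac np$), combined with \eqref{est-function1}, Lemma \ref{Lp-estimate} and Lemma \ref{lem:lq-inequality} (here $\alpha_{1}+\tfrac ns>0$ enters), gives $T_{1,\alpha_{1}}\lesssim\|\lambda\|_{\dot{K}_{p,\infty}^{\alpha_{2},q}b_{\infty}^{s_{2}}}^{q}\le\|\lambda\|_{\dot{K}_{p,\infty}^{\alpha_{2},q}b_{q}^{s_{2}}}^{q}$.

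The genuinely new ingredient is $T_{2,\alpha_{1}}$. Writing $f_{v,k}=\sum_{m}|\lambda_{v,m}|\chi_{v,m}\chi_{k}$, the first step is the Lorentz--H\"older inequality (Proposition \ref{Holder's-convolution-lorentz}/(v)) together with $\|\chi_{R_{k}}\|_{L^{v_{1},r_{1}}}\approx2^{kn(1/s-1/p)}$ from \eqref{est-function1} (with $\tfrac1{v_{1}}=\tfrac1s-\tfrac1p>0$), valid precisely because $s<p$, which replaces the target $L^{s,r_{1}}$-quasi-norm of $\big(\sum_{v\ge c_{n}+2-k}2^{v(s_{1}+n/2)\theta}f_{v,k}^{\theta}\big)^{1/\theta}$ by $2^{kn(1/s-1/p)}$ times its $L^{p,\infty}$-quasi-norm. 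After that I would pick $0<u<\min(\theta,p,q)$, pass the inner $\ell^{\theta}$-sum to an $\ell^{u}$-sum (using $\ell^{u}\hookrightarrow\ell^{\theta}$), and then use the homogeneity \eqref{pro1-lorentz} and the triangle inequality in $L^{p/u,\infty}$ (legitimate since $p/u>1$) to bring the sum outside the norm, reaching $\sum_{v\ge c_{n}+2-k}2^{v(s_{1}+n/2)u}\|f_{v,k}\|_{L^{p,\infty}}^{u}$, where $\|f_{v,k}\|_{L^{p,\infty}}$ is exactly the quantity that occurs in the source Besov quasi-norm.

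It then remains to sum in $k$ and $v$. The power relation enters in the form $s_{1}+\tfrac n2=s_{2}+\tfrac n2+\epsilon$ and $\alpha_{1}+\tfrac ns-\tfrac np=\alpha_{2}+\epsilon$ with $\epsilon:=\tfrac ns-\tfrac np+\alpha_{1}-\alpha_{2}$, and the hypothesis \eqref{newexp2.2-lorentz} says precisely that $\epsilon<0$. Regrouping the powers, the weight that survives is $2^{(k+v)\epsilon u}$, which on the summation range satisfies $k+v\ge c_{n}+2$; since $\epsilon<0$ it is bounded and geometrically summable, so H\"older's inequality in the single variable $w=k+v$ (where $u<q$ is used) collapses the estimate to
\[
T_{2,\alpha_{1}}\lesssim\sum_{k,v}2^{v(s_{2}+n/2)q}\,2^{k\alpha_{2}q}\big\|f_{v,k}\big\|_{L^{p,\infty}}^{q}=\big\|\lambda\big\|_{\dot{K}_{p,\infty}^{\alpha_{2},q}b_{q}^{s_{2}}}^{q}.
\]
The contribution $J_{2,\alpha_{1}}$ is handled in the same manner (in fact more directly, since for $k\ge1$ no splitting in $v$ is needed and $2^{(k+v)\epsilon u}$ is again summable), and assembling the three bounds yields the claimed embedding.

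The step I expect to be most delicate is exactly this passage for $T_{2,\alpha_{1}}$: one must move the $\ell^{\theta}$-summation that lives inside the $L^{s,r_{1}}$-quasi-norm of the target $f$-space across the H\"older reduction and convert it into the $\ell^{q}$-summation of the $b_{q}$-source while keeping track of the Herz weights. The choice of the auxiliary exponent $u$ (small enough that $L^{p/u,\infty}$ is normable and $u\le\min(\theta,q)$), and the recognition that $\alpha_{2}+\tfrac np>\alpha_{1}+\tfrac ns$ is exactly what forces the ``diagonal'' weight $2^{(k+v)\epsilon u}$ to decay, are the points that make it work; everything else parallels the earlier Franke and Jawerth embeddings of this section.
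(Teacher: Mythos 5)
Your proposal is correct and follows essentially the same route as the paper's proof: the same splitting into $J_{1,\alpha_{1}}\lesssim T_{1,\alpha_{1}}+T_{2,\alpha_{1}}$ and $J_{2,\alpha_{1}}$, the estimate of $T_{1,\alpha_{1}}$ carried over from Theorem \ref{F-emb3-lorentz}, and, for the off-diagonal pieces, the combination of a Minkowski-type inequality with a small auxiliary exponent and the Lorentz--H\"older reduction $\big\|\chi_{R_{k}}\big\|_{L^{v_{1},r_{1}}}\approx 2^{kn(\frac{1}{s}-\frac{1}{p})}$ (available precisely because $s<p$) to pass to $L^{p,\infty}$, with $\eta=\alpha_{1}+\tfrac{n}{s}-\tfrac{n}{p}-\alpha_{2}<0$ supplying the geometric decay in $k+v$. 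The only cosmetic differences are the order in which you apply H\"older and Minkowski and your use of H\"older in the $v$-variable against the weight $2^{(k+v)\epsilon u}$ where the paper simply bounds the inner sum by its supremum over $v$; both collapse to the same bound by $\big\|\lambda\big\|_{\dot{K}_{p,\infty}^{\alpha_{2},q}b_{q}^{s_{2}}}$.
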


\begin{proof}
We prove our embedding under the conditions \eqref{newexp2.2-lorentz}.
Obviously, we have $T_{1,\alpha _{1}}\lesssim \big\|\lambda \big\|_{\dot{K}%
_{p,\infty }^{\alpha _{2},q}b_{q}^{s_{2}}}$, so we need only to estimate $%
T_{2,\alpha _{1}}$. Let $0<\tau <\min (1,\frac{s}{\theta },\frac{r_{1}}{%
\theta })$. Minkowski's inequality; see \eqref{seeger}, yields 
\begin{align*}
& \Big\|\Big(\sum_{v=c_{n}+2-k}^{\infty }\sum\limits_{m\in \mathbb{Z}%
^{n}}2^{v(s_{1}+\frac{n}{2})\theta }|\lambda _{v,m}|^{\theta }\chi
_{v,m}\chi _{k}\Big)^{1/\theta }\Big\|_{L^{s,r_{1}}} \\
& \lesssim \Big(\sum_{v=c_{n}+2-k}^{\infty }2^{v(s_{1}+\frac{n}{2})\tau
\theta }\Big\|\sum\limits_{m\in \mathbb{Z}^{n}}|\lambda _{v,m}|\chi
_{v,m}\chi _{k}\Big\|_{L^{s,r_{1}}}^{\tau \theta }\Big)^{1/\theta \tau }.
\end{align*}%
By H\"{o}lder's inequality we obtain%
\begin{equation*}
2^{vs_{1}}\Big\|\sum\limits_{m\in \mathbb{Z}^{n}}\lambda _{v,m}\chi
_{v,m}\chi _{k}\Big\|_{L^{s,r_{1}}}\lesssim 2^{(\frac{n}{s}-\frac{n}{p}%
)k+vs_{1}}\Big\|\sum\limits_{m\in \mathbb{Z}^{n}}\lambda _{v,m}\chi
_{v,m}\chi _{k}\Big\|_{L^{p,\infty }},
\end{equation*}%
where the implicit constant is independent of $v$ and $k$. Put 
\begin{equation*}
\mu =\alpha _{1}+\frac{n}{s}-\frac{n}{p}-\alpha _{2}+s_{2}+\frac{n}{2}\quad 
\text{and}\quad \eta =\alpha _{1}+\frac{n}{s}-\frac{n}{p}-\alpha _{2}.
\end{equation*}%
\ Hence $T_{2,\alpha _{1}}$ can be\ estimated from above by%
\begin{equation*}
c\sum\limits_{k=-\infty }^{0}2^{k(\alpha _{1}+\frac{n}{s}-\frac{n}{p})q}\Big(%
\sum_{v=1-k}^{\infty }2^{v\mu \tau \theta }\Big\|\sum\limits_{m\in \mathbb{Z}%
^{n}}|\lambda _{v,m}|\chi _{v,m}\chi _{k}\Big\|_{L^{p,\infty }}^{\tau \theta
}\Big)^{q/\theta \tau },
\end{equation*}%
which is just%
\begin{align*}
& c\sum\limits_{k=-\infty }^{0}2^{k\alpha _{2}q}\Big(\sum_{v=c_{n}+2-k}^{%
\infty }2^{(v+k)\eta \tau \theta }2^{v(s_{2}+\frac{n}{2})\tau \theta }\Big\|%
\sum\limits_{m\in \mathbb{Z}^{n}}|\lambda _{v,m}|\chi _{v,m}\chi _{k}\Big\|%
_{L^{p,\infty }}^{\tau \theta }\Big)^{q/\theta \tau } \\
& \lesssim \sum\limits_{k=-\infty }^{0}2^{k\alpha _{2}q}\Big(\sup_{v\in 
\mathbb{N}_{0}}2^{v(s_{2}+\frac{n}{2})}\Big\|\sum\limits_{m\in \mathbb{Z}%
^{n}}|\lambda _{v,m}|\chi _{v,m}\chi _{k}\Big\|_{L^{p,\infty }}\Big)^{q} \\
& \lesssim \big\|\lambda \big\|_{\dot{K}_{p,\infty }^{\alpha
_{2},q}b_{q}^{s_{2}}}.
\end{align*}%
H\"{o}lder's inequality, Minkowski's inequality and the fact that $\eta <0$
lead to%
\begin{align*}
& \Big\|\Big(\sum_{v=0}^{\infty }\sum\limits_{m\in \mathbb{Z}^{n}}2^{v(s_{1}+%
\frac{n}{2})\theta }|\lambda _{v,m}|^{\theta }\chi _{v,m}\chi _{k}\Big)%
^{1/\theta }\Big\|_{L^{s,r_{1}}} \\
& \lesssim 2^{k(\frac{n}{s}-\frac{n}{p})}\sup_{v\in \mathbb{N}%
_{0}}2^{v(s_{2}+\frac{n}{2})}\Big\|\sum\limits_{m\in \mathbb{Z}^{n}}|\lambda
_{v,m}|\chi _{v,m}\chi _{k}\Big\|_{L^{p,\infty }} \\
& \lesssim 2^{k(\frac{n}{s}-\frac{n}{p}-\alpha _{2})}\big\|\lambda \big\|_{%
\dot{K}_{p,\infty }^{\alpha _{2},q}b_{q}^{s_{2}}}
\end{align*}%
for any $k\in \mathbb{N}_{0}$, where the implicit constant is independent of 
$k$. Thus, $J_{2,\alpha _{1}}\lesssim \big\|\lambda \big\|_{\dot{K}%
_{p,\infty }^{\alpha _{2},q}b_{q}^{s_{2}}}$. The proof is complete.
\end{proof}

Using Theorems \ref{phi-tran-lorentz} and \ref{F-emb3-lorentz}, we have the
following Franke embedding.

\begin{theorem}
\label{embeddings6.2-lorentz}\textit{Let }$\alpha ,\alpha _{1},\alpha
_{2},s_{1},s_{2}\in \mathbb{R}$, $0<r_{1}\leq \infty ,0<s,p,q,r<\infty
,\alpha >-\frac{n}{s},\alpha _{1}>-\frac{n}{s}\ $\textit{and }$\alpha _{2}>-%
\frac{n}{p}$.\ \newline
$\mathrm{(i)}$ Under the hypothesis of Theorem \ref{F-emb3-lorentz}\ we have%
\begin{equation*}
\dot{K}_{p,\infty }^{\alpha _{2},q}B_{q}^{s_{2}}\hookrightarrow \dot{K}%
_{s,r_{1}}^{\alpha _{1},q}F_{\theta }^{s_{1}}.
\end{equation*}%
$\mathrm{(ii)}$ Under the hypothesis of Theorem \ref{F-emb3-copy(1)-lorentz}
we have%
\begin{equation*}
\dot{K}_{p,\infty }^{\alpha ,q}B_{p_{1}}^{s_{2}}\hookrightarrow \dot{K}%
_{s,r_{1}}^{\alpha ,q}F_{\theta }^{s_{1}},
\end{equation*}%
where%
\begin{equation*}
p_{1}=\left\{ 
\begin{array}{ccc}
q, & \text{if} & q\leq r_{1}, \\ 
r_{1}, & \text{if} & q>r_{1}.%
\end{array}%
\right.
\end{equation*}%
$\mathrm{(iii)}$ Under the hypothesis of Theorem \ref{embeddings6-lorentz
copy(2)} we have%
\begin{equation*}
\dot{K}_{p,\infty }^{\alpha _{2},q}B_{p}^{s_{2}}\hookrightarrow \dot{K}%
_{s,r_{1}}^{\alpha _{1},q}F_{\theta }^{s_{1}}.
\end{equation*}
\end{theorem}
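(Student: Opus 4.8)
The plan is to deduce Theorem \ref{embeddings6.2-lorentz} directly from the discrete Franke embeddings (Theorems \ref{F-emb3-lorentz}, \ref{F-emb3-copy(1)-lorentz} and \ref{F-emb3 copy(2)-lorentz}) by transferring the statements for the sequence spaces $\dot{K}_{p,r}^{\alpha,q}a_\beta^s$ to the function spaces $\dot{K}_{p,r}^{\alpha,q}A_\beta^s$ via the $\varphi$-transform machinery of Theorem \ref{phi-tran-lorentz}. This is the standard ``lift the diagram'' argument: a continuous embedding between two sequence spaces becomes a continuous embedding between the corresponding function spaces once one sandwiches it between the bounded analysis operator $S_\varphi$ and the bounded synthesis operator $T_\psi$, using that $T_\psi\circ S_\varphi=\mathrm{id}$ on both spaces involved.

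First I would fix $\Phi,\Psi\in\mathcal{S}(\mathbb{R}^n)$ satisfying \eqref{Ass1} and $\varphi,\psi\in\mathcal{S}(\mathbb{R}^n)$ satisfying \eqref{Ass2} such that \eqref{Ass3} holds. For part (i), suppose the hypotheses of Theorem \ref{F-emb3-lorentz} are in force and let $f\in\dot{K}_{p,\infty}^{\alpha_2,q}B_q^{s_2}$. By Theorem \ref{phi-tran-lorentz} applied with the pair $(B,b)$, the sequence $S_\varphi f=\{\langle f,\varphi_{k,m}\rangle\}$ lies in $\dot{K}_{p,\infty}^{\alpha_2,q}b_q^{s_2}$ with
\begin{equation*}
\big\|S_\varphi f\big\|_{\dot{K}_{p,\infty}^{\alpha_2,q}b_q^{s_2}}\lesssim \big\|f\big\|_{\dot{K}_{p,\infty}^{\alpha_2,q}B_q^{s_2}}.
\end{equation*}
Theorem \ref{F-emb3-lorentz} then gives $S_\varphi f\in\dot{K}_{s,r_1}^{\alpha_1,q}f_\theta^{s_1}$ with the corresponding norm estimate. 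Applying the boundedness of $T_\psi:\dot{K}_{s,r_1}^{\alpha_1,q}f_\theta^{s_1}\to\dot{K}_{s,r_1}^{\alpha_1,q}F_\theta^{s_1}$ from Theorem \ref{phi-tran-lorentz} (noting $\alpha_1>-\tfrac{n}{s}$ is exactly the hypothesis needed there), and using $T_\psi S_\varphi f=f$, we obtain
\begin{equation*}
\big\|f\big\|_{\dot{K}_{s,r_1}^{\alpha_1,q}F_\theta^{s_1}}=\big\|T_\psi S_\varphi f\big\|_{\dot{K}_{s,r_1}^{\alpha_1,q}F_\theta^{s_1}}\lesssim \big\|S_\varphi f\big\|_{\dot{K}_{s,r_1}^{\alpha_1,q}f_\theta^{s_1}}\lesssim \big\|f\big\|_{\dot{K}_{p,\infty}^{\alpha_2,q}B_q^{s_2}},
\end{equation*}
which is precisely \eqref{Sobolev-emb1.2-lorentz} lifted to the function-space level. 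Parts (ii) and (iii) follow by the identical three-line argument, invoking Theorem \ref{F-emb3-copy(1)-lorentz} (with the stated value of $p_1$) and Theorem \ref{F-emb3 copy(2)-lorentz} respectively in place of Theorem \ref{F-emb3-lorentz}.

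There is no real obstacle here; the only point requiring a moment of care is checking that the parameter restrictions in Theorem \ref{phi-tran-lorentz} ($\alpha>-\tfrac{n}{p}$ and $\alpha>-\tfrac{n}{s}$ respectively for the source and target spaces, together with $0<q<\infty$ when the $F$/$f$-scale appears) are automatically satisfied under the hypotheses of the three discrete Franke theorems — which they are, since those theorems already assume $\alpha_1>-\tfrac{n}{s}$, $\alpha_2>-\tfrac{n}{p}$ and $q<\infty$. One should also remark, as in Remark \ref{optimal-lorentz} and the remark following Theorem \ref{F-emb3-lorentz}, that the exponent $q$ on the left-hand side of each embedding is optimal, which transfers automatically since $S_\varphi$ and $T_\psi$ are inverse isomorphisms. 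The proof is therefore a direct consequence of Theorems \ref{phi-tran-lorentz}, \ref{F-emb3-lorentz}, \ref{F-emb3-copy(1)-lorentz} and \ref{F-emb3 copy(2)-lorentz}, and I would write it as such.
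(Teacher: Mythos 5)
Your proposal is correct and is essentially the paper's own argument: the theorem is obtained by sandwiching the discrete Franke embeddings of Theorems \ref{F-emb3-lorentz}, \ref{F-emb3-copy(1)-lorentz} and \ref{F-emb3 copy(2)-lorentz} between the bounded operators $S_{\varphi}$ and $T_{\psi}$ of Theorem \ref{phi-tran-lorentz}, using $T_{\psi}\circ S_{\varphi}=\mathrm{id}$. The only discrepancy is notational: the paper's part (iii) cites Theorem \ref{embeddings6-lorentz copy(2)} (evidently a slip for Theorem \ref{F-emb3 copy(2)-lorentz}), and your reading matches the intended statement.
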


We observe that from Theorem \ref{embeddings6.2-lorentz}/(ii) we obtain the
following statement.

\begin{corollary}
Let $0<p<s<\infty ,0<\theta \leq \infty $ and\textit{\ }%
\begin{equation*}
s_{1}-\tfrac{n}{s}=s_{2}-\tfrac{n}{p}.
\end{equation*}%
Then%
\begin{equation*}
B_{p,s}^{s_{2}}\hookrightarrow \dot{K}_{p,\infty
}^{0,s}B_{s}^{s_{2}}\hookrightarrow F_{s,\theta }^{s_{1}}.
\end{equation*}
\end{corollary}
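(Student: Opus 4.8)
The plan is to obtain the corollary by chaining two already available embeddings, so that the proof becomes a matter of choosing parameters. First I would invoke the identifications recorded in the remark after Definition~\ref{B-F-def-lorentz}, namely $B_{p,s}^{s_2}=\dot{K}_{p,p}^{0,p}B_s^{s_2}$ and $F_{s,\theta}^{s_1}=\dot{K}_{s,s}^{0,s}F_\theta^{s_1}$; with these in hand the assertion becomes the composition $\dot{K}_{p,p}^{0,p}B_s^{s_2}\hookrightarrow\dot{K}_{p,\infty}^{0,s}B_s^{s_2}\hookrightarrow\dot{K}_{s,s}^{0,s}F_\theta^{s_1}$.

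For the left arrow I would argue only on the inner Lorentz--Herz quasi-norm, keeping the Besov fine index $s$ and the smoothness $s_2$ fixed. By Theorem~\ref{embeddings1.1-lorentz}(v), applied with $r_1=p\le r_2=\infty$, one has $\dot{K}_{p,p}^{0,p}B_s^{s_2}\hookrightarrow\dot{K}_{p,\infty}^{0,p}B_s^{s_2}$; then by Theorem~\ref{embeddings1.1-lorentz}(iii), applied with $q_1=p\le q_2=s$ (legitimate since $p<s$), one has $\dot{K}_{p,\infty}^{0,p}B_s^{s_2}\hookrightarrow\dot{K}_{p,\infty}^{0,s}B_s^{s_2}$. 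The hypothesis $\alpha>-n/p$ of that theorem is vacuous here because $\alpha=0$, so the left arrow follows by composition.

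For the right arrow I would apply Theorem~\ref{embeddings6.2-lorentz}(ii), which under the hypotheses of Theorem~\ref{F-emb3-copy(1)-lorentz} gives $\dot{K}_{p,\infty}^{\alpha,q}B_{p_1}^{s_2}\hookrightarrow\dot{K}_{s,r_1}^{\alpha,q}F_\theta^{s_1}$ with $p_1=q$ when $q\le r_1$. Choosing $\alpha=0$, $q=s$ and $r_1=s$, the relation $q\le r_1$ holds, so $p_1=s$, and the hypotheses of Theorem~\ref{F-emb3-copy(1)-lorentz} reduce precisely to $\alpha=0>-n/s$, $0<p<s<\infty$ and $s_1-\tfrac{n}{s}=s_2-\tfrac{n}{p}$, all of which are assumed. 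This yields $\dot{K}_{p,\infty}^{0,s}B_s^{s_2}\hookrightarrow\dot{K}_{s,s}^{0,s}F_\theta^{s_1}=F_{s,\theta}^{s_1}$, and concatenating with the left arrow finishes the proof.

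There is no genuine difficulty here: the substance is supplied entirely by the Franke-type embedding of Theorem~\ref{embeddings6.2-lorentz}(ii). The only point needing a moment of care is checking that the choice $q=r_1=s$ really produces the fine index $p_1=s$ in the source space, so that one gets exactly $\dot{K}_{p,\infty}^{0,s}B_s^{s_2}$ and not a space with a strictly smaller fine parameter, and that $0<p<s<\infty$ is precisely the range in which Theorem~\ref{F-emb3-copy(1)-lorentz} is stated; both verifications are immediate.
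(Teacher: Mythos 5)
Your proof is correct and is essentially the argument the paper intends: the paper derives this corollary directly from Theorem \ref{embeddings6.2-lorentz}(ii) with $\alpha=0$, $q=r_{1}=s$ (so $p_{1}=s$), and the left embedding is exactly the routine chain $B_{p,s}^{s_{2}}=\dot{K}_{p,p}^{0,p}B_{s}^{s_{2}}\hookrightarrow\dot{K}_{p,\infty}^{0,s}B_{s}^{s_{2}}$ via the monotonicity embeddings of Theorem \ref{embeddings1.1-lorentz}. Your parameter checks are accurate, so nothing is missing.
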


Again by Theorem \ref{embeddings6.2-lorentz}, we immediately arrive at the
following embedding between Herz and Besov spaces.

\begin{theorem}
\label{embeddings6.2-lorentz copy(1)}\textit{Let }$\alpha \in \mathbb{R}%
,1<s,q,p<\infty \ $\textit{and }$-\frac{n}{s}<\alpha \leq 0$. \newline
$\mathrm{(i)}$ \textit{We suppose that }$\mathit{\max }(1,p)<s<\infty \ $and 
$-\frac{n}{s}<\alpha <0$.\textit{\ Then}%
\begin{equation}
\dot{K}_{p,\infty }^{0,q}B_{q}^{\tfrac{n}{p}-\tfrac{n}{s}-\alpha
}\hookrightarrow \dot{K}_{s}^{\alpha ,q}.  \label{lorentz5}
\end{equation}%
In addition, if $1<p\leq q<\infty $, then we have%
\begin{equation*}
B_{p,q}^{\tfrac{n}{p}-\tfrac{n}{s}-\alpha }\hookrightarrow \dot{K}_{p,\infty
}^{0,q}B_{q}^{\tfrac{n}{p}-\tfrac{n}{s}-\alpha }\hookrightarrow \dot{K}%
_{s}^{\alpha ,q}.
\end{equation*}%
$\mathrm{(ii)}$ \textit{We suppose that }$1<p<s<\infty $.\textit{\ Then}%
\begin{equation}
B_{p,p_{1}}^{\tfrac{n}{p}-\tfrac{n}{s}}\hookrightarrow \dot{K}_{p,\infty
}^{0,q}B_{p_{1}}^{\tfrac{n}{p}-\tfrac{n}{s}}\hookrightarrow \dot{K}%
_{s}^{0,q}.  \label{lorentz6}
\end{equation}%
where%
\begin{equation*}
p_{1}=\left\{ 
\begin{array}{ccc}
q, & \text{if} & q\leq s, \\ 
s, & \text{if} & q>s.%
\end{array}%
\right.
\end{equation*}%
In addition if $1<p\leq q<\infty $, then we have%
\begin{equation*}
B_{p,p_{1}}^{\tfrac{n}{p}-\tfrac{n}{s}}\hookrightarrow \dot{K}_{p,\infty
}^{0,q}B_{p_{1}}^{\tfrac{n}{p}-\tfrac{n}{s}}\hookrightarrow \dot{K}%
_{s}^{0,q}.
\end{equation*}%
\textit{\ }$\mathrm{(iii)}$ \textit{We suppose that }$1<s<p<\infty \ $and $-%
\frac{n}{s}<\alpha <\frac{n}{p}-\frac{n}{s}$.\textit{\ Then the embeddings }%
\eqref{lorentz5} holds. In addition if $1<p\leq q<\infty $, then we have the
embeddings \eqref{lorentz6}.
\end{theorem}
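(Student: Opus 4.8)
The plan is to obtain Theorem~\ref{embeddings6.2-lorentz copy(1)} as a corollary of the Franke embedding of Theorem~\ref{embeddings6.2-lorentz}, combined with two facts already recorded in the excerpt: on the target side, $\dot{K}_{s,s}^{\alpha ,q}F_{2}^{0}=\dot{K}_{s}^{\alpha ,q}$ whenever $1<s,q<\infty $ and $-\frac{n}{s}<\alpha <n-\frac{n}{s}$ (see \cite{XuYang03}); on the source side, $B_{p,\beta }^{\sigma }=\dot{K}_{p,p}^{0,p}B_{\beta }^{\sigma }$, together with the monotonicity embeddings of Theorem~\ref{embeddings1.1-lorentz}(iii),(v). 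Since the standing hypothesis $-\frac{n}{s}<\alpha \le 0<n-\frac{n}{s}$ always places $\alpha $ in the admissible range, the first identification applies in every case below.

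For part~(i) we have $\max (1,p)<s<\infty $ and $-\frac{n}{s}<\alpha <0$. I would invoke Theorem~\ref{embeddings6.2-lorentz}(i) with the specializations $\alpha _{2}=0$, $\alpha _{1}=\alpha $, $r_{1}=s$, $\theta =2$ and $s_{1}=0$; the scaling identity $s_{1}-\frac{n}{s}-\alpha _{1}=s_{2}-\frac{n}{p}-\alpha _{2}$ then forces $s_{2}=\frac{n}{p}-\frac{n}{s}-\alpha $. One checks that the hypotheses required there, namely $0<p<s<\infty $, $\alpha _{2}=0>\alpha =\alpha _{1}$, $\alpha _{1}>-\frac{n}{s}$ and $\alpha _{2}>-\frac{n}{p}$, are exactly what the assumptions of~(i) grant, so
\begin{equation*}
\dot{K}_{p,\infty }^{0,q}B_{q}^{\frac{n}{p}-\frac{n}{s}-\alpha }\hookrightarrow \dot{K}_{s,s}^{\alpha ,q}F_{2}^{0}=\dot{K}_{s}^{\alpha ,q}.
\end{equation*}
When in addition $1<p\le q<\infty $, writing $B_{p,q}^{\frac{n}{p}-\frac{n}{s}-\alpha }=\dot{K}_{p,p}^{0,p}B_{q}^{\frac{n}{p}-\frac{n}{s}-\alpha }$ and chaining Theorem~\ref{embeddings1.1-lorentz}(v) (the $r$-index $p\le \infty $) with Theorem~\ref{embeddings1.1-lorentz}(iii) (the $q$-index $p\le q$) gives $\dot{K}_{p,p}^{0,p}B_{q}^{\sigma }\hookrightarrow \dot{K}_{p,\infty }^{0,q}B_{q}^{\sigma }$, which composed with the previous display yields the two-sided chain.

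Parts~(ii) and~(iii) follow the same template, only the branch of the Franke embedding changes. For~(ii), with the common value $\alpha _{1}=\alpha _{2}=0$ and $1<p<s<\infty $, one applies Theorem~\ref{embeddings6.2-lorentz}(ii) (which rests on Theorem~\ref{F-emb3-copy(1)-lorentz}) again with $r_{1}=s$, $\theta =2$, $s_{1}=0$; this produces $s_{2}=\frac{n}{p}-\frac{n}{s}$ and the summability index $p_{1}=q$ if $q\le s$, $p_{1}=s$ if $q>s$, hence $\dot{K}_{p,\infty }^{0,q}B_{p_{1}}^{\frac{n}{p}-\frac{n}{s}}\hookrightarrow \dot{K}_{s,s}^{0,q}F_{2}^{0}=\dot{K}_{s}^{0,q}$. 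For~(iii), with $1<s<p<\infty $ and $-\frac{n}{s}<\alpha <\frac{n}{p}-\frac{n}{s}$, one uses Theorem~\ref{embeddings6.2-lorentz}(iii) (resting on Theorem~\ref{F-emb3 copy(2)-lorentz}) with $\alpha _{2}=0$, $\alpha _{1}=\alpha $, $r_{1}=s$, $\theta =2$, $s_{1}=0$: the constraint $\alpha _{2}+\frac{n}{p}>\alpha _{1}+\frac{n}{s}$ reduces to $\alpha <\frac{n}{p}-\frac{n}{s}$, which is the hypothesis, and once more $s_{2}=\frac{n}{p}-\frac{n}{s}-\alpha $, giving~\eqref{lorentz5}. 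In both cases the ``in addition'' refinement is obtained, exactly as in~(i), by prepending $B_{p,\beta }^{\sigma }=\dot{K}_{p,p}^{0,p}B_{\beta }^{\sigma }\hookrightarrow \dot{K}_{p,\infty }^{0,q}B_{\beta }^{\sigma }$, which is where the assumption $1<p\le q<\infty $ is used.

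The proof carries no analytic difficulty of its own: everything has been done in the Franke-type Theorems~\ref{F-emb3-lorentz}, \ref{F-emb3-copy(1)-lorentz} and~\ref{F-emb3 copy(2)-lorentz} (and in the $\varphi $-transform characterization, Theorem~\ref{phi-tran-lorentz}, used to pass from sequence spaces to function spaces in Theorem~\ref{embeddings6.2-lorentz}). The only genuine points to watch are: (a) confirming that, under the uniform choices $\alpha _{2}=0$, $\theta =2$, $s_{1}=0$, $r_{1}=s$, \emph{all} the parameter restrictions of the relevant branch of Theorem~\ref{embeddings6.2-lorentz} are implied by the hypotheses of Theorem~\ref{embeddings6.2-lorentz copy(1)} --- in particular the strict inequalities $\alpha _{2}>\alpha _{1}$ in~(i) and $\alpha _{2}+\frac{n}{p}>\alpha _{1}+\frac{n}{s}$ in~(iii), which explain the restrictions $\alpha <0$ and $\alpha <\frac{n}{p}-\frac{n}{s}$; and (b) matching the Besov summability index on the source side ($B_{q}$ in~(i),(iii) and $B_{p_{1}}$ in~(ii)) with the precise form of the sequence-space embeddings, and tracking when $p\le q$ is needed to bring a genuine inhomogeneous Besov space into the left-hand side.
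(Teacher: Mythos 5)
Your argument is correct and is essentially the paper's own (one-line) derivation: the paper likewise obtains this theorem by specializing the Franke embedding of Theorem \ref{embeddings6.2-lorentz} with $\alpha _{2}=0$, $s_{1}=0$, $\theta =2$, $r_{1}=s$, using the identification $\dot{K}_{s,s}^{\alpha ,q}F_{2}^{0}=\dot{K}_{s}^{\alpha ,q}$ and $B_{p,\beta }^{\sigma }=\dot{K}_{p,p}^{0,p}B_{\beta }^{\sigma }$ together with the monotonicity embeddings in the Lorentz and Herz indices when $p\leq q$. Your explicit parameter checks, and your tracking of the source Besov index by going back to Theorem \ref{F-emb3 copy(2)-lorentz} in case (iii), only spell out what the paper leaves implicit.
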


\begin{remark}
Theorem \ref{embeddings6.2-lorentz copy(1)} extends and improves the
corresponding results obtained in \cite{drihem2016jawerth}.
\end{remark}

\begin{remark}
The same analysis as in Theorem \ref{embeddings3-lorentz} can be used to
prove that in Theorem \ref{embeddings6.2-lorentz} the assumptions 
\begin{equation*}
s_{1}-\tfrac{n}{s}-\alpha _{1}\leq s_{2}-\tfrac{n}{p}-\alpha _{2}\quad \text{%
and}\quad \alpha _{2}+\tfrac{n}{p}\geq \alpha _{1}+\tfrac{n}{s}
\end{equation*}%
\ become necessary.
\end{remark}

We now present an immediate consequence of the Franke embeddings.

\begin{corollary}
\textit{Let }$1<p<s<\infty $\ with $1<p<n$.\ \textit{Let }$\alpha =\frac{n}{p%
}-\frac{n}{s}-1<0$. There is a constant $c>0$ such that for all $f\in
B_{p,s}^{1}$ 
\begin{equation*}
\int_{\mathbb{R}^{n}}\Big(\frac{|f(x)|}{|x|^{-\alpha }}\Big)^{s}dx\leq c%
\big\|f\big\|_{\dot{K}_{p,\infty }^{0,s}B_{s}^{1}}^{s}\leq c\big\|f\big\|%
_{B_{p,s}^{1}}^{s}.
\end{equation*}
\end{corollary}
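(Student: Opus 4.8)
The plan is to read the left-hand integral as a power-weighted Lebesgue norm, identify it with a diagonal (Lorentz-)Herz norm, and then obtain both estimates at once from the Franke-type embedding between Herz and Besov spaces in Theorem \ref{embeddings6.2-lorentz copy(1)}. First I would observe that, since $-\alpha =1-\tfrac{n}{p}+\tfrac{n}{s}>0$,
\begin{equation*}
\int_{\mathbb{R}^{n}}\Big(\frac{|f(x)|}{|x|^{-\alpha }}\Big)^{s}dx=\int_{\mathbb{R}^{n}}|f(x)|^{s}|x|^{\alpha s}dx=\big\|f\big\|_{L^{s}(\mathbb{R}^{n},|\cdot |^{\alpha s})}^{s}.
\end{equation*}
By the identification of power-weighted Lebesgue spaces with diagonal Herz spaces recalled in the Introduction (together with Lemma \ref{embeddings1-lorentz}/(ii)), one has $L^{s}(\mathbb{R}^{n},|\cdot |^{\alpha s})=\dot{K}_{s,s}^{\alpha ,s}=\dot{K}_{s}^{\alpha ,s}$ with equivalent norms, because on each ring $R_{k}$ the weight $|x|^{\alpha s}$ is comparable to $2^{k\alpha s}$. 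Hence it suffices to bound $\big\|f\big\|_{\dot{K}_{s}^{\alpha ,s}}$ by $\big\|f\big\|_{\dot{K}_{p,\infty }^{0,s}B_{s}^{1}}$ and the latter by $\big\|f\big\|_{B_{p,s}^{1}}$.

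Next I would verify that the hypotheses of Theorem \ref{embeddings6.2-lorentz copy(1)}/(i) are met with the choice $q=s$. Indeed, $\alpha <0$ is assumed, and $\alpha >-\tfrac{n}{s}$ is equivalent to $\tfrac{n}{p}-1>0$, i.e. to $p<n$; together with $1<p<s$ this gives $\max (1,p)=p<s$ and $1<p\leq s=q<\infty $. Moreover $\tfrac{n}{p}-\tfrac{n}{s}-\alpha =1$ by the very definition of $\alpha $, so the smoothness index $\tfrac{n}{p}-\tfrac{n}{s}-\alpha $ appearing in that theorem equals $1$. Consequently the second part of Theorem \ref{embeddings6.2-lorentz copy(1)}/(i) yields the chain
\begin{equation*}
B_{p,s}^{1}\hookrightarrow \dot{K}_{p,\infty }^{0,s}B_{s}^{1}\hookrightarrow \dot{K}_{s}^{\alpha ,s},
\end{equation*}
the middle space being precisely $\dot{K}_{p,\infty }^{0,q}B_{q}^{\frac{n}{p}-\frac{n}{s}-\alpha }$ for $q=s$.

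Combining the two displays gives $\big\|f\big\|_{L^{s}(\mathbb{R}^{n},|\cdot |^{\alpha s})}^{s}\leq c\big\|f\big\|_{\dot{K}_{p,\infty }^{0,s}B_{s}^{1}}^{s}\leq c\big\|f\big\|_{B_{p,s}^{1}}^{s}$ for all $f\in B_{p,s}^{1}$, which is the assertion. There is no serious analytic obstacle here: the work is entirely in reading off the parameters correctly, and the only delicate point is checking that the Franke embedding really applies, in particular that the restriction $p<n$ is exactly what makes $\alpha >-\tfrac{n}{s}$, so that $\dot{K}_{s}^{\alpha ,s}$ is an admissible Herz target in Theorem \ref{embeddings6.2-lorentz copy(1)}; the equivalence of $\|\cdot\|_{L^{s}(\mathbb{R}^{n},|\cdot|^{\alpha s})}$ with the Herz norm up to absolute constants absorbs the constant $c$.
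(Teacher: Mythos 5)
Your argument is correct and is exactly the route the paper intends: the corollary is stated as an immediate consequence of the Franke embedding Theorem \ref{embeddings6.2-lorentz copy(1)}/(i) with $q=s$, and your parameter check ($p<n$ being equivalent to $\alpha>-\tfrac{n}{s}$, and $1<p\leq q=s$ for the additional Besov embedding) together with the identification $\int_{\mathbb{R}^{n}}|f|^{s}|x|^{\alpha s}dx\approx\big\|f\big\|_{\dot{K}_{s}^{\alpha,s}}^{s}$ via $|x|\approx 2^{k}$ on $R_{k}$ is precisely the missing bookkeeping. Nothing further is needed.
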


Concerning embeddings $\dot{K}_{q,r}^{\alpha ,\theta }F_{\beta }^{s}$ into $%
L^{\infty }$, we have the following result.

\begin{theorem}
\label{bounded1 copy(1)}\textit{Let }$\alpha \geq 0$ \textit{and }$%
0<q,p<\infty $ and $0<\theta \leq \infty $\textit{. }\newline
$\mathrm{(i)}$\textit{\ Let }$\alpha >0$. \textit{Assume that }$s>\alpha +%
\frac{n}{p}\ $or$\ s=\alpha +\frac{n}{p}$\ and $0<q\leq 1$. \textit{We have}%
\begin{equation*}
\dot{K}_{p,\infty }^{\alpha ,q}F_{\theta }^{s}\hookrightarrow L^{\infty }.
\end{equation*}%
$\mathrm{(ii)}$\textit{\ Assume that }$s>\frac{n}{p}\ $or$\ s=\frac{n}{p}$\
and $0<r,q\leq 1$. Then%
\begin{equation*}
\dot{K}_{p,r}^{0,q}F_{\theta }^{s}\hookrightarrow L^{\infty },
\end{equation*}%
holds.
\end{theorem}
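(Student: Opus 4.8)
The plan is to reduce the statement to embeddings already proved in the paper, in the spirit of the proof of Theorem \ref{bounded1-lorentz}; since only the sufficiency of the stated conditions is claimed, no sharpness construction is needed. I would split the argument according to whether $s$ is strictly above or exactly at the critical value ($\alpha+\frac{n}{p}$ in (i), $\frac{n}{p}$ in (ii)), and I would freely use that $\ell^{\theta}\hookrightarrow\ell^{\infty}$, that Herz and Lorentz indices may be enlarged by Theorem \ref{embeddings1.1-lorentz}, and that Theorem \ref{bounded1-lorentz} already settles $L^{\infty}$-embeddings of the Besov-type scale $\dot{K}_{p,r}^{\alpha,q}B_{\beta}^{s}$.

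\emph{The strict cases} ($s>\alpha+\frac{n}{p}$ in (i), $s>\frac{n}{p}$ in (ii)) are short. By Theorem \ref{embeddings2-lorentz}(ii) one has $\dot{K}_{p,r}^{\alpha,q}F_{\theta}^{s}\hookrightarrow\dot{K}_{p,\infty}^{\alpha,q}B_{\max(p,\theta,r,q)}^{s}$ (with $r=\infty$ in case (i), $\alpha=0$ in case (ii)), and since $\alpha\ge 0$ and $s$ is non-critical, Theorem \ref{bounded1-lorentz} gives $\dot{K}_{p,\infty}^{\alpha,q}B_{\max(p,\theta,r,q)}^{s}\hookrightarrow L^{\infty}$ for every value of the Besov fine index. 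This disposes of the strict parts of (i) and (ii) for all admissible $q$ and $r$.

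\emph{The critical cases} require routing through a Jawerth-type embedding into a Besov-type space whose fine index is $\le 1$, since the coarse passage through $B_{\infty}^{s}$ no longer reaches $L^{\infty}$. For (i), with $s=\alpha+\frac{n}{p}$, $\alpha>0$ and $0<q\le 1$: first $\dot{K}_{p,\infty}^{\alpha,q}F_{\theta}^{\alpha+n/p}\hookrightarrow\dot{K}_{p,\infty}^{\alpha,q}F_{\infty}^{\alpha+n/p}$ by Theorem \ref{embeddings1.1-lorentz}(i); then apply Theorem \ref{embeddings6.1-lorentz}(i) with source Herz exponent $\alpha_{2}=\alpha$, target Herz exponent $\alpha_{1}=0$ (here $\alpha>0$ is exactly what makes $\alpha_{2}>\alpha_{1}$), smoothness $s_{2}=\alpha+\frac{n}{p}$, target Lebesgue exponent any $s\in(p,\infty)$, and its free index chosen finite; the exponent relation forces target smoothness $\frac{n}{s}$, the source Herz index $q$ reappears as the Besov fine index, so one obtains $\dot{K}_{p,\infty}^{\alpha,q}F_{\infty}^{\alpha+n/p}\hookrightarrow\dot{K}_{s,r_{1}}^{0,1}B_{q}^{n/s}$ for any $r_{1}\le\infty$; finally Theorem \ref{bounded1-lorentz}, now with Herz exponent $0$, Lebesgue exponent $s$, critical smoothness $\frac{n}{s}$ and Besov fine index $q\le 1$, yields the embedding into $L^{\infty}$. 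For (ii), with $s=\frac{n}{p}$ and $0<r,q\le 1$: enlarging the Lorentz index when $q>r$ (Theorem \ref{embeddings1.1-lorentz}(v)) we may assume $q\le r\le 1$; then $\dot{K}_{p,r}^{0,q}F_{\theta}^{n/p}\hookrightarrow\dot{K}_{p,r}^{0,q}F_{\infty}^{n/p}$, Theorem \ref{embeddings6.1-lorentz}(ii) (whose hypotheses from Theorem \ref{embeddings6-lorentz copy(1)} hold because $q\le r<\infty$) gives $\dot{K}_{p,r}^{0,q}F_{\infty}^{n/p}\hookrightarrow\dot{K}_{s,r_{1}}^{0,q}B_{r}^{n/s}$ for $s\in(p,\infty)$, and Theorem \ref{bounded1-lorentz} with Herz exponent $0$, critical smoothness $\frac{n}{s}$ and Besov fine index $r\le 1$ finishes.

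\emph{The one delicate point} is the bookkeeping of parameters in the critical cases: the auxiliary exponents ($\alpha_{1}$ in (i); the target Lebesgue exponent $s$ and the possibly enlarged Lorentz index in (ii)) must be chosen so that simultaneously the hypotheses of the relevant Jawerth embedding are met — in particular $\alpha_{2}>\alpha_{1}$ in (i), which is where $\alpha>0$ enters, and the index comparison $q\le r$ in (ii), which is why one enlarges the Lorentz index first — and the resulting Besov-type target space falls under Theorem \ref{bounded1-lorentz} at critical smoothness, i.e.\ has a non-negative Herz exponent and Besov fine index $\le 1$. Everything else is routine chaining of the embeddings listed above.
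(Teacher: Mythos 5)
Your proof is correct and follows essentially the same route as the paper: the critical cases are handled by the Jawerth embedding of Theorem \ref{embeddings6.1-lorentz} into a critical Besov-type target with fine index $\leq 1$ (the paper lands in the classical space $B_{v,q}^{n/v}$ and cites the classical embedding into $L^{\infty}$, while you land in $\dot{K}_{s,r_{1}}^{0,1}B_{q}^{n/s}$ and invoke Theorem \ref{bounded1-lorentz}), and your reduction of case (ii) with $q>r$ by first enlarging the Lorentz index is exactly the paper's argument. Your explicit treatment of the non-critical cases via Theorem \ref{embeddings2-lorentz}(ii) combined with Theorem \ref{bounded1-lorentz} is only a minor elaboration of what the paper leaves implicit.
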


\begin{proof}
First assume that $\alpha >0$. Let $0<p<v<\infty $. It follows from Theorem %
\ref{embeddings6.1-lorentz}/(i) that%
\begin{equation*}
\dot{K}_{p,\infty }^{\alpha ,q}F_{\infty }^{\alpha +\frac{n}{p}%
}\hookrightarrow B_{v,q}^{\frac{n}{v}}.
\end{equation*}%
Hence the result follows by the embedding $B_{v,q}^{\frac{n}{v}%
}\hookrightarrow L^{\infty }$; see \cite{SiTr}.\ Now we study the case $%
\alpha =0$. If $q\leq r$, then by Theorems \ref{embeddings6.1-lorentz}/(ii)
and \ref{bounded1-lorentz} we have%
\begin{equation*}
\dot{K}_{p,r}^{0,q}F_{\theta }^{s}\hookrightarrow \dot{K}%
_{v,r_{1}}^{0,q}B_{r}^{s+\frac{n}{v}-\frac{n}{p}}\hookrightarrow L^{\infty }.
\end{equation*}%
If $q>r$, again, by Theorem \ref{embeddings6.1-lorentz}/(ii) we obtain%
\begin{equation*}
\dot{K}_{p,r}^{0,q}F_{\theta }^{s}\hookrightarrow \dot{K}_{p,q}^{0,q}F_{%
\theta }^{s}\hookrightarrow \dot{K}_{v,r_{1}}^{0,q}B_{q}^{s+\frac{n}{v}-%
\frac{n}{p}}\hookrightarrow L^{\infty }.
\end{equation*}%
The proof is complete.
\end{proof}

\begin{remark}
The results obtained in Subsections 4.3 and 4.4 extend and improve the
corresponding results of \cite{drihem2016jawerth}. In particular
Franke-Jawerth embeddings for Besov and Triebel-Lizorkin spaces\ of power
weight\ obtained\ in \cite{MM12}.
\end{remark}

\section{Atomic, molecular and wavelet characterizations}

In the first part of this section we will prove that under certain
restrictions on the parameters the spaces $\dot{K}_{p,r}^{\alpha ,q}A_{\beta
}^{s}$ can be characterized by smooth molecules and smooth atoms. The second
part is devoted to the characterization of the spaces $\dot{K}_{p,r}^{\alpha
,q}A_{\beta }^{s}$ by wavelet. The contents of this section are based on 
\cite{FJ90}, \cite{triebel08}.

\subsection{Atomic and molecular characterizations}

We will use the notation of \cite{FJ90}. We shall say that an operator $A$
is associated with the matrix $\{a_{Q_{k,m}P_{v,h}}\}_{k,v\in \mathbb{N}%
_{0},m,h\in \mathbb{Z}^{n}}$, if for all sequences $\lambda =\{\lambda
_{k,m}\}_{k\in \mathbb{N}_{0},m\in \mathbb{Z}^{n}}\subset \mathbb{C}$,%
\begin{equation*}
A\lambda =\{(A\lambda )_{k,m}\}_{k\in \mathbb{Z},m\in \mathbb{Z}^{n}}=\Big\{%
\sum_{v=0}^{\infty }\sum_{h\in \mathbb{Z}^{n}}a_{Q_{k,m}P_{v,h}}\lambda
_{v,h}\Big\}_{k\in \mathbb{N}_{0},m\in \mathbb{Z}^{n}}.
\end{equation*}%
We will use the notation%
\begin{equation*}
J=\left\{ 
\begin{array}{ccc}
\frac{n}{\min (1,p,\frac{n}{\alpha +\frac{n}{p}})}, & \text{if} & \dot{K}%
_{p,r}^{\alpha ,q}A_{\beta }^{s}=\dot{K}_{p,r}^{\alpha ,q}B_{\beta }^{s}, \\ 
\frac{n}{\min (1,p,\beta ,\frac{n}{\alpha +\frac{n}{p}})}, & \text{if} & 
\dot{K}_{p,r}^{\alpha ,q}A_{\beta }^{s}=\dot{K}_{p,r}^{\alpha ,q}F_{\beta
}^{s}.%
\end{array}%
\right. .
\end{equation*}%
We say that $A$, with associated matrix $\{a_{Q_{k,m}P_{v,h}}\}_{k,v\in 
\mathbb{N}_{0},m,h\in \mathbb{Z}^{n}}$, is almost diagonal on $\dot{K}%
_{p,r}^{\alpha _{2},q}a_{\beta }^{s}$ if there exists $\varepsilon >0$ such
that%
\begin{equation*}
\sup_{k,v\in \mathbb{N}_{0},m,h\in \mathbb{Z}^{n}}\frac{|a_{Q_{k,m}P_{v,h}}|%
}{\omega _{Q_{k,m}P_{v,h}}(\varepsilon )}<\infty ,
\end{equation*}%
where

\begin{align}
& \omega _{Q_{k,m}P_{v,h}}(\varepsilon )  \notag \\
& =\Big(1+\frac{|x_{Q_{k,m}}-x_{P_{v,h}}|}{\max \big(2^{-k},2^{-v}\big)}\Big)%
^{-J-\varepsilon }\left\{ 
\begin{array}{ccc}
2^{(v-k)(s+\frac{n+\varepsilon }{2})}, & \text{if} & v\leq k, \\ 
2^{(v-k)(s-\frac{n+\varepsilon }{2}-J+n)}, & \text{if} & v>k.%
\end{array}%
\right.  \label{omega-assumption}
\end{align}

The following theorem is a generalization of {\cite[Theorem 3.3]{FJ90}.}

\begin{theorem}
\label{almost-diag-est}Let $s\in \mathbb{R},0<p<\infty ,0<r,q\leq \infty
,0<\beta <\infty $ and $\alpha >-\frac{n}{p}$. Any almost diagonal operator $%
A$ on $\dot{K}_{p,r}^{\alpha ,q}a_{\beta }^{s}$ is bounded.
\end{theorem}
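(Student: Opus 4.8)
The plan is to adapt the proof of \cite[Theorem 3.3]{FJ90} to the Lorentz--Herz scale, the two genuinely new ingredients being the smoothing estimate of Lemma \ref{lamda-equi-lorentz} and the Hardy-type inequality of Lemma \ref{lem:lq-inequality}. By the conventions on $A_\beta^s$/$a_\beta^s$ it suffices to treat $\dot{K}_{p,r}^{\alpha ,q}f_{\beta }^{s}$ and $\dot{K}_{p,r}^{\alpha ,q}b_{\beta }^{s}$; I would describe the Triebel--Lizorkin case in detail and then indicate the (easier) modifications for the Besov case. Fix $\lambda=\{\lambda_{v,h}\}\in \dot{K}_{p,r}^{\alpha ,q}f_{\beta }^{s}$. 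As in \cite{FJ90} I would split the matrix of $A$ according to the relative position of the scales, $A=A_{0}+A_{1}$, where $A_{0}$ retains the entries $a_{Q_{k,m}P_{v,h}}$ with $v\le k$ and $A_{1}$ those with $v>k$; it is then enough to prove $\|A_{j}\lambda\|_{\dot{K}_{p,r}^{\alpha ,q}f_{\beta }^{s}}\lesssim \|\lambda\|_{\dot{K}_{p,r}^{\alpha ,q}f_{\beta }^{s}}$ for $j\in\{0,1\}$.

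The decisive step is a pointwise bound for $A_{j}\lambda$ by the smoothed sequence $\lambda^{\ast}_{\gamma,d}$ of Lemma \ref{lamda-equi-lorentz}, with $\gamma=\min(p,\beta)$ and $d$ chosen in the (nonempty) range allowed by that lemma, its left endpoint being controlled precisely by $J=\frac{n}{\min(1,p,\beta,n/(\alpha+n/p))}$ and the almost-diagonal exponent $J+\varepsilon$. Fix $k\in\mathbb{N}_{0}$, $m\in\mathbb{Z}^{n}$ and $x\in Q_{k,m}$, and let $m_{v}(x)$ be the index with $x\in Q_{v,m_{v}(x)}$. Using $|a_{Q_{k,m}P_{v,h}}|\lesssim \omega_{Q_{k,m}P_{v,h}}(\varepsilon)$, I would split the inner sum over $h$ into the dyadic shells $\{h:\ 2^{l}\le 1+2^{\min(k,v)}|x_{Q_{k,m}}-x_{P_{v,h}}|<2^{l+1}\}$, $l\ge 0$; counting the cubes in each shell (about $2^{ln}$ of them when $v\le k$, about $2^{(l+v-k)n}$ when $v>k$) and applying H\"older's inequality in $\ell^{\gamma}$ against the weight $(1+\cdots)^{-d/\gamma}$ converts the shell sum into $\lambda^{\ast}_{v,m_{v}(x),\gamma,d}$ times a geometric series in $l$ whose convergence is guaranteed by the size of $J+\varepsilon$ and the remaining decay of $\omega$. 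Multiplying by the scaling factor $2^{k(s+n/2)}$ and using the exponents $2^{(v-k)(s+\frac{n+\varepsilon}{2})}$ (for $v\le k$) and $2^{(v-k)(s-\frac{n+\varepsilon}{2}-J+n)}$ (for $v>k$) built into $\omega$, one arrives at
\begin{equation*}
2^{k(s+\frac{n}{2})}|(A_{j}\lambda)_{k,m}|\ \lesssim\ \sum_{v=0}^{\infty}2^{-|k-v|\delta}\,2^{v(s+\frac{n}{2})}\,\lambda^{\ast}_{v,m_{v}(x),\gamma,d},\qquad x\in Q_{k,m},
\end{equation*}
for some $\delta>0$ depending only on $\varepsilon$, $J$ and $d$.

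Raising this inequality to the power $\beta$, summing over $k$, and applying Lemma \ref{lem:lq-inequality} pointwise in $x$ (with $a=2^{-\delta}$ and exponent $\beta$, after extending the sequences by zero to $k,v\in\mathbb{Z}$) gives
\begin{equation*}
\Big(\sum_{k=0}^{\infty}2^{k(s+\frac{n}{2})\beta}\sum_{m\in\mathbb{Z}^{n}}|(A_{j}\lambda)_{k,m}|^{\beta}\chi_{k,m}\Big)^{1/\beta}\ \lesssim\ \Big(\sum_{v=0}^{\infty}2^{v(s+\frac{n}{2})\beta}\sum_{h\in\mathbb{Z}^{n}}(\lambda^{\ast}_{v,h,\gamma,d})^{\beta}\chi_{v,h}\Big)^{1/\beta}.
\end{equation*}
Taking the $\dot{K}_{p,r}^{\alpha ,q}$ quasi-norm of both sides and invoking Lemma \ref{lamda-equi-lorentz} (whose hypotheses on $\gamma$ and $d$ hold by construction) yields $\|A_{j}\lambda\|_{\dot{K}_{p,r}^{\alpha ,q}f_{\beta }^{s}}\lesssim \|\lambda^{\ast}_{\gamma,d}\|_{\dot{K}_{p,r}^{\alpha ,q}f_{\beta }^{s}}\approx \|\lambda\|_{\dot{K}_{p,r}^{\alpha ,q}f_{\beta }^{s}}$, and summing $j=0,1$ finishes the Triebel--Lizorkin case. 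For $\dot{K}_{p,r}^{\alpha ,q}b_{\beta }^{s}$ one runs the same scheme with $\gamma=p$ and the Besov value of $J$; the $\ell^{\beta}$-sum now sits outside the Lorentz--Herz quasi-norm, so one takes that quasi-norm first, uses H\"older's/Minkowski's inequality in $\ell^{\beta}$ together with Lemma \ref{lem:lq-inequality} in $\ell^{\beta}$, and closes with the $b_{\beta}^{s}$-part of Lemma \ref{lamda-equi-lorentz}.

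I expect the main obstacle to be the pointwise estimate of the middle paragraph: one must choose $\gamma$ and $d$ and carry out the shell decomposition so that, simultaneously, (i) the geometric series in the shell index $l$ converges, (ii) a genuine gain $2^{-|k-v|\delta}$ in the scale difference survives, and (iii) the pair $(\gamma,d)$ meets the hypotheses of Lemma \ref{lamda-equi-lorentz}. This is exactly where the threshold $J$ is needed in the precise form given above --- with the extra term $n/(\alpha+n/p)$ encoding the effective homogeneity of $|\cdot|^{\alpha}$ near the origin (so that Lemma \ref{Maximal-Inq copy(2)-lorentz} may be invoked inside Lemma \ref{lamda-equi-lorentz}), and with the $B$/$F$ dichotomy according to whether $\beta$ enters --- together with the room provided by the almost-diagonal parameter $\varepsilon$. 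Keeping track of these constraints, and of the two distinct cube counts in the cases $v\le k$ and $v>k$, is the only real work; everything else is the routine bookkeeping already done for $B_{p,q}^{s}$ and $F_{p,q}^{s}$ in \cite{FJ90}.
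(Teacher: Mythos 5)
Your proposal is correct and takes essentially the same route as the paper: the same splitting $A=A_{0}+A_{1}$ according to $v\le k$ versus $v>k$, the same dyadic-shell decomposition of the $h$-sum with the exponent window opened by $J+\varepsilon$, and the same final reduction to the Lorentz--Herz vector-valued maximal inequality. The only cosmetic difference is that you package the pointwise bound through $\lambda _{\gamma ,d}^{\ast }$ and Lemma \ref{lamda-equi-lorentz}, while the paper dominates the shell sums $S_{k,v,m}$ and $T_{k,v,m}$ directly by $\mathcal{M}_{\tau }\big(\sum_{h}\lambda _{v,h}\chi _{v,h}\big)$ (with $\frac{n}{J+\varepsilon /2}<\tau <\min (1,p,\beta ,\frac{n}{\alpha +\frac{n}{p}})$) and then applies Lemmas \ref{Maximal-Inq copy(2)-lorentz} and \ref{Maximal-Inq copy(1)-lorentz}; since Lemma \ref{lamda-equi-lorentz} is itself proved by exactly that maximal estimate, the two arguments coincide in substance.
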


\begin{proof}
By similarity, we consider only the spaces $\dot{K}_{p,r}^{\alpha
,q}f_{\beta }^{s}$. We write $A\equiv A_{0}+A_{1}$ with\ 
\begin{equation*}
(A_{0}\lambda )_{k,m}=\sum_{v=0}^{k}\sum_{h\in \mathbb{Z}%
^{n}}a_{Q_{k,m}P_{v,h}}\lambda _{v,h},\quad k\in \mathbb{N}_{0},m\in \mathbb{%
Z}^{n}
\end{equation*}%
and 
\begin{equation*}
(A_{1}\lambda )_{k,m}=\sum_{v=k+1}^{\infty }\sum_{h\in \mathbb{Z}%
^{n}}a_{Q_{k,m}P_{v,h}}\lambda _{v,h},\quad k\in \mathbb{N}_{0},m\in \mathbb{%
Z}^{n}.
\end{equation*}

\textit{Estimate of }$A_{0}$. From $\mathrm{\eqref{omega-assumption}}$,\ we
obtain 
\begin{align*}
\big|(A_{0}\lambda )_{k,m}\big|& \leq \sum_{v=0}^{k}\sum_{h\in \mathbb{Z}%
^{n}}2^{(v-k)(\alpha _{2}+\frac{n+\varepsilon }{2})}\frac{|\lambda _{v,h}|}{%
\big(1+2^{v}|x_{k,m}-x_{v,h}|\big)^{J+\varepsilon }} \\
& =\sum_{v=0}^{k}2^{(v-k)(\alpha _{2}+\frac{n+\varepsilon }{2})}S_{k,v,m}.
\end{align*}%
{For each }$j\in \mathbb{N},k\in \mathbb{N}_{0}$\ and $m\in \mathbb{Z}^{n}${%
\ we define } 
\begin{equation*}
{\Omega _{j,k,m}=\{h\in \mathbb{Z}^{n}:2^{j-1}<2^{v}|x_{k,m}-x_{v,h}|\leq
2^{j}\}}
\end{equation*}%
{and } 
\begin{equation*}
{\Omega _{0,k,m}=\{h\in \mathbb{Z}^{n}:2^{v}|x_{k,m}-x_{v,h}|\leq 1\}.}
\end{equation*}%
Let $\frac{n}{J+\frac{\varepsilon }{2}}<\tau <\min (1,p,\beta ,\frac{n}{%
\alpha +\frac{n}{p}})$. We rewrite $S_{k,v,m}$ as follows 
\begin{align*}
S_{k,v,m}& =\sum\limits_{j=0}^{\infty }\sum\limits_{h\in {\Omega _{j,k,m}}}%
\frac{|\lambda _{v,h}|}{\big(1+2^{v}|x_{k,m}-x_{v,h}|\big)^{J+\varepsilon }}
\\
& \leq \sum\limits_{j=0}^{\infty }2^{-(J+\varepsilon )j}\sum\limits_{h\in {\
\Omega _{j,k,m}}}|\lambda _{v,h}|.
\end{align*}%
By the embedding $\ell _{\tau }\hookrightarrow \ell _{1}$ we deduce that 
\begin{align*}
S_{k,v,m}& \leq \sum\limits_{j=0}^{\infty }2^{-(J+\varepsilon )j}\big(%
\sum\limits_{h\in {\Omega _{j,k,m}}}|\lambda _{v,h}|^{\tau }\big)^{1/\tau }
\\
& =\sum\limits_{j=0}^{\infty }2^{(\frac{n}{\tau }-J-\varepsilon )j}\Big(%
2^{(v-j)n}\int\limits_{\cup _{z\in {\Omega _{j,k,m}}}Q_{v,z}}\sum\limits_{h%
\in {\Omega _{j,k,m}}}|\lambda _{v,h}|^{\tau }\chi _{v,h}(y)dy\Big)^{1/\tau
}.
\end{align*}%
Let $y\in \cup _{z\in {\Omega _{j,k,m}}}Q_{v,z}$ and $x\in Q_{k,m}$. It
follows that $y\in Q_{v,z}$ for some $z\in {\Omega _{j,k,m}}$ and ${2^{j-1}<2%
}^{v}{|2}^{-k}m{-2}^{-v}{z|\leq 2^{j}}$. From this we obtain that 
\begin{align*}
\left\vert y-x\right\vert & \leq \left\vert y-{2}^{-k}m\right\vert
+\left\vert x-{2}^{-k}m\right\vert \\
& \lesssim 2^{-v}+2^{j-v}+2^{-k} \\
& \leq 2^{j-v+\delta _{n}},\quad \delta _{n}\in \mathbb{N},
\end{align*}%
which implies that $y$ is located in the ball $B(x,2^{j-v+\delta _{n}})$.
Consequently 
\begin{equation*}
S_{k,v,m}\lesssim \mathcal{M}_{\tau }\big(\sum\limits_{h\in \mathbb{Z}%
^{n}}\lambda _{v,h}\chi _{v,h}\big)(x)
\end{equation*}%
for any $x\in Q_{k,m}$ and any $k\leq v$. Applying Lemmas \ref{Maximal-Inq
copy(2)-lorentz} and \ref{Maximal-Inq copy(1)-lorentz}, we obtain that 
\begin{equation*}
{{\big\|}A_{0}\lambda {\big\|}}_{\dot{K}_{p,r}^{\alpha ,q}a_{\beta
}^{s}}\lesssim {{\big\|}\lambda {\big\|}}_{\dot{K}_{p,r}^{\alpha ,q}a_{\beta
}^{s}}{.}
\end{equation*}

\textit{Estimate of }$A_{1}$. Again from $\mathrm{\eqref{omega-assumption}}$
,\ we see that 
\begin{align*}
\big|(A_{1}\lambda )_{v,h}\big|& \leq \sum_{v=k+1}^{\infty }\sum_{h\in 
\mathbb{Z}^{n}}2^{(v-k)(\alpha _{1}-\frac{\varepsilon }{2}-J+\frac{n}{2})}%
\frac{|\lambda _{v,h}|}{\big(1+2^{k}|x_{k,m}-x_{v,h}|\big)^{J+\varepsilon }}
\\
& =\sum_{v=k+1}^{\infty }2^{(v-k)(\alpha _{1}-\frac{\varepsilon }{2}-J+\frac{%
n}{2})}T_{k,v,m}.
\end{align*}%
We proceed as in the estimate of $A_{0}$ we can prove that 
\begin{equation*}
T_{k,v,m}\leq c2^{(v-k)n/\tau }\mathcal{M}_{\tau }\big(\sum\limits_{h\in 
\mathbb{Z}^{n}}\lambda _{v,h}\chi _{v,h}\big)(x),\quad v>k,x\in Q_{k,m},
\end{equation*}%
where $\frac{n}{J+\frac{\varepsilon }{2}}<\tau <\min (1,p,\beta ,\frac{n}{%
\alpha +\frac{n}{p}})$ and the positive constant $c$ is independent of $v$, $%
k$ and $m$. Again applying Lemmas \ref{Maximal-Inq copy(2)-lorentz} and \ref%
{Maximal-Inq copy(1)-lorentz} we obtain 
\begin{equation*}
{{\big\|}A_{1}\lambda {\big\|}}_{\dot{K}_{p,r}^{\alpha ,q}f_{\beta
}^{s}}\lesssim {{\big\|}\lambda {\big\|}}_{\dot{K}_{p,r}^{\alpha ,q}f_{\beta
}^{s}}{.}
\end{equation*}%
Hence the theorem is proved.
\end{proof}

The following two lemmas are from \cite[Lemmas B.1-B.2]{FJ90}.

\begin{lemma}
\label{FJ901}Let $R>n,0<\theta \leq 1,j,k\in \mathbb{Z},j\leq k,L\in \mathbb{%
Z},L\geq 0,$%
\begin{equation*}
S>L+n+\theta \quad \text{and}\quad x_{1},x,y\in \mathbb{R}^{n}.
\end{equation*}%
Suppose that $g,h\in L^{1}$ satisfy%
\begin{equation*}
|\partial ^{\gamma }g(x)|\leq 2^{j(\frac{n}{2}+|\gamma
|)}(1+2^{j}|x|)^{-R},\quad |\gamma |\leq L,
\end{equation*}%
\begin{equation*}
|\partial ^{\gamma }g(x)-\partial ^{\gamma }g(y)|\leq 2^{j(\frac{n}{2}%
+L+\theta )}|x-y|^{\theta }\sup_{|z|\leq |x-y|}(1+2^{j}|z-x|)^{-R},\quad
|\gamma |=L,
\end{equation*}%
\begin{equation*}
|h(x)|\leq 2^{k\frac{n}{2}}(1+2^{k}|x-x_{1}|)^{-\max (R,S)},\quad |\gamma
|\leq L,
\end{equation*}%
and%
\begin{equation*}
\int_{\mathbb{R}^{n}}h(x)dx=0,\quad |\gamma |\leq L.
\end{equation*}%
Then%
\begin{equation*}
|h\ast g(x)|\lesssim 2^{-(k-j)(\frac{n}{2}+L+\theta )}(1+2^{j}|x-x_{1}|)^{-R}
\end{equation*}%
where the implicit constant is independent of $k,j,x_{1},x$ and $y.$
\end{lemma}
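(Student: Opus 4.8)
This is Lemma~B.1 of \cite{FJ90}, so one option is simply to invoke it; a direct argument proceeds as follows. The plan is to use the vanishing moments of $h$ to subtract a Taylor polynomial of $g$, thereby reducing the bound to a kernel integral which is then estimated by decomposing the domain according to the size of $|y-x_{1}|$.

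First I would write $h\ast g(x)=\int_{\mathbb{R}^{n}}h(y)\,g(x-y)\,dy$, set $a=x-x_{1}$, and let $P(v)=\sum_{|\gamma|\le L}\frac{1}{\gamma!}\partial^{\gamma}g(a)\,v^{\gamma}$ be the degree-$L$ Taylor polynomial of $g$ at $a$. Since the moments $\int_{\mathbb{R}^{n}}y^{\gamma}h(y)\,dy$ vanish for $|\gamma|\le L$ (the natural reading of the moment hypothesis), the same is true for the polynomials $(x_{1}-y)^{\gamma}$, so $\int h(y)P(x_{1}-y)\,dy=0$ and therefore
\begin{equation*}
h\ast g(x)=\int_{\mathbb{R}^{n}}h(y)\,(g(x-y)-P(x_{1}-y))\,dy .
\end{equation*}
The integral form of Taylor's theorem together with the H\"older-type hypothesis on $\partial^{\gamma}g$, $|\gamma|=L$, gives the pointwise remainder bound, valid for every $y$,
\begin{equation*}
|g(x-y)-P(x_{1}-y)|\lesssim 2^{j(\frac{n}{2}+L+\theta)}\,|y-x_{1}|^{L+\theta}\sup_{|z|\le|y-x_{1}|}(1+2^{j}|z-a|)^{-R},
\end{equation*}
while the decay conditions on $g$ also yield the cruder estimates $|g(x-y)|\le 2^{j\frac{n}{2}}(1+2^{j}|x-y|)^{-R}$ and $|P(x_{1}-y)|\lesssim 2^{j\frac{n}{2}}(1+2^{j}|a|)^{-R}(1+2^{j}|y-x_{1}|)^{L}$.

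Next I would insert $|h(y)|\le 2^{k\frac{n}{2}}(1+2^{k}|y-x_{1}|)^{-\max(R,S)}$ and split the $y$-integral at $|y-x_{1}|\approx\max(|a|,2^{-j})$. On the inner region I use the remainder bound: one checks there that $\sup_{|z|\le|y-x_{1}|}(1+2^{j}|z-a|)^{-R}\lesssim(1+2^{j}|a|)^{-R}$, and after the substitution $w=2^{k}(y-x_{1})$ the leftover integral equals $(1+2^{j}|a|)^{-R}\,2^{-(k-j)(\frac{n}{2}+L+\theta)}\int_{\mathbb{R}^{n}}(1+|w|)^{-\max(R,S)}|w|^{L+\theta}\,dw$, which is finite exactly because $\max(R,S)\ge S>L+n+\theta$; this gives the claimed bound on the inner part. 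On the outer region, where $|y-x_{1}|\gtrsim\max(|a|,2^{-j})$, I would bound $g(x-y)-P(x_{1}-y)$ by $|g(x-y)|+|P(x_{1}-y)|$ and lean on the fast decay $-\max(R,S)$ of $h$, which absorbs both the polynomial factor $(1+2^{j}|y-x_{1}|)^{L}$ from $P$ and the loss from replacing $(1+2^{j}|x-y|)^{-R}$ by $(1+2^{j}|a|)^{-R}$; here one uses $\max(R,S)>L+n+\theta$ and $R>n$, with a further subdivision of this region (according to whether $|x-y|$ is much smaller than or comparable to $|y-x_{1}|$) in order to recover the full negative power $2^{-(k-j)(\frac{n}{2}+L+\theta)}$. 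Summing the two contributions yields the lemma.

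The step I expect to be the main obstacle is the outer-region estimate. There the clean remainder bound is too lossy — its factor $|y-x_{1}|^{L+\theta}$ grows — so one must fall back on the direct pointwise bounds and then carefully balance the polynomial growth against the two competing decays, $(1+2^{j}|x-y|)^{-R}$ coming from $g$ and $(1+2^{k}|y-x_{1}|)^{-\max(R,S)}$ coming from $h$, so as to land on $2^{-(k-j)(\frac{n}{2}+L+\theta)}$ and not merely on $2^{-(k-j)\frac{n}{2}}$. This is precisely where the hypothesis that $h$ decays at the rate $\max(R,S)$ rather than just $R$ is indispensable, and the full case analysis is carried out in \cite[Appendix~B]{FJ90}.
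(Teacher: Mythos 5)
Your proposal is correct: the paper itself gives no proof of this lemma, simply quoting it as Lemma~B.1 of Frazier--Jawerth \cite{FJ90}, and your direct argument (moment cancellation against the degree-$L$ Taylor polynomial at $x-x_{1}$, splitting at $|y-x_{1}|\approx\max(|x-x_{1}|,2^{-j})$, and a further subdivision of the outer region with the $\max(R,S)$ decay and $S>L+n+\theta$ absorbing the polynomial growth) is exactly the argument of \cite[Appendix~B]{FJ90}. The computations you leave implicit do close as you describe, so either citing the lemma or writing out your sketch is fine.
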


\begin{lemma}
\label{FJ902}Let $R>n,j,k\in \mathbb{Z},j\leq k\ $and $x_{1},x\in \mathbb{R}%
^{n}$. Suppose that $g,h\in L^{1}$ satisfy%
\begin{equation*}
|g(x)|\leq 2^{j\frac{n}{2}}(1+2^{j}|x|)^{-R},
\end{equation*}%
\begin{equation*}
|h(x)|\leq 2^{k\frac{n}{2}}(1+2^{k}|x-x_{1}|)^{-R}.
\end{equation*}%
Then%
\begin{equation*}
|h\ast g(x)|\lesssim 2^{-(k-j)\frac{n}{2}}(1+2^{j}|x-x_{1}|)^{-R},
\end{equation*}%
where the implicit constant is independent of $k,j,x_{1}$ and $x.$
\end{lemma}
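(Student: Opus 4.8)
This is the classical almost‑orthogonality estimate for two bump functions living at scales $2^{-j}\ge 2^{-k}$ (it is precisely \cite[Lemma B.2]{FJ90}), and the plan is to prove it by a direct splitting of the convolution integral. First I would reduce to the case $x_{1}=0$: substituting $y\mapsto y+x_{1}$ in $h\ast g(x)=\int_{\mathbb{R}^{n}}h(y)g(x-y)\,dy$ writes it as $\big(\tilde h\ast g\big)(x-x_{1})$, where $\tilde h(y)=h(y+x_{1})$ satisfies $|\tilde h(y)|\le 2^{kn/2}(1+2^{k}|y|)^{-R}$; so it suffices to estimate $\int_{\mathbb{R}^{n}}|h(y)|\,|g(z-y)|\,dy$ and prove the bound at $x=z$ with $x_{1}=0$. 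Throughout I will use two elementary facts, both valid because $R>n$: $\int_{\mathbb{R}^{n}}|h(y)|\,dy\le 2^{kn/2}\int_{\mathbb{R}^{n}}(1+2^{k}|y|)^{-R}\,dy\le c\,2^{-kn/2}$ and likewise $\int_{\mathbb{R}^{n}}|g(y)|\,dy\le c\,2^{-jn/2}$, together with the trivial pointwise bounds $|g|\le 2^{jn/2}$ and $|h|\le 2^{kn/2}$; here and below $c=c(n,R)$.

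Next I would split into two cases according to the size of $2^{j}|x|$. If $2^{j}|x|\le 1$, then $(1+2^{j}|x|)^{-R}\approx 1$, and the crude estimate $|h\ast g(x)|\le \|g\|_{\infty}\int_{\mathbb{R}^{n}}|h(y)|\,dy\le c\,2^{jn/2}2^{-kn/2}=c\,2^{-(k-j)n/2}$ already gives the claim. If $2^{j}|x|>1$, I would split the $y$-integral into the two regions $|y|\le|x|/2$ and $|y|>|x|/2$. On $\{|y|\le|x|/2\}$ one has $|x-y|\ge|x|/2$, hence $|g(x-y)|\le 2^{jn/2}(1+2^{j-1}|x|)^{-R}\le c\,2^{jn/2}(2^{j}|x|)^{-R}$ (using $2^{j}|x|>1$), and pulling this factor out and using $\int|h|\le c\,2^{-kn/2}$ yields the contribution $c\,2^{-(k-j)n/2}(2^{j}|x|)^{-R}$. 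On $\{|y|>|x|/2\}$ one has, since $k\ge j$ and $2^{j}|x|>1$, that $2^{k}|y|\ge 2^{k-1}|x|$ and $2^{k}|x|\ge 2^{k-j}\ge 1$, so $|h(y)|\le 2^{kn/2}(1+2^{k-1}|x|)^{-R}\le c\,2^{kn/2}(2^{k}|x|)^{-R}$; pulling this out and using $\int|g(x-y)|\,dy=\int|g|\le c\,2^{-jn/2}$ gives the contribution $c\,2^{kn/2}(2^{k}|x|)^{-R}2^{-jn/2}$.

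Finally I would observe that this last quantity is again controlled by the desired bound: indeed
\[
2^{(k-j)n/2}(2^{k}|x|)^{-R}=2^{(k-j)(n-R)}\cdot 2^{-(k-j)n/2}(2^{j}|x|)^{-R}\le 2^{-(k-j)n/2}(2^{j}|x|)^{-R},
\]
because $n-R<0$ and $k-j\ge 0$. Combining the two regions and recalling that $(2^{j}|x|)^{-R}\approx(1+2^{j}|x|)^{-R}$ when $2^{j}|x|>1$ completes the argument, with all constants depending only on $n$ and $R$.

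The only point that needs to be watched is precisely this last step in the far region: the naive estimate there produces a factor $2^{+(k-j)n/2}$ of the wrong sign, and it is the surplus decay coming from $R>n$ (together with $j\le k$) that absorbs the loss $2^{(k-j)n}$ hidden in the ratio $(2^{k}|x|)^{-R}/(2^{j}|x|)^{-R}$ and converts it into the required gain $2^{-(k-j)n/2}$. Everything else is routine; alternatively, since the statement is exactly \cite[Lemma B.2]{FJ90}, one may simply cite that reference.
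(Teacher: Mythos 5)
Your argument is correct and complete: the reduction to $x_{1}=0$, the trivial case $2^{j}|x|\le 1$, the splitting of the convolution at $|y|=|x|/2$, and in particular the final absorption step, where the apparent loss $2^{+(k-j)n/2}$ in the far region is converted into the gain $2^{-(k-j)n/2}$ using $2^{(k-j)(n-R)}\le 1$ (valid precisely because $R>n$ and $j\le k$), are all sound. Note that the paper itself offers no proof of this lemma — it is stated with a bare citation to \cite[Lemma B.2]{FJ90} — so your self-contained derivation supplies more than the paper does, and it coincides with the standard proof of this almost-orthogonality estimate found in that reference.
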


Next we present the definition of inhomogeneous smooth synthesis and
analysis molecules for $\dot{K}_{p,r}^{\alpha ,q}A_{\beta }^{s}$, see {\cite%
{FJ90} and \cite{YSY10} for Besov-Triebel-Lizorkin type spaces.}

\begin{definition}
\label{Atom-Def}Let\ $s\in \mathbb{R},0<p<\infty ,0<r,q\leq \infty ,0<\beta
<\infty $ and $\alpha >-\frac{n}{p}$. Let $N=\max \{\lfloor J-n-s\rfloor
,-1\}$ and $s^{\ast }=s-\lfloor s\rfloor $.\newline
$\mathrm{(i)}$\ Let $k\in \mathbb{N}_{0}$ and $m\in \mathbb{Z}^{n}$. A
function $\varrho _{Q_{k,m}}$ is called an inhomogeneous smooth synthesis
molecule for $\dot{K}_{p,r}^{\alpha ,q}A_{\beta }^{s}$\ supported near $%
Q_{k,m}$ if there exist a real number $\delta \in (s^{\ast },1]$ and a real
number $M\in (J,\infty )$ such that 
\begin{equation}
\int_{\mathbb{R}^{n}}x^{\gamma }\varrho _{Q_{k,m}}(x)dx=0\text{\quad if\quad 
}0\leq |\gamma |\leq N,\quad k\in \mathbb{N},  \label{mom-cond}
\end{equation}%
\begin{equation}
|\varrho _{Q_{0,m}}(x)|\leq (1+|x-x_{Q_{0,m}}|)^{-M},  \label{cond1bis}
\end{equation}%
\begin{equation}
|\varrho _{Q_{k,m}}(x)|\leq 2^{\frac{kn}{2}}(1+2^{k}|x-x_{Q_{k,m}}|)^{-\max
(M,M-s)},\quad k\in \mathbb{N},  \label{cond1}
\end{equation}%
\begin{equation}
|\partial ^{\gamma }\varrho _{Q_{k,m}}(x)|\leq 2^{k(|\gamma |+\frac{1}{2}%
)}(1+2^{k}|x-x_{Q_{k,m}}|)^{-M}\quad \text{if}\quad |\gamma |\leq \lfloor
s\rfloor  \label{cond2}
\end{equation}%
and 
\begin{align}
& |\partial ^{\gamma }\varrho _{Q_{k,m}}(x)-\partial ^{\gamma }\varrho
_{Q_{k,m}}(y)|  \label{cond3} \\
& \leq 2^{k(|\gamma |+\frac{1}{2}+\delta )}|x-y|^{\delta }\sup_{|z|\leq
|x-y|}(1+2^{k}|x-z-x_{Q_{k,m}}|)^{-M}\text{\quad if\quad }|\gamma |=\lfloor
s\rfloor .  \notag
\end{align}%
A collection $\{\varrho _{Q_{k,m}}\}_{k\in \mathbb{N}_{0},m\in \mathbb{Z}%
^{n}}$ is called a family of inhomogeneous smooth synthesis molecules for $%
\dot{K}_{p,r}^{\alpha ,q}A_{\beta }^{s}$, if each $\varrho _{Q_{k,m}}$, $%
k\in \mathbb{N}_{0},m\in \mathbb{Z}^{n}$, is an homogeneous smooth synthesis
molecule for $\dot{K}_{p,r}^{\alpha ,q}A_{\beta }^{s}$ supported near $%
Q_{k,m}$. \newline
$\mathrm{(ii)}$\ Let $k\in \mathbb{N}_{0}$ and $m\in \mathbb{Z}^{n}$. A
function $b_{Q_{k,m}}$ is called an inhomogeneous smooth analysis molecule
for $\dot{K}_{p,r}^{\alpha ,q}A_{\beta }^{s}$ supported near $Q_{k,m}$ if
there exist a $\kappa \in ((J-s)^{\ast },1]$ and an $M\in (J,\infty )$ such
that 
\begin{equation}
\int_{\mathbb{R}^{n}}x^{\gamma }b_{Q_{k,m}}(x)dx=0\text{\quad if\quad }0\leq
|\gamma |\leq \left\lfloor s\right\rfloor ,\quad k\in \mathbb{N}
\label{mom-cond2}
\end{equation}%
\begin{equation}
|\varrho _{Q_{0,m}}(x)|\leq (1+|x-x_{Q_{0,m}}|)^{-M},  \label{cond1.1bis}
\end{equation}%
\begin{equation}
|b_{Q_{k,m}}(x)|\leq 2^{\frac{kn}{2}}(1+2^{k}|x-x_{Q_{k,m}}|)^{-\max
(M,M+n+s-J)},\quad k\in \mathbb{N}  \label{cond1.1}
\end{equation}%
\begin{equation}
|\partial ^{\gamma }b_{Q_{k,m}}(x)|\leq 2^{k(|\gamma |+\frac{n}{2}%
)}(1+2^{k}|x-x_{Q_{k,m}}|)^{-M}\quad \text{if}\quad |\gamma |\leq N
\label{cond1.2}
\end{equation}%
and 
\begin{align}
& |\partial ^{\gamma }b_{Q_{k,m}}(x)-\partial ^{\gamma }b_{Q_{k,m}}(y)|
\label{cond1.3} \\
& \leq 2^{k(|\gamma |+\frac{n}{2}+\kappa )}|x-y|^{\kappa }\sup_{|z|\leq
|x-y|}(1+2^{k}|x-z-x_{Q_{k,m}}|)^{-M}\text{\quad if\quad }|\gamma |=N. 
\notag
\end{align}%
A collection $\{b_{Q_{k,m}}\}_{k\in \mathbb{N}_{0},m\in \mathbb{Z}^{n}}$ is
called a family of inhomogeneous smooth analysis molecules for $\dot{K}%
_{p,r}^{\alpha ,q}A_{\beta }^{s}$, if each $b_{Q_{k,m}}$, $k\in \mathbb{N}%
_{0},m\in \mathbb{Z}^{n}$, is an homogeneous smooth synthesis molecule for $%
\dot{K}_{p,r}^{\alpha ,q}A_{\beta }^{s}$ supported near $Q_{k,m}$.
\end{definition}

We will use the notation $\{b_{k,m}\}_{k\in \mathbb{N}_{0},m\in \mathbb{Z}%
^{n}}$ instead of $\{b_{Q_{k,m}}\}_{k\in \mathbb{N}_{0},m\in \mathbb{Z}^{n}}$%
{. }To establish the homogeneous smooth atomic and molecular decomposition
characterizations of $\dot{K}_{p,r}^{\alpha ,q}A_{\beta }^{s}$ spaces, we
need the following key lemma.

\begin{lemma}
\label{matrix-est}Let\ $s,\alpha ,J,M,N,\delta ,\kappa ,p,q$ and $\beta $ be
as in Definition {\ref{Atom-Def}}. Suppose that $\{\varrho _{v,h}\}_{v\in 
\mathbb{N}_{0},h\in \mathbb{Z}^{n}}$ is a family of smooth synthesis
molecules for $\dot{K}_{p,r}^{\alpha ,q}A_{\beta }^{s}$ and $%
\{b_{k,m}\}_{k\in \mathbb{N}_{0},m\in \mathbb{Z}^{n}}$\ is a family of
homogeneous smooth analysis molecules for\ $\dot{K}_{p,r}^{\alpha
,q}A_{\beta }^{s}$. Then there exist a positive real number $\varepsilon
_{1} $ and a positive constant $c$ such that 
\begin{equation*}
\left\vert \langle \varrho _{v,h},b_{k,m}\rangle \right\vert \leq c\text{ }%
\omega _{Q_{k,m}P_{v,h}}(\varepsilon ),\quad k,v\in \mathbb{N}_{0},h,m\in 
\mathbb{Z}^{n}
\end{equation*}%
if $\varepsilon \leq \varepsilon _{1}$.
\end{lemma}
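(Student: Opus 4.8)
**

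The plan is to reduce everything to the two pointwise convolution estimates of Lemmas \ref{FJ901} and \ref{FJ902}, exactly as in the original argument of Frazier and Jawerth for $\dot F^s_{p,q}$, the only new ingredient being bookkeeping of the exponent $J$, which here takes the enlarged value dictated by the extra parameters $\alpha$ and $r$. First I would treat the off-diagonal regime $v\le k$, pairing a synthesis molecule $\varrho_{v,h}$ at the coarser scale with an analysis molecule $b_{k,m}$ at the finer scale. When $v\le k$ and $k\ge 1$ one uses the moment conditions \eqref{mom-cond2} on $b_{k,m}$ up to order $\lfloor s\rfloor$ together with the smoothness/decay of $\varrho_{v,h}$ from \eqref{cond2}--\eqref{cond3}; after a dilation normalisation the hypotheses of Lemma \ref{FJ901} are met with $j=v$, $k=k$, $L=\lfloor s\rfloor$, $\theta=\delta$, and the resulting bound is
\begin{equation*}
|\langle \varrho_{v,h},b_{k,m}\rangle|\lesssim 2^{(v-k)(s+\frac{n}{2}+\delta-\lfloor s\rfloor)}\Big(1+\tfrac{|x_{Q_{k,m}}-x_{P_{v,h}}|}{2^{-v}}\Big)^{-M},
\end{equation*}
which, since $\delta>s^\ast=s-\lfloor s\rfloor$ and $M>J$, is of the required form $\omega_{Q_{k,m}P_{v,h}}(\varepsilon)$ for some small $\varepsilon>0$ because the gain in $2^{(v-k)(\cdots)}$ strictly exceeds $2^{(v-k)(s+\frac{n+\varepsilon}{2})}$ for $\varepsilon$ small. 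The edge cases $v=0$ or $k=0$ (where moment conditions are absent but the decay hypotheses \eqref{cond1bis}, \eqref{cond1.1bis} are available) are handled by Lemma \ref{FJ902} directly, giving the weaker but still sufficient factor $2^{(v-k)n/2}$.

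For the regime $v>k$ the roles reverse: now the coarser object is $b_{k,m}$ and the finer object $\varrho_{v,h}$, and one exploits the moment conditions \eqref{mom-cond} on $\varrho_{v,h}$ up to order $N=\max\{J-n-s,-1\}$ together with the smoothness \eqref{cond1.2}--\eqref{cond1.3} of $b_{k,m}$. Applying Lemma \ref{FJ901} with $j=k$, $k=v$, $L=N$, $\theta=\kappa$ yields
\begin{equation*}
|\langle \varrho_{v,h},b_{k,m}\rangle|\lesssim 2^{(k-v)(\frac{n}{2}+N+\kappa)}2^{(v-k)\frac{n}{2}}\Big(1+\tfrac{|x_{Q_{k,m}}-x_{P_{v,h}}|}{2^{-k}}\Big)^{-M},
\end{equation*}
and the exponent $2^{(v-k)(s-\frac{n}{2}-N-\kappa+\frac{n}{2}\cdot 2)}=2^{(v-k)(s-N-\kappa+n)}$ — wait, more carefully one rewrites the collected power of $2^{v-k}$ and checks it is at least $2^{(v-k)(s-\frac{n+\varepsilon}{2}-J+n)}$; this holds precisely because $N\ge J-n-s$ forces $s-N+n\le 2n-J$ with room to spare, and $\kappa>(J-s)^\ast$ absorbs the fractional discrepancy, so again one lands in $\omega_{Q_{k,m}P_{v,h}}(\varepsilon)$ for $\varepsilon\le\varepsilon_1$. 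The spatial decay factor $(1+2^{\min(k,v)}|x_{Q_{k,m}}-x_{P_{v,h}}|)^{-M}$ produced in both regimes matches $\big(1+|x_{Q_{k,m}}-x_{P_{v,h}}|/\max(2^{-k},2^{-v})\big)^{-J-\varepsilon}$ since $M>J$; one chooses $\varepsilon_1=\min(M-J,\delta-s^\ast,\kappa-(J-s)^\ast,\text{etc.})$.

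The main obstacle, and the only place the Lorentz--Herz structure intervenes, is getting the exponent arithmetic to close with the \emph{larger} value of $J$ appearing here, namely $J=\tfrac{n}{\min(1,p,\beta,n/(\alpha+n/p))}$ rather than $\tfrac{n}{\min(1,p,\beta)}$; this only affects the threshold in $N$ and the required size of $M$, so one must re-verify that the admissible ranges $\delta\in(s^\ast,1]$, $\kappa\in((J-s)^\ast,1]$, $M\in(J,\infty)$ fixed in Definition \ref{Atom-Def} genuinely leave positive slack $\varepsilon_1>0$ in all four inequalities above. Once that is checked the proof is essentially a transcription of \cite[Appendix B]{FJ90}. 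I would also note that the inner products $\langle\varrho_{v,h},b_{k,m}\rangle$ are well defined because each molecule lies in $\mathcal S(\mathbb R^n)$-type decay classes, so no convergence subtlety arises. This finishes the proof.
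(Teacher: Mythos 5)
Your proposal follows the paper's proof essentially verbatim: both reduce to Lemmas \ref{FJ901} and \ref{FJ902}, split into the regimes $v\le k$ and $v>k$, and close the exponent arithmetic using $\delta>s^{\ast}$, $\kappa>(J-s)^{\ast}$ and $M>J$ (the paper simply normalizes $\delta-s^{\ast}=\tfrac{M-J}{2}=\kappa-(J-s)^{\ast}>0$ at the outset, which is your $\varepsilon_{1}$). Two small slips worth correcting: the fallback to Lemma \ref{FJ902} is needed not for the levels $v=0$ or $k=0$ but whenever the relevant moment conditions are vacuous at \emph{every} level — namely when $s<0$ (so $\lfloor s\rfloor<0$) in the regime $v\le k$, and when $N=-1$ in the regime $v>k$ — where the crude factor $2^{-|k-v|n/2}$ still suffices because $s<0$, respectively $n+s>J$; and the exponent in your first display should read $\lfloor s\rfloor+\tfrac{n}{2}+\delta=s+\tfrac{n}{2}+(\delta-s^{\ast})$, which is exactly where $\delta>s^{\ast}$ enters.
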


\begin{proof}
The proof is a slight modification of {\cite[Corollary \ B.3]{FJ90}. }%
Possibly reducing $\delta $, $\varrho $, or $M$, we may assume that $\delta
-s^{\ast }=\frac{M-J}{2}=\kappa -(J-s)^{\ast }>0$ . First we suppose that $%
k\geq v$ and $s\geq 0$. We have 
\begin{equation*}
\langle \varrho _{v,h},b_{k,m}\rangle =g_{v,h}\ast b_{k,m}(x_{P_{v,h}})
\end{equation*}%
with $g_{v,h}(x)=\overline{\varrho _{v,h}(x_{P_{v,h}}-x)}$. Applying Lemma {%
\ref{FJ901},\ }we obtain 
\begin{align*}
\left\vert \langle \varrho _{v,h},b_{k,m}\rangle \right\vert & \leq c\text{ }%
2^{-(k-v)(\lfloor s\rfloor +\frac{n}{2}+\delta
)}(1+2^{v}|x_{P_{v,h}}-x_{Q_{k,m}}|)^{-M} \\
& \leq c\text{ }2^{-(k-v)(s+\frac{n+\varepsilon }{2}%
)}(1+2^{v}|x_{P_{v,h}}-x_{Q_{k,m}}|)^{-M}
\end{align*}%
if $\lfloor s\rfloor +\delta \geq s+\frac{\varepsilon }{2}$\ for some $%
\varepsilon >0$ small enough, but this is possible since $\delta >s^{\ast }$%
. In view if the fact that $\delta \leq 1$, we will take $\varepsilon
<2(\delta -s^{\ast })$.

Now if $k\geq v$ and $s<0$, then by Lemma {\ref{FJ902}}, we find that 
\begin{align*}
\left\vert \langle \varrho _{v,h},b_{k,m}\rangle \right\vert & \leq c\text{ }%
2^{-(k-v)\frac{n}{2}}(1+2^{v}|x_{P_{v,h}}-x_{Q_{k,m}}|)^{-M} \\
& \leq c\text{ }2^{-(k-v)(s+\frac{n+\varepsilon }{2}%
)}(1+2^{v}|x_{P_{v,h}}-x_{Q_{k,m}}|)^{-M}
\end{align*}%
if $0<\varepsilon <-2s$.

\noindent We suppose that $k<v$ and $N\geq 0$. We have $\langle \varrho
_{v,h},b_{k,m}\rangle =g_{k,m}\ast \varrho _{v,h}(x_{Q_{k,m}})$, with $%
g_{k,m}(x)=\overline{b_{k,m}(x_{Q_{k,m}}-x)}$. Again, using Lemma {\ref%
{FJ901}, }we obtain 
\begin{align*}
\left\vert \langle \varrho _{v,h},b_{k,m}\rangle \right\vert & \leq c\text{ }%
2^{-(v-k)(N+\frac{n}{2}+\kappa )}(1+2^{k}|x_{Q_{v,h}}-x_{Q_{k,m}}|)^{-M} \\
& \leq c\text{ }2^{(v-k)(s-J-\frac{\varepsilon -n}{2}%
)}(1+2^{k}|x_{Q_{v,h}}-x_{Q_{k,m}}|)^{-M},
\end{align*}%
since 
\begin{equation*}
N+\frac{n}{2}+\kappa >\frac{\varepsilon }{2}+J-\frac{n}{2}-s
\end{equation*}%
for any $0<\varepsilon <2\kappa .$

Now if that $k<v$ and $N=-1$, then we apply Lemma {\ref{FJ902}}, since $N=-1$
implies $n+s>J$ so that $n>-s+\frac{\varepsilon }{2}+J$, and obtain 
\begin{align*}
\left\vert \langle \varrho _{v,h},b_{k,m}\rangle \right\vert & \leq c\text{ }%
2^{-(v-k)\frac{n}{2}}(1+2^{k}|x_{Q_{v,h}}-x_{Q_{k,m}}|)^{-M} \\
& \leq c\text{ }2^{(v-k)(s-J-\frac{\varepsilon -n}{2}%
)}(1+2^{k}|x_{Q_{v,h}}-x_{Q_{k,m}}|)^{-M}
\end{align*}%
if $0<\varepsilon <2(s-J+n)$. The proof is complete.
\end{proof}

As an immediate consequence, we have the following analogues of the
corresponding results on \cite[Corollary\ B.3]{FJ90}.

\begin{corollary}
Let\ $s,\alpha ,J,M,N,\delta ,\kappa ,p,q\ $and $\beta $ be as in Definition 
{\ref{Atom-Def}}. Let $\Phi $ and $\varphi $ satisfy, respectively $\mathrm{%
\eqref{Ass1}}$ and $\mathrm{\eqref{Ass2}}$.\newline
$\mathrm{(i)}$\ If $\{\varrho _{k,m}\}_{k\in \mathbb{N}_{0},m\in \mathbb{Z}%
^{n}}$ is a family of homogeneous synthesis molecules for the
Triebel-Lizorkin spaces $\dot{K}_{p,r}^{\alpha ,q}A_{\beta }^{s}$, then the
operator $A$ with matrix\ $a_{Q_{k,m}P_{v,h}}=\langle \varrho _{v,h},\varphi
_{k,m}\rangle $, $k,v\in \mathbb{N}_{0},m,h\in \mathbb{Z}^{n}$, is almost
diagonal.\newline
$\mathrm{(ii)}$ If $\{b_{k,m}\}_{k\in \mathbb{N}_{0},m\in \mathbb{Z}^{n}}$
is a family of homogeneous smooth analysis molecules for the
Triebel-Lizorkin spaces $\dot{K}_{p,r}^{\alpha ,q}A_{\beta }^{s}$, then \
the operator\ $A$, with matrix $a_{Q_{k,m}P_{v,h}}=\langle \varphi
_{v,h},b_{Q_{k,m}}\rangle $, $k,v\in \mathbb{N}_{0},m,h\in \mathbb{Z}^{n}$,
is almost diagonal.
\end{corollary}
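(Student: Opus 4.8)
The plan is to obtain both assertions as direct corollaries of Lemma \ref{matrix-est}, once one recognizes that the analysing functions $\varphi_{k,m}$ and $\Phi_{0,m}$ appearing in the $\varphi$-transform are themselves admissible molecules in the sense of Definition \ref{Atom-Def}. Thus the whole proof reduces to two things: first, checking that $\{\varphi_{k,m}\}_{k\in\mathbb{N}_0,m\in\mathbb{Z}^n}$, with $\varphi_{0,m}$ replaced by $\Phi_{0,m}$, is \emph{simultaneously} a family of smooth synthesis molecules and a family of smooth analysis molecules for $\dot{K}_{p,r}^{\alpha,q}A_\beta^s$; and second, invoking Lemma \ref{matrix-est} with the appropriate roles assigned to $\varphi_{k,m}$ in parts (i) and (ii).

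First I would verify the molecule properties of $\varphi_{k,m}$ and $\Phi_{0,m}$. Since $\varphi\in\mathcal{S}(\mathbb{R}^n)$ with $\operatorname{supp}\mathcal{F}\varphi\subset\{\tfrac12\le|\xi|\le2\}$ by \eqref{Ass2}, the function $\varphi$ has vanishing moments of every order, so each $\varphi_{k,m}=2^{kn/2}\varphi(2^k\cdot-m)$ with $k\in\mathbb{N}$ satisfies $\int_{\mathbb{R}^n}x^\beta\varphi_{k,m}(x)\,dx=0$ for all multi-indices $\beta$; this covers both \eqref{mom-cond} (for $0\le|\beta|\le N$) and \eqref{mom-cond2} (for $0\le|\beta|\le\lfloor s\rfloor$). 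The rapid decay of $\varphi$ and of all its derivatives, together with $x_{Q_{k,m}}=2^{-k}m$, gives, for any prescribed $M\in(J,\infty)$, the size and derivative bounds \eqref{cond1bis}--\eqref{cond2} and \eqref{cond1.1bis}--\eqref{cond1.2}; and the mean value theorem applied to $\partial^\beta\varphi$ furnishes the Hölder-type estimates \eqref{cond3} and \eqref{cond1.3} with any $\delta\in(s^\ast,1]$ and any $\kappa\in((J-s)^\ast,1]$. For $k=0$ no moment condition is required and $\Phi\in\mathcal{S}(\mathbb{R}^n)$ satisfies \eqref{cond1bis} and \eqref{cond1.1bis} at once. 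Hence $\{\varphi_{k,m}\}$ qualifies as a family of smooth analysis molecules and as a family of smooth synthesis molecules for $\dot{K}_{p,r}^{\alpha,q}A_\beta^s$.

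Next, for part (i): given a family $\{\varrho_{v,h}\}$ of smooth synthesis molecules for $\dot{K}_{p,r}^{\alpha,q}A_\beta^s$ and reading $\{\varphi_{k,m}\}$ as smooth analysis molecules, Lemma \ref{matrix-est} yields a number $\varepsilon_1>0$ and a constant $c>0$ such that
\[
|a_{Q_{k,m}P_{v,h}}|=|\langle\varrho_{v,h},\varphi_{k,m}\rangle|\le c\,\omega_{Q_{k,m}P_{v,h}}(\varepsilon),\qquad k,v\in\mathbb{N}_0,\ m,h\in\mathbb{Z}^n,
\]
for every $0<\varepsilon\le\varepsilon_1$, with $\omega_{Q_{k,m}P_{v,h}}(\varepsilon)$ as in \eqref{omega-assumption}. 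Fixing one such $\varepsilon$, the supremum $\sup_{k,v\in\mathbb{N}_0,\,m,h\in\mathbb{Z}^n}|a_{Q_{k,m}P_{v,h}}|/\omega_{Q_{k,m}P_{v,h}}(\varepsilon)$ is finite, which is precisely the definition of $A$ being almost diagonal on $\dot{K}_{p,r}^{\alpha,q}a_\beta^s$. Part (ii) follows by the symmetric argument: now $\{\varphi_{v,h}\}$ plays the role of the synthesis molecules and $\{b_{Q_{k,m}}\}$ that of the analysis molecules, so Lemma \ref{matrix-est} again bounds $a_{Q_{k,m}P_{v,h}}=\langle\varphi_{v,h},b_{Q_{k,m}}\rangle$ by $c\,\omega_{Q_{k,m}P_{v,h}}(\varepsilon)$.

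The only step that is not purely mechanical — and the one I would write out most carefully — is the verification that $\varphi_{k,m}$ and $\Phi_{0,m}$ meet \emph{all} the size, smoothness and moment conditions of Definition \ref{Atom-Def} with parameters lying in the admissible ranges of $M$, $\delta$, $\kappa$, $N$; this is routine but slightly tedious, and it is the exact analogue of the corresponding observation in \cite{FJ90}. Everything else is a direct application of Lemma \ref{matrix-est} and the definition of an almost diagonal operator, so no further obstacle arises.
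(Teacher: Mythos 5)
Your proposal is correct and follows exactly the route the paper intends: the corollary is stated as an immediate consequence of Lemma \ref{matrix-est} (in the spirit of \cite[Corollary B.3]{FJ90}), obtained by observing that $\{\varphi_{k,m}\}$ (with $\Phi_{m}$ for $k=0$) is, up to harmless multiplicative constants, simultaneously a family of smooth synthesis and analysis molecules, and then applying the lemma with the roles swapped in parts (i) and (ii). Your verification of the moment, size and H\"older conditions for $\varphi_{k,m}$ fills in precisely the routine details the paper leaves implicit.
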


Let $f\in \dot{K}_{p,r}^{\alpha ,q}A_{\beta }^{s}\ $and $\{b_{k,m}\}_{k\in 
\mathbb{N}_{0},m\in \mathbb{Z}^{n}}$ be a family of homogeneous\ smooth
analysis molecules. To prove that $\langle f,b_{Q_{k,m}}\rangle $, $k\in 
\mathbb{N}_{0},m\in \mathbb{Z}^{n}$, is well defined for all homogeneous
smooth analysis molecules for $\dot{K}_{p,r}^{\alpha ,q}A_{\beta }^{s}$, we
need the following result, which proved in {\cite[Lemma 5.4]{BoHo06}}.
Suppose that $\Phi $ is a smooth analysis (or synthesis) molecule supported
near $Q\in \mathcal{Q}$. Then there exists a sequence $\{\varphi
_{k}\}_{k\in \mathbb{N}}\subset \mathcal{S(}\mathbb{R}^{n}\mathcal{)}$ and $%
c>0$ such that $c\varphi _{k}$ is a smooth analysis (or synthesis) molecule
supported near $Q$ for every $k$,and $\varphi _{k}(x)\rightarrow \Phi (x)$
uniformly on $\mathbb{R}^{n}$ as $k\rightarrow \infty $.

Now we have the following smooth molecular characterization of the spaces $%
\dot{K}_{p,r}^{\alpha ,q}A_{\beta }^{s}$.

\begin{theorem}
\label{molecules-dec}Let $s\in \mathbb{R},0<p<\infty ,0<r,q\leq \infty
,0<\beta <\infty $ and $\alpha >-\frac{n}{p}$. Let $J,M,N,\delta $ and $%
\kappa $ be as in Definition {\ref{Atom-Def}}. \newline
$\mathrm{(i)}$\ If $f=\sum_{v=0}^{\infty }\sum_{h\in \mathbb{Z}^{n}}\varrho
_{v,h}\lambda _{v,h}$, where $\{\varrho _{v,h}\}_{v\in \mathbb{N}_{0},h\in 
\mathbb{Z}^{n}}$ is a family of homogeneous smooth synthesis molecules for $%
\dot{K}_{p,r}^{\alpha ,q}A_{\beta }^{s}$, then for all $\lambda \in \dot{K}%
_{p,r}^{\alpha ,q}a_{\beta }^{s}$ 
\begin{equation*}
{{\big\|}f{\big\|}}_{\dot{K}_{p,r}^{\alpha ,q}A_{\beta }^{s}}{\lesssim {%
\big\|}\lambda {\big\|}}_{\dot{K}_{p,r}^{\alpha ,q}a_{\beta }^{s}}{.}
\end{equation*}%
$\mathrm{(ii)}$\ Let $\{b_{k,m}\}_{k\in \mathbb{N}_{0},m\in \mathbb{Z}^{n}}$
be a family of homogeneous\ smooth analysis molecules.\ Then for all\ $f\in 
\dot{K}_{q}^{\alpha _{2},p}A_{\beta }^{s}$ 
\begin{equation*}
{{\big\|}\{\langle f,b_{k,m}\rangle \}_{k\in \mathbb{N}_{0},m\in \mathbb{Z}%
^{n}}{\big\|}}_{\dot{K}_{p,r}^{\alpha ,q}a_{\beta }^{s}}{\lesssim {\big\|}f%
\big\|}_{\dot{K}_{p,r}^{\alpha ,q}A_{\beta }^{s}}{.}
\end{equation*}
\end{theorem}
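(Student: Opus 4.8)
The plan is to transfer the problem to the sequence space $\dot K_{p,r}^{\alpha,q}a_\beta^s$ via the $\varphi$-transform of Theorem~\ref{phi-tran-lorentz}, and then to recognize the resulting operators as almost diagonal in the sense of Theorem~\ref{almost-diag-est}. Throughout I fix $\Phi,\Psi$ satisfying \eqref{Ass1} and $\varphi,\psi$ satisfying \eqref{Ass2} together with \eqref{Ass3}; since $\mathcal F\varphi$ and $\mathcal F\psi$ are supported in an annulus, the dilates $\varphi_{k,m}$ and $\psi_{v,h}$ are, up to a fixed multiplicative constant, simultaneously smooth analysis and smooth synthesis molecules for $\dot K_{p,r}^{\alpha,q}A_\beta^s$ in the sense of Definition~\ref{Atom-Def}, while $\Phi_m,\Psi_m$ are such molecules supported near $Q_{0,m}$.

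For part $\mathrm{(i)}$, first I would check, exactly as in the proof of Lemma~\ref{Inv-phi-trans-lorentz} but with $\psi_{k,m}$ replaced by the synthesis molecules $\varrho_{v,h}$ and using the decay and moment estimates \eqref{mom-cond}--\eqref{cond3}, that $f=\sum_{v=0}^{\infty}\sum_{h\in\mathbb Z^n}\varrho_{v,h}\lambda_{v,h}$ converges in $\mathcal S'(\mathbb R^n)$ whenever $\lambda\in\dot K_{p,r}^{\alpha,q}a_\beta^s$. Then, applying $S_\varphi$ and interchanging the series with the $\mathcal S'$-pairing, $(S_\varphi f)_{k,m}=\langle f,\varphi_{k,m}\rangle=\sum_{v,h}\langle\varrho_{v,h},\varphi_{k,m}\rangle\lambda_{v,h}=(A\lambda)_{k,m}$, where $A$ has matrix $a_{Q_{k,m}P_{v,h}}=\langle\varrho_{v,h},\varphi_{k,m}\rangle$. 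By the Corollary following Lemma~\ref{matrix-est}, $A$ is almost diagonal on $\dot K_{p,r}^{\alpha,q}a_\beta^s$, hence bounded there by Theorem~\ref{almost-diag-est}. Combining this with the equivalence $\|f\|_{\dot K_{p,r}^{\alpha,q}A_\beta^s}\approx\|S_\varphi f\|_{\dot K_{p,r}^{\alpha,q}a_\beta^s}$ from Theorem~\ref{phi-tran-lorentz} yields $\|f\|_{\dot K_{p,r}^{\alpha,q}A_\beta^s}\lesssim\|A\lambda\|_{\dot K_{p,r}^{\alpha,q}a_\beta^s}\lesssim\|\lambda\|_{\dot K_{p,r}^{\alpha,q}a_\beta^s}$.

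For part $\mathrm{(ii)}$, given $f\in\dot K_{p,r}^{\alpha,q}A_\beta^s$ I would first show that $\langle f,b_{k,m}\rangle$ is well defined for every smooth analysis molecule $b_{k,m}$, by approximating $b_{k,m}$ uniformly by Schwartz functions which remain, up to a constant, smooth analysis molecules supported near $Q_{k,m}$ (the cited result of \cite{BoHo06}) and using the argument below to see the values converge. Next, using the Calder\'on reproducing formula of Lemma~\ref{DW-lemma1} in the form $f=T_\psi S_\varphi f$ (Theorem~\ref{phi-tran-lorentz}), I write $\langle f,b_{k,m}\rangle=\sum_{v=0}^{\infty}\sum_{h\in\mathbb Z^n}(S_\varphi f)_{v,h}\,\langle\psi_{v,h},b_{k,m}\rangle=(A\,S_\varphi f)_{k,m}$, where now $A$ has matrix $a_{Q_{k,m}P_{v,h}}=\langle\psi_{v,h},b_{k,m}\rangle$ (with $\psi_{0,h}=\Psi_h$). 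Since each $\psi_{v,h}$ is a smooth synthesis molecule and each $b_{k,m}$ a smooth analysis molecule, Lemma~\ref{matrix-est} shows $A$ is almost diagonal, so Theorems~\ref{almost-diag-est} and \ref{phi-tran-lorentz} give $\|\{\langle f,b_{k,m}\rangle\}\|_{\dot K_{p,r}^{\alpha,q}a_\beta^s}=\|A\,S_\varphi f\|_{\dot K_{p,r}^{\alpha,q}a_\beta^s}\lesssim\|S_\varphi f\|_{\dot K_{p,r}^{\alpha,q}a_\beta^s}\approx\|f\|_{\dot K_{p,r}^{\alpha,q}A_\beta^s}$.

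The delicate points, on which I expect to spend most of the effort, are the two interchanges of a summation with the $\mathcal S'$-pairing: in $\mathrm{(i)}$ the identity $\langle\sum_{v,h}\varrho_{v,h}\lambda_{v,h},\varphi_{k,m}\rangle=\sum_{v,h}\lambda_{v,h}\langle\varrho_{v,h},\varphi_{k,m}\rangle$, and in $\mathrm{(ii)}$ the analogous identity for $T_\psi S_\varphi f$ tested against $b_{k,m}$. Both are handled by using Lemma~\ref{matrix-est} to show that the double series of matrix entries converges absolutely after multiplication by the relevant coefficients; here one exploits that $\|\lambda\|_{\dot K_{p,r}^{\alpha,q}a_\beta^s}<\infty$ forces polynomial control of $|\lambda_{v,h}|$ in $v$ and $h$ (as in \eqref{sum3} within the proof of Lemma~\ref{Inv-phi-trans-lorentz}), and then passes to the limit through the Schwartz approximants of $b_{k,m}$ supplied by \cite{BoHo06}. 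Everything else is a routine invocation of the already-established almost-diagonal boundedness and of the $\varphi$-transform characterization.
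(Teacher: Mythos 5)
Your proposal is correct and follows essentially the same route as the paper: reduce to the sequence space via the $\varphi$-transform (Theorem \ref{phi-tran-lorentz}), identify the relevant coefficient operators as almost diagonal through Lemma \ref{matrix-est}, and conclude with Theorem \ref{almost-diag-est}. The only cosmetic difference is in part (i), where the paper expands each molecule $\varrho_{v,h}$ by the Calder\'on reproducing formula and writes $f=T_{\psi}S$ with $S=A\lambda$, whereas you apply $S_{\varphi}$ to $f$ directly and use $f=T_{\psi}S_{\varphi}f$; both hinge on the same matrix estimates and the same boundedness of $T_{\psi}$.
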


\begin{proof}
The proof is a slight variant of \cite{FJ90}{. }We split the proof in two
steps.

\textit{Step 1. Proof of }\textrm{(i)}. By\ $\mathrm{\eqref{proc2}}$ we can
write 
\begin{equation*}
\varrho _{v,h}=\sum_{k=0}^{\infty }2^{-kn}\sum_{m\in \mathbb{Z}^{n}}%
\widetilde{\varphi }_{k}\ast \varrho _{v,h}(2^{-k}m)\psi _{k}(\cdot -2^{-k}m)
\end{equation*}%
for\ any\ $v\in \mathbb{N}_{0},h\in \mathbb{Z}^{n}$.\ Therefore, 
\begin{equation*}
f=\sum_{k=0}^{\infty }\sum_{m\in \mathbb{Z}^{n}}S_{k,m}\psi _{k,m}=T_{\psi
}S,
\end{equation*}%
where $S=\{S_{k,m}\}_{k\in \mathbb{N}_{0},m\in \mathbb{Z}^{n}}$, with 
\begin{equation*}
S_{k,m}=2^{-k\frac{n}{2}}\sum_{v=0}^{\infty }\sum_{h\in \mathbb{Z}^{n}}%
\widetilde{\varphi }_{k}\ast \varrho _{v,h}(2^{-k}m)\lambda _{v,h}.
\end{equation*}%
From Theorem {\ref{phi-tran-lorentz}, }we have 
\begin{equation*}
{{\big\|}f{\big\|}}_{\dot{K}_{p,r}^{\alpha ,q}A_{\beta }^{s}}{={\big\|}%
T_{\psi }S\big\|}_{\dot{K}_{p,r}^{\alpha ,q}A_{\beta }^{s}}{\lesssim {\big\|}%
S\big\|}_{\dot{K}_{p,r}^{\alpha ,q}a_{\beta }^{s}}{.}
\end{equation*}%
But 
\begin{equation*}
S_{k,m}=\sum_{v=0}^{\infty }\sum_{h\in \mathbb{Z}^{n}}a_{Q_{k,m}P_{v,h}}%
\lambda _{v,h},
\end{equation*}%
with 
\begin{equation*}
a_{Q_{k,m}P_{v,h}}=\langle \varrho _{v,h},\widetilde{\varphi }_{k,m}\rangle
,\quad k,v\in \mathbb{N}_{0},m,h\in \mathbb{Z}^{n}.
\end{equation*}%
Applying Lemma {\ref{matrix-est} and Theorem \ref{almost-diag-est} we find
that} 
\begin{equation*}
{{\big\|}S{\big\|}}_{\dot{K}_{p,r}^{\alpha ,q}a_{\beta }^{s}}{\lesssim {%
\big\|}\lambda {\big\|}}_{\dot{K}_{p,r}^{\alpha ,q}a_{\beta }^{s}}{.}
\end{equation*}

\textit{Step 2. Proof of }$\mathit{\mathrm{(ii)}}$\textit{.} We have 
\begin{align*}
\langle f,b_{k,m}\rangle & =\sum_{v=0}^{\infty }2^{-vn}\sum_{m\in \mathbb{Z}%
^{n}}\langle \psi _{v}(\cdot -2^{-v}h),b_{k,m}\rangle \widetilde{\varphi }%
_{v}\ast f(2^{-v}h) \\
& =\sum_{v=0}^{\infty }\sum_{m\in \mathbb{Z}^{n}}\langle \psi
_{v,h},b_{k,m}\rangle \lambda _{v,h} \\
& =\sum_{v=0}^{\infty }\sum_{h\in \mathbb{Z}^{n}}a_{Q_{k,m}P_{v,h}}\lambda
_{v,h},
\end{align*}%
{where } 
\begin{equation*}
a_{Q_{k,m}P_{v,h}}=\langle \psi _{v,h},b_{k,m}\rangle ,\quad \lambda
_{v,h}=2^{-v\frac{n}{2}}\widetilde{\varphi }_{v}\ast f(2^{-v}h).
\end{equation*}%
Again by Lemma {\ref{matrix-est} and Theorem \ref{almost-diag-est} we find
that} 
\begin{align*}
{\big\|\{\langle f,b_{k,m}\rangle \}_{k\in \mathbb{N}_{0},m\in \mathbb{Z}%
^{n}}\big\|}_{\dot{K}_{p,r}^{\alpha ,q}a_{\beta }^{s}}& {\lesssim }{\big\|%
\{\lambda _{v,h}\}_{v\in \mathbb{N}_{0},h\in \mathbb{Z}^{n}}\big\|}_{\dot{K}%
_{p,r}^{\alpha ,q}a_{\beta }^{s}} \\
& =c{\big\|\{(S_{\varphi })_{v,h}\}_{v\in \mathbb{N}_{0},h\in \mathbb{Z}^{n}}%
\big\|}_{\dot{K}_{p,r}^{\alpha ,q}a_{\beta }^{s}}{.}
\end{align*}%
Applying Theorem {\ref{phi-tran-lorentz} we find that} 
\begin{equation*}
{{\big\|}\{\langle f,b_{k,m}\rangle \}_{k\in \mathbb{N}_{0},m\in \mathbb{Z}%
^{n}}{\big\|}}_{\dot{K}_{p,r}^{\alpha ,q}a_{\beta }^{s}}{\lesssim {\big\|}f{%
\big\|}}_{\dot{K}_{p,r}^{\alpha ,q}A_{\beta }^{s}}{.}
\end{equation*}%
The proof is complete.
\end{proof}

Now we turn to the notion of a smooth atom for $\dot{K}_{p,r}^{\alpha
,q}A_{\beta }^{s}$.

\begin{definition}
\label{atom}Let $s\in \mathbb{R},0<p<\infty ,0<r,q\leq \infty ,0<\beta
<\infty ,\alpha >-\frac{n}{p}$ and\ $N=\max \{\lfloor J-n-s\rfloor ,-1\}$. A
function $\varrho _{Q_{k,m}}$ is called an homogeneous smooth atom for $\dot{%
K}_{p,r}^{\alpha ,q}A_{\beta }^{s}$ supported near $Q_{k,m}$, $k\in \mathbb{N%
}_{0}$ and $m\in \mathbb{Z}^{n}$, if 
\begin{equation}
\mathrm{supp}\varrho _{Q_{k,m}}\subseteq 3Q_{k,m}  \label{supp-cond}
\end{equation}

\begin{equation}
|\partial ^{\gamma }\varrho _{Q_{k,m}}(x)|\leq 2^{k(|\gamma |+\frac{n}{2})}%
\text{\quad if\quad }0\leq |\gamma |\leq \max (0,1+\lfloor s\rfloor ),\quad
x\in \mathbb{R}^{n}  \label{diff-cond}
\end{equation}%
and if 
\begin{equation}
\int_{\mathbb{R}^{n}}x^{\gamma }\varrho _{Q_{k,m}}(x)dx=0\text{\quad if\quad 
}0\leq |\gamma |\leq N\quad \text{and}\quad k\in \mathbb{N}.
\label{mom-cond1}
\end{equation}%
A collection $\{\varrho _{Q_{k,m}}\}_{k\in \mathbb{N}_{0},m\in \mathbb{Z}%
^{n}}$\ is called a family of homogeneous smooth atoms for $\dot{K}%
_{p,r}^{\alpha ,q}A_{\beta }^{s}$, if each $a_{Q_{k,m}}$ is an homogeneous
smooth atom for $\dot{K}_{p,r}^{\alpha ,q}A_{\beta }^{s}$ supported near $%
Q_{v,m}$.
\end{definition}

The moment condition $\mathrm{\eqref{mom-cond1}}$ can be strengthened into
that%
\begin{equation*}
\int_{\mathbb{R}^{n}}x^{\gamma }\varrho _{Q_{k,m}}(x)dx=0\text{\quad if\quad 
}0\leq |\gamma |\leq \tilde{N}\quad \text{and}\quad k\in \mathbb{N}
\end{equation*}%
and the regularity condition \eqref{diff-cond} can be strengthened into that%
\begin{equation*}
|\partial ^{\gamma }\varrho _{Q_{k,m}}(x)|\leq 2^{k(|\gamma |+\frac{n}{2})}%
\text{\quad if\quad }0\leq |\gamma |\leq \tilde{K},\quad x\in \mathbb{R}^{n},
\end{equation*}%
where $\tilde{K}$ and $\tilde{N}$ are arbitrary fixed integer satisfying $%
\tilde{K}\geq \max (0,1+\lfloor s\rfloor )$ and $\tilde{N}\geq \max
\{\lfloor J-n-s\rfloor ,-1\}$. If an atom $\varrho $ is supported near $%
Q_{k,m}$, then we denote it by $\varrho _{k,m}$. If $N=-1$, then $\mathrm{%
\eqref{mom-cond1}}$ means that no moment conditions are required. We see
that every inhomogeneous smooth atom for $\dot{K}_{p,r}^{\alpha ,q}A_{\beta
}^{s}$ is a multiple of an inhomogeneous smooth synthesis molecule for $\dot{%
K}_{p,r}^{\alpha ,q}A_{\beta }^{s}.$

Now we come to the atomic decomposition theorem{.}

\begin{theorem}
\label{atomic-dec}Let $s\in \mathbb{R},0<p<\infty ,0<r,q\leq \infty ,0<\beta
<\infty ,\alpha >-\frac{n}{p}$. Then for each $f\in \dot{K}_{p,r}^{\alpha
,q}A_{\beta }^{s}$, there exist a family\ $\{\varrho _{k,m}\}_{k\in \mathbb{N%
}_{0},m\in \mathbb{Z}^{n}}$ of homogeneous smooth atoms for $\dot{K}%
_{p,r}^{\alpha ,q}A_{\beta }^{s}$ and $\lambda =\{\lambda _{k,m}\}_{k\in 
\mathbb{N}_{0},m\in \mathbb{Z}^{n}}\in \dot{K}_{p,r}^{\alpha ,q}a_{\beta
}^{s}$ such that 
\begin{equation}
f=\sum\limits_{k=0}^{\infty }\sum\limits_{m\in \mathbb{Z}^{n}}\lambda
_{k,m}\varrho _{k,m},\text{\quad converging in }\mathcal{S}^{\prime }(%
\mathbb{R}^{n})  \label{atom-dec}
\end{equation}%
and 
\begin{equation*}
{{\big\|}\{\lambda _{k,m}\}_{k\in \mathbb{N}_{0},m\in \mathbb{Z}^{n}}{\big\|}%
}_{\dot{K}_{p,r}^{\alpha ,q}a_{\beta }^{s}}{\lesssim {\big\|}f\big\|}_{\dot{K%
}_{p,r}^{\alpha ,q}A_{\beta }^{s}}{.}
\end{equation*}%
Conversely, for any family of homogeneous smooth atoms for $\dot{K}%
_{p,r}^{\alpha ,q}A_{\beta }^{s}$ and 
\begin{equation*}
\lambda =\{\lambda _{k,m}\}_{k\in \mathbb{N}_{0},m\in \mathbb{Z}^{n}}\in 
\dot{K}_{p,r}^{\alpha ,q}a_{\beta }^{s},
\end{equation*}%
we have 
\begin{equation*}
{{\big\|}\sum\limits_{k=0}^{\infty }\sum\limits_{m\in \mathbb{Z}^{n}}\lambda
_{k,m}\varrho _{k,m}\big\|}_{\dot{K}_{p,r}^{\alpha ,q}A_{\beta }^{s}}{%
\lesssim {\big\|}\{\lambda _{k,m}\}_{k\in \mathbb{N}_{0},m\in \mathbb{Z}^{n}}%
{\big\|}}_{\dot{K}_{p,r}^{\alpha ,q}a_{\beta }^{s}}{.}
\end{equation*}
\end{theorem}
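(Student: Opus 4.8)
The plan is to follow the Frazier--Jawerth scheme of \cite{FJ90} (see also \cite{triebel08}), transferring all the work to the sequence space $\dot K_{p,r}^{\alpha,q}a_\beta^s$ through the $\varphi$-transform (Theorem \ref{phi-tran-lorentz}) and the almost diagonal estimate (Theorem \ref{almost-diag-est}) combined with the pairing bound of Lemma \ref{matrix-est}. The statement splits into two halves. The \emph{converse} half, namely that a superposition $\sum_{k,m}\lambda_{k,m}\varrho_{k,m}$ of homogeneous smooth atoms lies in $\dot K_{p,r}^{\alpha,q}A_\beta^s$ with $\|\sum_{k,m}\lambda_{k,m}\varrho_{k,m}\|_{\dot K_{p,r}^{\alpha,q}A_\beta^s}\lesssim\|\lambda\|_{\dot K_{p,r}^{\alpha,q}a_\beta^s}$, is essentially immediate: a homogeneous smooth atom for $\dot K_{p,r}^{\alpha,q}A_\beta^s$ supported near $Q_{k,m}$ is, up to a fixed multiplicative constant, a homogeneous smooth synthesis molecule in the sense of Definition \ref{Atom-Def} (the support property \eqref{supp-cond} together with the derivative bound \eqref{diff-cond} trivially yields \eqref{cond1}, \eqref{cond2} and \eqref{cond3} for every admissible $M,\delta$, while the moment condition \eqref{mom-cond1} is exactly \eqref{mom-cond}), so Theorem \ref{molecules-dec}(i) applies directly. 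The convergence of the series \eqref{atom-dec} in $\mathcal S'(\mathbb R^n)$ follows by the same argument as in Lemma \ref{Inv-phi-trans-lorentz}, since each $\varrho_{k,m}$ satisfies size, smoothness and cancellation estimates at least as good as those of the molecules $\Psi_m,\psi_{k,m}$ used there.

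For the direct half, fix $f\in\dot K_{p,r}^{\alpha,q}A_\beta^s$. I would start from the Calder\'on reproducing formula of Lemma \ref{DW-lemma1}, written as $f=\widetilde\Phi\ast\Psi\ast f+\sum_{v=1}^{\infty}\widetilde\varphi_v\ast\psi_v\ast f$, and set $f_v:=\widetilde\varphi_v\ast\psi_v\ast f$ for $v\ge1$ and $f_0:=\widetilde\Phi\ast\Psi\ast f$; each $f_v$ is a smooth band-limited function, and for $v\ge1$ the spectral support of $\psi$ forces $\int_{\mathbb R^n}x^\gamma f_v(x)\,dx=0$ for all multi-indices $\gamma$. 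Choosing $\rho\in C_c^{\infty}(\mathbb R^n)$ supported in a fixed dilate of the unit cube with $\sum_{m\in\mathbb Z^n}\rho(\cdot-m)\equiv1$, I put $g_{v,m}(x)=\rho(2^{v}x-m)f_v(x)$, so that $f=\sum_{v,m}g_{v,m}$ (convergence in $\mathcal S'$ inherited from Lemma \ref{DW-lemma1}) and $\operatorname{supp}g_{v,m}\subset 3Q_{v,m}$. Dividing by the normalising constant $\lambda_{v,m}=c\,\sup_{|\gamma|\le\widetilde K}\sup_{x}2^{-v(|\gamma|+n/2)}|\partial^{\gamma}g_{v,m}(x)|$, with $\widetilde K\ge\max(0,1+\lfloor s\rfloor)$, produces $\varrho_{v,m}=g_{v,m}/\lambda_{v,m}$ which obey \eqref{supp-cond} and \eqref{diff-cond} by construction. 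The one genuinely delicate point is the moment condition \eqref{mom-cond1} for $v\ge1$: the cut-off $\rho(2^{v}\cdot-m)$ destroys the vanishing moments of $f_v$, and this has to be repaired by the telescoping/regrouping procedure of \cite{FJ90}, i.e. by subtracting from each $g_{v,m}$ correction terms built from the coarser pieces $g_{v',m'}$ with $v'<v$ in such a way that the full sum is unchanged, the corrected functions are still supported in a fixed dilate of $Q_{v,m}$, and the needed moments vanish; one then re-normalises as above. (If $N=-1$ no moment condition is imposed and this step is vacuous.)

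It remains to establish $\|\lambda\|_{\dot K_{p,r}^{\alpha,q}a_\beta^s}\lesssim\|f\|_{\dot K_{p,r}^{\alpha,q}A_\beta^s}$. Here I would write $\lambda=A\,t$, where $t=S_\varphi f=\{\langle f,\varphi_{v,h}\rangle\}_{v\in\mathbb N_0,h\in\mathbb Z^n}$, which satisfies $\|t\|_{\dot K_{p,r}^{\alpha,q}a_\beta^s}\lesssim\|f\|_{\dot K_{p,r}^{\alpha,q}A_\beta^s}$ by Theorem \ref{phi-tran-lorentz}, and $A$ is the matrix operator obtained by expanding each $f_v$, hence each $g_{v,m}$ and hence each $\lambda_{k,m}$, through the Calder\'on identity \eqref{proc2}; concretely $|\lambda_{k,m}|\lesssim\sum_{v,h}|a_{Q_{k,m}P_{v,h}}|\,|t_{v,h}|$ with $a_{Q_{k,m}P_{v,h}}$ comparable to $\langle\varrho_{k,m},\widetilde\varphi_{v,h}\rangle$ (after the re-normalisation). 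Since each normalised $\varrho_{k,m}$ is a homogeneous smooth analysis molecule and each $\varphi_{v,h}$ is a smooth synthesis molecule, Lemma \ref{matrix-est} shows $A$ is almost diagonal on $\dot K_{p,r}^{\alpha,q}a_\beta^s$, so Theorem \ref{almost-diag-est} gives $\|\lambda\|_{\dot K_{p,r}^{\alpha,q}a_\beta^s}=\|At\|_{\dot K_{p,r}^{\alpha,q}a_\beta^s}\lesssim\|t\|_{\dot K_{p,r}^{\alpha,q}a_\beta^s}\lesssim\|f\|_{\dot K_{p,r}^{\alpha,q}A_\beta^s}$, which finishes the argument.

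The step I expect to be the main obstacle is precisely the moment-correction bookkeeping: one must arrange the telescoping so that (i) the corrected atoms stay compactly supported near the original cubes, (ii) their derivatives remain dominated by the chosen $\lambda_{k,m}$ up to a dimensional constant, and (iii) the resulting coefficient sequence is still recovered from $t=S_\varphi f$ by an almost diagonal matrix, so that Theorem \ref{almost-diag-est} remains applicable. Beyond this, the Lorentz--Herz structure enters only through Theorems \ref{phi-tran-lorentz} and \ref{almost-diag-est}, which have already been established, so adapting the index set of \cite{FJ90} from $\mathbb Z\times\mathbb Z^n$ to the inhomogeneous range $\mathbb N_0\times\mathbb Z^n$ is the only further routine change.
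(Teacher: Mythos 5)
Your converse half is correct and is exactly the intended route: an inhomogeneous smooth atom is, up to a fixed constant, an inhomogeneous smooth synthesis molecule, so Theorem \ref{molecules-dec}(i) (with the convergence argument as in Lemma \ref{Inv-phi-trans-lorentz}) gives the estimate for $\sum_{k,m}\lambda_{k,m}\varrho_{k,m}$.

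The direct half, however, has two genuine gaps. First, your construction (multiplying the Calder\'on pieces $f_v$ by a partition of unity $\rho(2^v\cdot-m)$ and then ``repairing'' the destroyed moments by a telescoping procedure which you attribute to \cite{FJ90}) leaves the decisive step undone: you never build the correction terms nor verify your own requirements (i)--(iii), and this is in fact not how the Frazier--Jawerth argument runs. There one chooses a \emph{compactly supported} kernel $\theta$ with vanishing moments up to order $N$ and $|\mathcal{F}\theta|\geq c>0$ on an annulus, obtains a companion $\gamma$ with band-limited Fourier transform so that $f=\sum_{v}\theta_v\ast\gamma_v\ast f$, and sets $\varrho_{v,m}(x)=c\,\lambda_{v,m}^{-1}\int_{Q_{v,m}}\theta_v(x-y)\,(\gamma_v\ast f)(y)\,dy$; then support, derivative and moment conditions \eqref{supp-cond}--\eqref{mom-cond1} hold automatically and no moment correction is ever needed. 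Second, your coefficient estimate rests on the claim that the normalised atoms are smooth \emph{analysis} molecules, so that Lemma \ref{matrix-est} and Theorem \ref{almost-diag-est} apply to the matrix $a_{Q_{k,m}P_{v,h}}$; this is false in general. An atom carries vanishing moments only up to order $N=\max\{\lfloor J-n-s\rfloor,-1\}$ (possibly none at all when $s$ is large), whereas an analysis molecule must satisfy the moment condition \eqref{mom-cond2} up to order $\lfloor s\rfloor$ and the smoothness condition \eqref{cond1.2} up to order $N$ --- the moment/smoothness balance is exactly reversed, so the pairing bound of Lemma \ref{matrix-est} is not available in the direction you need. The coefficient bound should instead be obtained directly: by Bernstein-type estimates on the band-limited pieces one gets $\lambda_{v,m}\lesssim 2^{vn/2}\inf_{x\in Q_{v,m}}\varphi_v^{\ast,a}f(x)$ (with $\Phi$ at level $v=0$), and the resulting sequence norm is then controlled by the Peetre maximal characterization of Theorem \ref{fun-char-lorentz}, or equivalently by Lemma \ref{lamda-equi-lorentz} together with Theorem \ref{phi-tran-lorentz}. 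With these two repairs the remaining outline (transfer to the sequence space and back) is sound.
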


\begin{remark}
Let $s\in \mathbb{R},0<p<\infty ,0<r,q\leq \infty ,0<\beta <\infty ,\alpha >-%
\frac{n}{p}$ and $f\in \dot{K}_{p,r}^{\alpha ,q}A_{\beta }^{s}$. Let $%
\{\varrho _{k,m}\}_{k\in \mathbb{N}_{0},m\in \mathbb{Z}^{n}}$ be a family of
homogeneous smooth atoms for $\dot{K}_{p,r}^{\alpha ,q}A_{\beta }^{s}$. From
Theorem \ref{atom-dec} there exist $\lambda =\{\lambda _{k,m}\}_{k\in 
\mathbb{N}_{0},m\in \mathbb{Z}^{n}}\in \dot{K}_{p,r}^{\alpha ,q}a_{\beta
}^{s}$ such that 
\begin{equation*}
f=\sum\limits_{k=0}^{\infty }\sum\limits_{m\in \mathbb{Z}^{n}}\lambda
_{k,m}\varrho _{k,m},\text{\quad converging in }\mathcal{S}^{\prime }(%
\mathbb{R}^{n}),
\end{equation*}%
which can be written as%
\begin{align*}
f& =\sum\limits_{k=0}^{\infty }\sum\limits_{m\in \mathbb{Z}^{n}}2^{(s-\frac{n%
}{p}+\frac{n}{2})k}\lambda _{k,m}2^{-(s-\frac{n}{p}+\frac{n}{2})k}\varrho
_{k,m} \\
& =\sum\limits_{k=0}^{\infty }\sum\limits_{m\in \mathbb{Z}^{n}}\tilde{\lambda%
}_{k,m}\tilde{\varrho}_{k,m}.
\end{align*}%
Observe that%
\begin{equation*}
\int_{\mathbb{R}^{n}}x^{\gamma }\tilde{\varrho}_{k,m}(x)dx=0\text{\quad
if\quad }0\leq |\gamma |\leq \tilde{N}\quad \text{and}\quad k\in \mathbb{N}
\end{equation*}%
and the regularity condition \eqref{diff-cond} can be strengthened into that%
\begin{equation*}
|\partial ^{\gamma }\tilde{\varrho}_{k,m}(x)|\leq 2^{-(s-\frac{n}{p}%
)k+|\beta |k}\text{\quad if\quad }0\leq |\gamma |\leq \tilde{K},\quad x\in 
\mathbb{R}^{n},
\end{equation*}%
where $\tilde{K}$ and $\tilde{N}$ are arbitrary fixed integer satisfying $%
\tilde{K}\geq \max (0,1+\lfloor s\rfloor )$ and $\tilde{N}\geq \max
\{\lfloor J-n-s\rfloor ,-1\}$.
\end{remark}

\begin{definition}
\label{atom2}Let $s\in \mathbb{R},0<p<\infty ,0<r,q\leq \infty ,0<\beta
<\infty ,\alpha >-\frac{n}{p}$ and\ $K,N\in \mathbb{N}_{0}$. A function $%
\varrho _{k,m}$ $k\in \mathbb{N}_{0},m\in \mathbb{Z}^{n}$ are called $(s,p)$%
-atoms if 
\begin{equation*}
\mathrm{supp}\varrho _{k,m}\subseteq 3Q_{k,m}
\end{equation*}%
there exist all (classical) derivatives $\partial ^{\gamma }\varrho _{k,m}$
with $|\gamma |\leq K$ such that%
\begin{equation*}
|\partial ^{\gamma }\varrho _{k,m}(x)|\leq 2^{-(s-\frac{n}{p})k+|\gamma |k}%
\text{\quad if\quad }0\leq |\gamma |\leq K,\quad x\in \mathbb{R}^{n},
\end{equation*}%
and%
\begin{equation*}
\int_{\mathbb{R}^{n}}x^{\gamma }\varrho _{k,m}(x)dx=0\text{\quad if\quad }%
0\leq |\gamma |\leq N\text{\quad and}\quad k\in \mathbb{N},m\in \mathbb{Z}%
^{n}.
\end{equation*}
\end{definition}

Let $\lambda =\{\lambda _{k,m}\}_{k\in \mathbb{N}_{0},m\in \mathbb{Z}%
^{n}}\subset \mathbb{C}$ be a complex valued sequence. We set 
\begin{equation*}
\big\|\lambda \big\|_{\widetilde{\dot{K}_{p,r}^{\alpha ,q}b_{\beta }^{s}}}=%
\Big(\sum_{k=0}^{\infty }2^{k\frac{n\beta }{p}}\big\|\sum\limits_{m\in 
\mathbb{Z}^{n}}\lambda _{k,m}\chi _{k,m}\big\|_{\dot{K}_{p,r}^{\alpha
,q}}^{\beta }\Big)^{1/\beta }
\end{equation*}%
and 
\begin{equation*}
\big\|\lambda \big\|_{\widetilde{\dot{K}_{p,r}^{\alpha ,q}f_{\beta }^{s}}}=%
\Big\|\Big(\sum_{k=0}^{\infty }\sum\limits_{m\in \mathbb{Z}^{n}}2^{k\frac{%
n\beta }{p}}|\lambda _{k,m}|^{\beta }\chi _{k,m}\Big)^{1/\beta }\Big\|_{\dot{%
K}_{p,r}^{\alpha ,q}},\quad 0<p,q<\infty .
\end{equation*}%
From Theorem \ref{atomic-dec} we get the following result:

\begin{theorem}
\label{atomic-decv2}Let $s\in \mathbb{R},0<p<\infty ,0<r,q\leq \infty
,0<\beta <\infty ,\alpha >-\frac{n}{p}$. Let $K,N\in \mathbb{N}_{0}$ with%
\begin{equation*}
K>s\quad \text{and}\quad N\geq \max \{\lfloor J-n-s\rfloor ,-1\}.
\end{equation*}%
Then $f\in \dot{K}_{p,r}^{\alpha ,q}A_{\beta }^{s}$ if, and only if, it can
be represented as 
\begin{equation}
f=\sum\limits_{k=0}^{\infty }\sum\limits_{m\in \mathbb{Z}^{n}}\lambda
_{k,m}\varrho _{k,m},\text{\quad converging in }\mathcal{S}^{\prime }(%
\mathbb{R}^{n}),  \label{rep2}
\end{equation}%
where $\varrho _{k,m}$ $k\in \mathbb{N}_{0},m\in \mathbb{Z}^{n}$ are $(s,p)$%
-atoms. Furthermore, 
\begin{equation*}
{{\big\|}f\big\|}_{\dot{K}_{p,r}^{\alpha ,q}A_{\beta }^{s}}\approx \inf {{%
\big\|}\{\lambda _{k,m}\}_{k\in \mathbb{N}_{0},m\in \mathbb{Z}^{n}}{\big\|}}%
_{\widetilde{\dot{K}_{p,r}^{\alpha ,q}a_{\beta }^{s}}}{,}
\end{equation*}%
are equivalent quasi-norms where the infimum is taken over all admissible
representations \eqref{rep2}.
\end{theorem}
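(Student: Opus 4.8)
The plan is to deduce Theorem \ref{atomic-decv2} from the already established smooth atomic decomposition Theorem \ref{atomic-dec}, by checking that, up to a harmless renormalization, $(s,p)$-atoms in the sense of Definition \ref{atom2} and the smooth atoms of Definition \ref{atom} describe the same class of admissible representations. The key book-keeping observation is the one recorded in the Remark following Theorem \ref{atomic-dec}: if $\{\varrho_{k,m}\}$ is a family of homogeneous smooth atoms and $\lambda\in\dot{K}_{p,r}^{\alpha,q}a_\beta^s$, then writing $\tilde\lambda_{k,m}=2^{(s-\frac np+\frac n2)k}\lambda_{k,m}$ and $\tilde\varrho_{k,m}=2^{-(s-\frac np+\frac n2)k}\varrho_{k,m}$ one has $f=\sum_{k,m}\tilde\lambda_{k,m}\tilde\varrho_{k,m}$, where the $\tilde\varrho_{k,m}$ satisfy exactly the support, derivative and moment bounds of Definition \ref{atom2} with $\tilde K\ge\max(0,1+\lfloor s\rfloor)$ and $\tilde N\ge\max\{\lfloor J-n-s\rfloor,-1\}$, and, crucially, the renormalized coefficient sequence obeys
\begin{equation*}
\big\|\{\tilde\lambda_{k,m}\}\big\|_{\widetilde{\dot{K}_{p,r}^{\alpha,q}a_\beta^s}}\approx\big\|\{\lambda_{k,m}\}\big\|_{\dot{K}_{p,r}^{\alpha,q}a_\beta^s},
\end{equation*}
since the factor $2^{(s-\frac np+\frac n2)k}$ is precisely what turns the weight $2^{k(s+\frac n2)\beta}$ appearing in $\dot{K}_{p,r}^{\alpha,q}a_\beta^s$ into the weight $2^{kn\beta/p}$ appearing in $\widetilde{\dot{K}_{p,r}^{\alpha,q}a_\beta^s}$; here one uses that $\|\chi_{k,m}\|_{\dot{K}_{p,r}^{\alpha,q}}$ scales like $2^{-kn/p}$ times a bounded Herz-weight factor, which follows from \eqref{est-function1} and Definition \ref{Herz-lorentz}. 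I would state this equivalence of the two sequence quasi-norms as the first lemma of the proof and verify it directly from the definitions.

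With that equivalence in hand, the two implications follow quickly. For the ``only if'' direction, let $f\in\dot{K}_{p,r}^{\alpha,q}A_\beta^s$; Theorem \ref{atomic-dec} gives a decomposition $f=\sum_{k,m}\lambda_{k,m}\varrho_{k,m}$ converging in $\mathcal{S}'(\mathbb{R}^n)$ with smooth atoms $\varrho_{k,m}$ and $\|\lambda\|_{\dot{K}_{p,r}^{\alpha,q}a_\beta^s}\lesssim\|f\|_{\dot{K}_{p,r}^{\alpha,q}A_\beta^s}$. Renormalizing as above produces an $(s,p)$-atomic representation \eqref{rep2} with $\|\tilde\lambda\|_{\widetilde{\dot{K}_{p,r}^{\alpha,q}a_\beta^s}}\lesssim\|f\|_{\dot{K}_{p,r}^{\alpha,q}A_\beta^s}$, whence the infimum over admissible representations is $\lesssim\|f\|_{\dot{K}_{p,r}^{\alpha,q}A_\beta^s}$. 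One should note here that the moment order $N$ in Definition \ref{atom2} is allowed to be any integer $\ge\max\{\lfloor J-n-s\rfloor,-1\}$ and $K$ any integer $>s$ (hence $K\ge\max(0,1+\lfloor s\rfloor)$), so the atoms produced by Theorem \ref{atomic-dec} are, after renormalization, admissible for \emph{every} such pair $(K,N)$; this is exactly the content of the strengthened conditions discussed in the Remark and must be invoked to match the statement.

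For the converse, suppose $f=\sum_{k,m}\lambda_{k,m}\varrho_{k,m}$ with $(s,p)$-atoms $\varrho_{k,m}$ and $\lambda\in\widetilde{\dot{K}_{p,r}^{\alpha,q}a_\beta^s}$. Reversing the renormalization, set $\mu_{k,m}=2^{-(s-\frac np+\frac n2)k}\lambda_{k,m}$ and $\sigma_{k,m}=2^{(s-\frac np+\frac n2)k}\varrho_{k,m}$, so that $f=\sum_{k,m}\mu_{k,m}\sigma_{k,m}$; one checks from Definition \ref{atom2} that the $\sigma_{k,m}$ satisfy \eqref{supp-cond}, \eqref{diff-cond} and \eqref{mom-cond1}, i.e.\ they are homogeneous smooth atoms for $\dot{K}_{p,r}^{\alpha,q}A_\beta^s$ in the sense of Definition \ref{atom} (this uses $K>s$ to cover $|\beta|\le\max(0,1+\lfloor s\rfloor)$ and $N\ge\max\{\lfloor J-n-s\rfloor,-1\}$ for the moments), and by the norm equivalence $\|\mu\|_{\dot{K}_{p,r}^{\alpha,q}a_\beta^s}\approx\|\lambda\|_{\widetilde{\dot{K}_{p,r}^{\alpha,q}a_\beta^s}}$. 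Theorem \ref{atomic-dec} then yields $\|f\|_{\dot{K}_{p,r}^{\alpha,q}A_\beta^s}\lesssim\|\mu\|_{\dot{K}_{p,r}^{\alpha,q}a_\beta^s}\approx\|\lambda\|_{\widetilde{\dot{K}_{p,r}^{\alpha,q}a_\beta^s}}$; taking the infimum over representations gives $\|f\|_{\dot{K}_{p,r}^{\alpha,q}A_\beta^s}\lesssim\inf\|\lambda\|_{\widetilde{\dot{K}_{p,r}^{\alpha,q}a_\beta^s}}$, and combined with the first direction this is the claimed equivalence. Convergence in $\mathcal{S}'(\mathbb{R}^n)$ is inherited in both directions since the renormalization only rescales each summand by a positive scalar, so the partial sums are unchanged as distributions.

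The main obstacle, and the only place requiring genuine care rather than bookkeeping, is the norm-equivalence lemma: one must verify that the rescaling exponent $s-\frac np+\frac n2$ is exactly right so that $2^{k(s+\frac n2)\beta}\|\sum_m\lambda_{k,m}\chi_{k,m}\|_{\dot{K}_{p,r}^{\alpha,q}}^\beta$ and $2^{kn\beta/p}\|\sum_m\tilde\lambda_{k,m}\chi_{k,m}\|_{\dot{K}_{p,r}^{\alpha,q}}^\beta$ agree (and similarly for the $f$-norm, where the $\chi_{k,m}$ sit inside the $\ell^\beta$-sum and one pulls the scalar $2^{(s-\frac np+\frac n2)k}$ through). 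This amounts to the elementary identity $\|\sum_m\lambda_{k,m}\chi_{k,m}\|_{\dot{K}_{p,r}^{\alpha,q}}=2^{-kn/2}\cdot 2^{-k(\frac n2-\frac n2)}\cdots$—more precisely, since $\chi_{k,m}=\chi_{Q_{k,m}}$ with $|Q_{k,m}|=2^{-kn}$, the factor $2^{kn/p}$ from the renormalization cancels the $2^{-kn/p}$ coming from $\|\chi_{k,m}\|_{L^{p,r}}$, leaving the Herz-weight structure untouched; the computation is routine once set up correctly, but getting the exponent arithmetic exactly consistent with Definition \ref{sequence-space-lorentz} is the crux. Everything else is a direct application of Theorem \ref{atomic-dec} and the observation on strengthening moment/regularity orders.
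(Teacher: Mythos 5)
Your proposal is correct and takes essentially the same route as the paper, which deduces this theorem from Theorem \ref{atomic-dec} via precisely the renormalization $\tilde\lambda_{k,m}=2^{(s-\frac{n}{p}+\frac{n}{2})k}\lambda_{k,m}$, $\tilde\varrho_{k,m}=2^{-(s-\frac{n}{p}+\frac{n}{2})k}\varrho_{k,m}$ recorded in the Remark after that theorem, together with the resulting identity between the quasi-norms of $\dot{K}_{p,r}^{\alpha,q}a_{\beta}^{s}$ and $\widetilde{\dot{K}_{p,r}^{\alpha ,q}a_{\beta }^{s}}$. One minor point: that identity requires no cancellation against $\big\|\chi_{k,m}\big\|_{L^{p,r}}$ as you suggest at the end --- one simply pulls the constant $2^{(s-\frac{n}{p}+\frac{n}{2})k}$ out of the inner norm and uses $2^{k\frac{n}{p}}\cdot 2^{(s-\frac{n}{p}+\frac{n}{2})k}=2^{k(s+\frac{n}{2})}$ --- but this does not affect the validity of your argument.
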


\subsection{Wavelet characterization}

Using the characterizations of $\dot{K}_{p,r}^{\alpha ,q}A_{\beta }^{s}$
spaces by atom obtained in Section 5.1, we establish characterizations of $%
\dot{K}_{p,r}^{\alpha ,q}A_{\beta }^{s}$ by wavelets. We begin with
recalling the notion of kernels; see \cite[Definition 9]{triebel08}.

\begin{definition}
\label{kernel}Let $A,B\in \mathbb{N}_{0}$ and $C>0$. Then $L_{\infty }$%
-functions $k_{j,m}:\mathbb{R}^{n}\rightarrow \mathbb{C}$ with $j\in \mathbb{%
N}_{0},m\in \mathbb{Z}^{n}$, are called kernels if%
\begin{equation*}
\mathrm{supp}k_{j,m}\subset CQ_{j,m},\text{\quad if\quad }j\in \mathbb{N}%
_{0},\quad m\in \mathbb{Z}^{n};
\end{equation*}%
there exist all (classical) derivatives $\partial ^{\beta }k_{j,m}$ with $%
|\beta |\leq A$ such that%
\begin{equation}
|\partial ^{\beta }k_{j,m}(x)|\leq 2^{j(n+|\beta |)},\text{\quad if\quad }%
|\beta |\leq A,\quad j\in \mathbb{N}_{0},\quad m\in \mathbb{Z}^{n};
\label{normalisation2}
\end{equation}%
and%
\begin{equation}
\int_{\mathbb{R}^{n}}x^{\beta }k_{j,m}(x)dx=0,\text{\quad if\quad }|\beta
|<B,\quad j\in \mathbb{N},\quad m\in \mathbb{Z}^{n}.  \label{mom-condT}
\end{equation}
\end{definition}

\begin{remark}
When $B=0$ or $j=0$, there are no moment conditions \eqref{mom-condT} on the
kernels. In view the Definition \ref{atom} for atoms we have different
normalisations in \eqref{diff-cond} and in \eqref{normalisation2}.
\end{remark}

\begin{definition}
\label{sequence-space copy(1)}Let $\alpha ,s\in \mathbb{R},0<p<\infty
,0<r,q\leq \infty $ and $0<\beta \leq \infty $.\newline
$\mathrm{(i)}$ The\ space $\dot{K}_{p,r}^{\alpha ,q}\bar{b}_{\beta }^{s}$\
is defined to be the set of all complex valued sequences $\lambda =\{\lambda
_{k,m}\}_{k\in \mathbb{N}_{0},m\in \mathbb{Z}^{n}}\subset \mathbb{C}$ such
that%
\begin{equation*}
\big\|\lambda \big\|_{\dot{K}_{p,r}^{\alpha ,q}\bar{b}_{\beta }^{s}}=\Big(%
\sum_{k=0}^{\infty }2^{ks\beta }\big\|\sum\limits_{m\in \mathbb{Z}%
^{n}}\lambda _{k,m}\chi _{k,m}\big\|_{\dot{K}_{p,r}^{\alpha ,q}}^{\beta }%
\Big)^{1/\beta }<\infty .
\end{equation*}%
$\mathrm{(ii)}$ Let $0<p,q<\infty $. The\ space $\dot{K}_{p,r}^{\alpha ,q}%
\bar{f}_{\beta }^{s}$\ is defined to be the set of all complex valued
sequences $\lambda =\{\lambda _{k,m}\}_{k\in \mathbb{N}_{0},m\in \mathbb{Z}%
^{n}}\subset \mathbb{C}$ such that 
\begin{equation*}
\big\|\lambda \big\|_{\dot{K}_{p,r}^{\alpha ,q}\bar{f}_{\beta }^{s}}=\Big\|%
\Big(\sum_{k=0}^{\infty }\sum\limits_{m\in \mathbb{Z}^{n}}2^{ks\beta
}|\lambda _{k,m}|^{\beta }\chi _{k,m}\Big)^{1/\beta }\Big\|_{\dot{K}%
_{p,r}^{\alpha ,q}}<\infty .
\end{equation*}
\end{definition}

\begin{definition}
Let $f\in \dot{K}_{p,r}^{\alpha ,q}A_{\beta }^{s},\alpha ,s\in \mathbb{R}%
,0<p<\infty ,0<r,q\leq \infty $ and $0<\beta \leq \infty $. Let $%
k_{j,m},j\in \mathbb{N}_{0},m\in \mathbb{Z}^{n}$ be kernels according to
Definition \ref{kernel} with $A>\max (J-n,0)-s$ and $B\in \mathbb{N}_{0}$.
We set%
\begin{equation}
k_{j,m}(f)=\langle f,k_{j,m}\rangle =\int_{\mathbb{R}^{n}}k_{j,m}(y)f(y)dy,%
\quad j\in \mathbb{N}_{0},m\in \mathbb{Z}^{n},  \label{kernel2}
\end{equation}%
where $\langle \cdot ,\cdot \rangle $ denotes the duality bracket between $%
\mathcal{S}(\mathbb{R}^{n})$ and $\mathcal{S}^{\prime }(\mathbb{R}^{n})$.
Furthermore,%
\begin{equation*}
k(f)=\{k_{j,m}(f):j\in \mathbb{N}_{0},m\in \mathbb{Z}^{n}\}.
\end{equation*}
\end{definition}

\begin{remark}
First, assume that the expression \eqref{kernel} makes sense, at least
formally. Later on we will justify the dual pairing.
\end{remark}

\begin{theorem}
\label{kernel3}Let $s\in \mathbb{R},0<p<\infty ,0<r,q\leq \infty ,0<\beta
\leq \infty $ and $\alpha >-\frac{n}{p}$. Let $k_{j,m},j\in \mathbb{N}%
_{0},m\in \mathbb{Z}^{n}$ be kernels according to Definition \ref{kernel}\
where $C>0$ is fixed, $A>\max (J-n,0)-s$ and $B>s$.$\newline
\mathrm{(i)}$\ It holds%
\begin{equation*}
\big\|k(f)\big\|_{\dot{K}_{p,r}^{\alpha ,q}\bar{b}_{\beta }^{s}}\lesssim %
\big\|f\big\|_{\dot{K}_{p,r}^{\alpha ,q}B_{\beta }^{s}}
\end{equation*}%
for all $f\in \dot{K}_{p,r}^{\alpha ,q}B_{\beta }^{s}.\newline
\mathrm{(ii)}$ Let $0<p,q<\infty $. It holds 
\begin{equation}
\big\|k(f)\big\|_{\dot{K}_{p,r}^{\alpha ,q}\bar{f}_{\beta }^{s}}\lesssim %
\big\|f\big\|_{\dot{K}_{p,r}^{\alpha ,q}F_{\beta }^{s}}
\label{rep-new-lorentz}
\end{equation}%
for all $f\in \dot{K}_{p,r}^{\alpha ,q}F_{\beta }^{s}.$
\end{theorem}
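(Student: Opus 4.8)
The plan is to reduce the statement to the molecular characterization already established in Theorem~\ref{molecules-dec}. The key observation is that, after a suitable renormalization, each kernel $k_{j,m}$ is a constant multiple of a homogeneous smooth analysis molecule for $\dot{K}_{p,r}^{\alpha ,q}A_{\beta }^{s}$. Indeed, comparing Definition~\ref{kernel} with the conditions \eqref{mom-cond2}--\eqref{cond1.3} in Definition~\ref{Atom-Def}, the support condition $\mathrm{supp}\,k_{j,m}\subset CQ_{j,m}$ forces the decay estimates \eqref{cond1.1bis}--\eqref{cond1.2} to hold automatically (a compactly supported function localized near $Q_{j,m}$ satisfies any polynomial decay bound on $\mathbb{R}^{n}$), the derivative bounds \eqref{normalisation2} give \eqref{cond1.2} up to the normalization factor $2^{-jn/2}$, the moment conditions \eqref{mom-condT} with $B>s$ give \eqref{mom-cond2} since $\lfloor s\rfloor <B$, and the H\"{o}lder-type condition \eqref{cond1.3} at $|\beta |=N$ follows from the mean value theorem applied to $\partial^{\beta }k_{j,m}$ using the next-order derivative bound in \eqref{normalisation2}, which is available because $A>\max (J-n,0)-s\geq N$. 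One has to check that $A$ being strictly larger than $\max(J-n,0)-s$ indeed dominates $N=\max\{\lfloor J-n-s\rfloor ,-1\}$; this is a routine comparison of the integer and real expressions.

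With this identification in hand, I would set $b_{j,m}=c\,2^{-jn/2}k_{j,m}$ for an appropriate fixed constant $c=c(A,B,n,C)>0$, so that $\{b_{j,m}\}_{j\in\mathbb{N}_{0},m\in\mathbb{Z}^{n}}$ is a family of homogeneous smooth analysis molecules for $\dot{K}_{p,r}^{\alpha ,q}A_{\beta }^{s}$ in the sense of Definition~\ref{Atom-Def}. Then Theorem~\ref{molecules-dec}/(ii) applies directly and yields
\begin{equation*}
\big\|\{\langle f,b_{j,m}\rangle \}_{j\in \mathbb{N}_{0},m\in \mathbb{Z}^{n}}\big\|_{\dot{K}_{p,r}^{\alpha ,q}a_{\beta }^{s}}\lesssim \big\|f\big\|_{\dot{K}_{p,r}^{\alpha ,q}A_{\beta }^{s}}
\end{equation*}
for all $f\in \dot{K}_{p,r}^{\alpha ,q}A_{\beta }^{s}$. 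Undoing the normalization, $\langle f,k_{j,m}\rangle =c^{-1}2^{jn/2}\langle f,b_{j,m}\rangle $, and the factor $2^{jn/2}$ is precisely the discrepancy between the sequence norm $\dot{K}_{p,r}^{\alpha ,q}a_{\beta }^{s}$ appearing in Definition~\ref{sequence-space-lorentz} (which carries the weight $2^{k(s+n/2)\beta}$) and the norm $\dot{K}_{p,r}^{\alpha ,q}\bar a_{\beta }^{s}$ of Definition~\ref{sequence-space copy(1)} (which carries only $2^{ks\beta}$). Hence $\big\|k(f)\big\|_{\dot{K}_{p,r}^{\alpha ,q}\bar a_{\beta }^{s}}\approx \big\|\{\langle f,b_{j,m}\rangle \}\big\|_{\dot{K}_{p,r}^{\alpha ,q}a_{\beta }^{s}}$, which gives both (i) and (ii) simultaneously.

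One technical point that must be addressed before invoking Theorem~\ref{molecules-dec} is that $\langle f,k_{j,m}\rangle $ is well defined for $f\in \dot{K}_{p,r}^{\alpha ,q}A_{\beta }^{s}$. Since $k_{j,m}$ is an $L_{\infty}$-function with compact support and finitely many vanishing moments rather than a Schwartz function, the dual pairing in \eqref{kernel2} is not literally the $\mathcal{S}$--$\mathcal{S}'$ bracket. I would handle this exactly as in the discussion preceding Theorem~\ref{molecules-dec}: by \cite[Lemma 5.4]{BoHo06} one approximates $k_{j,m}$ by Schwartz functions $\varphi_{\ell}$ with $c\varphi_{\ell}$ still analysis molecules and $\varphi_{\ell}\to k_{j,m}$ uniformly, so $\langle f,k_{j,m}\rangle :=\lim_{\ell}\langle f,\varphi_{\ell}\rangle$ exists and the molecular estimate passes to the limit. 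The main obstacle is therefore not any hard analysis but the careful bookkeeping: verifying all the size, smoothness and moment inequalities of Definition~\ref{Atom-Def} from those of Definition~\ref{kernel}, tracking the normalization constant $2^{jn/2}$ through the two different sequence-space conventions, and confirming the inequality $A>\max(J-n,0)-s$ is strong enough to supply the Lipschitz estimate \eqref{cond1.3} with the required order $N$. The $B$-case and $F$-case are treated uniformly since Theorem~\ref{molecules-dec} is stated for $\dot{K}_{p,r}^{\alpha ,q}A_{\beta }^{s}$.
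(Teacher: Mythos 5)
Your reduction to the molecular characterization is essentially sound, but it is a genuinely different route from the one the paper takes. The paper does not pass through Theorem \ref{molecules-dec} at all: it expands $f$ into $(s,p)$-atoms via Theorem \ref{atomic-decv2}, splits $k_{j,m}(f)$ into the contributions of atoms with $v\leq j$ and $v>j$, and estimates each pairing $\langle \varrho_{v,z},k_{j,m}\rangle$ by hand --- Taylor-expanding the atom against the kernel's moment conditions in one regime and the kernel against the atom's moment conditions in the other --- before closing the argument with the vector-valued maximal inequality (Lemma \ref{Maximal-Inq copy(2)-lorentz}) and Lemma \ref{lem:lq-inequality}; the bulk of the paper's proof (Steps 1 and 3) is devoted to justifying the dual pairing via the embedding \eqref{claim-emb}. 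Your observation that $2^{-jn/2}k_{j,m}$ is, up to a constant depending on $A,B,C,n$, an analysis molecule in the sense of Definition \ref{Atom-Def}(ii) is correct: the compact support in $CQ_{j,m}$ trivializes all the polynomial decay factors, $A>\max(J-n,0)-s$ forces $A\geq N+1$ so that \eqref{cond1.2} and the Lipschitz condition \eqref{cond1.3} (with $\kappa=1$, which always exceeds $(J-s)^{\ast}$) follow, $B>s$ supplies \eqref{mom-cond2}, and the factor $2^{jn/2}$ is exactly the discrepancy between the weights $2^{k(s+n/2)\beta}$ and $2^{ks\beta}$ in Definitions \ref{sequence-space-lorentz} and \ref{sequence-space copy(1)}. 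What your approach buys is brevity --- the hard estimates are already encapsulated in Lemma \ref{matrix-est} and Theorem \ref{almost-diag-est} --- at the cost of making the result depend on the full molecular machinery; the paper's direct proof is longer but self-contained and, in particular, keeps explicit control of the maximal-function bounds that are reused in the wavelet theorem.

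There is one genuine coverage gap. Theorem \ref{molecules-dec} (and the almost-diagonal Theorem \ref{almost-diag-est} behind it) is stated only for $0<\beta<\infty$, whereas Theorem \ref{kernel3} allows $0<\beta\leq\infty$, and the case $\beta=\infty$ is actually needed later (e.g.\ in Theorem \ref{wavelet} and in the $f_\infty^{s}$ sequence spaces used throughout Section 4). Your reduction therefore does not literally cover $\beta=\infty$; you would either have to extend the almost-diagonal boundedness to $\beta=\infty$ (which is true but requires the usual supremum modification of the proof of Theorem \ref{almost-diag-est}) or treat that endpoint by a separate limiting argument. Apart from this endpoint, and modulo the well-definedness of $\langle f,k_{j,m}\rangle$ --- which you correctly propose to handle by the approximation lemma of Bownik--Ho quoted before Theorem \ref{molecules-dec}, at the same level of rigor as the paper itself --- your argument is complete.
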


\begin{proof}
We will proceed in two steps.

\textit{Step 1.} {Let $\varphi $ be a continuous function with a compact
support in the unit ball having }all classical continuous derivatives of
order%
\begin{equation*}
\partial ^{\beta }\varphi ,\quad \frac{\partial }{\partial x_{i}}\partial
^{\beta }\varphi ,\quad \left\vert \beta \right\vert \leq N,\quad i=1,...,n.
\end{equation*}%
Let $f\in \dot{K}_{p,r}^{\alpha ,q}B_{\beta }^{s}$ be expanded according to %
\eqref{atom-dec}. We get from the moment conditions $\mathrm{%
\eqref{mom-cond1}}$ for fixed $j\in \mathbb{N}_{0}$%
\begin{align}
& \sum\limits_{j=0}^{\infty }\int\limits_{\mathbb{R}^{n}}\sum\limits_{m\in 
\mathbb{Z}^{n}}\lambda _{j,m}\varrho _{j,m}(y)\varphi (y)dy
\label{converges-atom} \\
& =\sum\limits_{j=0}^{\infty }\int\limits_{\mathbb{R}^{n}}\sum\limits_{m\in 
\mathbb{Z}^{n}}\lambda _{j,m}\varrho _{j,m}(y)\Big(\varphi
(y)-\sum\limits_{\left\vert \beta \right\vert <N}(y-2^{-j}m)^{\beta }\frac{%
\partial ^{\beta }\varphi (2^{-j}m)}{\beta !}\Big)dy  \notag \\
& =\sum\limits_{j=0}^{\infty }\sum\limits_{v=-\infty }^{\infty
}\int\limits_{R_{v}}\sum\limits_{m\in \mathbb{Z}^{n}}\lambda _{j,m}\varrho
_{j,m}(y)\Omega _{j,m}(y)dy,  \notag
\end{align}%
Let us estimate the sum 
\begin{equation}
\sum\limits_{v=-\infty }^{0}\int\limits_{R_{v}}\sum\limits_{m\in \mathbb{Z}%
^{n}}\lambda _{j,m}\varrho _{j,m}(y)\Omega _{j,m}(y)dy.  \label{sum-ne}
\end{equation}%
We use the Taylor expansion of $\varphi $ up to order $N-1$\ with respect to
the off-points $2^{-j}m$, we obtain%
\begin{equation*}
\Omega _{j,m}(y)=\sum\limits_{\left\vert \beta \right\vert
=N}(y-2^{-j}m)^{\beta }\frac{\partial ^{\beta }\varphi (\xi )}{\beta !},
\end{equation*}%
with $\xi $ on the line segment joining $y$ and $2^{-j}m$. Since 
\begin{equation*}
1+\left\vert y\right\vert \leq \left( 1+\left\vert \xi \right\vert \right)
\left( 1+\left\vert y-2^{-j}m\right\vert \right) ,
\end{equation*}%
we estimate%
\begin{align*}
\big|\Omega _{j,m}(y)\big|& \leq \left( 1+\left\vert y-2^{-j}m\right\vert
\right) ^{N}\sum\limits_{\left\vert \beta \right\vert =N}\frac{|\partial
^{\beta }\varphi (\xi )|}{\beta !} \\
& \leq \left( 1+\left\vert y-2^{-j}m\right\vert \right) ^{N}(1+\left\vert
\xi \right\vert )^{-S}\big\|\varphi \big\|_{\mathcal{S}_{n}(\mathbb{R}^{n})}
\\
& \leq c\left( 1+\left\vert y\right\vert \right) ^{-S}\left( 1+\left\vert
y-2^{-j}m\right\vert \right) ^{N+S},
\end{align*}%
where $S>0$ is at our disposal. Let $-\frac{\alpha p}{n}<t<\min (1,p)=1+p-%
\frac{p}{\min (1,p)}$ and $h=s+\frac{n}{p}(t-1)$ be such that 
\begin{equation*}
n(1-\frac{1}{\min (1,p)})+s>h>-1-N.
\end{equation*}%
Since $\varrho _{j,m}$ are atoms, then 
\begin{equation*}
2^{-j(N+1)}\left\vert \varrho _{j,m}(y)\right\vert \lesssim
2^{jh}2^{-j(N+1+h)}\left( 1+2^{j}\left\vert y-2^{-j}m\right\vert \right)
^{-M},
\end{equation*}%
where the implicit constant is independent of $j$ and $m$. Therefore, the
sum $\mathrm{\eqref{sum-ne}}$ can be estimated by%
\begin{equation}
c\text{ }2^{-j(N+1+h)}\sum\limits_{v=-\infty
}^{0}\int\limits_{R_{v}}\sum\limits_{m\in \mathbb{Z}^{n}}2^{hj}\left\vert
\lambda _{j,m}\right\vert \left( 1+2^{j}\left\vert y-2^{-j}m\right\vert
\right) ^{N+S-M}(1+\left\vert y\right\vert )^{-S}dy.  \label{Max-estimate}
\end{equation}%
Since $M$ can be taken large enough, by the same arguments as in Lemma \ref%
{almost-diag-est} we obtain%
\begin{equation*}
\sum\limits_{m\in \mathbb{Z}^{n}}\left\vert \lambda _{j,m}\right\vert \left(
1+2^{j}\left\vert y-2^{-j}m\right\vert \right) ^{N+S-M}\leq c\mathcal{M}%
_{\tau }\big(\sum\limits_{m\in \mathbb{Z}^{n}}\left\vert \lambda
_{j,m}\right\vert \chi _{j,m}\big)(y)
\end{equation*}%
for any $y\in R_{v}\cap Q_{j,l}$ with $l\in \mathbb{Z}^{n}$ where $0<\tau
<\min (1,\frac{p}{t},\frac{n}{\alpha +\frac{nt}{p}})$. We split $S$ into $%
R+T $ with $R+\alpha <0$ and $T$ large enough such that $T>\max (-R,\frac{%
n(p-t)}{p})$. Then $\mathrm{\eqref{Max-estimate}}$ is bounded by%
\begin{equation*}
c\ 2^{-j(N+1+h)}\sum\limits_{v=-\infty }^{0}2^{-vR}\int\limits_{R_{v}}%
\mathcal{M}_{\tau }\big(\sum\limits_{m\in \mathbb{Z}^{n}}2^{jh}\left\vert
\lambda _{j,m}\right\vert \chi _{j,m}\big)(y)(1+\left\vert y\right\vert
)^{-T}dy.
\end{equation*}%
Since we have in addition the factor $(1+\left\vert y\right\vert )^{-T}$, it
follows by H\"{o}lder's inequality that this expression is dominated by%
\begin{align*}
& c\ 2^{-j(N+1+h)}\sum\limits_{v=-\infty }^{0}2^{-vR}\Big\|\mathcal{M}_{\tau
}\Big(\sum\limits_{m\in \mathbb{Z}^{n}}2^{hj}\left\vert \lambda
_{j,m}\right\vert \chi _{j,m}\Big)\chi _{v}\Big\|_{L^{p/t,\infty }} \\
& \leq c\text{ }2^{-j(N+1+h)}\sum\limits_{v=-\infty }^{0}2^{-v(\alpha +R)}%
\Big\|\sum\limits_{m\in \mathbb{Z}^{n}}2^{hj}\left\vert \lambda
_{j,m}\right\vert \chi _{j,m}\Big\|_{\dot{K}_{p/t,\infty }^{\alpha ,\infty }}
\\
& \leq c\text{ }2^{-j(N+1+h)}\big\|\lambda \big\|_{\dot{K}_{p/t,\infty
}^{\alpha ,q}b_{\infty }^{h}},
\end{align*}%
where the first inequality follows by the boundedness of the
Hardy-Littlewood maximal operator $\mathcal{M}_{\tau }$ on $\dot{K}%
_{p/t,\infty }^{\alpha ,\infty }$. Using a combination of the arguments used
above, the sum 
\begin{equation*}
\sum\limits_{v=1}^{\infty }\int\limits_{R_{v}}\sum\limits_{m\in \mathbb{Z}%
^{n}}\lambda _{j,m}\varrho _{j,m}(y)\Omega _{j,m}(y)dy
\end{equation*}%
can be estimated from above by%
\begin{equation*}
c2^{-j(N+1+h)}\big\|\lambda \big\|_{\dot{K}_{p/t,\infty }^{\alpha
,q}b_{\infty }^{h}},
\end{equation*}%
where the positive constant $c$ is independent of $j$ and $\lambda $. We
claim that%
\begin{equation}
\dot{K}_{p,r}^{\alpha ,q}b_{\infty }^{s}\hookrightarrow \dot{K}_{p/t,\infty
}^{\alpha ,q}b_{\infty }^{h}.  \label{claim-emb}
\end{equation}%
Since $N+1+h>0$, by the embeddings \eqref{claim-emb}\ we obtain that %
\eqref{converges-atom} converges absolutely in $\mathbb{C}$; see Theorem \ref%
{embeddings3-lorentz}, and $\langle f,\Psi \rangle $ makes sense as a dual
pairing. The $F$-counterpart follows by the embeddings 
\begin{equation*}
\dot{K}_{p,\infty }^{\alpha ,q}f_{\infty }^{s}\hookrightarrow \dot{K}%
_{p,\infty }^{\alpha ,q}b_{\infty }^{s}\hookrightarrow \dot{K}_{p/t,\infty
}^{\alpha ,q}b_{\infty }^{h}.
\end{equation*}

\textit{Step 2.} By Step 1, the local means $k_{j,m}(f)$ make sense. By
similarity, we consider only the spaces $\dot{K}_{p,\infty }^{\alpha
,q}F_{\beta }^{s}$. Let 
\begin{equation}
f=\sum\limits_{k=0}^{\infty }\sum\limits_{z\in \mathbb{Z}^{n}}\lambda
_{k,z}\varrho _{k,z},  \label{atom3}
\end{equation}%
be an atomic decomposition of $f\in \dot{K}_{p,\infty }^{\alpha ,q}F_{\beta
}^{s}$, where $\tilde{K}=B>s$ and $\tilde{N}=A>\max (J-n,0)-s$ and $\varrho
_{k,m}$ $k\in \mathbb{N}_{0},m\in \mathbb{Z}^{n}$ are $(s,p)$-atoms. Let $%
j\in \mathbb{N}_{0}$. We split \eqref{atom3} into 
\begin{equation*}
f=\sum\limits_{v=0}^{j}\sum\limits_{z\in \mathbb{Z}^{n}}\lambda
_{v,z}\varrho _{v,z}+\sum\limits_{v=j+1}^{\infty }\sum\limits_{z\in \mathbb{Z%
}^{n}}\lambda _{v,z}\varrho _{v,z}.
\end{equation*}%
We set%
\begin{equation*}
V_{1,j,m}=\int_{\mathbb{R}^{n}}k_{j,m}(y)\sum\limits_{v=0}^{j}\sum\limits_{z%
\in \mathbb{Z}^{n}}\lambda _{v,z}\varrho _{v,z}(y)dy
\end{equation*}%
and%
\begin{equation*}
V_{2,j,m}=\int_{\mathbb{R}^{n}}k_{j,m}(y)\sum\limits_{v=j+1}^{\infty
}\sum\limits_{z\in \mathbb{Z}^{n}}\lambda _{v,z}\varrho _{v,z}(y)dy.
\end{equation*}

\textit{Estimate of }$V_{1,j,m}$. Let $y\in CQ_{j,m}\cap 3Q_{v,z}$. Then%
\begin{equation*}
|z-2^{v}y|\lesssim 1\quad \text{and}\quad |2^{v-j}m-2^{v}y|\lesssim 1,\quad
j\geq v,
\end{equation*}%
where the implicit constant is independent of $j,v,y$ and $m$. Thus $y$ is
located in the set%
\begin{equation*}
{\digamma _{j,v,m}=\{z\in \mathbb{Z}^{n}:|z-2^{v-j}m|\lesssim 1\}}
\end{equation*}%
and 
\begin{equation*}
V_{1,j,m}=\sum\limits_{v=0}^{j}\sum\limits_{z\in {\digamma _{j,v,m}}}\lambda
_{v,z}\int_{\mathbb{R}^{n}}k_{j,m}(y)\varrho _{v,z}(y)dy.
\end{equation*}%
We use the Taylor expansion of $\varrho _{v,z}$ up to order $B-1$\ with
respect to the off-points $2^{-j}m$, we obtain%
\begin{equation*}
V_{1,j,m}=\sum\limits_{v=0}^{j}\sum\limits_{z\in {\digamma _{j,v,m}}}\lambda
_{v,z}\sum\limits_{\left\vert \beta \right\vert =B}\int_{\mathbb{R}%
^{n}}k_{j,m}(y)(y-2^{-j}m)^{\beta }\frac{\partial ^{\beta }\varrho
_{v,z}(\xi )}{\beta !}dy,
\end{equation*}%
with $\xi $ on the line segment joining $y$ and $2^{-j}m$. Therefore%
\begin{align}
& \big|V_{1,j,m}\big|  \notag \\
& \leq \sum\limits_{v=0}^{j}\sum\limits_{z\in {\digamma _{j,v,m}}}|\lambda
_{v,z}|\sum\limits_{\left\vert \beta \right\vert =B}\sup_{x\in \mathbb{R}%
^{n}}\big|\partial ^{\beta }\varrho _{v,z}(x)\big|\int_{\mathbb{R}%
^{n}}|k_{j,m}(y)||y-2^{-j}m|^{B}dy  \notag \\
& \lesssim \sum\limits_{v=0}^{j}2^{(v-j)B-(s-\frac{n}{p})v}\sum\limits_{z\in 
{\digamma _{j,v,m}}}|\lambda _{v,z}|  \notag \\
& \lesssim 2^{-js}\sum\limits_{v=0}^{j}2^{(v-j)(B-s)}2^{v\frac{n}{p}%
}\sum\limits_{z\in {\digamma _{j,v,m}}}|\lambda _{v,z}|.  \label{sumv1}
\end{align}%
Let $x\in Q_{j,m}$ and $y\in Q_{v,z}$ with $|z-2^{v-j}m|\lesssim 1$. We have%
\begin{equation*}
|x-y|\leq |x-2^{-j}m|+|2^{-j}m-2^{-v}z|+|y-2^{-v}z|\lesssim 2^{-v},
\end{equation*}%
which implies that $y$ is located in the ball $B(x,2^{-v})$. Let $0<\tau
<\min (p,\beta ,\frac{n}{n+\frac{n}{p}})$. Then%
\begin{align}
|\lambda _{v,z}|& =\Big(\frac{1}{|Q_{v,z}|}\int_{Q_{v,z}}|\lambda
_{v,z}|^{\tau }\chi _{v,z}(y)dy\Big)^{\frac{1}{\tau }}  \notag \\
& =\Big(\frac{1}{|Q_{v,z}|}\int_{Q_{v,z}}\sum\limits_{h\in \mathbb{Z}%
^{n}}|\lambda _{v,h}|^{\tau }\chi _{v,h}(y)dy\Big)^{\frac{1}{\tau }}  \notag
\\
& \lesssim \mathcal{M}_{\tau }\big(\sum\limits_{h\in \mathbb{Z}^{n}}|\lambda
_{v,h}|\chi _{v,h}\big)(x),\quad x\in Q_{j,m}.  \label{sumv1.1}
\end{align}%
Plug \eqref{sumv1.1}\ in \eqref{sumv1}, and since the sum with respect to $%
z\in \mathbb{Z}^{n}$ such that $|z-2^{v-j}m|\lesssim 1$ in $\mathrm{%
\eqref{sumv1}}$ has always less than $C_{2}$ independent of $m$, we obtain%
\begin{equation*}
\big|V_{1,j,m}\big|\lesssim 2^{-js}\sum\limits_{v=0}^{j}2^{(v-j)(B-s)}2^{v%
\frac{n}{p}}\mathcal{M}_{\tau }\big(\sum\limits_{h\in \mathbb{Z}%
^{n}}|\lambda _{v,h}|\chi _{v,h}\big)(x)
\end{equation*}%
for any $x\in Q_{j,m},j\in \mathbb{N}_{0},m\in \mathbb{Z}^{n}$.\ By Lemmas %
\ref{lem:lq-inequality} and \ref{Maximal-Inq copy(2)-lorentz}, we get 
\begin{align*}
& \Big\|\Big(\sum_{j=0}^{\infty }2^{js\beta }\sum\limits_{m\in \mathbb{Z}%
^{n}}\big|V_{1,j,m}\big|^{\beta }\chi _{j,m}\Big)^{1/\beta }\Big\|_{\dot{K}%
_{p,r}^{\alpha ,q}} \\
& \lesssim \Big\|\Big(\sum_{v=0}^{\infty }2^{v\frac{n}{p}\beta }\big(%
\mathcal{M}_{\tau }\big(\sum\limits_{h\in \mathbb{Z}^{n}}|\lambda
_{v,h}|\chi _{v,h}\big)\big)^{\beta }\Big)^{1/\beta }\Big\|_{\dot{K}%
_{p,r}^{\alpha ,q}} \\
& \lesssim \Big\|\Big(\sum_{v=0}^{\infty }2^{v\frac{n}{p}\beta
}\sum\limits_{h\in \mathbb{Z}^{n}}|\lambda _{v,h}|^{\beta }\chi _{v,h}\Big)%
^{1/\beta }\Big\|_{\dot{K}_{p,r}^{\alpha ,q}}.
\end{align*}

\textit{Estimate of }$V_{2,j,m}$. Let $y\in CQ_{j,m}\cap 3Q_{v,z}$. Then%
\begin{align*}
|2^{-v}z-2^{-j}m|& \leq |y-2^{-v}z|+|y-2^{-j}m| \\
& \lesssim 2^{-v}+2^{-j},
\end{align*}%
which yields that $|z-2^{v-j}m|\lesssim 2^{v-j}$. Hence $y$ is located in
the set 
\begin{equation*}
{\Gamma _{j,v,m}=\{z\in \mathbb{Z}^{n}:|z-2^{v-j}m|\lesssim 2^{v-j}\}.}
\end{equation*}%
Again, by the Taylor expansion of $k_{j,m}$ up to order $A-1$\ with respect
to the off-points $2^{-v}z$, we obtain%
\begin{equation*}
V_{2,j,m}=\sum\limits_{v=j+1}^{\infty }\sum\limits_{z\in {\Gamma _{j,v,m}}%
}\lambda _{v,z}\sum\limits_{\left\vert \beta \right\vert =B}\int_{\mathbb{R}%
^{n}}\varrho _{v,z}(y)(y-2^{-v}z)^{\beta }\frac{\partial ^{\beta
}k_{j,m}(\xi )}{\beta !}dy,
\end{equation*}%
with $\xi $ on the line segment joining $y$ and $2^{-v}z$. Hence%
\begin{align}
& \big|V_{2,j,m}\big|  \notag \\
& \leq \sum\limits_{v=j+1}^{\infty }\sum\limits_{z\in {\Gamma _{j,v,m}}%
}|\lambda _{v,z}|\sum\limits_{\left\vert \beta \right\vert =A}\sup_{x\in 
\mathbb{R}^{n}}\big|\partial ^{\beta }k_{j,m}(x)\big|\int_{\mathbb{R}^{n}}%
\frac{|\varrho _{v,z}(y)|}{|y-2^{-v}z|^{-A}}dy  \notag \\
& \lesssim \sum\limits_{v=j+1}^{\infty }2^{(j-v)(A+n)}2^{-v(s-\frac{n}{p}%
)}\sum\limits_{z\in {\Gamma _{j,v,m}}}|\lambda _{v,z}|  \notag \\
& \lesssim 2^{-js}\sum\limits_{v=j+1}^{\infty }2^{(j-v)(A+s+n)}2^{v\frac{n}{p%
}}\sum\limits_{z\in {\Gamma _{j,v,m}}}|\lambda _{v,z}|.  \label{sumv2.1}
\end{align}%
Let $x\in Q_{j,m}$ and $y\in Q_{v,z}$ with $|z-2^{v-j}m|\lesssim 2^{v-j}$.
Let $M>0$. We have%
\begin{align*}
\sum\limits_{z\in {\Gamma _{j,v,m}}}|\lambda _{v,z}|& \lesssim
\sum\limits_{z\in {\Gamma _{j,v,m}}}\frac{|\lambda _{v,z}|}{%
(1+2^{j}|2^{-v}z-2^{-j}m|)^{M}} \\
& \lesssim \sum\limits_{z\in \mathbb{Z}^{n}}\frac{|\lambda _{v,z}|}{%
(1+2^{j}|2^{-v}z-2^{-j}m|)^{M}} \\
& =c\mathcal{V}_{j,v,m},
\end{align*}%
{For each }$i,j,v\in \mathbb{N}$\ and $m\in \mathbb{Z}^{n}${\ we define } 
\begin{equation*}
{\Omega _{i,j,v,m}=\{z\in \mathbb{Z}^{n}:2^{i-1}<2^{j}|2^{-v}z-2^{-j}m|\leq
2^{i}\}}
\end{equation*}%
{and } 
\begin{equation*}
{\Omega _{0,j,v,m}=\{z\in \mathbb{Z}^{n}:2^{j}|2^{-v}z-2^{-j}m|\leq 1\}.}
\end{equation*}%
Let $0<\tau <\min (1,p,\beta ,\frac{n}{\alpha +\frac{n}{p}})$. Rewrite $%
\mathcal{V}_{j,v,m}$ as follows 
\begin{align*}
\mathcal{V}_{j,v,m}& =\sum\limits_{i=0}^{\infty }\sum\limits_{z\in {\Omega
_{i,j,v,m}}}\frac{|\lambda _{v,z}|}{(1+2^{j}|2^{-v}z-2^{-j}m|)^{M}} \\
& \leq \sum\limits_{i=0}^{\infty }2^{-Mi}\sum\limits_{z\in {\Omega _{i,j,v,m}%
}}|\lambda _{v,z}|.
\end{align*}%
By the embedding $\ell _{\tau }\hookrightarrow \ell _{1}$ we deduce that 
\begin{align*}
\mathcal{V}_{j,v,m}& \leq \sum\limits_{i=0}^{\infty }2^{-Mi}\big(%
\sum\limits_{z\in {\Omega _{j,k,m}}}|\lambda _{v,z}|^{\tau }\big)^{1/\tau }
\\
& =\sum\limits_{i=0}^{\infty }2^{(\frac{n}{\tau }-M)i}\Big(%
2^{(v-i)n}\int\limits_{\cup _{h\in {\Omega _{i,j,v,m}}}Q_{v,h}}\sum%
\limits_{z\in {\Omega _{i,j,v,m}}}|\lambda _{v,z}|^{\tau }\chi _{v,z}(y)dy%
\Big)^{1/\tau }.
\end{align*}%
Let $y\in \cup _{h\in {\Omega _{i,j,v,m}}}Q_{v,h}$ and $x\in Q_{j,m}$ with $%
v\geq j$. It follows that $y\in Q_{v,h}$ for some $h\in {\Omega _{i,j,v,m}}$
and ${2^{i-1}<2^{j}|2^{-v}h-2^{-j}m|\leq 2^{i}}$. From this we obtain that 
\begin{align*}
\left\vert y-x\right\vert & \leq \left\vert y-{2}^{-v}h\right\vert
+\left\vert x-{2}^{-j}m\right\vert +\left\vert {2}^{-v}h-{2}^{-j}m\right\vert
\\
& \lesssim 2^{-v}+2^{-j}+2^{i-j}+ \\
& \leq 2^{i-j+\delta _{n}},\quad \delta _{n}\in \mathbb{N},
\end{align*}%
which implies that $y$ is located in the ball $B(x,2^{i-j+\delta _{n}})$. We
choose $M>0$. Then, we obtain 
\begin{equation}
\mathcal{V}_{j,v,m}\lesssim 2^{(v-j)\frac{n}{\tau }}\mathcal{M}_{\tau }\big(%
\sum\limits_{z\in \mathbb{Z}^{n}}|\lambda _{v,z}|\chi _{v,z}\big)(x),\quad
x\in Q_{j,m}.  \label{sumv2.4}
\end{equation}%
Inserting \eqref{sumv2.4}\ in \eqref{sumv2.1}, we obtain%
\begin{equation*}
\big|V_{2,j,m}\big|\lesssim 2^{-js}\sum\limits_{v=j+1}^{\infty }2^{(j-v)(A+s-%
\frac{n}{\tau }+n)}2^{v\frac{n}{p}}\mathcal{M}_{\tau }\big(\sum\limits_{z\in 
\mathbb{Z}^{n}}|\lambda _{v,z}|\chi _{v,z}\big)(x)
\end{equation*}%
We choose $\tau $ be such that 
\begin{equation*}
A+s-\frac{n}{\tau }+n>0,\quad A>\max (J-n,0)-s.
\end{equation*}%
By Lemmas \ref{lem:lq-inequality} and \ref{Maximal-Inq copy(2)-lorentz}, we
get 
\begin{align*}
& \Big\|\Big(\sum_{j=0}^{\infty }2^{js\beta }\sum\limits_{m\in \mathbb{Z}%
^{n}}\big|V_{2,j,m}\big|^{\beta }\chi _{j,m}\Big)^{1/\beta }\Big\|_{\dot{K}%
_{p,r}^{\alpha ,q}} \\
& \lesssim \Big\|\Big(\sum_{v=0}^{\infty }2^{v\frac{n}{p}\beta }\big(%
\mathcal{M}_{\tau }\big(\sum\limits_{z\in \mathbb{Z}^{n}}|\lambda
_{v,z}|\chi _{v,z}\big)\big)^{\beta }\Big)^{1/\beta }\Big\|_{\dot{K}%
_{p,r}^{\alpha ,q}} \\
& \lesssim \Big\|\Big(\sum_{v=0}^{\infty }2^{v\frac{n}{p}\beta
}\sum\limits_{z\in \mathbb{Z}^{n}}|\lambda _{v,z}|^{\beta }\chi _{v,z}\Big)%
^{1/\beta }\Big\|_{\dot{K}_{p,r}^{\alpha ,q}}.
\end{align*}%
Collecting the estimates\ obtained for $V_{1,j,m}$ and $V_{2,j,m}$, we
obtain \eqref{rep-new-lorentz}.

\textit{Step 3.} We prove our claim \eqref{claim-emb}. Let $\lambda \in \dot{%
K}_{p,r}^{\alpha ,q}b_{\infty }^{s}$ and $v\in \mathbb{N}$. We will estimate%
\begin{equation*}
\mathcal{H}_{v,1}=2^{(h+\frac{n}{2})v}\sup_{k\leq 1+c_{n}-v}\Big(2^{k\alpha }%
\Big\|\Big(\sum_{m\in \mathbb{Z}^{n}}|\lambda _{v,m}|\chi _{v,m}\chi _{k}%
\Big\|_{L^{p/t,\infty }}\Big).
\end{equation*}%
and%
\begin{equation*}
\mathcal{H}_{v,2}=2^{(h+\frac{n}{2})v}\sup_{k\geq 2+c_{n}-v}\Big(2^{k\alpha }%
\Big\|\Big(\sum_{m\in \mathbb{Z}^{n}}|\lambda _{v,m}|\chi _{v,m}\chi _{k}%
\Big\|_{L^{p/t,\infty }}\Big).
\end{equation*}%
\textit{Estimation of }$\mathcal{H}_{v,1}$\textit{.} Let $u>0,x\in R_{k}\cap
Q_{v,m}$ and $y\in Q_{v,m}$ with $k\leq 1+c_{n}-v$. As in Theorem, we obtain
that 
\begin{equation*}
\sum_{m\in \mathbb{Z}^{n}}|\lambda _{v,m}|\chi _{v,m}(x)\leq 2^{n\frac{v}{u}}%
\Big\|\sum_{m\in \mathbb{Z}^{n}}|\lambda _{v,m}|\chi _{v,m}\chi
_{B(0,2^{c_{n}-v+2})}\Big\|_{L^{u,u}}.
\end{equation*}%
This yields%
\begin{align}
\mathcal{H}_{v,1} &\lesssim 2^{v(h+\frac{n}{u}+\frac{n}{2}-n\frac{t}{p}%
-\alpha )}\Big\|\sum_{m\in \mathbb{Z}^{n}}|\lambda _{v,m}|\chi _{v,m}\chi
_{B(0,2^{c_{n}-v+2})}\Big\|_{L^{u,u}}  \notag \\
&\lesssim 2^{v(s+\frac{n}{u}+\frac{n}{2}-\frac{n}{p}-\alpha )}\Big(%
\sum_{i\leq -v}\Big\|\sum_{m\in \mathbb{Z}^{n}}|\lambda _{v,m}|\chi
_{v,m}\chi _{i+c_{n}+2}\Big\|_{L^{u,u}}^{\varkappa }\Big)^{1/\varkappa },
\label{sobolev-lorentz2}
\end{align}%
where $\varkappa =\min (1,u)$ and we have used Lemma \ref{Lp-estimate}, and
the implicit\ constant is independent of $v$. We may choose $u>0$ such that $%
\frac{1}{u}>\max (\frac{1}{p},\frac{1}{r},\frac{1}{p}+\frac{\alpha }{n})$
and 
\begin{equation*}
\frac{n}{u}=\frac{n}{p}+\frac{n}{l}=\frac{n}{\infty }+\frac{n}{u},\quad 
\frac{n}{l}=\alpha +\frac{n}{d},\quad 0<d<\infty .
\end{equation*}%
By H\"{o}lder's inequality and \eqref{est-function1}, we obtain%
\begin{align*}
& \Big\|\sum_{m\in \mathbb{Z}^{n}}|\lambda _{v,m}|\chi _{v,m}\chi
_{i+c_{n}+2}\Big\|_{L^{u,u}} \\
& \lesssim \Big\|\sum_{m\in \mathbb{Z}^{n}}|\lambda _{v,m}|\chi _{v,m}\chi
_{i+c_{n}+2}\Big\|_{L^{p,\infty }}\big\|\chi _{i+c_{n}+2}\big\|_{L^{l,u}} \\
& \lesssim 2^{i(\frac{n}{d}+\alpha )}\Big\|\sum_{m\in \mathbb{Z}%
^{n}}|\lambda _{v,m}|\chi _{v,m}\chi _{i+c_{n}+2}\Big\|_{L^{p,\infty }} \\
& \lesssim 2^{i(\frac{n}{d}+\alpha )-(s+\frac{n}{2})v}\sup_{j\in \mathbb{N}%
_{0}}\Big\|2^{(s+\frac{n}{2})j}\sum_{m\in \mathbb{Z}^{n}}|\lambda
_{j,m}|\chi _{j,m}\chi _{i+c_{n}+2}\Big\|_{L^{p,\infty }},
\end{align*}%
where the implicit constant is independent of $i$\ and $v$. Inserting this
estimate in \eqref{sobolev-lorentz2}, we get 
\begin{align*}
\mathcal{H}_{v,1}& \lesssim 2^{v\frac{n}{d}}\Big(\sum_{i\leq -v}2^{i(\frac{n%
}{d}+\alpha )\varkappa }\sup_{j\in \mathbb{N}_{0}}\Big\|2^{(s+\frac{n}{2}%
)j}\sum_{m\in \mathbb{Z}^{n}}|\lambda _{j,m}|\chi _{j,m}\chi _{i+c_{n}+2}%
\Big\|_{L^{p,\infty }}^{\varkappa }\Big)^{1/\varkappa } \\
& \lesssim \sup_{i\in \mathbb{Z}}2^{-\alpha i}\Big\|\sup_{j\in \mathbb{N}%
_{0}}\Big(2^{(s+\frac{n}{2})j}\sum_{m\in \mathbb{Z}^{n}}|\lambda _{j,m}|\chi
_{j,m}\chi _{-i+c_{n}+2}\Big)\Big\|_{L^{p,\infty }} \\
& \lesssim \big\|\lambda \big\|_{\dot{K}_{p,r}^{\alpha ,q}b_{\infty }^{s}}.
\end{align*}

\textit{Estimation of }$\mathcal{H}_{v,2}$\textit{.} As in the proof of
Theorem \ref{embeddings3 copy(1)-lorentz}, we obtain 
\begin{align*}
\vartheta _{v,k} &=2^{v(s+\frac{n}{2})+k\alpha }\sum_{m\in \mathbb{Z}%
^{n}}|\lambda _{v,m}|\chi _{v,m} \\
&\leq 2^{v(s_{2}+\frac{n}{2})+k\alpha }\sum_{m\in \mathbb{Z}^{n}}|\lambda
_{v,m}|\chi _{v,m}\chi _{\breve{R}_{k}} \\
&=\hslash _{k,v}..
\end{align*}%
where $\breve{R}_{k}=\cup _{i=-2}^{3}R_{k+i}$. Since $h-s=\frac{nt}{p}-\frac{%
n}{p}$, we get 
\begin{equation}
2^{v(h-s)}\big\|\vartheta _{v,k}\big\|_{L^{p/t,\infty }}=2^{v(\frac{nt}{p}-%
\frac{n}{p})}\big\|\vartheta _{v,k}\big\|_{L^{p/t,\infty }}
\label{Lorentz2.2}
\end{equation}%
for any $v\in \mathbb{N}_{0},k\in \mathbb{Z}$. Using duality, the right-hand
side of \eqref{Lorentz2.2} is dominated by%
\begin{equation*}
c\sup \int_{\mathbb{R}^{n}}2^{v(\frac{nt}{p}-\frac{n}{p})}\vartheta
_{v,k}(x)g(x)dx,
\end{equation*}%
where the supremum is taken\ over all $g\in L^{(p/t)^{\prime },1}$ such that 
$\big\|g\big\|_{L^{(p/t)^{\prime },1}}\leq 1$. It follows from Lemma \ref%
{Hardy-Littlewood inequality} that 
\begin{equation*}
2^{v(\frac{nt}{p}-\frac{n}{p})}\int_{\mathbb{R}^{n}}\vartheta
_{v,k}(x)g(x)dx\leq 2^{v(\frac{nt}{p}-\frac{n}{p})}\int_{0}^{\infty
}\vartheta _{v,k}^{\ast }(t)g^{\ast }(t)dt.
\end{equation*}%
We have%
\begin{equation}
\int_{0}^{\infty }\vartheta _{v,k}^{\ast }(t)g^{\ast
}(t)dt=\int_{0}^{2^{-vn}}\vartheta _{v,k}^{\ast }(t)g^{\ast
}(t)dt+\sum_{l=0}^{\infty }\int_{2^{(l-v)n}}^{2^{(l-v)n+n}}\vartheta
_{v,k}^{\ast }(t)g^{\ast }(t)dt.  \label{Lorentz2.1.1}
\end{equation}

Since $\vartheta _{v,k}^{\ast }$ is constant in $[0,2^{-vn})$\ and $%
\vartheta _{v,k}^{\ast }\leq \hslash _{k,v}^{\ast }$, the first term on the
right-hand side of \eqref{Lorentz2.1.1} is bounded by%
\begin{align}
\vartheta _{v,k}^{\ast }(2^{-vn-1})\int_{0}^{2^{-vn}}g^{\ast }(t)dt& \leq
2^{-vn}\vartheta _{v,k}^{\ast }(2^{-vn-1})g^{\ast \ast }(2^{-vn})  \notag \\
& \leq 2^{-vn}\hslash _{k,v}^{\ast }(2^{-vn-1})g^{\ast \ast }(2^{-vn}) 
\notag \\
& \leq 2^{-vn(1-\frac{1}{p})}\sup_{j\in \mathbb{N}_{0}}\big(2^{-\frac{jn}{p}%
}\hslash _{k,v}^{\ast }(2^{-jn-1})\big)g^{\ast \ast }(2^{-vn})  \notag \\
& \leq 2^{v(\frac{n}{p}-\frac{tn}{p})}\sup_{j\in \mathbb{Z}}\big(2^{-\frac{jn%
}{p}}\hslash _{k,v}^{\ast }(2^{-jn-1})\big)\sup_{v\in \mathbb{Z}}\big(%
2^{-vn(1-\frac{t}{p})}g^{\ast \ast }(2^{-vn})\big)  \notag \\
& \leq 2^{v(\frac{n}{p}-\frac{tn}{p})}\big\|\hslash _{k,v}\big\|%
_{L^{p,\infty }}\big\|g\big\|_{L^{(p/t)^{\prime },1}}.  \label{lorentz5.1}
\end{align}%
The second term on the right-hand side of \eqref{Lorentz2.1.1} can be
estimated from above by%
\begin{align}
& c\sum_{l=0}^{\infty }\hslash _{k,v}^{\ast }(2^{(l-v)n})2^{(l-v)n}g^{\ast
}(2^{(l-v)n})  \notag \\
& =c2^{v(\frac{n}{p}-\frac{tn}{p})}\sum_{l=0}^{\infty }2^{(l-v)\frac{n}{p}%
}\hslash _{k,v}^{\ast }(2^{(l-v)n})2^{(l-v)n(1-\frac{1}{p})}2^{v(\frac{tn}{p}%
-\frac{n}{p})}g^{\ast }(2^{(l-v)n}).  \label{Lorentz3.1}
\end{align}%
The term inside the sum in \eqref{Lorentz3.1} is dominated by%
\begin{align}
& \sup_{j\in \mathbb{N}_{0}}\big(2^{(l-j)n\frac{1}{p}}\hslash _{k,v}^{\ast
}(2^{(l-j)n})\big)\sup_{v\in \mathbb{N}_{0}}\Big(2^{(l-v)n(1-\frac{1}{p}%
)}2^{v(\frac{tn}{p}-\frac{n}{p})}g^{\ast }(2^{(l-v)n})\Big)  \notag \\
& \leq 2^{l(\frac{tn}{p}-\frac{n}{p})\theta _{1}}\big\|\hslash _{k,v}\big\|%
_{L^{p,\infty }}\sup_{v\in \mathbb{N}_{0}}\big(2^{(l-v)n(1-\frac{t}{p}%
)}g^{\ast }(2^{(l-v)n})\big)  \notag \\
& \leq 2^{l(\frac{tn}{p}-\frac{n}{p})}\big\|\hslash _{k,v}\big\|%
_{L^{p,\infty }}\big\|g\big\|_{L^{(p/t)^{\prime },1}}.  \label{Lorentz4.1}
\end{align}%
Collecting the estimates \eqref{Lorentz4.1} and \eqref{lorentz5.1} we get 
\begin{equation*}
\mathcal{H}_{v,2}\lesssim \big\|\lambda \big\|_{\dot{K}_{p,r}^{\alpha
,q}b_{\infty }^{s}}.
\end{equation*}

The proof is complete.
\end{proof}

Let $u\in \mathbb{N}$ and $\psi _{F},\psi _{M}\in C^{u}(\mathbb{R})$ be
real-valued compactly supported Daubechies wavelets with%
\begin{equation*}
\mathcal{F}\psi _{F}(0)=(2\pi )^{-\frac{1}{2}},\quad \int_{\mathbb{R}%
}x^{l}\psi _{M}(x)dx=0,\quad l\in \{0,...,u-1\}
\end{equation*}%
and%
\begin{equation*}
\big\|\psi _{F}\big\|_{2}=\big\|\psi _{M}\big\|_{2}=1.
\end{equation*}%
We have that%
\begin{equation*}
\{\psi _{F}(x-m),2^{\frac{j}{2}}\psi _{M}(2^{j}x-m)\}_{j\in \mathbb{N}%
_{0},m\in \mathbb{Z}^{n}}
\end{equation*}%
is an orthonormal basis in $L^{2}(\mathbb{R})$. This orthonormal basis can
be generalized to the $\mathbb{R}^{n}$ by the usual multiresolution
procedure. Let%
\begin{equation*}
G=\{G_{1},...,G_{n}\}\in G^{0}=\{F,M\}^{n}
\end{equation*}%
which means that $G_{r}$ is either $F$ or $M$. Let%
\begin{equation*}
G=\{G_{1},...,G_{n}\}\in G^{j}=\{F,M\}^{n^{\ast }},\quad j\in \mathbb{N},
\end{equation*}%
where $^{\ast }$ indicates that at least one of the components of $G$ must
be an $M$. Hence $G^{0}$ has $2^{n}$ elements, whereas $G^{j}$ with $j\in 
\mathbb{N}$ has $2^{n}-1$ elements. Let%
\begin{equation*}
\Psi _{G,m}^{j}(x)=2^{j\frac{n}{2}}\prod_{r=1}^{n}\psi
_{G_{r}}(2^{j}x_{r}-m_{r}),\quad G\in G^{j},m\in \mathbb{Z}^{n},x\in \mathbb{%
R}^{n},j\in \mathbb{N}_{0}.
\end{equation*}%
We always assume that $\psi _{F}\ $and $\psi _{M}$ have $L^{2}$-norm $1$.
Then%
\begin{equation}
\Psi =\{\Psi _{G,m}^{j}:\quad j\in \mathbb{N}_{0},G\in G^{j},m\in \mathbb{Z}%
^{n}\}  \label{orthogonal}
\end{equation}%
is an orthonormal basis in $L^{2}(\mathbb{R}^{n})$ (for any $u\in \mathbb{N}$%
) and.%
\begin{equation*}
f=\sum_{j=0}^{\infty }\sum_{G\in G^{j}}\sum_{m\in \mathbb{Z}^{n}}\lambda
_{j,m}^{G}2^{-j\frac{n}{2}}\Psi _{G,m}^{j},
\end{equation*}%
with%
\begin{equation*}
\lambda _{j,m}^{G}=\lambda _{j,m}^{G}(f)=2^{j\frac{n}{2}}\langle f,\Psi
_{G,m}^{j}\rangle ,
\end{equation*}%
is the corresponding expansion.

Let $\alpha ,s\in \mathbb{R},0<p<\infty ,0<r,q\leq \infty $\ and $0<\beta
\leq \infty $. We set%
\begin{equation*}
\overline{\dot{K}_{p,r}^{\alpha ,q}b_{\beta }^{s}}=\{\lambda =\{\lambda
_{j,m}^{G}\}_{j\in \mathbb{N}_{0},G\in G^{j},m\in \mathbb{Z}^{n}}\subset 
\mathbb{C}:\big\|\lambda \big\|_{\overline{\dot{K}_{p,r}^{\alpha ,q}b_{\beta
}^{s}}}<\infty \},
\end{equation*}%
and%
\begin{equation*}
\overline{\dot{K}_{p,r}^{\alpha ,q}f_{\beta }^{s}}=\{\lambda =\{\lambda
_{j,m}^{G}\}_{j\in \mathbb{N}_{0},G\in G^{j},m\in \mathbb{Z}^{n}}\subset 
\mathbb{C}:\big\|\lambda \big\|_{\overline{\dot{K}_{p,r}^{\alpha ,q}f_{\beta
}^{s}}}<\infty \},
\end{equation*}%
where%
\begin{equation*}
\big\|\lambda \big\|_{\overline{\dot{K}_{p,r}^{\alpha ,q}b_{\beta }^{s}}}=%
\Big(\sum_{j=0}^{\infty }2^{js\beta }\Big\|\sum_{G\in G^{j}}\sum_{m\in 
\mathbb{Z}^{n}}|\lambda _{j,m}^{G}|\chi _{j,m}\Big\|_{\dot{K}_{p,r}^{\alpha
,q}}^{\beta }\Big)^{1/\beta }.
\end{equation*}%
and%
\begin{equation*}
\big\|\lambda \big\|_{\overline{\dot{K}_{p,r}^{\alpha ,q}f_{\beta }^{s}}}=%
\Big\|\Big(\sum_{j=0}^{\infty }\sum_{G\in G^{j}}\sum_{m\in \mathbb{Z}%
^{n}}2^{js\beta }|\lambda _{j,m}^{G}|^{\beta }\chi _{j,m}\Big)^{1/\beta }%
\Big\|_{\dot{K}_{p,r}^{\alpha ,q}}.
\end{equation*}

\begin{theorem}
\label{wavelet}\textit{Let }$\alpha ,s\in \mathbb{R},0<p<\infty ,0<r,q\leq
\infty ,0<\beta \leq \infty $ and $\alpha >-\frac{n}{p}$. Let $\{\Psi
_{G,m}^{j}\}$ be the wavelet system with 
\begin{equation}
u>\max (J-s,s).  \label{u-assumption}
\end{equation}%
Let $f\in \mathcal{S}^{\prime }\mathcal{(}\mathbb{R}^{n})$. Then $f\in \dot{K%
}_{p,r}^{\alpha ,q}A_{\beta }^{s}$ if and only if 
\begin{equation}
f=\sum_{j=0}^{\infty }\sum_{G\in G^{j}}\sum_{m\in \mathbb{Z}^{n}}\lambda
_{j,m}^{G}2^{-j\frac{n}{2}}\Psi _{G,m}^{j},\quad \lambda \in \overline{\dot{K%
}_{p,r}^{\alpha ,q}a_{\beta }^{s}}  \label{representation}
\end{equation}%
with unconditional convergence in $\mathcal{S}^{\prime }\mathcal{(}\mathbb{R}%
^{n})$ and in any space $\dot{K}_{p,r}^{\alpha ,q}A_{\beta }^{\sigma }$ with 
$\sigma <s$. The representation \eqref{representation} is unique, 
\begin{equation*}
\lambda _{j,m}^{G}=\lambda _{j,m}^{G}(f)=2^{j\frac{n}{2}}\langle f,\Psi
_{G,m}^{j}\rangle
\end{equation*}%
and 
\begin{equation*}
I:\quad f\longmapsto \{\lambda _{j,m}^{G}(f)\}
\end{equation*}%
is an isomorphic map from $\dot{K}_{p,r}^{\alpha ,q}A_{\beta }^{s}$ onto $%
\overline{\dot{K}_{p,r}^{\alpha ,q}a_{\beta }^{s}}$. In particular, it holds 
\begin{equation*}
\big\|f\big\|_{\dot{K}_{p,r}^{\alpha ,q}F_{\beta }^{s}}\approx \big\|\lambda %
\big\|_{\overline{\dot{K}_{p,r}^{\alpha ,q}a_{\beta }^{s}}}.
\end{equation*}%
If, in addition, $q<\infty $, then $\{\Psi _{G,m}^{j}\}$ is an unconditional
basis in $\dot{K}_{p,r}^{\alpha ,q}A_{\beta }^{s}$.
\end{theorem}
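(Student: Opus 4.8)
The plan is to deduce Theorem~\ref{wavelet} from the atomic decomposition of Theorem~\ref{atomic-dec} (for the synthesis direction) and from the local means/kernel estimate of Theorem~\ref{kernel3} (for the analysis direction), after observing that the rescaled Daubechies wavelets fit, up to fixed multiplicative constants, both the notion of smooth atom of Definition~\ref{atom} and the notion of kernel of Definition~\ref{kernel}. Since $\psi_F,\psi_M\in C^u(\mathbb{R})$ are compactly supported, the function $\Psi_{G,m}^j$ is supported in a dilate $CQ_{j,m}$, satisfies $|\partial^\beta\Psi_{G,m}^j(x)|\lesssim 2^{j(|\beta|+n/2)}$ for $|\beta|\le u$, and — because every $G\in G^j$ with $j\ge1$ has at least one coordinate equal to $M$ and $\psi_M$ has $u$ vanishing moments — fulfils $\int_{\mathbb{R}^n}x^\beta\Psi_{G,m}^j(x)\,dx=0$ whenever $|\beta|\le u-1$ and $j\ge1$. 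The hypothesis \eqref{u-assumption}, namely $u>\max(J-s,s)$, guarantees simultaneously that $u\ge\max(0,1+\lfloor s\rfloor)$, that $u-1\ge N=\max\{\lfloor J-n-s\rfloor,-1\}$, and that $u>\max(J-n,0)-s$ and $u>s$; hence a fixed constant multiple of $\Psi_{G,m}^j$ is a smooth atom for $\dot{K}_{p,r}^{\alpha,q}A_\beta^s$ in the sense of Definition~\ref{atom}, while a fixed constant multiple of $2^{jn/2}\Psi_{G,m}^j$ is a kernel in the sense of Definition~\ref{kernel} with $A=B=u$.

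For the synthesis direction, let $\lambda=\{\lambda_{j,m}^G\}\in\overline{\dot{K}_{p,r}^{\alpha,q}a_\beta^s}$ and rewrite the right-hand side of \eqref{representation} as $\sum_{j,G,m}\big(2^{-jn/2}\lambda_{j,m}^G\big)\Psi_{G,m}^j$. By the previous paragraph each $\Psi_{G,m}^j$ is a constant multiple of a smooth atom, and the $2^{-jn/2}$ factor converts the weight $2^{j(s+n/2)\beta}$ appearing in $\dot{K}_{p,r}^{\alpha,q}a_\beta^s$ into the weight $2^{js\beta}$ appearing in $\overline{\dot{K}_{p,r}^{\alpha,q}a_\beta^s}$; since the sum over $G\in G^j$ has boundedly many terms this gives
\begin{equation*}
\big\|\{2^{-jn/2}\lambda_{j,m}^G\}\big\|_{\dot{K}_{p,r}^{\alpha,q}a_\beta^s}\approx\big\|\lambda\big\|_{\overline{\dot{K}_{p,r}^{\alpha,q}a_\beta^s}}.
\end{equation*}
The converse part of Theorem~\ref{atomic-dec} then shows that the series in \eqref{representation} converges in $\mathcal{S}^{\prime}(\mathbb{R}^n)$ to some $f\in\dot{K}_{p,r}^{\alpha,q}A_\beta^s$ with $\|f\|_{\dot{K}_{p,r}^{\alpha,q}A_\beta^s}\lesssim\|\lambda\|_{\overline{\dot{K}_{p,r}^{\alpha,q}a_\beta^s}}$; that is, the synthesis map $T_\Psi:\lambda\mapsto\sum_{j,G,m}\lambda_{j,m}^G2^{-jn/2}\Psi_{G,m}^j$ is bounded from $\overline{\dot{K}_{p,r}^{\alpha,q}a_\beta^s}$ into $\dot{K}_{p,r}^{\alpha,q}A_\beta^s$.

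For the analysis direction, let $f\in\dot{K}_{p,r}^{\alpha,q}A_\beta^s$. First, $\langle f,\Psi_{G,m}^j\rangle$ is meaningful: $\Psi_{G,m}^j$ is, up to a constant, an analysis molecule (resp. kernel), and by the approximation lemma \cite[Lemma~5.4]{BoHo06} recalled before Theorem~\ref{molecules-dec} it is a uniform limit of Schwartz functions that are themselves such molecules, so the pairing is well defined exactly as in Step~1 of the proof of Theorem~\ref{kernel3}. Setting $k_{j,m}=c\,2^{jn/2}\Psi_{G,m}^j$ for each $G$, which is a kernel with $A=B=u>\max(J-n,0)-s$ and $u>s$, Theorem~\ref{kernel3} yields $\|k(f)\|_{\dot{K}_{p,r}^{\alpha,q}\bar a_\beta^s}\lesssim\|f\|_{\dot{K}_{p,r}^{\alpha,q}A_\beta^s}$, and summing over the finitely many $G\in G^j$ this reads
\begin{equation*}
\big\|\{2^{jn/2}\langle f,\Psi_{G,m}^j\rangle\}\big\|_{\overline{\dot{K}_{p,r}^{\alpha,q}a_\beta^s}}\lesssim\big\|f\big\|_{\dot{K}_{p,r}^{\alpha,q}A_\beta^s}.
\end{equation*}
Hence the analysis map $S_\Psi:f\mapsto\{2^{jn/2}\langle f,\Psi_{G,m}^j\rangle\}$ (the map $I$ of the statement) is bounded from $\dot{K}_{p,r}^{\alpha,q}A_\beta^s$ into $\overline{\dot{K}_{p,r}^{\alpha,q}a_\beta^s}$.

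It remains to identify $T_\Psi\circ S_\Psi$ with the identity on $\dot{K}_{p,r}^{\alpha,q}A_\beta^s$, which simultaneously gives the uniqueness of \eqref{representation} (with $\lambda_{j,m}^G=2^{jn/2}\langle f,\Psi_{G,m}^j\rangle$), the fact that $S_\Psi$ is an isomorphism onto $\overline{\dot{K}_{p,r}^{\alpha,q}a_\beta^s}$ with inverse $T_\Psi$, and the norm equivalence. On $\mathcal{S}(\mathbb{R}^n)$ this is just the $L^2(\mathbb{R}^n)$ orthonormal wavelet expansion, which converges in $L^2$ hence in $\mathcal{S}^{\prime}(\mathbb{R}^n)$; when $q,r,\beta<\infty$ the space $\mathcal{S}(\mathbb{R}^n)$ is dense in $\dot{K}_{p,r}^{\alpha,q}A_\beta^s$ by Theorem~\ref{embeddings-S-inf-lorentz}, so $T_\Psi\circ S_\Psi=\mathrm{id}$ follows by continuity, and, finite sequences being dense in $\overline{\dot{K}_{p,r}^{\alpha,q}a_\beta^s}$, the convergence in \eqref{representation} is unconditional and $\{\Psi_{G,m}^j\}$ is an unconditional basis. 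In the general case one argues as in \cite{triebel08}: for $\sigma<s$ one has $\dot{K}_{p,r}^{\alpha,q}A_\beta^s\hookrightarrow\dot{K}_{p,r}^{\alpha,q}A_\beta^\sigma$ by Theorem~\ref{embeddings1.1-lorentz}, the identity $T_\Psi\circ S_\Psi=\mathrm{id}$ and unconditional convergence hold in the latter space, and the Fatou property (Proposition~\ref{Fatou-lorentz}) together with the boundedness of $S_\Psi$ and $T_\Psi$ transfers the equality and the estimates back to $\dot{K}_{p,r}^{\alpha,q}A_\beta^s$; the unconditional-basis assertion is then invoked only under the extra hypothesis $q<\infty$. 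The main technical point will be precisely this last step: handling the quasi-Banach endpoint cases $q=\infty$ or $\beta=\infty$, where $\mathcal{S}(\mathbb{R}^n)$ is not dense, so that $T_\Psi\circ S_\Psi=\mathrm{id}$ and the mode of convergence have to be obtained through the coarser spaces $\dot{K}_{p,r}^{\alpha,q}A_\beta^\sigma$ and the Fatou property rather than by a direct density argument.
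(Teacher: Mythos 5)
Your synthesis and analysis steps match the paper's: constant multiples of $\Psi_{G,m}^{j}$ are smooth atoms (the paper uses the $(s,p)$-normalisation and Theorem \ref{atomic-decv2}, you use the $2^{-jn/2}$-normalisation and Theorem \ref{atomic-dec}; these are equivalent), and constant multiples of $2^{jn/2}\Psi_{G,m}^{j}$ are kernels, so Theorem \ref{kernel3} gives the analysis bound. Your verification of the support, derivative and moment conditions under \eqref{u-assumption} is correct.

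The gap is in your treatment of $T_{\Psi}\circ S_{\Psi}=\mathrm{id}$ and uniqueness. Your primary argument rests on density of $\mathcal{S}(\mathbb{R}^{n})$ in $\dot{K}_{p,r}^{\alpha ,q}A_{\beta }^{s}$, which by Theorem \ref{embeddings-S-inf-lorentz} requires $q,r,\beta$ all finite; the theorem, however, allows $q=\infty$, $r=\infty$ and $\beta=\infty$ (for the $B$-scale). Your proposed fallback — pass to $\dot{K}_{p,r}^{\alpha ,q}A_{\beta }^{\sigma }$ with $\sigma<s$, establish the identity there, and transfer back via the Fatou property — does not close the gap: the embedding of Theorem \ref{embeddings1.1-lorentz} lowers $s$ and $\beta$ but cannot lower $q$ (part (iii) only goes from smaller to larger $q$), so if $q=\infty$ or $r=\infty$ the coarser space suffers from exactly the same lack of density and the argument is circular. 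Moreover Proposition \ref{Fatou-lorentz} is only stated for $0<p,q,r,\beta<\infty$, and even where it applies it yields membership and norm bounds from weak limits, not the pointwise identity $T_{\Psi}S_{\Psi}f=f$.

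The paper avoids density altogether. Having shown that $g=\sum_{j,G,m}\lambda _{j,m}^{G}(f)\,2^{-jn/2}\Psi _{G,m}^{j}$ lies in $\dot{K}_{p,r}^{\alpha ,q}A_{\beta }^{s}$, it pairs $g$ with each wavelet $\Psi _{G^{\prime },m^{\prime }}^{j^{\prime }}$ (the pairing is legitimate by Step 1 of the proof of Theorem \ref{kernel3}), uses the $L^{2}$-orthonormality of the system \eqref{orthogonal} to conclude $\langle g,\Psi _{G^{\prime },m^{\prime }}^{j^{\prime }}\rangle =\langle f,\Psi _{G^{\prime },m^{\prime }}^{j^{\prime }}\rangle $, and then, for an arbitrary $\varphi \in \mathcal{S}(\mathbb{R}^{n})$, expands $\varphi $ itself in the wavelet basis; this expansion converges in the dual of $\dot{K}_{p,r}^{\alpha ,q}A_{\beta }^{s}$, whence $\langle g,\varphi \rangle =\langle f,\varphi \rangle $ for all $\varphi $ and $g=f$. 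You should replace your density argument by this duality argument (or restrict your density route to the case of finite indices and supply the duality argument for the endpoints); as written, the identity and the uniqueness of \eqref{representation} are not established when $q=\infty$, $r=\infty$ or $\beta=\infty$.
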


\begin{proof}
We will do the proof in four steps.

\textit{Step 1.} Let $f\in \mathcal{S}^{\prime }\mathcal{(}\mathbb{R}^{n})$
be given by \eqref{representation}. Then%
\begin{equation*}
\varrho _{j,m}^{G}=2^{-(s-\frac{n}{p}+\frac{n}{2})j}\Psi _{G,m}^{j},\quad
j\in \mathbb{N}_{0},G\in G^{j},m\in \mathbb{Z}^{n}
\end{equation*}%
are $(s,p)$-atoms according to Definition \ref{atom2} with $K=L=u$ (up to
unimportant constants). We set%
\begin{equation*}
\lambda =\{\lambda _{j,m}^{G}:j\in \mathbb{N}_{0},G\in G^{j},m\in \mathbb{Z}%
^{n}\}.
\end{equation*}%
From Theorem \ref{atomic-decv2} and \eqref{u-assumption} we obtain $f\in 
\dot{K}_{p,r}^{\alpha ,q}A_{\beta }^{s}$ and 
\begin{equation}
\big\|f\big\|_{\dot{K}_{p,r}^{\alpha ,q}A_{\beta }^{s}}\lesssim \big\|%
\lambda \big\|_{\overline{\dot{K}_{p,r}^{\alpha ,q}a_{\beta }^{s}}}.
\label{key-estimate}
\end{equation}

\textit{Step 2. }Let $f\in \dot{K}_{p,r}^{\alpha ,q}A_{\beta }^{s}$. Then 
\begin{equation*}
k_{j,m}^{G}=2^{j\frac{n}{2}}\Psi _{G,m}^{j},\quad j\in \mathbb{N}_{0},G\in
G^{j},m\in \mathbb{Z}^{n}
\end{equation*}%
are kernels according to Definition \ref{kernel} with $A=B=u$. We set 
\begin{equation*}
k(f)=\{k_{j,m}^{G}(f)=\langle f,k_{j,m}^{G}\rangle :j\in \mathbb{N}_{0},G\in
G^{j},m\in \mathbb{Z}^{n}\}.
\end{equation*}%
All conditions on $k_{j,m}^{G}$ are fulfilled by \eqref{u-assumption} and
the compact support of the wavelets we get by Theorem \ref{kernel3} 
\begin{equation*}
\big\|k(f)\big\|_{\overline{\dot{K}_{p,r}^{\alpha ,q}a_{\beta }^{s}}%
}\lesssim \big\|f\big\|_{\dot{K}_{p,r}^{\alpha ,q}A_{\beta }^{s}}.
\end{equation*}

\textit{Step 3. }We prove the unconditional convergence of %
\eqref{representation} in $\mathcal{S}^{\prime }\mathcal{(}\mathbb{R}^{n})$
and in any space $\dot{K}_{p,r}^{\alpha ,q}A_{\beta }^{\sigma }$ with $%
\sigma <s$. First assume that $0<q<\infty $\ and $0<\beta <\infty $. By %
\eqref{key-estimate} and the properties of the sequence spaces $\dot{K}%
_{p,r}^{\alpha ,q}a_{\beta }^{s}$,\ we get the unconditional convergence of %
\eqref{representation} in $\dot{K}_{p,r}^{\alpha ,q}A_{\beta }^{s}$ and
hence in $\mathcal{S}^{\prime }\mathcal{(}\mathbb{R}^{n})$ and in any space $%
\dot{K}_{p,r}^{\alpha ,q}A_{\beta }^{\sigma }$ with $\sigma <s$. The
structure of the sequence spaces $\dot{K}_{p,r}^{\alpha ,q}a_{\beta }^{s}$
and $\sigma <s$, yields the unconditional convergence of $f$ given by %
\eqref{representation} in $\dot{K}_{p,r}^{\alpha ,q}A_{\beta }^{\sigma }$
with $\sigma <s$ and hence in $\mathcal{S}^{\prime }\mathcal{(}\mathbb{R}%
^{n})$.

\textit{Step 4. }We will prove the uniqueness of the coefficients. It
follows by Step 1 that%
\begin{equation*}
g=\sum_{j=0}^{\infty }\sum_{G\in G^{j}}\sum_{m\in \mathbb{Z}^{n}}\lambda
_{j,m}^{G}2^{-j\frac{n}{2}}\Psi _{G,m}^{j}\in \dot{K}_{p,r}^{\alpha
,q}A_{\beta }^{s}.
\end{equation*}%
From \eqref{u-assumption} the dual pairing of $g$ and any wavelet $\Psi
_{G^{\prime },m^{\prime }}^{j^{\prime }}$ makes sense. Since %
\eqref{orthogonal} is an orthonormal basis in $L^{2}(\mathbb{R}^{n})$ one
gets%
\begin{equation}
\langle g,\Psi _{G^{\prime },m^{\prime }}^{j^{\prime }}\rangle
=\sum_{j=0}^{\infty }\sum_{G\in G^{j}}\sum_{m\in \mathbb{Z}^{n}}\lambda
_{j,m}^{G}2^{-j\frac{n}{2}}\langle \Psi _{G,m}^{j},\Psi _{G^{\prime
},m^{\prime }}^{j^{\prime }}\rangle =\langle f,\Psi _{G^{\prime },m^{\prime
}}^{j^{\prime }}\rangle .  \label{g=f}
\end{equation}%
This holds also for finite linear combinations of $\Psi _{G^{\prime
},m^{\prime }}^{j^{\prime }}$. If $\varphi \in \mathcal{S}(\mathbb{R}^{n})$
then one has the unique $L^{2}(\mathbb{R}^{n})$-representation%
\begin{equation*}
\varphi =\sum_{j=0}^{\infty }\sum_{G\in G^{j}}\sum_{m\in \mathbb{Z}%
^{n}}\langle \varphi ,\Psi _{G,m}^{j}\rangle \Psi _{G,m}^{j}.
\end{equation*}%
By Step 1 of Theorem \ref{kernel3} this representation converges also in the
dual space of $\dot{K}_{p,r}^{\alpha ,q}A_{\beta }^{s}$. We get by %
\eqref{g=f} that $\langle g,\varphi \rangle =\langle f,\varphi \rangle $ for
all $\varphi \in \mathcal{S}(\mathbb{R}^{n})$ and hence $g=f$.
\end{proof}

\begin{remark}
We refer the reader to \cite{Xu09} for an atomic, molecular and\ wavelet
characterizations of the spaces $\dot{K}_{p}^{\alpha ,q}A_{\beta }^{s}$.
\end{remark}

\section{Several equivalent characterizations}

In this part, we establish characterizations of $\dot{K}_{p,r}^{\alpha
,q}A_{\beta }^{s}$ by Peetre maximal function, by ball mean of differences
and we will present some useful examples.

\subsection{Maximal function characterization}

Let $\{\varphi _{j}\}_{j\in \mathbb{N}_{0}}$ be the smooth dyadic resolution
of unity. Let $a>0$ and $f\in \mathcal{S}^{\prime }(\mathbb{R}^{n})$. Then
we define the Peetre maximal function as follows: 
\begin{equation*}
\varphi _{k}^{\ast ,a}f(x)=\sup_{y\in \mathbb{R}^{n}}\frac{\left\vert 
\mathcal{F}^{-1}\varphi _{k}\ast f(y)\right\vert }{\left( 1+2^{k}\left\vert
x-y\right\vert \right) ^{a}},\qquad x\in \mathbb{R}^{n},k\in \mathbb{N}_{0}.
\end{equation*}%
We now present a fundamental characterization of the spaces under
consideration.

\begin{theorem}
\label{fun-char-lorentz}Let $s\in \mathbb{R},0<p<\infty ,0<r,q\leq \infty
,0<\beta \leq \infty $ and $\alpha >-\frac{n}{p}$. \newline
$\mathrm{(i)}$ Let $a>\frac{n}{\min \big(p,\frac{n}{\alpha +\frac{n}{p}}\big)%
}$. Then 
\begin{equation*}
\big\|f\big\|_{\dot{K}_{p,r}^{\alpha ,q}B_{\beta }^{s}}^{\ast }=\Big(%
\sum\limits_{k=0}^{\infty }2^{ks\beta }\big\|\varphi _{k}^{\ast ,a}f\big\|_{%
\dot{K}_{p,r}^{\alpha ,q}}^{\beta }\Big)^{1/\beta },
\end{equation*}%
\textit{is an equivalent quasi-norm in }$\dot{K}_{p,r}^{\alpha ,q}B_{\beta
}^{s}$, with the obvious modification if $\beta =\infty $.\newline
$\mathrm{(ii)}$ Let $0<q<\infty $ and $a>\frac{n}{\min \big(\min (p,\beta ),%
\frac{n}{\alpha +\frac{n}{p}}\big)}$. Then 
\begin{equation*}
\big\|f\big\|_{\dot{K}_{p,r}^{\alpha ,q}F_{\beta }^{s}}^{\ast }=\Big\|\Big(%
\sum\limits_{k=0}^{\infty }2^{ks\beta }|\varphi _{k}^{\ast ,a}f|^{\beta }%
\Big)^{1/\beta }\Big\|_{\dot{K}_{p,r}^{\alpha ,q}},
\end{equation*}%
\textit{is an equivalent quasi-norm in }$\dot{K}_{p,r}^{\alpha ,q}F_{\beta
}^{s}$, with the obvious modification if \textit{\ }$\beta =\infty $.
\end{theorem}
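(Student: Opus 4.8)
The plan is to run the classical Peetre maximal function argument, adapting to the Lorentz--Herz setting the reasoning of \cite{Drihem1.13} (which in turn rests on ideas of Peetre and Rychkov). Throughout I would replace $\|\cdot\|_{\dot{K}_{p,r}^{\alpha,q}A_\beta^s}$ by the equivalent resolution-of-unity quasi-norm $\|\cdot\|_{\dot{K}_{p,r}^{\alpha,q}A_\beta^s}^{\varphi_0,\varphi_1}$ supplied by Theorem \ref{partition-equi-lorentz}. One inequality is trivial: taking $y=x$ in the supremum gives $|\mathcal{F}^{-1}\varphi_k\ast f(x)|\le\varphi_k^{\ast,a}f(x)$ for every $x$, hence
\begin{equation*}
\|f\|_{\dot{K}_{p,r}^{\alpha,q}A_\beta^s}\approx\|f\|_{\dot{K}_{p,r}^{\alpha,q}A_\beta^s}^{\varphi_0,\varphi_1}\le\|f\|_{\dot{K}_{p,r}^{\alpha,q}A_\beta^s}^{\ast}.
\end{equation*}
Everything then comes down to the converse estimate and, in particular, to finiteness of the right-hand side when $f\in\dot{K}_{p,r}^{\alpha,q}A_\beta^s$.

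The core is a pointwise bound on the Peetre maximal function: for each $M>0$ there is $c=c(M,a,n)$ such that, for every $d$ with $ad>n$ and every $m>n$,
\begin{equation*}
\varphi_k^{\ast,a}f(x)\le c\Big(\sum_{j=0}^{\infty}2^{-|j-k|Md}\,\eta_{2^{\min(j,k)},\,m}\ast\big|\mathcal{F}^{-1}\varphi_j\ast f\big|^{d}(x)\Big)^{1/d},\qquad x\in\mathbb{R}^{n},\ k\in\mathbb{N}_0.
\end{equation*}
I would obtain this from the identity $f=\sum_{j\ge0}\mathcal{F}^{-1}\varphi_j\ast f$ in $\mathcal{S}'$, a Calder\'on-type reproducing decomposition of each $\mathcal{F}^{-1}\varphi_k$ into pieces convolved against the $\mathcal{F}^{-1}\varphi_j$, the kernel estimates of Lemmas \ref{FJ901} and \ref{FJ902} (the vanishing moments of $\mathcal{F}^{-1}\varphi_j$, $j\ge1$, give the arbitrarily fast decay $2^{-|j-k|M}$, the pieces with $j=0$ or $k=0$ being handled separately), and the ``$r$-trick'' of Lemma \ref{r-trick}, which is what produces the averaged term $\eta_{2^{\min(j,k)},m}\ast|\cdot|^{d}$. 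Since $\eta_{N,m}=N^n\omega(N\cdot)$ with $\omega(x)=(1+|x|)^{-m}\in L^1$ for $m>n$, one has $\big(\eta_{N,m}\ast|g|^{d}\big)^{1/d}\lesssim\mathcal{M}_d(g)$ uniformly in $N$, so after multiplying by $2^{ks}$ and absorbing $2^{(k-j)s}$ into the decay,
\begin{equation*}
2^{ks}\varphi_k^{\ast,a}f(x)\lesssim\Big(\sum_{j=0}^{\infty}2^{-|j-k|(Md-|s|d)}\,\mathcal{M}\big(2^{jsd}\big|\mathcal{F}^{-1}\varphi_j\ast f\big|^{d}\big)(x)\Big)^{1/d}.
\end{equation*}

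The decisive point is the admissible range of $d$. By the hypothesis on $a$ one can fix
\begin{equation*}
\frac{n}{a}<d<\min\Big(p,\tfrac{n}{\alpha+n/p}\Big)\ \text{in case }(\mathrm{i}),\qquad\frac{n}{a}<d<\min\Big(p,\beta,\tfrac{n}{\alpha+n/p}\Big)\ \text{in case }(\mathrm{ii}).
\end{equation*}
Here $ad>n$ is precisely what guarantees convergence of the $\eta$-convolution, while $d<p$, $d<\beta$ (in the $F$-case) and $d<n/(\alpha+n/p)$ are exactly the conditions making the vector-valued maximal inequality of Lemma \ref{Maximal-Inq copy(2)-lorentz} applicable in the rescaled space $\dot{K}_{p/d,\,r/d}^{\alpha d,\,q/d}$ (indeed $-n/(p/d)<\alpha d<n(1-d/p)$ reads $-n/p<\alpha<n/d-n/p$). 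Using Lemma \ref{embeddings1-lorentz}(iv) to pass between a $1/d$-th power in $\dot{K}_{p,r}^{\alpha,q}$ and a norm in $\dot{K}_{p/d,r/d}^{\alpha d,q/d}$, I would then finish as follows. In case (i): take $\ell^\beta(\dot{K}_{p,r}^{\alpha,q})$-norms over $k$, pull the factor $2^{-|j-k|(Md-|s|d)}$ out of the $j$-sum by the $\ell^\delta$-triangle inequality, apply Lemmas \ref{Maximal-Inq copy(2)-lorentz} and \ref{Maximal-Inq copy(1)-lorentz}, and collapse the geometric sum with the Hardy-type inequality Lemma \ref{lem:lq-inequality} (splitting into $j\le k$ and $j>k$). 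In case (ii): apply Lemma \ref{lem:lq-inequality} with exponent $\beta/d$ pointwise in $x$ to remove $2^{-|j-k|(Md-|s|d)}$, and then the vector-valued maximal inequality in $\dot{K}_{p/d,r/d}^{\alpha d,q/d}$ with inner index $\beta/d$. Both routes end with $\lesssim\|f\|_{\dot{K}_{p,r}^{\alpha,q}A_\beta^s}^{\varphi_0,\varphi_1}\approx\|f\|_{\dot{K}_{p,r}^{\alpha,q}A_\beta^s}$.

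The step I expect to be the real obstacle is the pointwise estimate above: one must set up the reproducing decomposition so that the pieces involving $\mathcal{F}^{-1}\varphi_0$ (which carries no cancellation) are still under control, and so that the decay constant $M$ is genuinely free and uniform in all parameters --- it has to survive being reduced by $|s|$ and by the $\ell^\delta$-triangle inequality before Lemma \ref{lem:lq-inequality} is invoked. Once that estimate is in hand, the rest is the routine bookkeeping of Section 2's maximal and Hardy inequalities, with the interval of admissible $d$ forced exactly by the stated lower bounds on $a$.
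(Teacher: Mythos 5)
Your argument is correct, but it is a genuinely heavier route than the one the paper takes. The paper's proof is a one-scale argument: since $\mathcal{F}^{-1}\varphi _{k}\ast f$ is band-limited at scale $2^{k}$ (supp$\,\varphi _{k}\subset \{2^{k-1}\leq |\xi |\leq 3\cdot 2^{k-1}\}$), Lemma \ref{r-trick} applies directly to the block itself and, after dividing by $(1+2^{k}|x-y|)^{a}$ and using the submultiplicativity of the Peetre weight, gives $\varphi _{k}^{\ast ,a}f\lesssim \big(\eta _{k,a\tau }\ast |\mathcal{F}^{-1}\varphi _{k}\ast f|^{\tau }\big)^{1/\tau }\lesssim \mathcal{M}_{\tau }(\mathcal{F}^{-1}\varphi _{k}\ast f)$ for $n/a<\tau <\min \big(\min (p,\beta ),\tfrac{n}{\alpha +n/p}\big)$; then Lemma \ref{Maximal-Inq copy(2)-lorentz} finishes both the $B$- and $F$-cases, with no reproducing formula, no cross-scale sum, and no Hardy-type inequality. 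You instead run the Rychkov-type machinery (Calder\'on reproducing decomposition, Lemmas \ref{FJ901}--\ref{FJ902}, the $r$-trick, then Lemma \ref{lem:lq-inequality}), and the step you single out as the real obstacle --- the cross-scale pointwise bound with decay $2^{-|j-k|Md}$ --- is precisely what the paper's approach avoids; note also that for the smooth dyadic resolution of unity used here the terms with $|j-k|\geq 2$ vanish outright since $\varphi _{j}\varphi _{k}=0$, so your decay factor is never really exploited. What your route buys is generality: it is exactly the argument needed when the kernels only satisfy Tauberian and moment conditions, i.e.\ for the local means characterization (Theorem \ref{loc-mean-char-lorentz}), whereas the paper's short argument buys simplicity at the cost of using the compact Fourier support of each block. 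Your bookkeeping of the admissible range of the auxiliary exponent ($n/a<d<\min (p,\tfrac{n}{\alpha +n/p})$, resp.\ $d<\beta $ in the $F$-case, so that the rescaled Fefferman--Stein inequality in $\dot{K}_{p/d,r/d}^{\alpha d,q/d}$ applies via Lemma \ref{embeddings1-lorentz}(iv)) matches the hypotheses on $a$ and is sound.
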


\begin{proof}
By similarity, we only consider the spaces $\dot{K}_{p,r}^{\alpha
,q}F_{\beta }^{s}$. It is easy to see that for any $f\in \mathcal{S}^{\prime
}(\mathbb{R}^{n})$ with $\big\|f\big\|_{\dot{K}_{p,r}^{\alpha ,q}F_{\beta
}^{s}}^{\ast }<\infty $ and any $x\in \mathbb{R}^{n},k\in \mathbb{N}_{0}$ we
have%
\begin{equation*}
|\mathcal{F}^{-1}\varphi _{k}\ast f(x)|\leq \varphi _{k}^{\ast ,a}f(x)\text{.%
}
\end{equation*}%
This shows that $\big\|f\big\|_{\dot{K}_{p,r}^{\alpha ,q}F_{\beta }^{s}}\leq %
\big\|f\big\|_{\dot{K}_{p,r}^{\alpha ,q}F_{\beta }^{s}}^{\ast }$. We will
prove that there is a constant $C>0$ such that for every $f\in \dot{K}%
_{p,r}^{\alpha ,q}F_{\beta }^{s}$%
\begin{equation*}
\big\|f\big\|_{\dot{K}_{p,r}^{\alpha ,q}F_{\beta }^{s}}^{\ast }\leq C\big\|f%
\big\|_{\dot{K}_{p,r}^{\alpha ,q}F_{\beta }^{s}}.
\end{equation*}%
Let $0<\tau <\infty $ be such that $a>\frac{n}{\tau }>\frac{n}{\min \big(%
\min (p,\beta ),\frac{n}{\alpha +\frac{n}{p}}\big)}$. By Lemmas \ref{r-trick}
and \ref{est-maximal}, the estimate 
\begin{equation}
\left\vert \mathcal{F}^{-1}\varphi _{k}\ast f(y)\right\vert \leq C_{1}\text{ 
}\big(\eta _{k,\delta \tau }\ast |\mathcal{F}^{-1}\varphi _{k}\ast f|^{\tau
}(y)\big)^{1/\tau }  \label{esti-conv1-lorentz}
\end{equation}%
is true for any $y\in \mathbb{R}^{n}$, $\delta >\frac{n}{\tau }$ and $k\in 
\mathbb{N}_{0}$. Now dividing both sides of \eqref{esti-conv1-lorentz} by $%
\left( 1+2^{k}\left\vert x-y\right\vert \right) ^{a}$, in the right-hand
side we use the inequality%
\begin{equation*}
\left( 1+2^{k}\left\vert x-y\right\vert \right) ^{-a}\leq \left(
1+2^{k}\left\vert x-z\right\vert \right) ^{-a}\left( 1+2^{k}\left\vert
y-z\right\vert \right) ^{a},\quad x,y,z\in \mathbb{R}^{n},
\end{equation*}%
while in the left-hand side we take the supremum over $y\in \mathbb{R}^{n}$,
we find that%
\begin{align*}
\varphi _{k}^{\ast ,a}f(x)\lesssim & \big(\eta _{k,a\tau }\ast |\mathcal{F}%
^{-1}\varphi _{k}\ast f|^{\tau }(x)\big)^{1/\tau } \\
\lesssim & \mathcal{M}_{\tau }(\mathcal{F}^{-1}\varphi _{k}\ast f)(x),
\end{align*}%
where the implicit constant is independent of $x,k$ and $f$. Applying Lemma %
\ref{Maximal-Inq copy(2)-lorentz}, we deduce that 
\begin{equation*}
\big\|f\big\|_{\dot{K}_{p,r}^{\alpha ,q}F_{\beta }^{s}}^{\ast }\lesssim %
\Big\|\Big(\sum\limits_{k=0}^{\infty }2^{ks\beta }|\mathcal{F}^{-1}\varphi
_{k}\ast f|^{\beta }\Big)^{1/\beta }\Big\|_{\dot{K}_{p,r}^{\alpha
,q}}\lesssim \big\|f\big\|_{\dot{K}_{p,r}^{\alpha ,q}F_{\beta }^{s}}.
\end{equation*}%
The proof of Theorem \ref{fun-char-lorentz} is complete.
\end{proof}

Let us consider $k_{0},k\in \mathcal{S}\left( \mathbb{R}^{n}\right) $ and $%
S\geq -1$ an integer such that for an $\varepsilon >0$%
\begin{align}
\left\vert \mathcal{F}k_{0}\left( \xi \right) \right\vert & >0\text{\quad
for\quad }\left\vert \xi \right\vert <2\varepsilon  \label{T-cond1} \\
\left\vert \mathcal{F}k\left( \xi \right) \right\vert & >0\text{\quad
for\quad }\frac{\varepsilon }{2}<\left\vert \xi \right\vert <2\varepsilon
\label{T-cond2}
\end{align}%
and%
\begin{equation}
\int_{\mathbb{R}^{n}}x^{\alpha }k(x)dx=0\text{\quad for any\quad }\left\vert
\alpha \right\vert \leq S.  \label{moment-cond}
\end{equation}%
Here $\mathrm{\eqref{T-cond1}}$ and $\mathrm{\eqref{T-cond2}}$\ are
Tauberian conditions, while $\mathrm{\eqref{moment-cond}}$ are moment
conditions on $k$. We recall the notation%
\begin{equation*}
k_{t}(x)=t^{-n}k(t^{-1}x)\text{,}\quad k_{j}\left( x\right) =k_{2^{-j}}(x)%
\text{,\quad for}\quad t>0\quad \text{and}\quad j\in \mathbb{N}.
\end{equation*}%
For any $a>0$, $f\in \mathcal{S}^{\prime }\left( \mathbb{R}^{n}\right) $ and 
$x\in \mathbb{R}^{n}$ we denote%
\begin{equation*}
k_{j}^{\ast ,a}f(x)=\sup_{y\in \mathbb{R}^{n}}\frac{\left\vert k_{j}\ast
f(y)\right\vert }{(1+2^{j}\left\vert x-y\right\vert )^{a}},\quad j\in 
\mathbb{N}_{0}.
\end{equation*}%
Usually $k_{j}\ast f$ is called local mean.

We are able now to state the main result of this section.

\begin{theorem}
\label{loc-mean-char-lorentz}Let $0<p<\infty ,0<r,q\leq \infty ,0<\beta \leq
\infty $ and $\alpha >-\frac{n}{p}$. Let $s<S+1$\newline
$\mathrm{(i)}$ Let $a>\frac{n}{\min \big(p,\frac{n}{\alpha +\frac{n}{p}}\big)%
}$. Then 
\begin{equation*}
\big\|f\big\|_{\dot{K}_{p,r}^{\alpha ,q}B_{\beta }^{s}}^{\bullet }=\Big(%
\sum\limits_{j=0}^{\infty }2^{js\beta }\big\|k_{j}^{\ast ,a}f\big\|_{\dot{K}%
_{p,r}^{\alpha ,q}}^{\beta }\Big)^{1/\beta }
\end{equation*}%
and%
\begin{equation*}
\big\|f\big\|_{\dot{K}_{p,r}^{\alpha ,q}B_{\beta }^{s}}^{\star }=\Big(%
\sum\limits_{j=0}^{\infty }2^{js\beta }\big\|k_{j}\ast f\big\|_{\dot{K}%
_{p,r}^{\alpha ,q}}^{\beta }\Big)^{1/\beta }
\end{equation*}%
\textit{are an equivalent quasi-norm in }$\dot{K}_{p,r}^{\alpha ,q}B_{\beta
}^{s}$, with the obvious modification if \textit{\ }$\beta =\infty $.\newline
$\mathrm{(ii)}$ Let $0<q<\infty $ and $a>\frac{n}{\min \big(\min (p,\beta ),%
\frac{n}{\alpha +\frac{n}{p}}\big)}$. Then 
\begin{equation*}
\big\|f\big\|_{\dot{K}_{p,r}^{\alpha ,q}F_{\beta }^{s}}^{\bullet }=\Big\|%
\Big(\sum\limits_{j=0}^{\infty }2^{js\beta }|k_{j}^{\ast ,a}f|^{\beta }\Big)%
^{1/\beta }\Big\|_{\dot{K}_{p,r}^{\alpha ,q}},
\end{equation*}%
and%
\begin{equation*}
\big\|f\big\|_{\dot{K}_{p,r}^{\alpha ,q}F_{\beta }^{s}}^{\star }=\Big\|\Big(%
\sum\limits_{j=0}^{\infty }2^{js\beta }|k_{j}\ast f|^{\beta }\Big)^{1/\beta }%
\Big\|_{\dot{K}_{p,r}^{\alpha ,q}},
\end{equation*}%
\textit{are an equivalent quasi-norm in }$\dot{K}_{p,r}^{\alpha ,q}F_{\beta
}^{s}$, with the obvious modification if\textit{\ }$\beta =\infty $.
\end{theorem}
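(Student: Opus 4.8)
The plan is to derive everything from the Peetre maximal function characterization already proved in Theorem \ref{fun-char-lorentz}, coupled with a Calder\'on-type reproducing identity generated by the Tauberian pair $(k_{0},k)$ and with the convolution estimate of Lemma \ref{r-trick}. Write $A_{\beta}^{s}$ for either $B_{\beta}^{s}$ or $F_{\beta}^{s}$, and denote by $\|\cdot\|^{\ast}$ the Peetre maximal quasi-norm of Theorem \ref{fun-char-lorentz}, which is already known to be equivalent to $\|\cdot\|_{\dot{K}_{p,r}^{\alpha,q}A_{\beta}^{s}}$. Since $|k_{j}\ast f(x)|\le k_{j}^{\ast,a}f(x)$ pointwise, one has trivially $\|f\|^{\star}\le\|f\|^{\bullet}$, so it suffices to prove $\|f\|^{\bullet}\lesssim\|f\|^{\ast}$ and $\|f\|^{\ast}\lesssim\|f\|^{\star}$. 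First I would fix a smooth dyadic resolution $\{\varphi_{l}\}_{l\in\mathbb{N}_{0}}$ and use it as an intermediate system throughout.

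For the estimate $\|f\|^{\bullet}\lesssim\|f\|^{\ast}$ I would decompose $k_{j}\ast f=\sum_{l\ge 0}k_{j}\ast\mathcal{F}^{-1}\varphi_{l}\ast f$ (with the obvious modification for $j=0$) and treat two regimes. When $l>j$, the Fourier support of $\varphi_{l}$ meets $\mathcal{F}k_{j}(\xi)=\mathcal{F}k(2^{-j}\xi)$ only where $|2^{-j}\xi|\sim 2^{l-j}\gg1$, so the Schwartz decay of $\mathcal{F}k$, together with Lemmas \ref{r-trick} and \ref{est-maximal}, gives a bound $|k_{j}\ast\mathcal{F}^{-1}\varphi_{l}\ast f(x)|\lesssim 2^{-(l-j)L}\varphi_{l}^{\ast,a}f(x)$ with $L$ at our disposal. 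When $l\le j$, the moment conditions \eqref{moment-cond} force $\partial^{\gamma}\mathcal{F}k(0)=0$ for $|\gamma|\le S$, hence $|\mathcal{F}k(2^{-j}\xi)|\lesssim 2^{(l-j)(S+1)}$ on $\mathrm{supp}\,\varphi_{l}$; a Taylor-remainder argument of the type underlying Lemmas \ref{FJ901}--\ref{FJ902}, again via Lemma \ref{r-trick}, then yields $|k_{j}\ast\mathcal{F}^{-1}\varphi_{l}\ast f(x)|\lesssim 2^{-(j-l)(S+1)}\varphi_{l}^{\ast,a}f(x)$. Writing $2^{js-(j-l)(S+1)}=2^{ls}2^{(j-l)(s-S-1)}$ and using $s<S+1$, the two regimes combine to $2^{js}k_{j}^{\ast,a}f(x)\lesssim\sum_{l\ge 0}2^{-|j-l|\varepsilon_{0}}2^{ls}\varphi_{l}^{\ast,a}f(x)$ for some $\varepsilon_{0}>0$ (here one picks the auxiliary parameter $\tau$ in Lemma \ref{r-trick} with $n/\tau<a$ and $\tau$ just below $\min(p,\frac{n}{\alpha+n/p})$, resp. $\min(p,\beta,\frac{n}{\alpha+n/p})$, and passes from $\eta$-averages to Peetre maximal functions by $(1+2^{l}|x-y|)^{-a}\le(1+2^{l}|x-z|)^{-a}(1+2^{l}|y-z|)^{a}$). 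Taking the $\ell^{\beta}(\dot{K}_{p,r}^{\alpha,q})$-quasi-norm in the $B$-case, resp. the $\dot{K}_{p,r}^{\alpha,q}(\ell^{\beta})$-quasi-norm in the $F$-case, and absorbing the geometric convolution by the Hardy-type inequality Lemma \ref{lem:lq-inequality} (after a power trick based on Lemma \ref{Lp-estimate} when some index is $<1$) gives $\|f\|^{\bullet}\lesssim\|f\|^{\ast}$.

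For the estimate $\|f\|^{\ast}\lesssim\|f\|^{\star}$ I would invoke the Calder\'on reproducing formula attached to the Tauberian pair $(k_{0},k)$: conditions \eqref{T-cond1}--\eqref{moment-cond} allow one, by Rychkov's construction (cf. \cite{Drihem1.13} and \cite[Lemma 2.3]{YSY10}), to choose $\psi_{0},\psi\in\mathcal{S}(\mathbb{R}^{n})$ with $\mathcal{F}\psi_{0}$ supported in a ball, $\mathcal{F}\psi$ supported in an annulus, $\psi$ having vanishing moments of every order, and $f=\psi_{0}\ast k_{0}\ast f+\sum_{j\ge 1}\psi_{j}\ast k_{j}\ast f$ in $\mathcal{S}'(\mathbb{R}^{n})$. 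Because $\psi_{j}\ast(\cdot)$ is frequency-localized to $\{|\xi|\sim 2^{j}\}$, only the terms with $|j-\nu|\le c_{0}$ contribute to $\mathcal{F}^{-1}\varphi_{\nu}\ast f$, so Lemmas \ref{r-trick} and \ref{est-maximal} give $2^{\nu s}\varphi_{\nu}^{\ast,a}f(x)\lesssim\sum_{|j-\nu|\le c_{0}}2^{js}\mathcal{M}_{\tau}(k_{j}\ast f)(x)$ for a small $\tau$ as above; Lemma \ref{Maximal-Inq copy(2)-lorentz}, applied with the exponent $p/\tau>1$ (and $\beta/\tau>1$ in the $F$-case), then yields $\|f\|^{\ast}\lesssim\|f\|^{\star}$. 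Chaining $\|f\|^{\star}\le\|f\|^{\bullet}\lesssim\|f\|^{\ast}\approx\|f\|_{\dot{K}_{p,r}^{\alpha,q}A_{\beta}^{s}}\lesssim\|f\|^{\star}$ finishes the proof. The hard part, exactly as in the classical local-means theory, will be the construction of the dual system $(\psi_{0},\psi)$ from the mere Tauberian conditions and the verification that the resulting series converges in $\mathcal{S}'$ and reproduces $f$ in this inhomogeneous setting; once that is granted, the remaining work is bookkeeping of decay exponents, the one genuine constraint being $s<S+1$, which is precisely what makes $2^{(j-l)(s-S-1)}$ summable in the second paragraph.
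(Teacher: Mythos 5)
Your overall strategy (reduce everything to the Peetre maximal characterization of Theorem \ref{fun-char-lorentz} via a Calder\'on-type reproducing formula built from the Tauberian pair, then use the vector-valued maximal inequality in $\dot{K}_{p,r}^{\alpha,q}$) is exactly the Rychkov-style route the paper points to, and your first half, $\|f\|^{\bullet}\lesssim\|f\|^{\ast}$, is sound: the decomposition over $\{\varphi_{l}\}$, the factor $2^{-(l-j)L}$ from the Schwartz decay of $\mathcal{F}k$ for $l>j$, the factor $2^{(l-j)(S+1)}$ from the moment conditions for $l\le j$, and the summability coming from $s<S+1$ are all correct bookkeeping.

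The genuine gap is in the converse estimate $\|f\|^{\ast}\lesssim\|f\|^{\star}$. After the reproducing formula $f=\sum_{j}\Psi_{j}\ast k_{j}\ast f$ you claim that Lemmas \ref{r-trick} and \ref{est-maximal} give $2^{\nu s}\varphi_{\nu}^{\ast,a}f(x)\lesssim\sum_{|j-\nu|\le c_{0}}2^{js}\mathcal{M}_{\tau}(k_{j}\ast f)(x)$ with $\tau$ just below $\min\bigl(p,\beta,\tfrac{n}{\alpha+n/p}\bigr)$. Lemma \ref{r-trick} applies only when the function being averaged is a convolution with a \emph{band-limited} kernel; here the relevant function on the right is $k_{j}\ast f$, and $k$ is Schwartz but not band-limited, so the $r$-trick does not produce $\bigl(\eta_{j,m}\ast|k_{j}\ast f|^{\tau}\bigr)^{1/\tau}$ with $\tau<1$. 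What one gets honestly from $|\mathcal{F}^{-1}\varphi_{\nu}\ast\Psi_{j}(z)|\lesssim2^{jn}(1+2^{j}|z|)^{-M}$ is only the $L^{1}$-average $\eta_{j,M}\ast|k_{j}\ast f|\lesssim\mathcal{M}(k_{j}\ast f)$, i.e.\ $\tau=1$; and $(\eta\ast h)^{\tau}\not\lesssim\eta'\ast h^{\tau}$ for $\tau<1$ (test on a narrow spike), so you cannot upgrade afterwards. Since the theorem allows $0<p<\infty$, $0<\beta\le\infty$ and every $\alpha>-\frac{n}{p}$, the case $\tau<1$ is unavoidable (it occurs whenever $\min(p,\beta)\le1$ or $\alpha\ge n(1-\tfrac1p)$, the latter being needed to apply Lemma \ref{Maximal-Inq copy(2)-lorentz} after rescaling by $\tau$). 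The missing ingredient is precisely Rychkov's bootstrapping/absorption lemma from \cite{Ry01}: one proves directly, for $0<\tau\le1$ and $N$ large, the self-improving inequality
\begin{equation*}
\bigl(k_{j}^{\ast,a}f(x)\bigr)^{\tau}\lesssim\sum_{l\ge j}2^{(j-l)N\tau}2^{ln}\int_{\mathbb{R}^{n}}\frac{|k_{l}\ast f(y)|^{\tau}}{(1+2^{j}|x-y|)^{a\tau}}\,dy,
\end{equation*}
by iterating the reproducing identity and absorbing the maximal function (using its finiteness for large $a$, since $f\in\mathcal{S}^{\prime}(\mathbb{R}^{n})$ has finite order); only then does $\mathcal{M}_{\tau}$ of the plain means appear and the weighted vector-valued maximal inequality apply. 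Incidentally, the step you single out as the hard part (constructing the dual system) is comparatively easy here: since $k_{0},k\in\mathcal{S}(\mathbb{R}^{n})$ satisfy \eqref{T-cond1}--\eqref{T-cond2}, dividing a suitable dyadic partition of unity by $\mathcal{F}k_{0}$, $\mathcal{F}k(2^{-j}\cdot)$ on their supports already yields band-limited $\Psi_{j}$; the real difficulty in \cite{Ry01}, and the one your sketch skips, is the maximal estimate above.
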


\begin{proof}
The proof is very similar as in Rychkov \cite{Ry01}.
\end{proof}

\subsection{Characterizations by ball mean of differences}

Let $0<p<\infty ,0<r,\beta \leq \infty $. For later use we introduce the
following abbreviations:%
\begin{equation*}
\sigma _{p}=n\max \big(\frac{1}{p}-1,0\big)\quad \text{and}\quad \sigma
_{p,\beta }=n\max \big(\frac{1}{p}-1,\frac{1}{\beta }-1,0\big).
\end{equation*}%
In the next we shall interpret $L_{\mathrm{loc}}^{1}(\mathbb{R}^{n})$ as the
set of regular distributions.

\begin{theorem}
\label{regular-distribution1-lorentz}\textit{Let }$0<p<\infty ,0<r,q,\beta
\leq \infty ,\alpha >\max (-n,-\frac{n}{p}),\alpha _{0}=n-\frac{n}{p}$ and 
\begin{equation*}
s>\max (\sigma _{p},\alpha -\alpha _{0}).
\end{equation*}%
Then 
\begin{equation*}
\dot{K}_{p,r}^{\alpha ,q}A_{\beta }^{s}\hookrightarrow L_{\mathrm{loc}}^{1}(%
\mathbb{R}^{n}),
\end{equation*}%
where $0<q<\infty $ in the case of Herz-type Triebel-Lizorkin spaces.
\end{theorem}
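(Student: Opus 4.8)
The plan is to reduce the embedding $\dot{K}_{p,r}^{\alpha,q}A_{\beta}^{s}\hookrightarrow L_{\mathrm{loc}}^{1}(\mathbb{R}^{n})$ to a combination of the Sobolev-type embeddings already established in Section~3 and the regularity criterion in Lemma~\ref{regular-distribution-lorentz}. First I would use the basic embedding $\dot{K}_{p,r}^{\alpha,q}A_{\beta}^{s}\hookrightarrow \dot{K}_{p,r}^{\alpha,q}B_{\infty}^{s}$ (combining Theorem~\ref{embeddings1.1-lorentz}/(i) with Theorem~\ref{embeddings2-lorentz}/(ii) in the $F$-case and Theorem~\ref{embeddings1.1-lorentz}/(i) in the $B$-case), so that it suffices to treat $\dot{K}_{p,r}^{\alpha,q}B_{\infty}^{s}$. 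The key point will then be to show that the hypotheses $s>\sigma_{p}$ and $s>\alpha-\alpha_{0}=\alpha-n+\frac{n}{p}$ allow us to move, via a Sobolev embedding of the type in Theorem~\ref{embeddings3-lorentz} (or its $F$-analogue Theorem~\ref{embeddings3 copy(4)-lorentz}), into a space $\dot{K}_{\tilde{p},\tilde{r}}^{\tilde{\alpha},\tilde{q}}A_{\beta}^{0}$ with parameters for which the underlying Lorentz--Herz space embeds into $L_{\mathrm{loc}}^{1}$, i.e. $(\tilde{\alpha},\tilde{p},\tilde{r},\tilde{q})\in V_{\alpha,p,r,q}$ in the notation of Lemma~\ref{regular-distribution-lorentz}.

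\textbf{Key steps.} The concrete implementation I would carry out is: (1) reduce to $\beta=\infty$ as above. (2) Choose $\tilde{p}$ slightly larger than $1$ (or $\tilde{p}=1$ with an auxiliary parameter tweak), $\tilde{r}$ with $1<\tilde{r}<\infty$, and $\tilde{q}=1$ or $\tilde{q}$ finite, together with a target regularity $s_{1}>\sigma_{\tilde{p}}=0$ (for $\tilde{p}>1$, $\sigma_{\tilde{p}}=0$); then verify the balance conditions $s_{1}-\frac{n}{\tilde{p}}-\tilde{\alpha}\leq s-\frac{n}{p}-\alpha$ and the monotonicity conditions on the Herz exponents required by Theorem~\ref{embeddings3-lorentz} or \ref{embeddings3 copy(4)-lorentz}. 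Here the two scalar hypotheses $s>\sigma_{p}$ and $s>\alpha-n+\frac{n}{p}$ are exactly what is needed to leave enough room to land at positive target regularity while keeping $\tilde{\alpha}<n-\frac{n}{\tilde{p}}$ (or $\tilde{\alpha}=n-\frac{n}{\tilde{p}}$ with $\tilde{q}=1$). (3) Apply Lemma~\ref{regular-distribution-lorentz} to conclude $\dot{K}_{\tilde{p},\tilde{r}}^{\tilde{\alpha},\tilde{q}}A_{\beta}^{0}\hookrightarrow \mathcal{D}'(\mathbb{R}^{n})$ with the distribution being a regular $L^{1}_{\mathrm{loc}}$ function; more precisely, since $\dot{K}_{\tilde{p},\tilde{r}}^{\tilde{\alpha},\tilde{q}}A_{\beta}^{0}$ is continuously embedded into the base Lorentz--Herz space (for $s_{1}>0$, $A^{s_{1}}\hookrightarrow A^{0}\hookrightarrow \dot{K}_{\tilde{p},\tilde{r}}^{\tilde{\alpha},\tilde{q}}$ via Littlewood--Paley and Lemma~\ref{est-eta}), one gets $\dot{K}_{\tilde{p},\tilde{r}}^{\tilde{\alpha},\tilde{q}}A_{\beta}^{s_{1}}\subset L_{\mathrm{loc}}^{1}(\mathbb{R}^{n})$, using the local-integrability estimate $\|f\|_{L^{1}(B(0,2^{N}))}\lesssim 2^{N(n-\frac{n}{\tilde{p}}-\tilde{\alpha})}\|f\|_{\dot{K}_{\tilde{p},\tilde{r}}^{\tilde{\alpha},\tilde{q}}}$ from Step~1 of the proof of Lemma~\ref{regular-distribution-lorentz}. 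Chaining the embeddings gives the claim.

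\textbf{Main obstacle.} I expect the delicate part to be the bookkeeping in Step~(2): one must simultaneously satisfy the scaling identity relating $(s,\alpha,p)$ to $(s_{1},\tilde{\alpha},\tilde{p})$, the Herz-exponent monotonicity required by the Sobolev embedding theorems (the dichotomy $0<p\leq \tilde p$ with $\tilde\alpha\geq\alpha$ versus $0<\tilde p<p$ with $\tilde\alpha+\frac{n}{\tilde p}\geq\alpha+\frac{n}{p}$), and the admissibility condition $(\tilde\alpha,\tilde p,\tilde r,\tilde q)\in V_{\alpha,p,r,q}$. One should separate cases according to whether $\alpha\le 0$ or $\alpha>0$ and whether $p\le 1$ or $p>1$, and in each case exhibit an explicit admissible choice of target parameters; the hypothesis $\alpha>\max(-n,-\frac{n}{p})$ ensures the source space is a space of regular distributions to begin with, and $\alpha_{0}=n-\frac{n}{p}$ appears precisely as the critical Herz exponent from \eqref{est-function1}. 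The Triebel--Lizorkin restriction $0<q<\infty$ is needed only so that Theorem~\ref{embeddings3 copy(4)-lorentz} applies; the Besov case needs no such restriction. Once the parameter choice is fixed, the remaining estimates are routine given the machinery already developed.
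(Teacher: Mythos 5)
Your proposal follows essentially the same route as the paper's proof: reduce via the Sobolev embeddings of Theorem \ref{embeddings3-lorentz} (together with the $B$--$F$ embeddings of Theorem \ref{embeddings2-lorentz}) to a Lorentz--Herz space whose parameters are admissible in the sense of Lemma \ref{regular-distribution-lorentz}, keeping positive residual smoothness so that the Littlewood--Paley partial sums converge absolutely in that Herz norm, and the paper carries out exactly the case analysis on $\alpha$ versus $\alpha_0$ and on $p$ versus $1$ that you anticipate. One small caution: your intermediate chain ``$A^{s_1}\hookrightarrow A^{0}\hookrightarrow \dot K$'' is not literally valid (zero smoothness does not embed into the base space); what is actually used, and what your parenthetical in effect describes, is the bound $\sum_{j}\big\|\mathcal{F}^{-1}\varphi_j\ast f\big\|_{\dot K}\lesssim \big\|f\big\|_{A^{s_1}}$ for $s_1>0$, followed by identifying the limit of the partial sums with $f$ by testing against Schwartz functions, exactly as in the paper.
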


\begin{proof}
Let $\{\varphi _{j}\}_{j\in \mathbb{N}_{0}}$ be a smooth dyadic resolution
of unity. We set 
\begin{equation*}
\varrho _{k}=\sum\limits_{j=0}^{k}\mathcal{F}^{-1}\varphi _{j}\ast f,\quad
k\in \mathbb{N}_{0}.
\end{equation*}%
For technical reasons, we split the proof into two steps.

\textit{Step 1.} We consider the case $1\leq p<\infty $. In order to prove
we additionally do it into the four Substeps 1.1, 1.2, 1.3 and 1.4.

\textit{Substep 1.1.} $-\frac{n}{p}<\alpha <\alpha _{0}$. First assume that $%
1<p<\infty $. Let $1<p_{0}<\infty $ be such that 
\begin{equation*}
p<p_{0}<\frac{n}{\max (0,\frac{n}{p}-s)},
\end{equation*}%
which is possible because of $s>0$. From Theorem \ref{embeddings3-lorentz}
we obtain%
\begin{equation*}
\dot{K}_{p,r}^{\alpha ,q}B_{\beta }^{s}\hookrightarrow \dot{K}%
_{p_{0}}^{\alpha ,q}B_{\beta }^{s+\frac{n}{p_{0}}-\frac{n}{p}%
}\hookrightarrow \dot{K}_{p_{0}}^{\alpha ,\max (1,q)}B_{\beta }^{s+\frac{n}{%
p_{0}}-\frac{n}{p}}.
\end{equation*}%
We have 
\begin{equation*}
\sum_{j=0}^{\infty }\big\|\mathcal{F}^{-1}\varphi _{j}\ast f\big\|_{\dot{K}%
_{p_{0}}^{\alpha ,\max (1,q)}}\lesssim \big\|f\big\|_{\dot{K}_{p,r}^{\alpha
,q}A_{\beta }^{s}}.
\end{equation*}%
Then, the sequence $\{\varrho _{k}\}_{k\in \mathbb{N}_{0}}$ converges to $%
g\in \dot{K}_{p_{0}}^{\alpha ,\max (1,q)}$. Let $\varphi \in \mathcal{S}(%
\mathbb{R}^{n})$. Write 
\begin{equation*}
\langle f-g,\varphi \rangle =\langle f-\varrho _{N},\varphi \rangle +\langle
g-\varrho _{N},\varphi \rangle ,\quad N\in \mathbb{N}_{0}.
\end{equation*}%
Here $\langle \cdot ,\cdot \rangle $ denotes the duality bracket between $%
\mathcal{S}^{\prime }(\mathbb{R}^{n})$ and $\mathcal{S}(\mathbb{R}^{n})$.
Clearly, the first term tends to zero as $N\rightarrow \infty $, while by H%
\"{o}lder's\ inequality there exists a constant $C>0$ independent of $N$
such that 
\begin{equation*}
|\langle g-\varrho _{N},\varphi \rangle |\leq C\big\|g-\varrho _{N}\big\|_{%
\dot{K}_{p_{0}}^{\alpha ,\max (1,q)}},
\end{equation*}%
which tends to zero as $N\rightarrow \infty $. From this and $\dot{K}%
_{p_{0}}^{\alpha ,\max (1,q)}\hookrightarrow L_{\mathrm{loc}}^{1}(\mathbb{R}%
^{n})$, because of $\alpha <n-\frac{n}{p_{0}}$, see Lemma\ \ref%
{regular-distribution-lorentz}, we deduce the desired result. In addition,
we obtain 
\begin{equation*}
\dot{K}_{p,r}^{\alpha ,q}B_{\beta }^{s}\hookrightarrow \dot{K}%
_{p_{0}}^{\alpha ,\max (1,q)}.
\end{equation*}%
The case of the $F$-spaces follows simply from the embedding%
\begin{equation*}
\dot{K}_{p,r}^{\alpha ,q}F_{\infty }^{s}\hookrightarrow \dot{K}%
_{p,r}^{\alpha ,q}B_{\infty }^{s},
\end{equation*}%
Now, we study the case $p=1$. Let $d>1$ be such that 
\begin{equation*}
1<d<\min \big(\frac{n}{\max (0,n-s)},\frac{n}{-\alpha }\big).
\end{equation*}%
From Theorems \ref{embeddings2-lorentz} and \ref{embeddings3-lorentz}, we
obtain 
\begin{equation*}
\dot{K}_{1,r}^{\alpha ,q}A_{\beta }^{s}\hookrightarrow \dot{K}_{1,r}^{\alpha
,q}B_{\infty }^{s}\hookrightarrow \dot{K}_{d}^{\alpha ,q}B_{\infty }^{s+%
\frac{n}{d}-n}\hookrightarrow L_{\mathrm{loc}}^{1}(\mathbb{R}^{n}),
\end{equation*}%
where the last embedding follows since $s+\frac{n}{d}-n>0$ and $-\frac{n}{d}%
<\alpha <0.$

\textit{Substep 1.2.} $\alpha \geq \alpha _{0}$ and $1<p<\infty $. Let $%
1<p_{1}<\infty $ be such that 
\begin{equation*}
s>\alpha +\frac{n}{p}-\frac{n}{p_{1}}.
\end{equation*}%
We distinguish two cases:

$\bullet $ $p_{1}=p$. By Theorem \ref{embeddings3-lorentz}, we obtain 
\begin{equation*}
\dot{K}_{p,r}^{\alpha ,q}B_{\beta }^{s}\hookrightarrow \dot{K}%
_{p,p}^{0,p}B_{\beta }^{s-\alpha }=B_{p,\beta }^{s-\alpha }\hookrightarrow
L_{\mathrm{loc}}^{1}(\mathbb{R}^{n}).
\end{equation*}%
where the last embedding follows by the fact that 
\begin{equation}
B_{p,\beta }^{s-\alpha }\hookrightarrow L^{p},  \label{Substep1.2.1}
\end{equation}%
because of $s-\alpha >0$. The Lorentz Herz-type Triebel-Lizorkin case
follows by Theorem \ref{embeddings2-lorentz}.

$\bullet $ $1<p_{1}<p<\infty $ or $1<p<p_{1}<\infty $. If we assume the
first possibility then Theorem \ref{embeddings3-lorentz} and Substep 1.1
yield 
\begin{equation*}
\dot{K}_{p,r}^{\alpha ,q}B_{\beta }^{s}\hookrightarrow \dot{K}%
_{p_{1}}^{0,q}B_{\beta }^{s-\alpha -\frac{n}{p}+\frac{n}{p_{1}}%
}\hookrightarrow L_{\mathrm{loc}}^{1}(\mathbb{R}^{n}),
\end{equation*}%
since $\alpha +\frac{n}{p}>\frac{n}{p_{1}}$. The latter possibility follows
again by Theorem \ref{embeddings3-lorentz}. Indeed, we have 
\begin{equation*}
\dot{K}_{p,r}^{\alpha ,q}B_{\beta }^{s}\hookrightarrow \dot{K}_{p,r}^{\alpha
_{0},q}B_{\beta }^{s+\alpha _{0}-\alpha }\hookrightarrow \dot{K}%
_{p_{1}}^{0,p_{1}}B_{\beta }^{s-\alpha -\frac{n}{p}+\frac{n}{p_{1}}%
}=B_{p_{1},\beta }^{s-\alpha -\frac{n}{p}+\frac{n}{p_{1}}}\hookrightarrow L_{%
\mathrm{loc}}^{1}(\mathbb{R}^{n}),
\end{equation*}%
where the last embedding follows\ by the fact that 
\begin{equation}
B_{p_{1},\beta }^{s-\alpha -\frac{n}{p}+\frac{n}{p_{1}}}\hookrightarrow
L^{p_{1}}.  \label{Substep1.2.2}
\end{equation}%
Therefore from Theorem \ref{embeddings2-lorentz} we obtain the desired
embeddings.

\textit{Substep 1.3.} $p=1$ and $\alpha >0$. We have 
\begin{equation*}
\dot{K}_{1,r}^{\alpha ,q}B_{\beta }^{s}\hookrightarrow \dot{K}%
_{1}^{0,1}B_{\beta }^{s-\alpha }=B_{1,\beta }^{s-\alpha }\hookrightarrow
L^{1},
\end{equation*}%
since $s>\alpha $.

\textit{Substep 1.4.} $p=1$ and $\alpha =0$. Let $\alpha _{3}$ be a real
number such that 
\begin{equation*}
\max (-n,-s)<\alpha _{3}<0.
\end{equation*}%
From Theorems \ref{embeddings3-lorentz} and \ref{embeddings3 copy(4)-lorentz}%
, we get 
\begin{equation*}
\dot{K}_{1,r}^{0,q}A_{\beta }^{s}\hookrightarrow \dot{K}_{1,r}^{\alpha
_{3},q}B_{\infty }^{s+\alpha _{3}}\hookrightarrow L_{\mathrm{loc}}^{1}(%
\mathbb{R}^{n})
\end{equation*}%
by Substep 1.1.

\textit{Step 2.} We consider the case $0<p<1$.

\textit{Substep 2.1.}\ $-n<\alpha <0$. By Lemma \ref%
{Bernstein-Herz-ine1-lorentz}, we obtain 
\begin{equation*}
\sum_{j=0}^{\infty }\big\|\mathcal{F}^{-1}\varphi _{j}\ast f\big\|_{\dot{K}%
_{1}^{\alpha ,\max (1,q)}}\lesssim \sum_{j=0}^{\infty }2^{j(\frac{n}{p}-n)}%
\big\|\mathcal{F}^{-1}\varphi _{j}\ast f\big\|_{\dot{K}_{p,r}^{\alpha
,q}}\lesssim \big\|f\big\|_{\dot{K}_{p,r}^{\alpha ,q}A_{\beta }^{s}},
\end{equation*}%
since $s>\frac{n}{p}-n$. The desired embedding follows by the fact that 
\begin{equation*}
\dot{K}_{1}^{\alpha ,\max (1,q)}\hookrightarrow L_{\mathrm{loc}}^{1}(\mathbb{%
R}^{n})
\end{equation*}%
and the arguments in Substep 1.1. In addition, we obtain 
\begin{equation}
\dot{K}_{p,r}^{\alpha ,q}A_{\beta }^{s}\hookrightarrow \dot{K}_{1}^{\alpha
,\max (1,q)}.  \label{q-less1}
\end{equation}

\textit{Substep 2.2.} $\alpha \geq 0$. Let $\alpha _{4}$ be a real number
such that 
\begin{equation*}
\max \big(-n,-s+\frac{n}{p}-n+\alpha \big)<\alpha _{4}<0.
\end{equation*}%
From Theorem \ref{embeddings3-lorentz} , we get 
\begin{align*}
\dot{K}_{p,r}^{\alpha ,q}A_{\beta }^{s}& \hookrightarrow \dot{K}%
_{1}^{0,q}A_{\beta }^{s-\frac{n}{p}+n-\alpha } \\
& \hookrightarrow \dot{K}_{1}^{\alpha _{4},q}A_{\beta }^{s-\frac{n}{p}%
+n-\alpha +\alpha _{4}} \\
& \hookrightarrow \dot{K}_{1}^{\alpha _{4},\max (1,q)}A_{\beta }^{s-\frac{n}{%
p}+n-\alpha +\alpha _{4}}.
\end{align*}%
As in Substep 1.4, we easily obtain that 
\begin{equation*}
\dot{K}_{p,r}^{\alpha ,q}A_{\beta }^{s}\hookrightarrow L_{\mathrm{loc}}^{1}(%
\mathbb{R}^{n}).
\end{equation*}%
Therefore, under the hypothesis of this theorem, every $f\in \dot{K}%
_{p,r}^{\alpha ,q}A_{\beta }^{s}$ is a regular distribution. This finishes
the proof.
\end{proof}

\begin{remark}
In \cite[Theorem 2.4]{Dr-EMJ}, we have used the assumption $\alpha >-\frac{n%
}{q}$ but the correct is $\alpha >\max (-n,-\frac{n}{q})$.
\end{remark}

Using the same schema as in \cite{DrPolonais} with the help of Theorem \ref%
{regular-distribution1-lorentz} and the dilation identity %
\eqref{dilation-lorentz}, we obtain the following statement.

\begin{theorem}
Let\ $0<p<\infty ,0<r,\beta ,q\leq \infty ,\alpha >\max (-n,-\frac{n}{p})$\
and $s>\max (\sigma _{p},\alpha -n+\frac{n}{p})$. Then there exists a
positive constant $c$ independent of $\lambda $ such that 
\begin{equation*}
\big\|f(\lambda \cdot )\big\|_{\dot{K}_{p,r}^{\alpha ,q}A_{\beta }^{s}}\leq c%
\text{ }\lambda ^{s-\frac{n}{p}-\alpha }\big\|f\big\|_{\dot{K}_{p,r}^{\alpha
,q}A_{\beta }^{s}}
\end{equation*}%
holds for all $\lambda $ with $1\leq \lambda <\infty $\ and all $f\in \dot{K}%
_{p,r}^{\alpha ,q}A_{\beta }^{s}$.
\end{theorem}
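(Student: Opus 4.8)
The plan is to work with the Fourier-analytic (Littlewood--Paley) description of $\dot{K}_{p,r}^{\alpha ,q}A_{\beta }^{s}$ supplied by Theorem \ref{partition-equi-lorentz}: fix a smooth dyadic resolution of unity $\{\varphi _{j}\}_{j\in \mathbb{N}_{0}}$ and estimate $\mathcal{F}^{-1}\varphi _{k}\ast (f(\lambda \cdot ))$ scale by scale. By Theorem \ref{regular-distribution1-lorentz} every $f\in \dot{K}_{p,r}^{\alpha ,q}A_{\beta }^{s}$ is a regular distribution, so the composition $f(\lambda \cdot )$ makes pointwise sense and lies in $\mathcal{S}^{\prime }(\mathbb{R}^{n})$; this is already what makes the asserted inequality meaningful. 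The first step is the elementary scaling of the Herz--Lorentz quasi-norm itself: combining the dilation identity \eqref{dilation-lorentz} on $L^{p,r}$ with the observation that, for $2^{j}\le \lambda <2^{j+1}$, the annulus $\lambda R_{k}$ is contained in $R_{k+j}\cup R_{k+j+1}$, a reindexing of the weights $2^{k\alpha q}$ gives
\begin{equation*}
\big\|g(\lambda \cdot )\big\|_{\dot{K}_{p,r}^{\alpha ,q}}\le c\,\lambda ^{-\frac{n}{p}-\alpha }\big\|g\big\|_{\dot{K}_{p,r}^{\alpha ,q}},\qquad \lambda \ge 1,
\end{equation*}
with $c$ independent of $\lambda $ and $g$ (the constant absorbs the quasi-norm triangle constants and the factors $2^{\alpha }$).

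Next comes the frequency bookkeeping. Writing $\mathcal{F}^{-1}\varphi _{k}\ast (f(\lambda \cdot ))=[\mathcal{F}^{-1}(\varphi _{k}(\lambda \cdot )\,\widehat{f}\,)](\lambda \cdot )$ and $f=\sum_{l}\mathcal{F}^{-1}\varphi _{l}\ast f$, only finitely many $l$ contribute to a given $k$: one has $l\approx k-j$ in the ``fine'' range $k\ge j+2$, while $l\in\{0,1\}$ in the ``coarse'' range $0\le k\le j+1$. In both cases $\mathcal{F}^{-1}(\varphi _{k}(\lambda \cdot ))$ is a rescaled Schwartz function, so Lemmas \ref{r-trick} and \ref{est-maximal} give the pointwise bound $|\mathcal{F}^{-1}\varphi _{k}\ast (f(\lambda \cdot ))|\lesssim \sum_{|i|\le 1}\mathcal{M}_{\tau }\big((\mathcal{F}^{-1}\varphi _{l+i}\ast f)(\lambda \cdot )\big)$ for a suitable $0<\tau $, uniformly in $k,\lambda $. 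Applying the maximal inequalities (Lemmas \ref{Maximal-Inq copy(2)-lorentz} and \ref{Maximal-Inq copy(1)-lorentz}, in the rescaled form obtained from Lemma \ref{embeddings1-lorentz}$\mathrm{(iv)}$) and then the scaling of the Herz--Lorentz norm just derived, one obtains $\big\|\mathcal{F}^{-1}\varphi _{k}\ast (f(\lambda \cdot ))\big\|_{\dot{K}_{p,r}^{\alpha ,q}}\lesssim \lambda ^{-\frac{n}{p}-\alpha }\sum_{|i|\le 1}\big\|\mathcal{F}^{-1}\varphi _{l+i}\ast f\big\|_{\dot{K}_{p,r}^{\alpha ,q}}$, and similarly with the $\ell ^{\beta }$-norm inside for the $F$-case. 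In the fine range one substitutes $l=k-j$ and reindexes $\sum_{k}2^{ks\beta }(\cdots )$, which produces a factor $2^{js\beta }\approx \lambda ^{s\beta }$ and leaves $\big\|f\big\|_{\dot{K}_{p,r}^{\alpha ,q}A_{\beta }^{s}}^{\beta }$; in the coarse range one uses $\sum_{k=0}^{j+1}2^{ks\beta }\approx 2^{js\beta }\approx \lambda ^{s\beta }$ (here $s>0$, which is automatic from $s>\max (\sigma _{p},\alpha -\alpha _{0})$ since $\sigma_p\ge 0$), while the inner quasi-norm is controlled by $\|\mathcal{F}^{-1}\varphi _{0}\ast f\|_{\dot{K}_{p,r}^{\alpha ,q}}+\|\mathcal{F}^{-1}\varphi _{1}\ast f\|_{\dot{K}_{p,r}^{\alpha ,q}}\lesssim \|f\|_{\dot{K}_{p,r}^{\alpha ,q}A_{\beta }^{s}}$. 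Collecting the two ranges yields exactly $\|f(\lambda \cdot )\|_{\dot{K}_{p,r}^{\alpha ,q}A_{\beta }^{s}}\lesssim \lambda ^{s-\frac{n}{p}-\alpha }\|f\|_{\dot{K}_{p,r}^{\alpha ,q}A_{\beta }^{s}}$.

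The main obstacle is precisely the coarse range combined with the fact that the hypotheses permit $\alpha $ to be large (only $s>\alpha -\alpha _{0}$ is imposed, not an upper bound such as $\alpha <n(1-1/p)$), so the vector-valued maximal inequalities on $\dot{K}_{p,r}^{\alpha ,q}$ are not available off the shelf in the step above. This is where Theorem \ref{regular-distribution1-lorentz} is genuinely used: its proof furnishes embeddings of $\dot{K}_{p,r}^{\alpha ,q}A_{\beta }^{s}$ into Herz (and classical Besov/Triebel--Lizorkin) spaces whose parameters lie in the admissible range for $\mathcal{M}$, and one first transfers the coarse, low-frequency pieces of $f(\lambda \cdot )$ to that setting — where band-limitedness provides the required smoothing and the classical dilation estimates apply — before returning to $\dot{K}_{p,r}^{\alpha ,q}A_{\beta }^{s}$. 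The only delicate point is to run this reduction so that the exponent emerging from the classical estimate is again $s-\tfrac{n}{p}-\alpha $ and not something larger; everything else is the routine reindexing above, following the schema of \cite{DrPolonais}.
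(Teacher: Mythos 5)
Your outline is essentially the argument the paper has in mind: the paper's own ``proof'' is a one-line reference to the schema of \cite{DrPolonais} together with Theorem \ref{regular-distribution1-lorentz} and the dilation identity \eqref{dilation-lorentz}, and what you write out --- the scaling law $\big\|g(\lambda\cdot)\big\|_{\dot{K}_{p,r}^{\alpha,q}}\lesssim\lambda^{-\frac{n}{p}-\alpha}\big\|g\big\|_{\dot{K}_{p,r}^{\alpha,q}}$ from \eqref{dilation-lorentz} plus reindexing of annuli, the support bookkeeping $l\approx k-j$ for $2^{j}\le\lambda<2^{j+1}$, the pointwise control by $\mathcal{M}_{\tau}$ via Lemmas \ref{r-trick} and \ref{est-maximal}, and the final reindexing producing $\lambda^{s}$ --- is exactly that schema, with the correct exponent.

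The one place where you go astray is the closing paragraph. The restriction $-\frac{n}{p}<\alpha<n(1-\frac{1}{p})$ in Lemma \ref{Maximal-Inq copy(2)-lorentz} is not an obstacle here, because you never apply $\mathcal{M}$ directly on $\dot{K}_{p,r}^{\alpha,q}$: you apply $\mathcal{M}_{\tau}$ with $0<\tau<\min\big(p,\beta,\frac{n}{\alpha+\frac{n}{p}}\big)$, and by Lemma \ref{embeddings1-lorentz}\,(iv) this amounts to the boundedness of $\mathcal{M}$ on $\dot{K}_{p/\tau,r/\tau}^{\alpha\tau,q/\tau}$, whose parameters do lie in the admissible range precisely because $\tau<\frac{n}{\alpha+\frac{n}{p}}$ forces $\alpha\tau<n(1-\frac{\tau}{p})$. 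This is the same device used in the proof of Theorem \ref{partition-equi-lorentz}, and you had already invoked it correctly in your middle paragraph; the proposed detour through the embeddings of Theorem \ref{regular-distribution1-lorentz} for the coarse frequencies is therefore unnecessary (and is the only part of your write-up that is not actually carried out). The genuine role of the hypotheses $\alpha>\max(-n,-\frac{n}{p})$ and $s>\max(\sigma_{p},\alpha-n+\frac{n}{p})$, and of Theorem \ref{regular-distribution1-lorentz}, is just to guarantee that $f$ is a regular distribution so that $f(\lambda\cdot)$ is meaningful in the pointwise sense required for the later applications; the norm estimate itself only needs $s>0$ in the coarse range, which follows from $s>\sigma_{p}\ge 0$, as you note.
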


Let $f$ be an arbitrary function on $\mathbb{R}^{n}$ and $x,h\in \mathbb{R}%
^{n}$. Then%
\begin{equation*}
\Delta _{h}f(x)=f(x+h)-f(x),\quad \Delta _{h}^{M+1}f(x)=\Delta _{h}(\Delta
_{h}^{M}f)(x),\quad M\in \mathbb{N}.
\end{equation*}%
These are the well-known differences of functions which play an important
role in the theory of function spaces. Using mathematical induction one can
show the explicit formula%
\begin{equation*}
\Delta _{h}^{M}f(x)=\sum_{j=0}^{M}\left( -1\right)
^{j}C_{j}^{M}f(x+(M-j)h),\quad x\in \mathbb{R}^{n},
\end{equation*}%
where $C_{j}^{M}$ are the binomial coefficients. By ball means of
differences we mean the quantity%
\begin{equation*}
d_{t}^{M}f(x)=t^{-n}\int_{|h|\leq t}\left\vert \Delta
_{h}^{M}f(x)\right\vert dh=\int_{B}\left\vert \Delta
_{th}^{M}f(x)\right\vert dh,\quad x\in \mathbb{R}^{n}.
\end{equation*}%
Here $B=\{y\in \mathbb{R}^{n}:|h|\leq 1\}$ is the unit ball of $\mathbb{R}%
^{n}$ and $t>0$ is a real number. We set%
\begin{equation*}
\big\|f\big\|_{\dot{K}_{p,r}^{\alpha ,q}B_{\beta }^{s}}^{\ast }=\big\|f\big\|%
_{\dot{K}_{p,r}^{\alpha ,q}}+\Big(\int_{0}^{\infty }t^{-s\beta }\big\|%
d_{t}^{M}f\big\|_{\dot{K}_{p,r}^{\alpha ,q}}^{\beta }\frac{dt}{t}\Big)%
^{1/\beta }
\end{equation*}%
and%
\begin{equation*}
\big\|f\big\|_{\dot{K}_{p,r}^{\alpha ,q}F_{\beta }^{s}}^{\ast }=\big\|f\big\|%
_{\dot{K}_{p,r}^{\alpha ,q}}+\Big\|\Big(\int_{0}^{\infty }t^{-s\beta
}(d_{t}^{M}f)^{\beta }\frac{dt}{t}\Big)^{1/\beta }\Big\|_{\dot{K}%
_{p,r}^{\alpha ,q}}.
\end{equation*}

\begin{theorem}
\label{means-diff-cha-lorentz}\textit{Let }$0<p<\infty ,0<r,q,\beta \leq
\infty ,\alpha >\max (-n,-\frac{n}{p}),\alpha _{0}=n-\frac{n}{p}\ $and $M\in 
\mathbb{N}\backslash \{0\}.\newline
\mathrm{(i)}$ Assume that 
\begin{equation*}
\max (\sigma _{p},\alpha -\alpha _{0})<s<M.
\end{equation*}%
Then $\big\|\cdot \big\|_{\dot{K}_{p,r}^{\alpha ,q}B_{\beta }^{s}}^{\ast }$
is an equivalent quasi-norm on $\dot{K}_{p,r}^{\alpha ,q}B_{\beta }^{s}$.$%
\newline
\mathrm{(ii)}$ Let\ $0<q<\infty $. Assume that 
\begin{equation*}
\max (\sigma _{p,\beta },\alpha -\alpha _{0})<s<M.
\end{equation*}%
Then $\big\|\cdot \big\|_{\dot{K}_{p,r}^{\alpha ,q}F_{\beta }^{s}}^{\ast }$
is an equivalent quasi-norm on $\dot{K}_{p,r}^{\alpha ,q}F_{\beta }^{s}$.
\end{theorem}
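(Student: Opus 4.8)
The plan is to reduce the ball-means-of-differences characterization to the already-established local-means characterization of Theorem \ref{loc-mean-char-lorentz}, following the classical scheme (as in Triebel's monographs and its adaptations to Herz-type spaces). First I would record that, under the hypotheses, $s > \max(\sigma_p,\alpha-\alpha_0)$ forces every $f \in \dot{K}_{p,r}^{\alpha,q}A_{\beta}^{s}$ to be a regular distribution by Theorem \ref{regular-distribution1-lorentz}, so that $d_t^M f(x)$ is pointwise well-defined. The two inequalities are then handled separately.

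For the estimate $\big\|f\big\|_{\dot{K}_{p,r}^{\alpha,q}A_{\beta}^{s}}^{\ast} \lesssim \big\|f\big\|_{\dot{K}_{p,r}^{\alpha,q}A_{\beta}^{s}}$, I would fix a smooth dyadic resolution of unity $\{\varphi_j\}_{j\in\mathbb{N}_0}$ and write $f = \sum_{j=0}^{\infty}\mathcal{F}^{-1}\varphi_j\ast f$, so that $\Delta_h^M f = \sum_j \Delta_h^M(\mathcal{F}^{-1}\varphi_j\ast f)$. For $2^{-j}\le t$ one uses the Taylor-type bound $|\Delta_h^M(\mathcal{F}^{-1}\varphi_j\ast f)(x)| \lesssim (2^j t)^M \, \mathcal{M}_{\tau}(\mathcal{F}^{-1}\varphi_j\ast f)(x)$ via Lemma \ref{r-trick} and Lemma \ref{est-maximal} (this is where the gain $(2^jt)^M$ with $M>s$ makes the $j$-sum converge after the Hardy-type Lemma \ref{lem:lq-inequality}); for $2^{-j}>t$ one uses instead the crude bound $|\Delta_h^M(\mathcal{F}^{-1}\varphi_j\ast f)(x)| \lesssim \mathcal{M}_{\tau}(\mathcal{F}^{-1}\varphi_j\ast f)(x)$ together with the $M+1$ vanishing moments hidden in $\Delta_h^M$ being irrelevant here — rather, one summons the low-frequency decay differently and again applies Lemma \ref{lem:lq-inequality}. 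Integrating $d_t^M f$ in $t$ and choosing $\tau < \min\big(p,\frac{n}{\alpha+n/p}\big)$ (respectively $\tau<\min(p,\beta,\frac{n}{\alpha+n/p})$ in the $F$-case, using $s>\sigma_{p,\beta}$), the Fefferman--Stein inequality Lemma \ref{Maximal-Inq copy(2)-lorentz} converts the right-hand side to $\big\|f\big\|_{\dot{K}_{p,r}^{\alpha,q}A_{\beta}^{s}}$ after passing through the equivalent quasi-norm of Theorem \ref{partition-equi-lorentz}.

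For the reverse inequality $\big\|f\big\|_{\dot{K}_{p,r}^{\alpha,q}A_{\beta}^{s}} \lesssim \big\|f\big\|_{\dot{K}_{p,r}^{\alpha,q}A_{\beta}^{s}}^{\ast}$, I would invoke Theorem \ref{loc-mean-char-lorentz}: it suffices to construct a pair of kernels $k_0,k\in\mathcal{S}(\mathbb{R}^n)$ satisfying the Tauberian conditions \eqref{T-cond1}--\eqref{T-cond2} and moment conditions \eqref{moment-cond} with $S\ge M-1$ (so $s<S+1$ is compatible with $s<M$), such that $k_j\ast f$ is controlled pointwise by a finite linear combination of translates/dilates of $d_{2^{-j}}^M f$. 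The standard device is to take $k = \Delta^M$-type kernel, i.e. an $M$-th difference of a fixed bump function, or equivalently a kernel of the form $k(x) = c\int_B \Delta_{-y}^M \omega(x)\,dy$ for a suitable Schwartz $\omega$; then $k_j\ast f(x)$ is literally an average over $|h|\le 2^{-j}$ of $\Delta_h^M f(x)$-type quantities convolved with a fixed Schwartz function, giving $|k_j\ast f(x)| \lesssim (d_{2^{-j}}^M f)^{\ast,a}$-majorant up to a Peetre maximal function, which is absorbed by Lemma \ref{r-trick} and Lemma \ref{Maximal-Inq copy(2)-lorentz}. One also needs to add back the $\big\|f\big\|_{\dot{K}_{p,r}^{\alpha,q}}$ term, matching $k_0\ast f$. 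The main obstacle I expect is the bookkeeping in the low-frequency regime $2^{-j}>t$ of the first inequality: ensuring the $t$-integral $\int_0^\infty t^{-s\beta}(\cdots)\frac{dt}{t}$ converges at $t\to\infty$ requires that the contribution of large $t$ (where $d_t^M f$ is merely bounded by a constant times $\sum_j\|\mathcal{F}^{-1}\varphi_j\ast f\|_{\dot{K}_{p,r}^{\alpha,q}}$ locally) decays, which is exactly where the lower bound $s>\alpha-\alpha_0$ (equivalently $s+n/p+\alpha>n$, a regularity/integrability threshold) and $s>\sigma_p$ enter; this is handled precisely as in \cite{DrPolonais} and the classical references, and I would cite that scheme rather than reproduce it in full.
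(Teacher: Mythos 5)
Your overall architecture (Littlewood--Paley decomposition plus Peetre maximal functions for one inequality; kernels built from $M$-th differences for the other) is the right family of ideas, but as written there are two genuine gaps. First, in the direction $\big\|f\big\|^{\ast}\lesssim\big\|f\big\|_{\dot{K}_{p,r}^{\alpha ,q}A_{\beta }^{s}}$ you have the frequency regimes inverted and, more importantly, you miss the step where the hypotheses are actually used. The Taylor-type gain $(2^{j}t)^{M}$ is available only when $2^{j}t\leq 1$, i.e. for $j\leq k$ with $t=2^{-k}$ (this is \eqref{third-term} in the paper); in the opposite regime $j>k$ no such gain exists, and the ``crude bound'' by $\mathcal{M}_{\tau }(\mathcal{F}^{-1}\varphi _{j}\ast f)(x)$ with small $\tau$ is false without a loss of order $2^{(j-k)a}$, because shifting the evaluation point by $|h|\sim 2^{-k}\gg 2^{-j}$ costs a factor $(1+2^{j}|h|)^{a}$. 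The paper handles $j>k$ by the interpolation splitting $d_{2^{-k}}^{M}(\mathcal{F}^{-1}\varphi _{j}\ast f)\leq \big(\int_{B}|\Delta _{2^{-k}h}^{M}(\cdot )|^{\lambda }dh\big)\big(\sup_{h\in B}|\Delta _{2^{-k}h}^{M}(\cdot )|\big)^{1-\lambda }$, bounding the first factor by $\mathcal{M}(|\mathcal{F}^{-1}\varphi _{j}\ast f|^{\lambda })$ and the second by $2^{(j-k)a(1-\lambda )}(\varphi _{j}^{\ast ,a}f)^{1-\lambda }$, and then choosing $\lambda$ and $a$ so that $a(1-\lambda )<s$; it is precisely this choice that requires $s>\sigma _{p,\beta }$ (resp. $\sigma _{p}$) and, when $\alpha \geq n(1-\frac{1}{p})$, also $s>\frac{n}{p}+\alpha -n=\alpha -\alpha _{0}$, followed by H\"older's inequality in $\dot{K}_{p,r}^{\alpha ,q}$ with exponents $\frac{p}{1-\lambda }$ and $\frac{p}{\lambda }$. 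You attribute these hypotheses only to convergence of the $t$-integral at infinity, which is a separate (and easier) issue treated in the paper's Substep 2.2. You also never address the term $\big\|f\big\|_{\dot{K}_{p,r}^{\alpha ,q}}$ in $\big\|\cdot \big\|^{\ast }$, which in the paper requires a full case analysis (Step 1) showing the Littlewood--Paley partial sums converge to $f$ in $\dot{K}_{p,r}^{\alpha ,q}$ and identifying the limit with $f$ a.e.

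Second, your reverse direction has a parameter-range problem. After writing $k_{j}\ast f$ as an average of differences, you propose to absorb the resulting quantity through Lemma \ref{r-trick} and Lemma \ref{Maximal-Inq copy(2)-lorentz}. But Lemma \ref{r-trick} applies only to functions whose Fourier transform is supported in a ball, which $d_{2^{-j}}^{M}f$ is not, and Lemma \ref{Maximal-Inq copy(2)-lorentz} requires $1<p$, $1<\beta$ and $-\frac{n}{p}<\alpha <n(1-\frac{1}{p})$, whereas the theorem covers $0<p<\infty$, $0<\beta \leq \infty$ and all $\alpha >\max (-n,-\frac{n}{p})$; in addition the Tauberian condition \eqref{T-cond2} for a difference-type kernel is not automatic and would need verification. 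The paper avoids all of this: following Nikol'skij, it chooses a special resolution of unity so that $\mathcal{F}^{-1}\varphi _{j}\ast f(x)=(-1)^{M+1}\int_{\mathbb{R}^{n}}\Delta _{2^{-j}y}^{M}f(x)\,\widetilde{\Psi }(y)\,dy$ with $\widetilde{\Psi }$ rapidly decreasing, splits the integral into $|y|\leq 1$ and $|y|>1$, and bounds the latter pointwise (at the same point $x$, with no maximal operator) by a weighted sum of $d_{2^{-v}}^{M}f(x)$ over all $v\in \mathbb{Z}$, which is then absorbed by Lemma \ref{lem:lq-inequality}; this is why the full quasi-Banach range is reached and why the full range $\int_{0}^{\infty }$ in $\big\|\cdot \big\|^{\ast }$ is needed. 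Your route could be repaired for $p,\beta >1$ and $\alpha$ in the maximal-inequality range, but not, as sketched, for the theorem as stated.
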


\begin{proof}
Let $\{\varphi _{j}\}_{j\in \mathbb{N}_{0}}$ be a smooth dyadic resolution
of unity. For ease of presentation, we split the proof into three steps.

\textit{Step 1}. We will prove that 
\begin{equation*}
\big\|f\big\|_{\dot{K}_{p,r}^{\alpha ,q}}\lesssim \big\|f\big\|_{\dot{K}%
_{p,r}^{\alpha ,q}A_{\beta }^{s}}
\end{equation*}%
for all $f\in \dot{K}_{p,r}^{\alpha ,q}A_{\beta }^{s}$. We employ the same
notations as in Theorem \ref{regular-distribution1-lorentz}. Recall that 
\begin{equation*}
\varrho _{k}=\sum\limits_{j=0}^{k}\mathcal{F}^{-1}\varphi _{j}\ast f,\quad
k\in \mathbb{N}_{0}.
\end{equation*}%
Obviously, $\{\varrho _{k}\}_{k\in \mathbb{N}_{0}}$ converges to $f$ in $%
\mathcal{S}^{\prime }(\mathbb{R}^{n})$ and $\{\varrho _{k}\}_{k\in \mathbb{N}%
_{0}}\subset \dot{K}_{p,r}^{\alpha ,q}$ for any $0<p<\infty ,0<q\leq \infty
\ $and any $\alpha >-\frac{n}{p}$. Furthermore, $\{\varrho _{k}\}_{k\in 
\mathbb{N}_{0}}$ is a Cauchy sequence in $\dot{K}_{p,r}^{\alpha ,q}$ and
hence it converges to a function $g\in \dot{K}_{p,r}^{\alpha ,q}$, and 
\begin{equation*}
\big\|g\big\|_{\dot{K}_{p,r}^{\alpha ,q}}\lesssim \big\|f\big\|_{\dot{K}%
_{p,r}^{\alpha ,q}A_{\beta }^{s}}.
\end{equation*}%
Let us prove that $g=f$ a.e. We will do this into four cases.

\textit{Case 1.} $-\frac{n}{p}<\alpha <\alpha _{0}$ and $1\leq p<\infty $.
First assume that $1<p<\infty $. Let $1<p_{0}<\infty $ be as in Theorem \ref%
{regular-distribution1-lorentz}. Let $\varphi \in \mathcal{D}(\mathbb{R}%
^{n}) $. We write 
\begin{equation*}
\langle f-g,\varphi \rangle =\langle f-\varrho _{N},\varphi \rangle +\langle
g-\varrho _{N},\varphi \rangle ,\quad N\in \mathbb{N}_{0}.
\end{equation*}%
Here $\langle \cdot ,\cdot \rangle $ denotes the duality bracket between $%
\mathcal{S}^{\prime }(\mathbb{R}^{n})$ and $\mathcal{S}(\mathbb{R}^{n})$. By
H\"{o}lder's\ inequality there exists a constant $C>0$ independent of $N$
such that 
\begin{equation*}
|\langle g-\varrho _{N},\varphi \rangle |\leq C\big\|g-\varrho _{N}\big\|_{%
\dot{K}_{p_{0}}^{\alpha ,\max (1,q)}},
\end{equation*}%
which tends to zero as $N\rightarrow \infty $. Let $1<d<\infty $ be as in
Theorem \ref{regular-distribution1-lorentz}. We obtain%
\begin{equation*}
|\langle g-\varrho _{N},\varphi \rangle |\leq C\big\|g-\varrho _{N}\big\|_{%
\dot{K}_{d}^{\alpha ,\max (1,q)}},\quad N\in \mathbb{N}.
\end{equation*}%
Observe that%
\begin{equation*}
\dot{K}_{d}^{\alpha ,q}B_{\infty }^{s+\frac{n}{d}-n}\hookrightarrow \dot{K}%
_{d}^{\alpha ,\max (1,q)}.
\end{equation*}%
Then, with the help of Substep 1.1 of the proof of Theorem \ref%
{regular-distribution1-lorentz}, we have $g=f$ almost everywhere.

\textit{Case 2.} $\alpha \geq \alpha _{0}$ and $1<p<\infty $. Let $%
1<p_{1}<\infty $ be as in Theorem \ref{regular-distribution1-lorentz}. From %
\eqref{Substep1.2.1} and \eqref{Substep1.2.2}, we derive in this case, that
every $f\in \dot{K}_{p,r}^{\alpha ,q}A_{\beta }^{s}$ is a regular
distribution, $\{\varrho _{k}\}_{k\in \mathbb{N}_{0}}$ converges to $f$ in $%
L^{p_{1}}$ and 
\begin{equation*}
\big\|f\big\|_{p_{1}}\lesssim \big\|f\big\|_{\dot{K}_{p,r}^{\alpha
,q}A_{\beta }^{s}}.
\end{equation*}%
Indeed, from the embeddings \eqref{Substep1.2.2} and since $f\in
B_{p_{1},\beta }^{s-\alpha \frac{n}{p_{1}}-\frac{n}{p}}$, it follows that $%
\{\varrho _{k}\}_{k\in \mathbb{N}_{0}}$ converges to a function $h$ $\in $ $%
L^{p_{1}}$. Similarly as in Case 1, we conclude that $f=h$ a.e. It remains
to prove that $g=f$ a.e. We have 
\begin{equation*}
\big\|f-g\big\|_{\dot{K}_{p,r}^{\alpha ,q}}\lesssim \big\|f-\varrho _{k}%
\big\|_{\dot{K}_{p,r}^{\alpha ,q}}+\big\|g-\varrho _{k}\big\|_{\dot{K}%
_{p,r}^{\alpha ,q}},\quad k\in \mathbb{N}_{0}
\end{equation*}%
and 
\begin{equation*}
\big\|f-\varrho _{k}\big\|_{\dot{K}_{p,r}^{\alpha ,q}}^{\sigma }\leq
\sum\limits_{j=k+1}^{\infty }\big\|\mathcal{F}^{-1}\varphi _{j}\ast f\big\|_{%
\dot{K}_{p,r}^{\alpha ,q}}^{\sigma }\leq \big\|f\big\|_{\dot{K}%
_{p,r}^{\alpha ,q}A_{\beta }^{s}}^{\sigma }\sum\limits_{j=k+1}^{\infty
}2^{-js\sigma },
\end{equation*}%
where $\sigma <\min (1,p,q,r)$. Letting $k$ tends to infinity, we get $g=f$
a.e.

\textit{Case 3.} $p=1$ and $\alpha \geq 0.$

\textit{Subcase 3.1. }$p=1$ and $\alpha >0$. We have 
\begin{equation*}
\dot{K}_{1,r}^{\alpha ,q}B_{\beta }^{s}\hookrightarrow L^{1},
\end{equation*}%
since $s>\alpha $, see Theorem \ref{regular-distribution1-lorentz}, Substep
1.3. Now one can continue as in Case 2.

\textit{Subcase 3.2.} $p=1$ and $\alpha =0$. Let $\alpha _{3}$ be a real
number such that $\max (-n,-s)<\alpha _{3}<0$. From Theorems \ref%
{embeddings3-lorentz} and \ref{embeddings3 copy(4)-lorentz}, we get 
\begin{equation*}
\dot{K}_{1,r}^{0,q}A_{\beta }^{s}\hookrightarrow \dot{K}_{1}^{\alpha
_{3},q}A_{\beta }^{s+\alpha _{3}}.
\end{equation*}%
We have 
\begin{equation*}
\sum_{k=0}^{\infty }\big\|\mathcal{F}^{-1}\varphi _{k}\ast f\big\|_{\dot{K}%
_{1}^{\alpha _{3},\max (1,q)}}\lesssim \big\|f\big\|_{\dot{K}_{1,r}^{\alpha
_{3},q}A_{\beta }^{s+\alpha _{3}}}\lesssim \big\|f\big\|_{\dot{K}%
_{1,r}^{0,q}A_{\beta }^{s}},
\end{equation*}%
since $\alpha _{3}+s>0$. Hence the sequence $\{\varrho _{k}\}_{k\in \mathbb{N%
}_{0}}$ converges to $f$ in $\dot{K}_{1}^{\alpha _{3},\max (1,q)}$, see Case
1. As in Case 2, we obtain $g=f$ a.e.

\textit{Case 4.} $0<p<1$.

\textit{Subcase 4.1. }$-n<\alpha <0$. From the embedding \eqref{q-less1} and
the fact that $s>\frac{n}{p}-n$, the sequence $\{\varrho _{k}\}_{k\in 
\mathbb{N}_{0}}$ converge to $f$ in $\dot{K}_{1}^{\alpha ,\max (1,q)}$. As
above we prove that $g=f$ a.e.

\textit{Subcase 4.2. }$\alpha \geq 0$. Recall that 
\begin{equation*}
\dot{K}_{p,r}^{\alpha ,q}A_{\beta }^{s}\hookrightarrow \dot{K}_{1}^{\alpha
_{4},\max (1,q)}A_{\beta }^{s-\frac{n}{p}+n-\alpha +\alpha _{4}},
\end{equation*}%
see Substep 2.2 of the proof of Theorem \ref{regular-distribution1-lorentz}.
As in Subcase 3.2 the sequence $\{\varrho _{k}\}_{k\in \mathbb{N}_{0}}$
converges to $f$ in $\dot{K}_{1}^{\alpha _{4},\max (1,q)}$. The same
arguments above one can conclude that: $g=f$ a.e..

\textit{Step 2.} In this step we prove that 
\begin{equation*}
\big\|f\big\|_{\dot{K}_{p,r}^{\alpha ,q}F_{\beta }^{s}}^{\ast }=\Big\|\Big(%
\int_{0}^{\infty }t^{-s\beta }(d_{t}^{M}f)^{\beta }\frac{dt}{t}\Big)%
^{1/\beta }\Big\|_{\dot{K}_{p,r}^{\alpha ,q}}\lesssim \big\|f\big\|_{\dot{K}%
_{p,r}^{\alpha ,q}F_{\beta }^{s}},\quad f\in \dot{K}_{p,r}^{\alpha
,q}F_{\beta }^{s}.
\end{equation*}%
Thus, we need to prove that 
\begin{equation*}
\Big\|\Big(\sum_{k=-\infty }^{\infty }2^{sk\beta }|d_{2^{-k}}^{M}f|^{\beta }%
\Big)^{1/\beta }\Big\|_{\dot{K}_{p,r}^{\alpha ,q}}
\end{equation*}%
does not exceed $c\big\|f\big\|_{\dot{K}_{p,r}^{\alpha ,q}F_{\beta }^{s}}$.
The proof is a slight variant of \cite{Dr-EMJ}. For the convenience of the
reader, we give some details. In order to prove we additionally do it into
the two Substeps 2.1 and 2.2. The estimate for the space $\dot{K}%
_{p,r}^{\alpha ,q}B_{\beta }^{s}$ is similar.

\textit{Substep 2.1.} We will estimate 
\begin{equation*}
\Big\|\Big(\sum_{k=0}^{\infty }2^{sk\beta }|d_{2^{-k}}^{M}f|^{\beta }\Big)%
^{1/\beta }\Big\|_{\dot{K}_{p,r}^{\alpha ,q}}.
\end{equation*}%
Obviously, we need to estimate 
\begin{equation}
\Big\{2^{ks}\sum\limits_{j=0}^{k}d_{2^{-k}}^{M}(\mathcal{F}^{-1}\varphi
_{j}\ast f)\Big\}_{k\in \mathbb{N}_{0}}  \label{First-term}
\end{equation}%
and 
\begin{equation}
\Big\{2^{ks}\sum\limits_{j=k+1}^{\infty }d_{2^{-k}}^{M}(\mathcal{F}%
^{-1}\varphi _{j}\ast f)\Big\}_{k\in \mathbb{N}_{0}}.  \label{second-term}
\end{equation}%
As\ in \cite{T83}, we arrive at the estimate 
\begin{equation}
d_{2^{-k}}^{M}(\mathcal{F}^{-1}\varphi _{j}\ast f)\lesssim 2^{\left(
j-k\right) M}\varphi _{j}^{\ast ,a}f\left( x\right)  \label{third-term}
\end{equation}%
if $a>0$, $0\leq j\leq k,k\in \mathbb{N}_{0}$ and $x\in \mathbb{R}^{n}$,
where the implicit constant is independent of $j,k$ and $x$. We choose $a>%
\frac{n}{\min (\min (p,\beta ),\frac{n}{\frac{n}{p}+\alpha })}$. Since $s<M$%
, \eqref{First-term} in $\ell ^{\beta }$-quasi-norm does not exceed 
\begin{equation}
\Big(\sum\limits_{j=0}^{\infty }2^{js\beta }(\varphi _{j}^{\ast ,a}f)^{\beta
}\Big)^{1/\beta }.  \label{First-term1}
\end{equation}%
By Theorem \ref{fun-char-lorentz}, the $\dot{K}_{p,r}^{\alpha ,q}$
-quasi-norm of \eqref{First-term1} is bounded by $c\big\|f\big\|_{\dot{K}%
_{p,r}^{\alpha ,q}F_{\beta }^{s}}$. Now, we estimate \eqref{second-term}. We
can distinguish two cases as follows:

\textit{Case 1. }$\min (p,\beta )\leq 1$. If $-\frac{n}{p}<\alpha <n(1-\frac{%
1}{p})$, then $s>\frac{n}{\min (p,\beta )}-n$. We choose 
\begin{equation}
\max \Big(0,1-\frac{s\min (p,\beta )}{n}\Big)<\lambda <\min (p,\beta ),
\label{lamda5}
\end{equation}%
which is possible because of 
\begin{equation*}
s>\frac{n}{\min (p,\beta )}-n=\frac{n}{\min (p,\beta )}\Big(1-\min (p,\beta )%
\Big).
\end{equation*}%
Let $\frac{n}{\min (p,\beta )}<a<\frac{s}{1-\lambda }$. Then $s>a(1-\lambda
) $. Now, assume that $\alpha \geq n(1-\frac{1}{p})$. Therefore 
\begin{equation*}
s>\max \Big(\frac{n}{\min (p,\beta )}-n,\frac{n}{p}+\alpha -n\Big).
\end{equation*}%
If $\min (p,\beta )\leq \frac{n}{\frac{n}{p}+\alpha }$, then we choose $%
\lambda $ as in \eqref{lamda5}. If $\min (p,\beta )>\frac{n}{\frac{n}{p}%
+\alpha }$, then we choose 
\begin{equation}
\max \Big(0,1-\frac{s}{\frac{n}{p}+\alpha }\Big)<\lambda <\frac{n}{\frac{n}{p%
}+\alpha }  \label{lamda6}
\end{equation}%
be a strict positive real number, which is possible because of 
\begin{equation*}
s>\frac{n}{p}+\alpha -n=\big(\frac{n}{p}+\alpha \big)\big(1-\frac{n}{\frac{n%
}{p}+\alpha }\big).
\end{equation*}%
In that case, we choose $\frac{n}{p}+\alpha <a<\frac{s}{1-\lambda }$. We set 
\begin{equation*}
J_{2,k}(f)=2^{ks}\sum\limits_{j=k+1}^{\infty }d_{2^{-k}}^{M}(\mathcal{F}%
^{-1}\varphi _{j}\ast f),\quad k\in \mathbb{N}_{0}.
\end{equation*}%
Recalling the definition of $d_{2^{-k}}^{M}(\varphi _{j}\ast f)$, we have 
\begin{align}
d_{2^{-k}}^{M}(\mathcal{F}^{-1}\varphi _{j}\ast f)& =\int_{B}\big|\Delta
_{2^{-k}h}^{M}(\mathcal{F}^{-1}\varphi _{j}\ast f)\big|dh  \notag \\
& \leq \int_{B}\big|\Delta _{2^{-k}h}^{M}(\mathcal{F}^{-1}\varphi _{j}\ast f)%
\big|^{\lambda }dh\sup_{h\in B}\big|\Delta _{2^{-k}h}^{M}(\mathcal{F}%
^{-1}\varphi _{j}\ast f)\big|^{1-\lambda }.  \label{term2}
\end{align}%
Observe that 
\begin{equation}
\big|\mathcal{F}^{-1}\varphi _{j}\ast f(x+(M-i)2^{-k}h)\big|\leq c2^{\left(
j-k\right) a}\varphi _{j}^{\ast ,a}f\left( x\right) ,\quad |h|\leq 1
\label{term3}
\end{equation}%
and 
\begin{equation}
\int_{B}\big|\mathcal{F}^{-1}\varphi _{j}\ast f(x+(M-i)2^{-k}h)\big|%
^{\lambda }dh\leq c\mathcal{M}(|\mathcal{F}^{-1}\varphi _{j}\ast f|^{\lambda
})(x).  \label{term4}
\end{equation}%
if $j>k,i\in \{0,...,M\}$ and $x\in \mathbb{R}^{n}$. Therefore 
\begin{equation*}
d_{2^{-k}}^{M}(\mathcal{F}^{-1}\varphi _{j}\ast f)\leq c2^{\left( j-k\right)
a(1-\lambda )}(\varphi _{j}^{\ast ,a}f)^{1-\lambda }\mathcal{M}(|\mathcal{F}%
^{-1}\varphi _{j}\ast f|^{\lambda })
\end{equation*}%
for any $j>k$, where the positive constant $c$ is independent of $j$ and $k$%
. Hence 
\begin{equation*}
J_{2,k}(f)\leq c2^{ks}\sum\limits_{j=k+1}^{\infty }2^{\left( j-k\right)
a(1-\lambda )}(\varphi _{j}^{\ast ,a}f)^{1-\lambda }\mathcal{M}(|\mathcal{F}%
^{-1}\varphi _{j}\ast f|^{\lambda }).
\end{equation*}%
Using Lemma \ref{lem:lq-inequality}, we obtain that \eqref{second-term} in $%
\ell ^{\beta }$-quasi-norm can be estimated from above by 
\begin{align*}
& c\Big(\sum\limits_{j=0}^{\infty }2^{js\beta }(\varphi _{j}^{\ast
,a}f)^{(1-\lambda )\beta }(\mathcal{M}(|\mathcal{F}^{-1}\varphi _{j}\ast
f|^{\lambda }))^{\beta }\Big)^{1/\beta } \\
& \lesssim \Big(\sum\limits_{j=0}^{\infty }2^{js\beta }(\varphi _{j}^{\ast
,a}f)^{\beta }\Big)^{(1-\lambda )/\beta }\Big(\sum\limits_{j=0}^{\infty
}2^{js\beta }(\mathcal{M}(|\mathcal{F}^{-1}\varphi _{j}\ast f|^{\lambda
}))^{\beta /\lambda }\Big)^{\lambda /\beta }.
\end{align*}%
Applying the $\dot{K}_{p,r}^{\alpha ,q}$-quasi-norm and using H\"{o}lder's
inequality we obtain that 
\begin{equation*}
\Big\|\Big(\sum\limits_{j=0}^{\infty }(J_{2,k}(f))^{\beta }\Big)^{1/\beta }%
\Big\|_{\dot{K}_{p,r}^{\alpha ,q}}
\end{equation*}%
is bounded by 
\begin{align*}
& c\Big\|\Big(\sum\limits_{j=0}^{\infty }2^{js\beta }(\varphi _{j}^{\ast
,a}f)^{\beta }\Big)^{(1-\lambda )/\beta }\Big\|_{\dot{K}_{\frac{p}{1-\lambda 
},\frac{r}{1-\lambda }}^{\alpha (1-\lambda ),\frac{q}{1-\lambda }}}\times \\
& \Big\|\Big(\sum\limits_{j=0}^{\infty }2^{js\beta }\big(\mathcal{M}(|%
\mathcal{F}^{-1}\varphi _{j}\ast f|^{\lambda })\big)^{\beta /\lambda }\Big)%
^{\lambda /\beta }\Big\|_{\dot{K}_{\frac{p}{\lambda },\frac{r}{\lambda }%
}^{\alpha \lambda ,\frac{q}{\lambda }}} \\
& \lesssim \Big\|\Big(\sum\limits_{j=0}^{\infty }2^{js\beta }(\varphi
_{j}^{\ast ,a}f)^{\beta }\Big)^{1/\beta }\Big\|_{\dot{K}_{p,r}^{\alpha
,q}}^{1-\lambda }\Big\|\Big(\sum\limits_{j=0}^{\infty }2^{js\beta }|\mathcal{%
F}^{-1}\varphi _{j}\ast f|^{\beta }\Big)^{1/\beta }\Big\|_{\dot{K}%
_{p,r}^{\alpha ,q}}^{\lambda } \\
& \lesssim \big\|f\big\|_{\dot{K}_{p,r}^{\alpha ,q}F_{\beta }^{s}},
\end{align*}%
where we have used Lemma \ref{Maximal-Inq copy(2)-lorentz} and Theorem \ref%
{fun-char-lorentz}.

\textit{Case 2. }$\min (p,\beta )>1$. Assume that $\alpha \geq n(1-\frac{1}{p%
})$. Then we choose $\lambda $ as in \eqref{lamda6}\ and $\frac{n}{p}+\alpha
<a<\frac{s}{1-\lambda }$. If $-\frac{n}{p}<\alpha <n(1-\frac{1}{p})$, then
we choose $\lambda =1$. The desired estimate can be done in the same manner
as in Case 1.

\textit{Substep 2.2.} We will estimate 
\begin{equation*}
\Big\|\Big(\sum_{k=-\infty }^{-1}2^{sk\beta }|d_{2^{-k}}^{M}f|^{\beta }\Big)%
^{1/\beta }\Big\|_{\dot{K}_{p,r}^{\alpha ,q}}.
\end{equation*}%
We employ the same notations as in Substep 1.1. Define 
\begin{equation*}
H_{2,k}(f)(x)=\int_{B}\big|\sum_{j=0}^{\infty }\Delta _{z2^{-k}}^{M}(%
\mathcal{F}^{-1}\varphi _{j}\ast f)(x)\big|dz,\quad k\leq 0,x\in \mathbb{R}%
^{n}.
\end{equation*}%
As in the estimation of $J_{2,k}$, we obtain that 
\begin{equation*}
H_{2,k}(f)\lesssim 2^{k(s-a(1-\lambda ))}\sup_{j\in \mathbb{N}_{0}}\Big(\big(%
2^{js}(\varphi _{j}^{\ast ,a}f\big)^{1-\lambda }\mathcal{M}\big(2^{js}|%
\mathcal{F}^{-1}\varphi _{j}\ast f|\big)^{\lambda }\Big)
\end{equation*}%
and this yields that 
\begin{equation*}
\Big(\sum_{k=-\infty }^{-1}2^{sk\beta }|H_{2,k}|^{\beta }\Big)^{1/\beta
}\lesssim \sup_{j\in \mathbb{N}_{0}}\Big(\big(2^{js}(\varphi _{j}^{\ast ,a}f%
\big)^{1-\lambda }\mathcal{M}\big(2^{js}|\varphi _{j}\ast f|\big)^{\lambda }%
\Big).
\end{equation*}%
By the same arguments as used in Substep 2.1 we obtain the desired estimate.

\textit{Step 3.} Let $f\in \dot{K}_{p,r}^{\alpha ,q}A_{\beta }^{s}$. We will
prove that 
\begin{equation*}
\big\|f\big\|_{\dot{K}_{p,r}^{\alpha ,q}A_{\beta }^{s}}\lesssim \big\|f\big\|%
_{\dot{K}_{p,r}^{\alpha ,q}A_{\beta }^{s}}^{\ast }.
\end{equation*}%
As the proof for $\dot{K}_{p,r}^{\alpha ,q}B_{\beta }^{s}$ is similar, we
only consider $\dot{K}_{p,r}^{\alpha ,q}F_{\beta }^{s}$. The proof is very
similar as in \cite{Dr-EMJ}. We present some details, because we need it in
the next theorem. Let $\Psi $ be a function in $\mathcal{S}(\mathbb{R}^{n})$
satisfying $\Psi (x)=1$ for $\lvert x\rvert \leq 1$ and $\Psi (x)=0$ for $%
\lvert x\rvert \geq \frac{3}{2}$, and in addition radialsymmetric. We make
use of an observation made by Nikol'skij \cite{Nikolskii1975}, see also \cite%
[Section 3.3.2]{T83}. We put 
\begin{equation*}
\psi (x)=(-1)^{M+1}\sum\limits_{i=0}^{M-1}(-1)^{i}C_{i}^{M}\Psi (x\left(
M-i\right) ),
\end{equation*}%
where $C_{i}^{M}$, $i\in \{0,...,M-1\}$ are the binomial coefficients. The
function $\psi $ satisfies $\psi \left( x\right) =1$ for $\left\vert
x\right\vert \leq \frac{1}{M}$ and $\psi \left( x\right) =0$ for $\left\vert
x\right\vert \geq \frac{3}{2}$. Then, taking $\varphi _{0}(x)=\psi
(x),\varphi _{1}(x)=\psi (\frac{x}{2})-\psi (x)$ and $\varphi
_{j}(x)=\varphi _{1}(2^{-j+1}x)$ for $j=2,3,...$, we obtain that $\{\varphi
_{j}\}_{j\in \mathbb{N}_{0}}$\ is a smooth dyadic resolution of unity. This
yields that 
\begin{equation*}
\Big\|\Big(\sum_{j=0}^{\infty }2^{js\beta }|\mathcal{F}^{-1}\varphi _{j}\ast
f|^{\beta }\Big)^{1/\beta }\Big\|_{\dot{K}_{p,r}^{\alpha ,q}}
\end{equation*}%
is a quasi-norm equivalent in $\dot{K}_{p,r}^{\alpha ,q}{F_{\beta }^{s}}$.
Let us prove that the last expression is bounded by 
\begin{equation}
C\big\Vert f\big\Vert_{\dot{K}_{p,r}^{\alpha ,q}{F_{\beta }^{s}}}^{\ast }.
\label{second-est}
\end{equation}%
We observe that 
\begin{equation*}
\mathcal{F}^{-1}\varphi _{0}\ast f(x)=\left( -1\right) ^{M+1}\int_{\mathbb{R}%
^{n}}\mathcal{F}^{-1}\Psi \left( z\right) \Delta _{-z}^{M}f(x)dz+f(x)\int_{%
\mathbb{R}^{n}}\mathcal{F}^{-1}\Psi \left( z\right) dz
\end{equation*}%
Moreover, it holds for $x\in \mathbb{R}^{n}$ and $j=1,2,...$ 
\begin{equation*}
\mathcal{F}^{-1}\varphi _{j}\ast f\left( x\right) =\left( -1\right)
^{M+1}\int_{\mathbb{R}^{n}}\Delta _{2^{-j}y}^{M}f\left( x\right) \widetilde{%
\Psi }\left( y\right) dy,
\end{equation*}%
with $\widetilde{\Psi }=\mathcal{F}^{-1}\Psi -2^{-n}\mathcal{F}^{-1}\Psi
(\cdot /2)$. Now, for $j\in \mathbb{N}_{0}$ we have 
\begin{align}
& \int_{\mathbb{R}^{n}}|\Delta _{2^{-j}y}^{M}f(x)||\widetilde{\Psi }(y)|dy 
\notag \\
& =\int_{\left\vert y\right\vert \leq 1}|\Delta _{2^{-j}y}^{M}f(x)||%
\widetilde{\Psi }(y)|dy+\int_{\left\vert y\right\vert >1}|\Delta
_{2^{-j}y}^{M}f(x)||\widetilde{\Psi }(y)|dy.  \label{diff2.1}
\end{align}%
Thus, we need only to estimate the second term of \eqref{diff2.1}. We write 
\begin{align}
& 2^{sj}\int_{\left\vert y\right\vert >1}|\Delta _{2^{-j}y}^{M}f(x)||%
\widetilde{\Psi }(y)|dy  \notag \\
& =2^{sj}\sum\limits_{k=0}^{\infty }\int_{2^{k}<\left\vert y\right\vert \leq
2^{k+1}}|\Delta _{2^{-j}y}^{M}f(x)||\widetilde{\Psi }(y)|dy  \notag \\
& \leq c2^{sj}\sum\limits_{k=0}^{\infty }2^{nj-Nk}\int_{2^{k-j}<\left\vert
h\right\vert \leq 2^{k-j+1}}|\Delta _{h}^{M}f(x)|dh  \label{diff}
\end{align}%
where $N>0$ is at our disposal and we have used the properties of the
function $\widetilde{\Psi }$, $|\widetilde{\Psi }(x)|\leq c(1+\left\vert
x\right\vert )^{-N},$ for any $x\in \mathbb{R}^{n}$ and any $N>0$. Without
lost of generality, we may assume $1\leq \beta \leq \infty $. Now, the
right-hand side of \eqref{diff} in $\ell ^{\beta }$-norm is bounded by 
\begin{equation}
c\sum\limits_{k=0}^{\infty }2^{-Nk}\Big(\sum\limits_{j=0}^{\infty
}2^{(s+n)j\beta }\Big(\int_{\left\vert h\right\vert \leq 2^{k-j+1}}|\Delta
_{h}^{M}f(x)|dh\Big)^{\beta }\Big)^{1/\beta }.  \label{diff1}
\end{equation}%
After a change of variable $j-k-1=v$, we estimate \eqref{diff1} by 
\begin{equation*}
c\sum\limits_{k=0}^{\infty }2^{(s+n-N)k}\Big(\sum\limits_{v=-k-1}^{\infty
}2^{sv\beta }\big(d_{2^{-v}}^{M}f(x)\big)^{\beta }\Big)^{1/\beta }\lesssim %
\Big(\sum\limits_{v=-{\infty }}^{\infty }2^{sv\beta }\big(d_{2^{-v}}^{M}f(x)%
\big)^{\beta }\Big)^{1/\beta },
\end{equation*}%
where we choose $N>n+s$. Taking the $\dot{K}_{p,r}^{\alpha ,q}$-quasi-norm
we obtain the desired estimate \eqref{second-est}. The proof is complete.
\end{proof}

\begin{remark}
In \cite[Theorem 2.5]{Dr-EMJ}, we have used the assumption $\alpha >-\frac{n%
}{q}$\ but the correct is $\alpha >\max (-n,-\frac{n}{q})$.
\end{remark}

We set%
\begin{equation*}
\big\|f\big\|_{\dot{K}_{p,r}^{\alpha ,q}B_{\beta }^{s}}^{\ast \ast }=\big\|f%
\big\|_{\dot{K}_{p}^{\alpha ,q}}+\Big(\int_{0}^{1}t^{-s\beta }\big\|%
d_{t}^{M}f\big\|_{\dot{K}_{p,r}^{\alpha ,q}}^{\beta }\frac{dt}{t}\Big)%
^{1/\beta }
\end{equation*}%
and%
\begin{equation*}
\big\|f\big\|_{\dot{K}_{p,r}^{\alpha ,q}F_{\beta }^{s}}^{\ast \ast }=\big\|f%
\big\|_{\dot{K}_{p,r}^{\alpha ,q}}+\Big\|\Big(\int_{0}^{1}t^{-s\beta
}(d_{t}^{M}f)^{\beta }\frac{dt}{t}\Big)^{1/\beta }\Big\|_{\dot{K}%
_{p,r}^{\alpha ,q}}.
\end{equation*}

We have also another equivalent quasi-norm on $\dot{K}_{p,r}^{\alpha
,q}A_{\beta }^{s}$.

\begin{theorem}
\label{means-diff-cha copy(1)-lorentz}\textit{Let }$0<p<\infty ,0<r,q,\beta
\leq \infty ,\alpha >\max (-n,-\frac{n}{p}),\alpha _{0}=n-\frac{n}{p}\ $and $%
M\in \mathbb{N}\backslash \{0\}.\newline
\mathrm{(i)}$ Assume that 
\begin{equation*}
\max (\sigma _{p},\alpha -\alpha _{0})<s<M.
\end{equation*}%
Then $\big\|\cdot \big\|_{\dot{K}_{p,r}^{\alpha ,q}B_{\beta }^{s}}^{\ast
\ast }$ is an equivalent quasi-norm on $\dot{K}_{p,r}^{\alpha ,q}B_{\beta
}^{s}$.$\newline
\mathrm{(ii)}$ Let\ $0<q<\infty $. Assume that 
\begin{equation*}
\max (\sigma _{p,\beta },\alpha -\alpha _{0})<s<M.
\end{equation*}%
Then $\big\|\cdot \big\|_{\dot{K}_{p,r}^{\alpha ,q}F_{\beta }^{s}}^{\ast
\ast }$ is an equivalent quasi-norm on $\dot{K}_{p,r}^{\alpha ,q}F_{\beta
}^{s}$.
\end{theorem}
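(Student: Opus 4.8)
The plan is to reduce the statement to the $(0,\infty)$–version already proved in Theorem \ref{means-diff-cha-lorentz}. One inclusion is free: since $\int_0^1\le\int_0^\infty$ we have $\big\|f\big\|_{\dot K_{p,r}^{\alpha,q}A_\beta^s}^{\ast\ast}\le\big\|f\big\|_{\dot K_{p,r}^{\alpha,q}A_\beta^s}^{\ast}\approx\big\|f\big\|_{\dot K_{p,r}^{\alpha,q}A_\beta^s}$ for both the $B$– and the $F$–scale, and by Theorem \ref{regular-distribution1-lorentz} every $f\in\dot K_{p,r}^{\alpha,q}A_\beta^s$ is a regular distribution, so the ball means $d_t^Mf$ are classically defined. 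For the reverse inequality I would run Steps~1 and~3 of the proof of Theorem \ref{means-diff-cha-lorentz} unchanged: there the Nikol'skij reproduction argument already bounds $\big\|f\big\|_{\dot K_{p,r}^{\alpha,q}A_\beta^s}$ by $\big\|f\big\|_{\dot K_{p,r}^{\alpha,q}}$ plus the $\dot K_{p,r}^{\alpha,q}$–quasi-norm of $\big(\sum_{k\in\mathbb Z}2^{sk\beta}(d_{2^{-k}}^Mf)^{\beta}\big)^{1/\beta}$, so it only remains to split the sum over $k\in\mathbb Z$ at $k=0$ (the part $k\ge0$ becoming, after the usual $\ell^\beta$–versus–integral comparison, a piece of $\big\|f\big\|^{\ast\ast}$) and to establish the \emph{tail estimate}
\begin{equation*}
\Big\|\Big(\sum_{k\le -1}2^{sk\beta}(d_{2^{-k}}^Mf)^{\beta}\Big)^{1/\beta}\Big\|_{\dot K_{p,r}^{\alpha,q}}\lesssim\big\|f\big\|_{\dot K_{p,r}^{\alpha,q}},
\end{equation*}
with the obvious modification if $\beta=\infty$. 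Since $\big\|f\big\|^{\ast\ast}\ge\big\|f\big\|_{\dot K_{p,r}^{\alpha,q}}$, this tail bound is exactly the missing ingredient.

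To prove it I would start from the elementary pointwise inequality, valid for every $t>0$ and every $f\in L^1_{\mathrm{loc}}$: expanding $\Delta_h^Mf$ into translates $f(\cdot+ah)$, $0\le a\le M$, and rescaling, one gets $d_t^Mf(x)\lesssim\sum_{a=0}^M\mathcal N_{at}f(x)$, where $\mathcal N_Rf(x)=|B(x,R)|^{-1}\int_{B(x,R)}|f(y)|\,dy$ (and $\mathcal N_0f=|f|$); in particular $d_t^Mf\lesssim\mathcal Mf$ uniformly in $t$. When $1<p<\infty$ and $-\tfrac np<\alpha<n(1-\tfrac1p)=\alpha_0$ this, together with Lemmas \ref{Maximal-Inq copy(1)-lorentz}–\ref{Maximal-Inq copy(2)-lorentz} and $\sum_{k\le-1}2^{sk\beta}<\infty$ (here $s>\sigma_p=0$), yields the tail estimate immediately. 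For the remaining parameters one must track the genuine dependence on $t=2^{-k}\ge2$: decomposing $\mathcal N_Rf$ over the dyadic annuli $R_i$ and noting that for $x\in R_\ell$ only indices $i\lesssim\max(\ell,\log_2R)$ contribute, H\"older's inequality on $L^{p,r}$ (with $\big\|\chi_i\big\|_{L^{p',r'}}\approx2^{i\alpha_0}$, cf. \eqref{est-function1}) and the Hardy-type Lemma \ref{lem:lq-inequality} give $\big\|d_t^Mf\big\|_{\dot K_{p,r}^{\alpha,q}}\lesssim\max(1,t^{\alpha-\alpha_0})\,\big\|f\big\|_{\dot K_{p,r}^{\alpha,q}}$ for $t\ge1$; since $s>\max(\sigma_p,\alpha-\alpha_0)$, the $t$–sum $\sum_{k\le-1}2^{sk\beta}\max(1,2^{-k(\alpha-\alpha_0)\beta})$ still converges, which settles the $B$–case. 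When $0<p<1$ the averages $\mathcal N_Rf$ are not controlled by $\big\|f\big\|_{\dot K_{p,r}^{\alpha,q}}$ alone, and there I would first use the embeddings of Theorems \ref{embeddings3-lorentz} and \ref{embeddings2-lorentz}—exactly as in the proof of Theorem \ref{regular-distribution1-lorentz}, at the cost of $s>\sigma_p$—to pass to a Herz-type space with first index $\ge1$, and then argue as above.

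For the $F$–scale the same annular splitting must be carried out \emph{inside} the $\dot K_{p,r}^{\alpha,q}$–quasi-norm with the $\ell^\beta$–summation over $k$ left intact; this is precisely the computation of ``Substep~2.2'' in the proof of Theorem \ref{means-diff-cha-lorentz}, except that the target is now the $s$–free norm $\big\|f\big\|_{\dot K_{p,r}^{\alpha,q}}$. Concretely I would dominate $d_{2^{-k}}^Mf$ pointwise by $\sum_{a}\mathcal N_{a2^{-k}}f$ and then either apply the vector-valued maximal inequality (Lemma \ref{Maximal-Inq copy(2)-lorentz} together with Theorem \ref{Maximal-Inq3-lorentz}) to the dyadically weighted family, or use the powers-of-$\lambda$ device from that proof to rescale to a space $\dot K_{p/\lambda,r/\lambda}^{\alpha\lambda,q/\lambda}$ on which $\mathcal M$ is bounded. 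The main obstacle is this $F$–tail estimate when $\alpha\ge\alpha_0$: there $\mathcal M$ is genuinely unbounded on $\dot K_{p,r}^{\alpha,q}$, so one cannot simply dominate $d_t^Mf$ by $\mathcal Mf$ and integrate; the $t$–dependence has to be handled honestly through the annular (or $\lambda$–rescaled) estimate, and summability is bought only by the hypothesis $s>\alpha-\alpha_0$, just as the admissibility of $\lambda$ in Substep~2.2 rests on that same inequality. A secondary technical point is the case $p\le1$, where $\sigma_p>0$ and the sub-additivity $(\sum|a_i|)^{\min(1,p)}\le\sum|a_i|^{\min(1,p)}$ (Lemma \ref{Lp-estimate}) must be threaded through the averaging estimates.
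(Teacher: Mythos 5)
Your high-level reduction coincides with the paper's: the proof there also re-enters Step 3 of the proof of Theorem \ref{means-diff-cha-lorentz} and isolates the contribution of the large scales $t=2^{-v}\ge 1$ (the term $V_{1}$). The genuine gap is in the tail estimate you single out as ``exactly the missing ingredient'', namely
\begin{equation*}
\Big\|\Big(\sum_{k\le -1}2^{sk\beta }(d_{2^{-k}}^{M}f)^{\beta }\Big)^{1/\beta }\Big\|_{\dot{K}_{p,r}^{\alpha ,q}}\lesssim \big\|f\big\|_{\dot{K}_{p,r}^{\alpha ,q}},
\end{equation*}
driven by the per-scale bound $\big\|d_{t}^{M}f\big\|_{\dot{K}_{p,r}^{\alpha ,q}}\lesssim \max (1,t^{\alpha -\alpha _{0}})\big\|f\big\|_{\dot{K}_{p,r}^{\alpha ,q}}$ for $t\ge 1$. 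That bound is false when $\alpha >\alpha _{0}$. Take $1<p<\infty $, $\alpha >\alpha _{0}$ and $f_{\varepsilon }=|x|^{-n}\chi _{\{\varepsilon <|x|<1\}}$: as in Step 2 of the proof of Lemma \ref{regular-distribution-lorentz} one has $\big\|f_{\varepsilon }\big\|_{\dot{K}_{p,r}^{\alpha ,q}}\le C$ uniformly in $\varepsilon $, while $\big\|f_{\varepsilon }\big\|_{1}\approx \log (1/\varepsilon )$. For $t$ large and $2M^{2}<|x|<t/(2M)$ one gets $d_{t}^{M}f_{\varepsilon }(x)\gtrsim t^{-n}\log (1/\varepsilon )$, hence $\big\|d_{t}^{M}f_{\varepsilon }\big\|_{\dot{K}_{p,r}^{\alpha ,q}}\gtrsim t^{\alpha -\alpha _{0}}\log (1/\varepsilon )$, which is not $\lesssim t^{\alpha -\alpha _{0}}\big\|f_{\varepsilon }\big\|_{\dot{K}_{p,r}^{\alpha ,q}}$. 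The mechanism: for $t\ge 1$ the ball mean sees the full $L^{1}$-mass of $f$ near the origin, and H\"older against $\big\|\chi _{i}\big\|_{L^{p^{\prime },r^{\prime }}}\approx 2^{i\alpha _{0}}$ controls $\int_{R_{i}}|f|$ by the Herz norm only when $\alpha \le \alpha _{0}$; the same failure occurs for $p\le 1$, where $L^{1}$-averages are not dominated by $\big\|f\big\|_{\dot{K}_{p,r}^{\alpha ,q}}$ at all. So the hypothesis $s>\max (\sigma _{p},\alpha -\alpha _{0})$ cannot be spent merely on the summability of the weights $2^{sk}$: the smoothness must enter the estimate of each $d_{t}^{M}f$, $t\ge 1$. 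Your fallback for $p\le 1$ (embedding into a Herz space with first index $\ge 1$) uses precisely that smoothness and bounds the tail by the full quasi-norm of $f$ in $\dot{K}_{p,r}^{\alpha ,q}A_{\beta }^{s}$, which makes the reduction circular rather than a reduction to $\big\|f\big\|^{\ast \ast }$. (Your argument is fine in the restricted range $1<p<\infty $, $-\frac{n}{p}<\alpha <\alpha _{0}$, where $\mathcal{M}$ is bounded on $\dot{K}_{p,r}^{\alpha ,q}$.)

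For the large scales the paper therefore does not discard the Littlewood--Paley structure: it writes $d_{2^{-v}}^{M}f\le \sum_{j\ge 0}d_{2^{-v}}^{M}(\mathcal{F}^{-1}\varphi _{j}\ast f)$ for $v\le 0$ and estimates each piece through \eqref{term2}--\eqref{term4} by $2^{(j-v)a(1-\lambda )}(\varphi _{j}^{\ast ,a}f)^{1-\lambda }\mathcal{M}(|\mathcal{F}^{-1}\varphi _{j}\ast f|^{\lambda })$, with $\lambda $ chosen as in \eqref{lamda5} or \eqref{lamda6} according to the position of $\alpha $ relative to $\alpha _{0}$; the inequality $s>a(1-\lambda )$, which is available exactly because $s>\max (\sigma _{p,\beta },\alpha -\alpha _{0})$, is what makes the sums over $v\le 0$ converge, and the resulting bound is closed by H\"older's inequality, Lemma \ref{Maximal-Inq copy(2)-lorentz} and the Peetre maximal characterization (Theorem \ref{fun-char-lorentz}). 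If you want to keep your more elementary route, you must at least replace the right-hand side of your tail estimate by a quantity already controlled by $\big\|f\big\|^{\ast \ast }$, which forces you to relate the large-$t$ ball means to the small-$t$ ones and leads back to an argument of the paper's type.
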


\begin{proof}
We employ the same notations as in Theorem \ref{means-diff-cha-lorentz}. By
similarity, we will consider only the spaces $\dot{K}_{p,r}^{\alpha
,q}F_{\beta }^{s}$. Let $f\in \dot{K}_{p,r}^{\alpha ,q}F_{\beta }^{s}$.
Immediately it follows%
\begin{equation*}
\big\|f\big\|_{\dot{K}_{p,r}^{\alpha ,q}F_{\beta }^{s}}^{\ast \ast }\lesssim %
\big\|f\big\|_{\dot{K}_{p,r}^{\alpha ,q}F_{\beta }^{s}}^{\ast }\lesssim %
\big\|f\big\|_{\dot{K}_{p,r}^{\alpha ,q}F_{\beta }^{s}}.
\end{equation*}%
We will prove that 
\begin{equation*}
\big\|f\big\|_{\dot{K}_{p,r}^{\alpha ,q}F_{\beta }^{s}}\lesssim \big\|f\big\|%
_{\dot{K}_{p,r}^{\alpha ,q}F_{\beta }^{s}}^{\ast \ast }.
\end{equation*}%
In view of Step 3 of the proof of Theorem \ref{means-diff-cha-lorentz}, we
need only to estimate 
\begin{equation*}
V=\sum\limits_{k=0}^{\infty }2^{(s+n-N)k}\Big(\sum\limits_{v=-k-1}^{\infty
}2^{sv\beta }\big(d_{2^{-v}}^{M}f\big)^{\beta }\Big)^{1/\beta }
\end{equation*}%
in $\dot{K}_{p,r}^{\alpha ,q}$-quasi-norm. We see that $V$ can be estimated
from above by $V_{1}+V_{2}$, where 
\begin{equation*}
V_{1}=c\sum\limits_{k=0}^{\infty }2^{(s+n-N)k}\Big(\sum%
\limits_{v=-k-1}^{0}2^{sv\beta }\big(d_{2^{-v}}^{M}f\big)^{\beta }\Big)%
^{1/\beta }
\end{equation*}%
and%
\begin{equation*}
V_{2}=c\sum\limits_{k=0}^{\infty }2^{(s+n-N)k}\Big(\sum\limits_{v=1}^{\infty
}2^{sv\beta }\big(d_{2^{-v}}^{M}f\big)^{\beta }\Big)^{1/\beta }.
\end{equation*}%
We have%
\begin{equation*}
d_{2^{-v}}^{M}(f)\leq \sum_{j=0}^{\infty }d_{2^{-v}}^{M}(\mathcal{F}%
^{-1}\varphi _{j}\ast f),\quad v\geq -k-1.
\end{equation*}%
We choose $N>0$ sufficiently large such that $N>s+n$.

\textit{Estimate of }$V_{1}$. Using \eqref{term2}, \eqref{term3},%
\eqref{term4}, we obtain%
\begin{align*}
2^{sv}d_{2^{-v}}^{M}(f)& \leq c2^{-v(a(1-\lambda
)-s)}\sum\limits_{j=0}^{\infty }2^{j(a(1-\lambda )-s)}(2^{sj}\varphi
_{j}^{\ast ,a}f)^{1-\lambda }2^{sj\lambda }\mathcal{M}(|\mathcal{F}%
^{-1}\varphi _{j}\ast f|^{\lambda }) \\
& \leq c2^{-v(a(1-\lambda )-s)}\sup_{j\in \mathbb{N}_{0}}\Big((2^{sj}\varphi
_{j}^{\ast ,a}f)^{1-\lambda }2^{sj\lambda }\mathcal{M}(|\mathcal{F}%
^{-1}\varphi _{j}\ast f|^{\lambda })\Big),
\end{align*}%
where the positive constant $c$ is independent of $v$ and $k$. Thus, $V_{1}$
can be estimated from above by 
\begin{align}
& c\sup_{j\in \mathbb{N}_{0}}\Big((2^{sj}\varphi _{j}^{\ast ,a}f)^{1-\lambda
}2^{sj\lambda }\mathcal{M}(|\mathcal{F}^{-1}\varphi _{j}\ast f|^{\lambda })%
\Big)  \notag \\
& \times \sum\limits_{k=0}^{\infty }2^{(s+n-N)k}\Big(\sum%
\limits_{v=-k-1}^{0}2^{-\beta v(\frac{a}{\sigma }(1-\lambda )-s)}\Big)%
^{1/\beta }  \notag \\
& \lesssim \sup_{j\in \mathbb{N}_{0}}\Big((2^{sj}\varphi _{j}^{\ast
,a}f)^{1-\lambda }2^{sj\lambda }\mathcal{M}(|\mathcal{F}^{-1}\varphi
_{j}\ast f|^{\lambda })\Big)  \notag \\
& \lesssim \sup_{j\in \mathbb{N}_{0}}\big((2^{sj}\varphi _{j}^{\ast
,a}f)^{1-\lambda }\big)\sup_{j\in \mathbb{N}_{0}}\big(2^{sj\lambda }\mathcal{%
M}(|\mathcal{F}^{-1}\varphi _{j}\ast f|^{\lambda })\big),  \label{V1.1}
\end{align}%
since $N>s+n$. Taking the $\dot{K}_{p,r}^{\alpha ,q}$-quasi-norm in both
sides of \eqref{V1.1} and using H\"{o}lder's inequality, we obtain that $%
\big\|V_{1}\big\|_{\dot{K}_{p,r}^{\alpha ,q}}$ is bounded by 
\begin{align*}
& c\Big\|\big(\sup_{j\in \mathbb{N}_{0}}2^{js}(\varphi _{j}^{\ast ,a}f)\big)%
^{1-\lambda }\Big\|_{\dot{K}_{\frac{p}{1-\lambda },\frac{r}{1-\lambda }%
}^{\alpha (1-\lambda ),\frac{q}{1-\lambda }}}\Big\|\sup_{j\in \mathbb{N}_{0}}%
\big(2^{js\lambda }\mathcal{M}(|\mathcal{F}^{-1}\varphi _{j}\ast f|^{\lambda
})\big)\Big\|_{\dot{K}_{\frac{p}{\lambda },\frac{r}{\lambda }}^{\alpha
\lambda ,\frac{q}{\lambda }}} \\
& \lesssim \Big\|\sup_{j\in \mathbb{N}_{0}}(2^{js}\varphi _{j}^{\ast ,a}f)%
\Big\|_{\dot{K}_{p,r}^{\alpha ,q}}^{1-\lambda }\Big\|\sup_{j\in \mathbb{N}%
_{0}}\big(2^{js}(\mathcal{F}^{-1}\varphi _{j}\ast f)\big)\Big\|_{\dot{K}%
_{p,r}^{\alpha ,q}}^{\lambda } \\
& \lesssim \big\|f\big\|_{\dot{K}_{p,r}^{\alpha ,q}F_{\beta }^{s}},
\end{align*}%
where we have used Lemma \ref{Maximal-Inq copy(2)-lorentz} and Theorem \ref%
{fun-char-lorentz}.

\textit{Estimate of }$V_{2}$. We set%
\begin{equation*}
V_{3,k}=\Big(\sum\limits_{v=1}^{\infty }2^{sv\beta }\Big(%
\sum_{j=0}^{v}d_{2^{-v}}^{M}(\mathcal{F}^{-1}\varphi _{j}\ast f)\Big)^{\beta
}\Big)^{1/\beta }
\end{equation*}%
and%
\begin{equation*}
V_{4,k}=\Big(\sum\limits_{v=1}^{\infty }2^{sv\beta }\Big(\sum_{j=v+1}^{%
\infty }d_{2^{-v}}^{M}(\mathcal{F}^{-1}\varphi _{j}\ast f)\Big)^{\beta }\Big)%
^{1/\beta }.
\end{equation*}%
By \eqref{third-term} and Lemma \ref{lem:lq-inequality}, we get%
\begin{align*}
V_{3,k}& =\Big(\sum\limits_{v=1}^{\infty }\Big(\sum_{j=0}^{v}2^{\left(
j-v\right) (M-s)}2^{sj}\varphi _{j}^{\ast ,a}f\left( x\right) \Big)^{\beta }%
\Big)^{1/\beta } \\
& \lesssim \Big(\sum\limits_{j=0}^{\infty }\big(2^{sj}\varphi _{j}^{\ast
,a}f\left( x\right) \big)^{\beta }\Big)^{1/\beta },
\end{align*}%
where the implicit constant is independent of $k$. Theorem \ref%
{fun-char-lorentz} yields that 
\begin{equation*}
\Big\|\sum\limits_{k=0}^{\infty }2^{(s+n-N)k}V_{3,k}\Big\|_{\dot{K}%
_{p,r}^{\alpha ,q}}\lesssim \big\|f\big\|_{\dot{K}_{p,r}^{\alpha ,q}F_{\beta
}^{s}}.
\end{equation*}%
Now, using \eqref{term2}, \eqref{term3},\eqref{term4} and Lemma \ref%
{lem:lq-inequality}, we obtain%
\begin{align*}
V_{4,k}& \lesssim \Big(\sum\limits_{v=1}^{\infty }\Big(\sum_{j=v+1}^{\infty
}2^{(j-v)(a(1-\lambda )-s)}(2^{sj}\varphi _{j}^{\ast ,a}f)^{1-\lambda
}2^{sj\lambda }\mathcal{M}(|\mathcal{F}^{-1}\varphi _{j}\ast f|^{\lambda })%
\Big)^{\beta }\Big)^{1/\beta } \\
& \lesssim \Big(\sum\limits_{j=0}^{\infty }\Big(\big(2^{sj}\varphi
_{j}^{\ast ,a}f\big)^{1-\lambda }2^{sj\lambda }\mathcal{M}(|\mathcal{F}%
^{-1}\varphi _{j}\ast f|^{\lambda })\Big)^{\beta }\Big)^{1/\beta },
\end{align*}%
where the implicit constant is independent of $k$. The same schema as in the
estimation of $V_{1}$ applies%
\begin{equation*}
\Big\|\sum\limits_{k=0}^{\infty }2^{(s+n-N)k}V_{3,k}\Big\|_{\dot{K}%
_{p,r}^{\alpha ,q}}\lesssim \big\|f\big\|_{\dot{K}_{p,r}^{\alpha ,q}F_{\beta
}^{s}}.
\end{equation*}%
Therefore, 
\begin{equation*}
\big\|V_{2}\big\|_{\dot{K}_{p,r}^{\alpha ,q}}\lesssim \big\|f\big\|_{\dot{K}%
_{p,r}^{\alpha ,q}F_{\beta }^{s}}.
\end{equation*}%
Hence the proof is complete.
\end{proof}

We define the discretized counterpart of $\big\|f\big\|_{\dot{K}%
_{p,r}^{\alpha ,q}F_{\beta }^{s}}^{\ast }$\ and $\big\|f\big\|_{\dot{K}%
_{p,r}^{\alpha ,q}F_{\beta }^{s}}^{\ast \ast }$ by 
\begin{equation*}
\big\|f\big\|_{\dot{K}_{p,r}^{\alpha ,q}F_{\beta }^{s}}^{\ast ,1}=\big\|f%
\big\|_{\dot{K}_{p,r}^{\alpha ,q}}+\Big\|\Big(\sum_{k=-\infty }^{\infty
}2^{sk\beta }|d_{2^{-k}}^{M}f|^{\beta }\Big)^{1/\beta }\Big\|_{\dot{K}%
_{p,r}^{\alpha ,q}}
\end{equation*}%
and 
\begin{equation*}
\big\|f\big\|_{\dot{K}_{p,r}^{\alpha ,q}F_{\beta }^{s}}^{\ast \ast ,1}=\big\|%
f\big\|_{\dot{K}_{p,r}^{\alpha ,q}}+\Big\|\Big(\sum_{k=0}^{\infty
}2^{sk\beta }|d_{2^{-k}}^{M}f|^{\beta }\Big)^{1/\beta }\Big\|_{\dot{K}%
_{p,r}^{\alpha ,q}}.
\end{equation*}%
While for Lorentz-Herz-type Besov spaces, we put 
\begin{equation*}
\big\|f\big\|_{\dot{K}_{p,r}^{\alpha ,q}B_{\beta }^{s}}^{\ast ,1}=\big\|f%
\big\|_{\dot{K}_{p,r}^{\alpha ,q}}+\Big(\sum_{k=-\infty }^{\infty
}2^{sk\beta }\big\|d_{2^{-k}}^{M}f\big\|_{\dot{K}_{p,r}^{\alpha ,q}}^{\beta }%
\Big)^{1/\beta }
\end{equation*}%
and 
\begin{equation*}
\big\|f\big\|_{\dot{K}_{p,r}^{\alpha ,q}B_{\beta }^{s}}^{\ast \ast ,1}=\big\|%
f\big\|_{\dot{K}_{p,r}^{\alpha ,q}}+\Big(\sum_{k=0}^{\infty }2^{sk\beta }%
\big\|d_{2^{-k}}^{M}f\big\|_{\dot{K}_{p,r}^{\alpha ,q}}^{\beta }\Big)%
^{1/\beta }.
\end{equation*}

The Theorems \ref{means-diff-cha-lorentz} and \ref{means-diff-cha
copy(1)-lorentz} give immediately the following equivalent quasi-norms for
the spaces $\dot{K}_{p,r}^{\alpha ,q}A_{\beta }^{s}.$

\begin{corollary}
\label{means-diff-cha copy(2)}\textit{Let }$0<p<\infty ,0<r,q,\beta \leq
\infty ,\alpha >\max (-n,-\frac{n}{p}),\alpha _{0}=n-\frac{n}{p}\ $and $M\in 
\mathbb{N}\backslash \{0\}.\newline
\mathrm{(i)}$ Assume that 
\begin{equation*}
\max (\sigma _{p},\alpha -\alpha _{0})<s<M.
\end{equation*}%
Then $\big\|\cdot \big\|_{\dot{K}_{p,r}^{\alpha ,q}B_{\beta }^{s}}^{\ast ,1}$
and $\big\|\cdot \big\|_{\dot{K}_{p,r}^{\alpha ,q}B_{\beta }^{s}}^{\ast \ast
,1}$ are an equivalent quasi-norm on $\dot{K}_{p,r}^{\alpha ,q}B_{\beta
}^{s} $.$\newline
\mathrm{(ii)}$ Let\ $0<q<\infty $. Assume that 
\begin{equation*}
\max (\sigma _{p,\beta },\alpha -\alpha _{0})<s<M.
\end{equation*}%
Then $\big\|\cdot \big\|_{\dot{K}_{p,r}^{\alpha ,q}F_{\beta }^{s}}^{\ast ,1}$
and $\big\|\cdot \big\|_{\dot{K}_{p,r}^{\alpha ,q}F_{\beta }^{s}}^{\ast \ast
,1}$ are an equivalent quasi-norm on $\dot{K}_{p,r}^{\alpha ,q}F_{\beta
}^{s} $.
\end{corollary}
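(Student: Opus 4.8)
The plan is to deduce the corollary from Theorems \ref{means-diff-cha-lorentz} and \ref{means-diff-cha copy(1)-lorentz}, which already identify $\big\|\cdot\big\|_{\dot{K}_{p,r}^{\alpha,q}A_{\beta}^{s}}^{\ast}$ and $\big\|\cdot\big\|_{\dot{K}_{p,r}^{\alpha,q}A_{\beta}^{s}}^{\ast\ast}$ as equivalent quasi-norms on $\dot{K}_{p,r}^{\alpha,q}A_{\beta}^{s}$ under exactly the present hypotheses, by comparing the discretised functionals $\big\|\cdot\big\|^{\ast,1}$, $\big\|\cdot\big\|^{\ast\ast,1}$ with their continuous counterparts up to multiplicative constants depending only on $n,s,\beta,M$. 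Since $\big\|f\big\|_{\dot{K}_{p,r}^{\alpha,q}}$ appears unchanged in all four functionals, it suffices to compare the remaining pieces. The engine is the elementary pointwise estimate: for every $x\in\mathbb{R}^{n}$, every $k\in\mathbb{Z}$ and every $t$ with $2^{-k-1}\leq t\leq 2^{-k}$,
\begin{equation*}
2^{-n}\,d_{2^{-k-1}}^{M}f(x)\leq d_{t}^{M}f(x)\leq 2^{n}\,d_{2^{-k}}^{M}f(x),
\end{equation*}
an immediate consequence of $d_{t}^{M}f(x)=t^{-n}\int_{|h|\leq t}|\Delta_{h}^{M}f(x)|\,dh$ — shrinking or enlarging the ball of integration only decreases or increases the integral, while the normalising factors differ by at most $2^{n}$ across one dyadic block. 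Since moreover $t^{-s}\approx 2^{ks}$ on such a block (recall $s>0$ under the hypotheses), one gets, for $0<\beta<\infty$ and with the usual $\sup$-modification if $\beta=\infty$,
\begin{equation*}
\int_{2^{-k}}^{2^{-k+1}}t^{-s\beta}(d_{t}^{M}f(x))^{\beta}\,\frac{dt}{t}\ \gtrsim\ 2^{ks\beta}(d_{2^{-k}}^{M}f(x))^{\beta}\ \gtrsim\ \int_{2^{-k-1}}^{2^{-k}}t^{-s\beta}(d_{t}^{M}f(x))^{\beta}\,\frac{dt}{t}.
\end{equation*}

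For the pair $\big\|\cdot\big\|^{\ast}$, $\big\|\cdot\big\|^{\ast,1}$ I would sum these inequalities over all $k\in\mathbb{Z}$; since the blocks $[2^{-k-1},2^{-k})$ tile $(0,\infty)$, this yields the pointwise two-sided comparison $\sum_{k\in\mathbb{Z}}2^{ks\beta}(d_{2^{-k}}^{M}f(x))^{\beta}\approx\int_{0}^{\infty}t^{-s\beta}(d_{t}^{M}f(x))^{\beta}\tfrac{dt}{t}$ for the Triebel--Lizorkin scale, and the analogous comparison for the scalar sequences $\{\big\|d_{2^{-k}}^{M}f\big\|_{\dot{K}_{p,r}^{\alpha,q}}\}_{k}$ for the Besov scale. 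Applying the $\dot{K}_{p,r}^{\alpha,q}$ quasi-norm then gives $\big\|f\big\|^{\ast,1}\approx\big\|f\big\|^{\ast}$: for the $F$-case one uses that $\dot{K}_{p,r}^{\alpha,q}$ is a quasi-Banach ideal space, so pointwise domination of the $\ell^{\beta}$-functions transfers to the quasi-norm, while for the $B$-case one first applies the quasi-norm to $d_{t}^{M}f$ and $d_{2^{-k}}^{M}f$ (ideal property again) and only afterwards sums. Invoking Theorem \ref{means-diff-cha-lorentz} closes this pair.

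For the pair $\big\|\cdot\big\|^{\ast\ast}$, $\big\|\cdot\big\|^{\ast\ast,1}$ the same blockwise estimate gives at once $\int_{0}^{1}\cdots\lesssim\sum_{k\geq0}2^{ks\beta}(d_{2^{-k}}^{M}f)^{\beta}$, hence $\big\|f\big\|^{\ast\ast}\lesssim\big\|f\big\|^{\ast\ast,1}$. In the reverse direction the blocks $[2^{-k},2^{-k+1})$ with $k\geq0$ tile $(0,2)$, not $(0,1)$, so one only obtains $\sum_{k\geq0}2^{ks\beta}(d_{2^{-k}}^{M}f)^{\beta}\lesssim\int_{0}^{2}t^{-s\beta}(d_{t}^{M}f)^{\beta}\tfrac{dt}{t}$. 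This is the single point deserving a remark: the proof of Theorem \ref{means-diff-cha copy(1)-lorentz} is insensitive to the value of the cut-off, so replacing $\int_{0}^{1}$ by $\int_{0}^{2}$ still produces an equivalent quasi-norm on $\dot{K}_{p,r}^{\alpha,q}A_{\beta}^{s}$; chaining this fact with the last display and with Theorem \ref{means-diff-cha copy(1)-lorentz} yields $\big\|f\big\|^{\ast\ast,1}\lesssim\big\|f\big\|_{\dot{K}_{p,r}^{\alpha,q}A_{\beta}^{s}}\approx\big\|f\big\|^{\ast\ast}$. Thus the argument carries no genuine obstacle; the only items requiring attention are the $\beta=\infty$ modifications, the ideal-space monotonicity used to pass from the pointwise block estimates to the $\dot{K}_{p,r}^{\alpha,q}$ quasi-norm in the Besov case, and this harmless cut-off mismatch $(0,1)$ versus $(0,2)$.
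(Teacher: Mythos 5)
Your proposal is correct and is exactly the argument the paper has in mind: the paper states the corollary as an immediate consequence of Theorems \ref{means-diff-cha-lorentz} and \ref{means-diff-cha copy(1)-lorentz}, and your dyadic block comparison $2^{-n}d_{2^{-k-1}}^{M}f\leq d_{t}^{M}f\leq 2^{n}d_{2^{-k}}^{M}f$ for $2^{-k-1}\leq t\leq 2^{-k}$, together with the lattice property of $\dot{K}_{p,r}^{\alpha ,q}$, is the standard discretization that makes this precise. The only point you flag, the cut-off mismatch $(0,1)$ versus $(0,2)$, can also be disposed of without revisiting the proof of Theorem \ref{means-diff-cha copy(1)-lorentz} by chaining $\big\|f\big\|^{\ast \ast ,1}\leq \big\|f\big\|^{\ast ,1}\approx \big\|f\big\|_{\dot{K}_{p,r}^{\alpha ,q}A_{\beta }^{s}}\approx \big\|f\big\|^{\ast \ast }\lesssim \big\|f\big\|^{\ast \ast ,1}$.
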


\subsection{Examples}

We investigate a series of examples which play an important role in the
study of function spaces and composition operators in
Besov-Triebel-Lizorkin-type spaces.

\label{Si-funct1}Let $0<p,q<\infty ,0<r,\beta \leq \infty ,\alpha >\max (-n,-%
\frac{n}{p})$, $\alpha _{0}=n-\frac{n}{p}\ $and $s>\max (\sigma _{p},\alpha
-\alpha _{0})$. We put%
\begin{equation}
f_{\mu ,\delta }(x)=\theta (x)|x|^{\mu }(-\log |x|)^{-\delta },
\label{f-alpha-delta}
\end{equation}%
where $\mu ^{2}+\delta ^{2}>0,\delta \geq 0$, $\mu \neq 0$ and $\theta $ is
a smooth cut-off function with $\mathrm{supp}$ $\theta \subset \{x:|x|\leq
\vartheta \}$, $\vartheta >0$ sufficiently small.$\newline
\mathrm{(i)}$ Let $\delta >0$ and 
\begin{equation}
s<\frac{n}{p}+\alpha +\mu \quad \text{or\quad }s=\frac{n}{p}+\alpha +\mu
\quad \text{and}\quad \beta \delta >1.  \label{si-bou1}
\end{equation}%
Then $f_{\mu ,\delta }\in \dot{K}_{p,r}^{\alpha ,q}B_{\beta }^{s}$. If $\mu
<1$, then \eqref{si-bou1}\ become necessary. $\newline
\mathrm{(ii)}$ We have $f_{\mu ,0}\in \dot{K}_{p,r}^{\alpha ,q}B_{\beta
}^{s} $ if 
\begin{equation}
s<\frac{n}{p}+\alpha +\mu \quad \text{or\quad }s=\frac{n}{p}+\alpha +\mu
\quad \text{and}\quad \beta =\infty .  \label{si-bou2}
\end{equation}%
If $\mu <1$, then \eqref{si-bou2}\ become necessary.

\begin{proof}
We will present the proof in two steps.

\textit{Step 1.} \textit{Proof of sufficiency in }$\mathrm{(i)}$\textit{\
and }$\mathrm{(ii)}$\textit{.} We have to divide this step into two substeps.

\textit{Substep 1.1.} $-\frac{n}{p}<\alpha \leq 0$. First our assumptions
guarantee that\ $\mu >-n$. Let $0<\tau <\min \big(p,q,\frac{n}{\max (-\mu ,0)%
}\big)$. From Theorem \ref{embeddings5-lorentz} we know 
\begin{equation*}
B_{\tau ,\beta }^{\frac{n}{\tau }+\mu }\hookrightarrow \dot{K}_{p,r}^{\alpha
,q}B_{\beta }^{\frac{n}{p}+\alpha +\mu }.
\end{equation*}%
Notice that $\frac{n}{\tau }+\mu >\sigma _{\tau }$ and $f_{\mu ,\delta }\in
B_{\tau ,\beta }^{\frac{n}{\tau }+\mu }$ if $\beta \delta >1$, see \cite[%
2.3.1, p. 44]{RuSi96}. This finishes the proof of this case.

\textit{Substep 1.2.\ }$\alpha >0$. Our estimate use partially some
decomposition techniques already used in \cite[2.3.1, p. 44]{RuSi96}. Let $M$
be a natural number large enough. Let $0<t<\frac{\vartheta }{2M}$ and 
\begin{equation*}
B(t)=\mathbb{R}^{n}\backslash B(0,2Mt).
\end{equation*}%
We will estimate%
\begin{equation}
\sum\limits_{k=-\infty }^{\infty }2^{k\alpha q}\big\|(d_{t}^{M}f_{\mu
,\delta })\chi _{R_{k}}\big\|_{L^{p,r}}^{q}.  \label{main-est}
\end{equation}%
We split the integral $\big\|(d_{t}^{M}f_{\mu ,\delta })\chi _{R_{k}}\big\|%
_{L^{p,r}}^{q}$, $k\in \mathbb{Z}$ into two parts, one integral over the set 
$B(0,2Mt)$ and one over its complement. It holds%
\begin{equation*}
\sum\limits_{k=-\infty }^{\infty }2^{k\alpha q}\big\|(d_{t}^{M}f_{\mu
,\delta })\chi _{B(0,2Mt)\cap R_{k}}\big\|_{L^{p,r}}^{q}
\end{equation*}%
is just 
\begin{equation}
\sum\limits_{k\in \mathbb{Z},2^{k}\leq 2Mt}2^{k\alpha q}\big\|%
(d_{t}^{M}f_{\mu ,\delta })\chi _{B(0,2Mt)\cap R_{k}}\big\|_{L^{p,r}}^{q}=I.
\label{est-I}
\end{equation}%
Observe that $|x+jh|\leq 3Mt$, $j\in \{0,1,...,M\},x\in B(0,2Mt)\cap R_{k}$
and $|h|<t$, $k\in \mathbb{Z}$, which yields%
\begin{equation*}
d_{t}^{M}f_{\mu ,\delta }(x)\lesssim t^{-n}\int_{|v|\leq 3Mt}\big|f_{\mu
,\delta }(v)\big|dv\lesssim t^{\mu }(-\log t)^{-\delta },
\end{equation*}%
because of $\mu >-n$, where the implicit constant is independent of $x$ and $%
t$. Putting this into \eqref{est-I} and using 
\begin{equation*}
\big\|\chi _{B(0,2Mt)}\big\|_{L^{p,r}}\lesssim t^{\frac{n}{p}},
\end{equation*}%
where the implicit constant is independent of $t$ and $k$, we arrive at%
\begin{equation}
I\lesssim t^{q(\mu +\frac{n}{p})}(-\log t)^{-q\delta }\sum\limits_{k\in 
\mathbb{Z},2^{k}\leq 2Mt}2^{k\alpha q}\lesssim t^{q(\mu +\frac{n}{p}+\alpha
)}(-\log t)^{-q\delta },  \label{est-I1}
\end{equation}%
since $\alpha >0$. We easily seen that%
\begin{align*}
\sum\limits_{k=-\infty }^{\infty }2^{k\alpha q}\big\|(d_{t}^{M}f_{\mu
,\delta })\chi _{B(t)\cap R_{k}}\big\|_{L^{p,r}}^{q}& =\sum\limits_{k\in 
\mathbb{Z},2^{k}\geq 2Mt}2^{k\alpha q}\big\|(d_{t}^{M}f_{\mu ,\delta })\chi
_{B(t)\cap R_{k}}\big\|_{L^{p,r}}^{q} \\
& =J.
\end{align*}%
We set%
\begin{equation*}
B_{1}(t)=\{x\in \mathbb{R}^{n}:2Mt\leq \left\vert x\right\vert
<2Mt+\vartheta \}.
\end{equation*}%
Since $d_{t}^{M}f_{\mu ,\delta }(x)=0$ if $\left\vert x\right\vert \geq
2Mt+\vartheta $, we obtain%
\begin{equation*}
J\leq \sum\limits_{k\in \mathbb{Z},2^{k}\geq 2Mt}2^{k\alpha q}\big\|%
(d_{t}^{M}f_{\mu ,\delta })\chi _{B_{1}(t)\cap R_{k}}\big\|_{L^{p,r}}^{q}.
\end{equation*}%
Using the fact that%
\begin{equation*}
|\Delta _{h}^{M}f_{\mu ,\delta }(x)|\lesssim |h|^{M}\max_{|\gamma
|=M}\sup_{|x-y|\leq M|h|}|D^{\gamma }f_{\mu ,\delta }(y)|
\end{equation*}%
if $0\notin \{y\in \mathbb{R}^{n}:|x-y|\leq M|h|\}$ and%
\begin{equation*}
|D^{\gamma }f_{\mu ,\delta }(x)|\lesssim |x|^{\mu -M}(-\log |x|)^{-\delta
},\quad |\gamma |=M\geq 1,
\end{equation*}%
we find%
\begin{equation*}
\big\|(d_{t}^{M}f_{\mu ,\delta })\chi _{B_{1}(t)\cap R_{k}}\big\|%
_{L^{p,r}}\lesssim t^{M}\big\||x|^{(\mu -M)}(-\log |x|)^{-\delta }\chi
_{B_{1}(t)\cap R_{k}}\big\|_{L^{p,r}}.
\end{equation*}%
Let $i\in \mathbb{Z},i_{0}\in \mathbb{N}$ be such that $2^{i-1}\leq
2Mt<2^{i} $ and $2^{i_{0}-1}\leq \frac{\vartheta }{2tM}<2^{i_{0}}$. Then $J$
can be estimated from above by%
\begin{align}
& \sum\limits_{k\in \mathbb{Z},2^{k}\geq 2Mt}2^{k\alpha q}\big\|%
(d_{t}^{M}f_{\mu ,\delta })\chi _{B_{i,i_{0}}\cap R_{k}}\big\|_{L^{p,r}}^{q}
\notag \\
& \leq \sum\limits_{l=i}^{i+i_{0}+1}\sum\limits_{k\in \mathbb{Z},2^{k}\geq
2Mt}2^{k\alpha q}\big\|(d_{t}^{M}f_{\mu ,\delta })\chi _{R_{l}\cap R_{k}}%
\big\|_{L^{p,r}}^{q}  \notag \\
& \lesssim \sum\limits_{l=i}^{i+i_{0}+1}2^{l\alpha q}\big\||x|^{\mu
-M}(-\log |x|)^{-\delta }\chi _{R_{l}}\big\|_{L^{p,r}}^{q}  \notag \\
& \lesssim \sum\limits_{l=i}^{i+i_{0}+1}2^{l(\mu -M+\frac{n}{p}+\alpha
)q}(-l)^{-\delta q},  \label{est-J2lorentz}
\end{align}%
where $B_{i,i_{0}}=\{x\in \mathbb{R}^{n}:2^{i-1}\leq \left\vert x\right\vert
<2^{i+i_{0}+1}\}$. One easily checks%
\begin{align}
& \sum\limits_{l=i}^{i+i_{0}+1}2^{l(\mu -M+\frac{n}{p}+\alpha
)q}(-l)^{-\delta q}  \notag \\
& =(-i)^{-\delta q}2^{i(\mu -M+\frac{n}{p}+\alpha )q}\sum\limits_{\kappa
=0}^{i_{0}+1}2^{\kappa (\mu -M+\frac{n}{p}+\alpha )q}(1+\frac{k}{-i-\kappa }%
)^{\delta q}  \notag \\
& \lesssim (-i)^{-\delta q}2^{i(\mu -M+\frac{n}{p}+\alpha )q},
\label{est-J1}
\end{align}%
since $-i-\kappa \geq -i_{0}-i\geq -1-\log _{2}2\vartheta $ and $M$ is
sufficiently large. Inserting the estimation \eqref{est-J1}\ into %
\eqref{est-J2lorentz}, we get 
\begin{equation}
J\leq t^{Mq}(-i)^{-\delta q}2^{i(\mu -M+\frac{n}{p}+\alpha )q}\lesssim
t^{q(\mu +\frac{n}{p}+\alpha )}(-\log t)^{-q\delta }.  \label{est-J3}
\end{equation}%
Plugging \eqref{est-I1}\ and \eqref{est-J3} into \eqref{main-est}, we obtain 
\begin{equation*}
\big\|d_{t}^{M}f_{\mu ,\delta }\big\|_{\dot{K}_{p,r}^{\alpha ,q}}^{q}.\leq
ct^{q(\mu +\frac{n}{p}+\alpha )}(-\log t)^{-q\delta }
\end{equation*}%
for some constant $c$ independent of $t$. Consequently we obtain%
\begin{equation*}
\int_{0}^{\frac{\vartheta }{2M}}t^{-(\mu +\frac{n}{p}+\alpha )\beta }\big\|%
d_{t}^{M}f_{\mu ,\delta }\big\|_{\dot{K}_{p,r}^{\alpha ,q}}^{\beta }\frac{dt%
}{t}\lesssim \int_{0}^{\frac{\vartheta }{2M}}(-\log t)^{-\delta \beta }\frac{%
dt}{t}<\infty
\end{equation*}%
if and only if $\delta \beta >1$.

\textit{Step 2}\textbf{.} \textit{Necessity in part }$\mathrm{(i)}$\textit{\
and }$\mathrm{(ii)}$. Let $p_{1}>0$ be such that 
\begin{equation*}
\max (1,p)<p_{1}<\frac{n}{\max ((-\alpha )_{+},(-\mu -\alpha )_{+})}.
\end{equation*}%
Let $\alpha _{1}\in \mathbb{R}$ be such that 
\begin{equation*}
\max \Big(-\mu -\frac{n}{p_{1}},-\frac{n}{p_{1}}\Big)<\alpha _{1}<\min \Big(%
\alpha ,-\mu -\frac{n}{p_{1}}+1\Big).
\end{equation*}%
We claim that $f_{\mu ,\delta }\notin \dot{K}_{p_{1},r}^{\alpha
_{1},q}B_{\beta }^{\frac{n}{p_{1}}+\alpha _{1}+\mu }$, which implies that $f$
does not belong to $\dot{K}_{p,r}^{\alpha ,q}B_{\beta }^{\frac{n}{p}+\alpha
+\mu }$, since%
\begin{equation*}
\dot{K}_{p,r}^{\alpha ,q}B_{\beta }^{\frac{n}{p}+\alpha +\mu
}\hookrightarrow \dot{K}_{p_{1},r}^{\alpha _{1},q}B_{\beta }^{\frac{n}{p_{1}}%
+\alpha _{1}+\mu },
\end{equation*}%
see Theorem \ref{embeddings3-lorentz}. Let us prove our claim. Let $H>0$ and 
$0<t<\varepsilon $, where $\varepsilon $ is sufficiently small. Let $i\in 
\mathbb{Z}$ be such that $2^{i-1}\leq \frac{t}{2H}<2^{i}$. It is easily seen
that%
\begin{align*}
\sum\limits_{k=-\infty }^{\infty }2^{k\alpha _{1}q}\big\|(d_{t}^{1}f_{\mu
,\delta })\chi _{R_{k}}\big\|_{L^{p_{1},r}}^{q}& \geq 2^{(i-1)\alpha _{1}q}%
\big\|(d_{t}^{1}f_{\mu ,\delta })\chi _{B(0,\frac{t}{2H})\cap R_{i-1}}\big\|%
_{L^{p_{1},r}}^{q} \\
& \geq ct^{\alpha _{1}q}\big\|(d_{t}^{1}f_{\mu ,\delta })\chi _{B(0,\frac{t}{%
2H})\cap R_{i-1}}\big\|_{L^{p_{1},r}}^{q},
\end{align*}%
where $c$ is independent of $t$ and $i$. Let $A=\{x=(x_{1},...,x_{n}):x_{i}%
\geq 0,i=1,...,n\}$ and $x\in B(0,\frac{t}{2H})\cap R_{i-1}\cap A$. By the
inequality (27) in \cite[2.3.1, p. 45]{RuSi96}, we obtain%
\begin{equation*}
d_{t}^{1}f_{\mu ,\delta }(x)\geq t^{-n}\int_{\frac{t}{2}\leq |h|<t}|\Delta
_{h}^{1}f_{\mu ,\delta }(x)|\chi _{M}(h)dh\geq ct^{\mu }(-\log \frac{t}{2}%
)^{-\delta }
\end{equation*}%
for some positive constant $c$ independent of $h$ where $M=%
\{h=(h_{1},...,h_{n}):h_{i}\geq 0\}$. Therefore%
\begin{align*}
\big\|(d_{t}^{1}f_{\mu ,\delta })\chi _{B(0,\frac{t}{2H})\cap R_{i-1}}\big\|%
_{L^{p_{1},r}}^{q}& \geq ct^{\mu q}(-\log \frac{t}{2})^{-\delta q}\Big(%
\int_{R_{i-1}\cap A}dx\Big)^{\frac{q}{p_{1}}} \\
& \geq ct^{(\mu +\frac{n}{p_{1}})q}(-\log \frac{t}{2})^{-\delta q}.
\end{align*}%
As a consequence of the last estimate, we get%
\begin{equation*}
\int_{0}^{\varepsilon }t^{(-\frac{n}{p_{1}}-\alpha _{1}-\mu )\beta }\big\|%
d_{t}^{1}f_{\mu ,\delta }\big\|_{\dot{K}_{p_{1},r}^{\alpha _{1},q}}^{\beta }%
\frac{dt}{t}\geq c\int_{0}^{\varepsilon }(-\log \frac{t}{2})^{-1}\frac{dt}{t}%
=\infty .
\end{equation*}%
This yields the desired result. The proof is complete.
\end{proof}

\begin{remark}
If $\alpha =0$ and $p=q$, then Lemma \ref{Si-funct1} reduces to the result
given in \cite[Lemma 2.3.1/1]{RuSi96}.
\end{remark}

Let $\varrho $ be a $C^{\infty }$ function on $\mathbb{R}$ such that $%
\varrho (x)=1$ for $x\leq e^{-3}$ and $\varrho (x)=0$ for $x\geq e^{-2}$.
Let $(\lambda ,\sigma )\in \mathbb{R}^{2}$ and%
\begin{equation}
f_{\lambda ,\sigma }(x)=|\log |x||^{\lambda }|\log |\log |x|||^{-\sigma
}\varrho (|x|).  \label{triebel-function}
\end{equation}%
As in \cite{Bo4} let $U_{\beta }$ be the set of $(\lambda ,\sigma )\in 
\mathbb{R}^{2}$ such that:

\textbullet\ $\lambda =1-\frac{1}{\beta }$ and $\sigma >\frac{1}{\beta }$,
or $\lambda <1-\frac{1}{\beta }$, in case $1<\beta <\infty $,

\textbullet\ $\lambda =0$ and $\sigma >0$, or $\lambda <0$, in case $\beta
=1 $,

\textbullet\ $\lambda =1$ and $\sigma \geq 0$, or $\lambda <1$, in case $%
\beta =\infty .$

\begin{lemma}
\label{Bourdaud-Triebel}Let $(\lambda ,\sigma )\in \mathbb{R}^{2},0<p<\infty
,0<r,q\leq \infty ,1\leq \beta \leq \infty ,\alpha >-\frac{n}{p}$ and 
\begin{equation}
(\lambda ,\sigma )\in U_{\beta }.  \label{condition1}
\end{equation}%
Let $f_{\lambda ,\sigma }$ be the function defined by %
\eqref{triebel-function}\textrm{. }$\newline
\mathrm{(i)}$\ We have $f_{\lambda ,\sigma }\in \dot{K}_{p,r}^{\alpha
,q}B_{\beta }^{\alpha +\frac{n}{p}}$. In the case $\alpha \geq 0$, the
condition \eqref{condition1} becomes necessary.$\newline
\mathrm{(ii)}$ Let $1\leq r,q<\infty ,0<\beta \leq \infty $. Let $(\lambda
,\sigma )\in U_{p_{2}}$ where%
\begin{equation*}
p_{2}=\left\{ 
\begin{array}{ccc}
q, & \text{if} & q\leq r, \\ 
r, & \text{if} & q>r.%
\end{array}%
\right.
\end{equation*}%
Then $f_{\lambda ,\sigma }\in \dot{K}_{p,r}^{\alpha ,q}F_{\beta }^{\alpha +%
\frac{n}{p}}.$
\end{lemma}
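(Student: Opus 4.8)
The plan is to follow the pattern of the proof of Lemma~\ref{Si-funct1}. For part~(i) I would treat the ranges $-\tfrac np<\alpha\le 0$ and $\alpha>0$ separately, reduce the first to the classical Besov statement and the second to a direct ball-means computation, and obtain the necessity when $\alpha\ge 0$ by a matching lower bound; part~(ii) needs a genuinely separate, direct argument on the Triebel--Lizorkin scale. In the range $-\tfrac np<\alpha\le 0$ I would fix $0<\tau<\min(p,q)$ and apply Theorem~\ref{embeddings5-lorentz} (with its free intermediate index taken to be $q$, followed, when $\alpha=0$, by the monotonicity embedding $\dot K^{0,\tau}_{p,r}B^{n/p}_\beta\hookrightarrow\dot K^{0,q}_{p,r}B^{n/p}_\beta$ of Theorem~\ref{embeddings1.1-lorentz}(iii)) to get $B^{n/\tau}_{\tau,\beta}\hookrightarrow\dot K^{\alpha,q}_{p,r}B^{\alpha+n/p}_\beta$, the two critical lines coinciding: $(\alpha+\tfrac np)-\tfrac np-\alpha=0=\tfrac n\tau-\tfrac n\tau$. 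Since $\tfrac n\tau>\sigma_\tau$, the classical computation of Bourdaud (see \cite{Bo4} and \cite[2.3.1]{RuSi96}) gives $f_{\lambda,\sigma}\in B^{n/\tau}_{\tau,\beta}$ exactly when $(\lambda,\sigma)\in U_\beta$, which settles this range.

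When $\alpha>0$ no such reduction is available (the would-be parameters violate the hypotheses of Theorem~\ref{embeddings3-lorentz}), so I would argue directly via the ball-means characterization of Theorem~\ref{means-diff-cha copy(1)-lorentz}(i); here $s=\alpha+\tfrac np$ satisfies $\max(\sigma_p,\alpha-\alpha_0)<s<M$ for $M$ large, as $\alpha-\alpha_0=s-n<s$ and $s>\sigma_p$ since $\alpha>0$. As in Lemma~\ref{Si-funct1} I would split $\mathbb R^n$ into $B(0,2Mt)$ and its complement, estimate $d_t^Mf_{\lambda,\sigma}$ on each piece, and sum ring by ring using \eqref{est-function1} to reach a bound of the form
\begin{equation*}
\big\|d_t^Mf_{\lambda,\sigma}\big\|_{\dot K^{\alpha,q}_{p,r}}\lesssim t^{\alpha+\frac np}\,\omega_{\lambda,\sigma}(t),\qquad 0<t<\tfrac{\vartheta}{2M},
\end{equation*}
with $\omega_{\lambda,\sigma}$ the slowly varying profile of $f_{\lambda,\sigma}$ (comparable, up to lower order, to $|\log t|^{\lambda-1}(\log|\log t|)^{-\sigma}$ if $\lambda\neq0$ and to $|\log t|^{-1}(\log|\log t|)^{-\sigma-1}$ if $\lambda=0$); then $\int_0^{\vartheta/2M}t^{-(\alpha+n/p)\beta}\|d_t^Mf_{\lambda,\sigma}\|_{\dot K^{\alpha,q}_{p,r}}^\beta\,\tfrac{dt}t\lesssim\int_0^{\vartheta/2M}\omega_{\lambda,\sigma}(t)^\beta\,\tfrac{dt}t$, and the substitution $u=-\log t$ reduces convergence to the elementary dichotomy defining $U_\beta$. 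I expect the estimate of $d_t^Mf_{\lambda,\sigma}$ on $B(0,2Mt)$ to be the main obstacle: unlike the power case, the crude bound $d_t^Mf_{\lambda,\sigma}(x)\lesssim t^{-n}\int_{B(0,3Mt)}|f_{\lambda,\sigma}|$ is too lossy at the critical smoothness, and one has to use the cancellation carried by the $M$-th difference, i.e.\ that away from $0$ the function is smooth with $|D^\gamma f_{\lambda,\sigma}(y)|\lesssim|y|^{-|\gamma|}\omega_{\lambda,\sigma}(|y|)$ for $|\gamma|=M$, handling separately the directions of $h$ along which the stencil $\{x+jh\}_{j=0}^M$ meets the origin (that contribution being of the same order up to the $L^p$-integrable factor $\log(t/|x|)$).

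For the necessity in part~(i) when $\alpha\ge 0$ I would imitate Step~2 of the proof of Lemma~\ref{Si-funct1}: possibly after moving the parameters along the critical line into a range $1<p_1<\infty$ via Theorem~\ref{embeddings3-lorentz}, fix the origin inside a cone $A$, use the pointwise lower bound for $M$-th differences on $A$ as in \cite[2.3.1]{RuSi96}, and restrict the $\dot K^{\alpha_1,q}_{p_1,r}$-sum to the single ring $R_{i-1}$ with $2^{i-1}\le t/2H<2^i$, reaching $\|d_t^Mf_{\lambda,\sigma}\|_{\dot K^{\alpha_1,q}_{p_1,r}}\gtrsim t^{\alpha_1+n/p_1}\omega_{\lambda,\sigma}(t)$, so that the defining integral diverges whenever $(\lambda,\sigma)\notin U_\beta$.

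For part~(ii) the embedding $\dot K^{\alpha,q}_{p,r_0}B^s_{\min(p,\beta,q,r)}\hookrightarrow\dot K^{\alpha,q}_{p,r}F^s_\beta$ of Theorem~\ref{embeddings2-lorentz}(i) only produces the strictly stronger sufficient condition $(\lambda,\sigma)\in U_{\min(p,\beta,q,r)}$, so I would run the $F$-analogue of the above directly, using Theorem~\ref{means-diff-cha copy(1)-lorentz}(ii) and estimating $\big\|\big(\int_0^1 t^{-s\beta}(d_t^Mf_{\lambda,\sigma})^\beta\,\tfrac{dt}t\big)^{1/\beta}\big\|_{\dot K^{\alpha,q}_{p,r}}$ by discretizing the $t$-integral over dyadic scales and following the Substeps~2.1--2.2 of the proof of Theorem~\ref{means-diff-cha-lorentz} (the scales $2^{-j}\ge t$ handled by $d_{2^{-k}}^M(\mathcal{F}^{-1}\varphi_j\ast f)\lesssim 2^{(j-k)M}\varphi_j^{\ast,a}f$ for $j\le k$, the scales $2^{-j}<t$ by the Peetre-maximal bound, then Lemma~\ref{Maximal-Inq copy(2)-lorentz} and the Hardy-type inequality Lemma~\ref{lem:lq-inequality}). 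In the borderline case $s=\alpha+\tfrac np$ the norm is dominated by the dyadic rings near $0$, where after applying \eqref{est-function1} the Lorentz norms collapse and one is left with a double sum over the dilation level and the spatial ring whose only couplings are the $\ell^q$-sum over rings and the Lorentz fine index $r$; carrying the $\tfrac{dt}t$-integral through this double sum, in particular for large $\beta$, is the delicate bookkeeping, and it is this that yields the single coupling index $p_2=\min(q,r)$ rather than $\min(p,\beta,q,r)$, after which finiteness of the resulting scalar integral $\int_{u_0}^\infty u^{(\lambda-1)p_2}(\log u)^{-\sigma p_2}\,du$ (and its $\lambda=0$ variant) is again precisely $(\lambda,\sigma)\in U_{p_2}$.
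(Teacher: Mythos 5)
Your proposal diverges substantially from the paper's proof, and in two places the divergence leaves real gaps. The paper never touches ball means of differences for this lemma: for the sufficiency in (i) it works directly on the Littlewood--Paley side, using Bourdaud's pointwise estimate $|x|^{2v}|\mathcal{F}^{-1}\varphi_j\ast f_{\lambda,\sigma}(x)|\lesssim 2^{-2jv}\varepsilon_j$ with $\varepsilon_j=j^{\lambda-1}(\log j)^{-\sigma}$ (resp.\ $j^{-1}(\log j)^{-\sigma-1}$ for $\lambda=0$), which lies in $\ell^\beta$ exactly when $(\lambda,\sigma)\in U_\beta$; splitting the Herz sum at $k=-j$ and summing geometric series then handles \emph{all} $\alpha>-\frac np$ uniformly, with no case distinction at $\alpha=0$ or $\alpha>0$. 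Your reduction via Theorem~\ref{embeddings5-lorentz} is fine for $-\frac np<\alpha\le 0$, but for $\alpha>0$ you correctly observe that no embedding reduction exists and then defer precisely the step where the whole difficulty sits: obtaining $d_t^Mf_{\lambda,\sigma}(x)\lesssim(\log(1/t))^{\lambda-1}(\log\log(1/t))^{-\sigma}$ on $B(0,2Mt)$ requires exploiting that $\Delta_h^M$ annihilates constants and a careful splitting according to the distance of the stencil from the origin (the regions $|x+jh|\le t^2$ and $t^2\le|x+jh|\le t$ behave differently); you flag this as ``the main obstacle'' but do not carry it out, so the critical case of (i) is not actually proved. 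The necessity for $\alpha\ge 0$ is also done far more cheaply in the paper, by embedding $\dot K^{\alpha,q}_{p,r}B^{\alpha+n/p}_\beta\hookrightarrow\dot K^{0,q}_{p_0}B^{n/p_0}_\beta\hookrightarrow B^0_{\infty,\beta}$ and quoting that $f_{\lambda,\sigma}\notin B^0_{\infty,\beta}$ when $(\lambda,\sigma)\notin U_\beta$; your quantitative lower bound on $d_t^Mf_{\lambda,\sigma}$ would need the same delicate two-sided control of the $M$-th difference and is again only sketched.

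For part (ii) the gap is more serious. You rightly reject Theorem~\ref{embeddings2-lorentz}(i) because it only yields $U_{\min(p,\beta,q,r)}$, but you then propose a direct discretized ball-means computation whose decisive claim --- that the bookkeeping ``yields the single coupling index $p_2=\min(q,r)$'' --- is asserted with no mechanism; nothing in your sketch explains why $p$ and $\beta$ drop out of the coupling. The tool you are missing is the Franke-type embedding already proved in the paper (Theorem~\ref{embeddings6.2-lorentz}(ii)): for $0<p_1<p<\infty$ it gives
\begin{equation*}
\dot K^{\alpha,q}_{p_1,\infty}B^{\alpha+\frac n{p_1}}_{p_2}\hookrightarrow\dot K^{\alpha,q}_{p,r}F^{\alpha+\frac np}_{\beta},\qquad p_2=\left\{\begin{array}{ccc} q, & \text{if} & q\le r,\\ r, & \text{if} & q>r,\end{array}\right.
\end{equation*}
which reduces (ii) to part (i) applied on the Besov side with third index $p_2$, and this is exactly where the condition $(\lambda,\sigma)\in U_{p_2}$ comes from. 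Without either that embedding or a genuine execution of your direct computation, part (ii) is unproved.
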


\begin{proof}
For clarity, we split the proof into two steps.

\textit{Step 1.} \textit{Sufficiency in part in }$\mathrm{(i)}$. Let $%
\{\varphi _{j}\}_{j\in \mathbb{N}_{0}}$ be a partition of unity. Notice that 
\begin{equation*}
\big\|\mathcal{F}^{-1}\varphi _{j}\ast f_{\lambda ,\sigma }\big\Vert_{\dot{K}%
_{p,r}^{\alpha ,q}}<\infty ,\quad j\in \{0,1\}.
\end{equation*}%
Indeed, we have%
\begin{equation*}
|\mathcal{F}^{-1}\varphi _{j}(x-y)|\leq c\eta _{m}(x)\eta _{-m}(y),\quad
x,y\in \mathbb{R}^{n},j=0,1,m>0,
\end{equation*}%
where the positive constant $c$ is independent of $x$ and $y$, and $\eta
_{m}(x)=(1+|x|)^{-m},x\in \mathbb{R}^{n}$. We choose $m>\alpha +\frac{n}{p}$%
. Since $f_{\lambda ,\sigma }$ is an integrable function, we obtain%
\begin{equation*}
\big\|\mathcal{F}^{-1}\varphi _{j}\ast f_{\lambda ,\sigma }\big\Vert_{\dot{K}%
_{p,r}^{\alpha ,q}}\lesssim \big\|\eta _{m}\big\Vert_{\dot{K}_{p,r}^{\alpha
,q}}\int_{|y|\leq e^{-2}}|f_{\lambda ,\sigma }(y)|\eta _{-m}(y)dy<\infty
,\quad j\in \{0,1\},
\end{equation*}%
Therefore it suffices to prove the following:%
\begin{equation*}
\sum_{j=2}^{\infty }2^{j(n+\frac{n}{p})\beta }\big\|\mathcal{F}^{-1}\varphi
_{j}\ast f_{\lambda ,\sigma }\big\Vert_{\dot{K}_{p,r}^{\alpha ,q}}^{\beta
}<\infty ,
\end{equation*}%
From \cite[p. 272]{Bo4},%
\begin{equation*}
|x|^{2v}|\mathcal{F}^{-1}\varphi _{j}\ast f_{\lambda ,\sigma }(x)|\lesssim
2^{-2jv}\varepsilon _{j},\quad x\in \mathbb{R}^{n},j\geq 2,v\in \mathbb{N}%
_{0},
\end{equation*}%
with%
\begin{equation*}
\varepsilon _{j}=j^{\lambda -1}(\log j)^{-\sigma }\text{\quad if\quad }%
\lambda \neq 0,\quad \varepsilon _{j}=j^{-1}(\log j)^{-\sigma -1}\text{\quad
if\quad }\lambda =0,
\end{equation*}%
which belongs to $\ell ^{\beta }$ if and only if $(\lambda ,\sigma )\in
U_{\beta }$. Then we split%
\begin{equation*}
\sum_{k=-\infty }^{\infty }2^{k\alpha q}\big\|(\mathcal{F}^{-1}\varphi
_{j}\ast f_{\lambda ,\sigma })\chi _{k}\big\|_{L^{p,r}}^{q}=I_{1,j}+I_{2,j},%
\text{\quad }j\geq 2,
\end{equation*}%
where%
\begin{equation*}
I_{1,j}=\sum_{k=-\infty }^{-j}2^{k\alpha q}\big\|(\mathcal{F}^{-1}\varphi
_{j}\ast f_{\lambda ,\sigma })\chi _{k}\big\|_{L^{p,r}}^{q}
\end{equation*}%
and 
\begin{equation*}
I_{2,j}=\sum_{k=-j+1}^{\infty }2^{k\alpha q}\big\|(\mathcal{F}^{-1}\varphi
_{j}\ast f_{\lambda ,\sigma })\chi _{k}\big\|_{L^{p,r}}^{q}.
\end{equation*}%
It is easily seen that $I_{1,j}\lesssim \varepsilon _{j}^{q}\sum_{k=-\infty
}^{-j}2^{k(\alpha +\frac{n}{p})q},j\geq 2$. Therefore%
\begin{equation*}
\sum_{j=2}^{\infty }2^{j(\alpha +\frac{n}{p})\beta }(I_{1,j})^{\beta
/q}\lesssim \sum_{j=2}^{\infty }\varepsilon _{j}^{\beta }\Big(%
\sum_{k=-\infty }^{-j}2^{(k+j)(\alpha +\frac{n}{p})q}\Big)^{\beta
/q}\lesssim \sum_{j=2}^{\infty }\varepsilon _{j}^{\beta }<\infty .
\end{equation*}%
Now%
\begin{equation*}
I_{2,j}\lesssim \varepsilon _{j}^{q}\sum_{k=-j+1}^{\infty }2^{(k\alpha
-2jv)q}\big\||\cdot |^{-2v}\chi _{k}\big\|_{L^{p,r}}^{q}\lesssim \varepsilon
_{j}^{q}\sum_{k=-j+1}^{\infty }2^{k(\alpha -2v+\frac{n}{p})q-2jvq}
\end{equation*}%
for any $j\geq 2$. Hence%
\begin{equation*}
\sum_{j=2}^{\infty }2^{j(\alpha +\frac{n}{p})\beta }(I_{2,j})^{\beta
}\lesssim \sum_{j=2}^{\infty }\varepsilon _{j}^{\beta }\Big(%
\sum_{k=-j+1}^{\infty }2^{(k+j)(\alpha -2v+\frac{n}{p})}\Big)^{\beta
/q}\lesssim \sum_{j=2}^{\infty }\varepsilon _{j}^{\beta }<\infty ,
\end{equation*}%
by taking $v>\frac{\alpha +\frac{n}{p}}{2}$.

\textit{Step 2. Necessity part in }$\mathrm{(i)}$\textit{. }Let us assume $%
(\lambda ,\sigma )\notin U_{\beta }$ and $\alpha \geq 0$. We are going to
prove that $f_{\lambda ,\sigma }\notin \dot{K}_{p,r}^{\alpha ,q}B_{\beta
}^{\alpha +\frac{n}{p}}$, but this follows by the embeddings\ 
\begin{equation*}
\dot{K}_{p,r}^{\alpha ,q}B_{\beta }^{\alpha +\frac{n}{p}}\hookrightarrow 
\dot{K}_{p_{0}}^{0,q}B_{\beta }^{\frac{n}{p_{0}}}\hookrightarrow B_{\infty
,\beta }^{0},\quad 0<p<p_{0}<\infty
\end{equation*}
and $f_{\lambda ,\sigma }\notin B_{\infty ,\beta }^{0}$ for any $(\lambda
,\sigma )\notin U_{\beta }$, see \cite[Proposition 2]{Bo4}.

\textit{Step 3. Proof of }$\mathrm{(ii)}$\textit{. }Let $0<p_{1}<p<\infty $.
According to Theorem \ref{embeddings6.1-lorentz} the following embedding
holds:%
\begin{equation*}
\dot{K}_{p_{1},r}^{\alpha ,q}B_{p_{2}}^{\alpha +\frac{n}{p_{1}}%
}\hookrightarrow \dot{K}_{p,r}^{\alpha ,q}F_{\beta }^{\alpha +\frac{n}{p}},
\end{equation*}%
where%
\begin{equation*}
p_{2}=\left\{ 
\begin{array}{ccc}
q, & \text{if} & q\leq r, \\ 
r, & \text{if} & q>r.%
\end{array}%
\right.
\end{equation*}%
This proves (ii).
\end{proof}

\begin{remark}
If $\alpha =0$ and $p=q$, then Lemma \ref{Bourdaud-Triebel} reduces to the
result given in \cite[Proposition 2]{Bo4} and \cite{Triebel93}.
\end{remark}

Now, we present the last example.

\begin{proposition}
\label{Key-1-lorentz}Let $\beta >0,1\leq p,q,r<\infty ,-\frac{n}{p}<\alpha
<n-\frac{n}{p},$%
\begin{equation*}
0<\max \Big(\delta +\frac{n}{p},\delta +\frac{n}{p}+\alpha \Big)<2(\beta
+1)\quad \text{and}\quad \sigma =\frac{\delta +\frac{n}{p}+\alpha }{\beta +1}%
.
\end{equation*}%
Let $g\in B_{\infty ,\infty }^{\gamma }(\mathbb{R})$ for some $\sigma
<\gamma $. The function 
\begin{equation*}
f(x)=\left\vert x\right\vert ^{\delta }g(\left\vert x\right\vert ^{-\beta
})\varrho \left( \left\vert x\right\vert \right)
\end{equation*}%
belongs to $\dot{K}_{p,r}^{\alpha ,q}B_{\infty }^{\sigma }${. }
\end{proposition}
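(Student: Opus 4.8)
The plan is to verify $f\in \dot{K}_{p,r}^{\alpha ,q}B_{\infty }^{\sigma }$ by means of the ball‑means‑of‑differences characterization, namely Theorem~\ref{means-diff-cha copy(1)-lorentz} with fine index $\infty $ and smoothness $s=\sigma $. First I would check that this theorem applies: since $1\le p$ we have $\sigma _{p}=0$, and from $\alpha <n-\tfrac{n}{p}$ we get $\alpha -\alpha _{0}=\alpha -n+\tfrac{n}{p}<0<\sigma $ (note $\sigma (\beta +1)=\delta +\tfrac{n}{p}+\alpha >0$), so $\max (\sigma _{p},\alpha -\alpha _{0})<\sigma $; also $\alpha >-\tfrac{n}{p}=\max (-n,-\tfrac{n}{p})$. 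Using the elementary embedding $B_{\infty ,\infty }^{\gamma _{0}}(\mathbb{R})\hookrightarrow B_{\infty ,\infty }^{\gamma }(\mathbb{R})$ for $\sigma <\gamma \le \gamma _{0}$, I may assume the exponent satisfies $\sigma <\gamma <\lfloor \sigma \rfloor +1$ with $\gamma \notin \mathbb{N}$, and then fix $M=\lceil \gamma \rceil \in \mathbb{N}$, so that $M>\gamma >\sigma $ and $\lfloor \gamma \rfloor =M-1$. By Theorem~\ref{means-diff-cha copy(1)-lorentz} it then suffices to show $f\in \dot{K}_{p,r}^{\alpha ,q}$ and $\sup_{0<t<1}t^{-\sigma }\|d_{t}^{M}f\|_{\dot{K}_{p,r}^{\alpha ,q}}<\infty $.

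The membership $f\in \dot{K}_{p,r}^{\alpha ,q}$ is quick: $g\in B_{\infty ,\infty }^{\gamma }\subset L^{\infty }$ (because $\gamma >0$) and $\operatorname{supp}f\subset \{|x|\le e^{-2}\}$, hence $|f(x)|\lesssim |x|^{\delta }$, so by \eqref{est-function1} one has $\|f\chi _{k}\|_{L^{p,r}}\lesssim 2^{k(\delta +\frac{n}{p})}$ for $2^{k}\le e^{-2}$ and $0$ otherwise; summing, $\|f\|_{\dot{K}_{p,r}^{\alpha ,q}}^{q}\lesssim \sum _{2^{k}\le e^{-2}}2^{k(\alpha +\delta +\frac{n}{p})q}<\infty $ since $\alpha +\delta +\tfrac{n}{p}=\sigma (\beta +1)>0$. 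I record two consequences of the hypotheses to be used below: $\delta >-n$ (from $\delta >-\tfrac{n}{p}-\alpha $ and $-\tfrac{n}{p}-\alpha >-n$) and $\alpha +\tfrac{n}{p}>0$.

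The heart of the argument is the uniform estimate $\|d_{t}^{M}f\|_{\dot{K}_{p,r}^{\alpha ,q}}\lesssim t^{\sigma }$ for $0<t<1$; note $d_{t}^{M}f$ is supported in $\{|x|\le e^{-2}+Mt\}$. I would split the annular sum into a near part ($2^{k}\lesssim t$) and a far part ($2^{k}\gtrsim t$), a few transitional annuli being absorbed into the near part. On the near part, from $|\Delta _{th}^{M}f(x)|\le \sum _{l}\binom{M}{l}|f(x+lth)|$ together with $\delta >-n$ one gets $d_{t}^{M}f(x)\lesssim |f(x)|+t^{-n}\int _{|y|\lesssim t}|y|^{\delta }\,dy\lesssim |f(x)|+t^{\delta }$ for $|x|\lesssim t$; using $\alpha +\delta +\tfrac{n}{p}>0$, $\alpha +\tfrac{n}{p}>0$ and \eqref{est-function1}, this part contributes $\lesssim t^{\sigma (\beta +1)}\le t^{\sigma }$ (recall $\beta +1>1$, $t<1$). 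On the far part, on an annulus $R_{k}$ with $|x|\approx 2^{k}\gtrsim t$ (so $2^{k}t\lesssim 1$ and $2^{-k}>1$), I would write $f=u\,v$ with $u(x)=|x|^{\delta }\varrho (|x|)$ and $v(x)=g(|x|^{-\beta })$, apply the discrete Leibniz rule $\Delta _{th}^{M}(uv)(x)=\sum _{i=0}^{M}\binom{M}{i}(\Delta _{th}^{i}u)(x)(\Delta _{th}^{M-i}v)(x+ith)$, and estimate on a neighbourhood of $R_{k}$: from $|D^{\ell }u|\lesssim |x|^{\delta -\ell }\approx 2^{k(\delta -\ell )}$ one gets $|\Delta _{th}^{i}u|\lesssim (t2^{-k})^{i}2^{k\delta }$, while since the $\ell $-th derivative of $r\mapsto r^{-\beta }$ is $\lesssim |x|^{-\beta -\ell }\approx 2^{-k(\beta +\ell )}$ on $R_{k}$, the chain rule (Fa\`a di Bruno when $\gamma >1$) together with $g\in B_{\infty ,\infty }^{\gamma }=C^{\gamma }$ yields $|\Delta _{th}^{M}v|\lesssim \min \bigl(1,(2^{-k(\beta +1)}t)^{\gamma }\bigr)$ and $|\Delta _{th}^{m}v|\lesssim \min \bigl(1,(2^{-k(\beta +1)}t)^{m}\bigr)$ for $1\le m\le M-1$, and $|v|\lesssim 1$. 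A short bookkeeping using $2^{k}t\lesssim 1$, $2^{-k}>1$, $\beta >0$ and $M-1<\gamma <M$ shows that every Leibniz term is $\lesssim 2^{k\delta }\min \bigl(1,(2^{-k(\beta +1)}t)^{\gamma }\bigr)$, hence $d_{t}^{M}f(x)\lesssim 2^{k\delta }\min \bigl(1,(2^{-k(\beta +1)}t)^{\gamma }\bigr)$ on $R_{k}$, and by \eqref{est-function1}, $\|(d_{t}^{M}f)\chi _{k}\|_{L^{p,r}}\lesssim 2^{k(\delta +\frac{n}{p})}\min \bigl(1,(2^{-k(\beta +1)}t)^{\gamma }\bigr)$. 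Splitting the far sum at the index where $2^{-k(\beta +1)}\approx t^{-1}$: on the side where the $\min $ equals $(2^{-k(\beta +1)}t)^{\gamma }$ the exponent of $2^{k}$ is $q(\beta +1)(\sigma -\gamma )<0$, so the sum is controlled by its smallest index; on the other side ($\min =1$) the exponent is $q\sigma (\beta +1)>0$, so the sum is controlled by its largest index; and since $2^{-k}\approx t$ at the meeting index, each piece equals $t^{\sigma q}$ up to constants, by $\sigma =(\delta +\tfrac{n}{p}+\alpha )/(\beta +1)$. Combining the near and far parts via the quasi‑triangle inequality on $\dot{K}_{p,r}^{\alpha ,q}$ (essentially disjoint supports), we obtain $\|d_{t}^{M}f\|_{\dot{K}_{p,r}^{\alpha ,q}}\lesssim t^{\sigma }$, and Theorem~\ref{means-diff-cha copy(1)-lorentz} gives $f\in \dot{K}_{p,r}^{\alpha ,q}B_{\infty }^{\sigma }$.

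I expect the principal obstacle to be the far‑regime estimate: transferring the $C^{\gamma }$‑regularity of $g$ through the singular substitution $x\mapsto |x|^{-\beta }$, whose local modulus of continuity grows like $|x|^{-\beta -1}$ near the origin, in a form that survives the Leibniz decomposition of $\Delta _{th}^{M}(uv)$, and then arranging the annular exponents so that the two geometric series, balanced at scale $2^{-k}\approx t$, produce exactly $t^{\sigma }$. The case $\gamma >1$, which needs higher differences of $v$ and the bound $|D^{\ell }v|\lesssim 2^{-k(\beta +1)\ell }$ on $R_{k}$, is a routine but heavier elaboration of the case $0<\gamma <1$; and the finitely many annuli near $2^{k}\approx t$, where the displacement $Mt$ is not small compared with $2^{k}$, have to be pushed into the near regime.
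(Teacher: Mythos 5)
Your proposal is correct and rests on the same basic mechanism as the paper — the ball-means-of-differences characterization (the paper invokes Theorem \ref{means-diff-cha-lorentz} with $m=1$ for $0<\sigma <1$ and $m=2$ for $1\leq \sigma <2$, which matches your $M=\lceil \gamma \rceil$ after lowering $\gamma$; you cite Theorem \ref{means-diff-cha copy(1)-lorentz}, which is the same tool), an annular near/far splitting, and the transfer of the $C^{\gamma }$-regularity of $g$ through $|x|^{-\beta }$ via the mean value theorem — but your organization is genuinely different and in places cleaner. The paper splits at $|x|\approx t^{1/(\beta +1)}$ and estimates term by term the decompositions $\omega _{1}+\omega _{2}+\omega _{3}$ (first differences) and $\varpi _{1}+\cdots +\varpi _{5}$, $\vartheta _{1}+\vartheta _{2}+\vartheta _{3}$ (second differences); for $\delta <0$ its near part requires a duality/Hardy--Littlewood argument in Lorentz norms, and its $\omega _{2}$-estimate produces the borderline case $\delta =1-\frac{n}{p}-\alpha$, which is excluded in Step 1 and recovered afterwards by interpolation between auxiliary spaces $\dot{K}_{p_{i}}^{\alpha ,q}B_{\infty }^{\sigma _{i}}$ (Step 3). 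Your splitting at $|x|\approx t$, combined with the unified Leibniz bound $2^{k\delta }\min \bigl(1,(t2^{-k(\beta +1)})^{\gamma }\bigr)$ for every term, avoids both the Lorentz duality in the near region (your crude bound $|f|+t^{\delta }$ suffices there, since $\delta >-n$ and $\alpha +\frac{n}{p}>0$) and the borderline case with its interpolation step, because the two geometric series balanced at $2^{k}\approx t^{1/(\beta +1)}$ each give $t^{\sigma }$ directly; I checked that each Leibniz term is indeed dominated by that bound under $\beta >0$, $2^{k}\lesssim 1$, $t\lesssim 2^{k}$ and $M-1<\gamma <M$. One caution: for $1\leq \sigma <2$ you cannot literally invoke Fa\`a di Bruno, since $g$ is only $C^{\gamma }$ with $\gamma <2$; the needed estimate $|\Delta _{th}^{2}(g(|\cdot |^{-\beta }))(x)|\lesssim (t|x|^{-\beta -1})^{\gamma }+\Vert g^{\prime }\Vert _{\infty }\,t^{2}|x|^{-\beta -2}$ must instead be obtained by writing the second difference of the composition as (nearly) equally spaced second differences of $g$ plus a term controlled by $g^{\prime }$ times the second difference of $|x|^{-\beta }$ — precisely the paper's $\vartheta _{1},\vartheta _{2},\vartheta _{3}$ decomposition, or equivalently a Taylor argument with $g^{\prime }\in C^{\gamma -1}$ — and the extra term $t^{2}|x|^{-\beta -2}$ is then absorbed into your $\min$-bound by the same bookkeeping (it is dominated by $(t|x|^{-\beta -1})^{\gamma }$ wherever the latter is $\leq 1$, using $\gamma <2$ and $\beta >0$). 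With that replacement your argument closes.
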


\begin{proof}
Observe that $f\in \dot{K}_{p,r}^{\alpha ,q}$. From Theorem \ref%
{means-diff-cha-lorentz}, we need to prove that%
\begin{equation*}
\sup_{0<t\leq \frac{1}{2}e^{-2}}t^{-\sigma }\big\Vert d_{t}^{m}f\big\Vert_{%
\dot{K}_{p,r}^{\alpha ,q}}<\infty ,\quad 0<\sigma <m\leq 2.
\end{equation*}%
We will divide the proof into three steps.

\textit{Step 1.}\ We\ will\ prove that $f\in \dot{K}_{p,r}^{\alpha
,q}B_{\infty }^{\sigma }$ with $0<\sigma <1$ and $\delta \neq 1-\frac{n}{p}%
-\alpha $. We can only assume that $\gamma <1$. Let us estimate $\big\Vert %
d_{t}^{1}f\big\Vert_{\dot{K}_{p,r}^{\alpha ,q}}$ for any $0<t\leq \frac{1}{2}%
e^{-2}$. Obviously, $d_{t}^{1}f(x)=0$ for any $x\in \mathbb{R}^{n}$ such
that $\left\vert x\right\vert \geq 2e^{-2}$ and $0<t\leq \frac{1}{2}e^{-2}$.
We see that \textit{\ }%
\begin{align}
\big\|(d_{t}^{1}f)\chi _{B(0,2e^{-2})}\big\|_{\dot{K}_{p,r}^{\alpha
,q}}^{q}=& \sum\limits_{k=-\infty }^{\infty }2^{k\alpha q}\big\|%
(d_{t}^{1}f)\chi _{B(0,2e^{-2})\cap R_{k}}\big\|_{L^{p,r}}^{q}  \notag \\
=& H_{1}(t)+H_{2}(t),  \label{est-H-lorentz}
\end{align}%
where 
\begin{equation*}
H_{1}(t)=c\sum\limits_{k\in \mathbb{Z},2^{k}<4t^{\frac{1}{\beta +1}%
}}2^{k\alpha q}\big\|(d_{t}^{1}f)\chi _{B(0,2e^{-2})\cap R_{k}}\big\|%
_{L^{p,r}}^{q}
\end{equation*}%
and%
\begin{equation*}
H_{2}(t)=c\sum\limits_{k\in \mathbb{Z},2^{k}\geq 4t^{\frac{1}{\beta +1}%
}}2^{k\alpha q}\big\|(d_{t}^{1}f)\chi _{B(0,2e^{-2})\cap R_{k}}\big\|%
_{L^{p,r}}^{q}.
\end{equation*}%
In what follows, we estimate each term on the right hand side of %
\eqref{est-H-lorentz}. To do this, note first 
\begin{align*}
& \big\|(d_{t}^{1}f)\,\chi _{B(0,2e^{-2})\cap R_{k}}\big\|_{L^{p,r}} \\
& \leq \big\|(d_{t}^{1}f)\chi _{B(0,2e^{-2})\cap B(0,4t^{\frac{1}{\beta +1}%
})\cap R_{k}}\big\|_{L^{p,r}}+\big\|(d_{t}^{1}f)\chi _{(\mathbb{R}%
^{n}\backslash B(0,4t^{\frac{1}{\beta +1}}))\cap R_{k}}\big\|_{L^{p,r}} \\
& =T_{1,k}(t)+T_{2,k}(t),\quad k\in \mathbb{Z}.
\end{align*}%
For clarity, we split this step into two substeps and conclusion.

\textit{Substep 1.1.}\textbf{\ }\textit{Estimation of\ }$H_{1}$. Since $%
T_{2,k}(t)=0$ if $2^{k}<4t^{\frac{1}{\beta +1}},0<t\leq \frac{1}{2}e^{-2}\ $%
and $k\in \mathbb{Z}$, we need only to estimate $T_{1,k}(t)$. Let $x\in
B(0,4t^{\frac{1}{\beta +1}})\cap B(0,2e^{-2})\cap R_{k}$ and 
\begin{equation*}
\max \Big(0,-\delta \frac{p}{n}\Big)<\frac{1}{\tau }<\min \Big(1,1+\frac{%
\alpha p}{n}\Big).
\end{equation*}%
By H\"{o}lder's inequality, we get 
\begin{align}
T_{1,k}(t)& \leq \big\|\chi _{R_{k}}\big\|_{L^{p\tau ^{\prime },\infty }}%
\big\|(d_{t}^{1}f)\chi _{B(0,2e^{-2})\cap B(0,4t^{\frac{1}{\beta +1}})\cap
R_{k}}\big\|_{L^{p\tau ,r}}  \notag \\
& \lesssim 2^{k\frac{n}{p\tau ^{\prime }}}\big\|(d_{t}^{1}f)\chi
_{B(0,2e^{-2})\cap B(0,4t^{\frac{1}{\beta +1}})\cap R_{k}}\big\|_{L^{p\tau
,r}}.  \label{estimate-t1-lorentz}
\end{align}%
To estimate the right-hand side of \eqref{estimate-t1-lorentz} we
distinguish between the following two cases: $\delta \geq 0$\ and\ $-\frac{n%
}{p}<\delta <0$.

\textit{Case 1.} $\delta \geq 0$. We have%
\begin{equation*}
d_{t}^{1}f(x)\lesssim t^{-n}\int_{|h|<t}\left\vert f\left( x+h\right)
\right\vert dh+\left\vert f\left( x\right) \right\vert .
\end{equation*}%
Using the fact that $g,\varrho \in L^{\infty }(\mathbb{R})$ and 
\begin{equation*}
|x+h|\leq |x|+|h|<5t^{\frac{1}{\beta +1}},
\end{equation*}%
whenever $x\in B(0,2e^{-2})\cap B(0,4t^{\frac{1}{\beta +1}})\cap R_{k}$ and $%
\left\vert h\right\vert <t$, we obtain 
\begin{equation}
d_{t}^{1}f(x)\lesssim t^{\frac{\delta }{\beta +1}},  \label{est-dt-lorentz}
\end{equation}%
where the implicit constant is independent of $x$ and $t$. By %
\eqref{est-dt-lorentz}, we get%
\begin{align*}
\big\|(d_{t}^{1}f)\chi _{B(0,2e^{-2})\cap B(0,4t^{\frac{1}{\beta +1}})\cap
R_{k}}\big\|_{L^{p\tau ,r}}& \lesssim t^{\frac{\delta }{\beta +1}}\big\|\chi
_{B(0,4t^{\frac{1}{\beta +1}})}\big\|_{L^{p\tau ,r}} \\
& \lesssim t^{\frac{\delta +\frac{n}{p\tau }}{\beta +1}}.
\end{align*}

\textit{Case 2.} $-\frac{n}{p}<\delta <0$.

\textit{Subcase 2.1. }$-\frac{n}{p}<\delta <0$ and $1<p<\infty $. We see
that 
\begin{equation}
\Big\|t^{-n}\int_{|h|<t}\left\vert f\left( \cdot +h\right) \right\vert \chi
_{B(-h,5t^{\frac{1}{\beta +1}})}dh\Big\|_{L^{p\tau ,r}}  \label{dual-lorentz}
\end{equation}%
is compared to%
\begin{equation*}
c\sup \int_{\mathbb{R}^{n}}t^{-n}\int_{|h|<t}\left\vert f\left( x+h\right)
\right\vert \chi _{B(-h,5t^{\frac{1}{\beta +1}})}(x)dhw(x)dx,
\end{equation*}%
where the supremum is taken over all $w\in L^{(p\tau )^{\prime },r^{\prime
}} $ such that $\big\|w\big\|_{L^{(p\tau )^{\prime },r^{\prime }}}\leq 1$.
By H\"{o}lder's inequality, we get%
\begin{equation*}
\int_{\mathbb{R}^{n}}\left\vert f\left( x+h\right) \right\vert \chi
_{B(-h,5t^{\frac{1}{\beta +1}})}(x)w(x)dx\lesssim \big\|f\chi _{B(0,5t^{%
\frac{1}{\beta +1}})}\big\|_{L^{p\tau ,r}}.
\end{equation*}%
Hence, the right-hand side of \eqref{estimate-t1-lorentz} can be estimated
from above by%
\begin{equation}
c\big\|f\chi _{B(0,5t^{\frac{1}{\beta +1}})}\big\|_{L^{p\tau ,r}}.
\label{estimate-t2-lorentz}
\end{equation}%
Put $\omega (x)=|x|^{\delta }\chi _{B(0,5t^{\frac{1}{\beta +1}})}(x)$. A
simple calculation yields%
\begin{equation*}
\omega ^{\ast }(z)=\left\{ 
\begin{array}{ccc}
z^{\frac{\delta }{n}}, & \text{if} & 0<z<5^{n}t^{\frac{n}{\beta +1}}, \\ 
0, & \text{if} & z\geq 5^{n}t^{\frac{n}{\beta +1}}.%
\end{array}%
\right.
\end{equation*}%
This implies%
\begin{align*}
\big\|\omega \big\|_{L^{p\tau ,r}}& =\Big(\int_{0}^{5^{n}t^{\frac{n}{\beta +1%
}}}z^{\frac{r}{p\tau }}(\omega ^{\ast }(z))^{r}\frac{dz}{z}\Big)^{1/r} \\
& \lesssim \Big(\int_{0}^{5^{n}t^{\frac{n}{\beta +1}}}z^{(\frac{1}{p\tau }+%
\frac{\delta }{n})r}\frac{dz}{z}\Big)^{1/r} \\
& \lesssim t^{\frac{\delta +\frac{n}{p\tau }}{\beta +1}}
\end{align*}%
since $\frac{n}{\tau p}+\delta >0$. Consequently, \eqref{estimate-t2-lorentz}
does not exceed $ct^{\frac{\delta +\frac{n}{p\tau }}{\beta +1}}$, where the
positive constant is independent of $t.$

\textit{Subcase 2.2. }$-\frac{n}{p}<\delta <0$\ and $p=1$. Using the
embeddings $L^{1}\hookrightarrow L^{1,r}$, it is easy to see that $%
T_{1,k}(t) $\ can be estimated from above%
\begin{equation}
c\text{ }t^{-n}\int_{|h|<t}\int_{B(-h,5t^{\frac{1}{\beta +1}})\cap
B(0,2e^{-2})\cap R_{k}}\left\vert f(x+h)\right\vert dxdh+\int_{B(0,4t^{\frac{%
1}{\beta +1}})\cap B(0,2e^{-2})\cap R_{k}}\left\vert f(x)\right\vert dx,
\label{est-H2-lorentz}
\end{equation}%
where the positive constant is independent of $t,h$ and $k$. In this case we
choose 
\begin{equation*}
\max \Big(0,-\delta \frac{1}{n}\Big)<\frac{1}{\tau }<\min \Big(1,1+\frac{%
\alpha }{n}\Big).
\end{equation*}%
By H\"{o}lder's inequality and since $g,\varrho \in L^{\infty }(\mathbb{R})$%
, we obtain 
\begin{align}
\int_{B(-u,5t^{\frac{1}{\beta +1}})\cap B(0,2e^{-2})\cap R_{k}}\left\vert
f(x+u)\right\vert dx\lesssim & 2^{k\frac{n}{\tau ^{\prime }}}\Big(%
\int_{B(-u,5t^{\frac{1}{\beta +1}})}\left\vert f(x+u)\right\vert ^{\tau }dx%
\Big)^{\frac{1}{\tau }}  \notag \\
\lesssim & 2^{k\frac{n}{\tau ^{\prime }}}\Big(\int_{|z|<5t^{\frac{1}{\beta +1%
}}}\left\vert f(z)\right\vert ^{\tau }dz\Big)^{\frac{1}{\tau }}  \notag \\
\lesssim & 2^{k\frac{n}{\tau ^{\prime }}}\Big(\int_{0}^{5t^{\frac{1}{\beta +1%
}}}r^{\tau \delta +n-1}dr\Big)^{\frac{1}{\tau }}  \notag \\
\lesssim & t^{\frac{\delta +\frac{n}{\tau }}{\beta +1}}2^{k\frac{n}{\tau
^{\prime }}},  \label{est-H1-lorentz}
\end{align}%
since $\delta +\frac{n}{\tau }>0$, where $u\in \{0,h\}\ $and the implicit
constant is independent of $k$ and $t$. Plugging \eqref{est-H1-lorentz}\
into \eqref{est-H2-lorentz}, we obtain%
\begin{equation*}
T_{1,k}(t)\lesssim t^{\frac{\delta +\frac{n}{\tau }}{\beta +1}}2^{k\frac{n}{%
\tau ^{\prime }}}.
\end{equation*}%
In any case, we end up with 
\begin{equation}
H_{1}(t)\leq ct^{\sigma q}\sum\limits_{k\in \mathbb{Z},2^{k}<4t^{\frac{1}{%
\beta +1}}}\Big(\frac{2^{k}}{t^{\frac{1}{\beta +1}}}\Big)^{(\alpha +\frac{n}{%
p\tau ^{\prime }})q}\leq ct^{\sigma q},  \label{key-est11-lorentz}
\end{equation}%
since $\frac{n}{\tau ^{\prime }}+\alpha p>0$, where $c>0$ is independent of $%
t$.

\textit{Substep 1.2.}\textbf{\ }\textit{Estimation of\ }$H_{2}$. The
situation is quite different and more complicated. As in Substep 1.1, more
precisely with $\tau =1$, one finds that%
\begin{equation*}
T_{1,k}(t)\lesssim t^{\frac{\delta +\frac{n}{p}}{\beta +1}}.
\end{equation*}%
Therefore%
\begin{equation*}
\sum\limits_{k\in \mathbb{Z},2^{k}\geq 4t^{\frac{1}{\beta +1}}}2^{k\alpha
q}(T_{1,k}(t))^{q}\leq ct^{(\delta +\frac{n}{p})\frac{q}{\beta +1}%
}\sum\limits_{k\in \mathbb{Z},4t^{\frac{1}{\beta +1}}\leq 2^{k}\leq 8t^{%
\frac{1}{\beta +1}}}2^{k\alpha q}\leq ct^{\sigma q},
\end{equation*}%
where $c>0$ is independent of $t$.

\textit{Estimation of }$T_{2,k}(t)$. We decompose $\triangle _{h}^{1}f$\
into three parts%
\begin{equation*}
\triangle _{h}^{1}f(x)=\omega _{1}(x,h)+\omega _{2}(x,h)+\omega _{3}(x,h),
\end{equation*}%
where%
\begin{equation*}
\omega _{1}(x,h)=\left\vert x\right\vert ^{\delta }\big(g(\left\vert
x+h\right\vert ^{-\beta })-g(\left\vert x\right\vert ^{-\beta })\big)\varrho
\left( \left\vert x+h\right\vert \right) ,
\end{equation*}%
\begin{equation*}
\omega _{2}(x,h)=\big(|x+h|^{\delta }-|x|^{\delta })g(\left\vert
x+h\right\vert ^{-\beta }\big)\varrho (\left\vert x+h\right\vert )
\end{equation*}%
and%
\begin{equation*}
\omega _{3}(x,h)=\left\vert x\right\vert ^{\delta }g(\left\vert x\right\vert
^{-\beta })\big(\varrho (\left\vert x+h\right\vert )-\varrho (\left\vert
x\right\vert )\big).
\end{equation*}%
Define%
\begin{equation*}
\tilde{\omega}_{i}(x,t)=t^{-n}\int_{|h|<t}|\omega _{i}(x,h)|dh,\quad i\in
\{1,2,3\}.
\end{equation*}%
Let $x\in \mathbb{R}^{n}$ be such that $\left\vert x\right\vert \geq
2\left\vert h\right\vert ^{\frac{1}{\beta +1}}$. By the mean value theorem
we have%
\begin{equation*}
\big||x+h|^{-\beta }-|x|^{-\beta }\big|\leq c|h||x|^{-\beta -1},
\end{equation*}%
which together with the fact that $g\in B_{\infty ,\infty }^{\gamma }$ we
obtain that%
\begin{equation*}
\big|g(|x+h|^{-\beta })-g(|x|^{-\beta })\big|\leq c|h|^{\gamma }\big\|g\big\|%
_{B_{\infty ,\infty }^{\gamma }}|x|^{-\gamma (\beta +1)},
\end{equation*}%
where $c>0$ is independent of $h$. Therefore,%
\begin{equation*}
\tilde{\omega}_{1}(x,t)\lesssim t^{\gamma }\big\|g\big\|_{B_{\infty ,\infty
}^{\gamma }}|x|^{\delta -\gamma (\beta +1)},
\end{equation*}%
which yields%
\begin{align*}
\big\|\tilde{\omega}_{1}(\cdot ,t)\chi _{(\mathbb{R}^{n}\backslash B(0,4t^{%
\frac{1}{\beta +1}}))\cap R_{k}}\big\|_{L^{p,r}}& \lesssim t^{\gamma }\big\|%
|x|^{\delta -\gamma (\beta +1)}\chi _{k}\big\|_{L^{p,r}} \\
& \lesssim t^{\gamma }2^{k(\delta -\gamma (\beta +1)}\big\|\chi _{k}\big\|%
_{L^{p,r}} \\
& \lesssim t^{\gamma }2^{k(\delta -\gamma (\beta +1)+\frac{n}{p})}
\end{align*}%
Consequently%
\begin{align}
\sum\limits_{k\in \mathbb{Z},2^{k}\geq 4t^{\frac{1}{\beta +1}}}2^{k\alpha q}%
\big\|\tilde{\omega}_{1}(\cdot ,t)\chi _{k}\big\|_{L^{p,r}}^{q}\lesssim &
t^{\gamma q}\sum\limits_{k\in \mathbb{Z},2^{k}\geq 4t^{\frac{1}{\beta +1}%
}}2^{k(\delta -\gamma (\beta +1)+\frac{n}{p}+\alpha )q}  \notag \\
\lesssim & t^{\sigma q}\sum\limits_{k\in \mathbb{Z},2^{k}\geq 4t^{\frac{1}{%
\beta +1}}}\Big(\frac{2^{k}}{t^{\frac{1}{\beta +1}}}\Big)^{(\sigma -\gamma
)(\beta +1)q}  \notag \\
\lesssim & t^{\sigma q},  \label{omega1-lorentz}
\end{align}%
since $\sigma <\gamma $. We have%
\begin{equation}
\big|\left\vert x+h\right\vert ^{\delta }-\left\vert x\right\vert ^{\delta }%
\big|\leq c|h||x+\theta h|^{\delta -1},\quad 0<\theta <1,
\label{est-omega2-lorentz}
\end{equation}%
because of $\left\vert x\right\vert \geq 2t^{\frac{1}{\beta +1}}>2\left\vert
h\right\vert ^{\frac{1}{\beta +1}}$, where the positive constant $c$ is
independent of $x,h$ and $t$. From 
\begin{equation}
\frac{1}{2}|x|\leq |x+\theta h|\leq \frac{3}{2}|x|,\quad g,\varrho \in
L^{\infty }(\mathbb{R})  \label{x-lorentz}
\end{equation}%
and \eqref{est-omega2-lorentz} we immediately deduce that 
\begin{equation*}
\sum\limits_{k\in \mathbb{Z},2^{k}\geq 4t^{\frac{1}{\beta +1}}}2^{k\alpha q}%
\big\|\tilde{\omega}_{2}(\cdot ,t)\,\chi _{\{x:|x|\leq \frac{2}{e^{2}}\}\cap
R_{k}}\big\|_{L^{p,r}}^{q}\lesssim t^{q}\sum\limits_{k\in \mathbb{Z},4t^{%
\frac{1}{\beta +1}}\leq 2^{k}\leq 4e^{-2}}2^{k(\delta -1+\frac{n}{p}+\alpha
)q}
\end{equation*}%
which is bounded by%
\begin{equation}
S(t)=c\left\{ 
\begin{array}{ccc}
t^{(1-\frac{1}{\beta +1}+\sigma )q} & \text{if} & \delta -1+\frac{n}{p}%
+\alpha <0, \\ 
t^{q} & \text{if} & \delta -1+\frac{n}{p}+\alpha >0, \\ 
t^{q}\log \frac{1}{t} & \text{if} & \delta -1+\frac{n}{p}+\alpha =0,%
\end{array}%
\right.  \label{omega2-lorentz}
\end{equation}%
for sufficiently small $t>0$. Obviously, 
\begin{equation}
\sum\limits_{k\in \mathbb{Z},2^{k}\geq 2t^{\frac{1}{\beta +1}}}2^{k\alpha q}%
\big\|\tilde{\omega}_{3}(\cdot ,t)\,\chi _{\{x:|x|\leq 2e^{-2}\}\cap R_{k}}%
\big\|_{L^{p,r}}^{q}\lesssim t^{q}\sum\limits_{k\in \mathbb{Z},2^{k}\leq
4e^{-2}}2^{k(\delta +\frac{n}{p}+\alpha )q}\lesssim t^{q}.
\label{omega3-lorentz}
\end{equation}%
Collecting the estimations \eqref{omega1-lorentz}, \eqref{omega2-lorentz}
and \eqref{omega3-lorentz}, we derive 
\begin{equation}
H_{2}(t)\lesssim t^{\sigma q}+S(t).  \label{key-est22-lorentz}
\end{equation}

\textit{Conclusion. }Combining the two estimates \eqref{key-est11-lorentz}
and \eqref{key-est22-lorentz} we obtain $f\in \dot{K}_{p,r}^{\alpha
,q}B_{\infty }^{\sigma }$ but with 
\begin{equation*}
0<\sigma <1\text{\quad and\quad }\delta -1+\frac{n}{p}+\alpha \neq 0.
\end{equation*}

\textit{Step 2.\ }In this step we prove that $f$ belongs to $f\in \dot{K}%
_{p,r}^{\alpha ,q}B_{\infty }^{\sigma }$ with $1\leq \sigma <2$. We can only
assume that $\sigma <\gamma <2$. Then we split%
\begin{equation*}
\big\|(d_{t}^{2}f)\chi _{B(0,2e^{-2})}\big\|_{\dot{K}_{p,r}^{\alpha
,q}}^{q}=I_{1}+I_{2},
\end{equation*}%
where 
\begin{equation*}
I_{1}(t)=\sum\limits_{k\in \mathbb{Z},2^{k}<4t^{\frac{1}{\beta +1}%
}}2^{k\alpha q}\big\|(d_{t}^{2}f)\chi _{B(0,2e^{-2})\cap R_{k}}\big\|%
_{L^{p,r}}^{q}
\end{equation*}%
and%
\begin{equation*}
I_{2}(t)=\sum\limits_{k\in \mathbb{Z},2^{k}\geq 4t^{\frac{1}{\beta +1}%
}}2^{k\alpha q}\big\|(d_{t}^{2}f)\chi _{B(0,2e^{-2})\cap R_{k}}\big\|%
_{L^{p,r}}^{q}.
\end{equation*}%
We use the following estimate: 
\begin{align*}
& \big\|(d_{t}^{2}f\,)\chi _{B(0,2e^{-2})\cap R_{k}}\big\|_{L^{p,r}} \\
\lesssim & \big\|(d_{t}^{2}f\,)\chi _{B(0,2e^{-2})\cap B(0,4t^{\frac{1}{%
\beta +1}})\cap R_{k}}\big\|_{L^{p,r}}\,+\big\|(d_{t}^{2}f\,)\chi
_{B(0,2e^{-2})\cap (\mathbb{R}^{n}\backslash B(0,4t^{\frac{1}{\beta +1}%
}))\cap R_{k}}\big\|_{L^{p,r}} \\
=& V_{1,k}(t)+V_{2,k}(t),\quad 0<t<1,k\in \mathbb{Z}.
\end{align*}%
We will divide the proof into two Substeps 2.1 and 2.2.

\textit{Substep 2.1.}$\ $\textit{Estimation of }$I_{1}$. Obviously, $%
V_{2,k}(t)=0$ if $2^{k}<4t^{\frac{1}{\beta +1}}$ and $k\in \mathbb{Z}$. We
have%
\begin{equation*}
\triangle _{h}^{2}f\left( x\right) =f\left( x+2h\right) +f\left( x\right)
-2f\left( x+h\right)
\end{equation*}%
and 
\begin{equation*}
|x+2h|\leq |x|+2|h|<4t^{\frac{1}{\beta +1}},
\end{equation*}%
if $x\in B(0,4t^{\frac{1}{\beta +1}})$ and $\left\vert h\right\vert <t$. In
this case, we use an argument similar to that used in Step 1 we find $%
I_{1}(t)\lesssim t^{\sigma q}.$

\textit{Substep 2.2.}$\ $\textit{Estimation of}\textbf{\ }$I_{2}$. Using the
same type of arguments as in Step 1 it is easy to see that $V_{1,k}(t)\leq
ct^{\frac{\delta +\frac{n}{p}}{\beta +1}}$, where $c>0$ is independent of $k$
and $t$ and 
\begin{equation*}
\sum\limits_{k\in \mathbb{Z},2^{k}\geq 4t^{\frac{1}{\beta +1}}}2^{k\alpha
q}(V_{1,k}(t))^{q}\leq ct^{\sigma q}.
\end{equation*}%
We decompose $\triangle _{h}^{2}f(x)$ into $\sum_{i=1}^{5}\varpi _{i}(x,h)$,
where%
\begin{equation*}
\varpi _{1}(x,h)=\left\vert x+h\right\vert ^{\delta }\big(g(\left\vert
x+2h\right\vert ^{-\beta })+g(\left\vert x\right\vert ^{-\beta
})-2g(\left\vert x+h\right\vert ^{-\beta })\big)\varrho \left( \left\vert
x+2h\right\vert \right) ,
\end{equation*}%
\begin{align*}
\varpi _{2}(x,h)=& (\left\vert x+2h\right\vert ^{\delta }-\left\vert
x+h\right\vert ^{\delta })g(\left\vert x+2h\right\vert ^{-\beta })\varrho
(\left\vert x+2h\right\vert ), \\
\varpi _{3}(x,h)=& (\left\vert x\right\vert ^{\delta }-\left\vert
x+h\right\vert ^{\delta })g(\left\vert x\right\vert ^{-\beta })\varrho
(\left\vert x\right\vert ),
\end{align*}%
\begin{equation*}
\varpi _{4}(x,h)=2\left\vert x+h\right\vert ^{\delta }g(\left\vert
x+h\right\vert ^{-\beta })\big(\varrho (\left\vert x+2h\right\vert )-\varrho
(\left\vert x+h\right\vert )\big)
\end{equation*}%
and%
\begin{equation*}
\varpi _{5}(x,h)=\left\vert x+h\right\vert ^{\delta }g(\left\vert
x\right\vert ^{-\beta })\big(\varrho (\left\vert x\right\vert )-\varrho
(\left\vert x+2h\right\vert )\big).
\end{equation*}%
Obviously we need only to estimate $\varpi _{1}$. From %
\eqref{est-omega2-lorentz} and \eqref{x-lorentz}, we obtain%
\begin{equation*}
\left\vert x+h\right\vert ^{\delta }\lesssim \left\vert x\right\vert
^{\delta },\quad |\left\vert x+2h\right\vert ^{\delta }-\left\vert
x+h\right\vert ^{\delta }|\lesssim \left\vert x\right\vert ^{\delta }
\end{equation*}%
if $\left\vert x\right\vert \geq 4t^{\frac{1}{\beta +1}}>4\left\vert
h\right\vert ^{\frac{1}{\beta +1}}$. We split%
\begin{equation*}
2g(\left\vert x+2h\right\vert ^{-\beta })+2g(\left\vert x\right\vert
^{-\beta })-4g(\left\vert x+h\right\vert ^{-\beta })
\end{equation*}%
into three terms i.e., $\vartheta _{1}(x,h)+\vartheta _{2}(x,h)+\vartheta
_{3}(x,h)$, where%
\begin{align*}
& \vartheta _{1}(x,h) \\
=& g(\left\vert x+2h\right\vert ^{-\beta })-g(2\left\vert x+h\right\vert
^{-\beta }-\left\vert x\right\vert ^{-\beta })+g(\left\vert x\right\vert
^{-\beta }) \\
& -g(2\left\vert x+h\right\vert ^{-\beta }-\left\vert x+2h\right\vert
^{-\beta }),
\end{align*}%
\begin{equation*}
\vartheta _{2}(x,h)=g(\left\vert x+2h\right\vert ^{-\beta })+g(2\left\vert
x+h\right\vert ^{-\beta }-\left\vert x+2h\right\vert ^{-\beta
})-2g(\left\vert x+h\right\vert ^{-\beta })
\end{equation*}%
and%
\begin{equation*}
\vartheta _{3}(x,h)=g(\left\vert x\right\vert ^{-\beta })+g(2\left\vert
x+h\right\vert ^{-\beta }-\left\vert x\right\vert ^{-\beta })-2g(\left\vert
x+h\right\vert ^{-\beta }).
\end{equation*}%
Define%
\begin{equation*}
\tilde{\vartheta}_{i}(x,t)=t^{-n}\int_{|h|<t}|\vartheta _{i}(x,h)|dh,\quad
i\in \{1,2,3\}.
\end{equation*}%
Let%
\begin{equation*}
J_{i,k}(t)=\big\|\left\vert x\right\vert ^{\delta }|\tilde{\vartheta}%
_{i}(x,t)|\chi _{B(0,2e^{-2})\cap (\mathbb{R}^{n}\backslash B(0,4t^{\frac{1}{%
\beta +1}}))\cap R_{k}}\big\|_{L^{p,r}},\quad i\in \{1,2,3\}.
\end{equation*}%
Observe that $g^{(1)}\in B_{\infty ,\infty }^{\gamma -1}(\mathbb{R}%
)\hookrightarrow L^{\infty }(\mathbb{R})$. Again by the mean value theorem;%
\begin{equation*}
\left\vert \left\vert x+2h\right\vert ^{-\beta }+\left\vert x\right\vert
^{-\beta }-2\left\vert x+h\right\vert ^{-\beta }\right\vert \leq
c|h|^{2}|x|^{-\beta -2},\quad \left\vert x\right\vert \geq 2\left\vert
h\right\vert ^{\frac{1}{\beta +1}},
\end{equation*}%
which yields that%
\begin{align*}
J_{1,k}(t)& \lesssim t^{2}\big\||x|^{\delta -(\beta +2)}\chi _{R_{k}}\big\|%
_{L^{p,r}} \\
& \lesssim t^{2}\big\||x|^{\delta -(\beta +2)}\chi _{R_{k}}\big\|_{L^{p,r}}
\\
& \lesssim t^{2}2^{k(\delta -(\beta +2)+\frac{n}{p})}.
\end{align*}%
We also obtain%
\begin{equation*}
J_{i,k}(t)\lesssim t^{\gamma }\big\||x|^{\delta -\gamma (\beta +1)}\chi
_{R_{k}}\big\|_{L^{p,r}}\lesssim t^{\gamma }2^{k(\delta -\gamma (\beta +1)+%
\frac{n}{p})},
\end{equation*}%
where $i\in \{2,3\}$. Therefore%
\begin{align*}
\sum\limits_{k\in \mathbb{Z},4t^{\frac{1}{\beta +1}}\leq 2^{k}\leq
4e^{-2}}2^{k\alpha q}(J_{1,k}(t))^{q}\lesssim & t^{2q}\sum\limits_{k\in 
\mathbb{Z},4t^{\frac{1}{\beta +1}}\leq 2^{k}\leq 4e^{-2}}2^{k(\delta +\frac{n%
}{p}+\alpha -(\beta +2))q} \\
\lesssim & \max \big(t^{(2+\sigma -\frac{\beta +2}{\beta +1})q},t^{2q}\big)
\end{align*}%
and%
\begin{align*}
\sum\limits_{k\in \mathbb{Z},2^{k}\geq 4t^{\frac{1}{\beta +1}}}2^{k\alpha
q}(J_{i,k}(h))^{q}\lesssim & t^{\gamma q}\sum\limits_{k\in \mathbb{Z}%
,2^{k}\geq 4t^{\frac{1}{\beta +1}}}2^{k(\delta +\frac{n+\alpha }{p}-\gamma
(\beta +1))q} \\
\lesssim & t^{\sigma q},
\end{align*}%
since $\sigma <\gamma $, where $i\in \{2,3\}$. Hence%
\begin{equation*}
I_{2}(t)\lesssim t^{\sigma q}+\max \big(t^{(2+\sigma -\frac{\beta +2}{\beta
+1})q},t^{2q}\big).
\end{equation*}%
Collecting the estimates of $I_{1}$ and $I_{2}$ we have proved $f\in \dot{K}%
_{p,r}^{\alpha ,q}B_{\infty }^{\sigma }$ with $1\leq \sigma <2$.

\textit{Step 3. }We\ will\ prove that $f\in \dot{K}_{p,r}^{\alpha
,q}B_{\infty }^{\sigma }$ and\ $\delta =1-\frac{n}{p}-\alpha $.\ Let $1\leq
p_{1},p,p_{2}<\infty $ be such that%
\begin{equation*}
\max \Big(\frac{n}{n-\alpha },\frac{n}{2(\beta +1)-\alpha _{+}-\delta }\Big)%
<p_{1}<p<p_{2}<\frac{n}{(-\alpha )_{+}},
\end{equation*}%
where $\alpha _{+}=\max (0,\alpha )$ and $(-\alpha )_{+}=\max (0,-\alpha )$.
We set%
\begin{equation*}
\sigma _{i}=\frac{\delta +\frac{n}{p_{i}}+\alpha }{\beta +1},\quad i\in
\{1,2\},\quad \frac{1}{p}=\frac{\theta }{p_{1}}+\frac{1-\theta }{p_{2}}%
,\quad 0<\theta <1.
\end{equation*}%
Observe that $\delta -1+\frac{n}{p_{1}}+\alpha >0$ and $\delta -1+\frac{n}{%
p_{2}}+\alpha <0$, which yield that $f\in \dot{K}_{p_{i}}^{\alpha
,q}B_{\infty }^{\sigma _{i}}$, $i\in \{1,2\}$. By H\"{o}lder's inequality,
we obtain%
\begin{equation}
\big\|f\big\|_{\dot{K}_{p,r}^{\alpha ,q}B_{\infty }^{\sigma }}\leq \big\|f%
\big\|_{\dot{K}_{p_{1},r}^{\alpha ,q}B_{\infty }^{\sigma _{1}}}^{\theta }%
\big\|f\big\|_{\dot{K}_{p_{2},r}^{\alpha ,q}B_{\infty }^{\sigma
_{2}}}^{1-\theta }.  \label{interpolation-lorentz}
\end{equation}%
This ensures that $f\in \dot{K}_{p,r}^{\alpha ,q}B_{\infty }^{\sigma }$ but
for $p>1$. Now assume that $p=1$. Let $-n<\alpha _{1}<\alpha <\alpha _{2}<0$%
.\ We put%
\begin{equation*}
\sigma _{i}=\frac{\delta +n+\alpha _{i}}{\beta +1},\quad i\in \{1,2\},
\end{equation*}%
which yield that $f\in \dot{K}_{1,r}^{\alpha _{i},q}B_{\infty }^{\sigma
_{i}} $, $i\in \{1,2\}$. An interpolation inequality as in %
\eqref{interpolation-lorentz} gives that $f\in \dot{K}_{1,r}^{\alpha
,q}B_{\infty }^{\sigma },0<\sigma <1$.

The proof is complete.
\end{proof}

\begin{remark}
$\mathrm{(i)}$ If $\alpha =0$ and $p=q$, then Lemma \ref{Key-1-lorentz}
reduces to the result given in \cite[Proposition 3]{Bo4}.\newline
$\mathrm{(ii)}$ We can use Theorem \ref{Maximal-Inq3-lorentz} to estimate %
\eqref{dual-lorentz}. Indeed, we have%
\begin{align*}
t^{-n}\int_{|h|<t}\left\vert f\left( x+h\right) \right\vert \chi _{B(-h,5t^{%
\frac{1}{\beta +1}})}(x)dh& \leq t^{-n}\int_{|z-x|<t}\left\vert f\left(
z\right) \right\vert \chi _{B(0,5t^{\frac{1}{\beta +1}})}(z)dh \\
& \lesssim \mathcal{M}(f\chi _{B(0,5t^{\frac{1}{\beta +1}})})(x),
\end{align*}%
where $x\in B(0,4t^{\frac{1}{\beta +1}})\cap B(0,2e^{-2})\cap R_{k},k\in 
\mathbb{Z}.$
\end{remark}

\begin{remark}
$\mathrm{(i)}$\ It is well-known that Herz spaces have been widely applied
in harmonic analysis; see, for instance, \cite{Drappl}, \cite%
{FeichtingerWeisz08}, \cite{Rag09}-\cite{Rag12} and \cite{T11}. It is a
natural question to find more applications of Lorentz-Herz spaces in
harmonic analysis.\newline
$\mathrm{(ii)}$\ We think that it is interesting to develop a real-variable
theory of mixed-norm Lorentz-Herz spaces; see \cite{ZYZ22}. More precisely.
For $i\in \{1,...,n\}$ and $k_{i}\in \mathbb{Z}$\ let 
\begin{equation*}
R_{k_{i}}=\{x_{i}\in \mathbb{R}:2^{k_{i}-1}\leq |x_{i}|<2^{k_{i}}\}\quad 
\text{and}\quad \chi _{k_{i}}=\chi _{R_{k_{i}}}.
\end{equation*}%
Vectors $\vec{p}=(p_{1},...,p_{n})$ with $p_{i}\in (0,\infty ],i=1,...,n$
are written $0<\vec{p}\leq \infty $. Let $0<\vec{p},\vec{q},\vec{r}\leq
\infty $\ and $\vec{\alpha}=(\alpha _{1},...,\alpha _{n})\in \mathbb{R}^{n}$%
. The mixed-norm Lorentz Herz space $\dot{E}_{\vec{p},\vec{r}}^{\vec{\alpha},%
\vec{q}}(\mathbb{R}^{n})$ is defined to be the set of all measurable
functions $f$ such that%
\begin{equation*}
\big\|f\big\|_{\dot{E}_{\vec{p},\vec{r}}^{\vec{\alpha},\vec{q}}(\mathbb{R}%
^{n})}=\big\|\cdot \cdot \cdot \big\|f\big\|_{\dot{K}_{p_{1},r_{1}}^{\alpha
_{1},q_{1}}}\cdot \cdot \cdot \big\|_{\dot{K}_{p_{n},r_{n}}^{\alpha
_{n},q_{n}}}<\infty ,
\end{equation*}%
where%
\begin{equation*}
\big\|f\big\|_{\dot{K}_{p_{i},r_{i}}^{\alpha _{i},q_{i}}}=\Big(%
\sum\limits_{k_{i}\in \mathbb{Z}}2^{k_{i}{\alpha }_{i}{q}_{i}}\big\|f\,\chi
_{k_{i}}\big\|_{L^{p_{i},r_{i}}}^{q_{i}}\Big)^{1/q_{i}},\quad i\in
\{1,...,n\}.
\end{equation*}%
$\mathrm{(iii)}$ It is also interesting to develop a real-variable theory of
weighted Lorentz Herz-type Besov-Triebel-Lizorkin spaces.
\end{remark}

\bigskip \textbf{Acknowledgements}

This work is found by the General Direction of Higher Education and Training
under\ Grant No. C00L03UN280120220004 and by The General Directorate of
Scientific Research and Technological Development, Algeria.

\end{document}